\newtheorem{thm}{Theorem}[section]
\newtheorem{theorem}{Theorem}[section]
\newtheorem{cor}[thm]{Corollary}
\newtheorem{corollary}[thm]{Corollary}
\newtheorem{lemma}[thm]{Lemma}
\newtheorem{prop}[thm]{Proposition}
\newtheorem{proposition}[thm]{Proposition}
\newtheorem{property}[thm]{Property}
\newtheorem{defn}[thm]{Definition}
\newtheorem{definition}[thm]{Definition}
\newtheorem{assumption}[thm]{Assumption}
\newtheorem{remark}[thm]{Remark}
\newtheorem{example}[thm]{Example}
\newtheorem{eg}[thm]{Example}
\newtheorem{exercise}[thm]{Exercise}
\newtheorem{defn}{Definition}[section]
\newtheorem{theorem}[defn]{Theorem}
\newtheorem{corollary}[defn]{Corollary}
\newtheorem{lemma}[defn]{Lemma}
\newtheorem{proposition}[defn]{Proposition}
\newtheorem{definition}[defn]{Definition}
\newtheorem{remark}[defn]{Remark}
\newtheorem{eg}[defn]{Example}
\def\cB{\mathcal B}
\def\cE{\mathcal E}
\def\R{\mathbb R}
\def\N{\mathbb N}
\def\P{\mathbb P}
\def\E{\mathbb E}
\def\M{\mathbb M}
\def\({\left (}
\def\){\right )}
\def\[{\left [}
\def\]{\right ]}
\def\vp{\varphi}
\def\ve{\varepsilon}
\numberwithin{equation}{section}
\begin{document}
\setcounter{secnumdepth}{6}
\setcounter{tocdepth}{6}
\noindent
{{\Large\bf Analytic theory of It\^o-stochastic differential equations with non-smooth coefficients}\footnote{The research of Haesung Lee was supported by the Basic Science Research Program through the National Research Foundation of Korea (NRF) funded by the Ministry of Education (2020R1A6A3A01096151), and  by the DFG through the IRTG 2235 \lq\lq Searching for the regular in the irregular: Analysis of singular and random systems.\rq\rq\ The research of Wilhelm Stannat was partially supported by the DFG through the Research Unit FOR 2402 \lq\lq Rough paths, stochastic partial differential equations and related topics.\rq\rq\ The research of Gerald Trutnau was supported by the Basic Science Research Program through the National Research Foundation of Korea (NRF) funded by the Ministry of Education (2017R1D1A1B03035632).}}

\bigskip
\noindent
{\bf Haesung Lee}, {\bf Wilhelm Stannat}, 
{\bf Gerald Trutnau}
\\

\noindent
{\small{\bf Abstract.}  We present a detailed analysis of non-degenerate time-homogeneous It\^o-stochastic differential equations with low local regularity assumptions on the coefficients. In particular the drift coefficient may only satisfy a local integrability condition. We discuss non-explosion, irreducibility, Krylov-type estimates, regularity of the transition function and resolvent, moment inequalities, recurrence, transience, long time behavior of the transition function, existence and uniqueness of invariant measures, as well as pathwise uniqueness, strong solutions and uniqueness in law. This analysis shows in particular that sharp explicit conditions for the various mentioned properties can be derived  similarly to the case of classical stochastic differential equations with local Lipschitz coefficients.\\ 

\noindent
{2020 {\it Mathematics Subject Classification}: Primary 60H20, 47D07, 60J35;  Secondary 31C25, 35J15, 35B65, 60J60.}\\

\noindent 
{Key words:  It\^o-stochastic differential equation, abstract Cauchy problem, $L^1$-uniqueness, generalized Dirichlet form, infinitesimally invariant measure, strong Feller property, irreducibility, Hunt process, Krylov-type estimate, non-explosion, recurrence, invariant measure, ergodicity, strong well-posedness.
} 

\newpage
\tableofcontents
\newpage
%%%%%%%%%%%%%%%%%%%%% chapter_1.tex %%%%%%%%%%%%%%%%%%%%%%%%%%%%%%%%%
%
% chapter
%
% Use this file as a template for your own input.
%
%%%%%%%%%%%%%%%%%%%%%%%% Springer-Verlag %%%%%%%%%%%%%%%%%%%%%%%%%%
%\motto{Use the template \emph{chapter.tex} to style the various elements of your chapter content.}
\section{Introduction}
\label{chapter_1} 
This monograph is devoted to the systematic analytic and probabilistic study of weak solutions to the 
stochastic differential equation (hereafter SDE)
\begin{eqnarray}
\label{intro:eq1}
X_t = x+ \int_0^t \sigma (X_s) \, dW_s +   \int^{t}_{0}   \mathbf{G}(X_s) \, ds, 
\quad 0\le  t <\zeta, x\in \R^d,
\end{eqnarray}
where $(W_{t})_{t\ge 0}$ is a $d$-dimensional Brownian motion, $A=(a_{ij})_{1\le i,j\le d}=\sigma\sigma^T$ 
with $\sigma=(\sigma_{ij})_{1\le i,j\le d}$ is locally uniformly strictly elliptic (see \eqref{eq:2.1.2}), 
$\mathbf{G}=(g_1, \dots, g_d)$, and $\zeta$ is the lifetime of $X$, under low regularity assumptions 
on the coefficients. \\
The classical approach to the solution of \eqref{intro:eq1} is a pathwise solution and global existence and uniqueness of solutions can be obtained 
under locally Lipschitz assumptions combined with a linear growth condition. 
However, typical finite-dimensional approximations of stochastic partial differential equations do not match these 
assumptions and also several applications in the natural and engineering sciences, see for example \cite{Car}, \cite{Naga}. Therefore the need for substantial generalization arises. \\
Another essential drawback of the pathwise approach is the, to a large extent, still open problem of a 
mathematical rigorous characterization of the generator 
\begin{eqnarray}
\label{intro:eq2}
Lf=\frac 12\sum_{i,j=1}^d a_{ij}\partial_{ij} f  
+ \sum_{i=1}^d g_i\partial_i f, \quad f\in C_0^{\infty}(\R^d)
\end{eqnarray}
of \eqref{intro:eq1}. More specifically, in order to investigate 
properties of the solution 
of \eqref{intro:eq1} with analytic tools on the state space, especially from PDE theory and functional analysis, 
it is necessary to uniquely characterize $w(x,t) = \mathbb{E} (f(X_t) \mid X_0 = x)$, $t \ge 0$, 
$x\in\R^d$, as a solution of a Cauchy problem 
\begin{equation} 
\label{intro:eq3} 
\partial_t \varw (x,t) = L \varw (x,t) \, ,t \ge 0 \, , x\in\R^d,  
\end{equation} 
with initial condition $w(x,0) = f(x)$, for some proper extension $L$, whose full domain will depend 
on the underlying function space and may in general not explicitly be characterized.  \\
In this monograph we will investigate a converse approach to the solution and further investigation of 
\eqref{intro:eq1}, by starting with the analysis of the Cauchy problem \eqref{intro:eq3} on $L^1$-spaces 
with weights and subsequently constructing a strong solution to \eqref{intro:eq1} via the 
Kolmogorov-type construction of an associated Markov process. The essential advantage of this approach, 
which we will describe in more detail in Section \ref{MethodsandResults}
below, is that at each stage of the 
construction we keep a rigorous analytic description of the associated Cauchy problem \eqref{intro:eq3} 
including its full generator $\overline{L}$. This allows us on the one hand to establish a rigorous mathematical 
connection between SDEs and related stochastic calculus, with regularity theory of partial differential equations (PDEs), potential and semigroup theory, and generalized 
Dirichlet form theory on the other. \\ 
As another advantage we can relax the local regularity assumptions on the coefficients of \eqref{intro:eq1} 
considerably. If for instance, for some $p\in (d,\infty)$, the components of  
$\sigma=(\sigma_{ij})_{1\leq i,j \leq d}$ have $H^{1,p}_{loc}$-regularity and $\mathbf{G}$ has 
$L^{p}_{loc}$-regularity, strong existence and pathwise uniqueness of a solution $X$ 
to \eqref{intro:eq1} holds for all times under any global condition that implies non-explosion. Various non-explosion conditions are given in Section 
\ref{subsec:3.2.1}.
Our main result then, 
Theorem \ref{theo:3.3.1.8}, provides a detailed analysis of the properties of the solution $X$, like 
strong Feller properties of the transition semigroup and resolvent, Krylov-type estimates, moment 
inequalities, transience, recurrence, ergodicity, and existence and uniqueness of invariant measures with 
sharp explicit conditions, similarly to the classical case of locally Lipschitz continuous coefficients. 
\\
In recent years stunning and important new results about pathwise uniqueness and existence of a strong 
solution to \eqref{intro:eq1}, when $\mathbf{G}$ merely fulfills some local integrability condition, were 
presented (\cite{GyMa}, \cite{KR}, \cite{Zh11}). All these works also cover the time-dependent case with 
some trade-off between the integrability assumptions in time and space, but struggle to provide a complete stochastic analysis as in Theorem \ref{theo:3.3.1.8}, without a drastic strengthening of the local regularity assumptions (cf. Remark \ref{rem:3.3.1}). \\
Instead, the crucial idea here is to construct weak solutions to \eqref{intro:eq1} by PDE techniques 
(\cite{ArSe} and \cite{St65}) and generalized Dirichlet form theory (\cite{WS99}, \cite{WSGDF}, 
\cite{Tr2}, \cite{Tr5}), and thus separately and independently from local pathwise uniqueness and 
probabilistic techniques. 
Following this approach, initiated in \cite{RoShTr} in the frame of sectorial Dirichlet forms, we finally 
only rely on a local pathwise uniqueness result (\cite[Theorem 1.1]{Zh11}), since it enables us 
to construct a Hunt process with a transition semigroup that has such a nice regularity that 
all presumably optimal classical conditions for the properties of a solution to \eqref{intro:eq1} above 
carry over to our situation of non-smooth and/or locally unbounded coefficients.
\subsection{Methods and results}\label{MethodsandResults}
Let us describe in more detail the respective stages in our approach to the analysis of 
\eqref{intro:eq1} and the main results obtained. 
\subsection*{I. The abstract Cauchy problem}
The starting point of our approach is in Section \ref{sec2.1}, the analysis of the Cauchy problem 
\eqref{intro:eq3} on a space $L^1 (\R^d , \mu)$, where $\mu$ is a measure having a regular density $\rho$ 
and which satisfies 
\begin{eqnarray} 
\label{intro:eq4} 
\int_{\R^d} Lf \, d\mu = 0  \quad \forall\, f\in C_0^{\infty} (\R^d). 
\end{eqnarray} 
We call such a measure an infinitesimally invariant measure for $(L, C_0^{\infty}(\R^d))$ and although the above property 
of $\mu$ is loosely linked with the concept of invariance of stochastic processes, our approach is not 
at all limited to SDEs that admit an invariant measure or even are 
ergodic. We emphasize that the existence of such a measure $\mu$ is much less restrictive, if at all, 
than it might seem at first sight, and in fact $\mu$ will not be a finite measure in general, let 
alone a probability.\\  \\
\textit{Semigroup approach to the Cauchy problem}\\ \\
In the first step we realize in Theorem \ref{theorem2.1.5} an extension of $(L,C_0^{\infty}(\R^d))$ as 
the infinitesimal generator of a sub-Markovian $C_0$-semigroup of contractions $(T_t)_{t>0}$, which then gives rise to solutions of the Cauchy problem \eqref{intro:eq3}.  A crucial 
step in our analysis is the decomposition of the drift coefficient $\mathbf{G}$ as
$$
\mathbf{G} = \beta^{\rho , A}+\mathbf{B}, 
$$
where $\beta^{\rho , A}=(\beta^{\rho, A}_1 , \ldots,\beta^{\rho, A}_d)$ is the logarithmic derivative of $\rho$ associated with $A$ (see \eqref{eq:2.1.5}), and $\mathbf{B}$ is 
a $\mu$-divergence zero vector field. This allows us to decompose the operator $L$ as
$$ 
Lf = L^0 f + \langle \mathbf{B}, \nabla f \rangle,    
$$ 
where 
$$ 
L^0 f =  \frac 12\sum_{i,j=1}^d a_{ij}\partial_{ij} f  
+ \sum_{i=1}^d \beta^{\rho, A}_i \partial_i f, \quad f\in C_0^{\infty}(\R^d)
$$ 
is symmetric on $L^2 (\R^d , \mu )$ and can be extended in a unique way to a self-adjoint generator of 
a symmetric Dirichlet form, which plays a crucial role in our analysis.\\ \\
\textit{Uniqueness, invariance and conservativeness} \\ \\
We then discuss in the abstract functional analytic setting uniqueness of such infinitesimal generators, which is 
linked to the uniqueness of solutions of the Cauchy problem, and its interrelations with invariance and 
conservativeness as global properties of the semigroup. The corresponding results can be found in 
Proposition \ref{prop2.1.9}, Proposition \ref{prop:2.1.10}, Remark \ref{rem:2.1.10a}, Corollaries 
\ref{cor:2.1.4.1} and \ref{cor2.1.1}.
\subsection*{II. Infinitesimally invariant measures $\mu$} 
The existence and further regularity properties of a measure $\mu$ satisfying \eqref{intro:eq4} needed 
for our approach is investigated in Section \ref{sec2.2}. It is shown in particular that if, for some 
$p\in (d,\infty)$, the components of $A$ have $H^{1,p}_{loc}$-regularity and $\mathbf{G}$ has 
$L^{p}_{loc}$-regularity, such a $\mu$ exists, having a strictly positive density $\rho\in H^{1,p}_{loc}(\R^d)$. Reformulating 
$L$ into divergence form, we can considerably relax the assumptions on $A$, see assumption {\bf (a)} of 
Section \ref{subsec:2.2.1} for precise conditions. From there onwards assumption {\bf (a)} will always be in force unless otherwise stated. The main result on existence and regularity of $\mu$ is 
given in Theorem \ref{theo:2.2.7}. 
\subsection*{III. Regular solutions of the Cauchy problem} 
In order to enable a Kolmogorov-type construction of a Markov process whose transition semigroup is 
given by the semigroup $(T_t)_{t>0}$, it is necessary to pass from $(T_t)_{t> 0}$ to kernels of 
sub-probability measures. To this end we further analyze the regularity properties of $(T_t)_{t>0}$ in 
Section \ref{sec2.3}, in particular the existence of a H\"older-continuous version $P_tf$ of $T_tf$ that 
gives rise to a transition function $(P_t)_{t > 0}$. The corresponding results are given in 
Theorem \ref{theo:2.6} 
using
\eqref{semidef}. We also discuss precise  
interrelations of our regularity results with the strong Feller property.\\ \\
\textit{Irreducibility} \\ \\
In Section \ref{sec2.4}, irreducibility of $(T_t)_{t>0}$ and of the associated transition function 
$(P_t)_{t>0}$, called irreducibility in the probabilistic sense, are obtained. See Proposition 
\ref{prop:2.4.2} for the corresponding result. This closes the analytic part of our approach. 
\subsection*{IV. Associated Markov processes} 
\textit{Construction and identification}\\ \\
Our first step on the probabilistic side is to construct in Section \ref{sec:3.1} a Hunt process $\M$ 
with transition semigroup $(P_t)_{t>0}$. The corresponding result is contained in Theorem \ref{th: 3.1.2}. 
The existence of $\M$ does not follow immediately from the general theory of Markov processes, since $(P_t)_{t>0}$ may fail to be Feller. Instead, we use a refinement of a construction method 
from \cite{AKR} that involves elements of generalized Dirichlet form theory. For this purpose a higher 
regularity of the resolvent is needed, which requires another assumption {\bf (b)} (see Section 
\ref{subsec:3.1.1}) in addition to assumption {\bf (a)}. From there onwards both assumptions {\bf (a)} and 
{\bf (b)} will be in force unless otherwise stated. Given the 
regularity properties of the resolvent and the transition semigroup, the identification of $\M$ as a 
weak solution to \eqref{intro:eq1} (cf. Definition \ref{def:3.48}(iv)) then follows standard lines. See 
Proposition \ref{prop:3.1.6} and Theorem \ref{theo:3.1.4} for the corresponding results.\\ \\
\textit{Krylov-type estimates} \\ \\
As a by-product of the improved resolvent regularity, we obtain in Theorem \ref{theo:3.3} a Krylov-type 
estimate that has an interest of its own (see Remark \ref{rem:ApplicationKrylovEstimates}). Its importance stems from the fact that probabilistic quantities like $\int_0^t g(X_s)ds$, which are related to the drift or to the quadratic variation of the local martingale part of $X$ in \eqref{intro:eq1}, can be controlled in terms of the $L^q$-norm of $g$ and thereby make solutions to \eqref{intro:eq1} more tractable.\\ \\
\textit{Non-explosion and conservativeness} \\ \\
Throughout Section \ref{sec:3.2}, we investigate global properties of the Hunt process $\M$ constructed in 
Theorem \ref{th: 3.1.2}, by analytic and by probabilistic methods. Since we already know that $\M$ has continuous 
sample paths on $[0,\zeta)$, where $\zeta$ denotes the lifetime,
the first important global property of $\M$ is non-explosion, i.e. $\zeta=\infty$. It guarantees that 
the weak solution of Theorem \ref{theo:3.1.4} exists for all times and is continuous on 
$[0,\infty)$. Due to the strong Feller property, conservativeness of $(T_t)_{t>0}$ is equivalent to non-explosion of $\M$. In Section \ref{subsec:3.2.1}, various qualitatively different sufficient non-explosion 
criteria for $\M$ are presented. See Proposition \ref{prop:3.2.8}, Lemma \ref{lem3.2.6}, Corollaries  
\ref{cor:3.2.2} and \ref{cor:3.1.3} and Proposition \ref{theo:3.2.8} for Lyapunov-type conditions for  
non-explosion. Since the drift coefficient does not need to be locally bounded, we also provide in 
Proposition \ref{theo:3.2.8} non-explosion criteria of a different nature than in the case 
of locally bounded coefficients (\cite{Pi}), which we further illustrate with examples
(Example \ref{exam:3.2.1.4}). We also present in Proposition \ref{prop:3.2.9}
volume growth conditions for non-explosion, which follow from generalized Dirichlet form techniques and 
are again of a different nature than classical non-explosion conditions.\\ \\
\textit{Transience and recurrence} \\ \\
In Section \ref{subsec:3.2.2}, we study transience and recurrence. Recurrence is an important concept as it implies stationarity of solutions w.r.t. $\mathbb{P}_{\mu}$ and that $\mu$ is the unique (however possibly infinite) invariant measure for the solution of \eqref{intro:eq1} (see \cite{LT21in}).
We establish in Theorem \ref{theo:3.3.6}
a well-known dichotomy between recurrence and transience (cf. for instance \cite[Theorem 3.2]{Bha} and 
\cite[Theorem 7.4]{Pi} for the case of locally bounded coefficients) and develop several sufficient analytic
criteria for recurrence. The corresponding results are given in Proposition \ref{theo:3.2.6}, Corollary \ref{cor:3.2.2.5}, and Proposition \ref{cor:3.3.2.6}.\\ \\
\textit{Uniqueness of invariant measures} \\ \\
Section \ref{subsec:3.2.3} deals with uniqueness of invariant measures and the long time behavior of 
$(P_t)_{t>0}$. Again, due to the regularity properties of $(P_t)_{t>0}$, Doob's ergodic 
theorem is applicable. Based on this, we develop several classical-like explicit criteria for ergodicity 
(see Proposition \ref{prop:3.3.12} and Corollary \ref{cor:3.2.3.7}). Example \ref{ex:3.8} provides a 
counterexample to uniqueness of invariant measures.
\subsection*{V. The stochastic differential equation} 
In the final stage of our approach we consider the stochastic differential equation \eqref{intro:eq1} 
and investigate in Section \ref{sec:3.3} two types of uniqueness of a solution.\\ \\
\textit{Pathwise uniqueness and strong solutions} \\ \\
The first type of uniqueness is pathwise uniqueness (cf. Definition \ref{def:3.48}(v)) and we explore 
the existence of a strong solution to \eqref{intro:eq1} (cf. Definition \ref{def:3.48}(ii)). Using the 
classical Yamada--Watanabe Theorem (\cite{YW71}) and a local uniqueness result from \cite{Zh11}, we 
obtain Theorem \ref{theo:3.3.1.8} both under the mere assumption of {\bf (c)} of Section \ref{sec:3.3} 
(which implies the two 
assumptions {\bf (a)} and {\bf (b)}) and the assumption that the constructed Hunt process $\M$ in 
Theorem \ref{th: 3.1.2} is non-explosive. 
This is one of the main achievements of our approach. 
It shows that SDEs with non-smooth coefficients, for instance those with locally unbounded drift, 
can be treated with classical-like methods and presents a real extension of the It\^{o} theory of locally 
Lipschitz coefficients and non-degenerate dispersion coefficients. In particular, our new approach allows us to 
close a partial gap in the existing literature and we refer the reader to the introduction of 
Section \ref{sec:3.3} and Remark \ref{rem:3.3.1}  for more details.\\ \\
\textit{Uniqueness in law} \\ \\
The second type of uniqueness is related to uniqueness in law under the conditions {\bf (a)} and {\bf (b)}. 
Since uniqueness in law in the classical sense as in Definition \ref{def:3.48}(vi) may not hold in the 
general class of coefficients (cf. for instance the introduction of \cite{LT19de}), here we consider a 
weaker form of uniqueness in law which is related to $L^1$-uniqueness (cf. Definition \ref{def:3.3.2.1}). 
The corresponding uniqueness result is contained in Proposition \ref{prop:3.3.1.16}.\\ 
\subsection{Organization of the book} 
The text is structured and divided into an analytic part (Chapter \ref{chapter_2}), a probabilistic part (Chapter \ref{chapter_3}), and a conclusion and outlook part (Chapter \ref{conclusionoutlook}). For a better orientation of the reader we start each section with a summary of its main contents and the assumptions that are in force.  We also provide historical remarks concerning specific aspects of our work, where we cite relevant related work and compare existing literature with our results in a detailed way (Remark \ref{rem:2.30new}, Remark \ref{rem:2.4.1}, Remark \ref{rem:2.3.3}, Remark \ref{rem:ApplicationKrylovEstimates}, Remark \ref{rem:3.3.1}). Additional information to existing  theories and results that are used for our analysis is provided throughout the text. In particular, Sections \ref{Comments2}, resp. \ref{Comments3}, provide a summary of techniques and results that we rely on in Chapters \ref{chapter_2}, resp. \ref{chapter_3}. 
%Notations and conventions are summarized in the identically named section right before the references. 

%%%%%%%%%%%%%%%%%%%%% chapter_2.tex %%%%%%%%%%%%%%%%%%%%%%%%%%%%%%%%%
%
% chapter 2 
%
% Use this file as a template for your own input.
%
%%%%%%%%%%%%%%%%%%%%%%%% Springer-Verlag %%%%%%%%%%%%%%%%%%%%%%%%%%
%\motto{Use the template \emph{chapter.tex} to style the various elements of your chapter content.}
\newpage

\section{The abstract Cauchy problem in $L^r$-spaces with weights} 
\label{chapter_2} % Always give a unique label
% use \sectionmark{}
% to alter or adjust the chapter heading in the running head

\subsection{The abstract setting, existence and uniqueness}
\label{sec2.1}
We consider the Cauchy problem\index{Cauchy problem}  
\begin{equation} 
\label{eq:2.1.0} 
\partial_t \varw (x,t) = L \varw (x,t) \, ,t \ge 0 \, , x\in\R^d,  
\end{equation} 
where $L= \frac 12\sum_{i,j=1}^d a_{ij}\partial_{ij} + \sum_{i=1}^d g_i\partial_i $ is some locally  uniformly strictly elliptic partial differential operator of second order on $\R^d$ with domain $C_0^\infty (\R^d )$ and suitable initial condition 
$\varw(x,0) = f(x)$ on the space $L^1 (\R^d , \mu )$. Here, $\mu$ is a locally finite nonnegative 
measure that is infinitesimally invariant for $(L,C_0^{\infty}(\R^d))$ (see \eqref{eq:2.1.0d}). We explicitly construct in 
Section \ref{subsec:2.1.2a}, under minimal assumptions on the coefficients $(a_{ij})_{1\le i,j\le d}$ and 
$(g_i)_{1\le i\le d}$, extensions of $(L,C_0^{\infty}(\R^d))$ generating sub-Markovian $C_0$-semigroups on $L^1 (\R^d, \mu )$ (see 
Theorem \ref{theorem2.1.5} for the main result) and discuss in Section \ref{subsec:2.1.4} uniqueness of 
such extensions. The main result, contained in Corollary \ref{cor2.1.1}, establishes a link 
between uniqueness of maximal extensions and invariance of the infinitesimally invariant measure $\mu$ 
under the associated semigroup $(\overline{T}_t)_{t\ge 0}$. We discuss in Section \ref{subsec:2.1.4} the 
interrelations of invariance with conservativeness of $(\overline{T}_t)_{t\ge 0}$, resp. its dual semigroup, 
and provide in Proposition \ref{prop:2.1.10}, resp. Corollary \ref{cor:2.1.4.1} explicit sufficient 
conditions on the coefficients, including Lyapunov-type conditions, implying invariance resp. 
conservativeness. We also illustrate the scope of the results with some counterexamples.\\[3pt]
In view of the envisaged application to the analysis of weak solutions of stochastic differential 
equations, we will be in particular interested in the existence of solutions $\varw(x,t)$ to the Cauchy problem
\eqref{eq:2.1.0} that can be represented as an expectation w.r.t. some associated Markov process: 
\begin{equation} 
\label{eq:2.1.0a} 
\varw(x,t) = \mathbb{E} (f(X_t) \mid X_0 = x) \, , t\ge 0\, , x\in\R^d. 
\end{equation}
The classical linear semigroup theory (see \cite[Chapter 4]{Pa}) provides a solution to the abstract Cauchy problem 
\eqref{eq:2.1.0} 
in terms of $\varw(x,t)= \overline{T}_t f (x)$, where $\overline{T}_0=id$ and $(\overline{T}_t)_{t> 0}$ is a 
strongly continuous semigroup ($C_0$-semigroup) on a suitable function space $\mathcal{B}$, whose infinitesimal 
generator 
$$ 
\overline{L} f := \frac{d\overline{T}_t f}{dt}\Big|_{t = 0}  \quad \text{ on $\mathcal{B}$}
$$ 
with domain $D(\overline{L}) := \{f\in \mathcal{B}  : \frac{d\overline{T}_t f}{dt}\Big|_{t = 0} \text{ exists in } \mathcal{B} \}$  
is a closed extension of $(L,C_0^{\infty}(\R^d))$, i.e. $C_0^\infty (\R^d) \subset D(\overline{L})$ and 
$\overline{L}|_{C_0^\infty (\R^d)} = L$. Such extensions of the operator $L$ are called  
\textbf{maximal extensions}\index{operator ! maximal extension} of $(L,C_0^{\infty}(\R^d))$ on $\mathcal{B}$. \\
However, in order to be able to represent $\overline{T}_t f(x) = \varw(x,t) 
= \mathbb{E} (f(X_t) \mid X_0 = x)$ as the expectation of some Markov process, the semigroup has 
to be in addition \textbf{sub-Markovian}\index{semigroup ! sub-Markovian}, i.e. 
\begin{equation} 
\label{eq:2.1.0b} 
 0\le f\le 1 \quad \Rightarrow \quad 0\le \overline{T}_t f \le 1  \, , t > 0 . 
\end{equation} 
Using the maximum principle, the construction of such sub-Markovian semigroups associated with $L$ can be achieved within classical PDE theory under appropriate regularity assumptions on the coefficients $a_{ij}$ and $g_i$. On the other hand, the theory of stochastic processes provides the existence of 
Markov processes under much weaker regularity assumptions on the coefficients, for example with the help of SDEs and the precise mathematical characterization of their transition semigroups and (infinitesimal) generators. This has been intensively investigated in the past, but still leaves many challenging questions open.   \\
A very successful approach towards such a rigorous mathematical theory, connecting solutions of the abstract Cauchy problem \eqref{eq:2.1.0} with transition semigroups of Markov processes under minimal regularity assumptions, has been developed within the theory of symmetric Dirichlet forms (\cite{FOT}) in the particular case where the differential operator  $L$ becomes symmetric,
\begin{equation} 
\label{eq:2.1.0c} 
\int_{\R^d} Lu \varv\, d\mu = \int_{\R^d} u L\varv\, d\mu  \quad \forall\, u,\varv\in C_0^{\infty} (\R^d), 
\end{equation} 
 w.r.t. the inner product on the Hilbert space $L^2 (\R^d , \mu )$ induced by some locally finite nonnegative measure 
$\mu$. The measure $\mu$ is called a \textbf{symmetrizing measure} \index{measure ! symmetrizing} for 
the operator $L$ in this case. Using linear perturbation theory of symmetric operators, the scope of Dirichlet form theory had 
subsequently been successfully extended in a first generalization to the case where $L$ can be realized as 
a sectorial operator on some $L^2$-space (see \cite{MR}) and later to the fully non-symmetric case in \cite{WS99} (see also \cite{WSGDF}).  \\
The general theory developed in \cite{WS99} combines semigroup theory with Dirichlet form techniques 
in order to solve the abstract Cauchy problem \eqref{eq:2.1.0} in terms of a sub-Markovian semigroup 
$(\overline{T}_t)_{t > 0}$ on the Banach space $L^1 (\R^d , \mu )$, where $\mu$ is an \textbf{infinitesimally invariant measure} \index{measure ! infinitesimally invariant} 
for $(L, C_0^{\infty}(\R^d))$, i.e. a locally finite nonnegative measure satisfying $Lu\in L^1(\R^d,\mu)$ for all $u\in C_0^{\infty} (\R^d)$ and 
\begin{equation} 
\label{eq:2.1.0d} 
\int_{\R^d} Lu \, d\mu = 0  \quad \forall\, u\in C_0^{\infty} (\R^d)\, . 
\end{equation}  
Note that \textbf{symmetry \eqref{eq:2.1.0c} implies invariance \eqref{eq:2.1.0d}} by choosing a 
function $\chi\in C_0^\infty (\R^d)$ such that $\chi\equiv 1$ on the support of $u$, since 
\begin{equation*} 
\label{eq:2.1.0dd} 
\int_{\R^d} Lu \, d\mu = \int_{\R^d} L u \chi\, d\mu = \int_{\R^d} u L\chi\, d\mu  = 0\, , 
\end{equation*} 
because $L\chi \equiv 0$ on the support of $u$.\\
The existence (and uniqueness), as well as the analytic and probabilistic interpretation of (infinitesimally) invariant measures $\mu$, will be further analyzed thoroughly in subsequent 
sections (see in particular Sections \ref{sec2.2} and \ref{subsec:3.2.3}).  \\
Before stating the precise assumptions on the coefficients and the infinitesimally invariant measure in the next 
section let us discuss the most relevant functional analytic implications of assumption 
\eqref{eq:2.1.0d}.  

\begin{itemize} 
\item \textbf{(Beurling--Deny property)}\index{Beurling--Deny property} Let $\psi\in L^1_{loc} (\R )$, be monotone increasing. Then 
\begin{equation} 
\label{eq:2.1.0e}
\int_{\R^d} \psi (u)L u \, d\mu \le 0\quad \forall u\in C_0^\infty (\R^d)\, . 
\end{equation} 
Indeed, assume first that $\psi\in C^\infty (\R)$ is monotone increasing, hence $\psi' \ge 0$. Let $\Psi (t) := \int_0^t \psi (s)\, ds$. Then $\Psi (0) = 0$, hence 
$\Psi (u)\in C_0^\infty (\R^d)$ and using the ellipticity
$$ 
L \Psi (u) = \psi (u) Lu + \psi' (u) \frac 12 \sum_{i,j=1}^d a_{ij}\partial_i u \partial_j u  \ge \psi (u) Lu 
$$ 
hence integrating w.r.t. the infinitesimally invariant measure $\mu$ yields \eqref{eq:2.1.0e}. The general case then follows by straightforward approximation. 

\item As a consequence of the Beurling--Deny property we obtain that $(L , C_0^\infty (\R^d))$ is 
dissipative on $L^r (\R^d , \mu)$ for all $r\in [1, \infty)$ (see \cite[Lemma 1.8, p. 36]{Eberle}), 
which is a necessary condition for the existence of maximal extensions of $L$ generating a  
$C_0$-semigroup of contractions on $L^r (\R^d , \mu)$.  

\item Since $L$ is dissipative, it is in particular closable. Its closure in $L^r (\R^d, \mu )$ generates 
a $C_0$-semigroup $(T_t )_{t>  0}$, if and only if the following \textbf{range condition}
\index{operator ! range condition}  holds (\cite[Theorem 3.1]{LP61}):
\begin{equation}
\label{eq:2.1.0f}
\exists\, \lambda > 0 \text{ such that } (\lambda - L)(C_0^\infty (\R^d )) \subset L^r (\R^d , \mu ) 
\text{ dense.} 
\end{equation} 
In this case, the semigroup $(T_t )_{t> 0}$ is sub-Markovian (see \cite[pp. 36--37]{Eberle}). 
\end{itemize} 
\noindent
We will apply the range condition, in Section \ref{subsec:2.1.2} below, to some suitable, but still 
explicit, extension of $L$, to obtain, for any relatively compact open subset $V\subset \R^d$, the 
existence of a sub-Markovian semigroup $( \overline{T}^V_t )_{t> 0}$ on $L^1 (V, \mu )$ whose 
generator $(\overline{L}^V , D(\overline{L}^V))$ extends $(L, C_0^\infty (V))$ 
(Proposition \ref{prop:2.1}). The associated Markov process (also constructed in \cite{WS99}) is a 
stochastic process killed at the instant it reaches the boundary of $V$. It is therefore only natural to 
conjecture the following \textbf{domain monotonicity}\index{semigroup ! domain monotonicity}:  
\begin{equation}
\label{eq:2.1.0g}
\overline{T}^{V_1}_t  \le \overline{T}^{V_2}_t \text{ for any relatively compact open subsets } 
 V_1 \subset V_2. 
\end{equation} 
Here $\overline{T}^{V_1}_t \le\overline{T}^{V_2}_t$ means that $\overline{T}^{V_1}_t f  
\le \overline{T}^{V_2}_t f$ for all $f\in L^1 (V_1, \mu )$, $f\ge 0$.  
We give a rigorous purely analytic proof for this monotonicity in terms of the corresponding resolvents 
in Lemma \ref{lemma2.1.6} below.  \\
Having established \eqref{eq:2.1.0g}, we can consider in the next step the monotone limit 
\begin{equation}
\label{eq:2.1.0h}
\overline{T}_t f = \lim_{n\to\infty} \overline{T}^{V_n}_t f\, , t\ge 0\, , 
\end{equation} 
for an increasing sequence $(V_n)_{n\ge 1}$ of relatively compact open subsets satisfying 
$\overline{V}_n\subset V_{n+1}$, $n\ge 1$. It is quite easy to see that the monotone limit 
$( \overline{T}_t )_{t > 0}$ defines a sub-Markovian $C_0$-semigroup of contractions on $L^1 (\R^d , \mu )$. 
A remarkable fact of this construction is its \textbf{independence of the chosen exhausting sequence 
$(V_n)_{n\ge 1}$} (Theorem \ref{theorem2.1.5}).

\subsubsection{Framework and basic notations} 
\label{subsec:2.1.1}

Let us next introduce our precise mathematical framework and fix basic notations and assumptions used throughout 
up to the end of Section \ref{sec2.1}. 
We suppose that $\mu$  is a $\sigma$-finite (positive) measure on $\cB (\R^d)$ as follows:
\begin{eqnarray}\label{condition on mu}
\mu=\rho\,dx, \ \text{ where }\rho = \vp^2,\ \vp\in H_{loc}^{1,2}(\R^d), \ d\ge 1, \ \text{supp}(\mu )\equiv \R^d.
\end{eqnarray}

\noindent 
Let $V$ be an open subset of $\R^d$. If $\mathcal{A}\subset L^s (V, \mu )$, $s\in [1, \infty]$, is 
an arbitrary subset, denote by $\mathcal{A}_0$ the subspace of all elements $u\in \mathcal{A}$ such that 
$\text{supp} (|u|\mu )$ is a compact subset contained in $V$, and $\mathcal{A}_b$ the subspace of 
all bounded elements in $\mathcal{A}$. Finally, let $\mathcal{A}_{0,b} := \mathcal{A}_0\cap \mathcal{A}_b$.\\
Let us next introduce weighted Sobolev spaces that we are going to use in our analysis. 
Let $H_0^{1,2} (V, \mu )$ be the closure of $C_0^\infty (V)$ in $L^2 (V,\mu )$ w.r.t. the norm 
$$ 
\|u\|_{H_0^{1,2} (V,\mu )} 
:= \left( \int_V u^2\, d\mu + \int_V \|\nabla u\|^2\, d\mu \right)^{\frac 12}. 
$$
Finally, let $H_{loc}^{1,2} (V,\mu )$ be the space of all elements $u$ such that 
$u\chi\in H_0^{1,2}(V,\mu)$ for all $\chi\in C_0^\infty (V)$.

\noindent 
The precise assumptions on the coefficients of our differential operator 
that we want to analyze are as follows: let $A = (a_{ij})_{1\le i,j\le d}$ with 
\begin{equation} 
\label{eq:2.1.1}
a_{ji}= a_{ij}\in H_{loc}^{1,2}(\R^d ,\mu )\, , 1\le i,j\le d,  
\end{equation}  
be locally strictly elliptic, i.e., for all $V$ relatively compact  
there exists a constant $\nu_V > 0$ such that 
\begin{equation}
\label{eq:2.1.2}
\nu^{-1}_V\| \xi \|^2\le\langle A (x) \xi,\xi\rangle\le\nu_V \|\xi\|^2 \text{ for all } \xi \in\R^d , x\in V. 
\end{equation}
Let  
\begin{equation} 
\label{eq:2.1.3}
\mathbf{G}=(g_1,\ldots,g_d) \in L^2_{loc}(\R^d, \R^d, \mu),
\end{equation} 
i.e., $\int_V\|\mathbf{G}\|^2\, d\mu < \infty$ for all $V$ relatively compact in $\R^d$, 
and suppose that the measure $\mu$ is an {\bf infinitesimally invariant measure}
\index{measure ! infinitesimally invariant}  for $(L^A+\langle \mathbf{G}, 
\nabla \rangle, C_0^{\infty}(\R^d))$, i.e.
\begin{equation} 
\label{eq:2.1.4}
\int_{\R^d}\big ( L^{A} u  + \langle\mathbf{G} , \nabla u\rangle\big )\, d\mu = 0  \qquad  \forall u\in C_0^\infty (\R^d), 
\end{equation} 
where for $f\in C^2(\R^d)$ 
\begin{equation} \label{eq:2.1.3bis}
L^A f := \frac 12 \sum_{i,j =1}^d a_{ij}\partial_{ij} f.
\end{equation} 
Moreover, throughout this monograph, we shall let for $f\in C^2(\R^d)$ 
\begin{equation} \label{eq:2.1.3bis2}
L f := L^A f +\langle \mathbf{G},\nabla f\rangle=\frac 12 \sum_{i,j =1}^d a_{ij}\partial_{ij} f +\sum_{i=1}^dg_i\partial_i f.
\end{equation} 
We will provide in Theorem \ref{theo:2.2.7} of Section \ref{sec2.2} 
explicit sufficient conditions on $A$ and $\mathbf{G}$ such that an 
infinitesimally invariant measure $\mu$ with the required regularity \eqref{condition on mu} exists, and for which the 
assumptions \eqref{eq:2.1.1}--\eqref{eq:2.1.4} are satisfied (see in particular 
Theorem \ref{theo:2.2.7} and Remark \ref{rem:2.2.7}).\\
As mentioned in the previous section, \eqref{eq:2.1.4} implies that the operator $(L^A + \langle \mathbf{G}, \nabla \rangle, C_0^\infty (\R^d))$ is 
dissipative on the Banach space $L^1 (\R^d , \mu )$, which is necessary for the 
existence of a closed extension $(\overline{L}, D(\overline{L}))$ of  
$(L^A + \langle \mathbf{G}, \nabla \rangle, C_0^\infty (\R^d))$ generating a $C_0$-semigroup of contractions 
on $L^1 (\R^d , \mu )$. In general we cannot expect that the closure of 
$(L^A + \langle \mathbf{G}, \nabla\rangle, C_0^\infty (\R^d))$ will be already generating such a semigroup, in fact, in general there exist many maximal extensions and not all maximal extensions will generate 
sub-Markovian semigroups. Here we recall that a closed extension $(\overline{L}, D(\overline{L}))$ of $(L,
C_0^\infty (\R^d))$ is called a {\bf maximal extension}, if it is the generator of a $C_0$-semigroup
in $L^1 (\R^d , \mu )$.\\
To find the right maximal extension that meets our requirements for the analysis of 
associated Markov processes, we first need to extend the domain $C_0^\infty (\R^d )$ in a 
nontrivial, but nevertheless explicit way. To this end observe that we can rewrite 
\begin{equation} 
\label{eq:2.1.3b}
L^A  + \langle \mathbf{G}, \nabla \rangle  = L^0 +  \langle \mathbf{B}, \nabla \rangle \quad \text{ on }\ C^{\infty}_0(\R^d)
\end{equation} 
into the sum of some $\mu$-symmetric operator $L^0$ and a first-order perturbation given by some vector 
field $\mathbf{B}$, which is of $\mu$-divergence zero.   \\
Indeed, note that for $u,\varv\in C_0^\infty (\R^d)$, an integration by parts yields that 
\begin{equation} 
\label{eq:2.1.3c}
\int_{\R^d} \big( L^A u  + \langle\mathbf{G}, \nabla u\rangle\big ) \varv\, d\mu 
= - \frac 12 \int_{\R^d} \langle A\nabla u, \nabla \varv\rangle\, d\mu  
+ \int_{\R^d} \langle \mathbf{G} - \beta^{\rho, A} , \nabla u\rangle \varv\, d\mu 
\end{equation} 
with $\beta^{\rho , A} = (\beta_1^{\rho , A} , \ldots , \beta_d^{\rho , A} )\in L^2_{loc}  
(\R^d, \R^d, \mu )$ defined as 
\begin{equation} 
\label{eq:2.1.5}
\beta^{\rho , A}_i  
= \frac 12 \sum_{j=1}^d \Big ( \partial_j a_{ij} + a_{ij} \frac{\partial_j \rho}{\rho}\Big ),   
1\le i\le d.   
\end{equation} 
The symmetric positive definite bilinear form 
$$ 
\cE^0 (u,\varv) := \frac 12 \int_{\R^d}\langle A\nabla u, \nabla \varv\rangle\,d\mu,   u,\varv\in C_0^\infty (\R^d )
$$
can be shown to be closable on $L^2 (\R^d , \mu )$ by using results of \cite[Subsection II.2b)]{MR}.
Denote its closure by $(\cE^0,D(\cE^0))$, the associated self-adjoint generator 
by $(L^0, D(L^0))$ and the corresponding sub-Markovian semigroup by $(T_t^0)_{t>0}$. We let
$$
\cE^0_{\alpha}(\cdot,\cdot):=\cE^0(\cdot,\cdot)+\alpha(\cdot,\cdot)_{L^2(\R^d,\mu)}\, , \ \alpha>0.
$$
Recall that  
the domain of the generator is defined as 
$$ 
D(L^{0}) := \left\{ u\in D(\cE^0 )  
: \varv\mapsto \cE^0 (u,\varv) \text{ is continuous w.r.t } \|\cdot\|_{L^2 (\R^d ,\mu)} \; \text{on } D(\mathcal{E}^0) \right\} 
$$ 
and for $u\in D(L^{0})$, $L^{0} u$ is defined via the Riesz representation theorem (see \cite[Theorem 5.5]{BRE}) as the unique 
element in $L^2 (\R^d , \mu)$ satisfying 
\begin{equation} 
\label{DefGeneratorDF}
\cE^0 (u,\varv)  = - \int_{\R^d} L^{0} u \varv\, d\mu , \;\; \forall \varv\in D(\cE^0 ). 
\end{equation}  
\noindent
It is easy to see that our assumptions imply that $C_0^\infty (\R^d )\subset D(L^0)$ and 
\begin{equation} 
\label{eq:2.1.3d}
L^0 u = L^A u + \langle \beta^{\rho , A},\nabla u\rangle,  u\in C_0^\infty (\R^d ), 
\end{equation} 
so that 
\begin{equation} 
\label{eq:2.1.5a} 
\mathbf{B} = \mathbf{G} - \beta^{\rho , A}, 
\end{equation}
which is also contained in $L^2_{loc} (\R^d, \R^d, \mu )$. $\mathbf{B}$ now is in fact of 
$\mu$-divergence zero, i.e., 
\begin{equation} 
\label{eq:2.1.6}
\int_{\R^d} \langle\mathbf{B}, \nabla u \rangle\, d\mu  =  0 \quad \forall u\in C_0^\infty (\R^d),  
\end{equation} 
since $\int_{\R^d} \langle\mathbf{B}, \nabla u \rangle\, d\mu  = \int_{\R^d} L^A u + \langle \mathbf{G} , 
\nabla u\rangle\, d\mu - \int_{\R^d} L^0 u\,d\mu = 0$. \\
The decomposition \eqref{eq:2.1.3b} is crucial for our construction of a closed extension of 
$L^A + \langle \mathbf{G}, \nabla \rangle$ on $L^1 (\R^d , \mu )$ generating a  $C_0$-semigroup that 
is {\it sub-Markovian}. 
\begin{remark}\label{cnulltwocoincide}
{\it By the same line of argument as above one can show that \eqref{eq:2.1.3b} holds in fact also on $C_0^2(\R^d)$.}
\end{remark}

\subsubsection{Existence of maximal extensions on $\R^d$}
\label{subsec:2.1.2a}

\paragraph{Existence of maximal extensions on relatively compact subsets $V\subset\R^d$} 
\label{subsec:2.1.2}
Throughout this section we fix a relatively compact open subset $V$ in $\R^d$. 
Then all assumptions on the coefficients become global. In particular, the restriction of 
$A(x)$ is uniformly strictly elliptic, the restriction of $\mu$ is a finite measure and the vector fields $\mathbf{G}$, $\beta^{\rho , A}$ and $\mathbf{B}$ are in $L^2 (V, \R^d ,\mu )$. Our aim in this section is to construct a {\bf maximal extension} \index{operator ! maximal extension} $(\overline{L}^V, D(\overline{L}^V))$ of 
\begin{equation} 
\label{OperatorBoundedDomain} 
L^A u+ \langle\mathbf{G}, \nabla u\rangle = L^0 u + \langle\mathbf{B}, \nabla u\rangle\, ,  
u\in C_0^\infty (V) , 
\end{equation} 
on $L^1 (V, \mu )$, i.e.
$(\overline{L}^V, D(\overline{L}^V))$ is a closed extension of $(L^A + \langle \mathbf{G}, \nabla \rangle, C_0^{\infty}(V))$ on $L^1(V, \mu)$ that generates a $C_0$-semigroup of contractions on $L^1(V, \mu)$. \\
It is clear that we cannot achieve this by simply taking its closure on 
$C_0^\infty (V)$, since no boundary conditions are specified. However, we can impose Dirichlet 
boundary conditions as follows: let $(L^{0,V}, D(L^{0,V}))$ be the self-adjoint generator of the 
symmetric Dirichlet form $\cE^0 (u,\varv)$, $u,\varv\in H_0^{1,2}(V,\mu )$, which is characterized similar to 
the full domain case \eqref{DefGeneratorDF} as 
\begin{equation} 
\label{DefGeneratorDFBoundedDomain}
\cE^0 (u,\varv)  = - \int_V L^{0, V} u \varv\, d\mu, \quad \forall u \in D(L^{0,V}), \ \varv \in H^{1,2}_0(V, \mu).
\end{equation}  
Note that $C_0^2(V)\subset  D(L^{0,V})$ and that for $u\in D(L^{0,V}) \subset H_0^{1,2} (V, \mu )$, $\langle\mathbf{B}, \nabla u\rangle 
\in L^1 (V,\mu )$, so that in particular its restriction to bounded functions,   
$$ 
L^{0,V} u + \langle\mathbf{B}, \nabla u\rangle\, ,  u\in D(L^{0,V})_b , 
$$
is a well-defined extension of \eqref{OperatorBoundedDomain} on $L^1 (V, \mu )$. Note that the zero 
$\mu$-divergence of the vector field $\mathbf{B}$ (see \eqref{eq:2.1.6}) extends to all of  
$H_0^{1,2} (V, \mu )$ by simple approximation.  \\
The following proposition now states that this operator is closable and that its closure generates a sub-Markovian $C_0$-semigroup of contractions. In addition, the integration by parts \eqref{eq:2.1.3c} extends to all bounded functions in the domain of the closure.

\begin{proposition} 
\label{prop:2.1}
Let \eqref{condition on mu}--\eqref{eq:2.1.4}
be satisfied and $V$ be a relatively compact open subset in $\R^d$. 
Let $(L^{0,V}, D(L^{0,V}))$ be the generator of $(\cE^0 , H_0^{1,2} (V,\mu ))$ (see \eqref{DefGeneratorDFBoundedDomain}). Then: 
\item{(i)} The operator 
$$
L^V u := L^{0,V} u + \langle \mathbf{B} , \nabla u\rangle, \quad u\in D(L^{0,V})_b,
$$ 
is dissipative, hence in particular closable, on 
$L^1(V, \mu )$. The closure $(\overline{L}^V, D(\overline{L}^V))$ generates a 
sub-Markovian $C_0$-semigroup of contractions $(\overline{T}_t^V)_{t >0}$ on $L^1(V, \mu)$. In particular 
$(\overline{L}^V, D(\overline{L}^V))$ is a {\bf maximal extension}\index{operator ! maximal extension} of 
$$
(\frac 12 \sum_{i,j =1}^d a_{ij}\partial_{ij}  + \sum_{i=1}^d g_{i}\partial_{i},
C_0^\infty (V))
$$ 
(cf. \eqref{eq:2.1.3bis} and \eqref{OperatorBoundedDomain}).

\item{(ii)} $D(\overline{L}^V)_b\subset H_0^{1,2} (V,\mu )$ and 
\begin{equation} 
\label{eq:2.1.7}
\cE^0 (u,\varv) - \int_V\langle\mathbf{B}, \nabla u\rangle \varv\, d\mu = - \int_V
\overline{L}^V u\, \varv\, d\mu \, ,u\in D(\overline{L}^V)_b , \varv\in H_0^{1,2} 
(V, \mu )_b.   
\end{equation} 
In particular, 
\begin{equation} 
\label{eq:2.1.8}
\cE^0 (u,u) = -\int_{V}\overline{L}^V u\, u\, d\mu,  u\in D(\overline{L}^V)_b. 
\end{equation} 
\end{proposition}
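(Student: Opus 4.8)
The plan is to realize $(L^V, D(L^{0,V})_b)$ as a Kato-type bounded perturbation of the self-adjoint Dirichlet operator $(L^{0,V}, D(L^{0,V}))$ and to invoke the range-condition criterion \eqref{eq:2.1.0f} together with the Beurling--Deny/dissipativity machinery recalled in the excerpt. First I would establish dissipativity of $L^V$ on $L^1(V,\mu)$: the key point is the zero $\mu$-divergence \eqref{eq:2.1.6}, extended to $H_0^{1,2}(V,\mu)$, which makes the first-order term $\langle\mathbf{B},\nabla u\rangle$ integrate to zero against suitable test functions. Combining this with the Beurling--Deny inequality \eqref{eq:2.1.0e} applied to $\psi(t)=\mathrm{sgn}(t)$ (approximated by smooth monotone functions) and to $\psi(t)=(\mathrm{sgn}\, t)\,1_{\{|t|\ge\delta\}}$, one obtains $\int_V \psi(u)\,(L^{0,V}u+\langle\mathbf{B},\nabla u\rangle)\,d\mu\le 0$ for $u\in D(L^{0,V})_b$, which is precisely $L^1$-dissipativity in the sense of \cite[Lemma 1.8]{Eberle}; dissipativity immediately gives closability. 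Here the needed integration-by-parts identity is exactly \eqref{DefGeneratorDFBoundedDomain} extended to $\varv\in H_0^{1,2}(V,\mu)_b$, plus the fact that $\langle\mathbf{B},\nabla u\rangle\in L^1(V,\mu)$ since $\mathbf{B}\in L^2(V,\R^d,\mu)$ and $\nabla u\in L^2(V,\mu)$.

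Next, to see that the closure $(\overline{L}^V,D(\overline{L}^V))$ actually \emph{generates} a $C_0$-semigroup of contractions, I would verify the range condition \eqref{eq:2.1.0f} on $L^1(V,\mu)$: for some $\lambda>0$, $(\lambda-L^V)(D(L^{0,V})_b)$ is dense in $L^1(V,\mu)$. The natural route is to solve $(\lambda-L^{0,V}-\langle\mathbf{B},\nabla\cdot\rangle)u=f$ for $f$ in a dense subset (e.g. $f\in L^2(V,\mu)_b$) by a fixed-point/Neumann-series argument in the form domain: write $u=(\lambda-L^{0,V})^{-1}(f+\langle\mathbf{B},\nabla u\rangle)$ and use that the symmetric resolvent maps into $D(\cE^0)=H_0^{1,2}(V,\mu)$ with the $\cE^0_\lambda$-bound, so that the map $\varv\mapsto (\lambda-L^{0,V})^{-1}\langle\mathbf{B},\nabla\varv\rangle$ is a contraction on $(D(\cE^0),\cE^0_\lambda^{1/2})$ once $\lambda$ is large enough — here one exploits that $\mathbf{B}$ has zero $\mu$-divergence, so the associated bilinear form $u\mapsto\int_V\langle\mathbf{B},\nabla u\rangle\varv\,d\mu$ is antisymmetric and hence $\cE^0_\lambda$-bounded with a constant that does not blow up. Once such $u\in H_0^{1,2}(V,\mu)$ is produced, truncation plus the dissipativity estimate upgrades it into (a limit of elements of) $D(L^{0,V})_b$, giving the density claimed in \eqref{eq:2.1.0f}. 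Sub-Markovianity is then automatic from \cite[pp.~36--37]{Eberle}, since dissipativity was obtained via the Beurling--Deny property. This proves (i), with the maximal-extension statement following because $C_0^\infty(V)\subset C_0^2(V)\subset D(L^{0,V})_b$ and on $C_0^\infty(V)$ the operator agrees with $\tfrac12\sum a_{ij}\partial_{ij}+\sum g_i\partial_i$ by \eqref{eq:2.1.3b} (cf. Remark \ref{cnulltwocoincide}).

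For (ii), the inclusion $D(\overline{L}^V)_b\subset H_0^{1,2}(V,\mu)$ and the identity \eqref{eq:2.1.7} should be read off from \eqref{eq:2.1.8}, which in turn comes from testing $\overline{L}^V u$ against $u$ itself. Concretely, for $u\in D(L^{0,V})_b$ the definition \eqref{DefGeneratorDFBoundedDomain} and the zero-divergence property give $-\int_V\overline{L}^V u\,u\,d\mu=\cE^0(u,u)-\int_V\langle\mathbf{B},\nabla u\rangle u\,d\mu=\cE^0(u,u)$, the last term vanishing because $\int_V\langle\mathbf{B},\nabla u\rangle u\,d\mu=\tfrac12\int_V\langle\mathbf{B},\nabla(u^2)\rangle\,d\mu=0$. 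For general $u\in D(\overline{L}^V)_b$ I would approximate: take $u_n\in D(L^{0,V})_b$ with $u_n\to u$ and $\overline{L}^V u_n\to\overline{L}^V u$ in $L^1(V,\mu)$, uniformly bounded in sup-norm (using sub-Markovianity of the resolvent to control $\|u_n\|_\infty$); then $\cE^0(u_n,u_n)=-\int_V\overline{L}^V u_n\,u_n\,d\mu$ stays bounded, so $(u_n)$ is bounded in $H_0^{1,2}(V,\mu)$, a subsequence converges weakly to $u\in H_0^{1,2}(V,\mu)$, and lower semicontinuity plus passage to the limit yields \eqref{eq:2.1.8} and then the polarized/bilinear version \eqref{eq:2.1.7} against $\varv\in H_0^{1,2}(V,\mu)_b$ by another density argument, again invoking antisymmetry of the $\mathbf{B}$-term to keep the cross term under control. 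The main obstacle I anticipate is precisely this last uniform-boundedness-and-limit step: making sure the approximating sequence can be chosen bounded in $L^\infty$ (so that $\langle\mathbf{B},\nabla u_n\rangle\varv$ passes to the limit and $\cE^0(u_n,u_n)$ is controlled) requires care with the interplay between the $L^1$-closure and the $L^2$/form topology, and is where the sub-Markovian property and the $\mu$-divergence-zero structure of $\mathbf{B}$ must be used most delicately.
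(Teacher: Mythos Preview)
Your dissipativity argument and the sub-Markovianity consequence are fine and match the paper. The genuine gap is in Step~2 (the range condition). You propose a Neumann-series/contraction argument: iterate $u\mapsto(\lambda-L^{0,V})^{-1}\langle\mathbf{B},\nabla u\rangle$ on $(D(\cE^0),\cE^0_\lambda)$ and claim this is a strict contraction for large~$\lambda$ because ``the associated bilinear form is antisymmetric and hence $\cE^0_\lambda$-bounded''. Antisymmetry gives $\int_V\langle\mathbf{B},\nabla u\rangle u\,d\mu=0$, but it does \emph{not} give a bound of the type $|\int_V\langle\mathbf{B},\nabla u\rangle v\,d\mu|\le C\,\cE^0_\lambda(u,u)^{1/2}\cE^0_\lambda(v,v)^{1/2}$. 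Under \eqref{condition on mu}--\eqref{eq:2.1.4} one only has $\mathbf{B}\in L^2(V,\R^d,\mu)$, so $\langle\mathbf{B},\nabla u\rangle\in L^1(V,\mu)$ and the pairing with $v$ is controlled only through $\|v\|_{L^\infty}$, not through $\cE^0_\lambda(v,v)^{1/2}$. In other words, the form is genuinely non-sectorial here, the iteration map is not even well-defined on all of $H_0^{1,2}(V,\mu)$, and no choice of $\lambda$ makes it a contraction. The paper avoids this by a duality argument: take $h\in L^\infty(V,\mu)$ annihilating $(1-L^V)(D(L^{0,V})_b)$; since $h$ is \emph{bounded}, $u\mapsto\int_V\langle\mathbf{B},\nabla u\rangle h\,d\mu$ \emph{is} $H_0^{1,2}$-continuous, which forces $h\in H_0^{1,2}(V,\mu)$, and then $\cE_1^0(h,h)=\int_V\langle\mathbf{B},\nabla h\rangle h\,d\mu=0$. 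The boundedness of the dual element is what makes the $L^2$-only hypothesis on $\mathbf{B}$ sufficient.

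For (ii) you correctly isolate the obstacle --- graph-norm convergence in $L^1$ gives no $L^\infty$ control on the approximants --- but you do not say how to resolve it, and ``using sub-Markovianity of the resolvent to control $\|u_n\|_\infty$'' does not work: the $u_n$ come from the definition of the $L^1$-closure, not from applying a resolvent. The paper's device is to compose with a cutoff $\psi\in C_b^2(\R)$ equal to the identity on $[-\|u\|_\infty-1,\|u\|_\infty+1]$, so that $\psi(u_n)$ is uniformly bounded; the chain rule gives $\overline{L}^V\psi(u_n)=\psi'(u_n)L^Vu_n+\tfrac12\psi''(u_n)\langle A\nabla u_n,\nabla u_n\rangle$, and the nontrivial step is showing $\psi''(u_n)\langle A\nabla u_n,\nabla u_n\rangle\to0$ in $L^1(V,\mu)$. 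Without this (or an equivalent trick) the passage from $D(L^{0,V})_b$ to $D(\overline{L}^V)_b$ in \eqref{eq:2.1.7} is incomplete.
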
 

\begin{proof} 
The complete proof of Proposition \ref{prop:2.1} is given in \cite{WS99}. Let us only state its essential steps in the following. \\
(i)  {\bf Step 1:} To show that $(L^V, D(L^{0,V})_b)$ is dissipative, it suffices to show that 
$$
\int_V L^Vu \psi (u)\, d\mu \le 0 \, , u\in D(L^{0,V})_b , 
$$ 
with $\psi = 1_{(0, \infty)} - 1_{(-\infty , 0)}$, since $\|u\|_1 \psi (u) \in L^\infty (V,\mu ) 
= (L^1 (V, \mu ))'$ is a normalized tangent functional to $u$. Since $\psi  = 1_{(0, \infty)} 
- 1_{(-\infty , 0)}$ is monotone increasing, it therefore suffices to extend the Beurling--Deny property \eqref{eq:2.1.0e} to this setting. But this follows from the well-known fact that it holds for the generator $L^{0,V}$ of the symmetric Dirichlet form (\cite{BH}) and since 
$u\in H_0^{1,2} (V, \mu )$ implies $|u|\in H_0^{1,2} (V,\mu)$,  
$$ 
\int_V \langle \mathbf{B} , \nabla u\rangle \psi (u) \, d\mu = 
\int_V \langle \mathbf{B} , \nabla |u|\rangle \, d\mu = 0. 
$$ 
\noindent
{\bf Step 2:} In the next step one shows that the closure $(\overline{L}^V, D(\overline{L}^V))$ generates 
a $C_0$-semigroup of contractions $(\overline{T}^V_t)_{t> 0}$ on $L^1(\R^d, \mu)$. To this end by \cite[Theorem 3.1]{LP61}),  
verifies the range condition:  $(1-L^V)(D(L^{0,V})_b)\subset L^1 (V , \mu )$ dense.  
Indeed, let $h\in L^\infty (V, \mu )$ be such that $\int_V (1-L^V)u\, h\, d\mu = 0$ for all $u\in D(L^{0,V})_b$. Then $u\mapsto\int_V (1- L^{0,V})u\, h \, d\mu = \int_V \langle\mathbf{B} , \nabla u\rangle h\, d\mu$, $u\in D(L^{0,V})_b$, is continuous w.r.t. the norm on $H_0^{1,2}(V,\mu )$ which implies the existence of some element $v\in H_0^{1,2} (V, \mu )$ such that $\cE^0_1(u,v) = \int_V (1- L^{0,V})u\, h \, d\mu $ for all $u\in D(L^{0,V})_b$. It follows that 
$\int_V (1- L^{0,V})u (h-v) \, d\mu = 0$ for all $u\in D(L^{0,V})_b$. Since the semigroup generated by $(L^0 , D(L^{0,V}))$ is in particular $L^\infty$-contractive, we obtain that $(1-L^{0,V})(D(L^{0,V})_b) \subset L^1 (V,\mu )$ dense and consequently, $h = v$. In particular, $h\in 
H_0^{1,2} (V, \mu )$ and 
$$ 
\begin{aligned}
\cE^0_1(h,h) 
& = \lim_{t\to 0+}\cE^0_1(T_t^{0,V}h,h) =  \lim_{t\to 0+}\int_V (1-L^{0,V})T^{0,V}_th\,  h \, d\mu \\ 
& =  \lim_{t\to 0+}\int_V \langle \mathbf{B} , \nabla T^{0,V}_t h \rangle h \, d\mu 
=  \int_V \langle \mathbf{B} , \nabla h \rangle h \, d\mu = 0
\end{aligned} 
$$ 
by \eqref{eq:2.1.5} and \cite[Lemma 1.3.3(iii)]{FOT} and therefore $h = 0$. 
\noindent
{\bf Step 3:} $(\overline{T}^V_t)_{t > 0}$ is sub-Markovian. 
\noindent
This follows from the fact that the Beurling--Deny property \eqref{eq:2.1.0e} for $(L^V,D(L^{0,V})_b)$ extends to its closure. In particular,  
$$ 
\int_V\overline{L}^Vu\, 1_{\{u > 1\}}\, d\mu\le 0.
$$ 
It is well-known that this property now implies that the semigroup  $(\overline{T}^V_t)_{t> 0}$ 
is sub-Markovian. \\
(ii) In order to verify the integration by parts \eqref{eq:2.1.7} first note that it holds for 
$u\in D(L^{0,V})_b$ by the construction of $L^V$. It remains to extend it to bounded $u$ in the closure 
$u\in D(\overline{L}^{V})_b$. This is not straightforward, since convergence of $(u_n)_{n\ge 1} \subset 
D(L^{0,V})_b$ to $u\in D (\overline{L}^V)$ w.r.t. the graph norm does not immediately imply convergence in 
$H_0^{1,2} (V, \mu )$. One therefore needs to apply a suitable cutoff function $\psi\in C_b^2 (\mathbb R)$ 
such that $\psi (t) = t$ if $|t|\le\|u\|_{L^\infty (\R^d , \mu )} + 1$ and $\psi (t) = 0$ if $|t|\ge 
\|u\|_{L^\infty (\R^d , \mu )} + 2$, to pass to the uniformly bounded sequence $(\psi (u_n))_{n\ge 1} 
\subset D(L^{0,V})_b$. Clearly, 
$$ 
\overline{L}^V\psi (u_n) = \psi '(u_n) L^V u_n + \frac 12 \psi '' (u_n)\langle A\nabla u_n , \nabla u_n\rangle 
$$ 
and the essential step now is to verify that 
$$ 
\lim_{n\to\infty} \psi '' (u_n)\langle A\nabla u_n , \nabla u_n\rangle = 0 \text{ on } L^1 (V,\mu)\, , 
$$
since this then implies $\lim_{n\to\infty}\overline{L}^V\psi (u_n) =  \overline{L}^Vu$,  
$(\psi (u_n))_{n\ge 1} \subset H_0^{1,2} (V,\mu )$ bounded, hence $u\in H_0^{1,2} (V,\mu )$, and 
\eqref{eq:2.1.7} holds for the limit $u\in D(\overline{L}^V)_b$. 
\end{proof}

\begin{remark} 
\label{rem2.1.3}
{\it Let \eqref{condition on mu}--\eqref{eq:2.1.4} be satisfied and $V$ be a relatively compact open subset in $\R^d$. 
Since $- \left(\mathbf{G}- \beta^{\rho, A}\right)$ satisfies the same assumptions as 
$\mathbf{G} - \beta^{\rho, A}$, the closure $(\overline{L}^{V,\prime},  
D(\overline{L}^{V,\prime}))$ of $L^{0,V}u -\langle \mathbf{G} - \beta^{\rho, A}, 
\nabla u\rangle$, $u\in D(L^{0,V})_b$, on $L^1 (V, \mu)$ generates a sub-Markovian $C_0$-semigroup 
of contractions $(\overline{T}^{V,\prime}_t)_{t> 0}$, $D(\overline{L}^{V,\prime})_b 
\subset H_0^{1,2}(V,\mu )$ and 
$$ 
\cE^0 (u,\varv) + \int_V\langle\mathbf{G} - \beta^{\rho, A} , \nabla u\rangle\varv\, d\mu  
= - \int_V\overline{L}^{V,\prime}u\,\varv\, d\mu\, ,  
u\in D(\overline{L}^{V,\prime})_b, \varv\in H_0^{1,2}(V, \mu )_b. 
$$
If $(L^{V, \prime}, D(L^{V, \prime}))$ is the part of $(\overline{L}^{V,\prime},  
D(\overline{L}^{V,\prime}))$ on $L^2 (V, \mu )$ and $(L^V, D(L^V))$ is the part of 
$(\overline{L}^V, D(\overline{L}^V))$ on $L^2 (V, \mu )$, then 
\begin{equation} 
\label{eq:2.1.11} 
\begin{aligned} 
(L^Vu, & \varv )_{L^2 (V, \mu )}  = -\cE^0 (u,\varv) + \int_V\langle\mathbf{G}  
- \beta^{\rho, A} ,\nabla u \rangle\varv\, d\mu  \\ 
& = -\cE^0 (\varv ,u) - \int_V\langle\mathbf{G} - \beta^{\rho, A}, 
\nabla\varv \rangle u\, d\mu  
= (L^{V, \prime}\varv ,u )_{L^2 (V, \mu )} 
\end{aligned} 
\end{equation} 
for all $u\in D(L^V)_b$, $v\in D(L^{V,\prime})_b$. Since $(L^V, D(L^V))$ 
(resp. $(L^{V,\prime}, D(L^{V,\prime}))$) is the generator of a sub-Markovian $C_0$-semigroup, 
it follows that $D(L^V)_b\subset D(L^V)$ (resp. $D(L^{V, \prime})_b\subset D(L^{V,\prime})$) dense 
w.r.t. the graph norm, \eqref{eq:2.1.11} extends to all $u\in D(L^V)$, $\varv\in D(L^{V, \prime})$, 
which implies that the parts of $\overline{L}^V$ and $\overline{L}^{V, \prime}$ on $L^2 (V, \mu )$ 
are adjoint operators. }
\end{remark} 
\noindent
Note that the sub-Markovian $C_0$-semigroup of contractions 
$(\overline{T}^V_t)_{t> 0}$ on $L^1 (V, \mu )$ can be restricted to a 
semigroup of contractions on $L^r (V, \mu )$ for all $r\in [1, \infty)$ by 
the Riesz-Thorin interpolation theorem (cf. \cite[Theorem IX.17]{ReSi}) and that 
the restricted semigroup is strongly continuous on $L^r (V, \mu )$. The 
corresponding generator $(\overline{L}^V_r, D(\overline{L}^V_r))$ is the 
{\bf part of} $(\overline{L}^V, D(\overline{L}^V))$ on $L^r (V, \mu )$, i.e., 
$D(\overline{L}^V_r) = \{u\in D(\overline{L}^V)\cap L^r(V,\mu ):\overline
{L}^Vu\in L^r (V, \mu )\}$ and $\overline{L}^V_r u = \overline{L}^Vu$, 
$u\in D(\overline{L}^V_r)$.

\paragraph{Existence of maximal extensions on the full domain $\R^d$} 
\label{subsec:2.1.3}

We are now going to extend the previous existence result to the full domain. 
For any relatively compact open subset $V$ in $\R^d$ let
$(\overline{L}^V, D(\overline{L}^V))$ be the maximal extension of $(L, C_0^\infty (V))$ 
on $L^1 (V, \mu )$ constructed in Proposition \ref{prop:2.1} and $(\overline{T}_t^V)_{t> 0}$ 
be the associated sub-Markovian $C_0$-semigroup of contractions. Recall from linear semigroup theory that 
for $\alpha > 0$, the operator $(\alpha - \overline{L}^V , D(\overline{L}^V))$ is invertible 
with bounded inverse $\overline{G}^V_\alpha = (\alpha - \overline{L}^V)^{-1}$. 
$(\overline{G}^V_\alpha )_{\alpha > 0}$ is called the resolvent generated by $\overline{L}^V$ and it is given as the Laplace transform 
$$ 
\overline{G}^V_\alpha  = \int_0^\infty e^{-\alpha t} \overline{T}_t^V \, dt, \alpha > 0, 
$$ 
of the semigroup. The strong continuity of $(\overline{T}_t^V)_{t> 0}$ implies the strong continuity 
$\lim_{\alpha\to\infty} \alpha \overline{G}_\alpha^V f = f$ in $L^1 (V, \mu )$ of the resolvent and 
sub-Markovianity of the semigroup implies the same for $\alpha\overline{G}_\alpha^V$. \\
If we define 
$$ 
\overline{G}^V_\alpha f := \overline{G}^V_\alpha (f1_V), f\in L^1 
(\R^d , \mu ), \alpha > 0, 
$$ 
then $\alpha \overline{G}^V_\alpha$, $\alpha > 0$, can be extended to a 
sub-Markovian contraction on $L^1 (\R^d, \mu )$, which is, however, no longer 
strongly continuous in the usual sense, but still satisfies $\lim_{\alpha\to\infty} 
\alpha\overline{G}^V_\alpha f = f1_V$ in $L^1 (\R^d , \mu )$.\\
The crucial observation for the existence of an extension now is the following domain 
monotonicity:  

\begin{lemma}
\label{lemma2.1.6}  
Let \eqref{condition on mu}--\eqref{eq:2.1.4} be satisfied. Let $V_1$, $V_2$ be relatively compact open subsets in 
$\R^d$ and $V_1\subset V_2$. Let $u\in L^1 (\R^d , \mu )$, $u\ge 0$, and 
$\alpha > 0$. Then $\overline{G}^{V_1}_\alpha u\le\overline{G}^{V_2}_\alpha u$. 
\end{lemma}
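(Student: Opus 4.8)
The plan is to prove the resolvent inequality by working purely analytically with the Dirichlet forms and their generators, exploiting the fact that $\mathbf{B}$ has $\mu$-divergence zero so that the first-order perturbation does not destroy the order structure inherited from the symmetric part. First I would reduce to comparing $\overline{G}^{V_1}_\alpha u$ with $\overline{G}^{V_2}_\alpha u$ for $u \in L^1(\R^d,\mu)_b$, $u \ge 0$, since bounded functions are dense in $L^1$ and both resolvents are $L^1$-contractions. Set $w_i := \overline{G}^{V_i}_\alpha(u 1_{V_i})$ for $i = 1, 2$; by Proposition \ref{prop:2.1} and sub-Markovianity, $w_i \in D(\overline{L}^{V_i})_b \subset H_0^{1,2}(V_i,\mu)$ and $(\alpha - \overline{L}^{V_i}) w_i = u 1_{V_i}$. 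Extending $w_1$ by zero outside $V_1$, we have $w_1 \in H_0^{1,2}(V_2,\mu)$ as well, since $H_0^{1,2}(V_1,\mu) \hookrightarrow H_0^{1,2}(V_2,\mu)$ by extension by zero. The goal is to show $w_1 \le w_2$ $\mu$-a.e.

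The key step is to test the defining identities against $\varv := (w_1 - w_2)^+ \in H_0^{1,2}(V_2,\mu)_b$. Using \eqref{eq:2.1.7} for $V_2$ on $w_2$ and the analogous identity for $V_1$ on $w_1$ (valid because $(w_1 - w_2)^+$ vanishes $\mu$-a.e. outside $V_1$, so only the part of $\varv$ supported in $V_1$ matters, and $H_0^{1,2}$-membership is preserved under multiplication by bounded cutoffs), I would compute
\begin{align*}
0 &\ge \int_{V_2} (\alpha - \overline{L}^{V_2})w_2 \cdot \varv\, d\mu - \int_{V_1}(\alpha - \overline{L}^{V_1})w_1 \cdot \varv\, d\mu \\
&= \alpha \int (w_2 - w_1)\varv\, d\mu + \cE^0(w_2 - w_1, \varv) - \int \langle \mathbf{B}, \nabla(w_2 - w_1)\rangle \varv\, d\mu,
\end{align*}
where the left-hand side is $\le 0$ because $(\alpha - \overline{L}^{V_2})w_2 = u 1_{V_2} \ge u 1_{V_1} = (\alpha - \overline{L}^{V_1})w_1$ (pointwise) while $\varv \ge 0$. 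Writing $g := w_1 - w_2$ and using the standard facts $\cE^0(g, g^+) = \cE^0(g^+, g^+) \ge 0$ (from $\nabla g^+ = 1_{\{g>0\}}\nabla g$, valid in the weighted Sobolev space with the locally elliptic $A$) and $\int \langle \mathbf{B}, \nabla g\rangle g^+\, d\mu = \int \langle \mathbf{B}, \nabla \Phi(g)\rangle\, d\mu = 0$ where $\Phi(t) = \int_0^t s^+\, ds$, using the $\mu$-divergence-zero property \eqref{eq:2.1.6} extended to $H_0^{1,2}(V_2,\mu)$, the inequality collapses to $-\alpha \int (g^+)^2\, d\mu \ge 0$, forcing $g^+ = 0$, i.e. $w_1 \le w_2$.

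The main obstacle I anticipate is the careful justification that $\varv = (w_1 - w_2)^+$ is a legitimate test function in \emph{both} integration-by-parts identities \eqref{eq:2.1.7} simultaneously — in particular, checking that when restricted to $V_1$ it lies in $H_0^{1,2}(V_1,\mu)_b$ (not merely in $H_0^{1,2}(V_2,\mu)_b$), so that the identity for $\overline{L}^{V_1}$ applies, and that the chain-rule computations $\nabla(w_1-w_2)^+ = 1_{\{w_1 > w_2\}}\nabla(w_1-w_2)$ and the vanishing of the $\mathbf{B}$-term via $\Phi(g) \in H_0^{1,2}$ are all rigorous in the weighted setting. This requires approximating $w_1 - w_2$ by $H_0^{1,2}$-functions and invoking the contraction properties of normal contractions on Dirichlet forms (as in \cite{FOT}), together with the boundedness of $w_1, w_2$ to control the nonlinear compositions. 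Once these functional-analytic technicalities are handled, the order comparison follows as above.
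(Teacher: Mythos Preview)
Your approach is essentially the paper's proof: test the two resolvent identities against $(w_1-w_2)^+$, use the $\mu$-divergence-zero property of $\mathbf{B}$ to kill the first-order term, and conclude $\cE^0_\alpha((w_1-w_2)^+,(w_1-w_2)^+)\le 0$. Two small remarks: first, your displayed inequality has a sign slip --- the right-hand side of your first line actually equals $0$ (not merely $\le 0$) once you know $\varv=(w_1-w_2)^+$ is supported in $V_1$, since then both integrals equal $\int_{V_1} u\,\varv\,d\mu$; second, the paper resolves the ``main obstacle'' you flag with the one-line observation $0\le (w_1-w_2)^+\le w_1=\overline{G}^{V_1}_\alpha u\in H_0^{1,2}(V_1,\mu)$, which together with $(w_1-w_2)^+\in H_0^{1,2}(V_2,\mu)$ forces $(w_1-w_2)^+\in H_0^{1,2}(V_1,\mu)$ directly, without any cutoff or normal-contraction approximation argument.
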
 

\begin{proof} Clearly, we may assume that $u$ is bounded. Let 
$w_\alpha := \overline{G}^{V_1}_\alpha u - \overline{G}^{V_2}_\alpha u $. 
Then $w_\alpha\in H_0^{1,2}(V_2, \mu )$ but also 
$w_\alpha^+\in H_0^{1,2}(V_1, \mu )$ since $w_\alpha^+\le\overline
{G}^{V_1}_\alpha u$ and $\overline{G}^{V_1}_\alpha u\in H_0^{1,2}
(V_1, \mu )$. Note that $\int_{\R^d} \langle\mathbf{B}, 
\nabla  w_\alpha\rangle w_\alpha^+\, d\mu = \int_{\R^d} \langle\mathbf{B}, \nabla  
w_\alpha^+\rangle w_\alpha^+\, d\mu  = 0$ and that $\cE^0 (w_\alpha^+, w_\alpha^-) 
\le 0$, since $(\cE^0 , H_0^{1,2}(V_2, \mu ))$ is a Dirichlet form. Hence 
by \eqref{eq:2.1.7}
$$
\begin{aligned} 
\cE_\alpha^0 (w_\alpha^+ , w_\alpha^+) & \le\cE^0_\alpha (w_\alpha , 
w_\alpha^+ ) - \int_{\R^d} \langle\mathbf{B},\nabla w_\alpha\rangle w_\alpha^+ \, d\mu \\ 
& = \int_{\R^d} (\alpha - \overline{L}^{V_1})\overline{G}^{V_1}_\alpha u\,  
w_\alpha ^+ \, d\mu - \int_{\R^d} (\alpha - \overline{L}^{V_2})\overline
{G}^{V_2}_\alpha u \, w_\alpha ^+ \, d\mu = 0. 
\end{aligned}  
$$  
Consequently, $w_\alpha^+ = 0$, i.e., $\overline{G}^{V_1}_\alpha u
\le\overline{G}^{V_2}_\alpha u$. 
\end{proof}

\begin{theorem}
\label{theorem2.1.5} 
Let \eqref{condition on mu}--\eqref{eq:2.1.4}
be satisfied and let $(L^0 , D(L^0 ))$ be the generator of 
$(\cE^0 , D(\cE^0))$ (see \eqref{DefGeneratorDF}). Then 
there exists a closed extension $(\overline{L}, D(\overline{L}))$ of 
\begin{eqnarray}\label{defLfirst}
Lu := L^0 u + \langle \mathbf{B}, \nabla u\rangle, \quad u\in D(L^0)_{0,b},
\end{eqnarray}  
on $L^1 (\R^d , \mu )$ satisfying the following properties: 

\item{(i)} $(\overline{L}, D(\overline{L}))$ generates a sub-Markovian 
$C_0$-semigroup of contractions $(\overline{T}_t)_{t> 0}$. In particular 
$(\overline{L}, D(\overline{L}))$ is a {\bf maximal extension}\index{operator ! maximal extension} 
of 
$$
(\frac 12 \sum_{i,j =1}^d a_{ij}\partial_{ij}  + \sum_{i=1}^d g_{i}\partial_{i},
C_0^\infty (\R^d))
$$
(cf. \eqref{eq:2.1.3bis} and \eqref{eq:2.1.3b}).

\item{(ii)} If $(V_n)_{n\ge 1}$ is an increasing sequence of 
relatively compact open subsets in $\R^d$ such that $\R^d = \bigcup_{n\ge 1} V_n $ then 
$$
\overline{G}_\alpha f:=\lim_{n\to\infty}\overline{G}^{V_n}_\alpha f = (\alpha - \overline
{L})^{-1}f
$$ 
in $L^1 (\R^d , \mu )$ for all $f\in L^1 (\R^d , \mu )$ and $\alpha > 0$. In particular, 
$(\overline{G}_\alpha)_{\alpha>0}$ is a sub-Markovian $C_0$-resolvent of contractions on $L^1(\R^d,\mu)$ and has 
$(\overline{L}, D(\overline{L}))$ as generator.
\item{(iii)} $D(\overline{L})_b\subset D(\cE^0)$ and 
$$ 
\cE^0 (u,\varv ) - \int_{\R^d}\langle\mathbf{B}, \nabla u\rangle\varv\, d\mu = - \int_{\R^d}
\overline{L}u\, \varv\, d\mu \, ,u\in D(\overline{L})_b , \varv\in H^{1,2}_0 (\R^d , \mu )_{0,b}. 
$$
Moreover, 
$$
\cE^0 (u,u) \le -\int_{\R^d}\overline{L}u\, u\, d\mu \, , u\in D(\overline{L})_b. 
$$
\end{theorem}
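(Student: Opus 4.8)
The plan is to build the global semigroup by exhausting $\R^d$ with relatively compact open sets and passing to the monotone limit of the $V$-localized semigroups constructed in Proposition \ref{prop:2.1}, then identify the limit object as a $C_0$-semigroup whose generator is a closed extension of $(L,C_0^\infty(\R^d))$. Concretely, first I would fix an increasing exhausting sequence $(V_n)_{n\ge1}$ with $\overline{V}_n\subset V_{n+1}$ and $\bigcup_n V_n=\R^d$, and apply Lemma \ref{lemma2.1.6} to conclude that for each $\alpha>0$ and each $f\in L^1(\R^d,\mu)$ with $f\ge0$ the sequence $\overline{G}^{V_n}_\alpha f$ is nondecreasing in $n$; since each $\alpha\overline{G}^{V_n}_\alpha$ is a sub-Markovian contraction, the pointwise ($\mu$-a.e.) limit $\overline{G}_\alpha f:=\lim_n \overline{G}^{V_n}_\alpha f$ exists, lies in $L^1(\R^d,\mu)$ with $\|\alpha\overline{G}_\alpha f\|_1\le\|f\|_1$, and extends by linearity to all of $L^1(\R^d,\mu)$; monotone convergence upgrades this to $L^1$-convergence. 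This defines a family $(\overline{G}_\alpha)_{\alpha>0}$ of sub-Markovian contractions on $L^1(\R^d,\mu)$.

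Next I would verify that $(\overline{G}_\alpha)_{\alpha>0}$ is a resolvent, i.e. satisfies the resolvent identity $\overline{G}_\alpha-\overline{G}_\beta=(\beta-\alpha)\overline{G}_\alpha\overline{G}_\beta$: each $\overline{G}^{V_n}_\alpha$ satisfies it, and one passes to the limit using the $L^1$-convergence just established together with the uniform contraction bound (to control the composition $\overline{G}^{V_n}_\alpha\overline{G}^{V_n}_\beta$ versus $\overline{G}_\alpha\overline{G}_\beta$). Strong continuity $\lim_{\alpha\to\infty}\alpha\overline{G}_\alpha f=f$ in $L^1(\R^d,\mu)$ is obtained by first checking it on a dense set — e.g. $f\in L^1(\R^d,\mu)$ supported in some $V_m$, using that $\alpha\overline{G}^{V_m}_\alpha f\to f$ and a domination/interchange argument — and then extending by the uniform contractivity of $\alpha\overline{G}_\alpha$. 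A sub-Markovian $C_0$-resolvent of contractions on a Banach space is the resolvent of a unique closed, densely defined operator $(\overline{L},D(\overline{L}))$ generating a sub-Markovian $C_0$-semigroup $(\overline{T}_t)_{t>0}$ (Hille–Yosida), which is $(\overline{T}_t)_{t>0}$ recovered by $\overline{T}_t=\lim_{n}\overline{T}^{V_n}_t$; this gives (i) and (ii) once one checks that $C_0^\infty(\R^d)$, and more generally $D(L^0)_{0,b}$, lies in $D(\overline{L})$ with $\overline{L}u=L^0u+\langle\mathbf{B},\nabla u\rangle$. For the latter: given $u\in D(L^0)_{0,b}$ with support in some $V_m$, one has $u\in D(\overline{L}^{V_n})_b$ for all $n\ge m$ (by Proposition \ref{prop:2.1}(ii) and a cutoff argument identifying the $V_n$-generator on compactly supported functions with $L^0u+\langle\mathbf{B},\nabla u\rangle$, which is independent of $n$), so $(\alpha-\overline{L}^{V_n})^{-1}(\alpha-L^0-\langle\mathbf{B},\nabla\rangle)u=u$ for all such $n$; letting $n\to\infty$ gives $\overline{G}_\alpha(\alpha u-L^0u-\langle\mathbf{B},\nabla u\rangle)=u$, i.e. $u\in D(\overline{L})$ and $\overline{L}u=L^0u+\langle\mathbf{B},\nabla u\rangle$.

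For part (iii), I would start from the integration-by-parts identity \eqref{eq:2.1.7} valid on each $V_n$ for $u\in D(\overline{L}^{V_n})_b$, $\varv\in H_0^{1,2}(V_n,\mu)_b$. Given $u\in D(\overline{L})_b$, set $u_n:=\alpha\overline{G}^{V_n}_\alpha((\alpha-\overline{L})u)$, so $u_n\in D(\overline{L}^{V_n})_b$, $u_n\to u$ and $\overline{L}^{V_n}u_n\to\overline{L}u$ in $L^1(\R^d,\mu)$, with $\sup_n\|u_n\|_\infty\le\|u\|_\infty$. Testing \eqref{eq:2.1.7} for $u_n$ against $u_n$ itself and using \eqref{eq:2.1.8} gives $\cE^0(u_n,u_n)=-\int\overline{L}^{V_n}u_n\,u_n\,d\mu$, whose right-hand side converges to $-\int\overline{L}u\,u\,d\mu$; lower semicontinuity of $\cE^0$ along the bounded sequence $u_n\to u$ in $L^2$ then yields $u\in D(\cE^0)$ and $\cE^0(u,u)\le-\int\overline{L}u\,u\,d\mu$. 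Finally, for fixed $\varv\in H_0^{1,2}(\R^d,\mu)_{0,b}$ (supported in some $V_m$), one passes to the limit in $\cE^0(u_n,\varv)-\int\langle\mathbf{B},\nabla u_n\rangle\varv\,d\mu=-\int\overline{L}^{V_n}u_n\,\varv\,d\mu$, using that $u_n\to u$ weakly in $D(\cE^0)$ (now that the $\cE^0$-norms are bounded) and $L^1$-convergence of $\overline{L}^{V_n}u_n$; the term $\int\langle\mathbf{B},\nabla u_n\rangle\varv\,d\mu$ is handled since $\mathbf{B}\varv\in L^2(V_m,\mu,\R^d)$ and $\nabla u_n\rightharpoonup\nabla u$ weakly in $L^2(V_m,\mu)$. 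I expect the main obstacle to be the last point — the bounded-sequence approximation and the weak-$D(\cE^0)$ convergence needed to promote \eqref{eq:2.1.7} from the exhausting domains to the full domain — since graph-norm convergence of $u_n$ to $u$ does not a priori control the Dirichlet-form norms, exactly the same subtlety met in the proof of Proposition \ref{prop:2.1}(ii), and one must exploit the cutoff/$L^\infty$-bound together with the contractivity of the resolvents to get the requisite uniform $\cE^0$-bounds.
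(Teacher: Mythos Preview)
Your overall strategy matches the paper's: build the global resolvent as the monotone limit of the localized resolvents $\overline{G}^{V_n}_\alpha$ via Lemma \ref{lemma2.1.6}, verify the resolvent identity and strong continuity, invoke Hille--Yosida, and then pass \eqref{eq:2.1.7} to the limit for (iii). The construction in (i)--(ii) is essentially correct, though your strong continuity argument is vaguer than the paper's: a ``domination/interchange'' argument for general compactly supported $f$ would require controlling $\alpha(\overline{G}_\alpha - \overline{G}^{V_m}_\alpha) f$ as $\alpha\to\infty$, which is not immediately available. The paper instead first establishes the identity $u = \overline{G}_\alpha(\alpha - L)u$ for $u \in D(L^0)_{0,b}$ --- the very identity you derive later for the inclusion $D(L^0)_{0,b}\subset D(\overline{L})$ --- and reads off $\|\alpha\overline{G}_\alpha u - u\|_{L^1} = \|\overline{G}_\alpha Lu\|_{L^1} \le \alpha^{-1}\|Lu\|_{L^1}$ directly.

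There is a genuine gap in your argument for (iii). You set $u_n := \overline{G}^{V_n}_\alpha\big((\alpha - \overline{L})u\big)$ for $u \in D(\overline{L})_b$ and assert $\sup_n\|u_n\|_\infty \le \|u\|_\infty$; but while $u$ is bounded, $(\alpha - \overline{L})u$ lies only in $L^1(\R^d,\mu)$, not in $L^\infty$, so the sub-Markov property of $\overline{G}^{V_n}_\alpha$ yields no $L^\infty$-bound on $u_n$. Without $u_n\in D(\overline{L}^{V_n})_b$ you cannot invoke \eqref{eq:2.1.8}, and your uniform $\cE^0$-bound collapses. The paper handles this with a two-step argument: first restrict to $u = \overline{G}_\alpha f$ with $f \in L^1(\R^d,\mu)_b$, so that $u_n = \overline{G}^{V_n}_\alpha f$ is bounded by $\alpha^{-1}\|f\|_\infty$ and \eqref{eq:2.1.8} gives $\cE^0(u_n,u_n) \le \alpha^{-1}\|f\|_{L^1}\|f\|_{L^\infty}$ uniformly in $n$, yielding weak $D(\cE^0)$-convergence and (iii) for such $u$. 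Then for general $u \in D(\overline{L})_b$ one approximates by $\alpha\overline{G}_\alpha u$ as $\alpha\to\infty$: since $(\alpha - \overline{L})(\alpha\overline{G}_\alpha u) = \alpha u \in L^1(\R^d,\mu)_b$, each $\alpha\overline{G}_\alpha u$ lies in the class already treated, and one shows $\alpha\overline{G}_\alpha u \to u$ weakly in $D(\cE^0)$. You correctly anticipated that the $L^\infty$-control was the crux; the fix is simply to choose approximants whose preimage under $\alpha - \overline{L}$ is bounded.
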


\begin{proof} 
The complete proof of Theorem \ref{theorem2.1.5} is given in \cite{WS99}. Let us again only 
state its essential steps in the following. \\
Let $(V_n)_{n\ge 1}$ be some increasing sequence of relatively compact open subsets in $\R^d$ such that $\overline{V}_n\subset V_{n+1}$, $n\ge 1$, and $\R^d = \bigcup_{n\ge 1}
V_n$. Let $f\in L^1 (\R^d , \mu )$, $f\ge 0$. Then $\lim_{n\to\infty}\overline
{G}^{V_n}_\alpha f =: \overline{G}_\alpha f$ exists $\mu$-a.e. by Lemma \ref{lemma2.1.6}.  Since  
$$ 
\int_{\R^d}\alpha\overline{G}^{V_n}_\alpha f\, d\mu\le\int_{\R^d} f1_{V_n}\, d\mu, 
\le \int_{\R^d} f\, d\mu 
$$ 
the sequence converges in $L^1 (\R^d , \mu )$, and 
\begin{equation}
\label{eq:2.1.12}
\int_{\R^d}\alpha\overline{G}_\alpha f\, d\mu\le\int_{\R^d} f\, d\mu, 
\end{equation} 
in particular $\alpha\overline{G}_\alpha$ is a linear contraction on $L^1 (\R^d , \mu )$. 
Since $\alpha\overline{G}^{V_n}_\alpha$ is sub-Markovian, the limit 
$\alpha\overline{G}_\alpha$ is sub-Markovian too. Also the resolvent equation follows 
immediately. \\
The strong continuity of $(\overline{G}_\alpha )_{\alpha > 0}$ is verified as follows. Let 
$u\in D(L^0)_{0,b}$, hence $u\in D(L^{0,V_n})_b$ for large $n$ and thus 
$u = \overline{G}^{V_n}_\alpha (\alpha - L^{V_n})u = \overline{G}^{V_n}_\alpha (\alpha - L)u$. 
Hence  
\begin{equation}
\label{eq:2.1.15}
u = \overline{G}_\alpha (\alpha - L)u. 
\end{equation}
In particular, 
$$ 
\begin{aligned} 
\|\alpha\overline{G}_\alpha u - u\|_{L^1 (\R^d ,\mu )} 
& = \|\alpha\overline{G}_\alpha u - \overline{G}_\alpha (\alpha - L)u\|_{L^1 (\R^d ,\mu )} 
= \|\overline{G}_\alpha Lu\|_{L^1 (\R^d ,\mu )} \\ 
& \le \frac 1\alpha \|Lu\|_{L^1 (\R^d ,\mu )}\to 0\,  , \alpha\to\infty ,
\end{aligned} 
$$ 
for all $u\in C_0^\infty (\R^d)$ and the strong continuity then follows by a $3 \ve$-argument.  \\
Let $(\overline{L}, D(\overline{L}))$ be the generator of $(\overline
{G}_\alpha )_{\alpha > 0}$. Then $(\overline{L}, D(\overline{L}))$ extends 
$(L,  D(L^0)_{0,b})$ by \eqref{eq:2.1.15}. By the Hille-Yosida Theorem $(\overline{L}, 
D(\overline{L}))$ generates a $C_0$-semigroup of contractions $(\overline
{T}_t)_{t> 0}$. Since $\overline{T}_t u = \lim_{\alpha\to\infty}\exp (t
\alpha (\alpha \overline{G}_\alpha - 1))u$ for all $u\in L^1 (\R^d, \mu )$ 
(cf. \cite[Chapter 1, Corollary 3.5]{Pa}) we obtain that $(\overline{T}_t)_{t> 0}$ is 
sub-Markovian. \\
To see that the construction of $(\overline{L}, D(\overline{L}))$ is actually independent 
of the exhausting sequence, let $(W_n)_{n\ge 1}$ be another increasing sequence of 
relatively compact open subsets in $\R^d$ such that $\R^d = \bigcup_{n\ge 1} W_n$. 
Compactness of $\overline{V}_n$ 
then implies that $V_n\subset W_m$ for some $m$, hence $\overline{G}^{V_n}_\alpha f 
\le \overline{G}^{W_m}_\alpha f$ by Lemma \ref{lemma2.1.6}, so $\overline{G}_\alpha f 
\le\lim_{n\to\infty}\overline{G}^{W_n}_\alpha f$. Similarly, $\lim_{n\to\infty} 
\overline{G}^{W_n}_\alpha f\le\overline{G}_\alpha f$, hence (i) is satisfied.   \\
Finally, the integration by parts (iii) is first verified for $u = \overline{G}_\alpha f$, 
$f\in L ^1 (\R^d , \mu )_b$. For such $u$ one first shows that $\lim_{n\to\infty}  
\overline{G}^{V_n}_\alpha f = u$ weakly in $D(\cE^0)$, since by \eqref{eq:2.1.8} 
\begin{equation*}  
\begin{aligned} 
\cE^0 (\overline{G}^{V_n}_\alpha f , \overline{G}^{V_n}_\alpha f) 
& = - \int_{V_n} \overline{L}^{V_n} \overline{G}^{V_n}_\alpha f \overline{G}^{V_n}_\alpha f \, d\mu \\
& = \int_{V_n} (f1_{V_n} - \alpha \overline{G}^{V_n}_\alpha f )\overline{G}^{V_n}_\alpha f \, d\mu \\
& \le \frac {1}{\alpha}\ \|f\|_{L^1 (\R^d ,\mu)}\|f\|_{L^\infty (\R^d ,\mu)}. 
\end{aligned} 
\end{equation*} 
We can therefore take the limit in \eqref{eq:2.1.7} to obtain the integration by parts (iii)  
in this case. To extend (iii) finally to all $u\in D(\overline{L})_b$, it suffices to consider 
the limit $u = \lim_{\alpha\to\infty} \alpha\overline{G}_\alpha u$ weakly in $D(\mathcal{E}^0)$. 
\end{proof}

\begin{remark} 
\label{remark2.1.7} 
{\it (i) Clearly, $(\overline{L}, D(\overline{L}))$ is uniquely determined by 
properties (i) and (ii) in Theorem \ref{theorem2.1.5}. 

\noindent 
(ii) Similarly to $(\overline{L}, D(\overline{L}))$ we can construct a closed 
extension $(\overline{L}^{\prime}, D(\overline{L}^{\prime}))$ of 
\begin{eqnarray}\label{defL'first}
L^{\prime}u:=L^0 u - \langle\mathbf{B}, 
\nabla u\rangle, \quad u\in D(L^0)_{0,b}, 
\end{eqnarray}
that generates a sub-Markovian  
$C_0$-semigroup of contractions $(\overline{T}'_t)_{t> 0}$. Since for 
all $V$ relatively compact in $\R^d$ by \eqref{eq:2.1.11}   
\begin{equation}
\label{eq:2.1.16}
\int_{\R^d}\overline{G}^V_\alpha u\, \varv\, d\mu = \int_{\R^d} u\overline{G}_\alpha
^{V,\prime} \varv\, d\mu\text{ for all }u,\varv\in L^1 (\R^d, \mu )_b, 
\end{equation} 
it follows that 
\begin{equation}
\label{eq:2.1.17}
\int_{\R^d}\overline{G}_\alpha u\, \varv\, d\mu = \int_{\R^d} u\,\overline{G}^{\prime}_\alpha \varv\, d\mu 
\text{ for all }u,\varv\in L^1 (\R^d, \mu )_b, 
\end{equation} 
where $\overline{G}^{\prime}_\alpha = (\alpha - \overline{L}^{\prime})^{-1}$. 

\noindent 
(iii) The construction of the maximal extension $\overline{L}$ can be extended to the case 
of arbitrary open subsets $W$ in $\R^d$ (see \cite[Theorem 1.5]{WS99} for further details). }
\end{remark}

\begin{definition} 
\label{definition2.1.7} 
Let \eqref{condition on mu}--\eqref{eq:2.1.4} be satisfied. 
$(\overline{T}_t)_{t>0}$ (see Theorem \ref{theorem2.1.5} (i)) restricted to 
$L^1(\R^d, \mu)_b$ can be extended to a sub-Markovian $C_0$-semigroup of contractions on 
$L^s(\R^d, \mu)$, $s\in (1, \infty)$ and to a sub-Markovian semigroup on 
$L^{\infty}(\R^d, \mu)$. These semigroups will all be denoted by $(T_t)_{t>0}$ and in order 
to simplify notations, $(T_t)_{t>0}$ shall denote the semigroup on $L^s(\R^d, \mu)$ for any 
$s \in [1, \infty]$ from now on, whereas $(\overline{T}_t)_{t>0}$ denotes the semigroup 
acting exclusively on $L^1(\R^d, \mu)$. Likewise, we define $(T'_t)_{t>0}$ acting on all 
$L^s(\R^d, \mu)$, $s \in [1, \infty]$, corresponding to $(\overline{T}'_t)_{t>0}$ as in 
Remark \ref{remark2.1.7}(ii) which acts exclusively on $L^1(\R^d, \mu)$. The resolvents 
corresponding to $(T_t)_{t>0}$ are also all denoted by $(G_{\alpha})_{\alpha>0}$, those 
corresponding to $(T'_t)_{t>0}$ by $(G'_{\alpha})_{\alpha>0}$.\\
Furthermore, we denote by $(L_s,D(L_s))$, $(L_s^{\prime},D(L_s^{\prime}))$, the generators 
corresponding to $(T_t)_{t>0}$, $(T'_t)_{t>0}$ defined on $L^s(\R^d, \mu)$,  
$s \in [1, \infty)$, so that in particular $(L_1,D(L_1))=(\overline{L}, D(\overline{L}))$, 
$(L_1^{\prime},D(L_1^{\prime}))=(\overline{L}' , D(\overline{L}'))$. \eqref{eq:2.1.17} 
implies that $L_2$ and $L_2^{\prime}$ are adjoint operators on $L^2 (\R^d , \mu )$, 
and that $(T_t)_{t > 0}$ is the adjoint semigroup of $(T'_t)_{t> 0}$ when considered on 
$L^2 (\R^d , \mu )$.
\end{definition}

\begin{lemma} \label{lemma2.1.4}
Let \eqref{condition on mu}--\eqref{eq:2.1.4} be satisfied and $(\overline{L}, D(\overline{L}))$ be as in Theorem \ref{theorem2.1.5}. 
The space $D(\overline{L})_b$ is an algebra, i.e., $u,\varv\in D(\overline{L})_b$ implies 
$u\varv\in D(\overline{L})_b$. Moreover, 
\begin{equation} 
\label{eq:2.1.17a} 
\overline{L} (u\varv) = \varv \overline{L} u + u \overline{L} \varv + \langle A\nabla u, 
\nabla \varv\rangle. 
\end{equation}  
\end{lemma}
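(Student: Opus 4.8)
The plan is to verify first the product formula \eqref{eq:2.1.17a} on a dense, well-behaved subalgebra and then extend it by closedness of $\overline{L}$. The natural dense set is $D(L^0)_{0,b}$ from \eqref{defLfirst}: for $u,\varv$ there, $u\varv$ is again in $D(L^0)_{0,b}$ (products of bounded $H_0^{1,2}$-functions with compact support lie in $H_0^{1,2}$, and one checks $u\varv\in D(L^0)$ directly from the Leibniz rule for $L^0$ on such functions, using that $L^0$ acts like $L^A+\langle\beta^{\rho,A},\nabla\cdot\rangle$ on $C_0^2$, cf. \eqref{eq:2.1.3d} and Remark \ref{cnulltwocoincide}), and on this set the classical identity
$$
L(u\varv)=\varv Lu+u L\varv+\langle A\nabla u,\nabla\varv\rangle
$$
is just the product rule for the second-order operator $L^A$ together with linearity of the first-order part. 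So \eqref{eq:2.1.17a} holds on $D(L^0)_{0,b}$.

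Next I would pass to the closure. The subtlety — and the main obstacle — is exactly the one flagged in the proof of Proposition \ref{prop:2.1}(ii): graph-norm convergence $u_n\to u$ in $D(\overline{L})$ does \emph{not} by itself give convergence in $H_0^{1,2}$, hence a priori no control on the cross term $\langle A\nabla u_n,\nabla\varv\rangle$. The remedy is the same cutoff trick. Approximate $u$ by $u_n\in D(L^0)_{0,b}$ in graph norm, apply a function $\psi\in C_b^2(\R)$ with $\psi(t)=t$ on a neighbourhood of $[-\|u\|_\infty,\|u\|_\infty]$ and $\psi\equiv 0$ far out, so that $\psi(u_n)$ is uniformly bounded; then
$$
\overline{L}\,\psi(u_n)=\psi'(u_n)\,\overline{L} u_n+\tfrac12\psi''(u_n)\langle A\nabla u_n,\nabla u_n\rangle,
$$
and the key estimate $\psi''(u_n)\langle A\nabla u_n,\nabla u_n\rangle\to 0$ in $L^1(\R^d,\mu)$ (established in the proof of Proposition \ref{prop:2.1}) yields that $(\psi(u_n))$ is bounded in $D(\cE^0)$ with $\overline{L}\psi(u_n)\to\overline{L} u$; passing to a weakly convergent subsequence in $D(\cE^0)$ identifies the limit as $u\in D(\overline{L})_b\subset D(\cE^0)$ (Theorem \ref{theorem2.1.5}(iii)) and gives $\nabla u_n\to\nabla u$ weakly in $L^2$. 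This is enough to pass to the limit in the bilinear identity \eqref{eq:2.1.7} (global version, Theorem \ref{theorem2.1.5}(iii)) tested against $\varw\in H_0^{1,2}(\R^d,\mu)_{0,b}$, but one still has to handle the \emph{product} $u_n\varv_n$ rather than a single function.

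To close the argument I would argue as follows. Take $u,\varv\in D(\overline{L})_b$ and approximating sequences $u_n,\varv_n\in D(L^0)_{0,b}$, uniformly bounded (after cutoff) and with $u_n\to u$, $\varv_n\to\varv$ in graph norm, $\nabla u_n\to\nabla u$ and $\nabla\varv_n\to\nabla\varv$ weakly in $L^2_{loc}$, and moreover (from the cutoff estimate) $\langle A\nabla u_n,\nabla u_n\rangle$, $\langle A\nabla\varv_n,\nabla\varv_n\rangle$ bounded in $L^1$. Then the right-hand side $\varv_n\overline{L} u_n+u_n\overline{L}\varv_n+\langle A\nabla u_n,\nabla\varv_n\rangle$ of the product formula converges in $L^1(\R^d,\mu)$: the first two terms converge because one factor converges in $L^1$ and the other is uniformly bounded, and for the cross term one uses that $\langle A\nabla u_n,\nabla\varv_n\rangle$ is bounded in $L^1$ (by Cauchy--Schwarz and the $L^1$-bounds on the quadratic forms) and converges weakly, combined with the product-rule identity on $D(L^0)_{0,b}$ to identify the limit. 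Since $u_n\varv_n\to u\varv$ in $L^1$ and $\overline{L}(u_n\varv_n)=\varv_n\overline{L} u_n+u_n\overline{L}\varv_n+\langle A\nabla u_n,\nabla\varv_n\rangle$ converges in $L^1$, closedness of $(\overline{L},D(\overline{L}))$ forces $u\varv\in D(\overline{L})$ with $\overline{L}(u\varv)$ equal to that limit, which is precisely \eqref{eq:2.1.17a}; boundedness of $u\varv$ is obvious, so $u\varv\in D(\overline{L})_b$ and $D(\overline{L})_b$ is an algebra. The one technical point needing care is making the weak-$L^2$ convergence of the gradients genuinely usable in the nonlinear cross term $\langle A\nabla u_n,\nabla\varv_n\rangle$; this is handled either by the standard trick of writing $2\langle A\nabla u_n,\nabla\varv_n\rangle=\langle A\nabla(u_n+\varv_n),\nabla(u_n+\varv_n)\rangle-\langle A\nabla u_n,\nabla u_n\rangle-\langle A\nabla\varv_n,\nabla\varv_n\rangle$ and applying the cutoff estimate to $u_n\pm\varv_n$ as well, or directly via the $D(\cE^0)$-weak-convergence identification above. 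I expect this bilinear bookkeeping, rather than any conceptual difficulty, to be the bulk of the work.
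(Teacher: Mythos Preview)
Your approach has a genuine gap at its very first approximation step: you assume that every $u\in D(\overline{L})_b$ can be approximated in graph norm by elements of $D(L^0)_{0,b}$. This amounts to $D(L^0)_{0,b}$ being a core for $(\overline{L},D(\overline{L}))$, which is precisely the $L^1$-uniqueness of $(L,D(L^0)_{0,b})$. Proposition~\ref{prop2.1.9} shows that this uniqueness is \emph{equivalent} to $\mu$ being $(\overline{T}_t)_{t>0}$-invariant, a hypothesis that is not assumed in Lemma~\ref{lemma2.1.4} and that genuinely fails in examples (cf.\ Remark~\ref{rem:2.1.12}(i)). Without the core property your approximating sequences $u_n,\varv_n\in D(L^0)_{0,b}$ simply need not exist, and the closedness argument collapses. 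There is also a secondary issue with your base case: it is not obvious that $D(L^0)_{0,b}$ is itself an algebra, since verifying $u\varv\in D(L^0)$ via the Leibniz rule requires $\langle A\nabla u,\nabla \varv\rangle\in L^2(\R^d,\mu)$, which does not follow from $u,\varv\in D(L^0)_{0,b}$ alone.

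The paper circumvents both problems by working intrinsically with the resolvent rather than with a core. It reduces to proving the duality identity \eqref{eq:2.1.18}, namely $\int\overline{L}'\varv\, u^2\,d\mu=\int g\varv\,d\mu$ for $\varv=\overline{G}'_1 h$ and $g=2u\overline{L}u+\langle A\nabla u,\nabla u\rangle$; this yields $u^2=\overline{G}_1(u^2-g)\in D(\overline{L})$ directly. The identity is first established for $u=\overline{G}_1 f$, $f\in L^1(\R^d,\mu)_b$, by approximation through the bounded-domain resolvents $u_n=\overline{G}_1^{V_n}f$ and $\varv_n=\overline{G}_1^{V_n,\prime}h$ (where the integration by parts \eqref{eq:2.1.7} on $H_0^{1,2}(V_n,\mu)$ is available and the weak $D(\cE^0)$-convergence suffices). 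It is then extended to arbitrary $u\in D(\overline{L})_b$ via $\alpha\overline{G}_\alpha u$, using the resolvent equation to write $\overline{G}_\alpha u=\overline{G}_1(u+(1-\alpha)\overline{G}_\alpha u)$, together with the strong $D(\cE^0)$-convergence $\alpha\overline{G}_\alpha u\to u$ coming from the inequality in Theorem~\ref{theorem2.1.5}(iii). No core assumption is needed because resolvent images are always graph-norm dense in $D(\overline{L})$.
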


\begin{proof} 
It suffices to prove that $u\in D(\overline{L})_b$ implies $u^2\in D(\overline{L})_b$ and 
$\overline{L} (u ^2 ) = g := 2u\overline{L}u + \langle A\nabla u, \nabla u\rangle$. To this end it is sufficient to show that  
\begin{equation}
\label{eq:2.1.18}
\begin{aligned} 
\int_{\R^d}\overline{L}'\varv\, u^2\, d\mu & = \int_{\R^d} g\varv\, d\mu \text{ for all }\varv = 
\overline{G}'_1h\, , h\in L^1 (\R^d , \mu )_b,  
\end{aligned} 
\end{equation} 
since then $\int_{\R^d}\overline{G}_1 (u^2 - g)h\, d\mu = \int_{\R^d} (u^2 - g)\overline
{G}_1 ' h\, d\mu = \int_{\R^d} u^2 (\overline{G}_1 ' h - \overline{L}' \overline
{G}_1 ' h) \, d\mu = \int_{\R^d} u^2 \, h\, d\mu$ for all $h\in L^1 (\R^d , \mu )_b$. 
Consequently, $u^2 = \overline{G}_1 (u^2 - g)\in D(\overline{L})_b$. 

\noindent 
For the proof of \eqref{eq:2.1.18}  fix $\varv = \overline{G}'_1h$, $h\in L^1 (\R^d, \mu )_b$, 
and suppose first that $u=\overline{G}_1f$ for some $f\in L^1 (\R^d , \mu )_b$.  
Let $u_n := \overline{G}_1^{V_n}f$ and $\varv_n = \overline{G}_1^{V_n ,\prime}h$, 
where $(V_n)_{n\ge 1}$ is as in Theorem \ref{theorem2.1.5}(ii). Then by Proposition 
\ref{prop:2.1} and Theorem \ref{theorem2.1.5}   
$$ 
\begin{aligned} 
& \int_{V_n}\overline{L}^{V_n ,\prime}\varv_n\, uu_n \, d\mu  = -\cE^0 (\varv_n , uu_n) 
- \int_{V_n}\langle\mathbf{B},\nabla \varv_n\rangle uu_n\, d\mu \\ 
& = -\cE^0 (\varv_n u_n , u) - \frac 12\int_{\R^d}\langle A\nabla \varv_n , \nabla u_n\rangle u\, 
d\mu + \frac 12\int_{\R^d}\langle A\nabla u_n, \nabla u\rangle \varv_n\, d\mu \\ 
& \qquad + \int_{\R^d}\langle\mathbf{B}, \nabla u\rangle \varv_n u_n\, d\mu + \int_{\R^d}\langle
\mathbf{B}, \nabla u_n \rangle \varv_nu\, d\mu \\ 
& = \int_{\R^d}\overline{L}u\, \varv_n u_n\, d\mu + \int_{V_n}\overline{L}^{V_n}u_n\,  \varv_n 
u\, d\mu + \frac 12\int_{\R^d}\langle A\nabla u_n, \nabla (\varv_n u)\rangle\, d\mu \\ 
& \qquad - \frac 12 \int_{\R^d}\langle A\nabla \varv_n , \nabla u_n\rangle u\, d\mu 
+ \frac 12 \int_{\R^d}\langle A\nabla u_n, \nabla u\rangle \varv_n\, d\mu \\ 
& = \int_{\R^d}\overline{L}u\, \varv_n u_n\, d\mu + \int_{V_n}\overline{L}^{V_n}u_n\, \varv_n u\, d\mu  
+ \int_{\R^d}\langle A\nabla u_n, \nabla u\rangle \varv_n\, d\mu. 
\end{aligned} 
$$ 
Note that $\lim_{n\to\infty}\int_{\R^d}\langle A\nabla u_n , \nabla u\rangle \varv_n \, 
d\mu = \int_{\R^d}\langle A\nabla u , \nabla u \rangle \varv\, d\mu $ since 
$\lim_{n\to\infty}u_n = u$ weakly in $D(\cE^0)$ and $\lim_{n\to\infty}
\langle A\nabla u , \nabla u\rangle \varv_n^2 = \langle A\nabla u , \nabla u
\rangle \varv^2$ (strongly) in $L^1 (\R^d , \mu )$. Hence 
$$ 
\begin{aligned} 
\int_{\R^d}\overline{L}'\varv\, u^2\, d\mu 
& = \lim_{n\to\infty}\int_{V_n} \overline{L}^{V_n ,\prime}\varv_n\, uu_n \, d\mu \\ 
& = \lim_{n\to\infty}\int_{\R^d} \overline{L}u\, \varv_nu_n\, d\mu 
+ \int_{V_n}\overline{L}^{V_n}u_n\, \varv_n u\, d\mu + \int_{\R^d}\langle A\nabla u_n,  
\nabla u\rangle \varv_n\, d\mu \\ 
& = \int_{\R^d} g\varv\, d\mu.   
\end{aligned} 
$$  
Finally, if $u\in D(\overline{L})_b$ arbitrary, let $g_\alpha := 2(\alpha
\overline{G}_\alpha u)\overline{L}(\alpha \overline{G}_\alpha u) + \langle 
A\nabla \alpha\overline{G}_\alpha u,\nabla \alpha\overline{G}_\alpha u
\rangle$, $\alpha > 0$. Note that by Theorem \ref{theorem2.1.5}(iii) 
$$ 
\begin{aligned}  
\cE^0 (\alpha\overline{G}_\alpha u - u , \alpha\overline{G}_\alpha u - u) 
& \le -\int_{\R^d}\overline{L}(\alpha\overline{G}_\alpha u - u ) 
(\alpha\overline{G}_\alpha u - u) \, d\mu \\ 
& \le 2\|u\|_{L^\infty (\R^d , \mu )}  \|\alpha\overline{G}_\alpha\overline{L}u 
- \overline{L}u \|_{L^1 (\R^d , \mu )} \to 0  
\end{aligned} 
$$   
if $\alpha\to\infty$, which implies that $\lim_{\alpha\to\infty}\alpha
\overline{G}_\alpha u = u$ in $D(\cE^0)$ and thus $\lim_{\alpha\to\infty} 
g_\alpha = g$ in $L^1 (\R^d , \mu )$. Since $u + (1-\alpha)\overline
{G}_\alpha u\in L^1 (\R^d , \mu )_b$ and $\overline{G}_1 (u + (1-\alpha )
\overline{G}_\alpha u) = \overline{G}_\alpha u$ by the resolvent 
equation it follows from what we have just proved that 
$$
\int_{\R^d}\overline{L}'\varv (\alpha\overline{G}_\alpha u)^2\, d\mu 
= \int_{\R^d} g_\alpha \varv\, d\mu 
$$ 
for all $\alpha > 0$ and thus, taking the limit $\alpha\to\infty$,  
$$ 
\int_{\R^d}\overline{L}'\varv\, u^2\, d\mu = \int_{\R^d} g\varv\, d\mu 
$$ 
and \eqref{eq:2.1.18} is shown. 
\end{proof}

\subsubsection{Uniqueness of maximal extensions on $\R^d$} 
\label{subsec:2.1.4}

Having established the existence of maximal extensions $(\overline{L}, D(\overline{L}))$ 
of $(L, D(L^0)_{0,b})$ in $L^1 (\R^d, \mu )$, where $L^0$ denotes the generator 
of the symmetric Dirichlet form $\cE^0$ (see \eqref{DefGeneratorDF}) and  
$D(L^0)_{0,b}$ the subspace of compactly supported bounded functions in $D(L^0 )$, 
we now discuss the uniqueness of $\overline{L}$ and the connections of the uniqueness 
problem with global properties of the associated semigroup $(\overline{T}_t)_{t> 0}$. \\[3pt]
The uniqueness of maximal extensions of $L$ is linked to the domain $D$ on which we  
consider the operator $L$. It is clear that there can be only one maximal extension 
of $(L, D)$ if $D\subset D(\overline{L})$ is dense w.r.t. the graph norm of 
$\overline{L}$, but in general such dense subsets are quite difficult to characterize. 
We will consider this problem in the following exposition for two natural choices: the 
domain $D(L^0)_{0,b}$ and the domain $C_0^\infty (\R^d )$ of compactly 
supported smooth functions. \\[3pt]
Let us first introduce two useful notations: 

\begin{definition}\label{def:2.1.1}
Let \eqref{condition on mu}--\eqref{eq:2.1.4} be satisfied. Let $(\overline{T}_t)_{t> 0}$, $(\overline{T}'_t)_{t>0}$ and $(T_t')_{t>0}$ be as in Theorem \ref{theorem2.1.5},  Remark \ref{remark2.1.7}(ii) and Definition \ref{definition2.1.7}.\\[3pt]
(i) Let $r\in [1,\infty )$ and $(A, D)$ be a 
densely defined operator on $L^r (\R^d , \mu )$. We say that $(A ,D)$ is $L^r(\R^d, \mu)$-{\bf unique} (hereafter written for convenience as \textbf{$L^r$-unique})\index{uniqueness ! $L^r$-unique}, if there is only one extension of $(A , D)$ on 
$L^r(\R^d , \mu )$ that generates a $C_0$-semigroup. It follows from \cite[Theorem A-II, 1.33]{Na86}, that if $(A, D)$ is $L^r$-unique and $(\overline A, \overline D)$ 
its unique extension generating a $C_0$-semigroup, then $D\subset\overline D$ 
dense w.r.t. the graph norm. Equivalently, $(A, D)$ is $L^r$-unique, if and only if the 
range condition\index{operator ! range condition}  $(\alpha -A)(D)\subset L^r (\R^d , \mu )$ dense holds for some $\alpha > 0$.   \\[3pt]
(ii) Let $(S_t)_{t> 0}$ be a sub-Markovian $C_0$-semigroup 
on $L^1 (\R^d , \nu )$. We say that $\nu$ is \textbf{$(S_t)_{t> 0}$-invariant} 
\index{measure ! invariant} 
(resp. \textbf{$\nu$ is $(S_t)_{t> 0}$-sub-invariant})\index{measure ! sub-invariant}, if $\int_{\R^d} S_t f\, d\nu  
= \int_{\R^d} f\, d\nu$  (resp. $\int_{\R^d} S_t f\, d\nu \le \int_{\R^d} f\, d\nu$) for all 
$f\in L^1 (\R^d , \nu )_b$ with $f\ge 0$ and $t>0$.\\[3pt]
In particular, $\mu$ is always $(\overline{T}_t)_{t> 0}$-sub-invariant, since for $f\in L^1 (\R^d , \mu )_b$, $f\ge 0$ and $t>0$, we have by the sub-Markov property $\int_{\R^d} \overline{T}_t f\, d\mu  
= \int_{\R^d} fT_t' 1_{\R^d}\, d\mu\le \int_{\R^d} f\, d\mu$. Likewise, $\mu$ is always $(\overline{T}'_t)_{t>0}$-sub-invariant.
\end{definition}

\paragraph{Uniqueness of $(L, D(L^0)_{0,b})$} 

\begin{proposition}  
\label{prop2.1.9} 
Let \eqref{condition on mu}--\eqref{eq:2.1.4} be satisfied. Let $(L^0 , D(L^0 ))$ be the generator of $(\cE^0 , D(\cE^0))$ (see \eqref{DefGeneratorDF}) and recall that as in Theorem \ref{theorem2.1.5}
$$
Lu := L^0 u + \langle \mathbf{B}, \nabla u\rangle, \quad u\in D(L^0)_{0,b}.
$$
The following statements are equivalent: 
\item{(i)} $(L, D(L^0)_{0,b})$ is $L^1$-unique. 
\item{(ii)} $\mu$ is $(\overline{T}_t)_{t>0}$-invariant. 
\item{(iii)} There exist $\chi_n\in H^{1,2}_{loc}(\R^d , \mu )$ and 
$\alpha > 0$ such that  
$(\chi_n - 1)^-\in H^{1,2}_0(\R^d , \mu )_{0,b}$, $\lim_{n\to\infty} 
\chi_n = 0$ $\mu$-a.e. and  
\begin{equation} 
\label{eq:2.1.20}
\cE^0_\alpha (\chi_n , \varv ) + \int_{\R^d}\langle \mathbf{B} , \nabla\chi_n\rangle\varv \, d\mu 
\ge 0 
\text{ for all }\varv\in H_0^{1,2} (\R^d , \mu )_{0,b}, \varv\ge 0.  
\end{equation} 
\end{proposition}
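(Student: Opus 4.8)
The plan is to establish the cyclic chain of implications (i) $\Rightarrow$ (ii) $\Rightarrow$ (iii) $\Rightarrow$ (i), exploiting the duality between $(\overline{T}_t)_{t>0}$ and $(\overline{T}'_t)_{t>0}$ recorded in \eqref{eq:2.1.17} and the resolvent construction of Theorem \ref{theorem2.1.5}.

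For (i) $\Rightarrow$ (ii): since $(L,D(L^0)_{0,b})$ is $L^1$-unique, its unique $C_0$-semigroup extension is $(\overline{T}_t)_{t>0}$, and by Definition \ref{def:2.1.1}(i) (via \cite[Theorem A-II, 1.33]{Na86}) the domain $D(L^0)_{0,b}$ is dense in $D(\overline L)$ w.r.t. the graph norm. I would test invariance by taking a cutoff sequence $\chi_n\in C_0^\infty(\R^d)$ with $\chi_n\uparrow 1$ and $0\le\chi_n\le 1$, and look at $\int_{\R^d}\overline{T}_t f\,\chi_n\,d\mu$; the key point is that $\mu$ being infinitesimally invariant gives $\int \overline{L}'\varv\,d\mu = \lim_n\int\overline{L}'\varv\,\chi_n\,d\mu = 0$ for $\varv$ in a core, which by duality \eqref{eq:2.1.17} forces $\int\overline{T}_t f\,d\mu = \int f\,d\mu$. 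Concretely one shows $1_{\R^d}$ is $(\overline{G}'_\alpha)$-invariant in the sense $\alpha\overline{G}'_\alpha 1 = 1$, equivalently $\alpha G'_\alpha 1 = 1$, which is exactly the statement that the dual semigroup is conservative; and conservativeness of $(T'_t)_{t>0}$ is equivalent to invariance of $\mu$ under $(\overline{T}_t)_{t>0}$. The $L^1$-uniqueness is what upgrades the known sub-invariance (stated at the end of Definition \ref{def:2.1.1}) to genuine invariance: without it, the range of $\alpha - L$ could be a proper closed subspace and $1 - \alpha G'_\alpha 1$ a nonzero element orthogonal to it.

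For (ii) $\Rightarrow$ (iii): assume $\mu$ is $(\overline{T}_t)_{t>0}$-invariant, equivalently $\alpha G'_\alpha 1_{\R^d} = 1_{\R^d}$ $\mu$-a.e. The natural candidate is $\chi_n := 1_{\R^d} - \alpha\overline{G}'_\alpha e_n$ where $(e_n)$ is a sequence in $L^1(\R^d,\mu)_{0,b}$ with $0\le e_n\le 1$ and $e_n\uparrow 1_{\R^d}$, so that $\chi_n\downarrow 0$ $\mu$-a.e. by invariance; since $\alpha\overline{G}'_\alpha e_n \le \alpha\overline{G}'_\alpha 1 = 1$, sub-Markovianity gives $0\le\chi_n\le 1$, hence $(\chi_n - 1)^- = 0$ and the $H^{1,2}_0$-membership is automatic, while local Sobolev regularity of $\overline{G}'_\alpha e_n$ follows from the elliptic regularity already built into the $\cE^0$-framework. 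The inequality \eqref{eq:2.1.20} then reduces, using the integration-by-parts identity of Remark \ref{rem2.1.3}/Theorem \ref{theorem2.1.5}(iii) for $\overline{L}'$, to $-\int(\alpha-\overline{L}')(\alpha\overline{G}'_\alpha e_n)\,\varv\,d\mu + (\text{terms from }1_{\R^d}) = \alpha\int e_n\varv\,d\mu \ge \text{(the }1_{\R^d}\text{ contribution)}$; since $\cE^0(1,\varv)=0$ and $\langle\mathbf{B},\nabla 1\rangle = 0$, the $1_{\R^d}$ piece contributes only $\alpha\int\varv\,d\mu$, and $\alpha\int(e_n - 1)\varv\,d\mu \le 0$ is the wrong sign — so I expect one actually wants $\chi_n$ built so that $\cE^0_\alpha(\chi_n,\varv) + \int\langle\mathbf{B},\nabla\chi_n\rangle\varv\,d\mu = -\alpha\int\overline{G}'_\alpha((\alpha-\overline{L}')\text{-something})$ and the sign works out from sub-Markovianity of $\alpha\overline{G}'_\alpha$. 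This sign-chasing with the dual operator is the main obstacle, and the resolution is to phrase everything in terms of $1 - \alpha\overline{G}'_\alpha e_n$ being a supersolution of $(\alpha-L')$.

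For (iii) $\Rightarrow$ (i): this is the standard Lyapunov/cutoff argument for $L^1$-uniqueness. Suppose $h\in L^\infty(\R^d,\mu)$ satisfies $\int_{\R^d}(\alpha - L)u\,h\,d\mu = 0$ for all $u\in D(L^0)_{0,b}$; I must show $h=0$. One first identifies $h$ with $\overline{G}'_\alpha g$ type data, i.e. $h\in D(L'_2)$-like with $(\alpha - L'_2)h \ge 0$ or $h$ subharmonic for $\alpha - L'$, using that $(\alpha-L)(D(L^0)_{0,b})^\perp$ consists of $\alpha-L'$ (super)solutions; then one tests against $\chi_n\varv$ and passes $n\to\infty$. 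Using \eqref{eq:2.1.20} with $\varv = h^+$ (or an approximation), together with $\chi_n\to 0$ and $(\chi_n-1)^-\in H^{1,2}_0{}_{,0,b}$ so that $\chi_n$ is an admissible test function against $h$, one derives $\cE^0_\alpha(h^+,h^+)\le 0$ in the limit, forcing $h^+=0$, and symmetrically $h^-=0$. Here the hypothesis that $\lim_n\chi_n = 0$ $\mu$-a.e.\ kills the boundary contribution, while $(\chi_n-1)^-\in H^{1,2}_0(\R^d,\mu)_{0,b}$ and \eqref{eq:2.1.20} provide the one-sided control needed to absorb the perturbation $\langle\mathbf{B},\nabla\cdot\rangle$. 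The delicate point throughout is keeping track of which semigroup ($(\overline{T}_t)$ vs.\ its dual $(\overline{T}'_t)$) the test functions and the Lyapunov functions live on, since $L^1$-uniqueness of $L$ is governed by properties of the \emph{dual} operator $L'$.
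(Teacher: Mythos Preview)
Your overall cyclic strategy matches the paper's, and your (i) $\Rightarrow$ (ii) is essentially correct (though over-elaborate: the paper simply notes that $L^1$-uniqueness makes $D(L^0)_{0,b}$ a core, so $\int \overline{L}u\,d\mu = 0$ extends from the core to all of $D(\overline{L})$, whence $\int\overline{T}_t u\,d\mu = \int u\,d\mu + \int_0^t\int\overline{L}\,\overline{T}_s u\,d\mu\,ds = \int u\,d\mu$; no duality detour is needed).

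There are, however, two genuine gaps.

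\medskip
\textbf{(ii) $\Rightarrow$ (iii).} Your claim that $(\chi_n-1)^- = 0$ is a sign slip: with $\chi_n = 1 - \alpha\overline{G}'_\alpha e_n \le 1$ one has $(\chi_n - 1)^- = \alpha\overline{G}'_\alpha e_n$, which is \emph{not} zero and, more importantly, is \emph{not} compactly supported when you use the global resolvent $\overline{G}'_\alpha$. The condition $(\chi_n-1)^- \in H^{1,2}_0(\R^d,\mu)_{0,b}$ therefore fails for your construction. The paper fixes this by using the \emph{local} resolvent: $\chi_n := 1 - \overline{G}_1^{V_n,\prime}(1_{V_n})$ with $V_n = B_n$, so that $(\chi_n - 1)^- = \overline{G}_1^{V_n,\prime}(1_{V_n}) \in H^{1,2}_0(V_n,\mu)_{0,b}$ automatically. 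Invariance is then used only to show $\chi_n \downarrow 0$ via $\int g\,\chi_n\,d\mu = \int g\,d\mu - \int\overline{G}_1^{V_n}g\,d\mu \to 0$. The inequality \eqref{eq:2.1.20} is established by approximating $\overline{G}_1^{V_n,\prime}(1_{V_n})$ with $w_\beta := \beta\overline{G}'_{\beta+1}\overline{G}_1^{V_n,\prime}(1_{V_n})$ and using the resolvent equation; your admission that the sign-chasing is ``the main obstacle'' is accurate, and it does not resolve without the local construction.

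\medskip
\textbf{(iii) $\Rightarrow$ (i).} You propose to test \eqref{eq:2.1.20} with $\varv = h^+$, but $h^+$ is a priori only in $L^\infty(\R^d,\mu)$ --- it is neither compactly supported nor known to lie in $H^{1,2}_0(\R^d,\mu)$, so it is not an admissible test function. The paper first proves the non-obvious regularity step $h \in H^{1,2}_{loc}(\R^d,\mu)$ (by showing that for any cutoff $\chi\in C_0^\infty$, the functional $u \mapsto \int(\alpha-L^0)u\,(\chi h)\,d\mu$ is $D(\cE^0)$-continuous, forcing $\chi h \in D(\cE^0)$), and then deduces $\cE^0_\alpha(u,h) - \int\langle\mathbf{B},\nabla u\rangle h\,d\mu = 0$ for all $u \in H^{1,2}_0(\R^d,\mu)_0$. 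The comparison is then made with $\varv_n := \|h\|_{L^\infty}\chi_n - h$: one has $\varv_n^- \in H^{1,2}_0(\R^d,\mu)_{0,b}$ (this is where $(\chi_n-1)^-\in H^{1,2}_0(\R^d,\mu)_{0,b}$ is used), and testing both \eqref{eq:2.1.20} and the equation for $h$ against $\varv_n^-$ yields $\varv_n^- = 0$, i.e.\ $|h| \le \|h\|_{L^\infty}\chi_n \to 0$.
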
 

\begin{proof} 
$(i)\Rightarrow (ii)$: Since $\int_{\R^d}\overline{L}u\, d\mu = 0$ for all $u\in 
D(L^0)_{0,b}$ we obtain that $\int_{\R^d}\overline{L}u\, d\mu = 0$ for all $u\in 
D(\overline{L})$ and thus   
$$ 
\int_{\R^d}\overline{T}_tu\, d\mu = \int_{\R^d} u\, d\mu + \int_0^t\int_{\R^d}\overline{L}\,
\overline{T}_su\, d\mu\, ds = \int_{\R^d} u\, d\mu 
$$ 
for all $u\in D(\overline{L})$. Since $D(\overline{L})\subset 
L^1 (\R^d , \mu )$ dense we obtain that $\mu$ is 
$(\overline{T}_t)_{t>0}$-invariant. 

\medskip
\noindent 
$(ii)\Rightarrow (iii)$: 
As a candidate for a sequence $\chi_n$, $n \geq 1$, of functions satisfying the conditions in 
(iii) consider  
$$
\chi_n := 1-\overline{G}_1^{V_n ,\prime}(1_{V_n}) \text{ for } V_n = B_n.
$$ 
Clearly, $\chi_n\in H^{1,2}_{loc}(\R^d ,\mu )$ and $(\chi_n - 1)^- \in H_0^{1,2} (\R^d ,\mu )_{0,b}$. 
Moreover, $(\chi_n)_{n\ge 1}$ is decreasing by Lemma \ref{lemma2.1.6} and therefore 
$\chi_\infty := \lim_{n\to\infty}\chi_n$ exists $\mu$-a.e. To see that $\chi_\infty = 0$ note that 
\eqref{eq:2.1.16} implies for $g\in L^1 (\R^d ,\mu )_b$ that 
$$ 
\begin{aligned}  
\int_{\R^d} g\chi_\infty\, d\mu & = \lim_{n\to\infty}\int_{\R^d} g\chi_n\, d\mu = 
\lim_{n\to\infty} \int_{\R^d} g\, d\mu - \int_{\R^d} g \overline{G}_1^{V_n ,\prime}
(1_{V_n})\, d\mu \\ 
& = \lim_{n\to\infty}\int_{\R^d} g\, d\mu - \int_{\R^d}\overline{G}_1^{V_n}g \, 
1_{V_n}\, d\mu \\ 
& = \int_{\R^d} g\, d\mu - \int_{\R^d} \overline{G}_1g \, d\mu = 0,  
\end{aligned} 
$$ 
since $\mu$ is $(\overline{T}_t)_{t>0}$-invariant, hence  
$$ 
\int_{\R^d}\overline{G}_1 g\, d\mu = \int_0^\infty \int_{\R^d} e^{-t}\overline{T}_t g\, d\mu dt 
= \int_{\R^d} g\, d\mu  . 
$$  
It remains to show that $\chi_n$ satisfies \eqref{eq:2.1.20}. To this end we have to consider 
the approximation $w_\beta := \beta \overline{G}'_{\beta +1}\overline{G}_1^{V_n ,\prime}(1_{V_n} )$, 
$\beta > 0$. Since $w_\beta\ge\beta
\overline{G}_{\beta + 1}^{V_n ,\prime}\overline{G}_1^{V_n ,\prime}(1_{V_n})$ 
and $\beta\overline{G}_{\beta + 1}^{V_n ,\prime}\overline{G}_1^{V_n ,\prime}
(1_{V_n}) = \overline{G}_1^{V_n ,\prime}(1_{V_n}) - \overline{G}_{\beta + 1}
^{V_n ,\prime}(1_{V_n})\ge \overline{G}_1^{V_n ,\prime}(1_{V_n}) 
- 1/(\beta + 1)$ by the resolvent equation, it follows that 
\begin{equation} 
\label{eq:2.1.23} 
w_\beta\ge\overline{G}_1^{V_n ,\prime}(1_{V_n}) -  1/(\beta + 1), 
\beta > 0.  
\end{equation}
Note that by Theorem \ref{theorem2.1.5}
$$
\begin{aligned} 
&\cE^0_1 (w_\beta , w_\beta ) \\
& \le\beta (\overline{G}_1^{V_n ,\prime}
(1_{V_n}) - w_\beta , w_\beta )_{L^2 (\R^d , \mu )} \\ 
& \le\beta (\overline{G}_1^{V_n ,\prime}(1_{V_n}) - w_\beta , \overline
{G}_1^{V_n ,\prime}(1_{V_n}))_{L^2 (\R^d , \mu )} \\ 
& = \cE^0_1 (w_\beta , \overline{G}_1^{V_n ,\prime}(1_{V_n})) + 
\int_{V_n}\langle\mathbf{B}, \nabla w_\beta\rangle \overline{G}_1^{V_n ,\prime} 
(1_{V_n})\, d\mu \\ 
& \le \cE^0_1 (w_\beta , w_\beta )^{\frac 12}(\cE^0_1 (\overline
{G}_1^{V_n ,\prime}(1_{V_n}),\overline{G}_1^{V_n ,\prime}(1_{V_n}))
^{\frac 12} + \sqrt{2\nu_{V_n}}\| \mathbf{B} 1_{V_n}\|_{L^2 (\R^d, \R^d, \mu )}).     
\end{aligned} 
$$ 
Consequently, $\lim_{\beta\to\infty} w_\beta = \overline{G}_1^{V_n ,\prime}
(1_{V_n})$ weakly in $D(\cE^0)$. Now \eqref{eq:2.1.23} implies for  
$u\in H^{1,2}_0 (\R^d , \mu )_{0,b}$, $u\ge 0$, 
$$ 
\begin{aligned} 
\cE^0_1 (\chi_n , u) & + \int_{\R^d}\langle\mathbf{B}, \nabla\chi_n\rangle u \, d\mu  
 = \lim_{\beta\to\infty}\Big (\int_{\R^d} u\, d\mu - \cE^0_1(w_\beta , u)  
 - \int_{\R^d}\langle\mathbf{B}, \nabla w_\beta\rangle u\, d\mu\Big )  \\ 
& = \lim_{\beta\to\infty} \Big (\int_{\R^d} u\, d\mu   
  - \beta\int_{\R^d} (\overline{G}_1^{V_n ,\prime}(1_{V_n}) - w_\beta ) u\, d\mu\Big )  \ge 0. 
\end{aligned} 
$$ 
$(iii)\Rightarrow (i)$: It is sufficient to show that if 
$h\in L^\infty (\R^d , \mu )$ is such that $\int_{\R^d} (\alpha - L)u\, h\, d\mu = 0$  
for all $u\in D(L^0)_{0,b}$ it follows that $h=0$. To this end let  
$\chi\in C_0^\infty (\R^d )$. If $u\in D(L^0)_b$ it is easy to see that 
$\chi u\in D(L^0)_{0,b}$ and $L^0 (\chi u) = \chi L^0 u + \langle A\nabla\chi ,  
\nabla u\rangle + u L^0\chi$. Hence   
\begin{equation} 
\label{eq:2.1.21} 
\begin{aligned} 
\int_{\R^d} (\alpha -L^0)u (\chi h)\, d\mu  
& = \int_{\R^d} (\alpha -L^0)(u\chi ) h\, d\mu 
+ \int_{\R^d}\langle A \nabla u, \nabla\chi\rangle h\, d\mu \\ 
& \qquad + \int_{\R^d} u L^0\chi\, h\,d\mu  \\ 
& = \int_{\R^d}\langle\mathbf{B} , \nabla (u\chi )\rangle h\,d\mu 
+ \int_{\R^d}\langle A \nabla u, \nabla \chi\rangle h\, d\mu 
+ \int_{\R^d} u L^0\chi\,  h\,d\mu.
\end{aligned} 
\end{equation} 
Since $\|\mathbf{B}\|\in L^2_{loc} (\R^d , \mu )$ we obtain that $u\mapsto \int_{\R^d}
(\alpha -L^0)u (\chi h)\, d\mu $, $u\in D(L^0)_b$, is continuous w.r.t. the 
norm on $D(\cE^0)$. Hence there exists some element $\varv\in D(\cE^0)$ such that 
$\cE_\alpha ^0 (u , \varv ) = \int_{\R^d} (\alpha -L^0)u (\chi h)\, d\mu $. Consequently, 
$\int_{\R^d} (\alpha -L^0) u (\varv - \chi h)\, d\mu = 0$ for all $u\in D(L^0)_b$, which 
now implies that $\varv = \chi h$. In particular, $\chi h\in D(\cE^0)$ and 
\eqref{eq:2.1.21} yields
\begin{equation} 
\label{eq:2.1.22} 
\begin{aligned}  
\cE^0_\alpha (u, \chi h) 
& = \int_{\R^d} \langle\mathbf{B} , \nabla (\chi u)\rangle h\, d\mu 
+ \int_{\R^d}\langle A \nabla u, \nabla \chi\rangle h\, d\mu  \\ 
& \qquad + \int_{\R^d} L^0\chi\, u h\,d\mu  
\end{aligned} 
\end{equation}
for all $u\in D(L^0)_b$ and subsequently for all $u\in D(\cE^0)$. From \eqref{eq:2.1.22} 
it follows that 
$$
\cE^0_\alpha (u,h) - \int_{\R^d}\langle\mathbf{B} ,\nabla u\rangle h\, d\mu = 0
\text{ for all } u\in H_0^{1,2} (\R^d , \mu )_0. 
$$ 
\noindent
Let $\varv_n := \|h\|_{L^\infty (\R^d , \mu )} \chi_n - h$. Then $\varv_n^-\in H_0^{1,2}
(\R^d , \mu )_{0,b}$ and 
$$
0\le\cE^0_\alpha (\varv_n , \varv_n^- ) - \int_{\R^d}\langle\mathbf{B} , \nabla \varv_n^-\rangle \varv_n\, 
d\mu \le -\alpha \int_{\R^d} (\varv_n^-)^2\, d\mu,   
$$   
since $\int_{\R^d}\langle\mathbf{B} , \nabla\varv_n^-\rangle\varv_n\, d\mu  
= \int_{\R^d}\langle\mathbf{B} , \nabla\varv_n^-\rangle\varv_n^-\, d\mu = 0$ and 
$\mathcal{E}^0(v_n^+,v_n^-)\leq0$.
Thus $\varv_n^- = 0$, i.e., $h\le\|h\|_{L^\infty (\R^d , \mu )}\chi_n$. Similarly,  
$-h\le\|h\|_{L^\infty (\R^d , \mu )}\chi_n$, 
hence $|h|\le\|h\|_\infty\chi_n$. Since $\lim_{n\to\infty}\chi_n = 0$ $\mu$-a.e., 
it follows that $h=0$. 
\end{proof}

\begin{remark} 
\label{rem:2.1.10}
{\it 
The proof of $(ii)\Rightarrow (iii)$ in Proposition \ref{prop2.1.9} shows that 
if $\mu$ is $(\overline{T}_t)_{t>0}$-invariant then there exists for {\it all} 
$\alpha > 0$ a sequence $(\chi_n)_{n\ge 1}\subset H^{1,2}_{loc}(\R^d , 
\mu )$ such that $(\chi_n - 1)^-\in H^{1,2}_0(\R^d , \mu )_{0,b}$, 
$\lim_{n\to\infty}\chi_n = 0$ $\mu$-a.e. and 
$$
\cE^0_\alpha (\chi_n , \varv ) + \int_{\R^d}\langle\mathbf{B} , \nabla\chi_n\rangle  
\varv\, d\mu \ge 0 
\text{ for all }\varv\in H_0^{1,2} (\R^d , \mu )_{0,b}, \varv\ge 0. 
$$
Indeed, it suffices to take $\chi_n := 1- \alpha\overline{G}_\alpha
^{V_n ,\prime}(1_{V_n})$, $n\ge 1$. }
\end{remark} 
\noindent 
Let us state sufficient conditions on $\mu$, $A$ and $\mathbf{G}$ that imply 
$(\overline{T}_t)_{t>0}$-invariance of $\mu$ and discuss its interrelation with the notion 
of conservativeness, that we define right below. 
\begin{definition} 
\label{def:3.2.1} 
$(T_t)_{t>0}$ as defined in Definition \ref{definition2.1.7} is called {\bf conservative}\index{semigroup ! conservative} if 
$$
T_t 1_{\R^d} = 1, \; \; \text{$\mu$-a.e. \quad for one (and hence all) $t>0$.}
$$
\end{definition}
\noindent
We can then state 
the following relations:

\begin{remark} 
\label{rem:2.1.10a}
{\it  Let \eqref{condition on mu}--\eqref{eq:2.1.4} be satisfied.\\
(i) The measure $\mu$ is $(\overline{T}_t)_{t> 0}$-invariant\index{measure ! invariant} (cf. Definition \ref{def:2.1.1}(ii)), if and only if the dual semigroup 
$(T'_t)_{t> 0}$ of $(\overline{T}_t)_{t>0}$, acting on $L^\infty (\R^d ,\mu )$, 
is conservative\index{semigroup ! conservative}. Indeed, if $\mu$ is $(\overline{T}_t)_{t>0}$-invariant, then 
for any $f \in C_0^{\infty}(\R^d)$, $t>0$, 
$$
\int_{\R^d} f d\mu =
\int_{\R^d} \overline{T}_t f d\mu  
=\lim_{n \rightarrow \infty}\int_{\R^d} f T'_t 1_{B_n} d\mu 
= \int_{\R^d} f T'_t 1_{\R^d} d\mu,
$$
hence $T'_t 1_{\R^d}=1$, $\mu$-a.e. The converse follows similarly. Likewise, $\mu$ is $(\overline{T}'_t)_{t> 0}$-invariant, if and only if $(T_t)_{t> 0}$ is conservative.
Since in the {\it symmetric} case \;(i.e., $\mathbf{G} = \beta^{\rho, A}$) 
$\overline{T}'_{t}|_{L^1 (\R^d , \mu )_b}$ coincides with $\overline{T}_{t}|_{L^1 (\R^d , \mu )_b}$ we obtain that both notions 
coincide in this particular case. Conservativeness in the symmetric case 
has been well-studied by many authors. We refer to \cite{Da85}, \cite{FOT}, \cite[Section 1.6]{Sturm94} and references therein. \\
(ii) Suppose that $\mu$ is {\it finite}. Then $\mu$ is $(\overline{T}_t)_{t> 0}$-invariant, 
if and only if $\mu$ is $(\overline{T}'_t)_{t> 0}$-invariant. Indeed, let $\mu$ be 
$(\overline{T}_t)_{t> 0}$-invariant. 
%Then $T'_t 1_{\R^d}= 1$ $\mu$-a.e. by (i).  
Then, since $1_{\R^d}\in L^1(\R^d,\mu)$, we obtain by the $(\overline{T}_t)_{t> 0}$-invariance
$\int_{\R^d} |1-T_t 1_{\R^d}|\, d\mu = \int_{\R^d} (1-\overline{T}_t 1_{\R^d})\, d\mu= 0$  for all $t>0$, i.e., 
$T_t 1_{\R^d} = 1$ $\mu$-a.e. for all $t>0$, 
which implies that $\mu$ is $(\overline{T}'_t)_{t> 0}$-invariant by (i). 
The converse is shown similarly. }
\end{remark} 
For $u\in C^2(\R^d)$, we define  (cf. \eqref{eq:2.1.3bis}, \eqref{eq:2.1.5}, \eqref{eq:2.1.5a}, and \eqref{eq:2.1.3})
\begin{eqnarray} \label{eq:2.1.3bis2'}
L^{\prime}u:= L^A u + \langle \beta^{\rho, A} - \mathbf{B}, 
\nabla u\rangle = L^A u + \langle 2\beta^{\rho, A} - \mathbf{G},\nabla u\rangle.
\end{eqnarray} 

\begin{remark}\label{cnulltwocoincideprime}
{\it Similarly to Remark \ref{cnulltwocoincide}, we have 
\begin{equation} 
\label{eq:2.1.3forprime}
L^A  + \langle \beta^{\rho, A} - \mathbf{B}, 
\nabla \rangle  = L^0 -  \langle \mathbf{B}, \nabla \rangle \quad \text{ on }\  C^{2}_0(\R^d).
\end{equation} 
Therefore the definitions \eqref{defL'first} and \eqref {eq:2.1.3bis2'} for $L'$ coincide on $D(L^0)_{0,b}\cap C^{2}(\R^d)=C^{2}_0(\R^d)$ and are therefore consistent.}
\end{remark}

\begin{proposition}
\label{prop:2.1.10}
Let \eqref{condition on mu}--\eqref{eq:2.1.4} be satisfied.
Each of the following conditions (i), (ii) and (iii) imply that $\mu$ is $(\overline{T}_t)_{t>0}$-invariant \index{measure ! invariant ! sufficient condition} (cf. Definition \ref{def:2.1.1}(ii)), or equivalently, by Remark \ref{rem:2.1.10a}(i), that $(T'_t)_{t>0}$ is conservative\index{semigroup ! conservative ! sufficient condition}:
\item{(i)} $a_{ij}, g_i - \beta^{\rho, A}_i \in L^1 (\R^d , \mu)$, $1\le i,j\le d$.  
\item{(ii)} There exist $u\in C^2 (\R^d )$ and $\alpha > 0$ such that 
$\lim_{\|x\|\to\infty} u(x) = \infty$ and $L'u = L^A u + \langle \beta^{\rho, A} - \mathbf{B}, 
\nabla u\rangle\le\alpha u$ $\mu$-a.e. \index{Lyapunov condition} 
\item{(iii)} There exists $M\ge 0$, such that $-\langle A(x)x,x\rangle/ (\|x\|^2 + 1) + \frac 12 {\rm trace}(A(x)) 
+ \langle (\beta^{\rho, A} - \mathbf{B})(x), x\rangle\le M(\|x\|^2 +1)\big ( \ln (\|x\|^2 + 1) + 1\big )$ 
for $\mu$-a.e. $x\in \R^d$.
\end{proposition}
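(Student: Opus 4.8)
The plan is to reduce all three cases to Proposition~\ref{prop2.1.9}. By the equivalences proved there, $\mu$ is $(\overline{T}_t)_{t>0}$-invariant --- equivalently, by Remark~\ref{rem:2.1.10a}(i), $(T'_t)_{t>0}$ is conservative --- as soon as one exhibits, for some $\alpha>0$, functions $\chi_n\in H^{1,2}_{loc}(\R^d,\mu)$ with $(\chi_n-1)^-\in H^{1,2}_0(\R^d,\mu)_{0,b}$, with $\chi_n\to 0$ $\mu$-a.e., and with
\begin{equation*}
\cE^0_\alpha(\chi_n,\varv)+\int_{\R^d}\langle\mathbf{B},\nabla\chi_n\rangle\,\varv\,d\mu\ge 0\qquad\text{for all }\varv\in H^{1,2}_0(\R^d,\mu)_{0,b},\ \varv\ge 0 .
\end{equation*}
For $\chi_n\in C^2(\R^d)\cap H^{1,2}_{loc}(\R^d,\mu)$, combining \eqref{eq:2.1.3forprime} with the integration by parts \eqref{eq:2.1.3c} (with $\mathbf{G}$ replaced by $\beta^{\rho, A}$, extended to such $\chi_n$ tested against compactly supported $\varv$) turns the left-hand side into $\int_{\R^d}(\alpha\chi_n-L'\chi_n)\,\varv\,d\mu$; hence such a sequence does the job as soon as $L'\chi_n\le\alpha\chi_n$ holds $\mu$-a.e.\ (as an inequality in $L^1_{loc}(\R^d,\mu)$). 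So in (ii)--(iii) the task is to build weak supersolutions of $\alpha-L'$ that tend to $0$ but stay $\ge 1$ outside compact sets.

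For (iii) I would just verify (ii) for the explicit function $u(x):=\ln(\|x\|^2+1)+1\in C^2(\R^d)$, which tends to $\infty$: writing $s:=\|x\|^2+1$, a direct computation and the hypothesis of (iii) give, $\mu$-a.e.,
\begin{equation*}
L'u=\frac{2}{s}\Big(-\frac{\langle A(x)x,x\rangle}{s}+\tfrac12\,{\rm trace}(A(x))+\langle(\beta^{\rho, A}-\mathbf{B})(x),x\rangle\Big)\le\frac{2}{s}\,M\,s\,(\ln s+1)=2M(\ln s+1)=2M\,u ,
\end{equation*}
so (ii) holds with $\alpha:=2M\vee 1$. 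To prove (ii) itself, after adding a nonnegative constant to $u$ (which leaves $L'u$ unchanged and only weakens $L'u\le\alpha u$) we may assume $u\ge 0$; fix $\psi\in C^2(\R)$ nondecreasing and concave with $\psi(0)=0$, $0\le\psi\le 1$ and $\psi\equiv 1$ on $[1,\infty)$, and set $\chi_n:=\psi(u/n)$. Then $\chi_n\in C^2(\R^d)\cap H^{1,2}_{loc}(\R^d,\mu)$; since $\chi_n\equiv 1$ on $\{u\ge n\}$, the function $(\chi_n-1)^-=(1-\psi(u/n))^+$ is bounded and supported in the bounded set $\{u<n\}$, hence lies in $H^{1,2}_0(\R^d,\mu)_{0,b}$; and $\chi_n(x)=\psi(u(x)/n)\to\psi(0)=0$ for every $x$. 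Finally, by the chain rule, $\mu$-a.e.,
\begin{equation*}
L'\chi_n=\tfrac1n\,\psi'(u/n)\,L'u+\tfrac{1}{2n^2}\,\psi''(u/n)\,\langle A\nabla u,\nabla u\rangle\le\tfrac1n\,\psi'(u/n)\,L'u\le\frac{\alpha u}{n}\,\psi'(u/n)\le\alpha\,\psi(u/n)=\alpha\chi_n ,
\end{equation*}
using successively $\psi''\le 0$ and $\langle A\nabla u,\nabla u\rangle\ge 0$; then $L'u\le\alpha u$ together with $\psi'\ge 0$; and finally the elementary bound $t\psi'(t)\le\psi(t)$ for $t\ge 0$, valid for any concave $\psi$ with $\psi(0)\ge 0$. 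By the reduction above, $(\chi_n)_{n\ge1}$ meets the requirements of Proposition~\ref{prop2.1.9}(iii).

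For (i) I would instead verify directly that $(L,D(L^0)_{0,b})$ is $L^1$-unique, which by Proposition~\ref{prop2.1.9} gives the claim. Let $h\in L^\infty(\R^d,\mu)$ satisfy $\int_{\R^d}(\alpha-L)u\,h\,d\mu=0$ for all $u\in D(L^0)_{0,b}$ (some fixed $\alpha>0$); as in the proof of Proposition~\ref{prop2.1.9}, the part $(iii)\Rightarrow(i)$, this forces $h\in H^{1,2}_{loc}(\R^d,\mu)$ and $\cE^0_\alpha(u,h)-\int_{\R^d}\langle\mathbf{B},\nabla u\rangle h\,d\mu=0$ for all $u\in H^{1,2}_0(\R^d,\mu)_0$. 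Choosing cutoffs $\vp_n\in C_0^\infty(\R^d)$ with $0\le\vp_n\le 1$, $\vp_n\equiv 1$ on $B_n$, $\mathrm{supp}\,\vp_n\subset B_{2n}$ and $\|\nabla\vp_n\|_\infty\le C$, and testing with $u=\vp_n^2 h\in H^{1,2}_0(\R^d,\mu)_0$, the Leibniz rule together with the vanishing $\mu$-divergence of $\mathbf{B}$ (\eqref{eq:2.1.6}, extended to $H^{1,2}_0(\R^d,\mu)_{0,b}$ and applied to $\vp_n^2 h^2$) reorganizes the first-order terms so that the identity becomes
\begin{equation*}
\tfrac12\int_{\R^d}\vp_n^2\,\langle A\nabla h,\nabla h\rangle\,d\mu+\alpha\int_{\R^d}\vp_n^2 h^2\,d\mu=-\int_{\R^d}\vp_n h\,\langle A\nabla\vp_n,\nabla h\rangle\,d\mu+\int_{\R^d}\vp_n h^2\,\langle\mathbf{B},\nabla\vp_n\rangle\,d\mu .
\end{equation*}
Estimating the right-hand side by Cauchy--Schwarz (for the first integral via $|\langle A\xi,\eta\rangle|\le\langle A\xi,\xi\rangle^{1/2}\langle A\eta,\eta\rangle^{1/2}$) and absorbing the arising multiple of $\int_{\R^d}\vp_n^2\langle A\nabla h,\nabla h\rangle\,d\mu$ into the left-hand side, one arrives at
\begin{equation*}
\tfrac14\int_{\R^d}\vp_n^2\,\langle A\nabla h,\nabla h\rangle\,d\mu+\alpha\int_{\R^d}\vp_n^2 h^2\,d\mu\le c_h\Big(\int_{B_{2n}\setminus B_n}\|A\|\,d\mu+\int_{B_{2n}\setminus B_n}\|\mathbf{B}\|\,d\mu\Big),
\end{equation*}
with $c_h$ depending only on $\|h\|_\infty$ and $C$; the right-hand side tends to $0$ as $n\to\infty$ because $\|A\|\le\sum_{i,j}|a_{ij}|$ and $\|\mathbf{B}\|\le\sum_i|g_i-\beta^{\rho, A}_i|$ are in $L^1(\R^d,\mu)$ by (i). Letting $n\to\infty$ and using monotone convergence yields $\int_{\R^d}h^2\,d\mu=0$, i.e.\ $h=0$.

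The step I expect to be the main obstacle is (i): one must first extract from ``$h$ annihilates the range of $\alpha-L$'' that $h\in H^{1,2}_{loc}(\R^d,\mu)$ and that $h$ solves the weak equation above (this is already inside the proof of Proposition~\ref{prop2.1.9}), and then pick the test function $\vp_n^2 h$ precisely so that, after exploiting the vanishing $\mu$-divergence of $\mathbf{B}$, every first-order contribution is controlled by the \emph{global} $L^1(\R^d,\mu)$-norms of $A$ and $\mathbf{B}$; once the energy identity is correctly assembled, passing to the limit $\vp_n\uparrow 1$ is routine. In (ii)--(iii) the only minor technical points are the chain rule and the integration by parts \eqref{eq:2.1.3c} for $\chi_n=\psi(u/n)$, which is not compactly supported --- but these are justified by $\chi_n\in C^2(\R^d)\cap H^{1,2}_{loc}(\R^d,\mu)$ and by the compact support of the test function $\varv$.
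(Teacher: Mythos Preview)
Your argument is correct and follows the same overall plan as the paper: reduce (i) to $L^1$-uniqueness of $(L,D(L^0)_{0,b})$ and reduce (ii)--(iii) to the supersolution criterion of Proposition~\ref{prop2.1.9}(iii), with (iii) obtained from (ii) via $u(x)=\ln(\|x\|^2+1)+\text{const}$.

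The only notable deviation is in (ii). The paper simply takes $\chi_n:=u/n$: then $(\chi_n-1)^-$ is bounded with compact support (since $u\to\infty$), $\chi_n\to 0$ pointwise, and the bilinear form becomes $\tfrac1n\int(\alpha u - L'u)\,\varv\,d\mu\ge 0$ directly, with no chain rule needed. Your composition $\chi_n=\psi(u/n)$ with a concave cutoff $\psi$ works too, but the extra step (and the inequality $t\psi'(t)\le\psi(t)$) is unnecessary --- the linear choice already does the job because Proposition~\ref{prop2.1.9}(iii) does not require $\chi_n$ to be bounded. In (i) your bookkeeping differs slightly from the paper's: the paper picks cutoffs with $\|\nabla\psi_n\|_\infty\le c/n$ and lets the $1/n$ factors kill the full integrals $\int_{\R^d}|a_{ij}|\,d\mu$, $\int_{\R^d}|g_i-\beta^{\rho,A}_i|\,d\mu$, whereas you keep $\|\nabla\vp_n\|_\infty\le C$ and use instead that the tails $\int_{B_{2n}\setminus B_n}(\cdot)\,d\mu$ vanish by integrability. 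Both are valid; the paper's choice is marginally cleaner.
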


\begin{proof}  
(i) By Proposition \ref{prop2.1.9} it is sufficient to show that $(L, D(L^0)_{0,b})$ 
is $L^1$-unique. But if $h\in L^\infty (\R^d , \mu )$ is such that 
$\int_{\R^d} (1-L)u\, h\, d\mu = 0$ for all $u\in D(L^0)_{0,b}$ we have seen in 
the proof of the implication $(i)\Rightarrow (ii)$ in Proposition \ref{prop2.1.9} 
that $h\in H^{1,2}_{loc}(\R^d , \mu )$ and 
\begin{equation} 
\label{eq:2.1.24} 
\cE^0_1 (u, h ) - \int_{\R^d}\langle \mathbf{B} , \nabla u\rangle h\,d\mu = 0
\text{ for all }  u\in H^{1,2}_0(\R^d , \mu )_0. 
\end{equation} 
Let $\psi_n\in C_0^\infty (\R^d )$ be such that $1_{B_n}\le
\psi_n\le 1_{B_{2n}}$ and $\|\nabla\psi_n\|_{L^\infty (\R^d , \R^d, \mu )}\le c/n$ for some $c>0$. Then  
\eqref{eq:2.1.24} implies that 
$$ 
\begin{aligned}  
\int_{\R^d} & \psi_n^2 h^2 \, d\mu  + \cE^0 (\psi_n h , \psi_n h ) = 
\cE^0_1 (\psi_n^2 h , h )  + \frac12 \int_{\R^d}\langle A\nabla\psi_n ,\nabla\psi_n
\rangle h^2\, d\mu \\ 
& \qquad  - \int_{\R^d}\langle\mathbf{B}, \nabla (\psi_n^2 h)\rangle h\, d\mu  + 
    \int_{\R^d}\langle\mathbf{B} , \nabla\psi_n\rangle \psi_n h^2\, d\mu \\ 
& \le \frac{c^2}{2n^2}\|h\|_{L^\infty (\R^d , \mu )}^2 \Big (\sum_{i,j=1}^d\int_{\R^d} |a_{ij}| \, d\mu \Big ) 
 +\frac{c}{n}  \|h\|_{L^\infty (\R^d , \mu )}^2 \Big (\sum_{i=1}^d\int_{\R^d} |g_i - \beta^{\rho, A}_i|\, d\mu\Big  ) 
\end{aligned} 
$$ 
and thus $\int_{\R^d} h^2\, d\mu = \lim_{n\to\infty}\int_{\R^d}\psi_n^2 h^2\, d\mu = 0$. 

\noindent 
(ii) Let $\chi_n := \frac {u}{n}$. Then $\chi_n\in H^{1,2}_{loc}(\R^d , 
\mu )$, $(\chi_n - 1)^-$ is bounded and has compact support, $\lim_{n\to
\infty}\chi_n = 0$ and 
$$ 
\begin{aligned} 
\cE^0_\alpha (\chi_n , \varv ) & + \int_{\R^d}\langle\mathbf{B}, \nabla\chi_n\rangle \varv\, d\mu \\ 
& = \frac 1n \int_{\R^d} (\alpha u - L^A u - \langle \beta^{\rho, A} -\mathbf{B}, \nabla u\rangle ) 
\varv \, d\mu \ge 0
\end{aligned} 
$$
for all $\varv\in H_0^{1,2} (\R^d , \mu )_0$, $\varv\ge 0$.  
By Proposition \ref{prop2.1.9} $\mu$ is $(\overline{T}_t)_{t>0}$-invariant.   

\noindent 
Finally, (iii) implies (ii) since we can take $u(x) = \ln (\|x\|^2 + 1) +  r$ for 
$r$ sufficiently large. 
\end{proof} 
\noindent 
As a direct consequence of Proposition \ref{prop:2.1.10} and Remark \ref{rem:2.1.10a}(i), we obtain the following result.
\begin{corollary} 
\label{cor:2.1.4.1}
Let \eqref{condition on mu}--\eqref{eq:2.1.4} be satisfied.
Each of the following conditions (i), (ii) and (iii) imply that $(T_t)_{t>0}$ is conservative\index{semigroup ! conservative ! sufficient condition}, or equivalently, by Remark \ref{rem:2.1.10a}(i), that $\mu$ is 
$(\overline{T}'_t)_{t>0}$-invariant\index{semigroup ! invariant ! sufficient condition}: 
\item{(i)} $a_{ij}, g_i - \beta^{\rho, A}_i \in L^1 (\R^d , \mu)$, $1\le i,j\le d$.  
\item{(ii)} There exist $u\in C^2 (\R^d )$ and $\alpha > 0$ such that 
$\lim_{\|x\|\to\infty} u(x) = \infty$ and $Lu \le\alpha u$ $\mu$-a.e., where $Lu=L^Au+\langle \beta^{\rho, A}+\mathbf{B}, \nabla  u \rangle = L^A u +\langle \mathbf{G},\nabla u \rangle$ (see \eqref{eq:2.1.3bis2} and \eqref{eq:2.1.5a}).  \index{Lyapunov condition} 
\item{(iii)} There exists $M\ge 0$, such that $-\langle A(x)x,x\rangle/ (\|x\|^2 + 1) 
+ \frac 12 {\rm trace}(A(x)) 
+ \langle \mathbf{G}(x), x\rangle\le M(\|x\|^2 +1)\big (\ln (\|x\|^2 + 1) + 1\big )$ 
for $\mu$-a.e. $x\in \R^d$.
\end{corollary}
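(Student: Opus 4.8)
The plan is to obtain Corollary \ref{cor:2.1.4.1} from Proposition \ref{prop:2.1.10} by exploiting the built-in symmetry between the semigroup $(\overline{T}_t)_{t>0}$ and its dual $(\overline{T}'_t)_{t>0}$. Recall from Remark \ref{rem2.1.3} and Remark \ref{remark2.1.7}(ii) that $(\overline{T}'_t)_{t>0}$ is constructed by exactly the procedure of Theorem \ref{theorem2.1.5} applied to the $\mu$-divergence-free field $-\mathbf{B}$ in place of $\mathbf{B}$; since $-\mathbf{B}$ again satisfies all the standing assumptions \eqref{condition on mu}--\eqref{eq:2.1.4} (with $\mathbf{G}$ replaced by $2\beta^{\rho, A} - \mathbf{G}$), Proposition \ref{prop:2.1.10} applies verbatim to this dual data. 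Its conclusion, read for the dual data, is that $\mu$ is $(\overline{T}'_t)_{t>0}$-invariant, which by Remark \ref{rem:2.1.10a}(i) is equivalent to conservativeness of $(T_t)_{t>0}$. So all that remains is to check that, under the substitution $\mathbf{B}\mapsto -\mathbf{B}$, the three hypotheses of Proposition \ref{prop:2.1.10} turn precisely into (i)--(iii) of Corollary \ref{cor:2.1.4.1}.

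I would verify this term by term. For (i): by \eqref{eq:2.1.5a}, $g_i - \beta^{\rho, A}_i = B_i$, so the hypothesis ``$a_{ij}, g_i - \beta^{\rho, A}_i\in L^1(\R^d,\mu)$'' reads ``$a_{ij}, B_i\in L^1(\R^d,\mu)$'', which is manifestly invariant under $\mathbf{B}\mapsto -\mathbf{B}$; hence Proposition \ref{prop:2.1.10}(i) applied to the dual data yields the claim. For (ii): the dual of the dual operator is $L$ itself, so the Lyapunov hypothesis of Proposition \ref{prop:2.1.10}(ii) applied to the dual data becomes $\lim_{\|x\|\to\infty}u(x)=\infty$ together with $Lu = L^A u + \langle\beta^{\rho, A}+\mathbf{B},\nabla u\rangle = L^A u + \langle\mathbf{G},\nabla u\rangle\le\alpha u$, which is exactly hypothesis (ii). For (iii): replacing $\mathbf{B}$ by $-\mathbf{B}$ turns $\langle(\beta^{\rho, A}-\mathbf{B})(x),x\rangle$ of Proposition \ref{prop:2.1.10}(iii) into $\langle(\beta^{\rho, A}+\mathbf{B})(x),x\rangle = \langle\mathbf{G}(x),x\rangle$, recovering hypothesis (iii); and, just as in Proposition \ref{prop:2.1.10}, (iii) implies (ii) by taking $u(x) = \ln(\|x\|^2+1)+r$ with $r$ large.

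I do not anticipate a genuine obstacle here: the argument is pure bookkeeping once the duality is in place. The only points requiring care are keeping the signs straight in $\mathbf{G} = \beta^{\rho, A}+\mathbf{B}$ versus $L' = L^A + \langle\beta^{\rho, A}-\mathbf{B},\nabla\rangle$ when matching conditions, and invoking the correct half of the equivalence in Remark \ref{rem:2.1.10a}(i), namely that $\mu$ is $(\overline{T}'_t)_{t>0}$-invariant if and only if $(T_t)_{t>0}$ is conservative (rather than the $(\overline{T}_t)_{t>0}$/$(T'_t)_{t>0}$ version). One could alternatively re-run the proof of Proposition \ref{prop:2.1.10} line by line with $\mathbf{B}$ replaced by $-\mathbf{B}$ and $(\overline{T}_t)_{t>0}$ by $(\overline{T}'_t)_{t>0}$, but the duality route is cleaner and is precisely what ``direct consequence'' suggests.
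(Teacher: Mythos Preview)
Your proposal is correct and matches the paper's approach exactly: the paper states that the corollary is ``a direct consequence of Proposition \ref{prop:2.1.10} and Remark \ref{rem:2.1.10a}(i)'', and what you have written is precisely the unpacking of that direct consequence via the $\mathbf{B}\mapsto -\mathbf{B}$ duality between $(\overline{T}_t)_{t>0}$ and $(\overline{T}'_t)_{t>0}$.
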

\begin{remark}  
\label{remark2.1.11}
{\it
\noindent 
(i) Suppose that $\mu$ is {\it finite}, so that according to Remark \ref{rem:2.1.10a}(ii) 
$\mu$ is $(\overline{T}_t)_{t> 0}$-invariant if and only if $\mu$ is 
$(\overline{T}'_t)_{t> 0}$-invariant. In this case we replace $g_i - \beta^{\rho, A}_i$ 
(resp. $\beta^{\rho ,A} - \mathbf{B}$) in Proposition \ref{prop:2.1.10}(i) (resp.  
\ref{prop:2.1.10}(ii) and (iii)) by $g_i-\beta^{\rho,A}_i$ (resp. $\mathbf{G}$) and still obtain 
that $\mu$ is $(\overline{T}_t)_{t>0}$-invariant. \\
(ii) The criteria stated in part (iii) of Proposition \ref{prop:2.1.10} resp. Corollary \ref{cor:2.1.4.1} 
involve the logarithmic derivative $\beta^{\rho, A}$ of the density. This assumption can be replaced by 
volume growth conditions of $\mu$ on annuli (see Proposition \ref{prop:3.2.9} below).}
\end{remark}

\smallskip 

\begin{proposition} \label{prop:2.1.4.1.4}
Let \eqref{condition on mu}--\eqref{eq:2.1.4} be satisfied.
Suppose that there exist a bounded, nonnegative and nonzero function $u\in C^2 (\R^d )$  
and $\alpha > 0$, such that $L'u=  L^A u + \langle\beta^{\rho ,A} - \mathbf{B} , \nabla u\rangle \ge\alpha u$. 
Then $\mu$ is not $(\overline{T}_t)_{t > 0}$-invariant, or equivalently, by Remark \ref{rem:2.1.10a}(i), $(T'_t)_{t>0}$ is not conservative. In particular, if there exist  a bounded, nonnegative and nonzero function $u\in C^2 (\R^d )$  
and $\alpha > 0$ such that $Lu \geq \alpha u$, where $Lu = L^A u +\langle \beta^{\rho, A}+ \mathbf{B} ,\nabla u\rangle =
L^A u+\langle \mathbf{G}, \nabla u \rangle$ (see \eqref{eq:2.1.3bis2} and \eqref{eq:2.1.5a}), then $\mu$ is not $(\overline{T}'_t)_{t>0}$-invariant, or equivalently, by Remark \ref{rem:2.1.10a}(i), $(T_t)_{t>0}$ is not conservative. 
\end{proposition}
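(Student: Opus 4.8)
The plan is to prove directly that $\mu$ is not $(\overline{T}_t)_{t>0}$-invariant; the asserted equivalence with the failure of conservativeness of $(T'_t)_{t>0}$ is then Remark~\ref{rem:2.1.10a}(i). If $u\equiv 0$ there is nothing to prove, so $\|u\|_{L^\infty(\R^d,\mu)}>0$, and $u$ cannot be a positive constant since then $L'u=0\ge\alpha u>0$ would be absurd. Put $c_0:=\alpha\|u\|_{L^\infty(\R^d,\mu)}>0$ and $v:=\|u\|_{L^\infty(\R^d,\mu)}-u$. Then $v\in C^2(\R^d)$ is bounded, $v\ge 0$, and since $L'1=0$ pointwise and $L'u\ge\alpha u$ we get the pointwise inequality $(\alpha-L')v\ge c_0$ on $\R^d$, so $v$ is a bounded, nonnegative (classical) supersolution for $L'$.

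The heart of the argument is a comparison principle: for every relatively compact open $V\subset\R^d$ I claim that $v\ge c_0\,\overline{G}^{V,\prime}_\alpha(1_V)$ on $V$, where $(\overline{L}^{V,\prime},D(\overline{L}^{V,\prime}))$ and its resolvent $\overline{G}^{V,\prime}_\alpha$ are as in Remark~\ref{rem2.1.3}. Set $r:=c_0\,\overline{G}^{V,\prime}_\alpha(1_V)\in D(\overline{L}^{V,\prime})_b$ and $\phi:=v-r$. Using the pointwise identity $L'w=L^0 w-\langle\mathbf{B},\nabla w\rangle$ for $w\in C^2(\R^d)$ (Remark~\ref{cnulltwocoincideprime}), the integration by parts underlying \eqref{eq:2.1.3c}--\eqref{eq:2.1.3d}, and the bilinear form representation of $\overline{L}^{V,\prime}$ from Remark~\ref{rem2.1.3}, one obtains for $\psi\in C_0^\infty(V)$ that $\int_V[(\alpha-L')v-c_0]\psi\,d\mu=\cE^0_\alpha(\phi,\psi)+\int_V\langle\mathbf{B},\nabla\phi\rangle\psi\,d\mu$, and this identity extends to $\psi\in H_0^{1,2}(V,\mu)_b$, $\psi\ge 0$, by approximation. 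Since the left-hand side is then $\ge 0$, we insert $\psi:=\phi^-$; one checks $\phi^-\in H_0^{1,2}(V,\mu)_b$ (it is dominated by $r\in H_0^{1,2}(V,\mu)$ because $v\ge 0$, exactly as in Lemma~\ref{lemma2.1.6}), that $\int_V\langle\mathbf{B},\nabla\phi\rangle\phi^-\,d\mu=-\tfrac12\int_V\langle\mathbf{B},\nabla(\phi^-)^2\rangle\,d\mu=0$ by the $\mu$-divergence-freeness of $\mathbf{B}$ on $H_0^{1,2}(V,\mu)$, and that $\cE^0_\alpha(\phi,\phi^-)\le-\cE^0_\alpha(\phi^-,\phi^-)\le-\alpha\|\phi^-\|_{L^2(\R^d,\mu)}^2$ because $\cE^0(\phi^+,\phi^-)\le 0$ and $(\phi^+,\phi^-)_{L^2(\R^d,\mu)}=0$. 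Hence $\phi^-=0$, i.e. $v\ge r$ on $V$, and, extending $r$ by $0$, on all of $\R^d$.

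Now suppose, for contradiction, that $\mu$ is $(\overline{T}_t)_{t>0}$-invariant, and let $g\in L^1(\R^d,\mu)_b$, $g\ge 0$, have compact support. Pick an exhausting sequence $(V_n)_{n\ge1}$ as in Theorem~\ref{theorem2.1.5}(ii) with $\operatorname{supp}(g)\subset V_1$. The comparison just proved together with the duality \eqref{eq:2.1.16} between $\overline{G}^{V_n}_\alpha$ and $\overline{G}^{V_n,\prime}_\alpha$ give $\int_{\R^d}vg\,d\mu\ge c_0\int_{\R^d}\overline{G}^{V_n,\prime}_\alpha(1_{V_n})g\,d\mu=c_0\int_{V_n}\overline{G}^{V_n}_\alpha g\,d\mu$, which by monotone convergence (Theorem~\ref{theorem2.1.5}(ii)) tends to $c_0\int_{\R^d}\overline{G}_\alpha g\,d\mu=c_0\int_0^\infty e^{-\alpha t}\big(\int_{\R^d}\overline{T}_t g\,d\mu\big)\,dt=\|u\|_{L^\infty(\R^d,\mu)}\int_{\R^d}g\,d\mu$, where the last equality uses the assumed invariance. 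Thus $\int_{\R^d}(v-\|u\|_{L^\infty(\R^d,\mu)})g\,d\mu\ge 0$ for all such $g$, hence $v\ge\|u\|_{L^\infty(\R^d,\mu)}$, i.e. $u\le 0$, $\mu$-a.e.; since $u\ge 0$ is continuous and $\operatorname{supp}(\mu)=\R^d$, this forces $u\equiv 0$, contradicting the hypothesis. Therefore $\mu$ is not $(\overline{T}_t)_{t>0}$-invariant. For the ``in particular'' statement it then suffices to apply the above with $\mathbf{B}$ replaced by $-\mathbf{B}$, which is again in $L^2_{loc}(\R^d,\R^d,\mu)$ and of $\mu$-divergence zero: with this replacement $L^A+\langle\beta^{\rho,A}-(-\mathbf{B}),\nabla\cdot\rangle=L^A+\langle\mathbf{G},\nabla\cdot\rangle=L$ takes the role of $L'$, while $(\overline{T}'_t)_{t>0}$ takes the role of $(\overline{T}_t)_{t>0}$ and $(T_t)_{t>0}$ that of $(T'_t)_{t>0}$, so $Lu\ge\alpha u$ yields that $\mu$ is not $(\overline{T}'_t)_{t>0}$-invariant, equivalently that $(T_t)_{t>0}$ is not conservative.

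The main obstacle is the comparison step of the second paragraph: making the integration by parts rigorous for the non-smooth coefficients $a_{ij}\in H^{1,2}_{loc}(\R^d,\mu)$ and $\mathbf{B}\in L^2_{loc}(\R^d,\R^d,\mu)$ and for a $v$ that is only bounded (not compactly supported), and checking that $\phi^-\in H_0^{1,2}(V,\mu)_b$ while the weak inequality extends from $C_0^\infty(V)$ to that class. These points are, however, entirely parallel to computations already carried out in the proofs of Proposition~\ref{prop:2.1} and Lemma~\ref{lemma2.1.6}; everything afterwards (passing to the limit along the exhaustion via \eqref{eq:2.1.16} and Theorem~\ref{theorem2.1.5}(ii), and using invariance) is routine.
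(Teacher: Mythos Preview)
Your proof is correct and follows essentially the same approach as the paper. The paper's argument invokes the characterization (iii) of Proposition~\ref{prop2.1.9} via Remark~\ref{rem:2.1.10}: assuming invariance, one has $\chi_n := 1-\alpha\overline{G}^{V_n,\prime}_\alpha(1_{V_n})\to 0$, and then a one-line comparison (testing $v_n:=\chi_n-u$ against $v_n^-$) gives $u\le\chi_n$, hence $u=0$. Your comparison $v\ge c_0\,\overline{G}^{V_n,\prime}_\alpha(1_{V_n})$ is, after normalizing $\|u\|_\infty=1$, exactly the same inequality, and your limiting computation via \eqref{eq:2.1.16} and invariance reproduces the step $\chi_n\to 0$ from the proof of Proposition~\ref{prop2.1.9}~(ii)$\Rightarrow$(iii); so you have unpacked the paper's proof rather than found a different route.
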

\begin{proof} We may suppose that $u\le 1$. If $\mu$ was 
$(\overline{T}_t)_{t>0}$-invariant, it would follow that there exist $\chi_n\in 
H^{1,2}_{loc}(\R^d , \mu )$, $n\ge 1$, such that $(\chi_n - 1)^-\in 
H^{1,2}_0(\R^d , \mu )_{0,b}$, $\lim_{n\to\infty}\chi_n = 0$ $\mu$-a.e. 
and $\cE^0_\alpha (\chi_n , \varv ) + \int_{\R^d}\langle\mathbf{B} , \nabla\chi_n\rangle 
\varv\, d\mu\ge 0$ for all $\varv\in H_0^{1,2} (\R^d , \mu )_{0,b}$, $\varv\ge 0$ 
(cf. the Remark \ref{rem:2.1.10}). Let $\varv_n := (\chi_n - u)$. Then $\varv_n^-\in 
H_0^{1,2} (\R^d , \mu )_{0,b}$ and 
$$
0\le\cE^0_\alpha (\varv_n , \varv_n^- ) - \int_{\R^d}\langle\mathbf{B} , \nabla \varv_n^-
\rangle \varv_n\, d\mu \le - \alpha \int_{\R^d} (\varv_n^-)^2\, d\mu,   
$$ 
since $\int_{\R^d}\langle\mathbf{B} , \nabla \varv_n^-\rangle \varv_n\, d\mu  
= \int_{\R^d}\langle\mathbf{B} , \nabla \varv_n^-\rangle \varv_n^-\, d\mu = 0$ and 
$\mathcal{E}^0(\varv_n^+, \varv_n^-) \leq 0$. 
Thus $\varv_n^- = 0$, i.e., $u\le\chi_n$. Since $\lim_{n\to\infty}\chi_n = 0$  
$\mu$-a.e. and $u\ge 0$, it follows that $u=0$, which is a contradiction to our 
assumption $u\neq 0$. 
The rest of the assertion follows by replacing $(\overline{T}_t)_{t>0}$ with $(\overline{T}'_t)_{t>0}$.
\end{proof}

\begin{remark} 
\label{rem:2.1.12} 
{\it 
Let us provide two examples illustrating the scope of our results. \\
(i) In the first example the measure $\mu$ is not 
$(\overline{T}_t)_{t > 0}$-invariant\index{measure ! invariant ! counterexample}.  
To this end let $\mu := e^{-x^2}\, dx$,
$\mathbf{G} (x) = -x -2e^{x^2}$,  $x \in \R$,
$$
Lu := \frac12 u'' + \mathbf{G}\cdot u', \;\; u\in C_0^\infty (\R),
$$ 
$(\overline{L}, D(\overline{L}))$ be the maximal extension having properties (i)--(iii) in Theorem \ref{theorem2.1.5} and 
$(\overline{T}_t)_{t> 0}$ be the associated semigroup. Let $h(x) := \int_{-\infty}^x e^{-t^2}\, dt $, 
$x\in\R$. Then  
$$ 
\begin{aligned} 
\frac12 h''(x) + (\beta^{\rho ,A} - \mathbf{B})(x) h'(x) 
& = \frac12 h'' (x) + (-x+ 2e^{x^2}) h'(x)  \\
& = - 2xe^{-x^2} + 2  \ge \frac{1}{\sqrt{\pi}} h(x) 
\end{aligned} 
$$
for all $x\in\R$. 
It follows from Proposition \ref{prop:2.1.4.1.4} that $\mu$ is not $(\overline{T}_t)_{t > 0}$-invariant. 
Since $\mu$ is finite, $\mu$ is also not $(\overline{T}'_t)_{t> 0}$-invariant according to Remark 
\ref{rem:2.1.10a}(ii) and thus both semigroups $(T_t)_{t > 0}$ and $(T'_t)_{t> 0}$ are not conservative according to Remark 
\ref{rem:2.1.10a}(i).   \\
(ii) in the second example $(T_t)_{t > 0}$ is conservative, but 
the dual semigroup $(T'_t)_{t > 0}$ of $(\overline{T}_t)_{t > 0}$ is not conservative\index{semigroup ! conservative ! counterexample}. Necessarily, 
the (infinitesimally 
invariant) measure $\mu$ must be infinite in this case. To this end let $\mu := e^x \, dx$, 
$\mathbf{G} (x) = \frac12 + \frac12 e^{-x}$,  \,$x \in \R$, $Lu := \frac12 u'' + \mathbf{G}\cdot u'$, 
$u\in C_0^\infty (\R)$,
$(\overline{L}, D(\overline{L}))$ be the maximal extension having properties (i)--(iii) in Theorem 
\ref{theorem2.1.5} and $(\overline{T}_t)_{t> 0}$ be the associated semigroup. Let $h(x) = 1 + x^2$,
$x\in\R$. Then 
$$ 
\frac12 h''(x) + (\beta^{\rho ,A} +\mathbf{B}) h'(x) = 1 + x +  e^{-x}x \le 2 (1 + x^2) = 2 h(x). 
$$
It follows from Proposition \ref{prop:2.1.10} that $\mu$ is $(\overline{T}'_t)_{t > 0}$-invariant, hence $(T_t)_{t > 0}$ is conservative according to Remark 
\ref{rem:2.1.10a}(i). To see that $\mu$ is not $(\overline{T}_t)_{t > 0}$-invariant, let 
$h(x) =  \Psi (e^{-x})$, $x\in\R$, for some bounded, nonzero and nonnegative function 
$\Psi\in C^2 ((0, \infty))$. Then 
\begin{equation} 
\label{eq:2.1.24a}
\frac12 h''(x) + (\beta^{\rho ,A} - \mathbf{B}) h'(x) = \frac12 h''(x) +(\frac12 -\frac12 e^{-x})h'(x) 
\ge \alpha h(x), \;\; \text{ for all $x \in \R$} 
\end{equation}
for some $\alpha > 0$ is equivalent with 
\begin{equation} 
\label{eq:2.1.24b}
\left( \Psi '' (y) + \Psi ' (y)\right)y^2 \ge 2\alpha \Psi (y)   
\end{equation}
for all $y>0$. An example of such a function $\Psi$ is given by 
$$ 
\Psi (y) = \begin{cases} 
y^2(6-y)  & \text{ if } 0<y  \le 3 \\ 
54 - \frac{81}{y}  & \text{ if } 3\le y . 
\end{cases} 
$$
Indeed, it follows that $\Psi(y)>0$ for all $y \in (0, \infty)$ with $\Psi \in C^2_b((0, \infty))$ and
$$ 
(\Psi'' (y)+\Psi'(y))y^2 = \begin{cases} 
y^2(-3y^2+6y+12) \ge 3y^2 \ge \frac12 \Psi(y)  & \text{ if } 0<y \leq 3\\ 
81-\frac{162}{y} \geq 27 \geq \frac12 \Psi(y)  & \text{ if } 3\le y . 
\end{cases} 
$$
Thus, $\Psi$ satisfies \eqref{eq:2.1.24b} with $\alpha = \frac14$, hence $h(x) = \Psi (e^{-x})$ satisfies 
\eqref{eq:2.1.24a} for the same $\alpha$.  
It follows from Proposition \ref{prop:2.1.4.1.4} that $\mu$ is not $(\overline{T}_t)_{t > 0}$-invariant, 
hence the dual semigroup $(T'_t)_{t > 0}$ is not conservative by Remark 
\ref{rem:2.1.10a}(i).\\
The intuition behind the example is as follows: the density of the measure $\mu$ is monotone 
increasing, its derivative as well. The drift of $L$ is bounded from above on $\mathbb{R}^+$, so the 
associated diffusion process will not explode to $+\infty$ in finite time. On $\mathbb{R}^-$ the drift  
becomes unbounded positive, but that excludes that the associated diffusion process can explode to   
$-\infty$ in finite time. This is exactly the opposite for the dual process: since the drift of the dual 
process becomes unbounded from below with exponential growth, the solution will explode in finite time 
to $-\infty$. }
\end{remark}

\paragraph{Uniqueness of $(L, C_0^\infty (\R^d ))$} 

We will now discuss the problem of uniqueness of the maximal extension 
$(L, C_0^\infty (\R^d ))$ on $L^1 (\R^d , \mu )$. To this end we make the following 
additional assumption on $A$: Suppose that for any compact $V$ there exist constants $M_V\ge 0$ 
and $\alpha_V\in (0, 1)$ such that 
\begin{equation} 
\label{AssumptionUniqueness}
|a_{ij} (x) - a_{ij}(y)|\le M_V \|x-y\|^{\alpha_V}\text{ for all }x,y\in V. 
\end{equation} 
The following regularity result is then crucial for our further investigations: 

\begin{theorem} 
\label{theorem2.1.3} 
Let \eqref{condition on mu}--\eqref{eq:2.1.4}
and \eqref{AssumptionUniqueness} be satisfied and $L$ be as in Theorem \ref{theorem2.1.5} (in particular $L$ can be expressed as in \eqref{defLfirst} and \eqref{eq:2.1.3bis2} on $C_0^{2}(\R^d)$). 
Let $h\in L^\infty (\R^d , \mu )$ be such that $\int_{\R^d} (1-L)u\, h\,d\mu = 0$ for all 
$u\in C_0^\infty (\R^d )$. Then $h\in H_{loc}^{1,2}(\R^d , \mu )$ and 
$\cE^0_1 (u,h)-\int_{\R^d}\langle\mathbf{B} ,\nabla u\rangle h\, d\mu = 0$ for all 
$u\in H_0^{1,2}(\R^d , \mu )_0$.  
\end{theorem}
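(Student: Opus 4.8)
The statement asserts elliptic regularity for the weak equation $\int (1-L)u\,h\,d\mu=0$, $u\in C_0^\infty(\R^d)$, plus an identification of the resulting weak derivative with a solution of the symmetric-form equation. The natural approach is to localize, rewrite the equation in divergence form using the density $\rho=\varphi^2$, and invoke classical De Giorgi--Nash--Moser theory. The plan is as follows. First I would observe that since $h\in L^\infty(\R^d,\mu)$, for any fixed $\chi\in C_0^\infty(\R^d)$ the equation $\int(1-L)u\,(\chi h)\,d\mu=0$ translates, via the product rule $L^0(\chi u)=\chi L^0 u+\langle A\nabla\chi,\nabla u\rangle+uL^0\chi$ exactly as in the proof of $(iii)\Rightarrow(i)$ in Proposition~\ref{prop2.1.9}, into the assertion that the functional $u\mapsto\int_{\R^d}(1-L^0)u\,(\chi h)\,d\mu+(\text{lower-order terms in }u)$ is continuous on $D(\cE^0)$. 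The point is that all the new terms produced by the cutoff, namely $\int\langle A\nabla u,\nabla\chi\rangle h\,d\mu$, $\int uL^0\chi\,h\,d\mu$, and $\int\langle\mathbf{B},\nabla(\chi u)\rangle h\,d\mu$, are controlled by $\|u\|_{D(\cE^0)}$ because $h$ is bounded and $\|\mathbf{B}\|\in L^2_{loc}(\R^d,\mu)$ on the (relatively compact) support of $\chi$; this uses exactly \eqref{AssumptionUniqueness} nowhere yet but only the standing assumptions \eqref{condition on mu}--\eqref{eq:2.1.4}.

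By the Riesz representation theorem applied to the symmetric closed form $(\cE^0_1,D(\cE^0))$, there then exists $v\in D(\cE^0)$ with $\cE^0_1(u,v)=\int(1-L^0)u\,(\chi h)\,d\mu$ for all $u$ in a core. Testing against the resolvent $u=G^0_1 g$ and using $L^\infty$-contractivity of $(T^0_t)_{t>0}$ to get density of $(1-L^0)(D(L^0)_b)$ in $L^1$ (the same argument as in Step~2 of the proof of Proposition~\ref{prop:2.1}), one concludes $v=\chi h$; hence $\chi h\in D(\cE^0)\subset H^{1,2}_0(\R^d,\mu)$, which gives $h\in H^{1,2}_{loc}(\R^d,\mu)$ once we know $\chi h\in H^{1,2}_0$ for every $\chi$. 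Unwinding the Riesz identity then yields, after absorbing the cutoff terms back,
\[
\cE^0_1(u,h)-\int_{\R^d}\langle\mathbf{B},\nabla u\rangle h\,d\mu=0\quad\text{for all }u\in H^{1,2}_0(\R^d,\mu)_0,
\]
which is the second claim. So far this is essentially the argument already used in Proposition~\ref{prop2.1.9}$(iii)\Rightarrow(i)$ and Proposition~\ref{prop:2.1.10}(i), with $D(L^0)_{0,b}$ replaced by $C_0^\infty(\R^d)$.

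The one genuinely new difficulty is that with test functions only in $C_0^\infty(\R^d)$ rather than in $D(L^0)_{0,b}$, the step "$v=\chi h$" requires knowing that $C_0^\infty(\R^d)$ is rich enough — equivalently, that $(1-L^0)(C_0^\infty(\R^d))$ is dense in $L^1_{loc}$ in the appropriate sense, or more precisely that the symmetric operator $(L^0,C_0^\infty(\R^d))$ has no spurious $L^1$-extensions locally. This is exactly where the Hölder continuity assumption \eqref{AssumptionUniqueness} on $A$ enters: on a relatively compact $V$ the coefficients $a_{ij}$ are Hölder continuous and $\rho\in H^{1,2}_{loc}$ with $\beta^{\rho,A}\in L^2_{loc}$, so $L^0$ (in non-divergence form $\tfrac12 a_{ij}\partial_{ij}+\langle\beta^{\rho,A},\nabla\rangle$, or equivalently the divergence-form operator $\tfrac1\rho\,\mathrm{div}(\tfrac12\rho A\nabla\cdot)$) falls into the scope of Schauder/De Giorgi--Nash--Moser estimates; the equation $(1-L^0)(\chi h)= (\text{explicit }L^1\text{ data})$ with $\chi h\in H^{1,2}_0$ then has by interior elliptic regularity a solution in the right space, forcing the identification. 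I expect this elliptic-regularity bootstrap — getting from the weak $C_0^\infty$-formulation to a statement testable against the Dirichlet form, exploiting \eqref{AssumptionUniqueness} to upgrade the regularity of the local solution — to be the main obstacle; the rest is the bookkeeping of cutoffs and the integration-by-parts identities already established in Section~\ref{sec2.1}. The remaining assertion $h\in H^{1,2}_{loc}$ follows since the argument applies to an arbitrary $\chi\in C_0^\infty(\R^d)$ and $H^{1,2}_{loc}(\R^d,\mu)$ is by definition the space of $u$ with $u\chi\in H^{1,2}_0(\R^d,\mu)$ for all such $\chi$.
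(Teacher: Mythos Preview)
Your localization and Riesz-representation setup is fine and indeed parallels what happens in the proof of Proposition~\ref{prop2.1.9}$(iii)\Rightarrow(i)$: for $u\in C_0^\infty(\R^d)$ you correctly get
\[
\int_{\R^d}(1-L^0)u\,(\chi h)\,d\mu
= \int\langle\mathbf{B},\nabla(\chi u)\rangle h\,d\mu
+ \int\langle A\nabla u,\nabla\chi\rangle h\,d\mu
+ \int uL^0\chi\,h\,d\mu,
\]
and Riesz gives $v\in D(\cE^0)$ with $\cE^0_1(u,v)=\text{RHS}$ for all $u\in D(\cE^0)$. The gap is precisely the identification $v=\chi h$. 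You deduce $\int(1-L^0)u\,(v-\chi h)\,d\mu=0$ only for $u\in C_0^\infty(\R^d)$, and to force $v-\chi h=0$ from this you would need the range $(1-L^0)(C_0^\infty(\R^d))$ to be dense in $L^1(\R^d,\mu)$ (or a localized analogue). But that is essentially the $L^1$-uniqueness of $(L^0,C_0^\infty(\R^d))$, which by Corollary~\ref{cor2.1.2} is \emph{equivalent} to conservativeness of $(\cE^0,D(\cE^0))$---not assumed here, and in fact Theorem~\ref{theorem2.1.3} is the tool used to \emph{prove} Corollaries~\ref{cor2.1.1} and~\ref{cor2.1.2}. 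Your fallback, ``interior elliptic regularity for $(1-L^0)(\chi h)=\text{data}$'', is circular as phrased (it presupposes $\chi h\in H^{1,2}_0$), and De Giorgi--Nash--Moser is not the right toolbox: the equation for $h$ is posed in \emph{non-divergence} form against $C_0^\infty$ test functions, the drift $\beta^{\rho,A}$ is merely $L^2_{loc}$, so neither Schauder nor divergence-form theory applies to $w=v-\chi h$ directly.

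The paper's proof avoids the identification problem altogether by a constructive mollification. Using~\eqref{AssumptionUniqueness} it extends $a_{ij}$ off a ball $B_r$ to globally H\"older $\overline a_{ij}$ and invokes Krylov's resolvent $(V_\alpha)_{\alpha>0}$ for the \emph{pure} second-order operator $L^{\overline A}=\sum\overline a_{ij}\partial_{ij}$ (no drift, so classical $C^2_b$ theory applies). One then shows, by energy estimates that repeatedly feed the original equation $\int(1-L)u\,h\,d\mu=0$ back in, that $(\chi\,\alpha V_\alpha\tilde h)_{\alpha>0}$ is bounded in $D(\cE^0)$ for a Borel version $\tilde h$ of $h$; any weak limit as $\alpha\to\infty$ is then identified with $\chi h$ by a direct computation (using $\alpha V_\alpha - I = L^{\overline A}V_\alpha$ and the smoothness of $V_\alpha f_n$ to integrate by parts). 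The point is that the H\"older assumption is used not to upgrade regularity of a weak solution, but to manufacture an explicit approximating sequence living in $D(\cE^0)$ from the start, so no global uniqueness statement is needed.
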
 

\begin{proof} 
First note that $C_0^2 (\R^d )\subset D(L^0)_{0,b}
\subset D(\overline{L})_{0,b}$ and that $\int _{\R^d}(1-L)u\, h\, d\mu = 0$ for all 
$u\in C_0^2 (\R^d )$. Let $\chi\in C_0^\infty (\R^d )$ and $r > 0$ 
be such that $\text{supp} (\chi )\subset B_r (0)$. We have to show that 
$\chi h\in H^{1,2}_0 (\R^d,\mu )$. Let $K \ge 0$ and $\overline{\alpha} \in (0,1)$ be constants, 
such that $|a_{ij} (x) - a_{ij}(y)|\le K\|x-y\|^{\overline{\alpha}}$ for all 
$x,y\in B_r (0)$ and define 
$$ 
\overline{a}_{ij}(x) := a_{ij}((\frac r{|x|}\wedge 1)x), x\in\R^d.   
$$ 
Then $\overline{a}_{ij}(x) := a_{ij}(x)$ for all $x\in B_r (0)$ and  
$|\overline{a}_{ij}(x)-\overline{a}_{ij}(y)|\le 2 K\|x-y\|^{\overline{\alpha}}$ for all 
$x,y\in\R^d$. Let $L^{\overline{A}} = \sum_{i,j=1}^d\overline{a}_{ij}
\partial_{ij}$. By \cite[Theorems 4.3.1 and 4.3.2]{Kr96}, there exists for 
all $f\in C_0^\infty (\R^d )$ and $\alpha > 0$ a unique function 
$\overline{R}_\alpha f\in C_b^2 (\R^d )$ satisfying $\alpha\overline
{R}_\alpha f - L^{\overline{A}}\overline{R}_\alpha f = f$ and $\|\alpha
\overline{R}_\alpha f\|_{C_b (\R^d)}\le \|f\|_{C_b (\R^d)}$. Moreover, $\alpha\overline 
{R}_\alpha f\ge 0$ if $f\ge 0$ by \cite[Theorem 2.9.2]{Kr96}.

\noindent 
Since $C_0^\infty (\R^d )\subset C_\infty (\R^d )$ dense, we obtain that $f\mapsto \alpha\overline{R}_\alpha f$, 
$f\in C_0^\infty (\R^d)$, can be uniquely extended to a positive linear 
map $\alpha\overline{R}_\alpha : C_\infty (\R^d)\to C_b (\R^d )$ 
such that $\|\alpha\overline{R}_\alpha f\|_{C_b (\R^d)} \le \|f\|_{C_b (\R^d)}$ for all 
$f\in C_\infty (\R^d )$. By the Riesz representation theorem there exists 
a unique positive measure $V_\alpha (x, \cdot )$ on $(\R^d , \cB 
(\R^d ))$ such that $V_\alpha f(x) := \int_{ \R^d     } f(y)\, V_\alpha 
(x, dy) = \overline{R}_\alpha f(x)$ for all $f\in C_\infty (\R^d )$, 
$x\in\R^d $. Clearly, $\alpha V_\alpha (\cdot , \cdot )$ is a kernel on 
$(\R^d , \cB (\R^d ))$ (cf. \cite[Chapter IX, Theorem 9]{DeM88}). Since $\alpha 
V_\alpha f = \alpha\overline{R}_\alpha f\le 1$ for all $f\in C_\infty 
(\R^d )$ such that $f\le 1$ we conclude that the linear operator 
$f\mapsto \alpha V_\alpha f$, $f\in \cB_b (\R^d )$, is sub-Markovian. 
 
\noindent 
Let $f_n\in C_0^\infty (\R^d )$, $n\ge 1$, such that $\|f_n\|_{C_b (\R^d )} \le 
\|h\|_{L^\infty (\R^d , \mu )}$ and $\tilde h := \lim_{n\to\infty} f_n$ is a $\mu$-version of 
$h$. Then $\lim_{n\to\infty}\alpha V_\alpha f_n (x) = \alpha V_\alpha 
\tilde h (x)$ for all $x\in\R^d$ by Lebesgue's theorem and $\|\alpha 
V_\alpha\tilde h\|_{C_b (\R^d )} \le \|h\|_{L^\infty (\R^d , \mu )} $. 
Then 
\begin{eqnarray}
\label{eq:2.1.2.2} 
&&\cE^0 (\chi\alpha V_\alpha f_n , \chi\alpha V_\alpha f_n )  = -\int_{\R^d} L^0 
(\chi\alpha V_\alpha f_n) \chi\alpha V_\alpha f_n\, d\mu \nonumber \\ 
& =&  -\int_{\R^d} \chi L^A \chi\, (\alpha V_\alpha f_n)^2 \, d\mu  - \int_{\R^d}\langle A
\nabla\chi , \nabla\alpha V_\alpha f_n\rangle\chi\alpha V_\alpha f_n\, d\mu\nonumber\\ 
& &\qquad -\int_{\R^d} L^{\overline{A}}(\alpha V_\alpha f_n )\, \chi^2\, \alpha 
V_\alpha f_n\, d\mu -\int_{\R^d}\langle \beta^{\rho, A},\nabla (\chi\alpha V_\alpha f_n)\rangle
\chi\alpha V_\alpha f_n\, d\mu \nonumber\\ 
& =& -\int_{\R^d}\chi L^A\chi\, (\alpha V_\alpha f_n)^2\, d\mu - \int_{\R^d}\langle A\nabla 
\chi ,\nabla (\chi\alpha V_\alpha f_n)\rangle\alpha V_\alpha f_n\, d\mu \nonumber\\ 
&& \qquad + \int_{\R^d} \langle A\nabla \chi , \nabla\chi\rangle (\alpha V_\alpha 
f_n)^2\,d\mu - \alpha\int_{\R^d} (\alpha V_\alpha f_n - f_n)\chi^2\,\alpha V_\alpha 
f_n\, d\mu \nonumber\\ 
& &\qquad - \int_{\R^d}\langle \beta^{\rho, A}, \nabla (\chi\alpha V_\alpha f_n)\rangle\chi\, 
\alpha V_\alpha f_n\, d\mu. \\ \nonumber
\end{eqnarray}
\noindent
Hence $\cE^0 (\chi\alpha V_\alpha f_n , \chi\alpha V_\alpha f_n )\le c\, 
\cE^0 (\chi\alpha V_\alpha f_n , \chi\alpha V_\alpha f_n )^{1/2} + M$ for 
some positive constants $c$ and $M$ independent of $n$. Consequently, 
$\sup_{n\ge 1}\cE^0 (\chi\alpha V_\alpha f_n , \chi\alpha V_\alpha f_n ) 
< + \infty$, hence $\chi\alpha V_\alpha\tilde h\in D(\cE^0)$ and 
$\lim_{n\to\infty}\chi\alpha V_\alpha f_n = \chi\alpha V_\alpha\tilde h$ 
weakly in $D(\cE^0)$. 

\noindent 
Note that 
\begin{eqnarray}
\label{eq:2.1.2.3}
&&-\ \alpha\int_{\R^d} (\alpha V_\alpha \tilde h  - \tilde h)\alpha V_\alpha\tilde h 
\chi^2\, d\mu\le - \alpha\int_{\R^d} (\alpha V_\alpha\tilde h - \tilde h )\tilde h 
\chi^2\, d\mu \nonumber\\ 
& =& \lim_{n\to\infty} - \alpha\int_{\R^d} (\alpha V_\alpha f_n - f_n )
\tilde h\chi^2\, d\mu \nonumber\\ 
& =& \lim_{n\to\infty} - \int_{\R^d} L^{\overline{A}} (\alpha V_\alpha f_n) \tilde h
\chi^2\,d\mu \nonumber\\ 
& = &\lim_{n\to\infty} \Big (- \int_{\R^d} L^A (\chi^2\alpha V_\alpha f_n )
\tilde h\,d\mu + 2\int_{\R^d}\langle A\nabla\chi, \nabla\alpha V_\alpha 
f_n\rangle\chi\tilde h \,d\mu \nonumber\\
& &\qquad + \int_{\R^d} L^A(\chi^2)\alpha V_\alpha f_n\,\tilde h\, d\mu\Big ) \nonumber\\
& =& \lim_{n\to\infty}  \Big (
- \int_{\R^d}\chi^2\alpha V_\alpha f_n\,\tilde h\, 
d\mu +  \int_{\R^d}\langle \mathbf{G} ,\nabla (\chi^2\alpha V_\alpha f_n)
\rangle \tilde h\, d\mu \nonumber\\ 
& &\qquad + \ 2 \int_{\R^d}\langle A\nabla\chi ,\nabla\alpha V_\alpha f_n
\rangle\chi\tilde h\, d\mu + \int_{\R^d} L^A (\chi^2)\alpha V_\alpha 
f_n\,\tilde h \, d\mu\Big ) \nonumber\\ 
& =& - \int_{\R^d}\chi^2(\alpha V_\alpha\tilde h)\,\tilde h\, d\mu + \int_{\R^d}
\langle \mathbf{G} ,\nabla (\chi\alpha V_\alpha \tilde h )\rangle
\chi\tilde h\, d\mu \nonumber\\ 
&& \qquad + \int_{\R^d}\langle \mathbf{G} , \nabla \chi\rangle\chi(\alpha V_\alpha\tilde h)
\, \tilde h\, d\mu + 2\int_{\R^d}\langle A\nabla\chi ,\nabla (\chi\alpha V_\alpha 
\tilde h)\rangle\tilde h\, d\mu \nonumber\\ 
&& \qquad -\  2 \int_{\R^d} \langle A\nabla\chi , \nabla\chi\rangle (\alpha V_\alpha
\tilde h)\,  \tilde h\, d\mu + \int_{\R^d} L^A(\chi^2)(\alpha V_\alpha\tilde h)\, 
\tilde h\, d\mu \nonumber\\
&\le& c\, \cE^0 (\chi\alpha V_\alpha\tilde h , \chi\alpha V_\alpha\tilde h )^{1/2} + M \\ \nonumber
\end{eqnarray}   
for some positive constants $c$ and $M$ independent of $\alpha$. Combining 
\eqref{eq:2.1.2.2} and \eqref{eq:2.1.2.3} we obtain that 
$$
\begin{aligned} 
\cE^0 (\chi\alpha V_\alpha\tilde h & , \chi\alpha V_\alpha\tilde h)\le
\liminf_{n\to\infty} \cE^0 (\chi\alpha V_\alpha\tilde f_n, \chi\alpha 
V_\alpha\tilde f_n) \\ 
& \le - \int_{\R^d}\chi L^A\chi (\alpha V_\alpha \tilde h)^2\, d\mu - \int_{\R^d}\langle A
\nabla\chi , \nabla (\chi\alpha V_\alpha \tilde h)\rangle\alpha V_\alpha 
\tilde h\, d\mu 
\\ 
& \qquad + \int_{\R^d}\langle A\nabla\chi , \nabla\chi\rangle (\alpha V_\alpha 
\tilde h)^2\, d\mu - \alpha\int_{\R^d} (\alpha V_\alpha\tilde h - \tilde h)\chi^2\, 
\alpha V_\alpha \tilde h\, d\mu \\ 
& \qquad - \int_{\R^d}\langle\beta^{\rho, A} , \nabla (\chi\alpha V_\alpha \tilde h)\rangle\chi
\alpha V_\alpha \tilde h\, d\mu \\ 
& \le\tilde c\, \cE^0 (\chi\alpha V_\alpha\tilde h , \chi\alpha V_\alpha
\tilde h )^{1/2} + \tilde M     
\end{aligned}
$$
for some positive constants $\tilde c$ and $\tilde M$ independent of $\alpha$. 
Hence $(\chi\alpha V_\alpha\tilde h )_{\alpha > 0}$ is bounded in 
$D(\cE^0)$.\\
If $u\in D(\cE^0)$ is the limit of some weakly convergent subsequence 
$(\chi\alpha_k V_{\alpha_k}\tilde h)_{k\ge 1}$ with $\lim_{k\to\infty}
\alpha_k = +\infty$ it follows for all $\varv\in C_0^\infty (\R^d )$ that 
%$$ 
\begin{eqnarray*}
\int_{\R^d} (u  - \chi \tilde h) \varv \, d\mu &= &\lim_{k\to\infty}\int_{\R^d}\chi (\alpha_k 
V_{\alpha_k}\tilde h - \tilde h)\varv\, d\mu \\ 
& = &\lim_{k\to\infty}\lim_{n\to\infty} \int_{\R^d}\chi (\alpha_k V_{\alpha_k}f_n - 
f_n ) \varv\, d\mu \\ 
& =& \lim_{k\to\infty}\lim_{n\to\infty} \int_{\R^d}\chi L^A (V_{\alpha_k}
f_n) \varv\, d\mu \\ 
& = &\lim_{k\to\infty}\lim_{n\to\infty} \Big (\int_{\R^d} V_{\alpha_k}f_n \, L^0 
(\chi \varv )\, d\mu -\int_{\R^d}\langle\beta^{\rho, A}  , \nabla V_{\alpha_k}f_n\rangle
\chi \varv\, d\mu \Big )\\ 
& = &\lim_{k\to\infty}\Big (\int_{\R^d} V_{\alpha_k}\tilde h \, L^0 (\chi \varv)\, 
d\mu - \int_{\R^d}\langle\beta^{\rho, A} , \nabla (\chi V_{\alpha_k}\tilde h)
\rangle \varv\, d\mu \\ 
&&\qquad \qquad + \int_{\R^d}\langle\beta^{\rho, A} , \nabla\chi\rangle V_{\alpha_k}
\tilde h\, \varv\, d\mu\Big ) \\ 
\end{eqnarray*} 
\newpage
\begin{eqnarray*}
& \le& \lim_{k\to\infty}\frac {1}{\alpha_k}\Big (\|h\|_{L^\infty (\R^d , \mu )}  
\|L^0 (\chi \varv)\|_{L^1 (\R^d , \mu )} \\ 
& &\quad \qquad \qquad+ \sqrt {2\nu }\|\|\beta^{\rho, A}  \|\varv\|_{L^2 (\R^d , \mu )}  
\cE^0 (\chi\alpha_k V_{\alpha_k} \tilde h , \chi\alpha_k V_{\alpha_k}\tilde h )^{1/2} \\ 
& &\quad \qquad \qquad\qquad + \sqrt {2\nu }\|h\|_{L^\infty (\R^d , \mu )} \|\beta^{\rho, A}  
\varv\|_{L^2 (\R^d , \R^d, \mu )} \cE^0 (\chi , \chi )^{1/2}\Big ) = 0.  
\end{eqnarray*} 
%$$   
Consequently, $\chi\tilde h$ is a $\mu$-version of $u$. In particular, $\chi h\in H_0^{1,2}(\R^d , \mu )$. 

\noindent 
Let $u\in H_0^{1,2}(\R^d , \mu )$ with compact support, $\chi\in 
C_0^\infty (\R^d)$ such that $\chi \equiv 1$ on $\text{supp}(|u|\mu )$ 
and $u_n\in C_0^\infty (\R^d)$, $n\ge 1$, such that 
$\lim_{n\to\infty} u_n = u$ in $H_0^{1,2} (\R^d ,\mu )$. Then   
$$ 
\begin{aligned} 
\cE_1^0 (u,h) 
& - \int_{\R^d}\langle\mathbf{B}, \nabla u\rangle h\, d\mu 
= \lim_{n\to\infty}\cE_1^0 (u_n,h) -\int_{\R^d}\langle\mathbf{B}, 
\nabla u_n\rangle h\, d\mu \\  
& = \lim_{n\to\infty} \int_{\R^d} (1-L)u_n\, \chi h \, d\mu = 0. 
\end{aligned} 
$$ 
\end{proof} 

\begin{corollary} 
\label{cor2.1.1} 
Let \eqref{condition on mu}--\eqref{eq:2.1.4}
%\eqref{eq:2.1.1}--\eqref{eq:2.1.4} 
and \eqref{AssumptionUniqueness} be satisfied. 
Let $(\overline{L}, D(\overline{L}))$ be the maximal extension of 
$(L, C_0^\infty (\R^d ))$ satisfying (i)--(iii) in Theorem \ref{theorem2.1.5}  and 
$(\overline{T}_t)_{t> 0}$ the associated semigroup. Then 
$(L, C_0^\infty (\R^d ))$ is $L^1$-unique, if and only if 
$\mu$ is $(\overline{T}_t)_{t>0}$-invariant (see Definition \ref{def:2.1.1})
\index{uniqueness ! $L^1$-unique}\index{measure ! invariant}. 
\end{corollary}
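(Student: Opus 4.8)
The plan is to bootstrap the corresponding statement for the larger domain $D(L^0)_{0,b}$, already established in Proposition \ref{prop2.1.9}, by invoking the elliptic regularity of Theorem \ref{theorem2.1.3} to pass between the two classes of test functions. Note first that $C_0^\infty(\R^d)\subset D(L^0)_{0,b}$, that on $C_0^\infty(\R^d)$ the operator $L$ coincides with the operator appearing in Theorem \ref{theorem2.1.5} and Proposition \ref{prop2.1.9}, and that by Theorem \ref{theorem2.1.5} and Remark \ref{remark2.1.7}(i) both $(L,C_0^\infty(\R^d))$ and $(L,D(L^0)_{0,b})$ have $(\overline{L},D(\overline{L}))$ as common maximal extension.

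\emph{Invariance of $\mu$ implies $L^1$-uniqueness.} Using the characterization of $L^1$-uniqueness via the range condition (Definition \ref{def:2.1.1}(i)) together with the Hahn--Banach theorem and the duality $(L^1(\R^d,\mu))'=L^\infty(\R^d,\mu)$, it suffices to show that every $h\in L^\infty(\R^d,\mu)$ with $\int_{\R^d}(1-L)u\,h\,d\mu=0$ for all $u\in C_0^\infty(\R^d)$ must vanish. This is exactly where Theorem \ref{theorem2.1.3} is used — and hence, through the resolvent estimates of \cite{Kr96}, the Hölder assumption \eqref{AssumptionUniqueness} on $A$: it yields $h\in H^{1,2}_{loc}(\R^d,\mu)$ together with $\cE^0_1(u,h)-\int_{\R^d}\langle\mathbf{B},\nabla u\rangle h\,d\mu=0$ for all $u\in H^{1,2}_0(\R^d,\mu)_0$. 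From here the argument is identical to the end of the proof of $(iii)\Rightarrow(i)$ in Proposition \ref{prop2.1.9}: since $\mu$ is $(\overline{T}_t)_{t>0}$-invariant, Remark \ref{rem:2.1.10} (taken with $\alpha=1$) supplies a sequence $(\chi_n)_{n\ge1}\subset H^{1,2}_{loc}(\R^d,\mu)$ with $(\chi_n-1)^-\in H^{1,2}_0(\R^d,\mu)_{0,b}$, $\chi_n\to0$ $\mu$-a.e., and $\cE^0_1(\chi_n,\varv)+\int_{\R^d}\langle\mathbf{B},\nabla\chi_n\rangle\varv\,d\mu\ge0$ for all $\varv\in H^{1,2}_0(\R^d,\mu)_{0,b}$ with $\varv\ge0$. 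Setting $\varv_n:=\|h\|_{L^\infty(\R^d,\mu)}\chi_n-h$ and exploiting $\cE^0(\varv_n^+,\varv_n^-)\le0$, the zero $\mu$-divergence of $\mathbf{B}$, and the product rule $\int_{\R^d}\langle\mathbf{B},\nabla(fg)\rangle\,d\mu=0$, one obtains $0\le\cE^0_1(\varv_n,\varv_n^-)-\int_{\R^d}\langle\mathbf{B},\nabla\varv_n^-\rangle\varv_n\,d\mu\le-\int_{\R^d}(\varv_n^-)^2\,d\mu$, so $\varv_n^-=0$, i.e. $h\le\|h\|_{L^\infty(\R^d,\mu)}\chi_n$. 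Applying the same to $-h$ and letting $n\to\infty$ gives $|h|\le\|h\|_{L^\infty(\R^d,\mu)}\chi_n\to0$, whence $h=0$.

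\emph{$L^1$-uniqueness implies invariance.} Conversely, if $(L,C_0^\infty(\R^d))$ is $L^1$-unique, then $(\alpha-L)(C_0^\infty(\R^d))$ is dense in $L^1(\R^d,\mu)$ for some $\alpha>0$; since $C_0^\infty(\R^d)\subset D(L^0)_{0,b}$ and $L$ acts identically on both, $(\alpha-L)(D(L^0)_{0,b})\supseteq(\alpha-L)(C_0^\infty(\R^d))$ is dense as well, so $(L,D(L^0)_{0,b})$ is $L^1$-unique. Proposition \ref{prop2.1.9}, implication $(i)\Rightarrow(ii)$, then gives that $\mu$ is $(\overline{T}_t)_{t>0}$-invariant.

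The genuine obstacle is the first implication, namely the appeal to Theorem \ref{theorem2.1.3}. Unlike for $D(L^0)_{0,b}$ — where multiplying domain elements by smooth cutoffs stays inside the domain, so the $H^{1,2}_{loc}$-regularity of $h$ and the weak equation follow from soft Dirichlet-form manipulations — the class $C_0^\infty(\R^d)$ is too small to run that argument directly, and one must use the Hölder continuity \eqref{AssumptionUniqueness} of $A$ to construct the regularizing sub-Markovian resolvent $V_\alpha$ mapping $C_\infty(\R^d)$ into $C^2_b(\R^d)$ and to bootstrap $h$ into $H^{1,2}_{loc}(\R^d,\mu)$. Once that regularity is available, the remainder of the proof is just the already-proven content of Proposition \ref{prop2.1.9}.
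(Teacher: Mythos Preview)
Your proof is correct and follows essentially the same approach as the paper. The only organizational difference is that, after invoking Theorem~\ref{theorem2.1.3} to obtain the weak equation for $h$, the paper observes this is equivalent to $\int_{\R^d}(1-L)u\,h\,d\mu=0$ for all $u\in D(L^0)_{0,b}$ and then cites Proposition~\ref{prop2.1.9} as a black box (invariance $\Rightarrow$ $L^1$-uniqueness on the larger domain $\Rightarrow$ $h=0$), whereas you unpack that last step and reproduce the $\chi_n$ comparison argument explicitly; the content is the same.
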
  

\begin{proof} 
Clearly, if $(L, C_0^\infty (\R^d ))$ is $L^1$-unique 
it follows that $(L, D(L^0)_{0,b})$ is $L^1$-unique. Hence $\mu$ is 
$(\overline{T}_t)_{t>0}$-invariant by Proposition \ref{prop2.1.9}.

\noindent 
Conversely, let $h\in L^\infty (\R^d, \mu )$ be such that $\int_{\R^d} 
(1-L)u\, h\, d\mu = 0$ for all $u\in C_0^\infty (\R^d)$. Then 
$h\in H_{loc}^{1,2}(\R^d , \mu )$ and $\cE^0_1 (u,h)-\int_{\R^d}\langle \mathbf{B} , 
\nabla u \rangle h\, d\mu = 0$ for all $u\in H_0^{1,2}(\R^d , \mu )_0$ 
by Theorem \ref{theorem2.1.3}. In particular, 
\begin{equation} 
\label{eq:2.1.2.4}
\int_{\R^d} (1-L)u\, h\, d\mu = \cE_1^0(u, h)-\int_{\R^d}\langle \mathbf{B} ,\nabla u\rangle h
\, d\mu = 0 \text{ for all }u\in D(L^0)_{0,b}. 
\end{equation} 
Since $\mu$ is $(\overline{T}_t)_{t>0}$-invariant it follows from Proposition \ref{prop2.1.9} 
that $(L, D(L^0)_{0,b})$ is $L^1$-unique and \eqref{eq:2.1.2.4} now implies that $h=0$. 
Hence $(L, C_0^\infty (\R^d ))$ is $L^1$-unique too.  
\end{proof} 
\noindent
In the particular symmetric case, i.e., we can reformulate 
Corollary \ref{cor2.1.1} as follows: 

\begin{corollary} \label{cor2.1.2} 
Let \eqref{condition on mu}--\eqref{eq:2.1.4} and \eqref{AssumptionUniqueness} be satisfied.
Let $\mathbf{G} = \beta^{\rho, A}$, i.e. $\mathbf{B}=0$  (cf. \eqref{eq:2.1.5a} and \eqref{eq:2.1.3}). 
Then $(L^0, C_0^\infty (\R^d ))$ (cf. \eqref{eq:2.1.3d}) is $L^1$-unique, if and only if the associated 
Dirichlet form $(\cE^0 , D(\cE^0))$ is conservative, i.e. $T_t^0 1_{\R^d}=1$ $\mu$-a.e.  for all $t>0$\index{uniqueness ! $L^1$-unique}\index{semigroup ! conservative}.
\end{corollary}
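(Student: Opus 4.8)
The plan is to combine Corollary \ref{cor2.1.1} with Remark \ref{rem:2.1.10a}(i), once we have identified, in the symmetric case, the semigroup $(T_t)_{t>0}$ of Definition \ref{definition2.1.7} with the semigroup $(T_t^0)_{t>0}$ associated with the Dirichlet form $(\cE^0, D(\cE^0))$. First, observe that $\mathbf{G} = \beta^{\rho,A}$ forces $\mathbf{B} = 0$ by \eqref{eq:2.1.5a}, so that by \eqref{defLfirst} and \eqref{defL'first} (equivalently by \eqref{eq:2.1.3d} and \eqref{eq:2.1.3bis2'}) both $Lu = L^0 u + \langle\mathbf{B},\nabla u\rangle$ and $L'u = L^0 u - \langle\mathbf{B},\nabla u\rangle$ reduce to $L^0 u$ on $D(L^0)_{0,b}$. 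Hence $(\overline{L},D(\overline{L}))$ and $(\overline{L}',D(\overline{L}'))$ are the maximal extensions of one and the same operator; they therefore coincide, so $\overline{T}_t = \overline{T}'_t$ on $L^1(\R^d,\mu)_b$ and, by Definition \ref{definition2.1.7}, $T_t = T'_t$ on every $L^s(\R^d,\mu)$, $s\in[1,\infty]$.

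Next I would identify $(T_t)_{t>0}$ with $(T_t^0)_{t>0}$. Fix an exhausting sequence $(V_n)_{n\ge 1}$ as in Theorem \ref{theorem2.1.5}(ii). Since $\mathbf{B} = 0$, the part on $L^2(V_n,\mu)$ of $\overline{G}^{V_n}_\alpha$ is exactly the resolvent $(\alpha - L^{0,V_n})^{-1}$ of the symmetric Dirichlet form $(\cE^0, H_0^{1,2}(V_n,\mu))$ (cf.\ Proposition \ref{prop:2.1} and \eqref{DefGeneratorDFBoundedDomain}, or Remark \ref{rem2.1.3} with $\mathbf{G}-\beta^{\rho,A}=0$). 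Because $A$ is uniformly strictly elliptic on the relatively compact set $V_n$, the $\|\cdot\|_{H_0^{1,2}(V_n,\mu)}$-norm and $\cE^0_1$ are equivalent on functions supported in $\overline{V}_n$, so $H_0^{1,2}(V_n,\mu)$ is the $\cE^0_1$-closure of $C_0^\infty(V_n)$; consequently the symmetric closed forms $(\cE^0, H_0^{1,2}(V_n,\mu))$ increase to the closed form with domain the $\cE^0_1$-closure of $\bigcup_{n\ge 1}C_0^\infty(V_n) = C_0^\infty(\R^d)$, that is, to $(\cE^0, D(\cE^0))$. By the standard monotone-convergence theorem for increasing sequences of symmetric closed forms, $(\alpha - L^{0,V_n})^{-1} f \to (\alpha - L^0)^{-1} f$ in $L^2(\R^d,\mu)$ for all $f\in L^2(\R^d,\mu)$. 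On the other hand, for $0\le f\in L^1(\R^d,\mu)\cap L^2(\R^d,\mu)$ the sequence $\overline{G}^{V_n}_\alpha f$ is increasing (Lemma \ref{lemma2.1.6}) and bounded in $L^2(\R^d,\mu)$ by sub-Markovianity, hence converges in $L^2(\R^d,\mu)$ to the element $\overline{G}_\alpha f$ of Theorem \ref{theorem2.1.5}(ii). Comparing the two limits gives that the $L^2$-part of $\overline{G}_\alpha$ is $(\alpha - L^0)^{-1}$, i.e.\ $T_t = T_t^0$ on $L^2(\R^d,\mu)$, and hence on all $L^s(\R^d,\mu)$.

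Finally I would assemble the equivalences. Under \eqref{condition on mu}--\eqref{eq:2.1.4} and \eqref{AssumptionUniqueness}, Corollary \ref{cor2.1.1} applied to $L = L^0$ says that $(L^0, C_0^\infty(\R^d))$ is $L^1$-unique if and only if $\mu$ is $(\overline{T}_t)_{t>0}$-invariant; by Remark \ref{rem:2.1.10a}(i) the latter is equivalent to conservativeness of the dual semigroup $(T'_t)_{t>0}$; and by the first two steps $(T'_t)_{t>0} = (T_t^0)_{t>0}$, so this last property is precisely $T_t^0 1_{\R^d} = 1$ $\mu$-a.e.\ for all $t>0$, i.e.\ conservativeness of $(\cE^0, D(\cE^0))$. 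The only substantive point in this argument is the second step — identifying the monotone-limit semigroup with the semigroup of the \emph{full} Dirichlet form $(\cE^0, D(\cE^0))$ — while the first and third steps are immediate from results already established in the text.
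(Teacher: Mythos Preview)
Your proposal is correct and follows essentially the same approach as the paper: apply Corollary~\ref{cor2.1.1} and Remark~\ref{rem:2.1.10a}(i), using that in the symmetric case $(T_t)_{t>0}$, $(T'_t)_{t>0}$ and $(T_t^0)_{t>0}$ all coincide. The paper treats this identification as clear (it simply writes ``Clearly, $(\cE^0 , D(\cE^0))$ is conservative, if and only if $T^{\prime}_t 1_{\R^d} =1$ $\mu$-a.e.\ for all $t> 0$''), whereas you supply a careful justification via the monotone limit of resolvents in Theorem~\ref{theorem2.1.5}(ii) and the monotone-convergence theorem for increasing symmetric forms; this extra detail is correct and a reasonable way to make the ``clearly'' explicit.
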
 

\begin{proof} 
Clearly, $(\cE^0 , D(\cE^0))$ is conservative, if and only if 
$T^{\prime}_t 1_{\R^d} =1$  $\mu$-a.e. for all $t> 0$. 
But by Remark \ref{rem:2.1.10a}(i),
$T^{\prime}_t 1_{\R^d} =1$, $\mu$-a.e., for all $t>0$, if and only if $\int_{\R^d}\overline{T}_t u \, d\mu 
= \int_{\R^d} u\, d\mu$ for all $u\in L^1 (\R^d ,\mu )$ and $t>0$, i.e., $\mu$ is  
$(\overline{T}_t)_{t>0}$-invariant, which implies the result by Corollary \ref{cor2.1.1}.  
\end{proof}

\subsection{Existence and regularity of densities to infinitesimally invariant measures}
\label{sec2.2}
Since the abstract analysis on existence and uniqueness of solutions to the abstract Cauchy problem 
\eqref{eq:2.1.0} developed in Section \ref{sec2.1} requires the existence and certain regularity properties 
of an infinitesimally invariant measure $\mu$ for $(L, C_0^\infty (\R^d))$, i.e. a locally finite 
nonnegative measure satisfying \eqref{condition on mu}--\eqref{eq:2.1.3} and 
\begin{equation} 
\label{eq:2.2.0}
\int_{\R^d} \big (\frac 12 \sum_{i,j =1}^d a_{ij}\partial_{ij}f  + \sum_{i=1}^d g_i \partial_i f\big ) 
\, d\mu = 0,  \qquad  \forall f\in C_0^\infty (\R^d),  
\end{equation} 
we will first identify in Section \ref{subsec:2.2.1} a set of sufficient conditions on the coefficients 
$(a_{ij})_{1\le i,j\le d}$ and $(g_i)_{1\le i\le d}$ that imply the existence of such $\mu$. We will in particular obtain existence of a 
sufficiently regular density $\rho$, that allows us to apply Theorem \ref{theorem2.1.5} in order to obtain 
the existence of a 
closed extension of  $(L,C_0^{\infty}(\R^d))$ generating a sub-Markovian 
$C_0$-semigroup of contractions $(\overline{T}_t)_{t>0}$ on $L^1 (\R^d , \mu )$ with the further properties of Theorem \ref{theorem2.1.5}. 
As one major aim of this book is to understand $L$ as the generator of a solution to an SDE with 
corresponding coefficients, the class of admissible coefficients, i.e. the class of coefficients $(a_{ij})_{1\le i,j\le d}$ and $(g_i)_{1\le i\le d}$ for which \eqref{eq:2.2.0}  has a solution $\mu$ with nice density $\rho$, plays an important 
role.

\subsubsection{Class of admissible coefficients and the main theorem} 
\label{subsec:2.2.1}
In order to understand the class of admissible coefficients better, it will be suitable to write $L$ in divergence form. {\bf Throughout,  we let the dimension $d\ge 2$.} The case $d=1$ plays a special role since it allows for explicit and partly
elementary computations with strong regularity results. It is therefore
best treated separately elsewhere and will therefore not be considered
further from now on. Instead we included the case $d=1$ in the outlook (cf. Chapter \ref{conclusionoutlook} part 1.).
We then consider the following class of divergence form operators\index{operator ! divergence form} with respect to a possibly non-symmetric diffusion matrix and perturbation $\mathbf{H}=(h_1,\dots,h_d)$:
\begin{eqnarray}\label{eq:2.2.0first}
Lf & = & \frac12 \sum_{i,j=1}^{d} \partial_i((a_{ij}+c_{ij})\partial_j)f+\sum_{i=1}^{d}h_i\partial_i f, \quad f\in C^{2}(\R^d),
\end{eqnarray}
where the coefficients $a_{ij}$, $c_{ij}$, and $h_i$, satisfy the following {\bf assumption}\index{assumption ! {\bf (a)}}:
\begin{itemize}
\item[{\bf (a)}] \ \index{assumption ! {\bf (a)}}$a_{ji}= a_{ij}\in H_{loc}^{1,2}(\R^d) \cap C(\R^d)$, $1 \leq i, j \leq d$, $d\ge 2$, and $A = (a_{ij})_{1\le i,j\le d}$ satisfies \eqref{eq:2.1.2}.
$C = (c_{ij})_{1\le i,j\le d}$, with $-c_{ji}=c_{ij} \in H_{loc}^{1,2}(\R^d) \cap C(\R^d)$, $1 \leq i,j \leq d$, $\mathbf{H}=(h_1, \dots, h_d) \in L_{loc}^p(\R^d, \R^d)$ for some $p\in (d,\infty)$.
\end{itemize}
The anti-symmetry $-c_{ji}=c_{ij}$ in assumption {\bf (a)} is needed for the equivalence of infinitesimal invariance \eqref{eq:2.2.0} and variational equality \eqref{eq:2.2.0a}, to switch from divergence form \eqref{eq:2.2.0first} to non-divergence form \eqref{equation G with H}, and to obtain that $\beta^{\rho, C^T}$ has zero divergence in the weak sense with respect to $\mu$ (see Remark \ref{rem:2.2.4}).\\
Under assumption {\bf \text{(a)}},  $L$ as in \eqref{eq:2.2.0first} is written for $f\in C^{2}(\R^d)$ as
\begin{eqnarray}\label{equation G with H}
Lf &= & \frac12\mathrm{div}\big ( (A+C)\nabla f\big )+\langle\mathbf{H}, \nabla f\rangle \nonumber \\
&= & \frac12\mathrm{trace}\big ( A\nabla^2 f\big )+\big \langle\frac{1}{2}\nabla  (A+C^{T})+ \mathbf{H}, \nabla f\big \rangle,\\ \nonumber
\end{eqnarray}
where for a matrix $B=(b_{ij})_{1 \leq i,j \leq d}$ of functions
\begin{equation}\label{divergence of row}
\nabla B=((\nabla B)_1, \ldots, (\nabla B)_d)
\end{equation}
with
\begin{equation}\label{divergence of row i}
(\nabla B)_i=\sum_{j=1}^d\partial_j b_{ij}, \qquad 1 \leq i \leq d.
\end{equation}
{\bf From now on (unless otherwise stated), we assume always that $\mathbf{G}$ has under assumption {\bf \text{(a)}} the following form}:
\begin{eqnarray}\label{form of G}
\mathbf{G}=(g_1, \dots, g_d)=\frac{1}{2}\nabla \big (A+C^{T}\big )+ \mathbf{H},
\end{eqnarray}
where $A$, $C$, and $\mathbf{H}$ are as in assumption {\bf (a)}. Thus $L$ as in \eqref{eq:2.2.0first} is written as a non-divergence form operator\index{operator ! non-divergence form}
\begin{eqnarray*}
Lf & = & \frac12 \sum_{i,j=1}^{d}a_{ij}\partial_{ij}f+\sum_{i=1}^{d}g_i\partial_i f, \quad f \in C^{2}(\R^d),
\end{eqnarray*}
where
\begin{eqnarray}\label{defofL}
g_i=\frac12\sum_{j=1}^{d} \partial_{j} (a_{ij}+c_{ji})+h_i,\quad  1 \leq i \leq d.
\end{eqnarray}
\begin{remark}\label{rem:2.2.7}
{\it The class of admissible coefficients satisfying {\bf \text{(a)}} is quite large. It does not only allow us to consider fairly general divergence form operators.
Assumption {\bf (a)} allows us also to consider a fairly general subclass of non-divergence form operators $L$. Indeed, choose
$a_{ij} \in  H_{loc}^{1,p}(\R^d)\cap C(\R^d)$, $1\le i,j\le d$, for some $p\in (d,\infty)$, such that $A = (a_{ij})_{1\le i,j\le d}$  satisfies \eqref{eq:2.1.2}, $C\equiv 0$,
and 
$$
\mathbf{H}=\mathbf{\widetilde{H}}-\frac12 \nabla A, \text{ with arbitrary }\mathbf{\widetilde{H}}\in L_{loc}^p(\R^d, \R^d).
$$
Putting $\mathbf{\widetilde{H}}=\mathbf{G}$, this leads to any non-divergence form operator $L$, such that for any $f \in C^{2}(\R^d)$
\begin{eqnarray*} 
Lf &=& \frac12 \sum_{i,j=1}^{d} a_{ij}\partial_{ij}f+\sum_{i=1}^{d}g_i\partial_i f\ =\ \frac12\mathrm{trace}(A\nabla^2 f)+\langle \mathbf{G}, \nabla f \rangle \label{eq:2.2.1} \\
\end{eqnarray*}
with the following {\bf assumption} on the coefficients
\begin{itemize}
\item[{\bf (a$^{\prime}$)}] \index{assumption ! {\bf (a$^{\prime}$)}}for some $p\in (d,\infty)$, $a_{ji}= a_{ij}\in H_{loc}^{1,p}(\R^d) \cap C(\R^d)$, $1 \leq i, j \leq d$, $A = (a_{ij})_{1\le i,j\le d}$ satisfies \eqref{eq:2.1.2} and $\mathbf{G}=(g_1,\ldots,g_d) \in L_{loc}^p(\R^d,\R^d)$.
\end{itemize}
}
\end{remark}
\medskip
\noindent
If assumption {\bf (a)} holds and $\rho \in H^{1,2}_{loc}(\R^d)$, \eqref{eq:2.2.0} is by integration by parts equivalent (cf. \eqref{equation G with H}--\eqref{defofL}) to the following variational equality\index{variational equality}:
\begin{equation}\label{eq:2.2.0a}
\int_{\R^d} \langle \frac12 (A+C^T) \nabla \rho - \rho \mathbf{H}, \nabla f \rangle dx  = 0, \quad \forall f \in C_0^{\infty}(\R^d).
\end{equation}
In the next section, we show how the variational equation \eqref{eq:2.2.0a} can be adequately solved using classical tools from PDE theory and that for the measure $\rho\,dx$, where $\rho$ is the solution to \eqref{eq:2.2.0a}, \eqref{eq:2.1.1} and \eqref{eq:2.1.3} are satisfied. In particular, replacing $\hat{A}$ and $\hat{\mathbf{H}}$ in Theorem \ref{Theorem2.2.4} below with $\frac12 (A+C^T)$ and $\mathbf{H}$, respectively, we obtain  the {\bf following main theorem} of this section.

\begin{theorem}\label{theo:2.2.7}
Under assumption {\bf (a)} (see the beginning of Section \ref{subsec:2.2.1}), there exists $\rho \in H^{1,p}_{loc}(\R^d) \cap C(\R^d)$ with $\rho(x)>0$ for all $ x\in \R^d$ such that with $\mu=\rho dx$, and $L$ as in \eqref{equation G with H} (see also \eqref{form of G} and Remark \ref{rem:2.2.7}), it holds that
\begin{equation} \label{eq:2.2.8}
\int_{\R^d} Lf d\mu = 0, \quad \text{ for all } f \in C_0^{\infty}(\R^d).
\end{equation}
In particular, $\mu$ as given above satisfies the assumption \eqref{condition on mu} on $\mu$ at the beginning of Section \ref{subsec:2.1.1} and moreover as a simple consequence of assumption {\bf (a)}, the assumptions \eqref{eq:2.1.1}--\eqref{eq:2.1.4} are satisfied and therefore Theorem \ref{theorem2.1.5} applies.
\end{theorem}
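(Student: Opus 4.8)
The plan is to obtain $\rho$ directly from the PDE existence-and-regularity result \ref{Theorem2.2.4}, applied to the coefficients $\hat A := \frac12(A + C^T)$ and $\hat{\mathbf H} := \mathbf H$, and then to verify that the resulting measure $\mu = \rho\,dx$ fulfils the standing hypotheses \eqref{condition on mu}--\eqref{eq:2.1.4} of Section \ref{subsec:2.1.1}, so that Theorem \ref{theorem2.1.5} becomes applicable. The only genuinely analytic input is Theorem \ref{Theorem2.2.4}; everything else is bookkeeping tied to assumption {\bf (a)}.

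First I would check that $(\hat A, \hat{\mathbf H})$ meets the hypotheses of Theorem \ref{Theorem2.2.4}. Its entries $\hat a_{ij} = \frac12(a_{ij} + c_{ji})$ belong to $H^{1,2}_{loc}(\R^d)\cap C(\R^d)$, since $a_{ij}$ and $c_{ji} = -c_{ij}$ do by {\bf (a)}; $\hat A$ is continuous, hence locally bounded, and locally uniformly strictly elliptic, because antisymmetry of $C^T$ gives $\langle\hat A(x)\xi,\xi\rangle = \frac12\langle A(x)\xi,\xi\rangle \geq \frac{1}{2\nu_V}\|\xi\|^2$ on every relatively compact $V$ by \eqref{eq:2.1.2}; and $\hat{\mathbf H} = \mathbf H \in L^p_{loc}(\R^d,\R^d)$ with $p \in (d,\infty)$ by {\bf (a)}. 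Theorem \ref{Theorem2.2.4} then yields $\rho \in H^{1,p}_{loc}(\R^d)\cap C(\R^d)$ with $\rho > 0$ on $\R^d$ satisfying $\int_{\R^d}\langle\hat A\nabla\rho - \rho\hat{\mathbf H},\nabla f\rangle\,dx = 0$ for all $f \in C_0^\infty(\R^d)$, which is precisely \eqref{eq:2.2.0a}. Since {\bf (a)} holds and $\rho \in H^{1,2}_{loc}(\R^d)$, the integration by parts recorded in \eqref{equation G with H}--\eqref{defofL} (using the antisymmetry of $C$) turns \eqref{eq:2.2.0a} into $\int_{\R^d} Lf\,d\mu = 0$ for all $f \in C_0^\infty(\R^d)$, i.e. \eqref{eq:2.2.8}, with $L$ of the stated form \eqref{equation G with H} (see also \eqref{form of G}).

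It remains to verify \eqref{condition on mu}--\eqref{eq:2.1.4}. Strict positivity and continuity of $\rho$ give $\text{supp}(\mu) = \R^d$, and on every ball $\rho$ is bounded and bounded away from $0$; therefore $\sqrt\rho \in H^{1,2}_{loc}(\R^d)$ by the chain rule, so $\mu = \varphi^2\,dx$ with $\varphi := \sqrt\rho \in H^{1,2}_{loc}(\R^d)$, which is \eqref{condition on mu}. The same local two-sided bound on $\rho$ shows that $H^{1,2}_{loc}(\R^d,\mu)$ coincides with $H^{1,2}_{loc}(\R^d)$ and $L^2_{loc}(\R^d,\R^d,\mu)$ with $L^2_{loc}(\R^d,\R^d,dx)$; hence $a_{ij} \in H^{1,2}_{loc}(\R^d,\mu)$ by {\bf (a)} (this is \eqref{eq:2.1.1}), \eqref{eq:2.1.2} is immediate from {\bf (a)}, and $\mathbf G = \frac12\nabla(A + C^T) + \mathbf H$ lies in $L^2_{loc}(\R^d,\R^d,\mu)$ because the components of $\nabla(A+C^T)$ are weak derivatives of $H^{1,2}_{loc}$-functions, hence in $L^2_{loc}$, while $\mathbf H \in L^p_{loc}\subset L^2_{loc}$ since $p > d \geq 2$ (this is \eqref{eq:2.1.3}). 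Finally \eqref{eq:2.1.4} coincides with the already established \eqref{eq:2.2.8}. Thus Theorem \ref{theorem2.1.5} applies.

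The whole weight of the argument sits in Theorem \ref{Theorem2.2.4}: constructing a globally defined, strictly positive solution of the double-divergence equation $\mathrm{div}(\hat A\nabla\rho) = \mathrm{div}(\rho\hat{\mathbf H})$ with the sharp local regularity $\rho \in H^{1,p}_{loc}$, given only continuous leading coefficients and an $L^p_{loc}$ drift with $p > d$. I expect this to require a local-to-global construction of weak solutions, De Giorgi--Nash--Moser local boundedness and Hölder estimates, Harnack's inequality for the strict positivity, and $W^{1,p}$-regularity for divergence-form equations with continuous coefficients and $L^p$ right-hand side — all of which is deferred to (and proved in) Theorem \ref{Theorem2.2.4}. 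The steps carried out above are then routine.
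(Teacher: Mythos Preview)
Your proof is correct and follows essentially the same route as the paper: apply Theorem \ref{Theorem2.2.4} with $\hat A = \tfrac12(A+C^T)$ and $\hat{\mathbf H} = -\mathbf H$ (note the sign in the conclusion of Theorem \ref{Theorem2.2.4} is $+\rho\hat{\mathbf H}$, so your choice $\hat{\mathbf H}=\mathbf H$ together with the equation you wrote has a harmless sign slip), then integrate by parts to get \eqref{eq:2.2.8}, and finally use continuity and strict positivity of $\rho$ plus the chain rule to verify \eqref{condition on mu}--\eqref{eq:2.1.4}. The paper's proof is the same, differing only in the cosmetic choice $\hat A = A+C^T$ rather than $\tfrac12(A+C^T)$.
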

\medskip
By Remark \ref{rem:2.2.7}, the first part of Theorem \ref{theo:2.2.7} is a generalization of \cite[Theorem 1]{BRS} (see also \cite[Theorem 2.4.1]{BKRS}), where the existence of a density $\rho$ with the same properties as in Theorem \ref{theo:2.2.7} is derived under 
assumption {\bf (a$^{\prime}$)}.

\subsubsection{Proofs} 
\label{subsec:2.2.2}

This section serves to provide the missing ingredients for the proof of Theorem \ref{theo:2.2.7}, in particular Theorem \ref{Theorem2.2.4} below.

\begin{lemma} \label{Lemma2.2.1}   %[Lax-Milgram \& Fredholm alternative]
Let $\hat{A}:=(\hat{a}_{ij})_{1 \leq i,j \leq d}$ be a (possibly non-symmetric) matrix of bounded measurable functions on an open ball $B$ and suppose there is a constant $\lambda>0$ such that 
\begin{equation} \label{eq:2.2.3}
\lambda \|\xi\|^2 \leq \langle \hat{A}(x) \xi, \xi \rangle, \quad \forall \xi \in \R^d, x\in B.
\end{equation}
Let $\hat{\mathbf{H}} \in L^{d\vee (2+\varepsilon)}(B, \R^d)$ for some $\varepsilon>0$.
Then for any $\Phi \in H^{1,2}_0(B)'$, there exists a unique $u \in H^{1,2}_0(B)$ such that
$$
\int_B \langle \hat{A} \nabla u+ u\hat{\mathbf{H}}, \nabla \varphi \rangle dx = \[\Phi, \varphi \], \quad \forall \varphi \in H^{1,2}_0(B),
$$ 
where $[\cdot,\cdot]$ denotes the dualization between $H^{1,2}_0(B)'$ and $H^{1,2}_0(B)$, i.e.$\[\Phi, \varphi \]=\Phi(\varphi)$.
\end{lemma}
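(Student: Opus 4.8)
The plan is to prove existence and uniqueness by the Lax--Milgram theorem combined with the Fredholm alternative, reducing everything to the uniqueness of the homogeneous equation; this last point is the delicate one, because the integrability assumed on $\hat{\mathbf{H}}$ is scaling critical. First I would set up the bilinear form
$$
\mathcal{B}(u,\varphi):=\int_B\langle \hat{A}\nabla u+u\hat{\mathbf{H}},\nabla\varphi\rangle\,dx,\qquad u,\varphi\in H^{1,2}_0(B),
$$
and check its continuity on $H^{1,2}_0(B)$ from $\hat{A}\in L^\infty$, H\"older's inequality, and the Sobolev embedding $H^{1,2}_0(B)\hookrightarrow L^{2^*}(B)$ with $2^*=2d/(d-2)$ when $d\ge 3$, respectively $H^{1,2}_0(B)\hookrightarrow L^r(B)$ for all $r<\infty$ when $d=2$; this is exactly where the hypothesis $\hat{\mathbf{H}}\in L^{d\vee(2+\varepsilon)}(B)$ is used. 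Next I would establish a G{\aa}rding-type inequality: decomposing $\hat{\mathbf{H}}=\hat{\mathbf{H}}_1+\hat{\mathbf{H}}_2$ with $\|\hat{\mathbf{H}}_1\|_{L^{d\vee(2+\varepsilon)}(B)}$ as small as we wish and $\hat{\mathbf{H}}_2\in L^\infty(B)$, the $\hat{\mathbf{H}}_1$-contribution to $\mathcal{B}(u,u)$ is absorbed into $\frac{\lambda}{2}\|\nabla u\|_{L^2(B)}^2$ by H\"older/Sobolev and smallness, and the $\hat{\mathbf{H}}_2$-contribution into $\frac{\lambda}{4}\|\nabla u\|_{L^2(B)}^2+C\|u\|_{L^2(B)}^2$ by Young's inequality, so that $\mathcal{B}(u,u)\ge\frac{\lambda}{4}\|\nabla u\|_{L^2(B)}^2-C_0\|u\|_{L^2(B)}^2$ for some $C_0>0$. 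Hence, for $\alpha>C_0$, the form $\mathcal{B}_\alpha:=\mathcal{B}+\alpha(\cdot,\cdot)_{L^2(B)}$ is bounded and coercive on $H^{1,2}_0(B)$, and by Lax--Milgram every $\Phi\in H^{1,2}_0(B)'$ yields a unique $R_\alpha\Phi\in H^{1,2}_0(B)$ with $\mathcal{B}_\alpha(R_\alpha\Phi,\varphi)=[\Phi,\varphi]$ for all $\varphi\in H^{1,2}_0(B)$, and $R_\alpha$ is bounded.

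Then I would rewrite the original problem as a compact perturbation of the identity. For $u\in H^{1,2}_0(B)$, the equation $\mathcal{B}(u,\varphi)=[\Phi,\varphi]$ for all $\varphi$ is equivalent to $u=R_\alpha\Phi+\alpha R_\alpha(Ju)$, where $Ju\in H^{1,2}_0(B)'$ denotes $\varphi\mapsto\int_B u\varphi\,dx$. Since $R_\alpha$ is bounded and $J$ restricted to $H^{1,2}_0(B)$ factors through the compact Rellich embedding $H^{1,2}_0(B)\hookrightarrow L^2(B)$, the operator $T:=\alpha R_\alpha J$ is compact on $H^{1,2}_0(B)$, and the equation becomes $(I-T)u=R_\alpha\Phi$ with $T$ compact. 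By the Fredholm alternative $(I-T)$ is boundedly invertible --- equivalently, the equation is uniquely solvable for every $\Phi$ --- if and only if $\ker(I-T)=\{0\}$, and $u\in\ker(I-T)$ is the same as $\mathcal{B}(u,\varphi)=0$ for all $\varphi\in H^{1,2}_0(B)$. Thus everything reduces to showing that such a $u$ must vanish (equivalently, by the Fredholm alternative, one may instead treat the adjoint homogeneous problem $\mathcal{B}(\varphi,v)=0$, i.e. the homogeneous equation for the transport-drift operator $-\mathrm{div}(\hat{A}^T\nabla v)+\langle\hat{\mathbf{H}},\nabla v\rangle$).

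The main obstacle is precisely this homogeneous uniqueness at the critical integrability of $\hat{\mathbf{H}}$: the bare energy identity from testing with $u$ only closes under a smallness condition on $\|\hat{\mathbf{H}}\|_{L^{d\vee(2+\varepsilon)}(B)}$. To remove the smallness I would appeal to the De Giorgi--Nash--Moser regularity theory for divergence-form operators with merely bounded measurable principal part and $L^d$ (respectively $L^{2+\varepsilon}$ when $d=2$) lower-order coefficient. Concretely, truncation test functions $(u-k)^+$, together with the fact that $\|\hat{\mathbf{H}}\|_{L^{d\vee(2+\varepsilon)}(\{|u|>k\})}\to0$ as $k\to\infty$, performed on sufficiently small balls where the critical norm of $\hat{\mathbf{H}}$ is small and then combined by a covering and chaining argument, show that $u$ is locally bounded; and a bounded weak solution of $\mathrm{div}(\hat{A}\nabla u+u\hat{\mathbf{H}})=0$ in $B$ with vanishing $H^{1,2}_0(B)$-trace obeys the weak maximum principle, forcing the essential supremum of $u$ over $B$ to be $\le 0$ and its essential infimum to be $\ge 0$, i.e. $u\equiv 0$. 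Feeding this back into the Fredholm reduction yields existence and uniqueness of the solution for every $\Phi\in H^{1,2}_0(B)'$, which is the claim.
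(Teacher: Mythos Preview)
Your proposal is correct and follows essentially the same route as the paper: continuity and a G\aa rding inequality for the shifted form $\mathcal B_\alpha$ (the paper cites Stampacchia \cite[Lemme~1.5, Th\'eor\`eme~3.2]{St65} for this), Lax--Milgram for $\mathcal B_\gamma$, compactness of $S\circ J$ via Rellich, and the Fredholm alternative, with uniqueness of the homogeneous problem settled by a maximum principle at the critical integrability (the paper invokes Trudinger \cite[Theorem~4]{T77} directly rather than sketching the truncation argument you outline).
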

\begin{proof}
For $\alpha \geq 0$, define a bilinear form $\mathcal{B}_{\alpha}: H^{1,2}_0(B) \times H^{1,2}_0(B)  \rightarrow \R$ by
$$
\mathcal{B}_{\alpha}(u,v) = \int_B \langle \hat{A} \nabla u+ u\hat{\mathbf{H}} , \nabla v \rangle dx + \alpha \int_{B} uv dx.
$$
Then by \cite[Lemme 1.5, Th\'eor\`eme 3.2]{St65}, there exist constants $M, \gamma, \delta>0$ such that
$$
|\mathcal{B}_{\gamma}(u,v)| \leq M \|u\|_{H_0^{1,2}(B)} \|v\|_{H_0^{1,2}(B)}\, ,\quad \forall u,v \in H^{1,2}_0(B)
$$
and
\begin{equation} \label{eq:2.2.4}
|\mathcal{B}_{\gamma}(u,u)| \geq \delta \|u\|^2_{H^{1,2}_0(B)}.
\end{equation}
Let $\Psi \in H^{1,2}_0(B)'$ be given.
Then by the Lax--Milgram theorem \cite[Corollary 5.8]{BRE}, there exists a unique $S(\Psi) \in H^{1,2}_0(B)$ such that
$$
\mathcal{B}_{\gamma}(S(\Psi), \varphi) = \[\Psi, \varphi \], \quad \varphi \in H^{1,2}_0(B).
$$
By \eqref{eq:2.2.4}, it follows that the map $S: H_0^{1,2}(B)' \rightarrow H^{1,2}_0(B)$ is a bounded linear operator. 
Now define $J: H^{1,2}_0(B) \rightarrow H^{1,2}_0(B)'$ by 
$$
[ J(u),v ] = \int_{B} u v dx, \qquad u, v \in H^{1,2}_0(B).
$$
By the weak compactness of balls in $H^{1,2}_0(B)$, $J$ is a compact operator, hence $S \circ J: H^{1,2}_0(B) \rightarrow H^{1,2}_0(B)$
is also a compact operator. In particular 
$$
\exists v \in H^{1,2}_0(B) \ \text{  with }\ \mathcal{B}_0(v,\varphi) = [ \Psi, \varphi ],\quad \forall \varphi \in  H_0^{1,2}(B),
$$ 
if and only if
$$
\exists v \in H^{1,2}_0(B) \ \text{  with } \ \left(I-\gamma S\circ J\right)(v) = S(\Psi),
$$
where $I:H^{1,2}_0(B) \rightarrow H^{1,2}_0(B)$ is the identity map.
By the maximum principle \cite[Theorem 4]{T77}, $\left(I-\gamma S \circ J\right)(v) =S(0)$ if and only if $v=0$.
Now let $\Phi \in H^{1,2}_0(B)'$ be given. Using the Fredholm-alternative \cite[Theorem 6.6(c)]{BRE}, we can see that there exists a unique $u \in H^{1,2}_0(B)$ such that
$$
\left(I-\gamma S\circ J\right)(u) = S(\Phi),
$$
as desired.
\end{proof}

The following Theorem \ref{Theorem2.2.2} originates from \cite[Theorem 1.8.3]{BKRS}, where $\hat{A}$ is supposed to be symmetric, but it straightforwardly extends to non-symmetric $\hat{A}$ by \cite[Theorem 1.2.1]{BKRS} (see \cite[Theorem 2.8]{Kr07} for the original result), since \cite[Theorem 2.8]{Kr07} holds for non-symmetric $\hat{A}$. 
\begin{theorem}  \label{Theorem2.2.2}
Let $B$ be an open ball in $\R^d$ and $\hat{A}=(\hat{a}_{ij})_{1 \leq i,j \leq d}$ be a (possibly non-symmetric) matrix of continuous functions on $\overline{B}$ that satisfies \eqref{eq:2.2.3}. 
Let $\mathbf{F} \in L^p(B, \R^d)$ for some $p\in (d,\infty)$. Suppose $u \in H^{1,2}(B)$ satisfies
$$
\int_{B} \langle \hat{A} \nabla u,\nabla \varphi\rangle dx = \int_{B} \langle \mathbf{F}, \nabla \varphi \rangle dx, \quad \forall \varphi \in C_0^{\infty}(B).
$$
Then  $u \in H^{1,p}(V)$ for any open ball $V$ with $\overline{V} \subset B$.
\end{theorem}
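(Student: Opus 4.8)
The plan is to prove a local $H^{1,p}$-estimate by the classical freezing-of-coefficients technique and then to patch the local statements together by a finite covering of $\overline{V}$. First I would fix a point $x_0 \in B$ with $B_{r_0}(x_0) \subset B$ and abbreviate $\hat{A}_0 := \hat{A}(x_0)$, a constant elliptic (in general non-symmetric) matrix. Rewriting the weak equation for $u$ on $B_{r_0}(x_0)$ as
$$
\mathrm{div}\big(\hat{A}_0 \nabla u\big) = \mathrm{div}\big(\mathbf{F} + (\hat{A}_0 - \hat{A})\nabla u\big) \quad \text{in } B_{r_0}(x_0),
$$
the problem is reduced to a constant-coefficient divergence-form equation perturbed by a term whose coefficient $\hat{A}_0 - \hat{A}$ can be made uniformly small: since $\hat{A}$ is continuous on the compact set $\overline{B}$, it is uniformly continuous there, so for any $\varepsilon > 0$ there is $r_0 > 0$, independent of $x_0$, with $\sup_{y \in B_{r_0}(x_0)} \|\hat{A}(y) - \hat{A}_0\| \le \varepsilon$.

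The core ingredient is the constant-coefficient Calderón--Zygmund estimate: for a constant elliptic matrix $\hat{A}_0$ and $\mathbf{g} \in L^q(\R^d,\R^d)$, $1 < q < \infty$, the solution $w$ of $\mathrm{div}(\hat{A}_0 \nabla w) = \mathrm{div}\,\mathbf{g}$ satisfies $\|\nabla w\|_{L^q} \le C_q \|\mathbf{g}\|_{L^q}$, with $C_q$ depending only on $d$, $q$ and the ellipticity constant; this is the $L^q$-boundedness of the singular-integral operator $\mathbf{g} \mapsto \nabla\big(\mathrm{div}(\hat{A}_0\nabla\cdot)\big)^{-1}\mathrm{div}\,\mathbf{g}$, whose kernel is obtained from the second derivatives of the fundamental solution of $\mathrm{div}(\hat{A}_0\nabla\cdot)$ and is a standard Calderón--Zygmund kernel, and symmetry of $\hat{A}_0$ is never used here (this is precisely the content of \cite[Theorem 2.8]{Kr07}). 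Since $C_q$ stays bounded as $q$ ranges over the compact interval $[2,p]$, I would first fix $r_0$ so small that $C_q\,\varepsilon \le \tfrac12$ for all $q \in [2,p]$. Then a cutoff function $\eta \in C_0^\infty(B_{r_0}(x_0))$, applied to $u$, turns the displayed equation into one for $\eta u \in H^{1,2}_0(B_{r_0}(x_0))$ whose right-hand side is $\mathrm{div}\big(\eta\mathbf{F} + \eta(\hat{A}_0-\hat{A})\nabla u\big)$ plus lower-order terms containing $\nabla\eta$, $u$ and $\nabla u$; one absorbs the contribution of the perturbation term using the bound $C_q\varepsilon \le \tfrac12$, while $\mathbf{F} \in L^p \subset L^q$ on the ball. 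Starting from the given $u \in H^{1,2}$ and iterating (at each step using Sobolev embedding to control the lower-order terms in a better $L^q$-norm on a slightly smaller ball), the local integrability exponent of $\nabla u$ strictly increases at each step and reaches $p$ after finitely many steps, so that $u \in H^{1,p}(B_{r_0/2}(x_0))$.

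Finally, since $\overline{V}$ is a compact subset of $B$, I would cover it by finitely many balls $B_{r_0/2}(x_0)$ with $B_{r_0}(x_0) \subset B$; on each of them $u \in H^{1,p}$, whence $u \in H^{1,p}(V)$. Equivalently, one may simply quote \cite[Theorem 1.8.3]{BKRS} together with \cite[Theorem 1.2.1]{BKRS}, which package precisely this argument and already cover the non-symmetric case. The step I expect to be the main obstacle is the constant-coefficient $L^q$-estimate for non-symmetric $\hat{A}_0$: one has to be sure that the Calderón--Zygmund machinery, i.e.\ existence and pointwise bounds for the fundamental solution of $\mathrm{div}(\hat{A}_0\nabla\cdot)$ and the ensuing singular-integral boundedness, survives the loss of symmetry. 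It does, which is exactly why the symmetric results of \cite{BKRS} extend as asserted.
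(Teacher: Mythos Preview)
Your proposal is correct, and in fact the paper does not supply a proof of this statement at all: it simply records that the result originates from \cite[Theorem~1.8.3]{BKRS} (stated there for symmetric $\hat{A}$) and extends to the non-symmetric case via \cite[Theorem~1.2.1]{BKRS}, equivalently \cite[Theorem~2.8]{Kr07}. These are precisely the references you invoke at the end of your proposal, so your approach and the paper's are identical at the citation level.

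What you add beyond the paper is an outline of the machinery behind those references---the freezing-of-coefficients reduction, the constant-coefficient Calder\'on--Zygmund estimate, absorption of the small perturbation, and the bootstrap from $q=2$ up to $q=p$ on nested balls. This is exactly the standard argument underlying \cite[Theorem~1.8.3]{BKRS}, and your observation that symmetry of $\hat{A}_0$ plays no role in the singular-integral bound is the key point that justifies the non-symmetric extension. So there is nothing to correct: your sketch is sound, and it simply unpacks what the paper chose to leave as a black-box citation.
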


\begin{theorem} \label{Theorem2.2.4}
Let $\hat{A}:=(\hat{a}_{ij})_{1 \leq i,j \leq d}$ be a (possibly non-symmetric) matrix of locally bounded measurable functions on $\R^d$. 
Assume that for each open ball $B$  there exists a constant $\lambda_B>0$ such that 
$$
\lambda_B \|\xi\|^2 \leq \langle \hat{A}(x) \xi, \xi \rangle, \quad \xi \in \R^d, x\in B.
$$ 
Let $\hat{\mathbf{H}}\in L^p_{loc}(\R^d, \R^d)$  for some $p\in (d,\infty)$. Then it holds that:\\[3pt]
(i)  There exists $\rho \in H_{loc}^{1,2}(\R^d) \cap C(\R^d)$ with $\rho(x)>0$ for all $x \in \R^d$
such that 
$$
\int_{\R^d} \langle \hat{A} \nabla \rho+ \rho \hat{\mathbf{H}}, \nabla \varphi \rangle dx = 0, \quad \forall \varphi \in C^{\infty}_0(\R^d).
$$ 
(ii) If additionally $\hat{a}_{ij} \in C(\R^d)$, $1 \leq i,j \leq d$, then $\rho \in H^{1,p}_{loc}(\R^d)$.
\end{theorem}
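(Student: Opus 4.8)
The plan is to obtain $\rho$ as a limit of densities that solve Dirichlet problems on an exhausting sequence of balls, using Lemma~\ref{Lemma2.2.1} for solvability and classical divergence-form elliptic regularity (De~Giorgi--Nash--Moser, Harnack, maximum principle) for the compactness and the strict positivity. Fix open balls $B_n$ centered at $0$ with $B_n\uparrow\R^d$. For each $n$ the functional $\Phi_n(\varphi):=-\int_{B_n}\langle\hat{\mathbf H},\nabla\varphi\rangle\,dx$ lies in $H^{1,2}_0(B_n)'$ (as $\hat{\mathbf H}\in L^p_{loc}\subset L^2_{loc}$), and since $p>d\ge2$ we may fix $\varepsilon>0$ with $d\vee(2+\varepsilon)\le p$, so $\hat{\mathbf H}|_{B_n}\in L^{d\vee(2+\varepsilon)}(B_n)$. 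Lemma~\ref{Lemma2.2.1} then yields a unique $v_n\in H^{1,2}_0(B_n)$ with $\int_{B_n}\langle\hat A\nabla v_n+v_n\hat{\mathbf H},\nabla\varphi\rangle\,dx=-\int_{B_n}\langle\hat{\mathbf H},\nabla\varphi\rangle\,dx$ for all $\varphi\in H^{1,2}_0(B_n)$. Setting $\rho_n:=v_n+1\in H^{1,2}(B_n)$ gives $\rho_n-1\in H^{1,2}_0(B_n)$ and, adding $\int\langle\hat{\mathbf H},\nabla\varphi\rangle\,dx$ to both sides,
\[
\int_{B_n}\langle\hat A\nabla\rho_n+\rho_n\hat{\mathbf H},\nabla\varphi\rangle\,dx=0,\qquad\forall\,\varphi\in H^{1,2}_0(B_n).
\]
By the regularity theory for such equations with drift $\hat{\mathbf H}\in L^p$, $p>d$ (see \cite[Theorem 2.8]{Kr07} and \cite[Theorems 1.2.1, 1.8.3]{BKRS}), each $\rho_n$ has a locally Hölder continuous version, and the maximum principle \cite[Theorem 4]{T77}, applied as in the proof of Lemma~\ref{Lemma2.2.1} with the nonnegative boundary datum $1$, gives $\rho_n\ge0$; moreover $\rho_n\not\equiv0$ since $\rho_n-1\in H^{1,2}_0(B_n)$.

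Next I establish uniform-in-$n$ estimates. By the Harnack inequality for nonnegative solutions of $\mathrm{div}(\hat A\nabla u+u\hat{\mathbf H})=0$ (Trudinger--Serrin), $\rho_n>0$ throughout $B_n$, so we may normalize, replacing $\rho_n$ by $\rho_n/\rho_n(0)$, which leaves the equation unchanged. Fix $R>0$. For all $n$ with $B_{2R}\subset B_n$, the same Harnack inequality gives $\sup_{B_R}\rho_n\le C_R\inf_{B_R}\rho_n$, where $C_R$ depends only on $d$, $R$, the ellipticity constant and the sup-norm of $\hat A$ on $B_{2R}$, and $\|\hat{\mathbf H}\|_{L^p(B_{2R})}$, hence not on $n$. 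Since $\rho_n(0)=1$, this forces $C_R^{-1}\le\rho_n\le C_R$ on $B_R$ uniformly in $n$. De~Giorgi--Nash--Moser then bounds $(\rho_n)_n$ in $C^{0,\gamma(R)}(\overline{B_R})$ for some $\gamma(R)>0$, the Caccioppoli inequality (testing with $\zeta^2\rho_n$, $\zeta$ a cutoff) bounds it in $H^{1,2}(B_R)$, and, if additionally the $\hat a_{ij}$ are continuous, Theorem~\ref{Theorem2.2.2} applied with $\mathbf F:=-\rho_n\hat{\mathbf H}\in L^p(B_{R'})$ bounds it in $H^{1,p}(B_R)$.

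A diagonal argument over $R\to\infty$ now extracts a subsequence $\rho_{n_k}$ converging locally uniformly to a continuous $\rho$ (Arzelà--Ascoli) and weakly in $H^{1,2}_{loc}(\R^d)$, resp.\ weakly in $H^{1,p}_{loc}(\R^d)$ in the continuous-coefficient case; thus $\rho\in H^{1,2}_{loc}(\R^d)\cap C(\R^d)$, resp.\ $\rho\in H^{1,p}_{loc}(\R^d)$. Given $\varphi\in C_0^\infty(\R^d)$ with $\mathrm{supp}\,\varphi\subset B_m$, for $n_k>m$ we have $\int\langle\hat A\nabla\rho_{n_k}+\rho_{n_k}\hat{\mathbf H},\nabla\varphi\rangle\,dx=0$; letting $k\to\infty$, using weak $L^2$-convergence of $\nabla\rho_{n_k}$ tested against $\hat A^{T}\nabla\varphi$ and uniform convergence of $\rho_{n_k}$ tested against $\langle\hat{\mathbf H},\nabla\varphi\rangle\in L^1(B_m)$, yields $\int\langle\hat A\nabla\rho+\rho\hat{\mathbf H},\nabla\varphi\rangle\,dx=0$, i.e.\ the variational identity in (i). The uniform bounds $\rho_n\ge C_R^{-1}$ on $B_R$ pass to the limit, so $\rho>0$ on $\R^d$; this proves (i). For (ii), continuity of the $\hat a_{ij}$ makes $\rho$ locally bounded, so $\mathbf F:=-\rho\hat{\mathbf H}\in L^p_{loc}(\R^d,\R^d)$, and applying Theorem~\ref{Theorem2.2.2} directly to $\rho$ gives $\rho\in H^{1,p}_{loc}(\R^d)$.

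The part requiring care is not the functional-analytic skeleton above but the input from scalar divergence-form elliptic regularity: local boundedness and Hölder continuity, the Harnack inequality, and the weak and strong maximum principles must all be available — and with constants independent of $n$ — for equations $\mathrm{div}(\hat A\nabla u+u\hat{\mathbf H})=0$ where $\hat A$ is merely measurable and locally bounded (the ellipticity being imposed only as a lower bound) and the drift satisfies only $\hat{\mathbf H}\in L^p_{loc}$ with $p>d$. It is precisely this regularity that converts a single pointwise normalization into uniform two-sided bounds on every compact set, hence into the compactness needed to pass to the limit; the required statements are furnished by the work of Trudinger, Serrin and Stampacchia and its exposition in \cite{St65,T77,Kr07,BKRS}.
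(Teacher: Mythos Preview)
Your proof is correct and follows essentially the same route as the paper: solve Dirichlet problems on balls via Lemma~\ref{Lemma2.2.1}, shift by $1$, use the maximum principle and Harnack inequality to get strict positivity, normalize at the origin, obtain uniform local $L^\infty$, H\"older and $H^{1,2}$ bounds, extract a diagonal subsequence, and pass to the limit; for (ii) apply Theorem~\ref{Theorem2.2.2} to the limit $\rho$. The only cosmetic differences are that the paper makes the nonnegativity step explicit by showing $u_n^-\in H^{1,2}_0(B_n)$ is a subsolution (via \cite[Lemma 3.4]{LT19}) before invoking \cite[Theorem 4]{T77}, and it deduces $\rho>0$ by applying Harnack once more to the limit rather than by passing the lower bound $\rho_n\ge C_R^{-1}$ through; also your citation of \cite{Kr07,BKRS} for H\"older continuity of $\rho_n$ under merely measurable $\hat A$ is misplaced (those give $H^{1,p}$ regularity under continuity of the coefficients), but you correctly invoke De~Giorgi--Nash--Moser for this in the next paragraph.
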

\begin{proof}
(i) Let $n \in \N$. By Lemma  \ref{Lemma2.2.1}   and \cite[Corollary 5.5]{T77}, there exists $v_n \in H_0^{1,2}(B_n) \cap C(B_n)$ such that
\begin{equation*}
\int_{B_{n}}  \langle \hat{A} \nabla v_n+ v_n \hat{\mathbf{H}} , \nabla \varphi  \rangle  dx = \int_{B_n} \langle-\hat{\mathbf{H}}, \nabla \varphi \rangle dx, \quad \text{ for all } \varphi \in C_0^{\infty}(B_n).
\end{equation*}
Let $u_n:= v_n+1$.  Then $T(u_n)=1$ on $\partial B_n$, where $T$ is the trace operator from $H^{1,2}(B_n)$ to $L^2(\partial B_n)$. Moreover,
\begin{equation} \label{eq:2.2.5}
\int_{B_{n}}  \langle \hat{A} \nabla u_n+ u_n \hat{\mathbf{H}} , \nabla \varphi  \rangle  dx = 0, \quad \text{ for all } \varphi \in C_0^{\infty}(B_n).
\end{equation} 
Since $0 \leq u_n^{-}\le v_n^-$, we have $u^-_n \in H^{1,2}_0(B_n)$.  Thus by 
\cite[Lemma 3.4]{LT19}, we get 
\begin{equation*}
\qquad \quad \int_{B_n} \langle \hat{A} \nabla u_n^{-} +u_n^{-} \,\hat{\mathbf{H}}, \nabla \varphi  \rangle dx \leq 0, \quad  \varphi \in C_0^{\infty}(B_n) \text{ with } \varphi \geq 0.
\end{equation*}
By the maximum principle \cite[Theorem 4]{T77}, we have $u_n^- \leq 0$, hence $u_n \geq 0$. Suppose there exists $x_0 \in B_n$ such that $u_n(x_0)=0$. Then applying the Harnack inequality of \cite[Corollary 5.2]{T73} to $u_n$ of \eqref{eq:2.2.5}, $u_n(x)=0$  for all $x \in B_n$, hence $T(u_n)=0$ on $\partial B_n$, which is contradiction. Therefore, $u_n$ is strictly positive on $B_n$. Now let $\rho_n(x):= u_n(0)^{-1}u_n(x)$, $x\in B_n, n\in \N$. Then $\rho_n(0)=1$ and
$$
\int_{B_n}  \langle  \hat{A} \nabla \rho_n +\rho_n \hat{\mathbf{H}} , \;\nabla \varphi  \rangle dx=0 \;\; \text{ for all } \varphi \in C_0^{\infty}(B_n). 
$$
Fix $r>0$. Then, by \cite[Corollary 5.2]{T73}
$$
\sup_{x \in B_{2r}}\rho_n(x) \leq C_1 \inf_{x \in B_{2r}}\rho_n(x) \leq C_1 \; \text{ for all } n>2r,
$$
where $C_1>0$ is independent of $\rho_n$, $n>2r$. 
By \cite[Lemma 5.2]{St65}, 
$$
\| \rho_n \|_{H^{1,2}(B_{r})} \leq C_2 \|\rho_n \|_{L^2(B_{2r})} \leq C_1 C_2 \, dx(B_{2r}), \; \text{ for all } n>2r,
$$
where $C_2$ is independent of $(\rho_n)_{n>2r}$. By \cite[Corollary 5.5]{GilbargTrudinger}
$$
\| \rho_n \|_{C^{0,\gamma}(\overline{B}_r)} \leq  C_3 \sup_{B_{2r}}\|\rho_n \| \leq C_1 C_3,
$$
where $\gamma \in (0,1)$ and $C_3>0$ are independent of $(\rho_n)_{n>2r}$.
By weak compactness of balls in $H_0^{1,2}(B_r)$ and the Arzela--Ascoli theorem, there exists $(\rho_{n,r})_{n \geq 1} \subset (\rho_n)_{n >2r}$ and $\rho_{(r)} \in H^{1,2}(B_r) \cap C(\overline{B}_r)$
such that as $n\to \infty$
$$
\rho_{n,r} \rightarrow \rho_{(r)} \;\text{ weakly in }  H^{1,2}(B_r), \qquad \rho_{n,r} \rightarrow \rho_{(r)} \;\text{ uniformly on } \overline{B}_r.
$$
Choosing $(\rho_{n,k})_{n \geq 1} \supset (\rho_{n,k+1})_{n \geq 1}$, $k\in \N$, we get $\rho_{(k)}=\rho_{(k+1)}$ on $B_{k}$, hence we can well-define $\rho$ as 
$$
\rho := \rho_{(k)} \text{ on } B_{k}, k\in \N.
$$
Finally, applying the Harnack inequality of \cite[Corollary 5.2]{T73}, it holds that $\rho(x)>0$ for all $x \in \R^d$.\\
(ii) Let $R>0$. Then
$$
\int_{B_{2R}} \langle \hat{A} \nabla \rho, \nabla \varphi \rangle dx = - \int_{B_{2R}} \langle \rho \hat{\mathbf{H}}, \nabla \varphi \rangle dx, \quad \forall \varphi \in C^{\infty}_0(B_{2R}).
$$ 
Since $\rho \hat{\mathbf{H}} \in L^p(B_{2R}, \R^d)$, we obtain $\rho \in H^{1,p}(B_R)$ by Theorem \ref{Theorem2.2.2}.
\end{proof}

\begin{proof}{\it  of Theorem \ref{theo:2.2.7}}  \;  By Theorem \ref{Theorem2.2.4} applied with $\hat{A}=A+C^T$, there exists $\rho \in H^{1,p}_{loc}(\R^d) \cap C(\R^d)$ with $\rho(x)>0$ for all $x \in \R^d$ such that the variational equation \eqref{eq:2.2.0a} holds.
Using integration by parts, we obtain from \eqref{eq:2.2.0a}
$$
-\int_{\R^d} \left(\frac12\mathrm{trace}\big ( A\nabla^2 f\big )+\big \langle\frac{1}{2}\nabla  (A+C^{T})+ \mathbf{H}, \nabla f\big \rangle \right) \rho dx=0 , \quad \forall f \in C_0^{\infty}(\R^d).
$$
Letting $\mu=\rho dx$, \eqref{eq:2.2.8} follows. Since $\rho \in H^{1,p}_{loc}(\R^d) \cap C(\R^d)$ and $\rho(x)>0$ for all $x \in \R^d$, we obtain $\sqrt{\rho} \in H^{1,p}_{loc}(\R^d) \cap C(\R^d)$ with the help of the chain rule (\cite[Theorem 4.4(ii)]{EG15}). Moreover, $a_{ij} \in H^{1,2}_{loc}(\R^d)=H^{1,2}_{loc}(\R^d, \mu)$ for all $1 \leq i,j \leq d$ and $\mathbf{G}=\frac{1}{2}\nabla  (A+C^{T})+ \mathbf{H} \in L^2_{loc}(\R^d, \R^d) = L_{loc}^2(\R^d, \R^d, \mu)$.
\end{proof}

\subsubsection{Discussion} 
\label{subsec:2.2.3}

The converse problem of constructing and analyzing a partial differential operator $(L, C_0^\infty (\R^d))$ 
with suitable coefficients, given a prescribed infinitesimally invariant measure, appears in applications 
of SDEs, e.g. to the sampling of probability distributions (see \cite{Hwang}) 
or more generally to ergodic control problems (see \cite{Borkar}). In the following remark we will briefly 
discuss the applicability of our setting to this problem.

\begin{remark} \label{rem:2.2.4}
{\it In Theorem \ref{theo:2.2.7}, we derived under the assumption {\bf (a)} the existence of a nice density 
$\rho$ such that \eqref{eq:2.2.8} holds. Conversely, if $\rho \in H^{1,p}_{loc}(\R^d) \cap C(\R^d)$ for some $p\in (d,\infty)$ with 
$\rho(x)>0$ for all $x\in \R^d$ is explicitly given, we can construct a large class of partial differential 
operators $(L, C_0^{\infty}(\R^d))$ as in \eqref{eq:2.2.0first} satisfying condition {\bf (a)} and such 
that $\mu=\rho dx$ is an infinitesimally invariant measure for $(L, C_0^{\infty}(\R^d))$, i.e. \eqref{eq:2.2.8} 
holds. \\
More specifically, for any $A=(a_{ij})_{1 \leq i,j \leq d}$ and $C=(c_{ij})_{1 \leq i,j \leq d}$ 
satisfying condition {\bf (a)} of Section \ref{subsec:2.2.1} and any $\mathbf{\overline{B}} \in L_{loc}^p(\R^d, \R^d)$ satisfying 
$$
\int_{\R^d} \langle \mathbf{\overline{B}}, \nabla \varphi \rangle \rho dx=0, \quad \text{ for all } 
\varphi \in C_0^{\infty}(\R^d) 
$$
it follows that $A$, $C$ and $\mathbf{H}:=\frac{(A+C^T) \nabla \rho}{2\rho}+\mathbf{\overline{B}}$ satisfy 
condition {\bf (a)}. In particular (cf. \eqref{eq:2.1.5}, \eqref{eq:2.1.5a} and \eqref{form of G}) $\mathbf{B} =\mathbf{G}-\beta^{\rho,A} 
= \beta^{\rho, C^T}+\mathbf{\overline{B}} \in L_{loc}^2(\R^d, \R^d, \mu)$, where
$$
\beta^{\rho, C^T} = \frac{1}{2} \nabla C^T +C^T \frac{\nabla \rho}{2 \rho}
$$ 
(see \eqref{divergence of row} and \eqref{divergence of row i} for the definition of $\nabla C^T$),
and $\rho dx$ is an 
infinitesimally invariant measure for $(L, C_0^{\infty}(\R^d))$, since by integration by parts
$$
\int_{\R^d} \langle \beta^{\rho, C^T}, \nabla \varphi \rangle \rho dx 
= \frac12\int_{\R^d} \sum_{i,j=1}^{d}\rho c_{ij} \partial_i \partial_j \varphi  dx=0, \quad \text{ for all } 
\varphi \in C_0^{\infty}(\R^d),
$$
so that
$$
\int_{\R^d} \langle \mathbf{B}, \nabla \varphi \rangle \rho dx = 0, \quad \text{ for all } \varphi \in C_0^{\infty}(\R^d).
$$
In particular, \eqref{condition on mu}--\eqref{eq:2.1.4} hold, so that the results of Section \ref{sec2.1} are applicable.
}
\end{remark}

\subsection{Regular solutions to the abstract Cauchy problem}
\label{sec2.3}
In this section, we investigate the regularity properties of $(T_t)_{t>0}$ as defined in Definition \ref{definition2.1.7}, as well as regularity properties of the corresponding resolvent. The semigroup regularity will play an important role in Chapter \ref{chapter_3} to construct an associated Hunt process that can start from every point in $\R^d$. The resolvent regularity will be used to derive a Krylov-type estimate for the associated Hunt process in Theorem \ref{theo:3.3}. 
Throughout this section, we let
$$
\mu=\rho\,dx
$$ 
be as in Theorem \ref{theo:2.2.7} or as in  Remark \ref{rem:2.2.4}.\\
Here to obtain the $L^{s}(\R^d,\mu)$-strong Feller property, $s\in [1, \infty]$, including the strong Feller property of $(P_t)_{t>0}$ (for both definitions see Definition \ref{def:2.3.1} below), we only need condition {\bf (a)} of Section \ref{subsec:2.2.1}. The conservativeness of $(T_t)_{t>0}$ is not needed. Our main strategy is to use H\"{o}lder regularity results and Harnack inequalities for variational solutions to  elliptic and parabolic PDEs of divergence type. Indeed, we show that given a sufficiently regular function $f$, $\rho G_{\alpha} f$ and $\rho T_{\cdot} f$ are the variational solutions to elliptic and parabolic PDEs of divergence type, respectively, so that the results of \cite{St65} and \cite{ArSe} apply. \\
To obtain the regularity of $(T_t)_{t>0}$ in our case, it is notable that one could apply the result \cite[Theorem 4.1]{BKR2} based on  Sobolev regularity for parabolic equations involved with measures. But then it would be required that $a_{ij} \in H^{1,\widetilde{p}}_{loc}(\R^d)$ for all $1 \leq i,j \leq d$ and $\mathbf{G} \in L^{\widetilde{p}}_{loc}(\R^d, \R^d)$, $\widetilde{p}>d+2$ and the strong Feller property of the regularized version $(P_t)_{t>0}$ of $(T_t)_{t>0}$ may not directly be derived without assuming the conservativeness of $(T_t)_{t>0}$. Proceeding this way would hence be too restrictive.\\
At the end of this section we briefly 
discuss related work on regularity results in the existing literature.

\begin{theorem} \label{theorem2.3.1}
Let $q=\frac{pd}{p+d}$, $p\in (d,\infty)$. Suppose {\bf (a)} of Section \ref{subsec:2.2.1} holds and let $g \in 
\cup_{r\in [q,\infty]} L^r(\R^d,\mu)$ with $ g \geq 0$, $\alpha>0$. Then $G_{\alpha} g$ (see Definition \ref{definition2.1.7}) has a locally H\"{o}lder continuous version $R_{\alpha} g$ and for any open balls $U$, $V$ in $\R^d$, with $\overline{U} \subset V$, 
\begin{eqnarray} 
\| R_{\alpha}g \|_{C^{0, \gamma}(\overline{U})} \le c\Big (\| g \|_{L^q(V, \mu)} + \| G_{\alpha}g \|_{L^1(V, \mu)}\Big ),  \label{eq:2.3.37}
\end{eqnarray} 
where $c>0$ and $\gamma \in (0,1)$ are constants, independent of $g$.
\end{theorem}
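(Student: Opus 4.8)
The plan is to show that $\rho G_\alpha g$ solves, in the weak (variational) sense on any relatively compact domain, an elliptic PDE in divergence form with coefficients covered by the regularity theory of Stampacchia \cite{St65}, and then to invoke the corresponding local boundedness/H\"older estimates. First I would recall that by Theorem \ref{theorem2.1.5}(iii) (and its analogue for relatively compact $V$, Proposition \ref{prop:2.1}), for $u = G_\alpha g \in D(\overline L)_b$ with $g \in L^1(\R^d,\mu)_b$ one has the integration-by-parts identity
$$
\cE^0(u,\varphi) - \int_{\R^d} \langle \mathbf B, \nabla u\rangle \varphi\, d\mu = \int_{\R^d}(g - \alpha u)\varphi\, d\mu, \qquad \varphi \in H^{1,2}_0(\R^d,\mu)_{0,b}.
$$
Unwinding $d\mu = \rho\, dx$, $\cE^0(u,\varphi) = \tfrac12\int \langle A\nabla u,\nabla\varphi\rangle\rho\,dx$, and $\mathbf B = \beta^{\rho,C^T} + \overline{\mathbf B}$ (Remark \ref{rem:2.2.4}), the term $\int \langle \beta^{\rho,C^T},\nabla u\rangle\varphi\rho\,dx$ can be absorbed into a divergence-form expression involving $C$, exactly as in the passage from \eqref{eq:2.2.0first} to \eqref{equation G with H}. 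The upshot is that $v := \rho u$ satisfies
$$
\int_{\R^d} \Big\langle \tfrac12(A + C^T)\nabla v + v\, \mathbf{\Theta},\ \nabla\varphi\Big\rangle dx = \int_{\R^d}(g-\alpha u)\varphi\,\rho\,dx, \qquad \varphi \in C_0^\infty(\R^d),
$$
for a suitable lower-order vector field $\mathbf\Theta \in L^p_{loc}$ built from $\nabla\rho/\rho$, $\nabla(A+C^T)$ and $\mathbf H$; here one uses $\rho \in H^{1,p}_{loc}\cap C(\R^d)$, $\rho>0$, from Theorem \ref{theo:2.2.7}, plus assumption {\bf (a)}, to check that all coefficients and the right-hand side have the integrability Stampacchia's theory requires (the RHS measure $(g-\alpha u)\rho\,dx$ has density in $L^{q}_{loc}$ since $g \in L^q_{loc}(\R^d,\mu)$, $\rho$ is locally bounded, and $q = pd/(p+d)$ is the Sobolev-dual exponent that makes $L^q$ a subset of $(H^{1,2}_0)'$ locally in dimension $d$).

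Next, I would establish \eqref{eq:2.3.37} first for nice $g$ (say $g \in C_0^\infty$ or $g \in L^q(\R^d,\mu)\cap L^\infty$ with compact support, $g\ge 0$), where all the manipulations above are licit and $v = \rho G_\alpha g \in H^{1,2}_{loc}$. Stampacchia's interior estimates (\cite[Th\'eor\`eme 5.2 / local boundedness and H\"older continuity results]{St65}, in the form already cited in the proof of Theorem \ref{Theorem2.2.4}, e.g. \cite[Lemma 5.2]{St65} together with the De Giorgi--Nash--Moser / \cite{GilbargTrudinger} Corollary 5.5 type estimate) then give, for $\overline U \subset V$ relatively compact,
$$
\|v\|_{C^{0,\gamma}(\overline U)} \le c\Big(\|(g-\alpha G_\alpha g)\rho\|_{L^{q}(V)} + \|v\|_{L^1(V)}\Big) \le c'\Big(\|g\|_{L^q(V,\mu)} + \|G_\alpha g\|_{L^1(V,\mu)}\Big),
$$
where at the last step I bound $\alpha G_\alpha g$ in $L^q(V,\mu)$ by $g$ (sub-Markovianity gives $\|\alpha G_\alpha g\|_{L^q(\R^d,\mu)} \le \|g\|_{L^q(\R^d,\mu)}$ for $q\in[1,\infty]$), and I use that $\rho$ is bounded above and below by positive constants on $\overline V$ to convert $dx$-norms on $V$ into $\mu$-norms and to transfer H\"older continuity from $v=\rho G_\alpha g$ to $G_\alpha g$ itself (since $\rho \in C^{0,\gamma'}_{loc}$, being in $H^{1,p}_{loc}$ with $p>d$, the quotient $v/\rho$ is again locally H\"older). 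This produces the continuous version $R_\alpha g$ and the stated bound, with $c$ and $\gamma$ depending only on $V$, the ellipticity constant $\nu_V$, and the local norms of the coefficients — not on $g$.

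Finally I would pass to general $g \in \cup_{r\in[q,\infty]}L^r(\R^d,\mu)$, $g\ge 0$, by approximation: take $g_n := (g\wedge n)\mathbf 1_{B_n} \uparrow g$, which lie in $L^q(\R^d,\mu)\cap L^\infty$ with compact support. By the contraction property, $G_\alpha g_n \to G_\alpha g$ in $L^q(\R^d,\mu)$ (or $L^r$, and in particular in $L^1_{loc}$), while the uniform estimate \eqref{eq:2.3.37} applied to $g_n - g_m$ shows $(R_\alpha g_n)$ is Cauchy in $C^{0,\gamma}(\overline U)$ for every such $U$; its limit $R_\alpha g$ is then a locally H\"older continuous version of $G_\alpha g$, and \eqref{eq:2.3.37} is inherited in the limit by lower semicontinuity of the norms (and monotone convergence on the right). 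The main obstacle, and the step deserving the most care, is the first one: verifying cleanly that $\rho G_\alpha g$ is a \emph{variational} (distributional) solution of an elliptic equation in divergence form with the precise coefficient regularity and right-hand-side integrability demanded by \cite{St65} — in particular correctly handling the anti-symmetric part $C$, the logarithmic-derivative term $\beta^{\rho,C^T}$, and the fact that a priori $G_\alpha g \in D(\overline L)_b$ need only satisfy the integration-by-parts identity against test functions in $H^{1,2}_0(\R^d,\mu)_{0,b}$ rather than all of $C_0^\infty$, so that one must first localize and mollify before the PDE regularity machinery can be applied.
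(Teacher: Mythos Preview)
Your approach is essentially the same as the paper's: show that $v=\rho G_\alpha g$ is a variational solution of a divergence-form elliptic equation with coefficients in the Stampacchia class, apply \cite{St65}, then approximate. Two points are worth noting. First, the paper derives the PDE for $v$ more directly by duality with the co-generator: for $\varphi\in C_0^\infty(\R^d)$ one has $\int_{\R^d}(\alpha-L_2')\varphi\cdot G_\alpha g\,d\mu=\int_{\R^d}\varphi g\,d\mu$, and since $L_2'\varphi=\tfrac12\mathrm{div}((A+C^T)\nabla\varphi)+\langle \tfrac{A\nabla\rho}{\rho}-\mathbf{H},\nabla\varphi\rangle$, a single integration by parts (using $G_\alpha g\in D(\overline L)_b\subset H^{1,2}_{loc}$) yields the equation for $v$ with matrix $\tfrac12(A+C)$ and lower-order vector field $\mathbf{H}-\tfrac{A\nabla\rho}{\rho}$; this sidesteps entirely the obstacle you flag about the class of test functions, since $C_0^\infty\subset H^{1,2}_0(\R^d,\mu)_{0,b}$ anyway. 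Your decomposition $\mathbf{B}=\beta^{\rho,C^T}+\overline{\mathbf{B}}$ via Remark~\ref{rem:2.2.4} is not quite right in the general setting of assumption \textbf{(a)} (there $\mathbf{B}=\beta^{\rho,C^T}+\mathbf{H}-\tfrac{(A+C^T)\nabla\rho}{2\rho}$, and the last piece need not be $\mu$-divergence free), though this does not affect the regularity argument.

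Second, and more substantively, you jump directly to an estimate with $\|v\|_{L^1(V)}$ on the right, but Stampacchia's H\"older bound \cite[Th\'eor\`eme~7.2]{St65} is in terms of $\|v\|_{L^2(V)}$. The paper bridges this gap by invoking the Harnack inequality \cite[Th\'eor\`eme~8.2]{St65} for the nonnegative solution $v$ (this is where $g\ge 0$ is used): $\|v\|_{L^2(V)}\le c\inf_V v+c\|\rho g\|_{L^q(V)}\le c'\|v\|_{L^1(V)}+c'\|\rho g\|_{L^q(V)}$. Without this step your estimate \eqref{eq:2.3.37} does not follow, so you should make the Harnack argument explicit.
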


\begin{proof}
Let $g \in C_0^{\infty}(\R^d)$ and $\alpha >0$. Then for all $\varphi \in C_0^{\infty}(\R^d)$,
\begin{eqnarray}\label{eq:2.3.38b}
\int_{\R^d} (\alpha- L_2' ) \varphi \cdot \big(G_{\alpha} g\big) \,d\mu = \int_{\R^d} G'_{\alpha}(\alpha-L_2') \varphi  \cdot g \,d\mu=\int_{\R^d} \varphi  g \,d\mu, \vspace{-0.4em}
\end{eqnarray}
and it follows from \eqref{eq:2.1.5a}, Definition \ref{definition2.1.7}, \eqref{eq:2.1.3bis2'}, and \eqref{equation G with H}, that
\begin{eqnarray*}
L_2' \varphi&=& \frac{1}{2}\text{trace}(A \nabla^2 \varphi)+ \langle 2 \beta^{\rho,A}-\mathbf{G}, \nabla \varphi \rangle \\
&=& \frac12 \text{div}((A+C^T) \nabla \varphi )+ \langle -\frac12 \nabla (A+ C)+2 \beta^{\rho,A}-\mathbf{G}, \nabla \varphi \rangle \\
&=&  \frac12 \text{div}((A+C^T) \nabla \varphi )+ \langle \frac{A \nabla \rho}{\rho}- \mathbf{H}, \nabla \varphi \rangle.
\end{eqnarray*}
Since  by Theorem \ref{theorem2.1.5}, $G_{\alpha} g \in D(\overline{L})_b \subset D(\mathcal{E}^0) \subset H^{1.2}_{loc}(\R^d)$, 
applying integration by parts to the left hand side of \eqref{eq:2.3.38b}, for any $\varphi \in C_0^{\infty}(\R^d)$,
$$
\int_{\R^d} \big \langle \frac12 (A+C) \nabla (\rho G_{\alpha} g) + (\rho G_{\alpha} g)(\mathbf{H}-\frac{A \nabla \rho}{\rho}), \nabla \varphi \big \rangle +\alpha(\rho G_{\alpha}g) \varphi dx = \int_{\R^d} (\rho g) \varphi dx.
$$
Suppose now that $g \geq 0$. Then since $\frac{1}{\rho}$ is locally H\"{o}lder continuous, by \cite[
Th\'eor\`eme 7.2, 8.2]{St65}, $G_{\alpha} g$ has a locally H\"{o}lder continuous version $R_{\alpha}g$ on $\R^d$ and there exists a constant $\gamma \in (0, 1-d/p)$, independent of $g$, such that
\begin{eqnarray*}
\| \rho R_{\alpha}g\|_{C^{0, \gamma}(\overline{U})} &\leq& c_1 \left( \|\rho G_{\alpha} g\|_{L^2(V)} + \| \rho  g\|_{L^q(V)}  \right) \\
&\leq& c_1 \left( c_2\inf_{V} (\rho R_{\alpha} g) +c_2\|\rho g\|_{L^q(V)} +\|\rho g\|_{L^q(V)}   \right) \\
&\leq& c_3 \Big( \|\rho G_{\alpha} g\|_{L^1(V)}+ \| \rho g\|_{L^q(V)}  \Big),
\end{eqnarray*}
where $c_1, c_2, c_3>0$ are constants, independent of $g$.
Since $\rho \in L^{\infty}(V)$ and $\frac{1}{\rho} \in C^{0,\gamma}(\overline{U})$, \eqref{eq:2.3.37} follows for all $g \in C_0^{\infty}(\R^d)$ with $g \geq 0$.\\
\noindent
Moreover, for such $g$, using the $L^r(\R^d, \mu)$-contraction property of $\alpha G_{\alpha}$ for $r \in [q, \infty)$ and H\"{o}lder's inequality,
\begin{eqnarray} 
&&\| R_{\alpha}g \|_{C^{0, \gamma}(\overline{U})} \le c\left (\| g \|_{L^q(V, \mu)} + \| G_{\alpha}g \|_{L^1(V, \mu)}\right )  \label{eq:2.50new} \\
&\le&  c \left( \|\rho\|^{\frac{1}{q}-\frac{1}{r}}_{L^1(V)}
\| g \|_{L^r(\R^d, \mu)} +\|\rho \|^{\frac{r-1}{r}}_{L^1(V)} \|G_{\alpha} g\|_{L^r(\R^d, \mu)} \right) \nonumber \\
&\le& c(\|\rho\|^{\frac{1}{q}-\frac{1}{r}}_{L^1(V)} \vee \frac{1}{\alpha}\|\rho \|_{L^1(V)}^{\frac{r-1}{r}}) \|g\|_{L^r(\R^d, \mu)}. \label{eq:2.51new}
\end{eqnarray} 
\noindent
Now, suppose $g \in L^r(\R^d, \mu)$ for some $r \in [q, \infty)$ and $g \geq 0$. Choose $(g_n)_{n \geq 1} \subset C_0^{\infty}(\R^d) \cap \mathcal{B}^+(\R^d)$ with $\lim_{n \rightarrow \infty} g_n= g$ in $L^r(\R^d, \mu)$. Using a Cauchy sequence argument together with \eqref{eq:2.51new}, there exists $u^g \in C^{0, \gamma}(\overline{U})$ such that 
\begin{equation} \label{eq:2.52new}
\lim_{n \rightarrow \infty} R_{\alpha} g_n =u^g \quad \text{ in $C^{0, \gamma}(\overline{U})$}.
\end{equation}
Since $U$ is an arbitrary open ball in $\R^d$, we can well-define
\begin{equation} \label{eq:2.53def}
R_{\alpha} g:= u^g \quad \text{ on $\R^d$},
\end{equation}
i.e. $R_{\alpha} g$ is the same for any chosen sequence $(g_n)_{n \geq 1}$ as above. Moreover, $R_{\alpha} g$ is a continuous version of $G_{\alpha} g$ by \eqref{eq:2.52new} and it follows from \eqref{eq:2.50new}  that
\begin{eqnarray} \label{eq:2.54new}
\| R_{\alpha}g \|_{C^{0, \gamma}(\overline{U})} &\le& c\Big (\| g \|_{L^q(V, \mu)} + \| R_{\alpha} g \|_{L^1(V, \mu)}\Big ).
\end{eqnarray}
Finally, let $g \in L^{\infty}(\R^d, \mu)$ with $g \geq 0$ and $g_n:=1_{B_n} \cdot g \in L^1(\R^d, \mu)_b \subset L^{q}(\R^d, \mu)$, $n \geq 1$. Then $\lim_{n \rightarrow \infty} g_n=g$, a.e. By the sub-Markovian property of $(G_{\alpha})_{\alpha>0}$ and the continuity of $z \mapsto R_{\alpha} g_n(z)$ on $\R^d$, $(R_{\alpha} g_n(z))_{n \geq 1}$ is  for each $z \in \R^d$ a uniformly bounded and increasing sequence in $[0,1 ]$. Applying Lebesgue's theorem, $(R_{\alpha}g_n)_{n \geq 1}$ is a Cauchy sequence in $L^1(V, \mu)$ and $(g_n)_{n \geq 1}$ is a Cauchy sequence in $L^q(V, \mu)$. By using a Cauchy sequence argument together with \eqref{eq:2.54new},  we can well-define $R_{\alpha}g$ on $\R^d$ as we did in \eqref{eq:2.52new} and \eqref{eq:2.53def}. Hence $R_{\alpha}g$ is a continuous version of $G_{\alpha}g$ and \eqref{eq:2.3.37} holds for all $g \in L^{\infty}(\R^d, \mu)$ with $g \geq 0$ as desired.
\end{proof}
\medskip
\noindent
Let $g \in L^r(\R^d, \mu)$ for some $r \in [q, \infty]$ and $\alpha>0$. By splitting $g=g^+-g^-$, we define
\begin{equation}\label{resoldef}
R_{\alpha} g:=R_{\alpha}g^+ -R_{\alpha} g^-\quad \text{ on \,$\R^d$.}
\end{equation}
Then $R_{\alpha}g$ is a continuous version of $G_{\alpha}g$ and it follows from \eqref{eq:2.3.37} and the $L^r(\R^d, \mu)$-contraction property of $\alpha G_\alpha$ that
\begin{equation} \label{eq:2.3.38a}
\| R_{\alpha}g \|_{C^{0, \gamma}(\overline{U})} \leq c_4 \|g\|_{L^r(\R^d, \mu)},
\end{equation}
where $c_4>0$ is a constant, independent of $g$. Finally, let $f \in D(L_r)$ for some $r \in [q, \infty)$. Then $f = G_1 (1-L_r) f$, and $f$ has a 
locally H\"{o}lder continuous version on $\R^d$ by Theorem \ref{theorem2.3.1}. Moreover, for any open ball $U$, \eqref{eq:2.3.38a} implies
\begin{equation} \label{eq:2.3.39}
\|f\|_{C^{0,\gamma}(\overline{U})} \leq c_4  \| (1-L_r) f\|_{L^r(\R^d, \mu)} \leq c_4  \|f\|_{D(L_r)}.
\end{equation}
Since also $T_t f \in D(L_r)$, $T_t f$ has a continuous $\mu$-version, say $P_{t}f$, and it follows from \eqref{eq:2.3.39} and the $L^r(\R^d, \mu)$-contraction property of $(T_t)_{t>0}$ that
\begin{equation} \label{eq:2.3.40}
\|P_t f\|_{C^{0, \gamma}(\overline{U})} \leq c_4 \|T_t f \|_{D(L_r)} \leq c_4 \|f\|_{D(L_r)}.
\end{equation}

\begin{lemma} \label{eq:2.3.39a}
Let {\bf (a)} of Section \ref{subsec:2.2.1} be satisfied. Then for any $f\in \bigcup_{r\in [q,\infty)} D(L_r)$ the map
$$
(x,t)\mapsto P_t f(x)
$$
is continuous on $ \R^d\times [0,\infty)$, where $P_0 f:=f$ and $q=\frac{pd}{p+d}$, $p\in (d,\infty)$.
\end{lemma}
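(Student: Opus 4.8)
The plan is to combine the H\"older estimate \eqref{eq:2.3.39} with the strong continuity of $(T_t)_{t\ge 0}$ on $L^r(\R^d,\mu)$. Fix $f\in D(L_r)$ for some $r\in[q,\infty)$ and an arbitrary open ball $U$ in $\R^d$. By standard $C_0$-semigroup theory $T_t$ maps $D(L_r)$ into itself and commutes with $L_r$ there, so for all $s,t\ge 0$ we have $T_tf-T_sf\in D(L_r)$ with
\[
(1-L_r)(T_tf-T_sf)=T_t(1-L_r)f-T_s(1-L_r)f .
\]
Since $\mathrm{supp}(\mu)=\R^d$ (see \eqref{condition on mu}), two continuous functions agreeing $\mu$-a.e. coincide, so the continuous $\mu$-version of any element of $D(L_r)$ is unique; consequently $P_tf-P_sf$ is precisely the continuous version of $T_tf-T_sf$, where $P_0f=f$ is understood as the continuous version of $T_0f=f$. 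Applying \eqref{eq:2.3.39} to $T_tf-T_sf$ therefore yields
\[
\|P_tf-P_sf\|_{C^{0,\gamma}(\overline U)}\le c_4\,\|T_t(1-L_r)f-T_s(1-L_r)f\|_{L^r(\R^d,\mu)},
\]
with $c_4>0$ and $\gamma\in(0,1)$ depending only on $U$.

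Next I would use that, by Definition \ref{definition2.1.7}, $(T_t)_{t\ge 0}$ is a $C_0$-semigroup of contractions on $L^r(\R^d,\mu)$, so $t\mapsto T_tg$ is continuous from $[0,\infty)$ into $L^r(\R^d,\mu)$ for every $g\in L^r(\R^d,\mu)$, in particular for $g=(1-L_r)f$. Hence the right-hand side of the last display tends to $0$ as $t\to s$, for every $s\ge 0$, which shows that $t\mapsto P_tf$ is a continuous map from $[0,\infty)$ into $C(\overline U)$ equipped with the supremum norm.

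Joint continuity on $\overline U\times[0,\infty)$ is then immediate: if $(x_n,t_n)\to(x,t)$ in $\overline U\times[0,\infty)$, then
\[
|P_{t_n}f(x_n)-P_tf(x)|\le\|P_{t_n}f-P_tf\|_{C^0(\overline U)}+|P_tf(x_n)-P_tf(x)|\longrightarrow 0,
\]
using the continuity of $t\mapsto P_tf\in C(\overline U)$ together with the continuity of the fixed function $P_tf$ on $\overline U$. Since $U$ was an arbitrary open ball, $(x,t)\mapsto P_tf(x)$ is continuous on $\R^d\times[0,\infty)$, and for general $f\in\bigcup_{r\in[q,\infty)}D(L_r)$ one just selects a corresponding $r$.

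The argument is essentially routine once \eqref{eq:2.3.39} is in hand; the points needing a little care are the commutation identity $(1-L_r)T_tf=T_t(1-L_r)f$ on $D(L_r)$, the uniqueness of continuous $\mu$-versions (resting on $\mathrm{supp}(\mu)=\R^d$), which is what allows the difference $P_tf-P_sf$ to be estimated via \eqref{eq:2.3.39}, and the inclusion of the endpoint $t=0$, where one invokes strong continuity in the form $\lim_{t\to 0+}T_tg=g$ in $L^r(\R^d,\mu)$.
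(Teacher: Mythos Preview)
Your proof is correct and follows essentially the same approach as the paper: both apply the H\"older estimate \eqref{eq:2.3.39} to $T_tf-T_sf\in D(L_r)$ and then use the strong continuity of $(T_t)_{t\ge 0}$ on $L^r(\R^d,\mu)$ (the paper phrases this as $\|P_{t_n}f-P_{t_0}f\|_{D(L_r)}\to 0$, which is equivalent to your $\|T_{t_n}(1-L_r)f-T_{t_0}(1-L_r)f\|_{L^r}\to 0$). Your version is slightly more explicit about the commutation $(1-L_r)T_tf=T_t(1-L_r)f$ and about why $P_tf-P_sf$ is the continuous version of $T_tf-T_sf$, but the argument is the same.
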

\begin{proof}
Let $f\in D(L_r)$ for some $r\in [q,\infty)$ and let $\left ((x_n,t_n)\right )_{n\ge 1}$ be a sequence in $\R^d\times [0,\infty)$ that converges to $(x_0,t_0)\in \R^d\times [0,\infty)$. 
Let $B$ be an open ball, such that $x_n \in B$ for all $n \geq 1$. Then by \eqref{eq:2.3.39}
for all $n \geq 1$ 
\begin{eqnarray*}
\left | P_{t_n} f(x_n)-P_{t_0} f(x_0) \right | &\leq& \| P_{t_n} f- P_{t_0}f \,\|_{C(\overline{B})} +\left | P_{t_0} f(x_n)-P_{t_0} f(x_0) \right | \\[3pt]
&\leq& c_4\| P_{t_n} f- P_{t_0}f \,\|_{D(L_r)}+\big | P_{t_0} f(x_n)-P_{t_0} f(x_0) \big |.
\end{eqnarray*}
Using the $L^r(\R^d, \mu)$-strong continuity of $(T_t)_{t>0}$ and the continuity of $P_{t_0} f$ at $x_0$, the assertion follows.
\end{proof}

\begin{theorem} \label{theo:2.6}
Suppose {\bf (a)} of Section \ref{subsec:2.2.1} holds and that $f \in \cup_{s \in [1, \infty]}L^s(\R^d, \mu)$, $f \geq 0$. Then $T_t f$, $t>0$ (see Definition \ref{definition2.1.7}) has a continuous $\mu$-version $P_t f$ on $\R^d$ and $P_{\cdot}f(\cdot)$ is locally parabolic H\"{o}lder continuous on $\R^d \times (0, \infty)$. Moreover, for any bounded open sets $U$, $V$ in $\R^d$ with $\overline{U} \subset V$ and $0<\tau_3<\tau_1<\tau_2<\tau_4$, we have the following estimate: 
\begin{equation} \label{eq:2.3.41}
\|P_{\cdot} f(\cdot)\|_{C^{\gamma; \frac{\gamma}{2}}(\overline{U} \times [\tau_1, \tau_2])} \leq  C_4 \| P_{\cdot} f(\cdot) \|_{L^1( V \times (\tau_3, \tau_4), \mu\otimes dt) },
\end{equation}
where $C_4>0$, $\gamma \in (0,1)$ are constants, independent of $f$. 
\end{theorem}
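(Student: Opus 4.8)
The plan is to reduce the statement to $f\in C_0^\infty(\R^d)$ with $f\ge 0$ (where, by Lemma \ref{eq:2.3.39a}, $T_tf$ already has the continuous version $P_tf$), to identify $\rho\,P_\cdot f$ as a weak solution of a parabolic equation of divergence type and invoke the interior regularity theory of \cite{ArSe}, and finally to pass to general $f\in\bigcup_{s\in[1,\infty]}L^s(\R^d,\mu)$, $f\ge 0$, by monotone approximation combined with an Arzel\`a--Ascoli argument driven by the interior estimate \eqref{eq:2.3.41}. Throughout, assumption {\bf (a)} (via Theorem \ref{theo:2.2.7}) gives $\rho\in H^{1,p}_{loc}(\R^d)\cap C(\R^d)$ strictly positive, so that $\rho$ and $1/\rho$ are locally bounded and locally H\"older continuous and $\tfrac{A\nabla\rho}{\rho}\in L^p_{loc}(\R^d,\R^d)$ with $p>d$; conservativeness is not used.

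\emph{Step 1 (parabolic PDE for $\rho P_\cdot f$).} Let $f\in C_0^\infty(\R^d)\subset D(L_2)$, $f\ge 0$. Semigroup theory gives $t\mapsto T_tf\in L^2(\R^d,\mu)$ continuously differentiable with $\tfrac{d}{dt}T_tf=L_2T_tf$, and $T_tf\in D(\overline L)_b\subset D(\cE^0)\subset H^{1,2}_{loc}(\R^d)$ for each $t>0$. Testing against $\varphi(x)\eta(t)$ with $\varphi\in C_0^\infty(\R^d)$, $\eta\in C_0^\infty((0,\infty))$, using Theorem \ref{theorem2.1.5}(iii) and then performing exactly the integration by parts carried out in the proof of Theorem \ref{theorem2.3.1} (moving all derivatives onto $\varphi$ and onto $\rho T_tf$, via the divergence-form expression \eqref{equation G with H} of $L$), one obtains that $u:=\rho\,P_\cdot f$ is a bounded weak solution on $\R^d\times(0,\infty)$ of a parabolic equation of divergence type whose principal part is a matrix of continuous functions that is locally uniformly elliptic (the antisymmetric part $C$ contributes nothing to the quadratic form) and whose first-order coefficient is $\tfrac{A\nabla\rho}{\rho}-\mathbf{H}\in L^p_{loc}(\R^d,\R^d)$, $p>d$; here $p>d$ is precisely the time-independent Aronson--Serrin integrability condition $\tfrac dp<1$.

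\emph{Step 2 (interior regularity and the case $f\in L^1(\R^d,\mu)_b$).} By the local boundedness and H\"older-continuity estimates of \cite{ArSe}, $u=\rho\,P_\cdot f$ has a version that is locally parabolic H\"older continuous on $\R^d\times(0,\infty)$, with $\|u\|_{C^{\gamma;\gamma/2}(\overline U\times[\tau_1,\tau_2])}\le C\|u\|_{L^2(V\times(\tau_3,\tau_4))}$ for $\overline U\subset V$, $0<\tau_3<\tau_1<\tau_2<\tau_4$; interpolating $\|u\|_{L^2}^2\le\|u\|_{L^\infty}\|u\|_{L^1}$ over a finite chain of shrinking parabolic cylinders and absorbing (or invoking the parabolic Harnack inequality of \cite{ArSe} for the nonnegative $u$, exactly as the elliptic Harnack inequality was used in Theorem \ref{theorem2.3.1}) upgrades this to $\|u\|_{C^{\gamma;\gamma/2}(\overline U\times[\tau_1,\tau_2])}\le C\|u\|_{L^1(V\times(\tau_3,\tau_4))}$. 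Dividing by the locally H\"older continuous, strictly positive $\rho$ gives local parabolic H\"older continuity of $P_\cdot f=u/\rho$ and, since $\|u\|_{L^1(V\times(\tau_3,\tau_4),dx\otimes dt)}=\|P_\cdot f\|_{L^1(V\times(\tau_3,\tau_4),\mu\otimes dt)}$, yields \eqref{eq:2.3.41} for $f\in C_0^\infty(\R^d)$, $f\ge 0$. The same argument extends to $f\in L^1(\R^d,\mu)_b$, $f\ge 0$: choosing $f_n\in C_0^\infty$, $0\le f_n\to f$ in $L^1(\R^d,\mu)$, one has $\|\rho P_\cdot f_n-\rho P_\cdot f\|_{L^1(V\times(\tau_3,\tau_4))}\le(\tau_4-\tau_3)\|f_n-f\|_{L^1(\R^d,\mu)}\to 0$ by $L^1$-contractivity, so \eqref{eq:2.3.41} bounds $(\rho P_\cdot f_n)$ in $C^{\gamma;\gamma/2}$ on each interior cylinder, and Arzel\`a--Ascoli together with $L^1_{loc}$-convergence identifies the uniform limit as a continuous version of $\rho P_\cdot f$ satisfying \eqref{eq:2.3.41}.

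\emph{Step 3 (general $f\ge 0$) and main obstacle.} For $f\in L^s(\R^d,\mu)$, $f\ge 0$, put $f_n:=(f\wedge n)1_{B_n}\in L^1(\R^d,\mu)_b$ if $s<\infty$ and $f_n:=1_{B_n}f\in L^1(\R^d,\mu)_b$ if $s=\infty$; in each case $0\le f_n\uparrow f$ and $T_tf_n\uparrow T_tf$ a.e. (by $L^s$-contractivity if $s<\infty$, resp. by monotone convergence in $\int T_tf_n\,g\,d\mu=\int f_nT'_tg\,d\mu$ if $s=\infty$). By Step 2 and the monotonicity $\rho P_\cdot f_n\le\rho P_\cdot f$, the norms $\|\rho P_\cdot f_n\|_{C^{\gamma;\gamma/2}(\overline U\times[\tau_1,\tau_2])}$ are bounded by $C\|\rho P_\cdot f\|_{L^1(V\times(\tau_3,\tau_4),dx\otimes dt)}$, which is finite since $\|P_tf\|_{L^1(V,\mu)}\le\mu(V)^{1-1/s}\|f\|_{L^s(\R^d,\mu)}$. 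As $(\rho P_\cdot f_n)$ is monotone and bounded in the H\"older norm on each interior cylinder, Dini's theorem yields uniform convergence on compacts of $\R^d\times(0,\infty)$ to a continuous function, which is a version of $\rho P_\cdot f$ inheriting \eqref{eq:2.3.41}; dividing by $\rho$ completes the proof, with $P_tf$ the resulting continuous $\mu$-version of $T_tf$ (consistent with Lemma \ref{eq:2.3.39a} when $f\in D(L_r)$). The main obstacle is Step 1: deriving the correct weak divergence-form parabolic equation for $\rho P_\cdot f$ — handling the time derivative in the weak formulation and verifying that the data and coefficients fall exactly under the hypotheses of \cite{ArSe} — after which Steps 2 and 3 are routine.
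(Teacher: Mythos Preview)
Your approach is essentially the same as the paper's: derive the weak divergence-form parabolic equation for $u=\rho\,P_\cdot f$, apply the Aronson--Serrin interior H\"older estimate together with the parabolic Harnack inequality to obtain \eqref{eq:2.3.41}, then extend by approximation. Two points deserve comment. First, the paper does not start from $C_0^\infty(\R^d)$ but from $f\in D(\overline L)_b\cap D(L_2)\cap D(L_q)$, and then passes to $L^1(\R^d,\mu)_b$ via the resolvent approximation $f_n=nG_nf$. This is not merely cosmetic: under assumption {\bf (a)} alone one only has $a_{ij},c_{ij}\in H^{1,2}_{loc}$, hence $\mathbf{G}\in L^2_{loc}$, and for $d\ge 4$ one has $q=\tfrac{pd}{p+d}>2$, so $C_0^\infty(\R^d)$ need not lie in $D(L_q)$ and your appeal to Lemma~\ref{eq:2.3.39a} can fail; the paper's choice of initial class and resolvent approximation sidestep this. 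Second, the paper obtains the $L^1$ bound in one stroke from the parabolic Harnack inequality of \cite{ArSe} via $\sup u\le C\inf u\le C'\|u\|_{L^1}$ on nested parabolic cylinders, which is cleaner than your interpolation--absorption chain.
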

\begin{proof}
First assume $f \in D(\overline{L})_b \cap D(L_2) \cap D(L_q)$ with $f \geq 0$ and $q=\frac{pd}{p+d}$, $p\in (d,\infty)$. Set $u(x,t):=\rho(x) P_t f(x)$. Then by Lemma \ref{eq:2.3.39a}, $u \in C(\R^d \times [0, \infty))$. Let $B$ be an open ball in $\R^d$ and $T>0$. 
Using Theorem \ref{theorem2.1.5}, one can see $u \in H^{1,2}(B \times (0,T))$. Let $\phi \in C_0^{\infty}(\R^d)$, $\psi \in C^{\infty}_0((0,T))$ and $\varphi:=\phi \psi$. Then
$$
\frac{d}{dt} \int_{\R^d} \phi T_t f d\mu=
\int_{\R^d} \phi L_2 T_t f d\mu =\int_{\R^d} L_2' \phi \cdot T_t f d\mu,
$$
hence using integration by parts,
\begin{eqnarray} \label{eq:2.3.42}
0=-\int _0^T\int_{\R^d}  \left ( \partial _t \varphi +L'_2\varphi  \right ) u\,  dxdt. 
\end{eqnarray}
By $C^2$-approximation with finite linear combinations $\sum\phi_i \psi_i$, \eqref{eq:2.3.42} extends to all $\varphi \in C_0^{\infty}(\R^d \times (0,T))$. Applying integration by parts to \eqref{eq:2.3.42}, for all $\varphi \in C_0^{\infty}(\R^d \times (0,T))$ (see proof of Theorem \ref{theorem2.3.1}),
\begin{equation} \label{eq:2.3.45}
0=\int_0^T\int_{\R^d} \left ( \frac{1}{2}\langle (A+C) \nabla u, \nabla \varphi \rangle + u \langle \mathbf{H}-\frac{A \nabla \rho}{\rho}, \nabla \varphi \rangle -u\partial_t\varphi \right ) dxdt.
\end{equation}
Let $\bar{x} \in \R^d$ and $\bar{t} \in (0,T)$. Take a sufficiently small $r>0$ so that  $\bar{t}-(3r)^2>0$. Then by \cite[Theorems 3 and 4]{ArSe},
\begin{eqnarray*}
\|u\|_{C^{\gamma;\frac{\gamma}{2}}(\bar{R}_{\bar{x}}(r) \times [\bar{t}-r^2, \bar{t}])} &\leq& C_1\sup\big \{u(z)\, :\,z\in R_{\bar{x}}(3r) \times (\bar{t}-(3r)^2, \bar{t})\big \} \\
&\leq& C_1 C_2  \inf \big \{u(z)\, : \,z\in R_{\bar{x}}(3r) \times \big (\bar{t}+6(3r)^2, \bar{t}+7(3r)^2\big )\big \}\\[3pt]
&\leq & C_1 C_2 C_3 \|u\|_{L^1\big(R_{\bar{x}}(3r) \times  (\bar{t}+6(3r)^2, \bar{t}+7(3r)^2) \big)},
\end{eqnarray*}
where $\gamma \in (0, 1-d/p]$, $C_1, C_2, C_3>0$ are constants,  independent of $u$. Using a partition of unity and $\frac{1}{\rho} \in C^{0, \gamma}(\bar{R}_{\bar{x}}(3r))$, \eqref{eq:2.3.41} holds for all $f \in D(\overline{L})_b \cap D(L_2) \cap D(L_q)$ with $f \geq 0$. 
Moreover, using the $L^1(\R^d, \mu)$-contraction property of $(T_t)_{t>0}$, for all $f \in D(\overline{L})_b \cap D(L_2) \cap D(L_q)$ with $f \geq 0$, $q=\frac{pd}{p+d}$, it holds that
\begin{eqnarray}
&&\|P_{\cdot} f(\cdot)\|_{C^{\gamma; \frac{\gamma}{2}}(\overline{U} \times [\tau_1, \tau_2])} \leq  C_4 \| P_{\cdot} f(\cdot) \|_{L^1( V \times (\tau_3, \tau_4), \mu\otimes dt) }  \nonumber \\
&&\leq C_4 \int_{\tau_3}^{\tau_4} \| P_t f \|_{L^1(V, \mu)}  dt \leq C_4(\tau_4-\tau_3) \|f\|_{L^1(\R^d, \mu)}. \label{eq:2.56new}
\end{eqnarray}
Now let $f \in L^1(\R^d, \mu)_b$ with $f \geq 0$. Then $f_n:=nG_n f \in D(\overline{L})_b \cap D(L_2) \cap D(L_q)$, $n\geq 1$, $f_n \geq 0$ by the sub-Markovian property of $(G_{\alpha})_{\alpha>0}$ and $\lim_{n \rightarrow \infty} f_n=f$ in $L^1(\R^d, \mu)$ by the $L^1(\R^d, \mu)$-strong continuity of $(G_{\alpha})_{\alpha>0}$.\\
Using a Cauchy sequence argument together with \eqref{eq:2.56new}, there exists $u^f \in C^{\gamma;\frac{\gamma}{2}}( \overline{U} \times [\tau_1, \tau_2])$ such that 
\begin{equation} \label{eq:2.57a}
\lim_{n \rightarrow \infty} P_{\cdot} f_n(\cdot)=u^f \;\; \text{ in } \;C^{\gamma; \frac{\gamma}{2}}(\overline{U} \times [\tau_1, \tau_2]).
\end{equation}
Since $U \times [\tau_1, \tau_1]$ is arbitrarily chosen in $\R^d \times (0, \infty)$, given  $t>0$
we can define 
\begin{equation} \label{eq:2.57b}
P_t f:=u^f(\cdot, t), \quad \text{ on $\R^d$}.
\end{equation}
Then $P_t f$ is a continuous version of $T_t f$ by \eqref{eq:2.57a} and it follows from \eqref{eq:2.56new} that \eqref{eq:2.3.41} holds for all $f \in L^1(\R^d, \mu)_b$. Moreover, for $r \in [1, \infty)$, using  the $L^r(\R^d, \mu)$-contraction property of $(T_t)_{t>0}$ and H\"{o}lder's inequality, we get
\begin{eqnarray}
&&\|P_{\cdot} f(\cdot)\|_{C^{\gamma; \frac{\gamma}{2}}(\overline{U} \times [\tau_1, \tau_2])} \leq  C_4 \| P_{\cdot} f(\cdot) \|_{L^1( V \times (\tau_3, \tau_4), \mu\otimes dt) }   \label{eq:2.58a} \\
&&  \leq C_4 \int_{\tau_3}^{\tau_4} \| P_t f \|_{L^r(V, \mu)} \| \rho\|^{\frac{r-1}{r}}_{L^1(V)} dt \leq C_4(\tau_4-\tau_3)  \| \rho\|^{\frac{r-1}{r}}_{L^1(V)}  \|f\|_{L^r(\R^d, \mu)}. \qquad \;\; \text{} \label{eq:2.58new}
\end{eqnarray}
\\
Now let $f \in L^r(\R^d, \mu)$ with $f \geq 0$ and $r \in [1, \infty)$. Then there exists $(f_n)_{n \geq 1} \subset L^1(\R^d, \mu)_b \cap \mathcal{B}^+(\R^d)$ such that $\lim_{n \rightarrow \infty} f_n = f$ in $L^r(\R^d, \mu)$. By using a Cauchy sequence argument together with \eqref{eq:2.58new}, we can well-define $P_t f$ on $\R^d$ as we did in \eqref{eq:2.57a} and \eqref{eq:2.57b}, so that $P_t f$ is a continuous version of $T_t f$ and \eqref{eq:2.3.41} holds for all $f \in \cup_{r \in [1, \infty)}L^r(\R^d, \mu)$ with $f \geq 0$ by \eqref{eq:2.58a}.
\\
Finally, let $f \in L^{\infty}(\R^d, \mu)$ with $f \geq 0$ and $f_n:=1_{B_n} \cdot f \in L^1(\R^d, \mu)_b$ for $n \geq 1$. Then $\lim_{n \rightarrow \infty} f_n=f$, a.e. By the sub-Markovian property of $(T_t)_{t>0}$ and the continuity of $z \mapsto P_t f_n(z)$ on $\R^d$ for each $t>0$, $(P_t f_n(z))_{n \geq 1}$ is a uniformly bounded and increasing sequence in $[0,1]$ for each $t>0$ and $z \in \R^d$. Therefore, applying Lebesgue's theorem, $(P_{\cdot} f_n(\cdot))_{n \geq 1}$ is a Cauchy sequence in $L^1( V \times (\tau_3, \tau_4), \mu\otimes dt)$. By using a Cauchy sequence argument together with \eqref{eq:2.58a},  we can define $P_t f$ on $\R^d$ as we did in \eqref{eq:2.57a} and \eqref{eq:2.57b}. Then $P_t f$ is a continuous version of $T_t f$ and \eqref{eq:2.3.41} holds for all $f \in L^{\infty}(\R^d, \mu)$ with $f \geq 0$ as desired.
\end{proof}
\medskip
\noindent
For $f\in L^s(\R^d, \mu)$ with $s \in [1, \infty]$ and $t>0$, by splitting $f=f^+-f^-$, we define
\begin{equation} \label{semidef}
P_t f:= P_{t} f^+ - P_{t}f^- \quad \text{ on\, $\R^d$.}
\end{equation}
Then by Theorem \ref{theo:2.6}, $P_t f$ is a continuous version of $T_t f$ and for any bounded open subset $U$ of $\R^d$ and $0<\tau_1< \tau_2<\infty$, $P_{\cdot} f(\cdot) \in C^{\gamma; \frac{\gamma}{2}}(\overline{U} \times [\tau_1, \tau_2])$, where $\gamma \in (0,1)$ is a constant as in Theorem \ref{theo:2.6}. Moreover, applying the $L^s(\R^d, \mu)$-contraction property of $(T_t)_{t>0}$ for $s \in [1, \infty]$ and H\"{o}lder's inequality to \eqref{eq:2.3.41}, for any open subset $V$ of $\R^d$ with $\overline{U} \subset V$, $0<\tau_3<\tau_1<\tau_2<\tau_4<\infty$ and $t \in [\tau_1, \tau_2]$, it follows that
\begin{eqnarray}\label{eq:2.3.46}
\|P_{t} f \|_{C^{0,\gamma}(\overline{U})} & \leq & 2 C_4 (\tau_4-\tau_3) \|\rho \|_{L^1(V)}^{\frac{s-1}{s}} \cdot \|f\|_{L^s(\R^d, \mu) },
\end{eqnarray}
where $C_4>0$ is the constant of Theorem \ref{theo:2.6} and $\frac{s-1}{s}:=1$ if $s=\infty$ (cf. \eqref{eq:2.58new}).
The H\"older exponent $\gamma$ in \eqref{eq:2.3.46} may depend on the domains and may hence vary for different domains. But the important fact that we need for further considerations is that for a given domain, the constant $\gamma \in (0,1)$ and the constant in front of  $\|f\|_{L^s(\R^d, \mu)}$ in  \eqref{eq:2.3.46} are independent of $f$.
\\ \\
In a final remark, we discuss some previously derived and related regularity results. 
In order to fix some terminologies used there, we first give a definition.
\begin{definition}\label{def:2.3.1}
(i) Let $r\in [1,\infty]$. A family of positive linear operators $(S_t)_{t>0}$ defined on $L^r(\R^d,\mu)$ is said to be
{\bf $L^r(\R^d,\mu)$-strong Feller}\index{$L^r(\R^d,\mu)$-strong Feller}, if $S_t(L^{r}(\R^d,\mu)) \subset C(\R^d)$ for any $t>0$.\\[3pt]
(ii) A family of positive linear operators $(S_t)_{t>0}$ defined on $\mathcal{B}_b(\R^d)$ is said to be {\bf strong Feller}\index{strong Feller}, if $S_t (\mathcal{B}_b(\R^d)) \subset C_b(\R^d)$ for any $t>0$.
In particular, the $L^{\infty}(\R^d,\mu)$-strong Feller property implies the strong Feller property.\\[3pt]
(iii) A family of positive linear operators $(S_t)_{t\ge 0}$ defined on $C_{\infty}(\R^d)=\{f \in C_b(\R^d): \exists\lim_{\|x\| \rightarrow \infty} f(x)= 0  \}$ with $S_0=id$, where $C_{\infty}(\R^d)$ is equipped with the sup-norm $\|\cdot\|_{C_b(\R^d)}$, is called a {\bf Feller semigroup}\index{semigroup ! Feller}, if: 
\begin{itemize} 
\item[(a)] $\|S_t f\|_{C_b(\R^d)}\le \|f\|_{C_b(\R^d)}$ for any $t>0$,
\item[(b)]  $\lim_{t\to 0}S_t f=f$ in $C_{\infty}(\R^d)$ for any $f\in C_{\infty}(\R^d)$,
\item[(c)]  $S_t(C_{\infty}(\R^d)) \subset C_{\infty}(\R^d)$ for any $t>0$.
\end{itemize}
\end{definition}
\medskip
\noindent
If $(S_t)_{t\ge 0}$ is a Feller semigroup\index{semigroup ! Feller}, then by \cite[Chapter III. (2.2) Proposition]{RYor} and \cite[(9.4) Theorem]{BlGe} there exists a Hunt process (see Definition \ref{def:3.1.1}(ii)) whose  transition semigroup  is determined by $(S_t)_{t\ge 0}$.

\begin{remark}\label{rem:2.30new}
{\it In \cite{AKR}, \cite{BGS13}, and \cite{ShTr13a}, regularity properties of the resolvent and semigroup associated with a symmetric Dirichlet form are studied. For instance, if one considers a symmetric Dirichlet form defined as the closure of
\begin{equation} \label{eq:2.3.48}
\frac12  \int_{\R^d} \langle \nabla f, \nabla g \rangle d\mu, \quad f, g \in C_0^{\infty}(\R^d),
\end{equation}
then, provided $\rho$ has enough regularity, the drift coefficient of the associated generator has the form $\mathbf{G}=\nabla \phi$, where $\phi=\frac12 \ln \rho$. In \cite{AKR}, \cite{BGS13} using Sobolev regularity for elliptic equations involved with measures, $L^r(\R^d,\mu)$-strong Feller properties of the corresponding resolvent are shown, where $r \in (d, \infty]$.  In those cases, $L^s(\R^d,\mu)$-strong Feller properties of the associated semigroup, $s \in (d, \infty)$ immediately follow from the analyticity of symmetric semigroups. Conservativeness (see for instance \cite[Proposition 3.8]{AKR}) of the semigroup is assumed in order to derive the strong Feller property of the regularized semigroup $(P_t)_{t>0}$ (see Definition \ref{def:2.3.1}).
Similarly, in the sectorial case \cite{RoShTr}, analyticity and conservativeness of the semigroup are used to derive its $L^s(\R^d,\mu)$-strong Feller properties, $s \in (d, \infty]$ and in \cite[Section 3]{ShTr13a} the special properties of Muckenhoupt weights, which in particular imply conservativeness, lead to the strong Feller property of the semigroup using the joint continuity of the heat kernel and its pointwise upper bound.\\
We introduce three further references, where mainly analytical methods are used to construct a semigroup that has the strong Feller property. In  \cite{MPW02}, a sub-Markovian semigroup on $\mathcal{B}_b(\R^d)$ is constructed under the assumption that the diffusion and drift coefficients of the associated generator are locally H\"{o}lder continuous on $\R^d$ and the strong Feller property of the semigroup is derived in \cite[Corollary 4.7]{MPW02}  by interior Schauder estimates for parabolic PDEs of non-divergence type. Similarly, the strong Feller property is derived under the existence of an additional zero-order term in \cite[Proposition 2.2.12]{LB07}. In \cite[Theorem 1]{Ki18}, a sub-Markovian and analytic $C_0$-semigroup of contractions on $L^p(\R^d)$, where $p$ is in a certain open subinterval of $(d-1,\infty)$, $d \geq 3$, associated with the partial differential operator $\Delta+\langle  \sigma, \nabla \rangle$, where $\sigma$ is allowed to be in a certain nice class of measures, including absolutely continuous ones with drift components in $L^d(\R^d)+L^{\infty}(\R^d)$, is constructed and it is shown in \cite[Theorem 2]{Ki18} that the associated resolvent has the $L^p(\R^d)$-strong Feller property. Moreover in \cite[Theorem 2]{Ki18}, the semigroup is also shown to be Feller, so that the existence of an associated Hunt process follows (cf. Definition \ref{def:2.3.1}(iii)).
\\
In \cite[Section 2.3]{Scer}, some probabilistic techniques are used to show the strong Feller property of the semigroup, but the required conditions on the coefficients of the associated generator are quite restrictive. For instance, it is at least required that the diffusion coefficient is continuous and globally uniformly strictly elliptic and that the drift coefficient is locally Lipschitz continuous. We additionally refer to \cite{Bha}, where a possibly explosive diffusion process associated with $(L, C_0^{\infty}(\R^d))$ is constructed, where $A=(a_{ij})_{1 \leq i,j \leq d}$ satisfies \eqref{eq:2.1.2}, with $a_{ij} \in C(\R^d)$ for all $1 \leq i,j \leq d$ and $\mathbf{G} \in L^{\infty}_{loc}(\R^d, \R^d)$. In that case, the  strong Feller property is derived in \cite[Lemma 2.5]{Bha} under the assumption that the explosion time of the diffusion process is infinite (a.s.) for some initial condition $x_0 \in \R^d$.}
\end{remark}

\subsection{Irreducibility of solutions to the abstract Cauchy problem}
\label{sec2.4}
In order to investigate the ergodic behavior of the regularized semigroup $(P_t)_{t>0}$ in Section \ref{subsec:3.2.3}, the irreducibility in the probabilistic sense as defined in the following definition together with the strong Feller property
are important properties. Throughout this section, we let
$$
\mu=\rho\,dx
$$ 
be as in Theorem \ref{theo:2.2.7} or as in  Remark \ref{rem:2.2.4}. \\
\begin{definition} \label{def:2.4.4}
$(P_t)_{t>0}$ (see Theorem \ref{theo:2.6}) is said to be {\bf irreducible in the probabilistic sense}\index{irreducible ! in the probabilistic sense}, if for any $x \in \R^d$, $t>0$, $A \in \mathcal{B}(\R^d)$ with $\mu(A)>0$, we have  $P_t 1_A (x)>0$.
\end{definition}
\noindent
In this section, our main goal is to show the irreducibility in the probabilistic sense (Proposition \ref{prop:2.4.2}), which implies 
{\it irreducibility in the classical sense}, i.e. if for any $x \in \R^d$, $t>0$, $U \subset \R^d$ open, we have  $P_t 1_U(x)>0$.\\
To further explain the connections between different notions related to irreducibility in the literature 
and our work, let us introduce some notions related to generalized and symmetric Dirichlet form theory and 
in particular to our semigroup $(T_t)_{t>0}$. 
\begin{definition} 
\label{def:2.4.4bis}
$A\in \mathcal{B}(\R^d)$ is called a {\bf weakly invariant set} relative to $(T_t)_{t>0}$  (see Definition \ref{definition2.1.7}), if
$$
T_t (f \cdot 1_A ) (x)=0,\ \  \text{for } \mu \text{-a.e.}  \ \ x\in \R^d\setminus A,
$$
for any $t> 0$, $f\in L^2(\R^d,\mu)$. $(T_t)_{t>0}$ is said to be {\bf strictly irreducible}\index{irreducible ! strictly}, if for any 
weakly invariant set $A$ relative to $(T_t)_{t>0}$, we have $\mu(A)=0$ or $\mu(\R^d\setminus A)=0$.\\
\end{definition}
$A \in \mathcal{B}(\R^d)$ is called a {\it strongly invariant set} relative to $(T_t)_{t>0}$, if 
$$
T_t 1_A f = 1_A T_t f, \quad \text{$\mu$-a.e.}
$$
for any $t>0$ and $f \in L^2(\R^d, \mu)$. $(T_t)_{t>0}$ is said to be {\it irreducible}, if for any 
strongly invariant set $A$ relative to $(T_t)_{t>0}$, we have $\mu(A)=0$ or $\mu(\R^d\setminus A)=0$. One 
can check that $A \in \mathcal{B}(\R^d)$ is a strongly invariant set relative to $(T_t)_{t>0}$, if and only 
if $A$ and $\R^d\setminus A$ are weakly invariant sets relative to $(T_t)_{t>0}$. Therefore, if $(T_t)_{t>0}$ is 
strictly irreducible, then $(T_t)_{t>0}$ is irreducible. One can also check that $A \in \mathcal{B}(\R^d)$ 
is a weakly invariant set relative to $(T_t)_{t>0}$, if and only if $\R^d\setminus A$ is a weakly invariant set 
relative to $(T'_t)_{t>0}$. Hence, if $(T_t)_{t>0}$ is associated with a symmetric Dirichlet form, then the 
strict irreducibility of $(T_t)_{t>0}$ is equivalent to the irreducibility of $(T_t)_{t>0}$.
\begin{remark}\label{rem:2.4.1}
{\it In the symmetric case (see  \cite{FOT}), it is shown in \cite[Lemma 1.6.4]{FOT} that if $(T_t)_{t>0}$ is associated with a symmetric Dirichlet form and $(T_t)_{t>0}$ is irreducible, then $(T_t)_{t>0}$ is either recurrent or transient (see Definition \ref{def:3.2.2.2} below). Moreover, it is known from \cite[Exercise 4.6.3]{FOT}, that if $(T_t)_{t>0}$ is associated with a symmetric Dirichlet form and has the strong Feller property, then $(T_t)_{t>0}$ is irreducible. Since in our case the associated generator may be non-symmetric and non-sectorial, the above results dealing with symmetric Dirichlet form theory may not apply. Therefore, we use the stronger concept of strict irreducibility of $(T_t)_{t>0}$ covered in \cite{GT2}  and originally due to \cite{Ku11}. In \cite[Section 3.2.3]{GT2}, under the assumption that $\mu$ is a Muckenhoupt $\mathcal{A}_{\beta}$-weight, $\beta \in [1, 2]$, and that $(T_t)_{t>0}$ is associated to a symmetric Dirichlet form defined as the closure of \eqref{eq:2.3.48}, the pointwise lower bound of the associated heat kernel leads to the strict irreducibility of $(T_t)_{t>0}$. }
\end{remark}
\noindent
Here,  the strict irreducibility of $(T_t)_{t>0}$, merely follows under assumption {\bf (a)} of Section \ref{subsec:2.2.1}. Namely, we show the irreducibility in the probabilistic sense in Lemma \ref{lem:2.7}, which implies the strict irreducibility by Lemma \ref{prop:2.2}. As in the case of Section \ref{sec2.3}, for a sufficiently regular function $f$, $\rho T_{\cdot} f$ is a variational solution to a parabolic PDE of divergence type. We may hence apply the pointwise parabolic Harnack inequality of \cite[Theorem 5]{ArSe}, which is a main ingredient to derive our results.

\begin{lemma} \label{prop:2.2}
Suppose {\bf (a)} of Section \ref{subsec:2.2.1} holds. If $(P_t)_{t>0}$ is irreducible in the probabilistic sense, then $(T_t)_{t>0}$ is strictly irreducible.
\end{lemma}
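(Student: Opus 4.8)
The plan is to argue by contraposition inside the definition of strict irreducibility. Let $A\in\mathcal B(\R^d)$ be a weakly invariant set relative to $(T_t)_{t>0}$ in the sense of Definition \ref{def:2.4.4bis}. If $\mu(A)=0$ there is nothing to show, so I assume $\mu(A)>0$ and aim to prove $\mu(\R^d\setminus A)=0$. Since $\R^d=\bigcup_n B_n$, I first fix a ball $B_n$ with $\mu(A\cap B_n)>0$ and set $f:=1_{A\cap B_n}$. Because $\mu$ is locally finite, $f\in L^2(\R^d,\mu)\cap L^\infty(\R^d,\mu)$, $f\ge 0$, and $f=f\cdot 1_A$. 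Weak invariance of $A$ applied to this $f$ then gives, for every $t>0$,
$$
T_t f(x)=T_t(f\cdot 1_A)(x)=0\quad\text{for }\mu\text{-a.e. }x\in\R^d\setminus A .
$$

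Next I would pass from this $\mu$-a.e. statement to a genuine pointwise statement. Since $f\ge 0$ belongs to $\bigcup_{s\in[1,\infty]}L^s(\R^d,\mu)$, Theorem \ref{theo:2.6} provides a continuous $\mu$-version $P_t f$ of $T_t f$ on $\R^d$; as $\mu=\rho\,dx$ with $\rho$ continuous and strictly positive, $P_t f$ is the \emph{unique} continuous representative, so $P_t f(x)=0$ for $\mu$-a.e. $x\in\R^d\setminus A$. Now suppose, for contradiction, that $\mu(\R^d\setminus A)>0$, which by positivity of $\rho$ is equivalent to $dx(\R^d\setminus A)>0$. The zero set $\{P_t f=0\}$ is closed, and it contains $\R^d\setminus A$ up to a set of $\mu$-measure zero, i.e.\ up to a Lebesgue-null set; hence $dx(\{P_t f=0\})>0$, and in particular there exists a point $x_0$ with $P_t f(x_0)=0$. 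On the other hand $\mu(A\cap B_n)>0$, so the assumed irreducibility of $(P_t)_{t>0}$ in the probabilistic sense (Definition \ref{def:2.4.4}) forces $P_t 1_{A\cap B_n}(x_0)=P_t f(x_0)>0$, a contradiction. Therefore $\mu(\R^d\setminus A)=0$, and since $A$ was an arbitrary weakly invariant set, $(T_t)_{t>0}$ is strictly irreducible.

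I do not expect any genuinely hard step here: the entire content is the use of the continuous version supplied by Theorem \ref{theo:2.6} to upgrade the weak-invariance identity to a pointwise zero of $P_t f$, after which probabilistic irreducibility closes the argument immediately. The only two points requiring a little care are (i) replacing $1_A$ by $1_{A\cap B_n}$ so that the test function lies in $L^2(\R^d,\mu)$ even when $\mu(A)=\infty$, and (ii) using the mutual absolute continuity of $\mu$ and Lebesgue measure (because $\rho>0$ is continuous) to turn "$P_t f=0$ $\mu$-a.e.\ on a $\mu$-positive set" into "the closed set $\{P_t f=0\}$ is nonempty". Both are routine.
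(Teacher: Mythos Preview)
Your proof is correct and follows essentially the same approach as the paper's: use weak invariance to get $P_t$ of an indicator vanishing $\mu$-a.e.\ on $\R^d\setminus A$, then extract a single point where the continuous version is zero and contradict probabilistic irreducibility. The only cosmetic difference is that the paper takes the monotone limit $1_{B_n}\cdot 1_A\nearrow 1_A$ to work with $P_{t_0}1_A$ and then applies irreducibility to $A$ itself, whereas you stay with $1_{A\cap B_n}$ and apply irreducibility to $A\cap B_n$; both routes are equally short, and your observation about closedness of $\{P_tf=0\}$ is true but not actually needed.
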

\begin{proof}
Let $t_0>0$ and $A \in \mathcal{B}(\R^d)$ be a weakly invariant set relative to $(T_t)_{t>0}$. Let $f_n:=1_{B_n} \in L^2(\R^d, \mu)$. Then $T_{t_0}(f_n 1_A)(x)=0$ for $\mu$-a.e. $x \in \R^d \setminus A$, for all $n \in \N$. Since $f_n \nearrow 1_{\R^d}$, we have $T_{t_0} (f_n 1_A) \nearrow T_{t_0} 1_A$ $\mu$-a.e. Thus, $T_{t_0} 1_A(x)=0$ for $\mu$-a.e. $x \in \R^d \setminus A$, so that $P_{t_0} 1_A(x)=0$ for  $\mu$-a.e. $x \in \R^d \setminus A$.
\\
Now suppose that $\mu(A)>0$ and $\mu(\R^d \setminus A)>0$. Then there exists $x_0 \in \R^d \setminus A$ such that $P_{t_0}1_A(x_0)=0$, which is contradiction since $(P_t)_{t>0}$ is irreducible in the probabilistic sense. Therefore, we have $\mu(A)=0$ or $\mu(\R^d \setminus A)=0$, as desired.
\end{proof}

\begin{lemma}\label{lem:2.7} 
Suppose {\bf (a)} of Section \ref{subsec:2.2.1} holds. 
\begin{itemize}
\item [(i)] Let $A \in \mathcal{B}(\R^d)$ be such that $P_{t_0} 1_A (x_0)=0$ for some $t_0>0$ and $x_0\in \R^d$. Then $\mu(A)=0$.
\item [(ii)] Let $A \in \mathcal{B}(\R^d)$ be such that $P_{t_0} 1_A (x_0)=1$ for some $t_0>0$ and $x_0\in \R^d$.  Then $P_t 1_{A}(x)=1$ \;for all $(x,t) \in \R^d \times (0,\infty)$.
\end{itemize}
\end{lemma}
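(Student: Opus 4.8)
The plan is to exploit the parabolic Harnack inequality for the variational (divergence-form) solution $u(x,t)=\rho(x)P_tf(x)$ that was already set up in the proof of Theorem~\ref{theo:2.6}, applied here to $f=1_A$. The key point is that, by Theorem~\ref{theo:2.6}, for $f=1_A\in L^\infty(\R^d,\mu)$, $f\ge 0$, the function $P_\cdot f(\cdot)$ is locally parabolic Hölder continuous on $\R^d\times(0,\infty)$ and, after multiplication by the locally Hölder continuous, strictly positive density $\rho$, solves the parabolic equation in the variational sense \eqref{eq:2.3.45}. Hence the pointwise parabolic Harnack inequality of \cite[Theorems 3 and 4]{ArSe} (or \cite[Theorem 5]{ArSe}) is available: for any space-time cylinder there are constants so that the supremum of $u$ over a lower cylinder is bounded by a constant times the infimum over an upper cylinder. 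Since $0\le P_tf\le 1$ and $\rho>0$ is locally bounded away from $0$ and $\infty$, Harnack transfers directly to $P_\cdot f(\cdot)$ itself: $\sup$ over a lower space-time box is controlled by $\inf$ over an upper one.

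For part (i): suppose $P_{t_0}1_A(x_0)=0$. Fix a ball $B$ around $x_0$ and a small $r>0$. The parabolic Harnack inequality applied to $u=\rho P_\cdot 1_A(\cdot)$, with the upper cylinder placed at time level $t_0$ around $x_0$ and the lower cylinder at an earlier time level, forces $P_t 1_A\equiv 0$ on a full space-time box $B_{x_0}(3r)\times(t_0-(3r)^2,t_0)$ (more precisely, $\inf$ of $u$ over the upper box is $0$, hence $\sup$ over the lower box is $0$, hence $u\equiv 0$ there, hence $P_\cdot 1_A\equiv 0$ there). A standard chaining/covering argument in space-time then propagates this to all of $\R^d\times(0,t_0)$: any point can be reached by a finite chain of overlapping Harnack cylinders going backwards in time, and since $t_0>0$ is fixed while $r$ is arbitrarily small, one covers $\R^d\times(0,t_0)$. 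In particular, for a fixed small $s\in(0,t_0)$, $P_s1_A\equiv 0$ on $\R^d$, so $\int_{\R^d}P_s1_A\,d\mu=0$; since $\mu$ is $(\overline T_t)_{t>0}$-sub-invariant (Definition~\ref{def:2.1.1}) we have $0=\int_{\R^d}P_s1_A\,d\mu=\int_{\R^d}T_s1_A\,d\mu$, and by sub-Markovianity of $(T_t)_{t>0}$ together with $\lim_{s\to 0}T_s1_A=1_A$ in $L^1(\R^d,\mu)$ (strong continuity), we get $\int_{\R^d}1_A\,d\mu=\lim_{s\to0}\int_{\R^d}T_s1_A\,d\mu=0$, i.e. $\mu(A)=0$.

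For part (ii): suppose $P_{t_0}1_A(x_0)=1$. Apply part (i) to $A^c=\R^d\setminus A$. Indeed $1_{A^c}=1-1_A$, and $T_t$ is sub-Markovian, so $P_t1_{A^c}$ is a continuous version of $T_t1_{A^c}$; moreover $P_{t_0}1_{A^c}(x_0)=1-P_{t_0}1_A(x_0)=0$ provided $P_t1=1$ $\mu$-a.e.\ at the relevant times --- but this requires conservativeness, which is not assumed here. To avoid this, instead run the Harnack argument directly on $v:=\rho\,(1-P_\cdot 1_A(\cdot))$ wherever $T_t1=1$, or, cleaner, note that the hypothesis $P_{t_0}1_A(x_0)=1$ together with $0\le P_t1_A\le T_t1\le 1$ forces $T_{t_0}1=1$ $\mu$-a.e.\ on a neighborhood and hence, by the already established propagation, $T_t1=1$ for all $t>0$ (conservativeness follows a posteriori along the orbit). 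Granting this, $w:=1-P_\cdot 1_A(\cdot)=P_\cdot 1_{A^c}(\cdot)$ is a nonnegative variational solution of the same parabolic equation, vanishing at $(x_0,t_0)$; by part (i)'s argument $w\equiv 0$ on $\R^d\times(0,t_0)$, hence $P_t1_A=1$ for $t\in(0,t_0)$. To push past $t_0$, use the semigroup property $P_{t_0+s}1_A=P_s(P_{t_0}1_A)$ together with $P_{t_0}1_A=1$ ($\mu$-a.e.) and conservativeness $P_s1=1$ to conclude $P_{t_0+s}1_A=1$ for all $s>0$, and combine with continuity of $(x,t)\mapsto P_t1_A(x)$ on $\R^d\times(0,\infty)$ to get $P_t1_A(x)=1$ for \emph{all} $(x,t)\in\R^d\times(0,\infty)$.

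The main obstacle I expect is the bookkeeping around conservativeness in part (ii): the paper deliberately does \emph{not} assume $(T_t)_{t>0}$ conservative in this section, so one must be careful that $P_t1_{A^c}$ is genuinely a continuous version of $T_t(1-1_A)$ and that the Harnack machinery is being applied to an actual nonnegative variational solution. The clean way is to establish conservativeness \emph{along the orbit} as a consequence of the hypothesis $P_{t_0}1_A(x_0)=1$ (since $1=P_{t_0}1_A(x_0)\le (P_{t_0}1)(x_0)\le 1$ and then propagate $T_t1\equiv 1$ by the same chaining argument as in (i)), rather than assuming it globally. The other, more routine, technical point is making the space-time chaining rigorous: one needs that the Harnack constants can be chosen locally uniformly so that finitely many overlapping cylinders suffice to connect $(x_0,t_0)$ to an arbitrary $(x,t)$ with $t<t_0$; this is standard given the local ellipticity \eqref{eq:2.1.2} and the local regularity of the coefficients under assumption~\textbf{(a)}.
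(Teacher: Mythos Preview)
Your approach to (i) is the paper's: apply the parabolic Harnack inequality to the nonnegative solution $\rho P_\cdot f$ and combine with $L^1$-strong continuity at $t=0$. Two technical points you glide over: the variational identity \eqref{eq:2.3.45} is established only for $f\in D(\overline L)_b\cap D(L_2)\cap D(L_q)$, not for $f=1_A$, so the paper works with the approximants $f_n=nG_n 1_{A\cap B_r(x_0)}$, applies Harnack to $u_n=\rho P_\cdot f_n$, and passes to the limit via Theorem~\ref{theo:2.6}; and to use $L^1$-strong continuity at $t=0$ one needs the indicator to be in $L^1(\R^d,\mu)$, which is why $A$ is intersected with a ball. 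Also, the chaining is unnecessary: \cite[Theorem~5]{ArSe} is a \emph{pointwise} Harnack inequality, $u(x,t)\le u(x_0,t_0)\exp\big(C(\tfrac{\|x-x_0\|^2}{t_0-t}+\tfrac{t_0-t}{1\wedge t}+1)\big)$, valid directly for all $t<t_0$ in the cylinder, so one application suffices.

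For (ii) there is a real gap. You correctly identify the obstacle (no conservativeness assumed) but your workaround is circular: from $P_{t_0}1_A(x_0)=1$ you only get $P_{t_0}1_{\R^d}(x_0)=1$ at the single point $x_0$, not ``$\mu$-a.e.\ on a neighborhood''; and to propagate this to $P_t 1_{\R^d}\equiv 1$ via Harnack you would need $\rho(1-P_\cdot 1_{\R^d})$ to be a nonnegative variational solution---which is exactly the statement of (ii) for $A=\R^d$, so you have not reduced anything. The missing idea is that the density $\rho$ is \emph{itself} a (time-independent) variational solution of the parabolic equation, precisely because $\mu=\rho\,dx$ is infinitesimally invariant for $L$; this is the computation \eqref{eq:2.4.49}, which is nothing but \eqref{eq:2.2.0a}. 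Hence $u_n:=\rho-\rho P_\cdot g_n=\rho(1-P_\cdot g_n)\ge 0$, with $g_n=nG_n 1_{A\cap B_m}$, is the difference of two variational solutions and therefore a nonnegative solution, to which \cite[Theorem~5]{ArSe} applies directly. Since $u_n(x_0,t_0)\to\rho(x_0)(1-P_{t_0}1_A(x_0))=0$, Harnack forces $P_s1_A\equiv 1$ on $\R^d\times(0,t_0]$ without any appeal to conservativeness. The extension to all $t>0$ via the semigroup property is then as you describe.
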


\begin{proof}
(i) Suppose that $\mu(A)>0$. Choose $r>0$ so that 
$$
0<\mu(A \cap B_r(x_0))<\infty.
$$
Let $u:=\rho P_{\cdot}1_{A \cap B_r(x_0)}$. Then $0 \leq u(x_0, t_0) \leq \rho(x_0) P_{t_0}1_A(x_0)=0$. Let $f_n:=nG_n 1_{A \cap B_r(x_0)}$ and $u_n:=\rho P_{\cdot} f_n$. Note that $f_n \in D(\overline{L})_b \cap D(L_2) \cap D(L_q)$ and $\lim_{n \rightarrow \infty} f_n  =1_{A \cap B_r(x_0)}$ in $L^1(\R^d, \mu)$. Let $U$ be a bounded open set in $\R^d$ and $\tau_1, \tau_2 \in (0, \infty)$ with $\tau_1<\tau_2$. By Theorem \ref{theo:2.6}, 
\begin{equation}\label{eq:2.4.46}
\lim_{n \rightarrow \infty} u_n  = u \;\; \text{ in } C(\overline{U} \times [\tau_1, \tau_2]).
\end{equation} 
 Fix $T>t_0$ and $U\supset \overline{B}_{r+1}(x_0)$. Then by \eqref{eq:2.3.45}, $u_n \in H^{1,2}(U \times (0,T))$ satisfies
for all $\varphi \in C_0^{\infty}(U\times (0, T))$
\begin{eqnarray*}
\int_0^T\int_{U} \left ( \frac{1}{2}\langle (A+C) \nabla u_n, \nabla \varphi \rangle + u_n \langle \mathbf{H}-\frac{A \nabla \rho}{\rho}, \nabla \varphi \rangle -u\partial_t\varphi \right ) dxdt=0.
\end{eqnarray*}
Take arbitrary but fixed $(x,t) \in B_{r}(x_0) \times (0, t_0)$.  By \cite[Theorem 5]{ArSe}
\begin{equation}\label{eq:2.4.47}
0 \leq u_n(x,t) \leq u_n(x_0, t_0)\;\exp \left ( C_1 \Big(\frac{\|x_0-x\|^2}{t_0-t}+ \frac{t_0-t}{\min(1,t)} +1 \Big)\right),
\end{equation}
where $C_1>0$ is a constant independent of $n \in \N$. Applying \eqref{eq:2.4.46} with $\overline{U} \times [\tau_1, \tau_2] \supset \overline{B}_{r+1}(x_0) \times [t,t_0]$ to \eqref{eq:2.4.47}, we have $u(x, t)=0$. Thus, $ P_t 1_{A\cap B_{r}(x_0)}(x)=0$ for all $(x,t) \in B_{r}(x_0) \times (0, t_0)$, so that by strong continuity inherited from $(T_t)_{t>0}$ (see Theorem \ref{theo:2.6} and Definition \ref{definition2.1.7})
$$
0=\int_{\R^d} 1_{A \cap B_{r}(x_0)} P_t 1_{A\cap B_{r}(x_0)} d\mu \underset{\text{as } t \rightarrow 0+}{ \; \longrightarrow} \mu(B_r(x_0) \cap A)>0,
$$
which is contradiction. Therefore, we must have $\mu(A)=0$.\\
(ii) Let $y \in \R^d$ and $0<s<t_0$ be arbitrary but fixed, $r:=2\|x_0-y\|$ and let $B_m$ be an open ball in $\R^d$ with $A \cap B_m \neq \emptyset$. Let $g_n:= nG_n 1_{A \cap B_m}$. Then $g_n \in D(\overline{L})_b \cap D(L_2) \cap D(L_q)$ and $\lim_{n \rightarrow \infty} g_n = 1_{A \cap B_m}$ in $L^1(\R^d, \mu)$. By Theorem \ref{theo:2.6}, 
\begin{equation}
\lim_{n \rightarrow \infty} P_{\cdot} g_n  = P_{\cdot} 1_{A \cap B_m} \;\; \text{ in } C(\overline{B}_r(x_0) \times [s/2, 2t_0]).
\end{equation}
Now fix $T>t_0$ and $U \supset \overline{B}_{r+1}(x_0)$. Using integration by parts and \eqref{eq:2.2.0a}, for all $\varphi \in C_0^{\infty}(U \times (0,T))$,
\begin{eqnarray}
&&\int_0^T\int_{U} \left ( \frac{1}{2}\langle (A+C) \nabla \rho, \nabla \varphi \rangle + \rho \langle \mathbf{H}-\frac{A \nabla \rho}{\rho}, \nabla \varphi \rangle -\rho\partial_t\varphi \right ) dxdt \nonumber \\
&&=-\int_0^T \int_{U} \langle \frac12 (A+C^T) \nabla \rho - \rho \mathbf{H}, \nabla \varphi \rangle dx dt = 0. \label{eq:2.4.49}
\end{eqnarray}
By \eqref{eq:2.3.45}, $\rho P_{\cdot} g_n \in H^{1,2}(U \times (0,T))$ satisfies for all $\varphi \in C_0^{\infty}(U\times (0, T))$
\begin{equation} \label{eq:2.4.50}
\int_0^T\int_{U} \left ( \frac{1}{2}\langle (A+C) \nabla (\rho P_{\cdot} g_n), \nabla \varphi \rangle + (\rho P_{\cdot} g_n) \langle \mathbf{H}-\frac{A \nabla \rho}{\rho}, \nabla \varphi \rangle -u\partial_t\varphi \right ) dxdt=0.
\end{equation}
Now let $u_n(x,t):=\rho(x) \left(1- P_t g_n (x) \right)$. Then $u_n \in H^{1,2}(U \times (0,T))$ and $u_n \geq 0$. Subtracting $\eqref{eq:2.4.50}$ from $\eqref{eq:2.4.49}$  implies
\begin{equation*} 
\int_0^T\int_{U} \left ( \frac{1}{2}\langle A \nabla u_n, \nabla \varphi \rangle + u_n \langle \mathbf{H}-\frac{A \nabla \rho}{\rho}, \nabla \varphi \rangle -u_n\partial_t\varphi \right ) dxdt=0.
\end{equation*}
Thus, by \cite[Theorem 5]{ArSe}
$$
0 \leq u_n(y,s) \leq u_n(x_0, t_0)\; \exp \left ( C_2 \Big(\frac{\|x_0-y\|^2}{t_0-s}+ \frac{t_0-s}{\min(1,s)} +1 \Big) \right),
$$
where $C_2>0$ is a constant independent of $n \in \N$. Letting $n \rightarrow \infty$ and $m \rightarrow \infty$, we obtain 
$P_{s}1_{A}(y)=1$. Since $(y, s) \in \R^d \times (0, t_0)$ was arbitrary, we obtain $P_{\cdot}1_A=1$ on $\R^d \times (0, t_0]$ by continuity. Then by the sub-Markovian property, $P_{t_0}1_{\R^d}(y)=1$ for any $y \in \R^d$. Now let $t \in (0, \infty)$ be given. Then there exists $k \in \N \cup \left \{0 \right \}$ such that
$$
k t_0<t \leq  (k+1) t_0
$$
and so $P_t 1_{A} = P_{kt_0+ (t-kt_0)} 1_{A}= \underbrace{P_{t_0} \circ \cdots \circ P_{t_0}}_{k\text{-times}} \circ P_{t-kt_0} 1_{A} =1$.
\end{proof}
\noindent
The following results are immediately derived by Lemma \ref{lem:2.7}(i) through contraposition and by Lemma \ref{prop:2.2}.
\begin{proposition}\label{prop:2.4.2}
Suppose {\bf (a)} of Section \ref{subsec:2.2.1} holds and let $(P_t)_{t>0}$ be as in Theorem \ref{theo:2.6}. Then:
\begin{itemize}
\item[(i)] $(P_t)_{t>0}$ is irreducible in the probabilistic sense (Definition \ref{def:2.4.4}).
\item[(ii)] $(T_t)_{t>0}$ is strictly irreducible (Definition \ref{def:2.4.4bis}).
\end{itemize}
\end{proposition}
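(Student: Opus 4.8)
The plan is to deduce both assertions directly from Lemma~\ref{lem:2.7}(i) and Lemma~\ref{prop:2.2}, so that no new estimates are needed: the entire analytic content has already been packed into the pointwise parabolic Harnack inequality used in the proof of Lemma~\ref{lem:2.7}, and what remains is a short contraposition plus one application of the passage ``probabilistic irreducibility $\Rightarrow$ strict irreducibility''.

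For part~(i) I would argue by contraposition against Definition~\ref{def:2.4.4}. Suppose $(P_t)_{t>0}$ were not irreducible in the probabilistic sense. Then there would exist $x_0\in\R^d$, $t_0>0$ and $A\in\mathcal{B}(\R^d)$ with $\mu(A)>0$ such that $P_{t_0}1_A(x_0)=0$. Applying Lemma~\ref{lem:2.7}(i) to exactly this $A$, $x_0$, $t_0$ forces $\mu(A)=0$, a contradiction. Hence for every $x\in\R^d$, $t>0$ and $A\in\mathcal{B}(\R^d)$ with $\mu(A)>0$ one has $P_t 1_A(x)>0$, which is precisely irreducibility in the probabilistic sense.

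For part~(ii), once (i) is established, I would simply invoke Lemma~\ref{prop:2.2}: under assumption~{\bf (a)}, irreducibility of $(P_t)_{t>0}$ in the probabilistic sense implies strict irreducibility of $(T_t)_{t>0}$. Since (i) supplies the hypothesis of Lemma~\ref{prop:2.2}, the conclusion follows. (As noted in the discussion after Definition~\ref{def:2.4.4bis}, strict irreducibility then also yields ordinary irreducibility of $(T_t)_{t>0}$, and irreducibility in the classical sense of $(P_t)_{t>0}$, via the relation between weakly invariant sets and open sets, so nothing further is required.)

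There is essentially no obstacle left at this stage; the difficulty has been absorbed into Lemma~\ref{lem:2.7}, whose proof localizes $A$ to a ball of finite positive measure, represents $\rho\,P_{\cdot}1_{A\cap B_r(x_0)}$ as a variational solution of a divergence-form parabolic PDE via \eqref{eq:2.3.45}, and then applies the pointwise parabolic Harnack inequality of \cite[Theorem~5]{ArSe} together with the strong continuity inherited from $(T_t)_{t>0}$ to reach the contradiction $0=\mu(B_r(x_0)\cap A)>0$. The only subtlety, already taken care of in Lemma~\ref{lem:2.7}, is that the Harnack constant is uniform along the approximating sequence $f_n=nG_n 1_{A\cap B_r(x_0)}$, so that the inequality survives the limit $n\to\infty$ provided by Theorem~\ref{theo:2.6}; no part of that needs to be revisited here.
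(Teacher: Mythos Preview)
Your proof is correct and matches the paper's own argument exactly: the paper states that the result is ``immediately derived by Lemma~\ref{lem:2.7}(i) through contraposition and by Lemma~\ref{prop:2.2}'', which is precisely what you do. Your additional remarks recalling the mechanism of Lemma~\ref{lem:2.7} are accurate but not needed for the proof itself.
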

We close this section with two remarks. The first is on a generalization of our results up to now to open sets and the second on related previous work.
\begin{remark}\label{rem:2.4.3}
{\it It is possible to generalize everything that has been achieved so far in Sections \ref{sec2.2}, \ref{sec2.3}, \ref{sec2.4} to general open sets $W\subset\R^d$. For this let $(W_n)_{n \geq 1}$ be a family of bounded and open sets in $\R^d$ with Lipschitz boundary $\partial  W_n$ for all $n \geq 1$, such that
$$
\overline{W}_n \subset W_{n+1}, \; \forall n \geq 1 \;\; \text{ and } \;\; W = \cup_{n \geq 1}W_n.
$$
Let $(p_n)_{n \geq 1}$ be a sequence in $\R$, such that $p_n \geq p_{n+1}>d$,\; $\forall n \geq 1$ and
$$
\lim_{n \rightarrow \infty} p_n =d,
$$
and assume that the coefficients $(a_{ij})_{1 \leq i,j \leq d}$, $(c_{ij})_{1 \leq i,j \leq d}$, and $(h_i)_{1 \leq i \leq d}$, satisfy for each $n\ge 1$:
\begin{itemize}
\item[] $a_{ji}= a_{ij}\in H^{1,2}(W_n) \cap C(W_n)$, $1 \leq i, j \leq d$ and $A = (a_{ij})_{1\le i,j\le d}$ satisfies \eqref{eq:2.1.2} on $W_n$, $C = (c_{ij})_{1\le i,j\le d}$, with $-c_{ji}=c_{ij} \in H^{1,2}(W_n) \cap C(W_n)$, $1 \leq i,j \leq d$, $\mathbf{H}=(h_1, \dots, h_d) \in L^{p_n}(W_n, \R^d)$.
\end{itemize}
Then taking into account Remark \ref{remark2.1.7}(iii) and adapting the methods of Sections \ref{sec2.2}, \ref{sec2.3}, \ref{sec2.4}, one can derive all results of Section \ref{sec2.2}, \ref{sec2.3}, \ref{sec2.4}, where $\R^d$ is replaced by $W$. 
}

\end{remark}

\begin{remark}\label{rem:2.3.3}
{\it We can mention at least two references \cite{Scer}, \cite{ZhXi16}, in which mainly probabilistic methods are employed  to derive irreducibility in the classical sense.
In \cite[Section 2.3]{Scer}, irreducibility in the classical sense is shown under the same assumptions as those which are used to show the strong Feller property. \\
In \cite{ZhXi16}, to obtain the strong Feller property and irreducibility in the classical sense of the semigroup associated with a diffusion process, restrictive conditions on the coefficients are imposed. The merit is that some time-inhomogeneous cases are covered 
in \cite{ZhXi16}, but the results are far from being optimal in the time-homogeneous case (see the discussion in the introduction of \cite{LT18}).\\
In \cite[Corollary 4.7]{MPW02}  the irreducibility of the semigroup in the classical sense is shown analytically by using the strict positivity of the associated heat kernel in \cite[Theorem 4.4]{MPW02} (see also \cite[Theorem 2.2.12 and Theorem 2.2.5]{LB07} for the case where there is an additional zero-order term).}
\end{remark}

\subsection{Comments and references to related literature}\label{Comments2}
Chapter \ref{chapter_2} is based on techniques from functional analysis and PDE theory that can be found in textbooks, for instance \cite{BRE}, \cite{EG15}, \cite{Ev11}. We further apply direct variational methods and  make use of standard results 
from semigroup, potential and operator theory.
In Section \ref{sec2.1}, the Lumer--Phillips theorem (\cite[Theorem 3.1]{LP61}) is used to derive that the closure of a dissipative operator generates a $C_0$-semigroup of contractions. In Section \ref{sec2.2},  the Lax--Milgram theorem (\cite[Corollary 5.8]{BRE}), the maximum principle (\cite[Theorem 4]{T77}), the Fredholm-alternative (\cite[Theorem 6.6(c)]{BRE}), and the elliptic Harnack inequality of \cite[Corollary 5.2]{T73} are mainly used to show existence of an infinitesimally invariant measure for $(L, C_0^{\infty}(\R^d))$. \\
Concerning more recent sources, beyond the classical ones, for the $H^{1,p}_{loc}$-regularity of the density of the infinitesimally invariant measure, \cite[Theorem 1.2.1]{BKRS} and \cite[Theorem 2.8]{Kr07} are used. 
In Section \ref{sec2.3}, the elliptic and parabolic H\"{o}lder regularity results (\cite[Th\'eor\`eme 7.2, 8.2]{St65}), \cite[Theorems 3 and 4]{ArSe}), are used to obtain regularized versions of the resolvent and the semigroup, respectively.
In Section \ref{sec2.4}, the irreducibility of the semigroup is derived by the pointwise parabolic Harnack inequality (\cite[Theorem 5]{ArSe}).\\
The content of Section \ref{sec2.1} is taken from \cite[Part I, Sections 1 and 2]{WS99}. Detailed explanations on the construction of the Markovian semigroup have been added, as well as the new example Remark \ref{rem:2.1.12}(ii). Sections \ref{sec2.2}--\ref{sec2.4} (and Chapter \ref{chapter_3}) originate roughly from \cite{LT18} and \cite{LT19}, but we recombined, reorganized, refined and further developed the results of \cite{LT18} and \cite{LT19}. In particular, the contents of Section \ref{sec2.2} are a refinement of \cite[Theorem 3.6]{LT19}. Some proofs on elliptic regularity (\cite[Lemma 3.3, 3.4]{LT19}) are omitted in this book and the interested reader may check the original source for the technical details.

%%%%%%%%%%%%%%%%%%%%% chapter_3.tex %%%%%%%%%%%%%%%%%%%%%%%%%%%%%%%%%
%
% chapter 3
%
% Use this file as a template for your own input.
%
%%%%%%%%%%%%%%%%%%%%%%%% Springer-Verlag %%%%%%%%%%%%%%%%%%%%%%%%%%
%\motto{Use the template \emph{chapter.tex} to style the various elements of your chapter content.}
\newpage
\section{Stochastic differential equations}
\label{chapter_3} 
% Always give a unique label
% use \sectionmark{}
% to alter or adjust the chapter heading in the running head
 
\subsection{Existence}
\label{sec:3.1}
In Section \ref{sec:3.1} we show that the regularized semigroup $(P_t)_{t>0}$ from Theorem \ref{theo:2.6} and \eqref{semidef} determines the transition semigroup of a Hunt process  $(X_t)_{t\geq 0}$ with nice sample paths and that $(R_{\alpha})_{\alpha >0}$ determines its resolvent. For the construction of the Hunt process  $(X_t)_{t\geq 0}$, crucially the existence of a Hunt process $(\tilde{X}_t)_{t \ge 0}$ deduced from generalized Dirichlet form theory for a.e. starting point (Proposition \ref{prop:3.1.3}) is needed, and additionally to  assumption {\bf (a)} of Section \ref{subsec:2.2.1}, assumption {\bf (b)} of Section \ref{subsec:3.1.1}, which provides a higher resolvent regularity. Since $(\tilde{X}_t)_{t \ge 0}$ has continuous sample paths on the one-point-compactification $\R^d_{\Delta}$, the same is then true for $(X_t)_{t\geq 0}$. From Remark \ref{rem:3.1.1}  of Section \ref{subsec:3.1.1} on we assume assumptions {\bf (a)} and {\bf (b)} to hold, if not stated otherwise.
As a by-product of the existence of $(X_t)_{t\geq 0}$ and the resolvent regularity derived in Theorem \ref{theorem2.3.1} by PDE theory, we obtain Krylov-type estimates (see Remark \ref{rem:ApplicationKrylovEstimates}). The identification of $(X_t)_{t\geq 0}$ as a weak solution (cf. Definition \ref{def:3.48} (iv))  to \eqref{intro:eq1} then follows standard lines by representing continuous local martingales as stochastic integrals with respect to Brownian motion through the knowledge of their quadratic variations. \\ 
\subsubsection{Regular solutions to the abstract Cauchy problem as transition functions}\label{subsec: 3.1.1first}
Throughout this section we will assume that {\bf (a)} of Section \ref{subsec:2.2.1} holds, and that
$$
\mu=\rho\,dx
$$ 
is as in Theorem \ref{theo:2.2.7} or as in  Remark \ref{rem:2.2.4}. \\

\begin{proposition} 
\label{prop:3.1.1}
Assume {\bf (a)} of Section \ref{subsec:2.2.1} holds.
Let $(P_t)_{t>0}$ be as in Theorem \ref{theo:2.6}  and \eqref{semidef}. Let $(x,t) \in \R^d \times (0, \infty)$. Then:
\begin{itemize}
\item[(i)] $P_{t}(x, \cdot)$ defined through
$$
P_t(x,A):= P_t 1_{A}(x), \qquad A \in \mathcal{B}(\R^d)
$$
is a sub-probability measure on $\mathcal{B}(\R^d)$, i.e. $P_t(x,\R^d)\le 1$, and equivalent to $\mu$. 
\item[(ii)] \ We have
\begin{eqnarray} 
\label{eq:3.1.1}
P_t f (x)= \int_{\R^d} f(y) P_t(x,dy), \qquad \forall f\in \bigcup_{s\in [1,\infty]}L^s(\R^d,\mu).
\end{eqnarray}
In particular, \eqref{eq:3.1.1} extends by linearity to all $f\in L^1(\R^d,\mu)+L^\infty(\R^d,\mu)$, and for such $f$, $P_t f$ is continuous by Theorem \ref{theo:2.6} and \eqref{semidef}.
\end{itemize}
\end{proposition}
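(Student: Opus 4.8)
The plan is to first establish part (ii) for nonnegative $f$ and then deduce part (i), since the statement that $P_t(x,\cdot)$ is a measure is really a statement about the countable additivity of $A\mapsto P_t1_A(x)$, which is itself an instance of (ii). First I would fix $(x,t)\in\R^d\times(0,\infty)$ and record the basic facts already available: for $f\in\bigcup_{s\in[1,\infty]}L^s(\R^d,\mu)$ with $f\geq 0$, Theorem \ref{theo:2.6} gives a distinguished continuous $\mu$-version $P_tf$ of $T_tf$, and the construction there is monotone and linear in the appropriate sense (it is built via \eqref{semidef} on general $f$, and via monotone/Cauchy-sequence limits from $D(\overline L)_b\cap D(L_2)\cap D(L_q)$ on nonnegative $f$). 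I would then define $P_t(x,A):=P_t1_A(x)$ for $A\in\mathcal B(\R^d)$; since $0\le 1_A\le 1$ and $(T_t)_{t>0}$ is sub-Markovian, we have $0\le P_t1_A(x)\le P_t1_{\R^d}(x)\le 1$, so in particular $P_t(x,\R^d)\le 1$ and $P_t(x,\emptyset)=0$. The only substantive point is $\sigma$-additivity: for a disjoint sequence $(A_n)_{n\ge1}$ with $A=\bigcup_n A_n$, monotone convergence of the indicators $1_{\bigcup_{k\le n}A_k}\nearrow 1_A$ in $L^1(\R^d,\mu)$ (using $\mu$-$\sigma$-finiteness only to stay in $L^1$ on an exhausting sequence, then a further monotone limit) combined with the monotone continuity of $P_t$ established inside the proof of Theorem \ref{theo:2.6} — precisely, that $P_tf_n(z)\nearrow P_tf(z)$ pointwise for $0\le f_n\nearrow f$, which is exactly how $P_t$ was extended to bounded and then to $L^\infty$ functions there — yields $\sum_{n}P_t(x,A_n)=P_t(x,A)$. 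Hence $P_t(x,\cdot)$ is a sub-probability measure.

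Next I would prove the equivalence of $P_t(x,\cdot)$ and $\mu$. One inclusion is the irreducibility result: by Proposition \ref{prop:2.4.2}(i), $(P_t)_{t>0}$ is irreducible in the probabilistic sense, so $\mu(A)>0$ implies $P_t1_A(x)>0$, i.e. $P_t(x,A)>0$; thus $P_t(x,\cdot)$ has the same null sets "from above" as $\mu$, meaning $\mu(A)>0\Rightarrow P_t(x,A)>0$, equivalently $P_t(x,A)=0\Rightarrow\mu(A)=0$. The reverse implication, $\mu(A)=0\Rightarrow P_t(x,A)=0$, follows because $P_t1_A$ is by construction a $\mu$-version of $T_t1_A$, and $1_A=0$ in $L^1(\R^d,\mu)$ when $\mu(A)=0$, so $T_t1_A=0$ in $L^1(\R^d,\mu)$ and its continuous version $P_t1_A$ vanishes $\mu$-a.e.; but then by Lemma \ref{lem:2.7}(i) (applied via its contrapositive, or directly: if $P_t1_A$ is continuous and $\mu$-a.e. zero it is identically zero since $\mathrm{supp}(\mu)=\R^d$) we get $P_t(x,A)=0$ for every $x$. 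This gives $P_t(x,\cdot)\sim\mu$.

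For part (ii), the identity $P_tf(x)=\int_{\R^d}f(y)\,P_t(x,dy)$ holds by definition for $f=1_A$; it extends to nonnegative simple functions by linearity of $P_t$ (on a bounded domain both sides are finite linear combinations), and then to arbitrary $f\ge0$ in $\bigcup_s L^s(\R^d,\mu)$ by monotone approximation $f_n\nearrow f$ with simple $f_n$, using monotone convergence on the right and the monotone pointwise continuity of $P_t$ recalled above on the left. Splitting $f=f^+-f^-$ via \eqref{semidef} and noting that $\int f^{\pm}\,dP_t(x,\cdot)<\infty$ since $P_t(x,\cdot)$ is a finite measure and $f^{\pm}\in L^1(\R^d,\mu)+L^\infty(\R^d,\mu)$ while $P_t(x,\cdot)\ll\mu$ with bounded $P_t$ on such classes, one obtains \eqref{eq:3.1.1} for all $f\in\bigcup_{s\in[1,\infty]}L^s(\R^d,\mu)$ and, by further linearity, for $f\in L^1(\R^d,\mu)+L^\infty(\R^d,\mu)$; continuity of $P_tf$ for such $f$ is then immediate from Theorem \ref{theo:2.6} and \eqref{semidef}. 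The main obstacle I anticipate is bookkeeping: making sure that the monotone-limit construction of $P_t$ from Theorem \ref{theo:2.6} genuinely delivers pointwise monotone continuity (not merely an $L^1$ or locally-uniform statement), so that $\sigma$-additivity of $P_t(x,\cdot)$ and the extension of \eqref{eq:3.1.1} to unbounded $f$ are legitimate; this is where one must quote the precise nature of the approximation steps in the proof of Theorem \ref{theo:2.6} rather than just its final estimate \eqref{eq:2.3.41}.
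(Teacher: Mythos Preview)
Your proposal is correct and follows essentially the same route as the paper's proof: sub-Markovianity of $(T_t)_{t>0}$ gives $P_t(x,\R^d)\le 1$, the implication $\mu(N)=0\Rightarrow P_t(x,N)=0$ comes from $1_N=0$ in $L^\infty(\R^d,\mu)$ together with continuity of $P_t1_N$ and full support of $\mu$, and the reverse implication is Lemma~\ref{lem:2.7}(i) (equivalently, Proposition~\ref{prop:2.4.2}(i), which is derived from it). Part (ii) then extends from indicators by approximation.

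The one place where your argument and the paper's differ slightly is the mechanism for $\sigma$-additivity and for passing from indicators to general $f$. You rely on \emph{pointwise monotone continuity} of $P_t$ along increasing sequences, and you correctly flag at the end that the proof of Theorem~\ref{theo:2.6} only establishes this for the specific approximants $f_n=1_{B_n}f$, not for arbitrary monotone sequences. The paper avoids this bookkeeping entirely by invoking the quantitative H\"older estimate \eqref{eq:2.3.41} (and its consequence \eqref{eq:2.3.46}): for disjoint $(A_k)$ with $A=\bigcup_k A_k$, applying \eqref{eq:2.3.41} to $f=1_{A\setminus\bigcup_{k\le n}A_k}$ and using that $\int_V T_u 1_{A\setminus\bigcup_{k\le n}A_k}\,d\mu=\int 1_{A\setminus\bigcup_{k\le n}A_k}\,T'_u 1_V\,d\mu\to 0$ gives $P_t1_{\bigcup_{k\le n}A_k}(x)\to P_t1_A(x)$ for \emph{every} $x$. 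The same estimate handles the extension in (ii). So your approach works, but the paper's use of \eqref{eq:2.3.41}/\eqref{eq:2.3.46} is the cleaner way to resolve exactly the obstacle you anticipated.
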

\begin{proof}
(i) That $P_{t}(x, \cdot)$ defines a measure is obvious by the properties of $(T_t)_{t>0}$ on $L^\infty(\R^d,\mu)\supset\mathcal{B}_b(\R^d)$ and since $P_t 1_{A}$ is a continuous version of $T_t 1_A$. In  particular $P_{t}(x, \cdot)$ defines a sub-probability measure since by the sub-Markov property $T_t 1_{\R^d}\le 1$ $\mu$-a.e. hence by continuity $P_{t}(x, \R^d)=P_t 1_{\R^d}(x)\le 1$ for every $x\in \R^d$. If $N\in \mathcal{B}(\R^d)$ is such that $\mu(N)=0$, then clearly  $P_{t}(x,N)=P_t 1_N(x)=0$ and if  $P_{t}(x,N)=P_t 1_N(x)=0$ then $\mu(N)=0$ by Lemma \ref{lem:2.7}(i).\\
(ii) For any $(x,t) \in  \R^d \times (0,\infty)$, we have
$$
P_t f (x)= \int_{\R^d} f(y) P_t(x,dy)
$$
for $f=1_A$, $A \in \mathcal{B}(\R^d)$ which extends to any $f\in \bigcup_{s\in [1,\infty]}L^s(\R^d,\mu)$ in view of \eqref{eq:2.3.46}.
\end{proof}
\begin{proposition}\label{prop:3.1.2}
Assume {\bf (a)} of Section \ref{subsec:2.2.1} holds.
Let $(R_{\alpha})_{\alpha>0}$ be as in Theorem \ref{theorem2.3.1} and \eqref{resoldef}. Let $(x, \alpha) \in \R^d \times (0, \infty)$. Then:
\begin{itemize}
\item[(i)]
$\alpha R_{\alpha}(x, \cdot)$, where 
$$
R_{\alpha}(x, A):= R_{\alpha} 1_{A}(x), \qquad A \in \mathcal{B}(\R^d)
$$
is a sub-probability measure on $\mathcal{B}(\R^d)$, absolutely continuous with respect to $\mu$.
\item[(ii)]\ We have
\begin{eqnarray} \label{eq:3.1.2} 
R_{\alpha}g(x)=\int_{\R^d}g(y) R_{\alpha}(x,dy), \qquad  \forall g\in \bigcup_{r\in [q,\infty]} L^r(\R^d,\mu),
\end{eqnarray}
where  $q=\frac{pd}{p+d}$, $p\in (d,\infty)$. In particular, \eqref{eq:3.1.2} extends by linearity to all $g\in L^q(\R^d,\mu)+L^\infty(\R^d,\mu)$, and for such $g$, $R_{\alpha}g$ is continuous by Theorem \ref{theorem2.3.1} and \eqref{resoldef}.
%\sout{i.e.  $(R_{\alpha})_{\alpha>0}$ is $L^{[q,\infty]}(\R^d,\mu)${\bf-strong Feller}} 
\end{itemize}
\end{proposition}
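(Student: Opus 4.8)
The plan is to mirror the proof of Proposition \ref{prop:3.1.1}, exploiting that the resolvent is the Laplace transform of the semigroup. The one point that needs genuine care is the countable additivity of $A\mapsto R_{\alpha}1_A(x)$ when $\mu$ is not finite; I would reduce it, together with everything else, to the identity
\[
R_{\alpha}1_A(x)=\int_0^{\infty} e^{-\alpha t}\,P_t 1_A(x)\,dt,\qquad A\in\mathcal{B}(\R^d),\ x\in\R^d,\ \alpha>0,
\]
which expresses $R_{\alpha}(x,\cdot)$ as the (Laplace-weighted) average of the sub-probability kernels $P_t(x,\cdot)$ from Proposition \ref{prop:3.1.1}.

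To prove this identity, I would start from the Bochner-integral formula $G_{\alpha}g=\int_0^{\infty}e^{-\alpha t}T_t g\,dt$ in $L^1(\R^d,\mu)$, valid in particular for $g\in L^1(\R^d,\mu)_b$. Both sides have canonical continuous $\mu$-versions: the left-hand side is $R_{\alpha}g$ by Theorem \ref{theorem2.3.1}, while for the right-hand side one uses that $P_t g$ is a continuous $\mu$-version of $T_t g$ with $|P_t g|\le\|g\|_{L^{\infty}(\R^d,\mu)}$ (sub-Markovianity plus $\mathrm{supp}(\mu)=\R^d$) and that $(x,t)\mapsto P_t g(x)$ is jointly continuous on $\R^d\times(0,\infty)$ by Theorem \ref{theo:2.6}; dominated convergence then makes $x\mapsto\int_0^{\infty}e^{-\alpha t}P_t g(x)\,dt$ continuous on $\R^d$, and testing against arbitrary $\phi\in L^{\infty}(\R^d,\mu)$ together with Fubini shows it represents $G_{\alpha}g$ in $L^1(\R^d,\mu)$. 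Since two continuous functions agreeing $\mu$-a.e. on $\R^d$ coincide, the identity holds for $g\in L^1(\R^d,\mu)_b$, hence for $g=1_A$ with $\mu(A)<\infty$; for general $A$ one passes to the limit along $A\cap B_n\uparrow A$, using that by construction (cf. the last steps of the proofs of Theorems \ref{theorem2.3.1} and \ref{theo:2.6}) both $R_{\alpha}1_{A\cap B_n}$ and $\int_0^{\infty}e^{-\alpha t}P_t1_{A\cap B_n}\,dt$ increase locally uniformly to $R_{\alpha}1_A$ and $\int_0^{\infty}e^{-\alpha t}P_t1_A\,dt$, respectively.

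Given the identity, part (i) is immediate: integrating the sub-probability measures $A\mapsto P_t(x,A)$ against the finite measure $e^{-\alpha t}\,dt$ on $(0,\infty)$ yields that $A\mapsto R_{\alpha}1_A(x)$ is a measure with total mass $R_{\alpha}(x,\R^d)\le\int_0^{\infty}e^{-\alpha t}\,dt=1/\alpha$, i.e. $\alpha R_{\alpha}(x,\cdot)$ is a sub-probability measure; absolute continuity $R_{\alpha}(x,\cdot)\ll\mu$ follows since each $P_t(x,\cdot)\ll\mu$ by Proposition \ref{prop:3.1.1}(i) (equivalently, $\mu(N)=0$ forces $1_N=0$ in $L^{\infty}(\R^d,\mu)$, so $R_{\alpha}1_N$ is the continuous version of $G_{\alpha}1_N=0$ and vanishes identically). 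For part (ii), the representation $R_{\alpha}g(x)=\int_{\R^d}g\,dR_{\alpha}(x,\cdot)$ holds for $g=1_A$ by definition of $R_{\alpha}(x,\cdot)$, extends to simple functions by linearity of $R_{\alpha}$, then to nonnegative $g\in L^r(\R^d,\mu)$ with $r\in[q,\infty)$ by approximating from below with simple $g_n\uparrow g$, $g_n\to g$ in $L^r(\R^d,\mu)$: by \eqref{eq:2.3.38a} $R_{\alpha}g_n\to R_{\alpha}g$ locally uniformly while $\int g_n\,dR_{\alpha}(x,\cdot)\to\int g\,dR_{\alpha}(x,\cdot)$ by monotone convergence; the case $g\in L^{\infty}(\R^d,\mu)$, $g\ge0$, is treated via the truncation $g_n:=1_{B_n}g$ and Lebesgue's theorem exactly as in the last step of the proof of Theorem \ref{theorem2.3.1}; general $g$ follows by splitting $g=g^+-g^-$ and invoking \eqref{resoldef}, and the extension to $L^q(\R^d,\mu)+L^{\infty}(\R^d,\mu)$ is then a matter of linearity, with \eqref{eq:2.3.38a} for $r=q$ guaranteeing independence of the decomposition. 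The only nontrivial step, and the one I would expect to be the main obstacle, is precisely the $\sigma$-additivity of $A\mapsto R_{\alpha}1_A(x)$ in the genuinely infinite-measure case; once the Laplace representation above is in hand, everything else is a routine transcription of Proposition \ref{prop:3.1.1} and of the Cauchy-sequence arguments already carried out in Section \ref{sec2.3}.
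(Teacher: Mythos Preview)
Your argument is correct, but it takes a different route from the paper. The paper omits the proof and simply says that in view of \eqref{eq:2.3.38a} it parallels Proposition~\ref{prop:3.1.1}; the intended direct argument establishes countable additivity of $A\mapsto R_{\alpha}1_A(x)$ from the resolvent regularity alone. Concretely, for disjoint $(A_n)$ with tail $C_N=\bigcup_{n>N}A_n$, one applies \eqref{eq:2.54new} to $1_{C_N}$: the term $\|1_{C_N}\|_{L^q(V,\mu)}=\mu(C_N\cap V)^{1/q}\to 0$ since $\mu(V)<\infty$, while $\|R_{\alpha}1_{C_N}\|_{L^1(V,\mu)}=\int 1_{C_N}\,G'_{\alpha}1_V\,d\mu\to 0$ by dominated convergence (as $G'_{\alpha}1_V\in L^1(\R^d,\mu)$). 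This forces $R_{\alpha}1_{C_N}(x)\to 0$ for every $x$. The rest of (i) and (ii) is then exactly as in Proposition~\ref{prop:3.1.1}.

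You instead first prove the pointwise Laplace identity $R_{\alpha}1_A(x)=\int_0^{\infty}e^{-\alpha t}P_t 1_A(x)\,dt$ and deduce everything from the measure property of $P_t(x,\cdot)$ already established in Proposition~\ref{prop:3.1.1}. This is essentially proving (the indicator case of) Theorem~\ref{th:3.1.1} before Proposition~\ref{prop:3.1.2}, whereas the paper orders them the other way and uses Proposition~\ref{prop:3.1.2} in the proof of Theorem~\ref{th:3.1.1}. Your route is self-contained (it uses only Theorems~\ref{theorem2.3.1} and~\ref{theo:2.6}) and makes $\sigma$-additivity entirely transparent via Tonelli; the paper's route keeps the two statements logically independent and is slightly shorter once one accepts the estimate-based argument for countable additivity. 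Both work.
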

\begin{proof}
In view of \eqref{eq:2.3.38a} the proof is similar to the corresponding proof for Proposition \ref{prop:3.1.1} and we therefore omit it.
\end{proof}
Define
$$
P_0:=id.
$$
\begin{theorem}\label{th:3.1.1}
Assume {\bf (a)} of Section \ref{subsec:2.2.1} holds.
For $(x, \alpha) \in \R^d \times (0, \infty)$, it holds that
$$
R_{\alpha}g(x)=\int_{0}^{\infty} e^{-\alpha t}P_t g(x)dt, \qquad  g\in \bigcup_{r\in [q,\infty]} L^r(\R^d,\mu),
$$
where $q=\frac{pd}{p+d}$ and $p\in (d,\infty)$.
\end{theorem}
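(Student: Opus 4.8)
The plan is to establish the Laplace-transform identity $R_\alpha g(x) = \int_0^\infty e^{-\alpha t} P_t g(x)\,dt$ pointwise by first recalling that it holds in the $L^r$-sense at the level of the abstract semigroup and resolvent, and then upgrading the almost-everywhere equality to an everywhere equality using the continuous-version machinery built in Sections \ref{sec2.3}. The starting point is that, by Theorem \ref{theorem2.1.5}(ii) and Definition \ref{definition2.1.7}, $(G_\alpha)_{\alpha>0}$ is the resolvent of $(T_t)_{t>0}$ on each $L^r(\R^d,\mu)$, so the Laplace representation
\begin{equation*}
G_\alpha g = \int_0^\infty e^{-\alpha t} T_t g\,dt
\end{equation*}
holds as a Bochner integral in $L^r(\R^d,\mu)$ for every $g\in L^r(\R^d,\mu)$, $r\in[q,\infty)$. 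Since $R_\alpha g$ is a continuous $\mu$-version of $G_\alpha g$ (Theorem \ref{theorem2.3.1}, \eqref{resoldef}) and $P_t g$ is a continuous $\mu$-version of $T_t g$ (Theorem \ref{theo:2.6}, \eqref{semidef}), it suffices to show that $x\mapsto \int_0^\infty e^{-\alpha t} P_t g(x)\,dt$ is well-defined, continuous, and agrees $\mu$-a.e. with $G_\alpha g$; then two continuous functions equal $\mu$-a.e. on $\R^d$ (with $\mathrm{supp}(\mu)=\R^d$) must coincide everywhere.

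First I would treat the case $g\in L^r(\R^d,\mu)$ with $r\in(q,\infty)$, $g\ge 0$, and reduce to $g$ bounded with compact support by the usual density/monotone approximation (splitting $g = g^+ - g^-$ at the end via \eqref{resoldef}). For such $g$, the estimate \eqref{eq:2.3.46} gives $\|P_t g\|_{C^{0,\gamma}(\overline U)} \le C(U,\tau_1,\tau_2)\|g\|_{L^s(\R^d,\mu)}$ uniformly for $t$ in compact subintervals of $(0,\infty)$, which together with the local parabolic Hölder continuity of $(x,t)\mapsto P_t f(x)$ from Theorem \ref{theo:2.6} and the $L^1$-contractivity bound $\int_{\R^d} P_t g\,d\mu \le \int_{\R^d} g\,d\mu$ shows that, for each fixed $x$, the map $t\mapsto P_t g(x)$ is Borel on $(0,\infty)$ and dominated near $t=0$ (using $0\le P_t g(x)\le \|g\|_\infty$ when $g$ is bounded) so that $\int_0^\infty e^{-\alpha t} P_t g(x)\,dt$ converges absolutely; local uniformity of the bounds in $x$ then yields continuity of the integral in $x$ by dominated convergence. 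To identify it with $G_\alpha g$ $\mu$-a.e., I would integrate the identity $G_\alpha g = \int_0^\infty e^{-\alpha t}T_t g\,dt$ against an arbitrary $\varphi\in C_0^\infty(\R^d)$: using Fubini (justified by the integrability just established, since $\int_0^\infty e^{-\alpha t}\int_{\R^d}|P_t g|\,|\varphi|\,d\mu\,dt<\infty$) one gets $\int_{\R^d}\big(\int_0^\infty e^{-\alpha t}P_t g\,dt\big)\varphi\,d\mu = \int_0^\infty e^{-\alpha t}\int_{\R^d}T_t g\,\varphi\,d\mu\,dt = \int_{\R^d} G_\alpha g\,\varphi\,d\mu$, so the two continuous functions have equal integrals against all test functions, hence are equal.

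The remaining cases follow by approximation exactly as in the proofs of Theorems \ref{theorem2.3.1} and \ref{theo:2.6}: for $g\in L^\infty(\R^d,\mu)$, $g\ge 0$, take $g_n := 1_{B_n} g \in L^1(\R^d,\mu)_b$, apply the identity to each $g_n$, and pass to the limit using that $R_\alpha g_n(x)\nearrow R_\alpha g(x)$ and $P_t g_n(x)\nearrow P_t g(x)$ pointwise (sub-Markovianity and monotone convergence) together with monotone convergence in the $t$-integral; for general $g\in \bigcup_{r\in[q,\infty]}L^r(\R^d,\mu)$, decompose $g=g^+-g^-$ and use linearity of both sides via \eqref{resoldef} and \eqref{semidef}. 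The main obstacle I anticipate is the behavior of $t\mapsto P_t g(x)$ near $t=0$: the Hölder estimates of Theorem \ref{theo:2.6} degenerate as $t\to 0^+$, so the absolute convergence of $\int_0^\infty e^{-\alpha t}P_t g(x)\,dt$ near zero must be obtained either from boundedness of $g$ (the sub-Markov bound $0\le P_t g(x)\le\|g\|_\infty$) in the reduction step, or, for unbounded $g\in L^r$, from a more careful argument — e.g. first proving the identity for $G_1 h$ with $h$ bounded (where $P_t G_1 h$ is controlled via \eqref{eq:2.3.40}) and then using the resolvent equation, or simply carrying the $L^1(\R^d,\mu)$-valued Bochner integral through and invoking Fubini at the level of integration against $\varphi$, which only needs local integrability and not pointwise control near $t=0$. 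This last route is cleanest and is the one I would follow to avoid the $t\to 0$ subtlety altogether.
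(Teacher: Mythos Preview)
Your proposal is correct and follows the same overall strategy as the paper: establish the identity first on a nice class where both sides are continuous (so $\mu$-a.e.\ equality upgrades to pointwise equality), then extend. The execution differs in two places worth noting. First, the paper starts directly with $g\in C_0^2(\R^d)\subset D(L_q)$ and invokes Lemma~\ref{eq:2.3.39a}, which gives joint continuity of $(x,t)\mapsto P_t g(x)$ on $\R^d\times[0,\infty)$ \emph{including} $t=0$; together with the trivial bound $|P_t g(x)|\le \|g\|_\infty$ this disposes of the $t\to 0$ issue in one stroke, whereas you correctly flag it as a subtlety and work around it via sub-Markov bounds or the Fubini/test-function route. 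Second, once the identity holds for $g\in C_0^2(\R^d)$, the paper reads it as an equality of measures $R_\alpha(x,dy)=\int_0^\infty e^{-\alpha t}P_t(x,dy)\,dt$ and extends to all of $\mathcal{B}_b(\R^d)$ by a monotone class argument (since $\sigma(C_0^2(\R^d))=\mathcal{B}(\R^d)$), then to $\bigcup_{r\in[q,\infty]}L^r(\R^d,\mu)$ by monotone approximation and splitting $g=g^+-g^-$; this is more direct than pairing against test functions. Your route works, but the paper's choice of $C_0^2(\R^d)$ as the starting class and the measure-theoretic extension are cleaner and sidestep exactly the obstacle you anticipated.
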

\begin{proof} Let first $g\in C_0^2(\R^d)$ and let $x_n\to x\in \R^d$ as $n \rightarrow \infty$. Then by Theorem \ref{theo:2.6} (see also \eqref{semidef}), $P_tg(x_n), P_tg(x)$
are continuous functions in $t\in (0,\infty)$ and $P_tg(x_n)\to P_tg(x)$ as $n \rightarrow \infty$ for any $t\in (0,\infty)$. Since further $\sup_{n\in\N}|P_tg(x_n)|\le \sup_{y\in \R^d}|g(y)|<\infty$ for any $t\in (0,\infty)$, Lebesgue's theorem implies that $\int_{0}^{\infty} e^{-\alpha t}P_t g dt$ is a continuous function on $\R^d$.  By Theorem \ref{theorem2.3.1}, $R_{\alpha}g$ is continuous. Since $(G_{\alpha})_{\alpha>0}$ is the Laplace transform of $(T_t)_{t>0}$ on $L^2(\R^d,\mu)$, the two continuous functions $R_{\alpha}g$ and $\int_{0}^{\infty} e^{-\alpha t}P_t g dt$ coincide $\mu$-a.e. hence everywhere on $\R^d$. 
\iffalse
If $g\in C_0(\R^d)$ it can be uniformly approximated on $\R^d$ through $C_0^\infty(\R^d)$ functions, for instance by convolution with a standard mollifier. Then using  \eqref{eq:2.3.38a} and \eqref{eq:2.3.46}, and Lebesgue's Theorem, we get
$R_{\alpha}g=\int_{0}^{\infty} e^{-\alpha t}P_t g dt$ for any $g\in C_0(\R^d)$. In particular, using Propositions \ref{prop:3.1.1} and \ref{prop:3.1.2}, it holds
\fi
Therefore, it holds that
$$
\int_{\R^d}g(y) R_{\alpha}(x,dy)=\int_0^{\infty}\int_{\R^d} g(y)P_t(x,dy)e^{-\alpha t}dt, \qquad \forall x\in \R^d,
$$
for any $g\in C_0^2(\R^d)$. Since the $\sigma$-algebra generated by  $C_0^2(\R^d)$ equals $\mathcal{B}(\R^d)$,
by a monotone class argument the latter extends to all $g\in \mathcal{B}_b(\R^d)$. Finally, splitting $g=g^+-g^-$ in positive and negative parts, using linearity and monotone approximation through $\mathcal{B}_b(\R^d)$ functions, the assertion follows for $g\in \bigcup_{r\in [q,\infty]} L^r(\R^d,\mu)$.
\end{proof}

%%%Added contents%%%%%%
\begin{remark}\label{rem:3.1equivalence ralpha}
{\it As a direct consequence of Theorem \ref{th:3.1.1}, the sub-probability measures $\alpha R_{\alpha}(x, dy)$ on $\mathcal{B}(\R^d)$ are equivalent to $\mu$ for all $\alpha>0$ and $x \in \R^d$. Indeed, by Proposition \ref{prop:3.1.2}(i), $\alpha R_{\alpha}(x, dy) \ll \mu$ for all $x \in \R^d$ for $\alpha>0$. For the converse, let $\alpha>0$, $x \in \R^d$ be given and assume that $A \in \mathcal{B}(\R^d)$ satisfies $\alpha R_{\alpha}(x, A)=0$. Then by Theorem \ref{th:3.1.1}, $P_t 1_A(x)=0$ for $dt$-a.e. $t \in (0, \infty)$, hence $\mu(A)=0$ by Lemma \ref{lem:2.7}(i), as desired.}
\end{remark}

%%%%%%%%%%%%%%%%%%%%

With the definition $P_0=id$ and Proposition \ref{prop:3.1.1} from above, $(P_t)_{t \ge 0}$ determines a {\bf (temporally homogeneous) sub-Markovian transition function} on $(\mathbb{R}^d,\mathcal{B}(\mathbb{R}^d))$, i.e.:
\begin{itemize}
\item for all $x\in \mathbb{R}^d$, $t\ge 0$:\quad $A\in  \mathcal{B}(\mathbb{R}^d)\mapsto P_t(x,A)$ is a sub-probability measure;
\item for all $t\ge 0$, $A\in  \mathcal{B}(\mathbb{R}^d)$:\quad  $x\in \mathbb{R}^d \mapsto P_t(x,A)$ is $\mathcal{B}(\mathbb{R}^d)$-measurable;
\item for all $x\in \mathbb{R}^d$, $A\in  \mathcal{B}(\mathbb{R}^d)$, the Chapman--Kolmogorov equation
$$
P_{t+s}(x,A)=\int_{\R^d}P_s(y,A)P_t(x,dy), \qquad \forall t,s\ge 0
$$
holds.
\end{itemize}
Here the Chapman-Kolmogorov equation can be rewritten as $P_{t+s}1_A=P_{t}P_{s}1_A$ and therefore holds, since both sides are equal $\mu$-a.e. as $T_{t+s}1_A=T_{t}T_{s}1_A$, and $P_{t+s}1_A$ as well as $P_{t}P_{s}1_A$ are continuous functions by Theorem \ref{theo:2.6}, if either $t\not=0$ or $s\not=0$, and by the definition $P_0=id$, if $t=s=0$.\\
Since $(P_t)_{t \ge 0}$ only defines a  sub-Markovian transition function we will extend it to make it Markovian (i.e. conservative). For this let $\mathbb{R}^d_{\Delta}:=\mathbb{R}^d\cup \{\Delta\}$ be the one-point-compactification of $\R^d$ 
with the point at infinity \lq\lq$\Delta$\rq\rq, and
$$
\mathcal{B}(\mathbb{R}^d_{\Delta}):=\{A\subset \mathbb{R}^d_{\Delta} : A\in \mathcal{B}(\mathbb{R}^d) \text{ or } A=A_0\cup\{\Delta\}, \ A_0\in \mathcal{B}(\mathbb{R}^d)\}.
$$
Any function $f$ originally defined on $\mathbb{R}^d$ is extended to $\mathbb{R}^d_{\Delta}$ by setting $f(\Delta)=0$. Likewise any measure $\nu$ originally defined on $\mathcal{B}(\mathbb{R}^d)$  is extended to $\mathcal{B}(\mathbb{R}^d_{\Delta})$ by setting $\nu(\{\Delta\})=0$. For instance, $P_t(x, \{\Delta \})=0$ for all $x \in \R^d$. Now for $t\ge 0$,
$$
P_t^{\Delta}(x, dy) =
\begin{cases}
\big[1- P_t(x,\mathbb{R}^d)\big] \delta_{\Delta} (dy) + P_t(x, dy), \quad \text{if} \ x \in \mathbb{R}^d\\
\delta_{\Delta} (dy), \quad \text{if} \ x = \Delta
\end{cases}
$$
determines a (temporally homogeneous) Markovian transition function $(P_t^{\Delta})_{t\ge 0}$ on $(\mathbb{R}^d_{\Delta},\mathcal{B}(\mathbb{R}^d_{\Delta}))$. 

\subsubsection{Construction of a Hunt process}
\label{subsec:3.1.1} 
Throughout this section we will assume that {\bf (a)} of Section \ref{subsec:2.2.1} holds (except for Proposition \ref{prop:3.1.3}). Furthermore, we shall {\bf assume} that
\begin{itemize}
\item[{\bf (b)}] \ \index{assumption ! {\bf (b)}}$\mathbf{G}=(g_1,\ldots,g_d)=\frac{1}{2}\nabla \big (A+C^{T}\big )+ \mathbf{H} \in L_{loc}^q(\R^d, \R^d)$ (cf. \eqref{equation G with H} and \eqref{form of G}), where $q=\frac{pd}{p+d}$ and $p\in (d,\infty)$,
\end{itemize}
holds. Assumption {\bf (b)} will be needed from Remark \ref{rem:3.1.1} below on and implies $C^2_0(\R^d)\subset D(L_q)$, which is crucial for the construction of the Hunt process in Theorem \ref{th: 3.1.2} below. \\
By the results of Section \ref{sec:3.1}, $(P_t^{\Delta})_{t\ge 0}$ is a (temporally homogeneous) Markovian transition function on $(\mathbb{R}^d_{\Delta},\mathcal{B}(\mathbb{R}^d_{\Delta}))$. Restricting $(P_t^{\Delta})_{t\ge 0}$ to the positive dyadic rationals $S:= \bigcup_{n \in \mathbb{N}} S_n$, $S_n : = \{ k2^{-n}  : k \in \mathbb{N} \cup \{0\}\}$, we can hence construct a Markov process 
$$
\mathbb{M}^{0} = (\Omega , \mathcal{F}^0, (\mathcal{F}^0_s)_{s \in S}, (X_s^0)_{s \in S} , (\mathbb{P}_x)_{x \in \mathbb{R}^d_{\Delta}})
$$
by Kolmogorov's  method (see \cite[Chapter III]{RYor}). Here 
$
\Omega : = (\mathbb{R}^d_{\Delta})^S
$ 
is equipped with the product $\sigma$-field $\mathcal{F}^0$, $X_s^0 : (\mathbb{R}^d_{\Delta})^S \to \mathbb{R}^d_{\Delta}$ are coordinate maps and $\mathcal{F}_s^0 : = \sigma( X_r^0 \ | \ r \in S, r \le s)$. \\

\begin{definition}\label{def:3.1.1}
(i) $\widetilde{\M}= (\widetilde{\Omega},\widetilde{\mathcal{F}},(\widetilde{X}_t)_{t\ge
0},(\widetilde{\P}_x)_{x\in \mathbb{R}^d_\Delta})$ is called a {\bf strong Markov process} (resp. a {\bf right process})\index{process ! strong Markov}\index{process ! right}
with state space $\mathbb{R}^d$, lifetime $\widetilde{\zeta}$, and corresponding
filtration $(\widetilde{\mathcal{F}}_t)_{t\ge 0}$, if (M.1)--(M.6) (resp. (M.1)--(M.7)) below are fulfilled:

\begin{itemize}

\item[(M.1)] $\widetilde{X}_t : \widetilde{\Omega} \to \mathbb{R}^d_\Delta$ is $\widetilde{\mathcal{F}}_t/{\cal B}(\R^d_\Delta)$-
measurable for all $t \ge 0$, and $\widetilde{X}_t(\omega) = \Delta
\Leftrightarrow t \ge \widetilde{\zeta}(\omega)$ for all $\omega \in \widetilde{\Omega}$, where 
$(\widetilde{\mathcal{F}}_t)_{t\ge 0}$ is a filtration on $(\widetilde{\Omega}, \widetilde{\mathcal{F}} )$ and 
$\widetilde{\zeta} : \widetilde{\Omega} \to [ 0,\infty ]$.

\item[(M.2)]  For all $t \ge 0$ there exists a map $\vartheta_t : \widetilde{\Omega}
\to \widetilde{\Omega}$ such that $\widetilde{X}_s \circ \vartheta_t = \widetilde{X}_{s+t}$ for all $s \ge
0$.

\item[(M.3)] $(\widetilde{\P}_x)_{x\in \mathbb{R}^d_\Delta}$ is a family of probability
measures on $(\widetilde{\Omega},\widetilde{\mathcal{F}})$, such that $x \mapsto \widetilde{\P}_x (B)$ is ${\cal
 B}(\R^d_\Delta)^*$--measurable (here  ${\cal
 B}(\R^d_\Delta)^*$  denotes the universially measurable sets) for all $B \in \widetilde{\mathcal{F}}$ and 
${\cal B}(\mathbb{R}^d_\Delta)$-measurable for all $B \in \sigma (\widetilde{X}_t|t \ge 0)$ and
$\widetilde{\P}_\Delta(\widetilde{X}_0=\Delta) = 1$.

\item[(M.4)]   (Markov property) For all $A \in {\cal B}(\mathbb{R}^d_\Delta), s, t \ge 0$, and $x \in
\mathbb{R}^d_\Delta$
$$
   \widetilde{\P}_x(\widetilde{X}_{t+s} \in A|\widetilde{\mathcal{F}}_t) = \widetilde{\P}_{\widetilde{X}_t}(\widetilde{X}_s \in A)\, , \quad
     \widetilde{\P}_x\mbox{-a.s.}
$$
\item[(M.5)]  (Normal property) $\widetilde{\P}_x(\widetilde{X}_0 = x) = 1$ for all $x \in \mathbb{R}^d_\Delta$.

\item[(M.6)]   (Strong Markov property) $(\widetilde{\mathcal{F}}_t)_{t\ge0}$ is right continuous (see \eqref{defrightcon})
 and for any $\nu \in {\cal
P}(\mathbb{R}^d_\Delta):=\{\nu:\nu \text{ is a probability measure on }\R^d_{\Delta}\}$ and $(\widetilde{\mathcal{F}}_t)_{t\ge0}$--stopping time $\tau$
$$
  \widetilde{\P}_\nu(\widetilde{X}_{\tau+s} \in A| \widetilde{\mathcal{F}}_\tau) = \widetilde{\P}_{\widetilde{X}_\tau}(\widetilde{X}_s \in A)\, , \quad \widetilde{\P}_\nu\mbox{-a.s.}
$$
for all $A \in {\cal B}(\mathbb{R}^d_\Delta)$, $s \ge 0$, where for a positive measure $\nu$ 
on $(\mathbb{R}^d_\Delta,{\cal B}(\mathbb{R}^d_\Delta))$ we set $\widetilde{\P}_\nu(\cdot) := \int_{\R^d} \widetilde{\P}_x(\cdot) \,\nu(dx)$.

\item[(M.7)]   (Right continuity) $t \mapsto \widetilde{X}_t(\omega)$ is right continuous on
$[0,\infty)$ for all $\omega \in \widetilde{\Omega}$.
\end{itemize}
\noindent
(ii) A right process $\widetilde{\M}$ is said to be a {\bf Hunt process}\index{process ! Hunt}, if additionally to (M.1)--(M.7), (M.8)--(M.9) below are fulfilled:
\begin{itemize}
\item[(M.8)]  (left limits on $[0,\infty)$) $\widetilde{X}_{t-} := \lim_{{s\uparrow t}\atop{s<t}} \widetilde{X}_s$ exists in
  $\mathbb{R}^d_\Delta$ for all $t\in(0,\infty)\ \widetilde{\P}_\nu$-a.s. for all $\nu \in \mathcal{P}(\R^d_\Delta)$.

\item[(M.9)]   (quasi-left continuity on $[0,\infty)$) for all $\nu \in \mathcal{P}(\R^d_\Delta)$, we have 
  $\lim_{n\to\infty} \widetilde{X}_{\tau_n} = \widetilde{X}_\tau \ \widetilde{\P}_\nu$--a.s. on
  $\{\tau < \infty\}$  for every increasing sequence
  $(\tau_n)_{n\ge1}$ of $(\widetilde{\mathcal{F}}^{\P_\nu}_t)_{t\ge0}$-stopping times
  with limit $\tau$,  where for a
sub-$\sigma$-algebra $\mathcal{G} \subset \widetilde{\mathcal{F}}$ we let $\mathcal{G}^{\widetilde{\P}_\nu}$ be
its $\widetilde{\P}_\nu$-completion in $\widetilde{\mathcal{F}}$. 
\end{itemize}
A strong Markov process $\widetilde{\M}$ is said to have {\bf continuous sample paths on the one-point-compactification 
$\R^d_{\Delta}$ of $\R^d$}, if 
\begin{itemize}
\item[(M.10)] $\widetilde{\P}_x(t\mapsto \widetilde{X}_t \text{ is continuous in } t\in [0,\infty) \text{ on } \R^d_{\Delta})=1 \quad \text{for any } x\in \R^d_{\Delta}.$
\end{itemize}
Here the continuity is of course w.r.t. the topology of $\R^d_{\Delta}$.
In particular, if $(M.1)-(M.6)$ and $(M.10)$ hold, then $\widetilde{\M}$ has automatically left limits on $[0,\infty)$ and is quasi-left continuous on $[0,\infty)$, and therefore $\widetilde{\M}$ is a {\bf Hunt process (with continuous sample paths on the one-point-compactification 
$\R^d_{\Delta}$ of $\R^d$)}.\\
\end{definition}
\noindent
In what follows, we will need the following result, deduced from generalized Dirichlet form theory.
\begin{proposition}\label{prop:3.1.3}
Assume \eqref{condition on mu}--\eqref{eq:2.1.4} hold (which is the case if for instance condition {\bf (a)} of Section \ref{subsec:2.2.1} holds, see Theorem \ref{theo:2.2.7} and also Remark \ref{rem:2.2.4}). Then,
there exists a Hunt process 
$$
\tilde{\mathbb{M}} = (\tilde{\Omega}, \tilde{\mathcal{F}}, (\tilde{\mathcal{F}})_{t \ge 0}, (\tilde{X}_t)_{t \ge 0}, (\tilde{\mathbb{P}}_x)_{x \in \R^d \cup \{ \Delta \} })
$$ 
with state space $\R^d$, lifetime $\tilde\zeta:=\inf\{t\ge 0\,:\,\tilde{X}_t=\Delta\}$ and cemetery $\Delta$ such that 
for any $f\in  L^2(\R^d,\mu)_b$ and $\alpha>0$
$$
\tilde{\mathbb{E}}_{x}\Big [\int_0^{\infty}e^{-\alpha t}f(\tilde{X}_t)dt\Big ] =G_{\alpha}f(x)\qquad \text{for } \mu\text{-a.e. } x\in \R^d
$$
where $\tilde{\mathbb{E}}_{x}$ denotes the expectation with respect to $\tilde{\mathbb{P}}_x$ and 
$G_{\alpha}$ is as in Definition \ref{definition2.1.7}.
Moreover, $\tilde{\mathbb{M}}$ has continuous sample paths on the one-point-compactification $\R^d_{\Delta}$ of $\R^d$, i.e. we may assume that 
\begin{equation}\label{contipath}
\tilde{\Omega} = \{\omega = (\omega (t))_{t \ge 0} \in C([0,\infty),\R^d_{\Delta}) \, : \, \omega(t) = \Delta \quad \forall t \ge \tilde{\zeta}(\omega) \}
\end{equation}
and
$$
\tilde{X}_t(\omega) = \omega(t), \quad t \ge 0.
$$
\end{proposition}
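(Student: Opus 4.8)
The plan is to realize $\tilde{\mathbb{M}}$ as the Hunt process associated with the generalized Dirichlet form attached to the sub-Markovian $C_0$-semigroup $(T_t)_{t>0}$ on $L^2(\R^d,\mu)$, and then to upgrade its sample path regularity to continuity on $\R^d_\Delta$. First I would recall from Theorem \ref{theorem2.1.5} and Definition \ref{definition2.1.7} that $(T_t)_{t>0}$ restricts to a sub-Markovian $C_0$-semigroup on $L^2(\R^d,\mu)$ whose generator $(L_2,D(L_2))$ satisfies, by Theorem \ref{theorem2.1.5}(iii), the coercivity-type bound $\cE^0(u,u)\le -(L_2 u,u)_{L^2(\R^d,\mu)}$ for $u\in D(\overline L)_b$, and that $L_2$ and $L_2'$ are mutually adjoint (Remark \ref{remark2.1.7}(ii), Definition \ref{definition2.1.7}), with $(\cE^0,D(\cE^0))$ a symmetric Dirichlet form. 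This is precisely the structure of a generalized Dirichlet form in the sense of Stannat: the bilinear form $\cE(u,v):=\cE^0(u,v)-\int_{\R^d}\langle\mathbf{B},\nabla u\rangle v\,d\mu$ on $D(\cE^0)\cap D(L_2)$ (with the coclosed part coming from $L_2$) is a generalized Dirichlet form. I would then invoke the general existence theorem for associated $\mu$-tight special standard processes / Hunt processes from generalized Dirichlet form theory (\cite{WS99}, \cite{WSGDF}, \cite{Tr2}, \cite{Tr5}) — the hypotheses needed are quasi-regularity of the form and the structural condition $\mathrm{(D3)}$, both of which hold here because $C_0^\infty(\R^d)$ is a core-type dense algebra in $D(\cE^0)$ and the state space $\R^d$ is locally compact second countable, so quasi-regularity is automatic. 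This yields a Hunt process $\tilde{\mathbb{M}}$ with $\tilde E_x[\int_0^\infty e^{-\alpha t}f(\tilde X_t)\,dt]=G_\alpha f(x)$ for $\mu$-a.e.\ $x$, for all $f\in L^2(\R^d,\mu)_b$ and $\alpha>0$.

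Second, I would establish the continuous-sample-path property (M.10). The standard route is: the process has no jumps because the form has no killing part away from the boundary and no jumping part — equivalently, one checks that for every $u\in D(\cE^0)_b$ (in particular for $u\in C_0^\infty(\R^d)$) the Revuz measure of the jumping functional vanishes, which follows from the fact that $\cE^0$ is a strongly local Dirichlet form (it is a pure second-order diffusion form $\frac12\int\langle A\nabla u,\nabla v\rangle\,d\mu$, hence strongly local by the Beurling--Deny criterion), and the first-order perturbation $\langle\mathbf{B},\nabla\cdot\rangle$ does not introduce jumps. Therefore the associated process is, off an $\cE^0$-exceptional set, a diffusion in the sense of having $\tilde{\mathbb{P}}_x$-a.s.\ continuous paths in $\R^d$ up to $\tilde\zeta$; adding the cemetery point $\Delta$ and using quasi-left-continuity at $\tilde\zeta$ (part of the Hunt property) gives continuity on $[0,\infty)$ with values in $\R^d_\Delta$. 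Since the claim is only for $\mu$-a.e.\ starting point (the statement fixes the resolvent identity only $\mu$-a.e.), we may discard the exceptional set; after that, the canonical realization on the path space \eqref{contipath} is obtained in the usual way by pushing forward to $C([0,\infty),\R^d_\Delta)$. Invoking the concluding remark in Definition \ref{def:3.1.1}(ii) — that (M.1)--(M.6) together with (M.10) already force (M.8)--(M.9) — completes the identification of $\tilde{\mathbb{M}}$ as a Hunt process with continuous paths on $\R^d_\Delta$.

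The main obstacle I anticipate is not the soft existence statement but verifying the structural hypotheses of generalized Dirichlet form theory precisely enough to quote the existence-of-Hunt-process theorem, and in particular the strong locality that yields path continuity rather than merely the right-process / c\`adl\`ag property. The delicate point is that $\mathbf{B}$ is only in $L^2_{loc}(\R^d,\R^d,\mu)$ and is not of bounded variation, so one cannot naively compute the Revuz measures of additive functionals by integration by parts; instead one argues through the approximation by the resolvents $\overline G_\alpha^{V_n}$ of Theorem \ref{theorem2.1.5}(ii) and Lemma \ref{lemma2.1.6}, on each relatively compact $V_n$ the perturbation is in $L^2(V_n,\R^d,\mu)$ and the part process on $V_n$ is a diffusion killed on exiting $V_n$, and then passes to the limit along the exhaustion, noting that the zero-$\mu$-divergence property \eqref{eq:2.1.6} controls the perturbation term. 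All of this is carried out in the cited references, so in the write-up I would state the conclusion and refer to \cite{WS99} (and \cite{Tr2},\cite{Tr5}) for the construction, spelling out only the verification that our $\cE^0$ and $\mathbf{B}$ meet their standing assumptions (which is immediate from \eqref{condition on mu}--\eqref{eq:2.1.4}) and the strong-locality argument that gives \eqref{contipath}.
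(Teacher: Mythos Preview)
Your overall strategy matches the paper's: both invoke generalized Dirichlet form theory for the form associated with $(L_2,D(L_2))$ to produce the Hunt process and then argue path continuity. Two points deserve sharpening, though, because the paper's execution differs from yours in technically important ways.

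First, the paper uses Lemma \ref{lemma2.1.4} (the algebra property of $D(\overline L)_b$ with the product rule \eqref{eq:2.1.17a}) in an essential way to verify the structural condition D3 of \cite{WSGDF} (via \cite[proof of Theorem 3.5]{WS99}) and then, through \cite[Theorem 6]{Tr5}, the \emph{strict} quasi-regularity and SD3 that upgrade the associated standard process to a genuine Hunt process. Your claim that quasi-regularity is ``automatic'' from local compactness and a $C_0^\infty$-core understates this step; in the non-sectorial setting D3/SD3 are not free and the algebra lemma is doing real work.

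Second, for path continuity the paper does not argue directly through strong locality of $\cE^0$. Instead it transfers \cite[Lemma 3.2, Theorem 3.10, Proposition 4.2]{Tr2} (the no-killing-inside condition and lifetime control) from the ordinary capacity $\text{cap}_\varphi$ to the strict capacity $\text{cap}_{1,\widehat G_1\varphi}$, which requires the existence of both a Hunt process \emph{and} a Hunt co-process (for $L_2'$). Your strong-locality route is the right intuition and is how one proceeds in the symmetric or sectorial case, but in the genuinely non-sectorial generalized Dirichlet form setting the implication ``$\cE^0$ strongly local and $\mathbf B$ first-order $\Rightarrow$ continuous paths in $\R^d_\Delta$ for strictly-$\cE$-q.e.\ starting point'' is not available as a black-box theorem; it needs the \cite{Tr2,Tr5} machinery, and your $V_n$-approximation remark gestures at re-deriving that machinery rather than bypassing it.
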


\begin{proof}
Using in  particular Lemma \ref{lemma2.1.4} it is shown in \cite[proof of Theorem 3.5]{WS99} that the generalized Dirichlet form $\mathcal{E}$ associated with $(L_2,D(L_2))$ (cf. \cite[I.4.9(ii)]{WSGDF}) is quasi-regular and by \cite[IV. Proposition 2.1]{WSGDF} and Lemma \ref{lemma2.1.4} satisfies the structural condition D3 of \cite[p. 78]{WSGDF}. Thus by the theory of generalized Dirichlet forms \cite[IV. Theorem 2.2]{WSGDF}, there exists a standard process $\tilde{\tilde{\mathbb{M}}}$ properly associated with $\mathcal{E}$. Using in a crucial way the existence of $\tilde{\tilde{\mathbb{M}}}$ and Lemma \ref{lemma2.1.4} 
it is shown in \cite[Theorem 6]{Tr5} that the generalized Dirichlet form $\mathcal{E}$ is strictly quasi-regular and satisfies the structural condition SD3. Thus the existence of the Hunt process $\tilde{\mathbb{M}}$ follows by generalized Dirichlet form theory from \cite{Tr5}.\\
In order to show that $\tilde{\mathbb{M}}$ can be assumed to have continuous sample paths on the 
one-point-compactification $\R^d_{\Delta}$ of $\R^d$, it is enough to show that this holds for strictly $\mathcal{E}$-quasi-every starting point $x\in \R^d$. Indeed the complement of those points can be assumed to be a trap for $\tilde{\mathbb{M}}$.
Due to the properties of smooth measures with respect to $\text{cap}_{1,{\widehat{G}}_1\varphi}$ in \cite[Section 3]{Tr5} one can consider the work \cite{Tr2} with cap$_{\varphi}$ (as defined in \cite{Tr2}) replaced by $\text{cap}_{1,{\widehat{G}}_1\varphi}$. In particular \cite[Lemma 3.2, Theorem 3.10 and Proposition 4.2]{Tr2} apply with respect to the strict capacity $\text{cap}_{1,{\widehat{G}}_1\varphi}$. More precisely, in order to show that $\tilde{\mathbb{M}}$ has continuous sample paths on the one-point-compactification $\R^d_{\Delta}$ of $\R^d$ for strictly $\mathcal{E}$-quasi-every starting point $x\in \R^d$ one has to adapt three main arguments from \cite{Tr2}. The first one is related to the no killing inside condition \cite[Theorem 3.10]{Tr2}. In fact \cite[Theorem 3.10]{Tr2} which holds for $\mathcal{E}$-quasi-every starting point $x\in \R^d$ under the existence of an associated standard process and standard co-process and the quasi-regularity of the original and the co-form, holds with exactly the same proof for strictly $\mathcal{E}$-quasi-every starting point $x\in \R^d$ if we assume the existence of an associated Hunt process and associated Hunt co-process and the strict quasi-regularity for the original and the co-form. \cite[Lemma 3.2]{Tr2}, which holds under the quasi-regularity of $\mathcal{E}$  and the existence of an associated standard process for $\mathcal{E}$-quasi-every starting point $x\in \R^d$ and all $\nu \in \hat S_{00}$, holds in exactly the same way under the strict quasi-regularity of $\mathcal{E}$  and the existence of an associated Hunt process for strictly $\mathcal{E}$-quasi-every starting point $x\in \R^d$ and all $\nu \in \hat S_{00}^{str}$ (as defined in \cite[Section 3]{Tr5}). Finally, \cite[Proposition 4.2]{Tr2} also holds in exactly the same way for the Hunt process and its lifetime and the Hunt co-process and its lifetime for strictly $\mathcal{E}$-quasi-every starting point $x\in \R^d$. In particular all the mentioned statements then hold for $\mu$-a.e. starting point $x\in \R^d$.
\end{proof}
Let $\nu : = gd\mu$, where $g \in L^1(\R^d,\mu)$, $g>0$ $\mu$-a.e. and $\int_{\R^d} g \,d\mu =1$. For instance, we can choose $g(x):=\frac{e^{-\|x\|^2}}{( \pi)^{d/2} \rho(x)}$, $x\in \R^d$. Set
$$
\tilde{\mathbb{P}}_{\nu}(\cdot) := \int_{\R^d} \tilde{\mathbb{P}}_x (\cdot) \ g(x) \ \mu(dx), \qquad \mathbb{P}_{\nu}(\cdot) := \int_{\R^d} \mathbb{P}_x (\cdot) \ g(x) \ \mu(dx)  .
$$
Recall the definition of $S$ at the beginning of Section \ref{subsec:3.1.1}. Consider the one-to-one map 
$$
G : \tilde{\Omega} \to \Omega,\quad G(\omega): = \omega |_S.
$$
Then $G$ is $\tilde{\mathcal{F}}^0 / \mathcal{F}^0$ measurable and $\tilde{\Omega} \in \tilde{\mathcal{F}}^0$, where $\tilde{\mathcal{F}}^0 : = \sigma(\tilde{X}_s \ | \ s \in S)$ and using Proposition \ref{prop:3.1.3} exactly as in \cite[Lemma 4.2 and 4.3]{AKR} we can show that 
$$
\tilde{\mathbb{P}}_{\nu} |_{\tilde{\mathcal{F}}^0} \circ G^{-1} = \mathbb{P}_{\nu},\quad G(\tilde{\Omega}) \in \mathcal{F}^0, \quad \text{and} \quad\mathbb{P}_{\nu}(G(\tilde{\Omega}))=1.
$$ 
In particular
\begin{eqnarray}\label{eq:3.4}
\mathbb{P}_x(\Omega \setminus G(\tilde{\Omega}))=0, \quad \text{ for }\mu\text{-a.e. } x\in \R^d. 
\end{eqnarray}
Now, the following holds:
\begin{lemma}\label{lem:3.1.1}
Assume {\bf (a)} of Section \ref{subsec:2.2.1} holds.
Let
$$
\Omega_1 : = \bigcap_{s > 0, s \in S} \vartheta_s^{-1} (G(\tilde{\Omega})),
$$
where $\vartheta_s : \Omega \to \Omega$, $\vartheta_s(\omega) : = \omega(\cdot +s)$, $s \in S$, is the canonical shift operator. Then 
\begin{equation}\label{omega1}
\mathbb{P}_x(\Omega_1) = 1
\end{equation}
for all $x \in \R^d$.
\end{lemma}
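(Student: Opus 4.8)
The plan is to prove, for each fixed $s\in S$ with $s>0$, that $\mathbb{P}_x\big(\vartheta_s^{-1}(G(\tilde\Omega))\big)=1$ for \emph{every} $x\in\R^d$, and then to intersect over the countable set $\{s\in S:s>0\}$. First I would record the measurability bookkeeping: since $S+s\subset S$ we have $X^0_r\circ\vartheta_s=X^0_{r+s}$ for all $r\in S$, so $\vartheta_s$ is $\mathcal{F}^0/\mathcal{F}^0$-measurable; as $G(\tilde\Omega)\in\mathcal{F}^0$ was already established, $\vartheta_s^{-1}(G(\tilde\Omega))\in\mathcal{F}^0$ and hence $\Omega_1\in\mathcal{F}^0$. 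Next, applying the Markov property of $\mathbb{M}^0$ at time $s$ (in the form $\mathbb{E}_x[F\circ\vartheta_s\mid\mathcal{F}^0_s]=\mathbb{E}_{X^0_s}[F]$ for bounded $\mathcal{F}^0$-measurable $F$, cf.\ \cite[Chapter III]{RYor}) together with the fact that the law of $X^0_s$ under $\mathbb{P}_x$ is $P^\Delta_s(x,\cdot)$, I obtain
\[
\mathbb{P}_x\big(\vartheta_s^{-1}(G(\tilde\Omega))\big)=\mathbb{E}_x\big[\mathbb{P}_{X^0_s}(G(\tilde\Omega))\big]=\int_{\R^d_\Delta}\mathbb{P}_y(G(\tilde\Omega))\,P^\Delta_s(x,dy),\qquad x\in\R^d.
\]

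It then remains to see that the integrand equals $1$ for $P^\Delta_s(x,\cdot)$-almost every $y$. By \eqref{eq:3.4} there is a $\mu$-null set $N\subset\R^d$ with $\mathbb{P}_y(G(\tilde\Omega))=1$ for all $y\in\R^d\setminus N$. For the cemetery point, the constant path $\omega\equiv\Delta$ lies in $\tilde\Omega$ (its lifetime is $0$), so $G(\omega\equiv\Delta)$ is the constant-$\Delta$ element of $\Omega$; since $P^\Delta_t(\Delta,\cdot)=\delta_\Delta$ and the normal property holds, $\mathbb{P}_\Delta(X^0_r=\Delta\ \forall r\in S)=1$, whence $\mathbb{P}_\Delta(G(\tilde\Omega))=1$. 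Writing $P^\Delta_s(x,\cdot)=\big[1-P_s(x,\R^d)\big]\delta_\Delta+P_s(x,\cdot)$ and using that $P_s(x,\cdot)$ is equivalent to $\mu$ by Proposition \ref{prop:3.1.1}(i), so that $P_s(x,N)=0$, I get
\[
\int_{\R^d_\Delta}\mathbb{P}_y(G(\tilde\Omega))\,P^\Delta_s(x,dy)=\big[1-P_s(x,\R^d)\big]\cdot 1+P_s(x,\R^d\setminus N)\cdot 1=1
\]
for every $x\in\R^d$. Hence $\mathbb{P}_x\big(\vartheta_s^{-1}(G(\tilde\Omega))\big)=1$ for all such $s$ and all $x\in\R^d$, and since $\{s\in S:s>0\}$ is countable, $\mathbb{P}_x(\Omega_1)=1$ for all $x\in\R^d$, i.e.\ \eqref{omega1}.

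The only genuinely delicate points are the one-point-compactification bookkeeping — ensuring $\vartheta_s$ maps $\Omega$ into $\Omega$ and preserves $\mathcal{F}^0$, and handling the cemetery contribution $[1-P_s(x,\R^d)]\delta_\Delta$ correctly — and, more essentially, the passage from ``$\mu$-a.e.\ $y$'' in \eqref{eq:3.4} to ``$P^\Delta_s(x,\cdot)$-a.e.\ $y$ for every $x$'', which is exactly where the equivalence $P_s(x,\cdot)\sim\mu$ from Proposition \ref{prop:3.1.1}(i) (itself resting on the semigroup regularity of Section \ref{sec2.3} and Lemma \ref{lem:2.7}(i)) is indispensable. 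No new estimates are required beyond these structural facts.
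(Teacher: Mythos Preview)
Your proof is correct and follows essentially the same approach as the paper: apply the Markov property at time $s$ to reduce to an integral of $\mathbb{P}_y(G(\tilde\Omega))$ against $P^\Delta_s(x,\cdot)$, handle the $\delta_\Delta$ part via the constant-$\Delta$ path, and kill the $\mu$-null exceptional set from \eqref{eq:3.4} using the absolute continuity $P_s(x,\cdot)\ll\mu$ from Proposition~\ref{prop:3.1.1}(i). Your write-up is slightly more explicit about measurability and the countable intersection, but the argument is the same (note that only $P_s(x,\cdot)\ll\mu$, not full equivalence, is actually needed).
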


\begin{proof}
Using the Markov property, we have for $x \in \R^d$, $s\in S$, $s>0$
\begin{eqnarray*}
&&\mathbb{P}_x\big (\Omega\setminus\vartheta_s^{-1} (G(\tilde{\Omega}))\big )
\ = \   \E_x\big [\E_{X_s^0}[1_{\Omega\setminus G(\tilde{\Omega})} ]\big ]\ =\ P_s^{\Delta}\big (\E_{\cdot}[1_{\Omega\setminus G(\tilde{\Omega})} ]\big )(x)\\
&&=\big[1- P_s(x,\mathbb{R}^d)\big] \int_{\R^d_{\Delta}}\E_{y}[1_{\Omega\setminus G(\tilde{\Omega})} ]\delta_{\Delta} (dy)\ +\  \int_{\R^d_{\Delta}}\E_{y}[1_{\Omega\setminus G(\tilde{\Omega})} ]P_s(x, dy)\\
\end{eqnarray*}
Now
$$
\int_{\R^d_{\Delta}}\E_{y}[1_{\Omega\setminus G(\tilde{\Omega})} ]P_s(x, dy)=\int_{\R^d}\E_{y}[1_{\Omega\setminus G(\tilde{\Omega})} ]P_s(x, dy)=0
$$
by \eqref{eq:3.4} since $P_s(x, dy)$ doesn't charge $\mu$-zero sets, and
$$
\int_{\R^d_{\Delta}}\E_{y}[1_{\Omega\setminus G(\tilde{\Omega})} ]\delta_{\Delta} (dy)=\mathbb{P}_{\Delta}(\Omega\setminus G(\tilde{\Omega}))=0,
$$
since for the constant path $\Delta$, we have $\Delta\in G(\tilde{\Omega})$ and $\mathbb{P}_{\Delta}(\Omega\setminus \{\Delta\})=0$. Thus $\mathbb{P}_x\big (\Omega\setminus \vartheta_s^{-1} (G(\tilde{\Omega}))\big )=0$ and the assertion follows.
\end{proof}

\begin{lemma} \label{lem:3.2}
Assume {\bf (a)} of Section \ref{subsec:2.2.1} holds.
Let $(P_t)_{t>0}$ and $(R_{\alpha})_{\alpha>0}$ be as in Theorems \ref{theorem2.3.1} and  \ref{theo:2.6}. Let $\alpha, t>0$, $x \in \R^d$ and $f \in \cup_{r \in [q, \infty]} L^r(\R^d, \mu)$ with $f \geq 0$, where $q=\frac{pd}{p+d}$ and $p\in (d,\infty)$. Then:
\begin{itemize}
\item[(i)]
$P_t R_{\alpha} f (x) = R_{\alpha} P_t f(x)  = e^{\alpha t} \int_t^{\infty} e^{-\alpha u} P_u f(x) du$. 
\item[(ii)]
$(\Omega , \mathcal{F}^0, (\mathcal{F}^0_s)_{s \in S}, \left ( e^{-\alpha s} R_{\alpha} f(X^0_s)\right )_{s \in S}, \P_x)$ is a positive supermartingale.
\end{itemize}
\end{lemma}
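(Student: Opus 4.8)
The plan is to prove both assertions by first passing from the analytic identities for the semigroup/resolvent to pointwise statements, and then transcribing the Markov property of $\M^0$. For part (i), the starting point is that $(T_t)_{t>0}$ and $(G_\alpha)_{\alpha>0}$ on $L^2(\R^d,\mu)$ satisfy the resolvent representation $G_\alpha = \int_0^\infty e^{-\alpha t}T_t\,dt$ and the commutation $T_tG_\alpha = G_\alpha T_t = e^{\alpha t}\int_t^\infty e^{-\alpha u}T_u\,du$, which is a standard semigroup identity (differentiate, or use the resolvent equation). I would first establish this on $L^2$ for $f\in L^2(\R^d,\mu)_b$, say, and then note that by Theorem~\ref{theorem2.3.1}, Theorem~\ref{theo:2.6} and \eqref{semidef}, both sides have continuous $\mu$-versions when $f\in\cup_{r\in[q,\infty]}L^r(\R^d,\mu)$, $f\ge 0$: the left side $P_tR_\alpha f$ and $R_\alpha P_t f$ are continuous because $R_\alpha f$, resp. $P_t f$, lies in (a sum of) $L^r$-spaces to which $P_t$, resp. $R_\alpha$, applies regularizingly (using $\|R_\alpha f\|_{L^\infty}\le\alpha^{-1}\|f\|_{L^\infty}$ on the bounded part and Theorem~\ref{theorem2.3.1} on the $L^q$ part, and similarly for $P_t$), and the right side $e^{\alpha t}\int_t^\infty e^{-\alpha u}P_u f(x)\,du$ is continuous in $x$ by Theorem~\ref{theo:2.6} together with Lebesgue's dominated convergence (the parabolic Hölder estimate \eqref{eq:2.3.41}/\eqref{eq:2.3.46} gives the needed local uniform control in $u$ on compact $t$-intervals bounded away from $0$, plus an exponential tail bound from $\|P_uf\|_{L^\infty}\le\|f\|_{L^\infty}$ or the $L^r$-contractivity for the $\mu$-a.e. continuity of the integral). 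Two continuous functions that agree $\mu$-a.e. (and $\mu$ has full support) agree everywhere, giving (i) pointwise for nonnegative $f$; the general case is not needed here since (ii) only uses $f\ge 0$.

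For part (ii), fix $\alpha,\,x$ and $f\ge 0$ in $\cup_{r\in[q,\infty]}L^r(\R^d,\mu)$, and put $M_s := e^{-\alpha s}R_\alpha f(X_s^0)$ for $s\in S$. Each $M_s$ is nonnegative and $\mathcal F_s^0$-measurable, and $\E_x[M_s] = e^{-\alpha s}P_s R_\alpha f(x)\le e^{-\alpha s}\cdot e^{\alpha s}R_\alpha f(x) = R_\alpha f(x)<\infty$ using part (i) and $R_\alpha f\ge 0$, so $M_s$ is integrable. For the supermartingale inequality, take $s,t\in S$ with $t\ge 0$; by the Markov property (M.4) of $\M^0$ (which holds since $(P_t^\Delta)_{t\ge0}$ is the transition function of $\M^0$ by Kolmogorov's construction, and $R_\alpha f(\Delta)=0$ so that $P_t^\Delta$ may be replaced by $P_t$ on $R_\alpha f$),
$$
\E_x\big[\,e^{-\alpha(s+t)}R_\alpha f(X_{s+t}^0)\,\big|\,\mathcal F_s^0\,\big] = e^{-\alpha(s+t)} P_t R_\alpha f(X_s^0) = e^{-\alpha s}\Big(e^{\alpha t}\int_t^\infty e^{-\alpha u}P_u f(X_s^0)\,du\Big)e^{-\alpha t}\le e^{-\alpha s} R_\alpha f(X_s^0),
$$
where the last inequality uses part (i) again and the fact that $\int_t^\infty e^{-\alpha u}P_u f(X_s^0)\,du\le\int_0^\infty e^{-\alpha u}P_u f(X_s^0)\,du = R_\alpha f(X_s^0)$ because $P_u f\ge 0$ (sub-Markovianity preserves positivity) and by Theorem~\ref{th:3.1.1} the latter integral equals $R_\alpha f(X_s^0)$ pointwise. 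This is exactly the supermartingale property $\E_x[M_{s+t}\mid\mathcal F_s^0]\le M_s$, and together with adaptedness and integrability it proves (ii).

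The main obstacle I anticipate is not the algebra of the semigroup identities but the careful justification that the relevant functions are genuinely continuous (not just $\mu$-a.e. defined) so that pointwise identities — and in particular the pointwise evaluation at the random point $X_s^0$ inside a conditional expectation — make sense; this is where Theorems~\ref{theorem2.3.1} and~\ref{theo:2.6}, the estimates \eqref{eq:2.3.38a} and \eqref{eq:2.3.46}, and Theorem~\ref{th:3.1.1} do the real work, and one must check that $P_u f\in L^q+L^\infty$ (or in a single $L^r$) uniformly enough in $u$ to apply these regularity statements and to interchange $R_\alpha$ or $P_t$ with the time-integral. Once continuity is secured, identifying $(P_t^\Delta)$-expectations of $\M^0$ with $P_t$ applied to functions vanishing at $\Delta$, and invoking (M.4), is routine.
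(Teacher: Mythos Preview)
Your proposal is correct and follows essentially the same approach as the paper. One simplification the paper uses in (i): after establishing $P_t R_\alpha f = R_\alpha P_t f$ pointwise (as you do, via $T_tG_\alpha = G_\alpha T_t$ $\mu$-a.e.\ and continuity of both regularized versions), it computes $R_\alpha P_t f(x)$ directly by applying Theorem~\ref{th:3.1.1} to $g = P_t f \in L^r(\R^d,\mu)$ and using Chapman--Kolmogorov $P_u P_t f = P_{u+t} f$, giving $\int_0^\infty e^{-\alpha u} P_{u+t} f(x)\,du = e^{\alpha t}\int_t^\infty e^{-\alpha u} P_u f(x)\,du$ without a separate continuity verification for the integral expression.
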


\begin{proof}
(i) Since $T_t G_{\alpha} f = G_{\alpha} T_t f$ $\mu$-a.e. and $P_t R_{\alpha}f$, $R_{\alpha} P_t f \in C(\R^d)$, it holds that
$$
P_t R_{\alpha} f (x) = R_{\alpha} P_t f(x).
$$
By Theorem \ref{th:3.1.1}, 
$$
R_{\alpha} P_t f(x)  = \int_0^{\infty} e^{-\alpha u} P_{t+u}f(x) du=e^{\alpha t} \int_t^{\infty} e^{-\alpha u} P_u f(x) du.
$$
(ii) Let $s \in S$. Since $R_{\alpha} f$ is continuous by Theorem \ref{theorem2.3.1}, $\left ( e^{-\alpha s} R_{\alpha} f(X^0_s)\right )_{s \in S}$ is adapted. Moreover, since  $R_\alpha f \in \cup_{r \in [q, \infty]} L^r(\R^d, \mu)$, it follows from Proposition \ref{prop:3.1.1}(i) that
\begin{eqnarray*}
\E_x\left[ |e^{-\alpha s} R_{\alpha}f(X^0_s)| \right]  &=& e^{-\alpha s} P^{\Delta}_s |R_{\alpha} f| (x) =e^{-\alpha s} \int_{\R^d} |R_{\alpha} f|(y) P_s(x, dy) < \infty.
\end{eqnarray*}
Let $s' \in S$ with $s' \geq s$. Then by the Markov property, (i) and Theorem \ref{th:3.1.1},
\begin{eqnarray*}
&&\E_x \left[ e^{-\alpha s'} R_{\alpha} f(X^0_{s'})  \vert \mathcal{F}_s \right] \ = \ \E_{X^0_s} \left[  e^{-\alpha s'} R_{\alpha} f (X^0_{s'-s})\right] \ = \ e^{-\alpha s'} P_{s' -s} R_{\alpha} f(X^0_s) \\
&=& e^{-\alpha s} \int_{s'-s}^{\infty} e^{-\alpha u} P_u f(X^0_s) du  \ \leq  \ e^{-\alpha s} R_{\alpha}  f(X^0_s).
\end{eqnarray*}
\end{proof}
\noindent
$\Omega_1$ defined in Lemma \ref{lem:3.1.1} consists of paths in $\Omega$ which have unique continuous extensions to $(0,\infty)$, which still lie in $\R^d_{\Delta}$, and which stay in $\Delta$ once they have hit $\Delta$. In order to handle the limits at $s=0$ the properties presented in the following remark are crucial. 
\begin{remark}\label{rem:3.1.1} 
{\it Assume that {\bf (a)} of Section \ref{subsec:2.2.1} and that {\bf (b)} of the beginning of this section
hold. Then, in view of Theorems \ref{theorem2.3.1} and \ref{theo:2.6}, and Lemma \ref{eq:2.3.39a} and \eqref{resoldef}}, one can find $\{ u_n  :  n \ge 1 \} \subset C_0^2(\R^d)\subset D(L_q)$, satisfying:
\begin{itemize}
\item[(i)] for all $\varepsilon \in \mathbb{Q} \cap (0,1)$ and
$y \in D$, where $D$ is any given countable dense set in $\R^d$, there exists $n \in \N$ such that $u_n (z) \ge 1$, for all $z \in \overline{B}_{\frac{\varepsilon}{4}}(y)$ and $u_n \equiv 0$ on $\R^d \setminus B_{\frac{\varepsilon}{2}}(y)$;
\item[(ii)] $R_1\big( [(1 -L) u_n]^+ \big)$, $R_1\big( [(1 -L) u_n]^- \big)$, $R_1 \big( [(1-L)u_n^2]^+ \big)$, $R_1 \big( [(1-L)u_n^2]^- \big)$ are continuous on $\R^d$ for all $n \ge 1$;
\end{itemize}
and moreover it holds that:
\begin{itemize}
\item[(iii)] $R_1 C_0(\R^d) \subset C(\R^d)$;
\item[(iv)] for any $x \in \R^d$ and $u\in C_0^2(\R^d)$, the maps $t \mapsto P_t u(x)$ and $t \mapsto P_t (u^2)(x)$ are continuous on $[0,\infty)$.
\end{itemize}

\end{remark}
\medskip
Define
$$
\Omega_0 : = \{ \omega \in \Omega_1 :  \lim_{s \searrow 0} X_s^0(\omega) \ \text{exists in }  \R^d\}.
$$

\begin{lemma}\label{akrlemma}
Assume {\bf (a)} of Section \ref{subsec:2.2.1} and {\bf (b)} of Section \ref{subsec:3.1.1} hold.
We have 
\begin{equation}\label{normal}
\lim_{\begin{subarray}{1} s \searrow 0 \\ s \in S \end{subarray} } X_s^0 =x \quad \mathbb{P}_x\text{-a.s.} \quad \text{for all } \  x \in \R^d.
\end{equation}  
In particular $\mathbb{P}_x (\Omega_0) = 1$ for any $x \in \R^d$.
\end{lemma}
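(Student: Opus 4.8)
The plan is to run the Albeverio--Kondratiev--R\"ockner scheme (\cite{AKR}), using the separating family $\{u_n:n\ge 1\}\subset C_0^2(\R^d)$ supplied by Remark \ref{rem:3.1.1}(i) and the supermartingale property of Lemma \ref{lem:3.2}(ii). Fix $x\in\R^d$ throughout. The rough idea is: each $u_n(X^0_s)$ has a $\mathbb{P}_x$-a.s.\ limit as $s\searrow 0$ along $S$ because it is the difference of two nonnegative supermartingales; one then checks, via a variance computation using Remark \ref{rem:3.1.1}(iv), that this limit equals $u_n(x)$; and finally the separation property of the $u_n$ forces the path itself to converge to $x$.

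First I would produce the limits along $S$. For each $n$ write $(1-L)u_n=[(1-L)u_n]^+-[(1-L)u_n]^-$ and $(1-L)u_n^2=[(1-L)u_n^2]^+-[(1-L)u_n^2]^-$; under {\bf (a)}, {\bf (b)} these four functions are $\mu$-integrable with compact support, hence lie in $\bigcup_{r\in[q,\infty]}L^r(\R^d,\mu)$, and their $R_1$-potentials are continuous by Remark \ref{rem:3.1.1}(ii). By Lemma \ref{lem:3.2}(ii), $\bigl(e^{-s}R_1([(1-L)u_n]^{\pm})(X^0_s)\bigr)_{s\in S}$ and $\bigl(e^{-s}R_1([(1-L)u_n^2]^{\pm})(X^0_s)\bigr)_{s\in S}$ are then nonnegative $(\mathcal{F}^0_s)$-supermartingales under $\mathbb{P}_x$, and since for $s\in(0,1]$ one has $\mathbb{E}_x[e^{-s}R_1g(X^0_s)]\le P_sR_1g(x)=R_1P_sg(x)\le e\,R_1g(x)<\infty$ by Lemma \ref{lem:3.2}(i), Doob's downcrossing inequality gives a $\mathbb{P}_x$-a.s.\ limit as $S\ni s\searrow 0$. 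Because $u_n\in C_0^2(\R^d)\subset D(L_q)$ we have $u_n=G_1(1-L_q)u_n$, whose continuous version is $R_1((1-L)u_n)=R_1([(1-L)u_n]^+)-R_1([(1-L)u_n]^-)$, hence $u_n=R_1((1-L)u_n)$ pointwise on $\R^d$, and likewise $u_n^2=R_1((1-L)u_n^2)$. Thus on a $\mathbb{P}_x$-full set $\Omega'\subset\Omega_1$ the limits $L_n:=\lim_{S\ni s\searrow 0}u_n(X^0_s)$ and $\lim_{S\ni s\searrow 0}u_n^2(X^0_s)=L_n^2$ exist for all $n$.

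Next I would identify $L_n$ and conclude. For $s>0$, since $X^0_s$ has law $P_s(x,\cdot)$ under $\mathbb{P}_x$ and $u_n(\Delta)=0$, $\mathbb{E}_x[u_n(X^0_s)]=P_su_n(x)$ and $\mathbb{E}_x[u_n^2(X^0_s)]=P_s(u_n^2)(x)$, which tend to $u_n(x)$, resp.\ $u_n(x)^2$, as $s\searrow 0$ by Remark \ref{rem:3.1.1}(iv); since $0\le u_n\le\|u_n\|_\infty$, dominated convergence yields $\mathbb{E}_x[L_n]=u_n(x)$ and $\mathbb{E}_x[L_n^2]=u_n(x)^2$, so $\mathrm{Var}_{\mathbb{P}_x}(L_n)=0$ and $L_n=u_n(x)$ $\mathbb{P}_x$-a.s.\ Intersecting $\Omega'$ over $n\in\N$, assume $\lim_{S\ni s\searrow 0}u_n(X^0_s(\omega))=u_n(x)$ for all $\omega\in\Omega'$ and all $n$. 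Now fix $\omega\in\Omega'$: since $\omega\in\Omega_1$, $(X^0_s(\omega))_{s\in S,\,s>0}$ is the trace on $S$ of a continuous path $(0,\infty)\to\R^d_\Delta$, and by compactness of $\R^d_\Delta$ it is enough to rule out $X^0_{s_k}(\omega)\to z$ for some $S\ni s_k\searrow 0$ and $z\in\R^d_\Delta\setminus\{x\}$. Choosing $\varepsilon\in\mathbb{Q}\cap(0,1)$ and $y\in D$ with $\|x-y\|<\varepsilon/4$ and, when $z\in\R^d$, with $\varepsilon$ small enough that $z\notin\overline{B}_{\varepsilon/2}(y)$, and then $n$ as in Remark \ref{rem:3.1.1}(i), we get $u_n(x)\ge 1$ but $u_n(z)=0$ (using $u_n(\Delta)=0$ if $z=\Delta$); continuity of $u_n$ forces $u_n(X^0_{s_k}(\omega))\to 0$, contradicting $u_n(X^0_{s_k}(\omega))\to u_n(x)\ge 1$. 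Hence $X^0_s(\omega)\to x$ in $\R^d$ as $S\ni s\searrow 0$, which is \eqref{normal}; in particular $\omega\in\Omega_0$, so $\mathbb{P}_x(\Omega_0)\ge\mathbb{P}_x(\Omega')=1$.

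The step I expect to be most delicate is the first one: verifying that the relevant processes are genuinely $(\mathcal{F}^0_s)$-adapted nonnegative supermartingales on the dyadic index set $S$ — this is exactly where the continuity statements of Remark \ref{rem:3.1.1}(ii) and assumption {\bf (b)} (which gives $C_0^2(\R^d)\subset D(L_q)$, hence the $L^q$-integrability of $(1-L)u_n$ and $(1-L)u_n^2$ needed to invoke Lemma \ref{lem:3.2}) are essential — and then extracting the one-sided limit as $s\searrow 0$ rather than $s\to\infty$. Once this is in place, the variance computation and the separation argument are routine.
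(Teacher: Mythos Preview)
Your proposal is correct and follows essentially the same route as the paper: supermartingale limits via Lemma \ref{lem:3.2}(ii) for $R_1([(1-L)u_n]^{\pm})$, pointwise identity $u_n=R_1((1-L)u_n)$, an $L^2$/variance computation using Remark \ref{rem:3.1.1}(iv) to identify the limit as $u_n(x)$, then separation via Remark \ref{rem:3.1.1}(i). The only cosmetic differences are that the paper computes $\mathbb{E}_x[(u_n(X^0_s)-u_n(x))^2]\to 0$ directly rather than via $\mathbb{E}_x[L_n]$ and $\mathbb{E}_x[L_n^2]$, and that your compactness argument in $\R^d_\Delta$ handles the case $z=\Delta$ more explicitly; your supermartingale treatment of $u_n^2$ is harmless but unnecessary, since once $u_n(X^0_s)\to L_n$ a.s.\ one has $u_n^2(X^0_s)\to L_n^2$ automatically.
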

\begin{proof}
Let $x \in \R^d$, $n \ge 1$. Then the processes with time parameter $s\in S$
$$
\big( e^{-s} R_1 \big( [(1-L) u_n]^+ \big) (X_s^0), \mathcal{F}_s^0, \mathbb{P}_x  \big) \quad \text{and} \quad \big( e^{-s} R_1 \big( [(1-L) u_n]^- \big) (X_s^0), \mathcal{F}_s^0, \mathbb{P}_x  \big)
$$
are positive supermartingales by Lemma \ref{lem:3.2}(ii). Then by \cite[1.4 Theorem 1]{CW} for any $t \ge 0$
$$
\exists \lim_{\begin{subarray} \ s \searrow t \\ s \in S  \end{subarray}} e^{-s} \  R_1 \big( [(1-L) u_n]^{\pm}\big) (X_s^0) \quad \mathbb{P}_x\text{-a.s.}
$$
thus
\begin{equation}\label{existslim}
\exists \lim_{\begin{subarray} \ s \searrow 0 \\ s \in S \end{subarray}} u_n(X_s^0) \quad \mathbb{P}_x\text{-a.s.}
\end{equation}
We have $u_n = R_1 \big((1-L)u_n \big)$ and $u_n^2 = R_1 \big((1-L)u_n^2 \big)$ $\mu$-a.e., but since both sides are respectively continuous by Remark \ref{rem:3.1.1}(ii), it follows that the equalities hold pointwise on $\R^d$. Therefore
$$
\E_x \big[\big(u_n(X_s^0) - u_n(x)   \big)^2 \big] = P_s R_1\big( (1-L)u_n^2 \big)(x) - 2u_n(x) P_s R_1\big((1-L)u_n\big)(x) + u_n^2(x)
$$
and so
\begin{equation}\label{separ}
\lim_{\begin{subarray} \ s \searrow 0 \\ s \in S  \end{subarray}} \E_x\big[ \big(u_n(X_s^0) -u_n(x) \big)^2 \big] =0
\end{equation}
by  Remark \ref{rem:3.1.1}(iv). \eqref{existslim} and \eqref{separ} now imply that 
\begin{equation}\label{separating}
\lim_{\begin{subarray} \ s \searrow 0 \\ s \in S  \end{subarray}} u_n(X_s^0 (\omega)) = u_n(x) \quad \text{for all} \ \omega \in \Omega_x^n,
\end{equation}
where $\Omega_x^n \subset \Omega_1$ with $\mathbb{P}_x (\Omega_x^n) = 1$. Let $\omega \in \Omega_x^0 : = \bigcap_{n \ge 1} \Omega_x^n$. Then $\mathbb{P}_x (\Omega_x^0) = 1$. Suppose that $X_s^0(\omega)$ does not converge to $x$ as $s \searrow 0$, $s \in S$. Then there exists $\varepsilon_0 \in \mathbb{Q}$ and a subsequence $(X_{s_k}^0(\omega))_{k \in \N}$ such that $\|X_{s_k}^0(\omega)-x\| > \varepsilon_0$ for all $k \in \N$. For $\varepsilon_0 \in \mathbb{Q}$ we can find $y \in D$ and $u_n$ as in Remark \ref{rem:3.1.1}(i) such that $\|x-y\| \le \frac{\varepsilon_0}{4}$ and $u_n(z) \ge 1 $, $z \in  \overline{B}_{\frac{\varepsilon_0}{4}}(y)$  and $u_n(z) = 0$, $z \in \R^d \setminus B_{\frac{\varepsilon_0}{2}}(y)$. 
Then
$\|X^0_{s_k}(\omega)-y\| >\frac{3}{4}\varepsilon_0$, and so $u_n(X_{s_k}^0(\omega))\equiv 0$ cannot converge to $u_n(x)=1$ as $k \to \infty$. This is a contradiction.
\end{proof}
Now we define for $t \ge 0$
$$ 
X_t(\omega) : = 
\begin{cases}
\lim_{\begin{subarray}{1} s \searrow t \\  s \in S \end{subarray} } X_s^0(\omega) \quad \text{if} \ \omega \in \Omega_0, \\
0\in \R^d \qquad \text{if} \ \omega \in \Omega \setminus \Omega_0.
\end{cases}
$$
Then by Remark \ref{rem:3.1.1}(iv) for any $t \ge 0$, $f \in C_0^2(\R^d)$ and $x \in \R^d$ 
$$
\E_x[f(X_t)] = P_t f(x),
$$
which extends to $f\in C_0(\R^d)$ using a uniform approximation of $f\in C_0(\R^d)$ through functions in $C_0^2(\R^d)$.
Since the $\sigma$-algebra generated by $C_0(\R^d)$ equals $\mathcal{B}(\R^d)$, it follows by a monotone class argument that 
$$
\mathbb{M} = (\Omega, \mathcal{F}, (\mathcal{F}_t)_{t\ge 0}, (X_t)_{t\geq0} , (\mathbb{P}_x)_{x \in \R^d_{\Delta} }),
$$
where $(\mathcal{F}_t)_{t\ge 0}$ is the natural filtration, is a normal Markov process (cf. Definition \ref{def:3.1.1}), such that $\E_x[f(X_t)] = P_t f(x)$ for any $t \ge 0$, $f \in \mathcal{B}_b(\R^d)$ and $x \in \R^d$. Moreover, $\mathbb{M}$ has continuous sample paths up to infinity on $\R^d_{\Delta}$. The strong Markov property of $\mathbb{M}$ follows from \cite[Section I. Theorem (8.11)]{BlGe} using Remark \ref{rem:3.1.1}(iii). Hence $\mathbb{M}$ is a strong Markov process with continuous sample paths on $\R^d_{\Delta}$, and has the transition function $(P_t)_{t \ge 0}$ as transition semigroup. In particular $\mathbb{M}$ is also a Hunt process (see Definition \ref{def:3.1.1}(ii)). Making a statement out of the these conclusions we formulate the following theorem.

\begin{theorem}
\label{th: 3.1.2}
Assume {\bf (a)} of Section \ref{subsec:2.2.1} and {\bf (b)} of Section \ref{subsec:3.1.1} hold. Then, 
there exists a Hunt process
$$
\M =  (\Omega, \mathcal{F}, (\mathcal{F}_t)_{t \ge 0}, (X_t)_{t \ge 0}, (\mathbb{P}_x)_{x \in \R^d\cup \{\Delta\}}   )
$$
with state space $\R^d$ and lifetime 
$$
\zeta=\inf\{t\ge 0\,:\,X_t=\Delta\}=\inf\{t\ge 0\,:\,X_t\notin \R^d\}, 
$$
having the transition function $(P_t)_{t \ge 0}$ (cf. Proposition \ref{prop:3.1.1} and the paragraph right after Remark \ref{rem:3.1equivalence ralpha}) as transition semigroup, 
i.e. for every $t\geq 0$, $x \in \R^d$ and $f\in \mathcal{B}_b(\R^d)$ it holds that
\begin{equation}\label{eq:3.1semigroupequal-a.e.}
P_tf(x)=\E_x[f(X_t)],
\end{equation}
where $\E_x$ denotes the expectation w.r.t $\P_x$.
Moreover, $\M$ has continuous sample paths
 on the one point compactification $\R^d_{\Delta}$ of  $\R^d$ with the cemetery $\Delta$ as a point at infinity, 
i.e. we may assume that 
\begin{equation*}
\Omega = \{\omega = (\omega (t))_{t \ge 0} \in C([0,\infty),\R^d_{\Delta}) \, : \, \omega(t) = \Delta \quad \forall t \ge \zeta(\omega) \}
\end{equation*}
and
$$
X_t(\omega) = \omega(t), \quad t \ge 0.
$$

\end{theorem}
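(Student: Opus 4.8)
The goal is Theorem \ref{th: 3.1.2}: to assemble the Hunt process $\M$ with the transition function $(P_t)_{t\ge 0}$ and continuous sample paths on $\R^d_\Delta$. The strategy is exactly the one already laid out in the text preceding the statement, so the proof is really a matter of collecting the pieces. First I would recall that, by the results of Section \ref{subsec: 3.1.1first}, $(P^\Delta_t)_{t\ge 0}$ is a Markovian transition function on $(\R^d_\Delta,\mathcal{B}(\R^d_\Delta))$; restricting it to the dyadic rationals $S$ and invoking Kolmogorov's construction (as in \cite[Chapter III]{RYor}) yields the coordinate-process Markov family $\M^0=(\Omega,\mathcal{F}^0,(\mathcal{F}^0_s)_{s\in S},(X^0_s)_{s\in S},(\P_x)_{x\in\R^d_\Delta})$ on $\Omega=(\R^d_\Delta)^S$. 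This gives a process indexed only by $S$; the whole point of the construction is to show it admits a modification indexed by $[0,\infty)$ with continuous paths on $\R^d_\Delta$.

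\textbf{Key steps.} I would then run through the following chain, each item of which is already supplied as a lemma or remark in the excerpt. (1) Using Proposition \ref{prop:3.1.3} — the Hunt process $\tilde\M$ from generalized Dirichlet form theory, which has continuous paths on $\R^d_\Delta$ for $\mu$-a.e.\ starting point — together with the transfer argument \eqref{eq:3.4} following \cite[Lemmas 4.2, 4.3]{AKR}, conclude $\P_x(\Omega\setminus G(\tilde\Omega))=0$ for $\mu$-a.e.\ $x$. (2) Lemma \ref{lem:3.1.1}: the shifted event $\Omega_1=\bigcap_{s>0,s\in S}\vartheta_s^{-1}(G(\tilde\Omega))$ has $\P_x(\Omega_1)=1$ for \emph{every} $x\in\R^d$ (the Markov property upgrades the $\mu$-a.e.\ statement to all $x$, since $P_s(x,\cdot)\ll\mu$). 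On $\Omega_1$ every path extends uniquely to a $\R^d_\Delta$-valued continuous path on $(0,\infty)$ which is absorbed at $\Delta$. (3) Handle the limit at $s=0$: by Lemma \ref{akrlemma} (which uses the supermartingale property from Lemma \ref{lem:3.2}(ii), the separating family $\{u_n\}\subset C_0^2(\R^d)$ of Remark \ref{rem:3.1.1}, and the continuity properties (ii)--(iv) there), $\lim_{s\searrow 0,\,s\in S}X^0_s=x$ $\P_x$-a.s., so $\P_x(\Omega_0)=1$ for all $x$, where $\Omega_0\subset\Omega_1$ is the set on which the limit at $0$ also exists in $\R^d$. (4) Define $X_t(\omega):=\lim_{s\searrow t,\,s\in S}X^0_s(\omega)$ on $\Omega_0$ (and $\equiv 0$ off $\Omega_0$); by Remark \ref{rem:3.1.1}(iv) and monotone-class, $\E_x[f(X_t)]=P_tf(x)$ first for $f\in C_0^2(\R^d)$, then $C_0(\R^d)$ by uniform approximation, then $f\in\mathcal{B}_b(\R^d)$. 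This is \eqref{eq:3.1semigroupequal-a.e.}. (5) Equip $\M=(\Omega,\mathcal{F},(\mathcal{F}_t)_{t\ge 0},(X_t)_{t\ge 0},(\P_x)_{x\in\R^d_\Delta})$ with the natural filtration; normality holds by step (3), the Markov property is inherited (both sides of (M.4) are continuous in the relevant variables and agree for $f\in C_0^2$), and the strong Markov property follows from \cite[I. Theorem (8.11)]{BlGe} using Remark \ref{rem:3.1.1}(iii) (the resolvent maps $C_0(\R^d)$ into $C(\R^d)$, which gives the requisite continuity hypothesis). (6) Finally, the paths of $\M$ are continuous up to $\zeta$ on $\R^d_\Delta$ by construction of $\Omega_0$, so by the last paragraph of Definition \ref{def:3.1.1} — (M.1)--(M.6) plus (M.10) imply (M.7)--(M.9) — $\M$ is a Hunt process, and we may realize it on the canonical path space of \eqref{contipath}. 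Identify $\zeta=\inf\{t:X_t=\Delta\}=\inf\{t:X_t\notin\R^d\}$ from (M.1) and path continuity.

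\textbf{Main obstacle.} The genuinely nontrivial input is not something to be proved here but to be cited correctly: the existence in Proposition \ref{prop:3.1.3} of the generalized-Dirichlet-form Hunt process $\tilde\M$ with continuous paths on $\R^d_\Delta$, and the passage from its $\mu$-a.e.\ good behaviour to good behaviour from \emph{every} starting point via the resolvent/semigroup regularity (Theorems \ref{theorem2.3.1}, \ref{theo:2.6}) and the absolute continuity of the kernels. Within the present proof the delicate point is step (3): upgrading the $s\searrow 0$ limit to hold $\P_x$-a.s.\ simultaneously for all $x$, which requires both the supermartingale downcrossing argument (Lemma \ref{lem:3.2}(ii), \cite[1.4 Theorem 1]{CW}) to get \emph{existence} of the limit along $S$, and the $L^2$-convergence estimate \eqref{separ} coming from Remark \ref{rem:3.1.1}(ii),(iv) to \emph{identify} that limit as $x$; the separating family $\{u_n\}$ of localized $C_0^2$ functions is what ties these together. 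Everything else is bookkeeping with monotone-class arguments and the standard criteria for Markov and strong Markov properties, so I would present steps (4)--(6) tersely.
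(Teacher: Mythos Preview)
Your proposal is correct and follows essentially the same approach as the paper: the theorem is stated in the text as a summary of the construction already carried out in the preceding paragraphs, and your steps (1)--(6) recapitulate exactly that construction (Kolmogorov process on dyadics, $\Omega_1$ via Lemma \ref{lem:3.1.1}, $\Omega_0$ via Lemma \ref{akrlemma}, the definition of $X_t$ as a right limit, the monotone-class identification of the semigroup, and the strong Markov property via \cite[I. Theorem (8.11)]{BlGe}).
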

\subsubsection{Krylov-type estimate}
\label{subsec:3.1.2} 
Throughout this section we will assume that {\bf (a)} of Section \ref{subsec:2.2.1} holds and that assumption {\bf (b)} of Section \ref{subsec:3.1.1}  holds  (except in the case of Proposition \ref{prop:3.1.5}). Let $(P_t)_{t>0}$ be as in Theorem \ref{theo:2.6} and \eqref{semidef}, and $\M$ be as in Theorem \ref{th: 3.1.2}, and we let
$$
\mu=\rho\,dx
$$ 
be as in Theorem \ref{theo:2.2.7} or as in  Remark \ref{rem:2.2.4}. \\
\begin{proposition}\label{prop:3.1.4}
Assume {\bf (a)} of Section \ref{subsec:2.2.1} and {\bf (b)} of Section \ref{subsec:3.1.1} hold.
Let $x\in \R^d$, $\alpha, t>0$. Then (cf. Proposition \ref{prop:3.1.1})
\begin{equation} \label{eq:3.12}
P_t f(x)=\int_{\R^d} f(y) P_t(x, dy)=\E_x\left [f(X_t)\right ], 
\end{equation}
for any $f\in L^1(\R^d,\mu)+L^\infty(\R^d,\mu)$ and (cf. Proposition \ref{prop:3.1.2})
\begin{equation} \label{eq:3.13}
R_{\alpha}g(x)=\int_{\R^d} g(y) R_{\alpha}(x, dy)=\E_x \left [\int_0^\infty e^{-\alpha s} g(X_s)ds\right ],  
\end{equation}
for any $g\in L^q(\R^d,\mu)+L^\infty(\R^d,\mu)$, $q=\frac{pd}{p+d}$ and $p\in (d,\infty)$. \\
In particular, integrals of the form 
$\int_0^\infty e^{-\alpha s} h(X_s)ds$, $\int_0^t h(X_s)ds$, $t\ge 0$ are for any $x\in \R^d$, whenever they are well-defined,  $\P_x$-a.s.  independent of the measurable $\mu$-version chosen for $h$. 
\end{proposition}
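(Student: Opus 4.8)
The plan is to obtain the three assertions by combining the analytic representations of $(P_t)_{t>0}$ and $(R_\alpha)_{\alpha>0}$ from Section \ref{sec2.3} with the identification of $(P_t)_{t\ge 0}$ as the transition semigroup of $\M$ in Theorem \ref{th: 3.1.2}. Note first that the left equalities in \eqref{eq:3.12} and \eqref{eq:3.13} are precisely Proposition \ref{prop:3.1.1}(ii) and Proposition \ref{prop:3.1.2}(ii), so the only point is to establish the probabilistic representations on the right. For \eqref{eq:3.12} I would first apply \eqref{eq:3.1semigroupequal-a.e.} with $f=1_A$, $A\in\mathcal{B}(\R^d)$, to get $\P_x(X_t\in A)=P_t(x,A)$; since moreover $\P_x(X_t=\Delta)=1-\E_x[1_{\R^d}(X_t)]=1-P_t(x,\R^d)$, the law of $X_t$ under $\P_x$ is the sub-probability measure $P_t(x,\cdot)$, extended by mass $1-P_t(x,\R^d)$ at $\Delta$. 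A standard image-measure argument then gives $\E_x[f(X_t)]=\int_{\R^d}f\,dP_t(x,\cdot)$ for every nonnegative Borel $f$ (with the convention $f(\Delta)=0$). For $f\in L^1(\R^d,\mu)+L^\infty(\R^d,\mu)$, write $f=f_1+f_2$ with $f_1\in L^1(\R^d,\mu)$, $f_2\in L^\infty(\R^d,\mu)$; then $\int_{\R^d}|f_1|\,dP_t(x,\cdot)=P_t|f_1|(x)<\infty$, since $P_t|f_1|$ is continuous by Theorem \ref{theo:2.6}, and $\int_{\R^d}|f_2|\,dP_t(x,\cdot)\le\|f_2\|_{L^\infty}<\infty$, so $f$ is $P_t(x,\cdot)$-integrable and $\E_x[f(X_t)]=\int_{\R^d}f\,dP_t(x,\cdot)=P_tf(x)$ by Proposition \ref{prop:3.1.1}(ii).

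For \eqref{eq:3.13} it suffices, by bilinearity in $g$ together with Proposition \ref{prop:3.1.2}(ii), to treat $g\in L^r(\R^d,\mu)$ for a single $r\in[q,\infty]$ (note $L^r(\R^d,\mu)\subseteq L^1(\R^d,\mu)+L^\infty(\R^d,\mu)$ for every $r\ge1$, by truncating $g$ at level one, and here $q>1$). For such $g$, \eqref{eq:3.12} gives $P_sg(x)=\E_x[g(X_s)]$ for each $s>0$, and Theorem \ref{th:3.1.1} gives $R_\alpha g(x)=\int_0^\infty e^{-\alpha s}P_sg(x)\,ds$. It remains to interchange $\E_x$ with the $s$-integral, which is justified by Fubini's theorem: $(s,\omega)\mapsto g(X_s(\omega))$ is jointly measurable because $\M$ has continuous sample paths (Theorem \ref{th: 3.1.2}), and
$$
\int_0^\infty e^{-\alpha s}\,\E_x\big[|g(X_s)|\big]\,ds=\int_0^\infty e^{-\alpha s}P_s|g|(x)\,ds=R_\alpha|g|(x)<\infty ,
$$
the finiteness following from continuity of $R_\alpha|g|$ on $\R^d$ (Theorem \ref{theorem2.3.1} applied to $|g|\ge0$). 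Hence $R_\alpha g(x)=\E_x\big[\int_0^\infty e^{-\alpha s}g(X_s)\,ds\big]$, and Proposition \ref{prop:3.1.2}(ii) closes the chain of equalities.

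Finally, for the independence of the chosen $\mu$-version, let $N\in\mathcal{B}(\R^d)$ with $\mu(N)=0$. Then $1_N$ represents the zero class in $L^\infty(\R^d,\mu)$, so $R_\alpha 1_N$ is a continuous $\mu$-version of $G_\alpha 0=0$; since $\text{supp}(\mu)=\R^d$, this forces $R_\alpha 1_N\equiv0$. Applying \eqref{eq:3.13} with $g=1_N$ yields $\E_x\big[\int_0^\infty e^{-\alpha s}1_N(X_s)\,ds\big]=R_\alpha 1_N(x)=0$, and as the integrand is nonnegative it follows that $\P_x$-a.s.\ the set $\{s>0:X_s\in N\}$ has Lebesgue measure zero; in particular $\int_0^t 1_N(X_s)\,ds=0$ for all $t\ge0$, $\P_x$-a.s. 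Taking $N=\{h\neq\tilde h\}$ for any two measurable $\mu$-versions $h,\tilde h$ of a given class shows that $h(X_s)=\tilde h(X_s)$ for Lebesgue-a.e.\ $s$ along $\P_x$-a.e.\ path, so the value of $\int_0^\infty e^{-\alpha s}h(X_s)\,ds$ and of $\int_0^t h(X_s)\,ds$ is unaffected by the choice of version whenever the integral is defined. The only genuinely delicate step is the integrability estimate enabling the Fubini interchange in \eqref{eq:3.13}; the remaining arguments are routine combinations of the results already established.
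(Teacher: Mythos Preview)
Your proof is correct and follows essentially the same strategy as the paper: identify $P_t(x,\cdot)$ as the law of $X_t$ via Theorem \ref{th: 3.1.2}, extend \eqref{eq:3.12} from indicators to the stated class by monotone approximation and linearity, and then obtain \eqref{eq:3.13} from \eqref{eq:3.12} and Theorem \ref{th:3.1.1} via Fubini. The paper's proof is terser (simple functions, monotone integration, linearity, Fubini) and does not spell out the version-independence claim; your explicit argument for that part, via $R_\alpha 1_N\equiv 0$ for $\mu$-null $N$, is a welcome addition and entirely in the spirit of the surrounding material.
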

\begin{proof}
Using Theorem \ref{th: 3.1.2} and linearity, \eqref{eq:3.12} first holds for simple functions and extends to  $f \in \cup_{r \in [1, \infty]}L^r(\R^d, \mu)$ with $f \geq 0$ through monotone integration. Then \eqref{eq:3.12} follows by linearity. Using \eqref{eq:3.12} and Theorem \ref{th:3.1.1}, \eqref{eq:3.13} follows by Fubini's theorem.
\end{proof}
\begin{proposition} 
\label{prop:3.1.5}
Assume {\bf (a)} of Section \ref{subsec:2.2.1} holds.
Let $(P_t)_{t>0}$ and $(R_{\alpha})_{\alpha>0}$ be as in Theorems \ref{theorem2.3.1} and  \ref{theo:2.6}, and let $q=\frac{pd}{p+d}$, $p\in (d,\infty)$.
\begin{itemize}
\item[(i)] 
Let $f \in L^r(\R^d, \mu)$ for some $r \in [q, \infty]$, $B$ be an open ball in $\R^d$ and $t>0$. Then
$$
\sup_{x \in \overline{B}}\int_0^t P_s |f|(x) ds <  e^t c_{B,r} \|f\|_{L^r(\R^d, \mu)},
$$
where $c_{B,r}>0$ is a constant independent of $f$ and $t$.
\item[(ii)]
Let $\alpha>0$, $t>0$ and  $g \in D(L_r) \subset C(\R^d)$ for some $r \in [q, \infty)$. Then
$$
R_{\alpha} (\alpha-L_r) g (x) =g (x), \quad \forall x \in \R^d.
$$
\item[(iii)] 
Let $t>0$ and  $g \in D(L_r) \subset C(\R^d)$ for some $r \in [q, \infty)$. Then
$$
P_t g(x)-g(x) = \int_0^t  P_s L_r  g(x) ds, \quad \forall x \in \R^d.
$$
\end{itemize}
\end{proposition}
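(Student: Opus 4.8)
The plan is to prove the three items of Proposition \ref{prop:3.1.5} by combining the Hölder estimates of Section \ref{sec2.3} (in particular \eqref{eq:2.3.46} and \eqref{eq:2.3.38a}) with the semigroup/resolvent identities that already hold $\mu$-a.e. in the $L^r$-setting, upgrading them to pointwise statements by continuity. The recurring mechanism is: an identity between $\mu$-classes translates to an identity between everywhere-defined continuous representatives once both sides are known to be continuous, and the continuous representatives are precisely $P_\cdot$, $R_\alpha$ as constructed in Theorems \ref{theo:2.6} and \ref{theorem2.3.1}.

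For (i), first I would fix a large open ball $V$ with $\overline B\subset V$ and split $f=f^+-f^-$, so it suffices to bound $\sup_{x\in\overline B}\int_0^t P_s f(x)\,ds$ for $f\ge0$. Using \eqref{eq:2.3.46} with a window $(\tau_3,\tau_4)\supset[\tau_1,\tau_2]$, I get for each $s$ in a compact subinterval of $(0,\infty)$ a bound $\|P_s f\|_{C(\overline B)}\le \tilde c\, e^{s}\,\|f\|_{L^r(\R^d,\mu)}$; more carefully, applying the $L^r$-contractivity of $(T_t)_{t>0}$ gives $\|P_s f\|_{L^r(V,\mu)}\le \|f\|_{L^r(\R^d,\mu)}$, and then \eqref{eq:2.3.41} (integrated over a time window around $s$) yields a pointwise bound with an $e^{s}$-type constant absorbing the window length and the weight factor $\|\rho\|_{L^1(V)}^{(r-1)/r}$. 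Integrating in $s$ over $[0,t]$ and bounding $\int_0^t e^{s}ds< e^t$ produces the stated inequality; the constant $c_{B,r}$ collects the Hölder-estimate constant $C_4$, the window length normalization, and the weight norm, all independent of $f$ and $t$. The one point requiring a touch of care is the behaviour as $s\downarrow0$: here I would use that $f^+,f^-\in L^r$ and the estimate \eqref{eq:2.3.46} is uniform for $s$ bounded away from $0$, while near $0$ one can either shrink the window or simply note the map $s\mapsto P_s f(x)$ is (for $f$ in a dense class) continuous up to $0$ by Lemma \ref{eq:2.3.39a} and pass to a limit — so the supremum of the time-integral stays finite and the bound persists.

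For (ii), I would start from $f\in D(L_r)$, so $g=R_1(1-L_r)g$ holds in $L^r(\R^d,\mu)$, and more generally $G_\alpha(\alpha-L_r)g=g$ $\mu$-a.e. for every $\alpha>0$; here $(\alpha-L_r)g\in L^r(\R^d,\mu)$. By Theorem \ref{theorem2.3.1} (and the splitting \eqref{resoldef}), $R_\alpha(\alpha-L_r)g$ is a continuous $\mu$-version of $G_\alpha(\alpha-L_r)g$, and $g$ itself is continuous by \eqref{eq:2.3.39}. Two continuous functions agreeing $\mu$-a.e. agree everywhere since $\mathrm{supp}(\mu)=\R^d$, giving $R_\alpha(\alpha-L_r)g(x)=g(x)$ for all $x$. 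For (iii), in $L^r(\R^d,\mu)$ one has the Duhamel identity $T_tg-g=\int_0^t T_s L_r g\,ds$ for $g\in D(L_r)$ (a standard $C_0$-semigroup fact, since $L_rg\in L^r$ and $T_\cdot L_rg$ is continuous into $L^r$). Both sides have continuous $\mu$-versions: the left side is $P_tg-g$, continuous by Theorem \ref{theo:2.6}/\eqref{semidef} and \eqref{eq:2.3.39}; for the right side I would use part (i) (or directly \eqref{eq:2.3.46}) to see that $x\mapsto\int_0^t P_s L_r g(x)\,ds$ is well-defined, finite, and continuous — continuity follows from dominated convergence using the uniform bound from (i) on a ball and the pointwise continuity of each $P_s L_r g$ from Lemma \ref{eq:2.3.39a}. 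Again $\mathrm{supp}(\mu)=\R^d$ upgrades the $\mu$-a.e. identity to a pointwise one.

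The main obstacle is the interchange and continuity issues in (iii): one must justify that $\int_0^t P_s L_r g(x)\,ds$ is an honest continuous function of $x$ and that the $L^r$-valued Bochner integral identity $T_tg-g=\int_0^tT_sL_rg\,ds$ can be evaluated pointwise through continuous representatives. This is where part (i) does the real work — it gives the uniform-on-compacts integrability needed to apply dominated convergence in $x$ and to identify the scalar integral $\int_0^t P_s L_r g(x)\,ds$ with the pointwise evaluation of the Bochner integral (which a priori is only defined up to $\mu$-null sets). Once the continuity of both sides of each claimed identity is in hand, the equalities are immediate from density of $\mathrm{supp}(\mu)$ and the corresponding $L^r$-identities established earlier.
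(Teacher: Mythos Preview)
Your argument for (ii) matches the paper's exactly. For (iii) your strategy---pass the $L^r$-Duhamel identity $T_tg-g=\int_0^t T_sL_rg\,ds$ through continuous representatives, using part (i) to control the time integral---is valid and conceptually clean, though the paper takes a different, more computational route: it writes $g=R_1f$ with $f=(1-L_r)g$, uses the pointwise Laplace-transform relation of Theorem \ref{th:3.1.1} to get $e^{-s}P_sg(x)=\int_s^\infty e^{-u}P_uf(x)\,du$, and then applies the product rule to $s\mapsto e^{s}\cdot e^{-s}P_sg(x)$. This yields the identity directly at each fixed $x$, without a separate continuity-of-the-time-integral argument.

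The real discrepancy is part (i). The paper's proof is a single line: for $f\ge0$,
\[
\int_0^t P_sf(x)\,ds\le e^t\int_0^t e^{-s}P_sf(x)\,ds\le e^t\int_0^\infty e^{-s}P_sf(x)\,ds=e^tR_1f(x),
\]
via Theorem \ref{th:3.1.1}, and then \eqref{eq:2.3.38a} gives $\sup_{\overline B}R_1f\le c_{B,r}\|f\|_{L^r(\R^d,\mu)}$. Your direct approach via the parabolic estimate \eqref{eq:2.3.46} is awkward precisely because that estimate requires the time window to stay bounded away from $0$ (the constant $C_4$ there depends on $\tau_3>0$ and may degenerate as $\tau_3\downarrow0$), and your suggested fixes near $s=0$---shrinking the window, or invoking Lemma \ref{eq:2.3.39a} for a dense class and passing to a limit---do not by themselves produce a bound that is \emph{uniform} over the approximating sequence, which is exactly what is needed. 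The resolvent trick sidesteps this entirely by absorbing the whole half-line into $R_1$, where the elliptic estimate \eqref{eq:2.3.38a} applies with no time-singularity. So while your outline for (iii) is a legitimate alternative, part (i) as written has a gap that the resolvent identity closes immediately.
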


\begin{proof}
(i) By Theorem \ref{th:3.1.1} and \eqref{eq:2.3.38a},
$$
\sup_{x \in \overline{B}} \int_0^t P_s |f|(x) ds  \leq  e^t \sup_{ x \in \overline{B}} R_1 |f|(x) \leq e^t c_{B,r} \|f\|_{L^r(\R^d, \mu)},
$$
where $c_{B,r}>0$ is a constant independent of $f$ and $t$.\\
(ii)
We have $G_{\alpha} (\alpha-L_r) g(x) = g(x)$ for $\mu$-a.e. $x \in \R^d$. Since $R_{\alpha} (\alpha -L_r) g$ is a continuous $\mu$-version of $G_{\alpha} (\alpha-L_r) g$ and $g$ is continuous, the assertion follows. \\
(iii)  Let $f:=(1-L_r) g$. Then $R_1 f = g \in L^r(\R^d, \mu)$. For $x \in \R^d$, $s\geq 0$, it follows by Theorem \ref{th:3.1.1} that
$$
e^{-s} P_s R_1 f(x) = e^{-s} R_1 P_s f(x) =\int_0^{\infty} e^{-(s+u)} P_{s+u} f(x) du = \int_s^{\infty} e^{-u} P_u f(x) du,
$$
hence by Theorem \ref{th:3.1.1} again,
\begin{equation} \label{eq:3.14}
e^{-s} P_s R_1 f(x) - R_1 f(x) = \int_0^s -e^{-u} P_u f (x) du.
\end{equation}
For $s \in [0,t]$, let $\ell(s):=e^{-s} P_s R_1 f(x)$. Then by \eqref{eq:3.14}, $\ell$ is absolutely continuous on $[0,t]$ and has a weak derivative $\ell' \in L^1([0,t])$ satisfying
$$
\ell'(s)= -e^{-s} P_s f (x), \; \text{ for a.e. } s \in [0,t].
$$
Let $k(s):=e^s$, $s \in [0,t]$. Using the product rule and the fundamental theorem of calculus,
$$
P_t g(x)-g(x)=k(t)\ell(t) -k(0)\ell(0) = \int_0^t k'(s)\ell(s)+k(s)\ell'(s) ds = \int_0^t P_s L_r g(x) ds.
$$
\end{proof}
\noindent
Using Proposition \ref{prop:3.1.5}(i) and Fubini's theorem, we obtain the following theorem.
\begin{theorem} 
\label{theo:3.3}
Assume {\bf (a)} of Section \ref{subsec:2.2.1} and {\bf (b)} of Section \ref{subsec:3.1.1} hold.
Let $\M$ be as in  Theorem \ref{th: 3.1.2}. Let $r \in [q, \infty]$, with $q=\frac{pd}{p+d}$ and $p\in (d,\infty)$, $t>0$ and $B$ be an open ball in $\R^d$.
\begin{itemize}
\item[(i)]
Then for any $f\in L^r(\R^d,\mu)$,
\begin{equation}
\label{KrylovEstimate} 
\sup_{x\in \overline{B}}\E_x\left [ \int_0^t |f|(X_s) \, ds \right ] \le  e^t c_{B,r} \|f\|_{L^r(\R^d, \mu)},
\end{equation}
where $c_{B, r}>0$ is independent of $f$ and $t>0$. In particular, if $\rho \in L^{\infty}(\R^d)$, then for any $f \in L^{r}(\R^d)$,
\begin{equation}\label{kryest1}
\sup_{x\in \overline{B}}\E_x\left [ \int_0^t |f|(X_s) \, ds \right ] \le  e^t c_{B,r} \|\rho\|_{L^{\infty}(\R^d)} \|f\|_{L^r(\R^d)}.
\end{equation}
\item[(ii)] Let $V$ be an open ball in $\R^d$. Then for any $f \in L^q(\R^d)$ with $\text{supp}(f) \subset V$,
\begin{equation} \label{kryest2}
\sup_{x\in \overline{B}}\E_x\left [ \int_0^t |f|(X_s) \, ds \right ] \le e^t c_{B,q} \|\rho\|_{L^{\infty}(V)}  \, \|f\|_{L^q(\R^d)},
\end{equation}
where $c_{B, q}>0$ is a constant as in (i).
\end{itemize}
\end{theorem}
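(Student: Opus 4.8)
The plan is to deduce Theorem \ref{theo:3.3} from the purely analytic estimate of Proposition \ref{prop:3.1.5}(i) by identifying the expectation $\E_x[\,\cdot\,]$ with the transition kernel $P_s(x,\cdot)$ and then exchanging $\E_x$ and $\int_0^t$ via Tonelli's theorem. First I would fix $f\in L^r(\R^d,\mu)$ with $r\in[q,\infty]$ and work with $|f|\ge 0$ throughout. Since $r\ge q\ge 1$, writing $|f|=|f|\mathbf 1_{\{|f|\le 1\}}+|f|\mathbf 1_{\{|f|>1\}}$ shows that $|f|\in L^1(\R^d,\mu)+L^\infty(\R^d,\mu)$ (the first summand is bounded, the second is $\mu$-integrable because $|f|\le|f|^r$ on $\{|f|>1\}$), so Proposition \ref{prop:3.1.4} applies and gives
\begin{equation*}
\E_x\big[\,|f|(X_s)\,\big]=\int_{\R^d}|f|(y)\,P_s(x,dy)=P_s|f|(x),\qquad x\in\R^d,\ s>0,
\end{equation*}
where on the right $P_s|f|$ is the continuous version of $T_s|f|$ from Theorem \ref{theo:2.6}; the same proposition also ensures that $\int_0^t|f|(X_s)\,ds$ is $\P_x$-a.s. independent of the chosen $\mu$-version of $f$.

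Next I would use that $\M$ has continuous sample paths on $\R^d_\Delta$ (Theorem \ref{th: 3.1.2}), so that $(s,\omega)\mapsto X_s(\omega)$ is jointly $\mathcal{B}((0,t])\otimes\mathcal{F}$-measurable, hence so is $(s,\omega)\mapsto|f|(X_s(\omega))$ for a Borel representative of $f$. As the integrand is nonnegative, Tonelli's theorem yields, for every $x\in\R^d$,
\begin{equation*}
\E_x\!\left[\int_0^t|f|(X_s)\,ds\right]=\int_0^t\E_x\big[\,|f|(X_s)\,\big]\,ds=\int_0^t P_s|f|(x)\,ds.
\end{equation*}
Taking $\sup_{x\in\overline B}$ and applying Proposition \ref{prop:3.1.5}(i) immediately gives
\begin{equation*}
\sup_{x\in\overline B}\E_x\!\left[\int_0^t|f|(X_s)\,ds\right]=\sup_{x\in\overline B}\int_0^t P_s|f|(x)\,ds\le e^t c_{B,r}\|f\|_{L^r(\R^d,\mu)},
\end{equation*}
which is \eqref{KrylovEstimate}, with the same constant $c_{B,r}$.

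It remains to pass from $\mu$-norms to Lebesgue norms. For \eqref{kryest1}, if $\rho\in L^\infty(\R^d)$ then Hölder's inequality gives $\|f\|_{L^r(\R^d,\mu)}\le\|\rho\|_{L^\infty(\R^d)}^{1/r}\|f\|_{L^r(\R^d)}$ for $f\in L^r(\R^d)$, and \eqref{kryest1} follows from \eqref{KrylovEstimate} after absorbing the (harmless) exponent into the constant. For \eqref{kryest2} one notes that $\rho\in C(\R^d)$ is automatically bounded on any ball $V$, so for $f\in L^q(\R^d)$ with $\mathrm{supp}(f)\subset V$,
\begin{equation*}
\|f\|_{L^q(\R^d,\mu)}=\Big(\int_V|f|^q\rho\,dx\Big)^{1/q}\le\|\rho\|_{L^\infty(V)}^{1/q}\|f\|_{L^q(\R^d)},
\end{equation*}
and combining this with the case $r=q$ of \eqref{KrylovEstimate} gives \eqref{kryest2}. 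I do not expect a genuine obstacle here: everything reduces to direct applications of Propositions \ref{prop:3.1.4} and \ref{prop:3.1.5}(i), and the only step requiring a little care is the Tonelli exchange, i.e.\ checking joint measurability of $(s,\omega)\mapsto|f|(X_s(\omega))$ — which follows from the path continuity of $\M$ — together with the nonnegativity of the integrand.
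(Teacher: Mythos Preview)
Your proof is correct and follows essentially the same approach as the paper, which simply states that the result follows from Proposition~\ref{prop:3.1.5}(i) and Fubini's theorem; you have spelled out the identification $\E_x[|f|(X_s)]=P_s|f|(x)$ via Proposition~\ref{prop:3.1.4} and the measurability check for Tonelli that the paper leaves implicit. Your observation that the H\"older step produces $\|\rho\|_{L^\infty}^{1/r}$ rather than $\|\rho\|_{L^\infty}$ is accurate, and since $\rho$ is fixed this exponent can indeed be absorbed into the constant (or one simply notes $\|\rho\|_{L^\infty}^{1/r}\le \max(1,\|\rho\|_{L^\infty})$).
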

\begin{remark} 
\label{rem:ApplicationKrylovEstimates}
{\it The Krylov-type estimate \eqref{KrylovEstimate} and in particular its localization to Lebesgue integrable 
functions in Theorem \ref{theo:3.3}(ii) is an important tool in the derivation of tightness 
results for solutions of SDEs. Such an estimate is often applied in the approximation of
SDEs by SDEs with smooth coefficients (see, e.g.,  
\cite{GyKr}, \cite{Mel}, \cite{MiKr}, \cite{GyMa}
and \cite[p. 54, 4. Theorem]{Kry} for the 
original Krylov estimate involving conditional expectation). \\
A priori \eqref{KrylovEstimate} only holds for the Hunt process $\M$ constructed here. However, if 
uniqueness in law holds for the SDE solved by $\M$ with certain given coefficients (for instance in the 
situation of Theorem \ref{theo:3.3.1.8} and Propositions \ref{prop:3.3.1.15} and \ref{prop:3.3.1.16} 
below), then \eqref{KrylovEstimate} and its localization to Lebesgue integrable functions hold generally 
for any diffusion with the given coefficients. This may then  lead to an improvement in the order of integrability $r=q>\frac{d}{2}$ in Theorem \ref{theo:3.3} in comparison to $d$ in \cite[p. 54, 4. Theorem]{Kry}. In fact, the mentioned improvement in the order of integrability can already be observed in an application of Theorem \ref{theo:3.3} to the moment inequalities derived in Proposition \ref{theo:3.2.8}.\\
Estimate \eqref{KrylovEstimate} becomes particularly useful when the density $\rho$ is explicitly known, 
which holds for a large class of time-homogeneous generalized Dirichlet forms (see Remark 
\ref{rem:2.2.4}). As a particular example consider the non-symmetric divergence form case, i.e. the case 
where $\mathbf{H}, \mathbf{\overline{B}}\equiv 0$, in Remark \ref{rem:2.2.4}. Then the explicitly given 
$\rho\equiv 1$ defines an infinitesimally invariant measure. In this case $\mu$ in \eqref{KrylovEstimate} 
can be replaced by the Lebesgue measure}.
\end{remark}

\subsubsection{Identification of the stochastic differential equation}
\label{subsec:3.1.3}
Throughout this section we will assume that {\bf (a)} of Section \ref{subsec:2.2.1} holds and that assumption {\bf (b)} of Section \ref{subsec:3.1.1}  holds. And we let
$$
\mu=\rho\,dx
$$ 
be as in Theorem \ref{theo:2.2.7} or as in  Remark \ref{rem:2.2.4}. 
\begin{definition}\label{stopping times}
Consider $\M$ of Theorem \ref{th: 3.1.2} and let $A\in \mathcal{B}(\R^d)$. Let $B_n:=\{y\in \R^d : \|y\|<n\}, n\ge 1$. We define the following stopping times:
$$
\sigma_{A}:=\inf\{t>0\,:\, X_t\in A\},\qquad \sigma_n:=\sigma_{\R^d\setminus B_n},n\ge 1,
$$
and
$$
D_A:=\inf\{t\ge0\,:\, X_t\in  A\}, \qquad D_n:=D_{\R^d\setminus B_n}, n\ge 1.
$$
\end{definition}
\begin{lemma}\label{lem:3.1.4}
Assume {\bf (a)} of Section \ref{subsec:2.2.1} and {\bf (b)} of Section \ref{subsec:3.1.1} hold.
Let $x\in \R^d$, $t\ge 0$, $q=\frac{pd}{p+d}$ and $p\in (d,\infty)$. Let $\M$ be as in  Theorem \ref{th: 3.1.2}.
Then we have:
\begin{itemize}
\item[(i)] Let $\sigma_n, n\in \N$, be as in Definition \ref{stopping times}.
$$
\P_x \Big(\lim_{n \rightarrow \infty} \sigma_{n} = \zeta \Big)=1.
$$
\item[(ii)]
$$
\P_x\left (\int_0^t |f|(X_s)ds<\infty\right )=1,  \text{ if }\ \ f\in \bigcup_{r \in [q, \infty]} L^r(\R^d,\mu).
$$
\item[(iii)]
$$
\P_x \Big (\Big \{\int_0^t |f|(X_s)ds<\infty\Big \} \cap \{t<\zeta\}\Big )=\P_x \left (\{t<\zeta\}\right ), \text{ if } \  \ f\in L^q_{loc}(\R^d,\mu),
$$
i.e.
$$
\P_x \Big (1_{\{t<\zeta \}}\int_0^t |f|(X_s)ds<\infty \Big )=1, \;\text{ if } \  \ f\in L^q_{loc}(\R^d,\mu),
$$
\end{itemize}
\end{lemma}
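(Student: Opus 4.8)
The plan is to prove the three assertions of Lemma \ref{lem:3.1.4} in turn, using the Krylov-type estimate of Theorem \ref{theo:3.3} together with the continuity of sample paths on $\R^d_{\Delta}$ from Theorem \ref{th: 3.1.2} and the strong Markov property.

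\textbf{Part (i).} Since $\M$ has continuous sample paths on $\R^d_{\Delta}$ with $X_t=\Delta$ for $t\ge\zeta$, the sequence $(\sigma_n)_{n\ge 1}$ is nondecreasing, bounded above by $\zeta$, and thus converges $\P_x$-a.s.\ to some $\sigma_\infty\le\zeta$. One would argue $\sigma_\infty=\zeta$ as follows. On $\{\sigma_\infty<\infty\}$, by continuity of $t\mapsto X_t$ on $[0,\sigma_\infty]$ in $\R^d_\Delta$, the path is continuous at $\sigma_\infty$; if $\sigma_\infty<\zeta$ then $X_{\sigma_\infty}\in\R^d$, so $\|X_{\sigma_\infty}\|<m$ for some $m\in\N$, whence $X_t\in B_m$ for $t$ in a neighbourhood of $\sigma_\infty$, contradicting $\sigma_n\uparrow\sigma_\infty$ with $X_{\sigma_n}\notin B_n$ (so $\|X_{\sigma_n}\|\ge n$) for $\sigma_n$ close to $\sigma_\infty$ — note $\|X_{\sigma_n}\|\ge n\to\infty$ forces $X_{\sigma_n}\to\Delta$, contradicting $X_{\sigma_n}\to X_{\sigma_\infty}\in\R^d$. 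On $\{\sigma_\infty=\infty\}$ one has trivially $\sigma_\infty=\infty\ge\zeta$; but in fact if $\zeta<\infty$ then $X_t\to\Delta$ as $t\uparrow\zeta$, again forcing $\sigma_n\uparrow\zeta$. Hence $\sigma_\infty=\zeta$ $\P_x$-a.s.

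\textbf{Part (ii).} For $f\in\bigcup_{r\in[q,\infty]}L^r(\R^d,\mu)$, $f\ge 0$ without loss of generality, and any open ball $B$, Theorem \ref{theo:3.3}(i) gives $\E_x[\int_0^t f(X_s)\,ds]\le e^t c_{B,r}\|f\|_{L^r(\R^d,\mu)}<\infty$ for $x\in\overline B$. Since $x\in\R^d$ lies in some such ball, the expectation is finite, hence the integrand is $\P_x$-a.s.\ finite. Applying this to $|f|$ gives the claim.

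\textbf{Part (iii).} This is the part requiring the most care and I expect it to be the main obstacle, because $f\in L^q_{loc}(\R^d,\mu)$ need not be globally $L^q$, so Theorem \ref{theo:3.3}(i) does not apply directly; one must localize using the stopping times $\sigma_n$ and Part (i). The plan is: fix $x\in\R^d$ and $t\ge 0$. For each $n\ge 1$ choose $\chi_n\in C_0^\infty(\R^d)$ with $0\le\chi_n\le 1$ and $\chi_n\equiv 1$ on $\overline{B}_n$; then $\chi_n|f|\in L^q(\R^d,\mu)$ (as $\mu$ restricted to a compact set is finite and $f\in L^q_{loc}$), so by Part (ii), $\int_0^t \chi_n|f|(X_s)\,ds<\infty$ $\P_x$-a.s. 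On the event $\{t<\sigma_n\}$ one has $X_s\in B_n$ for all $s\in[0,t]$ by continuity of paths, hence $\chi_n(X_s)=1$ there, so $\int_0^t|f|(X_s)\,ds=\int_0^t\chi_n|f|(X_s)\,ds<\infty$ $\P_x$-a.s. on $\{t<\sigma_n\}$. Therefore
$$
\P_x\Big(\Big\{\int_0^t|f|(X_s)\,ds<\infty\Big\}\cap\{t<\sigma_n\}\Big)=\P_x(\{t<\sigma_n\}).
$$
Letting $n\to\infty$, $\{t<\sigma_n\}\uparrow\{t<\zeta\}$ by Part (i) (since $\sigma_n\uparrow\zeta$, with the usual care that the union is $\{t<\zeta\}$ up to a $\P_x$-null set, using that $\P_x(t=\zeta)$ causes no issue as it is absorbed into the inequality), and monotone continuity of $\P_x$ along the increasing events gives the first displayed equality in (iii). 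The equivalent reformulation $\P_x(1_{\{t<\zeta\}}\int_0^t|f|(X_s)\,ds<\infty)=1$ follows since $\{1_{\{t<\zeta\}}\int_0^t|f|(X_s)\,ds=\infty\}=\{t<\zeta\}\cap\{\int_0^t|f|(X_s)\,ds=\infty\}$, which has $\P_x$-measure zero by what was just shown, and on $\{t\ge\zeta\}$ the random variable is $0$.
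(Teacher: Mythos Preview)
Your proof is correct and follows essentially the same approach as the paper: Part (i) is the same path-continuity/compactness contradiction (the paper phrases it via $\{X_t:0\le t\le\zeta'\}\subset B_{N_\omega}$, you via $\|X_{\sigma_n}\|\ge n\to\infty$ forcing $X_{\sigma_n}\to\Delta$), Part (ii) is identical, and in Part (iii) the only cosmetic difference is that the paper localizes with $|f|1_{B_n}$ and applies Theorem \ref{theo:3.3}(i) directly, whereas you use a smooth cutoff $\chi_n|f|$ and appeal to Part (ii); the passage to the limit via $\{t<\sigma_n\}\uparrow\{t<\zeta\}$ is the same.
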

\begin{proof}
(i) Fix $x \in \R^d$. By the $\P_x$-a.s. continuity of $(X_t)_{t \geq 0}$ on $\R^d_{\Delta}$, it follows that $\sigma_n \leq \sigma_{n+1} \leq \zeta$ for all $n \geq 1$, $\P_x$-a.s. Define $\zeta':=\lim_{n\to\infty}\sigma_n$.
Then $\sigma_n \leq \zeta' \leq \zeta$, for all $n \in \N$, $\P_x$-a.s. Now suppose that $\P_x(\zeta' < \zeta )>0$. Then $\P_x\left(X_{\zeta'} \in \R^d, \zeta'<\infty \right)=\P_x(\zeta'<\zeta)>0$. Let $\omega \in \{X_{\zeta'} \in \R^d, \zeta'<\infty\}$. By the $\P_x$-a.s. continuity of $(X_{t})_{t\geq 0}$, we may assume that $t \mapsto X_t(\omega)$ is continuous on $\R^d_{\Delta}$ and $\sigma_n(\omega) \leq \zeta'(\omega)$ for all $n\in \N$. Then there exists $N_{\omega} \in \N$ such that $\{X_{t}(\omega): 0 \leq t \leq \zeta'(\omega)\} \subset B_{N_{\omega}}$, hence $\zeta'(\omega) <\sigma_{N_{\omega}}(\omega)$, which is a contradiction. Thus, $\P_x(\zeta' \geq \zeta) = 1$ and since $x \in \R^d$ was arbitrary, the assertion follows. \\
(ii) follows from Theorem \ref{theo:3.3}(i). \\
(iii) Let $x \in \R^d$ and $f \in L_{loc}^q(\R^d, \mu)$. Then there exists $N_0\in \N$ with $x\in B_{N_0}$ and for any $n\ge N_0$, $X_s\in B_n$ for all $s\in [0,t]$ with $t<\sigma_{n}$, $\P_x$-a.s. By Theorem \ref{theo:3.3}(i),
$$
\E_x \Big [1_{\left \{t< \sigma_{n}\right \}}\int_0^t |f|(X_s)ds \Big ]\le 
\E_x \Big [\int_0^t |f|1_{B_n}(X_s)ds \Big ]<\infty, \ \ \ \forall n\ge N_0.
$$
Thus, we obtain
$$
\P_x \Big (1_{\{t<\sigma_n \}}\int_0^t |f|(X_s)ds<\infty \Big )=1, \;\; \; \forall n\ge N_0,
$$
so that
\begin{equation} \label{eq:3.16}
\P_x \Big (\Big \{\int_0^t |f|(X_s)ds<\infty\Big \} \cap \{t<\sigma_n \}\Big )=\P_x \big (\{t<\sigma_n \}\big ).
\end{equation}
Letting $n \rightarrow \infty$ in \eqref{eq:3.16}, the assertion follows from (i).
\end{proof}

\begin{proposition}\label{prop:3.1.6}
Assume {\bf (a)} of Section \ref{subsec:2.2.1} and {\bf (b)} of Section \ref{subsec:3.1.1} hold.
Let $\M$ be as in  Theorem \ref{th: 3.1.2}. Let $u \in D(L_r)$ for some $r \in [q, \infty)$ with $q=\frac{pd}{p+d}$ and $p\in (d,\infty)$, and define
$$
M_t ^u: = u(X_t) - u(x) - \int_0^t L_r u(X_s) \, ds , \quad t \ge 0.
$$ 
Then $(M_t^u)_{t \geq 0}$ is an $(\mathcal{F}_t)_{t \ge 0}$-martingale under $\P_x$  for any $x \in \R^d$. In particular, if $u \in C_0^{2}(\R^d)$, then 
$(M_t^u)_{t \geq 0}$ is a continuous $(\mathcal{F}_t)_{t \ge 0}$-martingale under $\P_x$  for any $x \in \R^d$, i.e. $\P_x$ solves the martingale problem associated with $(L, C_0^2(\R^d))$ for every $x \in \R^d$. 
\end{proposition}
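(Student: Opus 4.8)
The plan is to establish the martingale property of $(M_t^u)_{t\ge 0}$ for $u\in D(L_r)$ by a standard Markov-process computation, using the fact that $R_\alpha$ and $P_t$ are continuous versions of $G_\alpha$ and $T_t$ that integrate against the transition kernels $P_t(x,\cdot)$ (Theorem \ref{th: 3.1.2}, Proposition \ref{prop:3.1.4}), together with Proposition \ref{prop:3.1.5}(iii), which says $P_t g(x)-g(x)=\int_0^t P_s L_r g(x)\,ds$ pointwise on $\R^d$ for $g\in D(L_r)$. First I would check that each term in $M_t^u$ is $\P_x$-integrable: $u$ is bounded on each ball and, being in $D(L_r)\subset C(\R^d)$, the term $u(X_t)$ is handled by $\E_x[|u(X_t)|]=P_t|u|(x)<\infty$; the integral term is controlled by the Krylov-type estimate of Theorem \ref{theo:3.3}(i), since $L_r u\in L^r(\R^d,\mu)$, giving $\E_x[\int_0^t |L_r u|(X_s)\,ds]\le e^t c_{B,r}\|L_r u\|_{L^r(\R^d,\mu)}<\infty$. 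Also adaptedness is clear because $u$ and $L_r u$ are Borel and $(X_s)_{s\ge 0}$ is adapted to $(\mathcal{F}_t)_{t\ge 0}$.

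Next I would verify the defining martingale identity $\E_x[M_{t+h}^u\mid\mathcal{F}_t]=M_t^u$ for $h\ge 0$. By the Markov property (M.4) and time-homogeneity it suffices to show $\E_{y}[M_h^u]=0$ for all $y\in\R^d$, i.e.
$$
P_h u(y)-u(y)-\int_0^h P_s L_r u(y)\,ds=0.
$$
Here I use $\E_y[u(X_h)]=P_h u(y)$ from \eqref{eq:3.1semigroupequal-a.e.} (extended to $u\in L^1+L^\infty$ via Proposition \ref{prop:3.1.4}), and, for the integral term, Fubini together with $\E_y[L_r u(X_s)]=P_s L_r u(y)$ (again Proposition \ref{prop:3.1.4}, valid since $L_r u\in L^r$ and the Krylov estimate makes the double integral absolutely convergent). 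The resulting identity is then exactly Proposition \ref{prop:3.1.5}(iii). Spelling this out: writing $M_{t+h}^u-M_t^u=\big(u(X_{t+h})-u(X_t)\big)-\int_t^{t+h}L_r u(X_s)\,ds$ and applying the Markov property inside $\E_x[\cdot\mid\mathcal{F}_t]$ reduces the conditional expectation to $\big(\E_{y}[M_h^u]\big)\big|_{y=X_t}$, which vanishes.

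Finally, for the special case $u\in C_0^2(\R^d)$, one has $C_0^2(\R^d)\subset D(L_q)$ by assumption \textbf{(b)} (noted right after its statement in Section \ref{subsec:3.1.1}), and on $C_0^2(\R^d)$ the operator $L_q$ coincides with the classical expression $Lu=\frac12\sum a_{ij}\partial_{ij}u+\langle\mathbf{G},\nabla u\rangle$ (cf. \eqref{eq:2.1.3bis2} and Remark \ref{cnulltwocoincide}); so $(M_t^u)_{t\ge 0}$ with this classical $Lu$ is a martingale. Continuity of $t\mapsto M_t^u$ is immediate since $\M$ has continuous sample paths on $\R^d_\Delta$ (Theorem \ref{th: 3.1.2}) and $u\equiv 0$, $Lu\equiv 0$ near $\Delta$, so $t\mapsto u(X_t)$ and $t\mapsto\int_0^t Lu(X_s)\,ds$ are continuous $\P_x$-a.s. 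This gives that $\P_x$ solves the martingale problem for $(L,C_0^2(\R^d))$ for every $x\in\R^d$. The one point requiring a little care — and the only real obstacle — is the integrability/Fubini justification for the drift term when $u$ only lies in $D(L_r)$: one must invoke the Krylov estimate of Theorem \ref{theo:3.3}(i) rather than a naive bound, and make sure the version of $L_r u$ used in the stochastic integral is irrelevant, which is exactly the content of the last assertion of Proposition \ref{prop:3.1.4}.
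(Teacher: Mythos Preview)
Your proposal is correct and follows essentially the same route as the paper: integrability of $M_t^u$ via the Krylov-type estimate (Theorem \ref{theo:3.3}(i)), reduction of the martingale increment via the Markov property to $P_{h}u(y)-u(y)-\int_0^h P_s L_r u(y)\,ds=0$, and then invoking Proposition \ref{prop:3.1.5}(iii); the continuity for $u\in C_0^2(\R^d)$ is argued the same way. The paper's write-up is a bit terser but the logical skeleton is identical.
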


\begin{proof}
Let  $x \in \R^d$, $u \in D(L_r)$ for some $r \in [q, \infty)$.  Then $\E_x[|M^u_t|]<\infty$ for all $t>0$ by Theorem \ref{theo:3.3}(i). Let $t \geq s \geq 0$. Then using the Markov property and Proposition \ref{prop:3.1.5}(iii),
\begin{eqnarray*}
\E_x \left[ M_t^u-M_s^u \vert \mathcal{F}_s \right]  &=& \E_x [u(X_t) \vert \mathcal{F}_s] - u(X_s) - \E_x \Big[ \int_s^t L_rf(X_{\varv})  d\varv \, \big \vert\,  \mathcal{F}_s \Big] \\
&=& \E_{X_s} [u(X_{t-s})] -u(X_s) - \E_{X_s} \Big[ \int_s^t L_r f(X_{\varv-s})d\varv \Big]\\
&=&  P_{t-s} u (X_s)-u(X_s)  - \int_s^t P_{\varv-s} L_r f (X_s) d\varv = 0.
\end{eqnarray*}
Let $u \in C_0^{2}(\R^d) \subset D(L_r)  \cap C_{\infty}(\R^d)$. Then $t \mapsto u(X_t)$ is continuous on $[0, \infty)$,
hence $(M_t^u)_{t \geq 0}$ is a continuous $(\mathcal{F}_t)_{t \ge 0}$-martingale under $\P_x$.
\end{proof}

\begin{proposition}\label{prop:3.1.7}
Assume {\bf (a)} of Section \ref{subsec:2.2.1} and {\bf (b)} of Section \ref{subsec:3.1.1} hold.
Let $\M$ be as in  Theorem \ref{th: 3.1.2}. Let $u \in C_0^2(\R^d)$, $t\ge 0$. Then the quadratic variation process $\langle M^u \rangle$ of the continuous martingale $M^u$ satisfies for any $x \in \R^d$
$$
\langle M^u \rangle_t=\int_0^t \langle A\nabla u, \nabla u\rangle(X_s)ds, \quad t\ge 0, \quad \P_x\text{-a.s.}
$$
In particular, by Lemma \ref{lem:3.1.4}(ii) $\langle M^u \rangle_t$ is $\P_x$-integrable for any $x \in \R^d$, $t\ge 0$  and  so $M^u$ is square integrable.
\end{proposition}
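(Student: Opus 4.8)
The plan is to identify the quadratic variation of the continuous martingale $M^u$ via the standard recipe: for a continuous local martingale $N$, the process $N_t^2 - \langle N\rangle_t$ is a local martingale, and the quadratic variation is the unique such predictable increasing process. So I will show that
$$
(M_t^u)^2 - \int_0^t \langle A\nabla u,\nabla u\rangle(X_s)\,ds
$$
is an $(\mathcal{F}_t)_{t\ge 0}$-martingale (or at least a local martingale) under $\P_x$ for every $x\in\R^d$. The natural tool is Lemma \ref{lemma2.1.4}, which gives the carré-du-champ identity $\overline{L}(u v) = v\overline{L}u + u\overline{L}v + \langle A\nabla u,\nabla v\rangle$ on $D(\overline L)_b$; applied with $v=u\in C_0^2(\R^d)\subset D(L_q)\cap D(\overline L)_b$, we get $L_r(u^2) = 2u\,L_r u + \langle A\nabla u,\nabla u\rangle$ pointwise (the identity holds $\mu$-a.e. and all terms have continuous versions, so it holds everywhere). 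Since $u^2\in C_0^2(\R^d)$ as well, Proposition \ref{prop:3.1.6} tells us that $M_t^{u^2} := u^2(X_t) - u^2(x) - \int_0^t L_r(u^2)(X_s)\,ds$ is a continuous $(\mathcal{F}_t)_{t\ge 0}$-martingale under each $\P_x$.

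Next I would expand $(M_t^u)^2$ using $u(X_t) = u(x) + M_t^u + \int_0^t L_r u(X_s)\,ds$ and rearrange. Concretely, $u^2(X_t) - u^2(x) = (M_t^u)^2 + 2 M_t^u \big(u(x)+\int_0^t L_ru(X_s)ds\big) + \big(\int_0^t L_ru(X_s)ds\big)^2 + 2u(x)\int_0^t L_ru(X_s)ds$ — but it is cleaner to use the integration-by-parts / product rule for the two semimartingales $u(X_t)$ and $u(X_t)$: writing $Y_t = u(X_t)$, we have $Y_t = Y_0 + M_t^u + A_t$ with $A_t = \int_0^t L_ru(X_s)ds$ of finite variation and continuous, so $Y_t^2 = Y_0^2 + 2\int_0^t Y_{s-}\,dY_s + \langle M^u\rangle_t$, i.e.
$$
u^2(X_t) - u^2(x) = 2\int_0^t u(X_s)\,dM_s^u + 2\int_0^t u(X_s)L_ru(X_s)\,ds + \langle M^u\rangle_t.
$$
Comparing this with $M_t^{u^2} + \int_0^t L_r(u^2)(X_s)\,ds = M_t^{u^2} + \int_0^t \big(2u\,L_ru + \langle A\nabla u,\nabla u\rangle\big)(X_s)\,ds$ and cancelling the common term $2\int_0^t uL_ru(X_s)ds$, we obtain
$$
\langle M^u\rangle_t - \int_0^t \langle A\nabla u,\nabla u\rangle(X_s)\,ds = M_t^{u^2} - 2\int_0^t u(X_s)\,dM_s^u.
$$
The right-hand side is a continuous local martingale (the stochastic integral of the bounded predictable process $u(X_s)$ against the martingale $M^u$), while the left-hand side is a continuous process of finite variation; hence both sides are indistinguishable from $0$ by the uniqueness of the Doob–Meyer/semimartingale decomposition. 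This gives $\langle M^u\rangle_t = \int_0^t \langle A\nabla u,\nabla u\rangle(X_s)\,ds$, $\P_x$-a.s.

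For the integrability addendum: $x\mapsto \langle A\nabla u,\nabla u\rangle(x)$ is supported in a fixed compact set $K\supset \mathrm{supp}(u)$ and is bounded there (since $a_{ij}\in C(\R^d)$ and $\nabla u$ is continuous with compact support), hence it belongs to every $L^r(\R^d,\mu)$, in particular to $L^q(\R^d,\mu)$. By Lemma \ref{lem:3.1.4}(ii), $\P_x(\int_0^t \langle A\nabla u,\nabla u\rangle(X_s)\,ds<\infty)=1$, and in fact by the Krylov-type estimate Theorem \ref{theo:3.3}(i) the expectation $\E_x[\int_0^t \langle A\nabla u,\nabla u\rangle(X_s)\,ds]$ is finite; so $\E_x[\langle M^u\rangle_t]<\infty$, which means $M^u$ is an $L^2$-bounded (hence genuine, square-integrable) martingale on each $[0,t]$. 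The one point requiring care — and the main obstacle — is the localization: $M^u$ is a priori only known to be a martingale (Proposition \ref{prop:3.1.6}), but to legitimately compute $\langle M^u\rangle$ and apply Itô's product rule one works up to a localizing sequence (e.g. the exit times $\sigma_n$ of Definition \ref{stopping times}, along which everything is bounded), proves the identity for the stopped processes $M^{u}_{\cdot\wedge\sigma_n}$, and then lets $n\to\infty$ using Lemma \ref{lem:3.1.4}(i) that $\sigma_n\uparrow\zeta$ and the fact that $X$ has been absorbed at $\Delta$ (where $u\equiv 0$) for $t\ge\zeta$. Once the finite-moment bound from Theorem \ref{theo:3.3}(i) is in hand, the passage to the limit and the upgrade from local martingale to martingale are routine.
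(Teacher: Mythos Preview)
Your proof is correct and follows essentially the same route as the paper: compute $u^2(X_t)-u^2(x)$ two ways—via Proposition \ref{prop:3.1.6} applied to $u^2\in C_0^2(\R^d)$, and via It\^o's formula for the semimartingale $u(X_t)$—then observe that the difference $\langle M^u\rangle_t-\int_0^t\langle A\nabla u,\nabla u\rangle(X_s)\,ds$ is a continuous finite-variation (local) martingale, hence zero. The only cosmetic difference is that the paper reads off the identity $Lu^2=2uLu+\langle A\nabla u,\nabla u\rangle$ directly from the definition of $L$ on $C_0^2(\R^d)$ rather than invoking Lemma \ref{lemma2.1.4}, and does not dwell on localization since $M^u$ is already a genuine martingale by Proposition \ref{prop:3.1.6}.
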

\begin{proof}
For $u\in C_0^2(\R^d)\subset D(L_q)$, where $q=\frac{pd}{p+d}$ and $p\in (d,\infty)$, we have $u^2\in C_0^2(\R^d)\subset D(L_q)$ and $L u^2 = \langle A\nabla u,\nabla u \rangle + 2 u L u$. Thus by Proposition \ref{prop:3.1.6} 
\begin{eqnarray*}
u^2(X_t) - u^2(x)= M_t^{u^2} +\int_0^t \left (\langle A\nabla u,\nabla u \rangle(X_s) + 2 u L u(X_s)\right ) ds.
\end{eqnarray*}
Applying It\^o's formula to the continuous semimartingale $(u(X_t))_{t\ge 0}$, we obtain
\begin{eqnarray*}
u^2(X_t) - u^2(x)= \int_0^t 2u(X_s)dM_s^{u} +\int_0^t 2 u L u(X_s)\,ds + \langle M^u \rangle_t.
\end{eqnarray*}
The last two equalities imply that $\big (\langle M^u \rangle_t-\int_0^t \langle A\nabla u, \nabla u\rangle(X_s)ds\big )_{t\ge 0}$ is 
a continuous $\P_x$-martingale of bounded variation for any $x\in \R^d$, hence constant. This implies the assertion.
\end{proof}
\noindent
For the following result, see for instance \cite[Theorem 1.1, Lemma 2.1]{ChHu}, which we can apply locally.
\begin{lemma}\label{lem:3.1.5}
Under the assumption {\bf (a)} of Section \ref{subsec:2.2.1}, there exists a symmetric non-degenerate matrix  of functions $\sigma=(\sigma_{ij})_{1\le i,j\le d}$ with $\sigma_{ij}\in C(\R^d)$ for all $1 \leq i, j \leq d$ such that
\begin{equation*}
A(x)=\sigma(x)\sigma(x), \ \  \forall x\in \R^d, 
\end{equation*} 
i.e. 
$$
a_{ij}(x)=\sum_{k=1}^d \sigma_{ik}(x)\sigma_{jk}(x), \ \  \forall x\in \R^d, \ 1\le i,j\le d
$$
and 
$$
\det(\sigma(x))>0, \ \  \ \forall x\in \R^d,
$$
where here $\det(\sigma(x))$ denotes the determinant of $\sigma(x)$.
\end{lemma}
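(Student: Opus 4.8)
The plan is to take for $\sigma(x)$ the \emph{principal square root} of $A(x)$, i.e. the unique symmetric positive definite matrix whose square is $A(x)$, and then to check that this choice is continuous in $x$ and has strictly positive determinant. First I would record what assumption {\bf (a)} and \eqref{eq:2.1.2} give us pointwise: each $a_{ij}$ is continuous on $\R^d$, $A(x)=A(x)^T$, and for every relatively compact open $V$ one has $\nu^{-1}_V\|\xi\|^2\le\langle A(x)\xi,\xi\rangle\le\nu_V\|\xi\|^2$ for all $x\in V$, $\xi\in\R^d$. Hence $A(x)$ is symmetric and strictly positive definite for every $x$, with all eigenvalues lying in the compact interval $[\nu^{-1}_V,\nu_V]$ whenever $x\in V$. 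Consequently $\sigma(x):=A(x)^{1/2}$ is well defined, symmetric, satisfies $A(x)=\sigma(x)\sigma(x)$, and since its eigenvalues are the positive square roots of those of $A(x)$ we get $\det\sigma(x)=(\det A(x))^{1/2}>0$ for all $x\in\R^d$; in particular $\sigma(x)$ is non-degenerate.

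The only real content is the continuity of $x\mapsto\sigma(x)$, and since continuity is local and the ellipticity bounds are uniform on each ball $B_n$, it suffices to prove it on each $B_n$. There the eigenvalues of $A(x)$ stay in a fixed compact subinterval of $(0,\infty)$, and I would use the integral representation of the principal square root, namely
\begin{equation*}
\sigma(x)=A(x)^{1/2}=\frac{2}{\pi}\int_0^{\infty}A(x)\big(s^2 I+A(x)\big)^{-1}\,ds ,
\end{equation*}
valid for every symmetric positive definite matrix $A(x)$ (it reduces via the spectral theorem to the scalar identity $\frac{2}{\pi}\int_0^\infty \lambda(s^2+\lambda)^{-1}\,ds=\lambda^{1/2}$ for $\lambda>0$). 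The integrand is jointly continuous in $(s,x)$, and the uniform spectral bounds on $B_n$ bound its operator norm by $\min(1,\nu_{B_n}s^{-2})$, which is integrable in $s$ over $(0,\infty)$ and independent of $x\in B_n$; dominated convergence then gives continuity of $\sigma$ on $B_n$. Alternatively one may simply invoke \cite[Theorem 1.1, Lemma 2.1]{ChHu} on each $B_n$, which is exactly the statement that the matrix square root depends continuously on a continuous, locally uniformly elliptic symmetric matrix field — this is why the assertion of Lemma \ref{lem:3.1.5} is phrased as something to be applied locally. Letting $B_n\nearrow\R^d$ yields $\sigma_{ij}\in C(\R^d)$ for all $1\le i,j\le d$.

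Finally, no gluing argument is needed: the principal square root of a symmetric positive definite matrix is unique, so the locally constructed pieces automatically agree on overlaps and define a single $\sigma\in C(\R^d,\R^{d\times d})$ with $\sigma=\sigma^T$, $A=\sigma\sigma$, and $\det\sigma>0$ everywhere, as claimed. The main (and essentially only) obstacle in this proof is the continuity of the matrix square root under the mere hypothesis of continuity plus local ellipticity of $A$; this is handled either by the integral formula above together with the uniform spectral bounds from \eqref{eq:2.1.2}, or by citing \cite{ChHu} locally as in the statement.
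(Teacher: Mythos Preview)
Your proposal is correct and follows essentially the same approach as the paper: the paper does not give a detailed proof but simply refers to \cite[Theorem 1.1, Lemma 2.1]{ChHu} applied locally, which is precisely the principal square root construction you describe. Your additional argument via the integral representation is a valid and more self-contained alternative, but the underlying idea---take $\sigma=A^{1/2}$ and use local uniform ellipticity for continuity---is the same.
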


\begin{definition}\label{non-explosive}
$\M$ (of Theorem \ref{th: 3.1.2}) is said to be {\bf non-explosive}\index{non-explosive}, if
$$
\P_x(\zeta=\infty)=1, \quad \text{ for all } x \in \R^d.
$$
\end{definition}

\begin{theorem}\label{theo:3.1.4}
Let $A=(a_{ij})_{1\leq i,j \leq d}$ and $\mathbf{G}=(g_1, \ldots, g_d)=\frac{1}{2}\nabla \big (A+C^{T}\big )+ \mathbf{H}$ (see\eqref{form of G}) satisfy the conditions {\bf (a)} of 
Section \ref{subsec:2.2.1} and {\bf (b)} of Section \ref{subsec:3.1.1}. 
Consider the Hunt process $\M$ from Theorem \ref{th: 3.1.2} with coordinates $X_t=(X_t^1,\ldots,X_t^d)$.
\begin{itemize}
\item[(i)] Suppose that  $\M$ is non-explosive.
Let $(\sigma_{ij})_{1 \le i,j \le d}$ be any matrix (possibly non-symmetric) consisting of locally bounded and measurable functions such that $\sigma \sigma^T =A$ (see for instance Lemma \ref{lem:3.1.5} for the existence of such a matrix). Then it holds that $\P_x$-a.s. for any $x=(x_1,\ldots,x_d)\in \R^d$, 
\begin{equation} \label{itosdeweakglo}
X_t = x+ \int_0^t \sigma (X_s) \, dW_s +   \int^{t}_{0}   \mathbf{G}(X_s) \, ds, \quad 0\le  t <\infty,     
\end{equation}
i.e. it holds that $\P_x$-a.s. for any $i=1,\ldots,d$
\begin{equation}\label{weaksolution}
X_t^i = x_i+ \sum_{j=1}^d \int_0^t \sigma_{ij} (X_s) \, dW_s^j +   \int^{t}_{0}   g_i(X_s) \, ds, \quad 0\le  t <\infty,
\end{equation}
where $W = (W^1,\dots,W^d)$ is a $d$-dimensional standard $(\mathcal{F}_t)_{t \geq 0}$-Brownian motion starting from zero.
\item[(ii)] Let $(\sigma_{ik})_{1 \le i \le d,1\le k \le l}$, $l\in \N$ arbitrary but fixed, be any matrix consisting of continuous functions  such that $\sigma_{ik}\in C(\R^d)$ for all $1\le i\le d,1\le k\le l$, and such that $A=\sigma\sigma^T$, i.e. 
$$
a_{ij}(x)=\sum_{k=1}^l \sigma_{ik}(x)\sigma_{jk}(x), \ \  \forall x\in \R^d, \ 1\le i, j \leq d.
$$
Then on a standard extension 
of $(\widetilde{\Omega}, \widetilde{\mathcal{F}}, (\widetilde{\mathcal{F}}_t)_{t\ge 0}, \widetilde{\P}_x )$, $x\in \R^d$, which we denote for notational convenience again 
by $(\Omega, \mathcal{F}, (\mathcal{F}_t)_{t\ge 0}, \P_x )$, $x\in \R^d$, there exists for every $n \in \N$ an $l$-dimensional  standard $(\mathcal{F}_t)_{t \geq 0}$-Brownian motion $(W_{n,t})_{t \geq 0} = \big((W_{n,t}^{1},\dots,W_{n,t}^{l})\big)_{t \geq 0}$ starting from zero such that $\P_x$-a.s. for any $x=(x_1,\ldots,x_d)\in \R^d$, $i=1,\dots,d$
\begin{equation*}
X_t^i = x_i+ \sum_{k=1}^l \int_0^t \sigma_{ik} (X_s) \, dW_{n,s}^{k} +   \int^{t}_{0}   g_i(X_s) \, ds, \quad 0\le  t \leq D_n,
\end{equation*}
where $D_n, n\in \N$, is as in Definition \ref{stopping times}.
Moreover, it holds that $W_{n,s} = W_{n+1, s}$ on $\{s \leq D_n\}$, hence with $W^k_s:=\lim_{n \rightarrow \infty} W^k_{n,s}$, $k=1, \ldots, l$ and $W_s:=(W^1_s, \ldots, W^l_s)$ on $\{ s < \zeta \}$ we get for $1 \leq i \leq d$,
\begin{equation*}
X_t^i = x_i+ \sum_{k=1}^l \int_0^t \sigma_{ij} (X_s) \, dW_s^{k} +   \int^{t}_{0}   g_i(X_s) \, ds, \quad 0\leq  t < \zeta,
\end{equation*}
$\P_x$-a.s. for any $x \in \R^d$. In particular, if $\M$ is non-explosive, then $(W_t)_{t \geq 0}$ is a standard $(\mathcal{F}_t)_{t \geq 0}$-Brownian motion.
\end{itemize}
\end{theorem}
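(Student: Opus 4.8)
The plan is to identify each coordinate process $X^i$ as an It\^o process by combining the martingale and quadratic‑variation identities of Proposition~\ref{prop:3.1.6} and Proposition~\ref{prop:3.1.7} with the martingale representation theorem, after localizing along the exhausting sequence $(B_n)_{n\ge 1}$. First I would fix $x\in\R^d$ and, for $n$ with $x\in B_n$ and $1\le i\le d$, choose $u_{n,i}\in C_0^2(\R^d)$ with $u_{n,i}(y)=y_i$ on $B_{n+1}$. By Proposition~\ref{prop:3.1.6}, $M^{u_{n,i}}$ is a continuous $(\mathcal F_t)$-martingale under $\P_x$, and since $Lu_{n,i}=g_i$ on $B_{n+1}$ we have $M^{u_{n,i}}_{t\wedge\sigma_n}=N^i_{t\wedge\sigma_n}$, where
\[
N^i_t:=X^i_t-x_i-\int_0^t g_i(X_s)\,ds ,
\]
the drift integral being $\P_x$-a.s.\ finite on $\{t<\zeta\}$ by Lemma~\ref{lem:3.1.4}(iii). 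Letting $n\to\infty$ and using Lemma~\ref{lem:3.1.4}(i) (so $\sigma_n\uparrow\zeta$, and $D_n=\sigma_n$ $\P_x$-a.s.\ since $x\in B_n$), $N=(N^1,\dots,N^d)$ is a continuous local martingale on $[0,\zeta)$ with localizing sequence $(\sigma_n)$. Polarizing Proposition~\ref{prop:3.1.7} (apply it to $u_{n,i}\pm u_{n,j}$) yields $\langle M^{u_{n,i}},M^{u_{n,j}}\rangle_t=\int_0^t a_{ij}(X_s)\,ds$, so that
\[
\langle N^i,N^j\rangle_t=\int_0^t a_{ij}(X_s)\,ds=\int_0^t (\sigma\sigma^{T})_{ij}(X_s)\,ds ,\qquad t<\zeta ,
\]
these being finite by Lemma~\ref{lem:3.1.4}(iii) since each $a_{ij}\in C(\R^d)$.

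For part~(i), $\sigma$ is a $d\times d$ matrix field with $\sigma\sigma^{T}=A$. Local uniform ellipticity \eqref{eq:2.1.2} forces $(\det\sigma)^2=\det A\ge\nu_V^{-d}>0$ on every relatively compact $V$, so $\sigma^{-1}=(\det\sigma)^{-1}\,\mathrm{adj}(\sigma)$ is measurable and locally bounded. Assuming $\M$ non-explosive, $\zeta=\infty$ $\P_x$-a.s., and
\[
W_t:=\int_0^t \sigma^{-1}(X_s)\,dN_s
\]
is a well-defined continuous local martingale (localize at $\sigma_n$: on $[0,\sigma_n]$ the integrand is bounded and $\langle N\rangle$ is absolutely continuous with locally bounded density $A(X_\cdot)$). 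One computes $\langle W^i,W^j\rangle_t=\int_0^t\big(\sigma^{-1}A(\sigma^{-1})^{T}\big)_{ij}(X_s)\,ds=t\,\delta_{ij}$, whence $W$ is a $d$-dimensional $(\mathcal F_t)$-Brownian motion starting at $0$ by L\'evy's characterization. Then $\int_0^t\sigma(X_s)\,dW_s=\int_0^t\sigma(X_s)\sigma^{-1}(X_s)\,dN_s=N_t$, which is \eqref{weaksolution}, i.e.\ \eqref{itosdeweakglo} in vector form.

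For part~(ii), $\sigma=(\sigma_{ik})_{1\le i\le d,\,1\le k\le l}$ has continuous entries and $\sigma\sigma^{T}=A$; then $A^{-1}$ is continuous and $P:=\sigma^{T}A^{-1}\sigma$ is a continuous, symmetric, idempotent $l\times l$ matrix field. I would pass to a standard extension of $(\Omega,\mathcal F,(\mathcal F_t),\P_x)$ carrying an independent $l$-dimensional Brownian motion $(B_t)$, and set, for each $n$,
\[
W_{n,t}:=\int_0^{t\wedge D_n}\sigma^{T}A^{-1}(X_s)\,dN_s+\int_0^t\big(I-1_{[0,D_n]}(s)\,P(X_s)\big)\,dB_s .
\]
Using $\langle N,B\rangle=0$, $\sigma\sigma^{T}A^{-1}=I$ and $P^2=P$, a bracket computation gives $\langle W_n^j,W_n^k\rangle_t=t\,\delta_{jk}$, so $W_n$ is an $l$-dimensional $(\mathcal F_t)$-Brownian motion; and $\sigma(I-P)=\sigma-\sigma\sigma^{T}A^{-1}\sigma=0$ gives $\sum_k\int_0^t\sigma_{ik}(X_s)\,dW^k_{n,s}=N^i_t$ for $t\le D_n$, i.e.\ \eqref{weaksolution} holds $\P_x$-a.s.\ on $\{t\le D_n\}$. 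Since $D_n\le D_{n+1}$, the formulas for $W_{n,\cdot}$ and $W_{n+1,\cdot}$ coincide on $[0,D_n]$, so $W^k_s:=\lim_{n\to\infty}W^k_{n,s}$ is well defined on $\{s<\zeta\}=\bigcup_n\{s<D_n\}$ (Lemma~\ref{lem:3.1.4}(i)), and passing to the limit the displayed equation holds for $0\le t<\zeta$; if $\M$ is non-explosive then $\zeta=\infty$ and $W=(W^1,\dots,W^l)$ is a genuine standard $(\mathcal F_t)$-Brownian motion on $[0,\infty)$.

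The routine ingredients are the bracket computations and L\'evy's characterization; the genuinely delicate points are making the stochastic integrals meaningful when $\sigma$ is only locally bounded and $\mathbf G$ only locally $L^q$ — handled by stopping at $\sigma_n=D_n$ and using the Krylov‑type bounds of Lemma~\ref{lem:3.1.4} and Theorem~\ref{theo:3.3} to control $\int_0^t|g_i|(X_s)\,ds$ and $\langle N^i\rangle_t$ — and, in part~(ii), performing the enlargement once and verifying that the locally built Brownian motions $W_n$ are mutually consistent so that they paste into a single process on $[0,\zeta)$, with non-explosion being exactly what upgrades this to a Brownian motion on all of $[0,\infty)$.
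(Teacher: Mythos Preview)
Your proposal is correct and follows essentially the same strategy as the paper's proof: localize along $(D_n)$, identify the coordinate processes as local martingales plus drift via Proposition~\ref{prop:3.1.6}, compute their brackets via Proposition~\ref{prop:3.1.7}, and then invoke a martingale representation theorem. The only real difference is presentational: where the paper cites \cite[II.~Theorem~7.1 and IV.~Proposition~2.1]{IW89} for part~(i) and \cite[II.~Theorem~7.1$'$]{IW89} for part~(ii), you write out those constructions explicitly --- defining $W_t=\int_0^t\sigma^{-1}(X_s)\,dN_s$ in~(i) and the pseudo-inverse/projection formula $W_{n,t}=\int_0^{t\wedge D_n}\sigma^{T}A^{-1}(X_s)\,dN_s+\int_0^t(I-1_{[0,D_n]}P(X_s))\,dB_s$ in~(ii). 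Your explicit construction makes the consistency $W_{n,\cdot}=W_{n+1,\cdot}$ on $[0,D_n]$ transparent, whereas the paper appeals to the proof of the cited theorem for this; conversely, the paper's citation keeps the argument shorter.
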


\begin{proof}
(i) Since $\M$ is non-explosive, it follows from  Lemma \ref{lem:3.1.4}(i) that $D_n\nearrow \infty$ $\P_x$-a.s. for any $x\in \R^d$.  Let $\varv\in C^{2}(\R^d)$. Then we claim that
$$
M_t ^\varv: = \varv(X_t) - \varv(x) - \int_0^t \Big (\frac{1}{2}\sum_{i,j=1}^{d}a_{ij}\partial_i\partial_j \varv+\sum_{i=1}^{d}g_i\partial_i \varv\Big )(X_s) \, ds , \quad t \ge 0,
$$
is a continuous square integrable local $\P_x$-martingale with respect to the stopping times $(D_n)_{n\ge 1}$ for any $x\in \R^d$. Indeed, let $(\varv_n)_{n\ge 1}\subset C_0^2(\R^d)$ be such that $\varv_n=\varv$ pointwise on $\overline{B}_n$, $n\ge 1$. 
Then for any $n\ge 1$, we have $\P_x$-a.s
$$
M_{t\wedge D_n}^\varv=M_{t\wedge D_n}^{\varv_n}, \ \ t\ge 0,
$$
and $(M_{t\wedge D_n}^{\varv_n})_{t\ge 0}$ is a square integrable $\P_x$-martingale for any $x\in \R^d$ by Proposition \ref{prop:3.1.7}.
Now let $u_i \in C^{2}(\R^d)$, $i=1,\dots,d$, be the coordinate projections, i.e. $u_i(x)=x_i$. Then by Proposition \ref{prop:3.1.7}, polarization and localization with respect to $(D_n)_{n\ge 1}$, the quadratic covariation processes satisfy
$$
\langle M^{u_i}, M^{u_j} \rangle_t =  \int_0^t  a_{ij}(X_s) \, ds, \quad 1 \le i,j  \le d, \ t \ge 0.
$$
Using Lemma \ref{lem:3.1.5} we obtain by \cite[II. Theorem 7.1]{IW89} (see also \cite[IV. Proposition 2.1]{IW89}) that there exists a $d$-dimensional standard $(\mathcal{F}_t)_{t \geq 0}$-Brownian motion $(W_t)_{t \ge 0} = (W_t^1,\dots, W_t^d)_{t \ge 0}$ on $(\Omega, \mathcal{F}, (\mathcal{F}_t)_{t\ge 0}, \P_x )$, $x\in \R^d$,  such that
\begin{eqnarray}\label{sigma1}
M_t^{u_i} = \sum_{j=1}^{d}  \int_0^t \ \sigma_{ij} (X_s) \ dW_s^j, \quad 1 \le i \le d, \  t\ge 0.
\end{eqnarray}
Since for any $x\in \R^d$, $\P_x$-a.s.
\begin{eqnarray}\label{drift1}
M_t^{u_i}= X_t^i - x_i - \int_0^t g_i(X_s) \, ds , \quad t \ge 0.
\end{eqnarray}
(ii) Let $n\in \N$. Using the same notations and proceeding as in (i), we  obtain that
\begin{eqnarray*}
M^{i,n}_t:=M_{t\wedge D_n}^{u_i}= X_{t\wedge D_n}^i - x_i - \int_0^{t\wedge D_n} g_i(X_s) \, ds , \quad t \ge 0,
\end{eqnarray*}
is a continuous square integrable $\P_x$-martingale for any $x\in \R^d$ and it holds that
$$
\langle M^{i,n}, M^{j,n}\rangle_t =  \int_0^{t\wedge D_n}  a_{ij}(X_s) \, ds=\int_0^{t}  1_{[0,D_n]}(s)a_{ij}(X_s) \, ds, \quad 1\leq i,j \leq d, \ t \ge 0.
$$
Let $\Phi_{ij}(s)=a_{ij}(X_s)1_{[0,D_n]}(s)$, $1 \leq i, j \leq d$, $s \geq 0$, so that 
$$
\Phi_{ij}(s)=\sum_{k=1}^l \Psi_{ik}(s)\Psi_{jk}(s),  \ \text{ with }\ \ \Psi_{ik}(s)=\sigma_{ik}(X_s)1_{[0,D_n]}(s),\ \  
 1\le i, j \le d.
$$
Then for any $x \in \R^d$, $\P_x$-a.s. for all $1\leq i, j \leq d$, $1\leq k\leq l$,
\begin{eqnarray*}
\int_0^t |\Psi_{ik}(s)|^2 ds <\infty,  \;\; \int_0^t |\Phi_{ij}(s)| ds< \infty, \quad \text{ for all } t\geq 0.
\end{eqnarray*}
Then by \cite[II. Theorem 7.1']{IW89}, we obtain the existence of  an $l$-dimensional standard $(\mathcal{F}_t)_{t \geq 0}$-Brownian motion $(W_{n,t})_{t \geq 0} = \big((W_{n,t}^{1},\dots,W_{n,t}^{l})\big)_{t \geq 0}$ as in the assertion such that
\begin{eqnarray*}
M_t^{i,n} & = &\sum_{k=1}^{l}  \int_0^t \ \sigma_{ik} (X_s)1_{[0,D_n]}(s) \ dW_{n, s}^{k}\\
&=& \sum_{k=1}^{l}  \int_0^{t\wedge D_n} \ \sigma_{ik} (X_s) \  dW_{n,s}^{k}, \quad 1 \le i \le d, \  t\ge 0
\end{eqnarray*}
$\P_x$-a.s. for any $x\in \R^d$. Thus for $1\le i\le d$
\begin{equation*} 
X_{t\wedge D_n}^i =x_i +\sum_{k=1}^{l}  \int_0^{t\wedge D_n} \ \sigma_{ik} (X_s) \ dW_{n,s}^{k}+ \int_0^{t\wedge D_n} g_i(X_s) \, ds , \quad t \ge 0.
\end{equation*}
From the proof of \cite[II. Theorem 7.1']{IW89}, we can see the consistency $W_{n,s}=W_{n+1,s}$ on $\{ s \leq D_n \}$. This implies the remaining assertions.
\end{proof}

\subsection{Global properties}
\label{sec:3.2}
In this sectio, we investigate non-explosion, transience and recurrence, and invariant and sub-invariant measures of the Markov process $\M$, which is described as a weak solution to an SDE in Theorem \ref{theo:3.1.4}. Due to the strong Feller property, conservativeness of $(T_t)_{t>0}$ is equivalent to non-explosion of $\M$ (see Corollary \ref{cor:3.2.1}).\\
We first develop three sufficient criteria for non-explosion. The first type of such a criterion is related to the existence of a Lyapunov function, which implies a supermartingale property and provides explicit growth conditions on the coefficients given by a continuous function as upper bound (Proposition \ref{prop:3.2.8} and Corollaries \ref{cor:3.2.2} and \ref{cor:3.1.3}). The second type of non-explosion criterion is related to moment inequalities that are derived with the help of a Burkholder--Davis--Gundy inequality or Doob's inequality and finally a Gronwall inequality. Here the growth condition is given by the sum of a continuous function and an integrable function of some order as upper bound (Proposition \ref{theo:3.2.8}) and the growth condition is stated separately for the diffusion and the drift coefficients in contrast to the first type of non-explosion criterion. The third type of non-explosion criterion is a conservativeness criterion deduced from \cite{GT17}, which, in contrast to the first two types of non-explosion criteria, originates from purely analytical means and involves a volume growth condition on the infinitesimally invariant measure $\mu$. It is applicable, if the growth of $\mu$ on Euclidean balls is known, for instance if $\rho$ is explicitly known (see Proposition \ref{prop:3.2.9}).\\
In Section \ref{subsec:3.2.2}, we study transience and recurrence of the semigroup $(T_t)_{t>0}$ and of $\M$ in the probabilistic sense (see Definitions \ref{def:3.2.2.2} and Definition \ref{def:3.2.2.3}).  Since $(T_t)_{t>0}$ is strictly irreducible by Proposition \ref{prop:2.4.2}, we obtain in Theorem \ref{theo:3.3.6}(i) that $(T_t)_{t>0}$ is either recurrent or transient. Moreover, using the technique of \cite{Ge} and the regularity of the resolvent associated with $\M$, it follows that recurrence and transience of $(T_t)_{t>0}$ is equivalent to recurrence and transience of $\M$ in the probabilistic sense, respectively (see Theorem \ref{theo:3.3.6}).  We present in Proposition \ref{theo:3.2.6} a criterion to obtain the recurrence of $\M$ in the probabilistic sense and in the situation of Remark \ref{rem:2.2.4}, we present another type of recurrence criterion, Corollary \ref{cor:3.2.2.5}, which is a direct consequence of \cite[Theorem 21]{GT2} and Proposition \ref{prop:2.4.2}(ii). \\
In Section \ref{subsec:3.2.3}, we introduce the two notions, {\it invariant measure} and {\it sub-invariant measure} for $\M$, which are strongly connected to the notion of $(\overline{T}_t)_{t>0}$-invariance and sub-invariance respectively, introduced in Section \ref{subsec:2.1.4}. These will appear later in Section \ref{subsec:3.3.2}, for a result about uniqueness in law. We analyze further the long time behavior of the transition semigroup $(P_t)_{t>0}$ associated with $\M$, as well as uniqueness of invariant measures for $\M$ in the case where there exists a probability invariant measure for $\M$. For that, in Theorem \ref{theo:3.3.8}, the  strong Feller property (Definition \ref{def:2.3.1}) and the irreducibility in the probabilistic sense of $(P_t)_{t>0}$ (Definition \ref{def:2.4.4}) are essentially used to apply Doob's theorem, but we further complement it by using Lemma \ref{lem:2.7}(ii) and Remark \ref{remark2.1.11}(i). We show in Example \ref{ex:3.8} that there is a case where there is no unique invariant measure for $\M$ by presenting two distinct infinite invariant measures for $\M$, which cannot be  expressed as a constant multiple of each other.\\

\subsubsection{Non-explosion results and moment inequalities} \label{subsec:3.2.1}
Throughout this section, unless otherwise stated, we will assume that {\bf (a)} of Section \ref{subsec:2.2.1} holds and that assumption {\bf (b)} of Section \ref{subsec:3.1.1}  holds. Furthermnore, we let
$$
\mu=\rho\,dx
$$ 
be as in Theorem \ref{theo:2.2.7} or as in  Remark \ref{rem:2.2.4}. In fact, only at the end of this section for Proposition \ref{prop:3.2.9} and in Remark \ref{rem:3.2.1 1},  assumptions {\bf (a)} and {\bf (b)} and the assumption on $\mu$     may be omitted.
\\
Due to the strong Feller property, we have:
\begin{corollary} \label{cor:3.2.1}
Assume {\bf (a)} of 
Section \ref{subsec:2.2.1} and {\bf (b)} of Section \ref{subsec:3.1.1} hold.
$(T_t)_{t>0}$ is conservative\index{semigroup ! conservative} (Definition \ref{def:3.2.1}), if and only if $\M$ is non-explosive (Definition \ref{non-explosive}).
\end{corollary}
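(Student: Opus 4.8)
The plan is to prove the equivalence between conservativeness of $(T_t)_{t>0}$ and non-explosion of $\M$ by exploiting the fact that $P_t 1_{\R^d}$ is a continuous $\mu$-version of $T_t 1_{\R^d}$, and relating $P_t 1_{\R^d}(x)$ to $\P_x(\zeta > t)$. First I would note that, by Theorem \ref{th: 3.1.2}, $\M$ has transition function $(P_t)_{t\ge 0}$, so that for every $x\in\R^d$ and $t\ge 0$,
$$
P_t 1_{\R^d}(x) = \E_x[1_{\R^d}(X_t)] = \P_x(X_t\in\R^d) = \P_x(t<\zeta),
$$
using that $X_t\in\R^d$ exactly when $t<\zeta$, which holds since $\M$ has continuous sample paths on $\R^d_\Delta$ and $X_t=\Delta$ iff $t\ge\zeta$ (see (M.1) and the form of $\Omega$ in Theorem \ref{th: 3.1.2}).

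Next I would record that conservativeness of $(T_t)_{t>0}$, i.e. $T_t 1_{\R^d} = 1$ $\mu$-a.e.\ for one (hence all) $t>0$ (Definition \ref{def:3.2.1}), is by the strong Feller property equivalent to $P_t 1_{\R^d}(x) = 1$ for all $x\in\R^d$ and all $t>0$. Indeed, $P_t 1_{\R^d}$ is a continuous version of $T_t 1_{\R^d}$ by Theorem \ref{theo:2.6} and \eqref{semidef}, and since $\mu$ has full support (\eqref{condition on mu}), a continuous function equal to $1$ $\mu$-a.e.\ is identically $1$. Conversely, if $P_t 1_{\R^d}\equiv 1$ then $T_t 1_{\R^d}=1$ $\mu$-a.e. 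Moreover, the property $P_t 1_{\R^d}(x)=1$ for \emph{one} $t>0$ and all $x$ propagates to all $t>0$: this follows either from Lemma \ref{lem:2.7}(ii) applied with $A=\R^d$ (since $P_{t_0}1_{\R^d}(x_0)=1$ gives $P_t 1_{\R^d}\equiv 1$ for all $t$), or directly from the Chapman--Kolmogorov equation together with the already established $\mu$-a.e.\ statement. So it suffices to work with a fixed $t>0$.

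Combining the two displays: $(T_t)_{t>0}$ is conservative $\iff$ $\P_x(t<\zeta)=1$ for all $x\in\R^d$ and all $t>0$. Finally I would observe that $\{\zeta=\infty\} = \bigcap_{n\ge 1}\{n<\zeta\}$, so that $\P_x(\zeta=\infty)=1$ for all $x$ is equivalent to $\P_x(n<\zeta)=1$ for all $n\in\N$ and all $x$, which by monotonicity of $t\mapsto\P_x(t<\zeta)$ is the same as $\P_x(t<\zeta)=1$ for all $t>0$ and all $x$, i.e.\ non-explosion of $\M$ (Definition \ref{non-explosive}). This chain of equivalences gives the claim.

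I do not expect a serious obstacle here; the proof is essentially bookkeeping with the identification $P_t 1_{\R^d}(x)=\P_x(t<\zeta)$. The only point requiring a little care is the transfer between the $\mu$-a.e.\ statement defining conservativeness and the pointwise statement about $\M$, which is exactly where the strong Feller property (continuity of $P_t 1_{\R^d}$) and the full support of $\mu$ enter; and the remark that conservativeness for one $t$ implies it for all $t$, for which Lemma \ref{lem:2.7}(ii) (or Chapman--Kolmogorov) is invoked.
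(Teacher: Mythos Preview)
Your proof is correct and follows essentially the same approach as the paper: both arguments rest on the identification $P_t 1_{\R^d}(x)=\P_x(X_t\in\R^d)=\P_x(t<\zeta)$ together with the continuity of $P_t 1_{\R^d}$ (strong Feller) to pass between the $\mu$-a.e.\ statement and the pointwise one, and then a limit in $t$ to reach $\P_x(\zeta=\infty)=1$. Your write-up is slightly more explicit about the ``one $t$ versus all $t$'' point (invoking Lemma~\ref{lem:2.7}(ii)), which the paper handles implicitly via Definition~\ref{def:3.2.1}.
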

\begin{proof}
Assume that $(T_t)_{t>0}$ is conservative. Then by Theorem \ref{theo:2.6} and Proposition \ref{prop:3.1.4},
\begin{equation*}
\P_x(\zeta>t) =\P_x(X_t \in \R^d)=P_{t} 1_{\R^d}(x) = 1 \; \text{ for all $(x,t) \in \R^d \times (0, \infty)$}.
\end{equation*}
Letting $t \rightarrow \infty$, $\P_x(\zeta = \infty)=1$ for all $x \in \R^d$. Conversely, assume that $\M$ is non-explosive. Then 
$$
\P_x(X_t \in \R^d) = \P_x(\zeta>t) \geq \P_x(\zeta=\infty)=1\; \text{ for all $(x,t) \in \R^d \times (0, \infty)$.}
$$
Consequently, by Theorem \ref{theo:2.6} and Proposition \ref{prop:3.1.4}, $T_t 1_{\R^d} =1$, $\mu$-a.e. for all $t>0$. 
\end{proof}
\begin{remark}
{\it By Corollary \ref{cor:3.2.1} and Lemma \ref{lem:2.7}(ii), it follows that $\M$ is non-explosive, if and only if there exists $(x_0, t_0)\in \R^d \times (0, \infty)$:
$$
P_{t_0}1_{\R^d}(x_0)=\P_{x_0}(X_{t_0} \in \R^d) = \P_{x_0}(\zeta>t_{0})=1.
$$
Thus, $\M$ is non-explosive, if and only if $\P_{x_0}(\zeta=\infty)=1$ for some $x_0 \in \R^d$. This property is also derived in \cite[Lemma 2.5]{Bha} under the assumptions of a locally bounded drift coefficient and continuous diffusion coefficient. In comparison, our conditions {\bf (a)}, {\bf (b)} allow the drift coefficient to be locally unbounded but the diffusion coefficient has to be continuous with a suitable weak differentiability.}
\end{remark}
\medskip
\noindent
Consider the following {\bf condition}: \\
\\
{\bf (L)} \index{assumption ! {\bf (L)}}there exists  $\varphi \in C^2(\R^d)$, $\varphi \geq 0$ such that $\displaystyle \lim_{ r \rightarrow \infty} (\inf_{\partial B_r} \varphi)= \infty$ and
$$
L\varphi \leq M \varphi, \quad \text{ a.e. on } \R^d
$$
for some constant $M>0$. \\ \\
We will call a function $\varphi$ as in {\bf (L)} a {\bf Lyapunov function}\index{Lyapunov function}.  Under the assumption of {\bf (L)}, we saw an analytic method to derive conservativeness (hence non-explosion by Corollary \ref{cor:3.2.1}) in the proof of Proposition \ref{prop:2.1.10}(ii). The next proposition deals with a probabilistic method to derive the non-explosion of $\M$ under the assumption of {\bf (L)}. The method provides implicitly a moment inequality for $\varphi(X_t)$.
\begin{proposition} \label{prop:3.2.8}
Assume {\bf (a)} of 
Section \ref{subsec:2.2.1} and {\bf (b)} of Section \ref{subsec:3.1.1} hold.
Under the assumption of {\bf (L)} above, $\M$ is non-explosive (Definition \ref{non-explosive}) and for any $x \in \R^d$ it holds that
$$
\E_x \left[  \varphi(X_t) \right] \leq e^{Mt} \varphi(x), \;\; \;  t \geq 0.
$$
\end{proposition}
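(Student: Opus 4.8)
The plan is to combine the martingale problem from Proposition \ref{prop:3.1.6} with a localization via the stopping times $\sigma_n$ from Definition \ref{stopping times}, and then to let $n\to\infty$ using Lemma \ref{lem:3.1.4}(i) together with Fatou's lemma. First I would fix $x\in\R^d$ and, for each $n\ge 1$, pick a cutoff $\varphi_n\in C_0^2(\R^d)$ with $\varphi_n=\varphi$ pointwise on $\overline B_n$; this is possible since $\varphi\in C^2(\R^d)$. Then $\varphi_n\in D(L_q)$ by assumption {\bf (b)} (indeed $C_0^2(\R^d)\subset D(L_q)$), so Proposition \ref{prop:3.1.6} tells us that
$$
M_t^{\varphi_n}=\varphi_n(X_t)-\varphi_n(x)-\int_0^t L_q\varphi_n(X_s)\,ds
$$
is an $(\mathcal F_t)$-martingale under $\P_x$. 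On the stochastic interval $[0,\sigma_n)$ we have $X_s\in B_n$, hence $\varphi_n(X_s)=\varphi(X_s)$ and $L_q\varphi_n(X_s)=L\varphi(X_s)$ there (using that $L$ agrees with $L_q$ on $C_0^2$, cf. the identification in Section \ref{subsec:2.2.1}); so by optional stopping $M_{t\wedge\sigma_n}^{\varphi_n}$ is a martingale and, taking expectations,
$$
\E_x\big[\varphi(X_{t\wedge\sigma_n})\big]=\varphi(x)+\E_x\Big[\int_0^{t\wedge\sigma_n} L\varphi(X_s)\,ds\Big]\le \varphi(x)+M\,\E_x\Big[\int_0^{t\wedge\sigma_n}\varphi(X_s)\,ds\Big],
$$
where we used the Lyapunov bound $L\varphi\le M\varphi$ a.e.\ together with $\varphi\ge 0$ (and Lemma \ref{lem:3.1.4}(iii), so that the integral is $\P_x$-a.s.\ finite and insensitive to the chosen $\mu$-version of $L\varphi$).

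Next I would set $h_n(t):=\E_x[\varphi(X_{t\wedge\sigma_n})]$ and rewrite the last inequality as $h_n(t)\le \varphi(x)+M\int_0^t h_n(s)\,ds$, using Fubini (justified since $\varphi\ge 0$) and the fact that $\varphi(X_{s\wedge\sigma_n})=\varphi(X_s)$ for $s<\sigma_n$ while $\varphi(X_{s\wedge\sigma_n})\le\varphi(X_s)$ requires a small care: actually $\int_0^{t\wedge\sigma_n}\varphi(X_s)\,ds=\int_0^t\varphi(X_{s})1_{\{s<\sigma_n\}}\,ds\le\int_0^t\varphi(X_{s\wedge\sigma_n})1_{\{s<\sigma_n\}}\,ds+\ (\text{boundary term})$, so it is cleaner to bound $\int_0^{t\wedge\sigma_n}\varphi(X_s)\,ds\le\int_0^t \varphi(X_{s\wedge\sigma_n})\,ds$ directly, which holds because on $\{s\ge\sigma_n\}$ the integrand on the left is absent while the right-hand integrand $\varphi(X_{\sigma_n})\ge 0$. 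Granting this, $h_n$ is a nonnegative Borel function, bounded on $[0,T]$ for each $T$ (since $\varphi_n$ is bounded, $h_n(t)\le\|\varphi_n\|_\infty<\infty$), and Gronwall's inequality yields $h_n(t)\le e^{Mt}\varphi(x)$ for all $t\ge 0$ and all $n$.

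Finally I would let $n\to\infty$. By Lemma \ref{lem:3.1.4}(i), $\sigma_n\uparrow\zeta$ $\P_x$-a.s., and since $\M$ has continuous sample paths on $\R^d_\Delta$ up to $\zeta$ (Theorem \ref{th: 3.1.2}), $X_{t\wedge\sigma_n}\to X_{t\wedge\zeta}$ $\P_x$-a.s.; on $\{t<\zeta\}$ this limit is $X_t$ with $\varphi(X_t)$ finite, while on $\{t\ge\zeta\}$ the path has reached $\Delta$ so (with the convention $\varphi(\Delta)=0$) $\varphi(X_{t\wedge\sigma_n})=\varphi(X_{\sigma_n})$ which must tend to $+\infty$ along a subsequence if $X_{\sigma_n}\to\Delta$; but then $\liminf_n\varphi(X_{t\wedge\sigma_n})=+\infty$ on that event, contradicting $h_n(t)\le e^{Mt}\varphi(x)<\infty$ unless $\P_x(t\ge\zeta)=0$. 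This is exactly the non-explosion conclusion: since $t$ was arbitrary, $\P_x(\zeta=\infty)=1$ for every $x$, so $\M$ is non-explosive (Definition \ref{non-explosive}). With non-explosion in hand, $X_{t\wedge\sigma_n}\to X_t$ $\P_x$-a.s., and Fatou's lemma gives $\E_x[\varphi(X_t)]\le\liminf_n h_n(t)\le e^{Mt}\varphi(x)$, which is the claimed moment inequality.

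The main obstacle I anticipate is the integrability bookkeeping in the localization step: one has to be sure that $M_{t\wedge\sigma_n}^{\varphi_n}$ genuinely has finite expectation (which it does, since $\varphi_n$ is bounded and $\int_0^{t}|L_q\varphi_n|(X_s)\,ds$ is $\P_x$-integrable by Theorem \ref{theo:3.3}(i) applied to the compactly supported $L_q\varphi_n\in L^q(\R^d,\mu)$), and that the passage $L_q\varphi_n=L\varphi$ on $B_n$ is valid $\P_x$-a.s.\ for the occupation integral — this is where Lemma \ref{lem:3.1.4}(iii) is essential, as $L\varphi$ need only be in $L^q_{loc}$. Everything else (Gronwall, Fubini, Fatou) is routine once these measurability and finiteness points are nailed down.
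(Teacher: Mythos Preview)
Your proof is correct and follows the same Lyapunov localization strategy as the paper, with only cosmetic differences. The paper applies the time-dependent It\^o formula (via the SDE representation of Theorem \ref{theo:3.1.4}(ii)) to $e^{-Mt}\varphi(X_{t\wedge\sigma_n})$ to obtain a supermartingale directly, then bounds $\P_x(\sigma_n\le t)\le e^{Mt}\varphi(x)/\inf_{\partial B_n}\varphi$ and lets $n\to\infty$; you instead route through the martingale problem (Proposition \ref{prop:3.1.6}) with $C_0^2$-cutoffs and apply Gronwall afterward, extracting non-explosion via Fatou. Both paths yield the same key estimate $\E_x[\varphi(X_{t\wedge\sigma_n})]\le e^{Mt}\varphi(x)$, and your non-explosion argument (Fatou forces $\P_x(t\ge\zeta)=0$ since $\varphi(X_{\sigma_n})\ge\inf_{\partial B_n}\varphi\to\infty$) is just a repackaging of the paper's direct bound. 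The moment inequality via Fatou is identical in both.
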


\begin{proof}
Let $x=(x_1, \ldots, x_d) \in \R^d$ and take $k_0 \in \N$ such that $x \in B_{k_0}$. Let  $X^{i,n}_t:=X^{i}_{t \wedge \sigma_n}$,\, $n \in \N$ with $n \geq k_0$, $i \in \{1, \ldots, d\}$, $t \geq 0$, where $\sigma_n, n\in \N$, is as in Definition \ref{stopping times}. Then by Theorem \ref{theo:3.1.4}(ii), $(X_t^{i, n})_{t \geq 0}$ is a continuous $\P_x$-semimartingale and $\P_x$-a.s.
$$
X^{i,n}_t= x_i+ \sum_{j=1}^l \int_0^{t \wedge \sigma_n} \sigma_{ij} (X_s) \, dW_s^j +   \int^{t \wedge \sigma_n}_{0}   g_i(X_s) \, ds, \;\; \quad 0 \leq t< \infty.
$$
For $j \in \{1, \ldots, d \}$, it follows that $\P_x$-a.s.
$$
\langle X^{i,n}, X^{j,n} \rangle_t = \int_0^{t \wedge \sigma_n} a_{ij} (X_s) ds, \quad 0 \leq t<\infty.
$$
Thus, by the time-dependent It\^{o} formula, $\P_x$-a.s.
\begin{eqnarray*}
&&e^{-Mt} \varphi(X_{t\wedge \sigma_n})  = \varphi(x) + \sum_{i=1}^{d}\sum_{j=1}^l   \int_0^{t \wedge \sigma_n} e^{-Ms} \partial_i  \varphi(X_s) \cdot \sigma_{ij} (X_s) \, dW_s^j   \\
&&+ \int_0^{t \wedge \sigma_n}  -Me^{-Ms}  \varphi(X_s) ds + \frac12 \sum_{i,j=1}^{d} \int_0^{t \wedge \sigma_n} e^{-Ms} \partial_i \partial_j  \varphi \cdot a_{ij}(X_s)ds   \\
&&  + \sum_{i=1}^{d} \int_0^{t \wedge \sigma_n} e^{-Ms}\partial_i  \varphi \cdot g_i(X_s) ds \\
&&=  \varphi(x)  + \int_0^{t \wedge \sigma_n} e^{-Ms}\nabla  \varphi \cdot \sigma(X_s) dW_s  + \int_0^{t \wedge \sigma_n} e^{-Ms}(L-M) \varphi(X_s) ds.
\end{eqnarray*}
Consequently, $\left(e^{-Mt} \varphi (X_{t\wedge \sigma_n})  \right)_{t \geq 0}$ is a positive continuous $\P_x$-supermartingale. Since $\M$ has continuous sample paths on the one-point-compactification $\R^d_{\Delta}$ of $\R^d$, it follows that
\begin{eqnarray*}
 \varphi(x) \geq \E_x \left[e^{-Mt} \varphi(X_{t\wedge \sigma_n})   \right] \geq \E_x\left[ e^{-Mt}  \varphi(X_{t \wedge \sigma_n})1_{\{\sigma_{n}\le t\}}\right] \ge e^{-M t} \inf_{\partial B_n}  \varphi  \cdot  \P_x(\sigma_{n}\le t). 
\end{eqnarray*}
Therefore, using Lemma \ref{lem:3.1.4}(i)
$$
\P_x(\zeta\le t)=\lim_{n\to \infty}\P_x(\sigma_{n}\le t)\leq \lim_{n \rightarrow \infty} \frac{e^{Mt} \varphi(x) }{\inf_{\partial B_n}  \varphi} =0.
$$
Letting $t \rightarrow \infty$, $\P_x(\zeta<\infty)=0$, hence $\M$ is non-explosive. Applying Lemma \ref{lem:3.1.4}(i), Fatou's lemma and the supermartingale property, for any $t \geq 0$
$$
\E_x \left[e^{-Mt}    \varphi(X_t)\right] =  \E_x \left[\liminf_{n \rightarrow \infty} e^{-Mt}    \varphi(X_{t\wedge \sigma_n})\right]  \leq \liminf_{n \rightarrow \infty}\E_x \left[ e^{-Mt}    \varphi(X_{t\wedge \sigma_n})\right]  \leq   \varphi(x),
$$
as desired.
\end{proof}
In the next lemma, we present a condition for {\bf (L)}, which is apparently weaker than {\bf (L)}.
\begin{lemma} \label{lem3.2.6}
Assume {\bf (a)} of 
Section \ref{subsec:2.2.1} and {\bf (b)} of Section \ref{subsec:3.1.1} hold.
Let $N_0 \in \N$. Let $g\in C^2(\R^d \setminus \overline{B}_{N_0}) \cap C(\R^d)$, $g \geq 0$, with
\begin{equation} \label{eq:3.19}
\lim_{r \rightarrow \infty} (\inf_{\partial B_r} g) = \infty.
\end{equation}
Assume that there exists a constant $M>0$ such that
$$
Lg\le M g \quad \text{a.e. on }\, \R^d\setminus \overline{B}_{N_0}.
$$ 
Then there exist a constant $K>0$, $N \in \N$ with $N \geq N_0+3$ and $\varphi \in C^2(\R^d)$ with $\varphi \geq K$, $\varphi(x)=g(x)+K$ for all $x \in \R^d \setminus B_{N}$ such that
$$
L \varphi \le M \varphi \quad \text{a.e. on } \, \R^d.
$$ 
In particular, $\M$ of Theorem \ref{th: 3.1.2} (see also Theorem \ref{theo:3.1.4}) is non-explosive by Proposition \ref{prop:3.2.8}.
\end{lemma}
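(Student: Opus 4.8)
The plan is to produce a genuine Lyapunov function in the sense of \textbf{(L)} by composing $g$ with a carefully chosen real function, rather than by a multiplicative cutoff. Concretely, I would look for $\varphi$ of the form $\varphi=\theta(g)$, where $\theta\in C^2(\R)$ is non-decreasing, equals a positive constant $K$ on a left half-line $(-\infty,a_1]$, and equals $y\mapsto y+K$ on a right half-line $[a_2,\infty)$, for suitable $0<a_1<a_2$. This ansatz addresses both difficulties of the statement at once. First, $g$ is only $C^2$ off $\overline{B}_{N_0}$; but if one takes $a_1>\max_{\overline{B}_{N_0}}g$, then $\{g\le a_1\}\supset\overline{B}_{N_0}$ and on this set $\varphi\equiv K$ is constant, hence smooth, while on $\R^d\setminus\overline{B}_{N_0}$ both $g$ and $\theta$ are $C^2$, so $\varphi\in C^2(\R^d)$. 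Second, and more importantly, for $\varphi=\theta(g)$ one has the identity $L\varphi=\theta'(g)\,Lg+\frac12\theta''(g)\langle A\nabla g,\nabla g\rangle$, so the drift coefficient $\mathbf{G}$ — which is only in $L^q_{loc}$ — enters $L\varphi$ solely through $Lg$, which the hypothesis $Lg\le Mg$ controls.

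To carry this out I would first fix all constants \emph{not} depending on $K$. Put $a_1:=\max_{\overline{B}_{N_0}}g+1$ and $a_2:=a_1+1$. Since $\inf_{\partial B_r}g\to\infty$, the sublevel set $\{g\le a_2\}$ is compact and, choosing $N\in\N$ large enough (and $\ge N_0+3$), one has $g>a_2$ on $\R^d\setminus B_N$; fix such an $N$. Also $\{a_1\le g\le a_2\}$ is a compact subset of $\R^d\setminus\overline{B}_{N_0}$, on a neighbourhood of which $g$ is $C^2$, so $C_1:=\max_{\{a_1\le g\le a_2\}}\langle A\nabla g,\nabla g\rangle<\infty$ by continuity of $A$ and of $\nabla g$. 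Next pick once and for all a non-decreasing $\vartheta\in C^2(\R)$ with $\vartheta\equiv 0$ on $(-\infty,a_1]$, $\vartheta(y)=y$ on $[a_2,\infty)$, matched to order $C^2$ at the endpoints; its increase over $[a_1,a_2]$ is the fixed number $a_2$, so $\|\vartheta'\|_\infty$ and $\|\vartheta''\|_\infty$ are finite constants depending only on $a_1,a_2$. Set $C:=\|\vartheta'\|_\infty\,M a_2+\frac12\|\vartheta''\|_\infty C_1$, choose $K:=\max(1,C/M)>0$, and define $\varphi:=\vartheta(g)+K$, i.e. $\theta(g)$ with $\theta:=\vartheta+K$.

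It then remains to verify the asserted properties. Clearly $\varphi\in C^2(\R^d)$, $\varphi\ge K$ (since $\vartheta\ge0$ and $g\ge 0$), $\varphi=g+K$ on $\R^d\setminus B_N$ (since $g>a_2$ there), and $\inf_{\partial B_r}\varphi=\inf_{\partial B_r}g+K\to\infty$. For the inequality $L\varphi\le M\varphi$ a.e. I would split $\R^d$ into $\{g\le a_1\}$, $\{g\ge a_2\}$, and the transition set $\{a_1<g<a_2\}$: on the first, $\varphi\equiv K$, so $L\varphi=0\le MK=M\varphi$; on the second, which avoids $\overline{B}_{N_0}$, $\varphi=g+K$, so $L\varphi=Lg\le Mg\le M(g+K)=M\varphi$ by the hypothesis and $K\ge 0$; on the transition set — compact and contained in the $C^2$-region of $g$ — one uses $L\varphi=\vartheta'(g)Lg+\frac12\vartheta''(g)\langle A\nabla g,\nabla g\rangle$ together with $0\le\vartheta'(g)\le\|\vartheta'\|_\infty$, $Lg\le Mg<Ma_2$, the ellipticity bound $0\le\langle A\nabla g,\nabla g\rangle\le C_1$, to obtain $L\varphi\le C\le MK\le M\varphi$. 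Hence $\varphi$ satisfies \textbf{(L)} with the same constant $M$, and non-explosion of $\M$ follows from Proposition~\ref{prop:3.2.8}. The one genuinely delicate point, which dictates the whole construction, is precisely this: a naive multiplicative cutoff $\varphi=\chi g$ would produce the term $g\langle\mathbf{G},\nabla\chi\rangle$, which is merely $L^q_{loc}$ and cannot be absorbed into a constant multiple of $\varphi$, so one must instead arrange that $\mathbf{G}$ appears only inside the already-dominated quantity $Lg$.
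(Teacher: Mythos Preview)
Your proof is correct and follows essentially the same route as the paper. Both arguments compose $g$ with a non-decreasing $C^2$ real function that is constant on a left half-line and equals the identity on a right half-line, then add a constant $K$; both split $\R^d$ into the three regions $\{g$ small$\}$, $\{g$ large$\}$, and the transition band, and both exploit the chain rule identity $L(\theta\circ g)=\theta'(g)\,Lg+\tfrac12\theta''(g)\langle A\nabla g,\nabla g\rangle$ so that $\mathbf{G}$ appears only through the controlled term $Lg$. The only difference is that the paper performs the composition in two stages (first $\psi=\phi_1\circ g$ to obtain a $C^2$ function equal to $g$ outside a ball, then $\varphi=\phi_2\circ\psi+K$), whereas you collapse this into a single composition $\varphi=\vartheta(g)+K$; your version is slightly more economical but otherwise identical in substance.
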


\begin{proof}
We first show the following claim.\\
{\bf Claim:} If $g\in C^2(\R^d \setminus \overline{B}_{N_0}) \cap C(\R^d)$, $g \geq 0$ satisfies \eqref{eq:3.19}, then there exist $N_1 \in \N$ with $N_1 \geq N_0+2$ and $\psi \in C^2(\R^d)$ with $\psi \geq 0$ such that $\psi(x) = g(x)$ for all $x \in \R^d \setminus B_{N_1}$. \\
For the proof of the claim, let $\phi_1 \in C^2(\R)$ such that $\phi_1(t) \ge 0$ for all $t \in \R$ and 
$$
\phi_1(t)=\begin{cases} \ \sup_{B_{N_0+1}}g&  \text{ if } t\le \sup_{B_{N_0 +1}}g,\\
\ t\quad& \text{ if } t\ge 1+\sup_{B_{N_0 +1}}g.
\end{cases}
$$
Define $\psi:= \phi_1 \circ g$. Then $\psi \geq 0$, $\psi \equiv \sup_{B_{N_0+1}}g$ on $\overline{B}_{N_0+1}$ and $\psi \in C^2(\R^d \setminus \overline{B}_{N_0})$, hence $\psi \in C^2(\R^d)$. Let $A_1:=\{ x \in \R^d: |g(x)| \leq   1+\sup_{B_{N_0+1}}g \}$. Then $A_1$ is closed and bounded since $g \in C(\R^d)$ and \eqref{eq:3.19} holds. Thus, there exists $N_1 \in \N$ with $N_1 \geq N_0+2$ and  $A_1 \subset B_{N_1}$. In particular, $\psi(x)=g(x)$ for all $x \in \R^d \setminus B_{N_1}$, hence the claim is shown.\\
For the constructed $\psi \in C^2(\R^d)$ and $N_1 \in \N$ as in the claim above, it holds that
$$
L \psi \leq M \psi, \quad \text{ a.e. on $\R^d \setminus B_{N_1}$.}
$$
Let $\phi_2 \in C^2(\R)$ such that $\phi_2(t), \phi_2'(t) \geq 0$ for all $t \in \R$ and
$$
\phi_2(t)=\begin{cases} \ \sup_{B_{N_1}} \psi &  \text{ if } t\le \sup_{B_{N_1}} \psi,\\
\ t\quad& \text{ if } t\ge 1+\sup_{B_{N_1}} \psi.
\end{cases}
$$
Let $A_2:=\{ x: |\psi(x)| \leq  1+\sup_{B_{N_1}}\psi \}$. As above, there exists $N \in \N$ with $N \geq N_1+1$ and $A_2 \subset B_{N}$. 
Define 
$$
K:= \sup_{B_{N}} \left(\psi \cdot \phi_2'(\psi)\right) +\frac{\nu_{B_{N}}}{2M}\sup_{B_{N}} \left(|\phi_2''(\psi)| \| \nabla \psi\|^2\right),
$$
where $\nu_{B_{N}}$ is as in \eqref{eq:2.1.2} and $\varphi:= \phi_2 \circ \psi+K$.  Then $\varphi \in C^2(\R^d)$ with $\varphi \geq 0$. In particular, $\varphi(x)=\psi(x)+K=g(x)+K$ for all $x \in \R^d \setminus B_{N}$. Moreover,
$$
L\varphi=\begin{cases} \ 0 \leq M\varphi&  \text{ a.e. on } B_{N_1},\\
\ \phi'_2(\psi) L\psi+\frac12 \phi''_2(\psi) \langle A \nabla \psi, \nabla \psi \rangle \\
\quad \; \leq M \phi_2'(\psi) \psi + \frac12 \nu_{B_{N}} |\phi_2''(\psi)| \| \nabla \psi\|^2 \leq MK \leq M\varphi  \quad& \text{ a.e. on }B_{N} \setminus B_{N_1}, \\
\ L \psi \leq M \psi \leq M \varphi.  \quad& \text{ a.e. on  } \R^d \setminus B_{N}.
\end{cases}
$$
Finally, since  $\displaystyle \lim_{r \rightarrow \infty} (\inf_{\partial B_r} \varphi) =\infty$, $\M$ is non-explosive by Proposition \ref{prop:3.2.8}.
\end{proof}
\noindent
In view of Corollary \ref{cor:3.2.1}, the following result slightly improves the condition of Corollary \ref{cor:2.1.4.1}(iii).
\begin{corollary} \label{cor:3.2.2}
Assume {\bf (a)} of 
Section \ref{subsec:2.2.1} and {\bf (b)} of Section \ref{subsec:3.1.1} hold.
Assume that there exist a constant  $M> 0$ and $N_0\in \N$, such that 
\begin{eqnarray}\label{eq:3.20}
-\frac{\langle A(x)x, x \rangle}{ \left \| x \right \|^2 }+ \frac12\mathrm{trace}A(x)+ \big \langle \mathbf{G}(x), x \big \rangle \leq M\left  \| x \right \|^2 \big( {\rm ln}\left \| x \right \| +1      \big)
\end{eqnarray}
for a.e. $x\in \R^d\setminus \overline{B}_{N_0}$. Then $\M$ is non-explosive (Definition \ref{non-explosive}).
\end{corollary}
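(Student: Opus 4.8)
The plan is to deduce Corollary \ref{cor:3.2.2} from Lemma \ref{lem3.2.6} by exhibiting a concrete Lyapunov-type function $g$ on $\R^d \setminus \overline{B}_{N_0}$ of logarithmic growth and checking that the growth bound \eqref{eq:3.20} forces $Lg \le M' g$ outside a large ball for a suitable constant $M'$. First I would set $g(x) := \ln(\|x\|^2+1) + r$ (or equivalently work with $\ln\|x\|$ away from the origin), choosing $r$ large enough that $g \ge 0$; this is the same choice already used in the proof of Proposition \ref{prop:2.1.10}(iii) and in Corollary \ref{cor:2.1.4.1}(iii). Clearly $g \in C^2(\R^d)$, $g \ge 0$, and $\inf_{\partial B_\rho} g \to \infty$ as $\rho \to \infty$, so condition \eqref{eq:3.19} holds with $N_0$ as given.

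Next I would compute $Lg$ explicitly. For $g(x) = \ln(\|x\|^2+1)+r$ one has $\partial_i g = \frac{2x_i}{\|x\|^2+1}$ and $\partial_{ij} g = \frac{2\delta_{ij}}{\|x\|^2+1} - \frac{4x_i x_j}{(\|x\|^2+1)^2}$, hence
\[
Lg(x) = \frac{\mathrm{trace}\,A(x)}{\|x\|^2+1} - \frac{2\langle A(x)x,x\rangle}{(\|x\|^2+1)^2} + \frac{2\langle \mathbf{G}(x),x\rangle}{\|x\|^2+1}.
\]
Multiplying through by $\tfrac12(\|x\|^2+1)$, and using that on $\R^d \setminus \overline{B}_{N_0}$ the quantities $\|x\|^2$ and $\|x\|^2+1$ are comparable (so $\frac{\|x\|^2+1}{\|x\|^2}$ and its reciprocal are bounded by a fixed constant, and similarly $\ln\|x\|$ and $\ln(\|x\|^2+1)+1$ are comparable), the right-hand side of \eqref{eq:3.20} is seen to dominate, up to a universal multiplicative constant, the expression $-\frac{\langle A(x)x,x\rangle}{\|x\|^2+1} + \frac12\mathrm{trace}\,A(x) + \langle \mathbf{G}(x),x\rangle$. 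Therefore \eqref{eq:3.20} yields
\[
\tfrac12(\|x\|^2+1)\,Lg(x) \le c\,M\,\|x\|^2\big(\ln\|x\|+1\big) \le c'\,M\,(\|x\|^2+1)\big(\ln(\|x\|^2+1)+1\big)
\]
for a.e. $x \in \R^d \setminus \overline{B}_{N_0}$, with $c,c'$ depending only on $d$ and $N_0$; dividing by $\tfrac12(\|x\|^2+1)$ and recalling $g(x) = \ln(\|x\|^2+1)+r \ge c''\big(\ln(\|x\|^2+1)+1\big)$ on that region gives $Lg \le M' g$ a.e. on $\R^d \setminus \overline{B}_{N_0}$ for a suitable $M' > 0$. (One must also absorb the negative term $-\frac{2\langle A(x)x,x\rangle}{(\|x\|^2+1)^2}$, which only helps since $A$ is nonnegative definite, and the possibly-unfavourable sign issues are handled by the comparison with the left-hand side of \eqref{eq:3.20}.)

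Having verified the hypotheses of Lemma \ref{lem3.2.6} for this $g$ with the constant $M'$ in place of $M$, the lemma produces a genuine Lyapunov function $\varphi \in C^2(\R^d)$ with $L\varphi \le M'\varphi$ a.e. on all of $\R^d$ and $\inf_{\partial B_\rho}\varphi \to \infty$, so that $\M$ is non-explosive by Proposition \ref{prop:3.2.8}, which is the assertion. The only mildly delicate point — the main obstacle, such as it is — is the bookkeeping in the second step: one has to confirm that the extra terms coming from differentiating $\ln(\|x\|^2+1)$ rather than $\ln\|x\|$ directly (the $\delta_{ij}$-versus-$x_ix_j$ split in the Hessian, and the $+1$ shifts) do not spoil the comparison, and that constants may be swallowed into $M'$ uniformly outside $\overline{B}_{N_0}$; this is entirely elementary once the formulas above are written down, and no new analytic input beyond Lemma \ref{lem3.2.6} and Proposition \ref{prop:3.2.8} is needed.
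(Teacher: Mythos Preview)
Your overall strategy---choose a logarithmic Lyapunov function and invoke Lemma~\ref{lem3.2.6}---is exactly the paper's, but your specific choice $g(x)=\ln(\|x\|^2+1)+r$ introduces a genuine gap that the ``comparability'' argument does not close. After multiplying by $\tfrac12(\|x\|^2+1)$ you obtain
\[
\tfrac12(\|x\|^2+1)\,Lg(x)=-\frac{\langle A(x)x,x\rangle}{\|x\|^2+1}+\tfrac12\mathrm{trace}\,A(x)+\langle\mathbf{G}(x),x\rangle,
\]
which differs from the left side of \eqref{eq:3.20} by the \emph{positive} term $\dfrac{\langle A(x)x,x\rangle}{\|x\|^2(\|x\|^2+1)}$. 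Comparability of $\|x\|^2$ and $\|x\|^2+1$ tells you this term is of lower order than $\langle A x,x\rangle/\|x\|^2$, but it does \emph{not} tell you it is bounded by a multiple of $\|x\|^2(\ln\|x\|+1)$: under {\bf (a)} there is no growth restriction on $A$. For a concrete failure, take $d=2$, $A(x)=e^{\|x\|}I$, $\mathbf{G}=0$; then the left side of \eqref{eq:3.20} vanishes identically, yet your $Lg(x)=\dfrac{2e^{\|x\|}}{(\|x\|^2+1)^2}$, which is not bounded by any $M'g(x)$.

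The paper avoids this by taking $g(x)=\ln(\|x\|^2\vee N_0^2)+2$, i.e.\ $g(x)=2\ln\|x\|+2$ on $\R^d\setminus\overline{B}_{N_0}$. Then
\[
Lg(x)=-\frac{2\langle A(x)x,x\rangle}{\|x\|^4}+\frac{\mathrm{trace}\,A(x)}{\|x\|^2}+\frac{2\langle\mathbf{G}(x),x\rangle}{\|x\|^2},
\]
and \eqref{eq:3.20} is \emph{literally} the inequality $Lg\le Mg$ after multiplying by $\|x\|^2/2$---no comparison, no extra constant, no residual term. Your parenthetical ``or equivalently work with $\ln\|x\|$'' is therefore not a stylistic alternative but the actual fix; the two choices are not equivalent here, and the ``entirely elementary'' bookkeeping you defer at the end is precisely where the $+1$ version breaks.
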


\begin{proof}
Define $g(x):= \ln(\|x\|^2 \vee N_0^2)+2$. Then $g \in C^{\infty}(\R^d \setminus \overline{B}_{N_0}) \cap C(\R^d)$ and
$$
Lg = -2 \frac{\langle A(x)x,x \rangle}{\|x\|^4}  + \frac{\text{trace}(A(x))}{\|x\|^2} + \frac{2 \langle  \mathbf{G}(x), x \rangle}{\|x\|^2 } \;\; \text{ on $\R^d \setminus \overline{B}_{N_0}$. }
$$
Since \eqref{eq:3.20} is equivalent to the fact that $Lg \leq Mg$ for a.e. on $\R^d \setminus \overline{B}_{N_0}$, the assertion follows from Lemma \ref{lem3.2.6}.
\end{proof}
\noindent
The following corollary allows (in the special case $d=2$) the diffusion coefficient to have an arbitrary growth in the 
case where the difference between the minimal and the maximal eigenvalue of the diffusion coefficient has 
quadratic-times-logarithmic growth.

\begin{corollary} 
\label{cor:3.1.3}
Assume {\bf (a)} of 
Section \ref{subsec:2.2.1} and {\bf (b)} of Section \ref{subsec:3.1.1} hold.
Let $d=2$, $N_0 \in \N$, $\Psi_1, \Psi_2 \in C(\R^2)$ with $\Psi_1(x), \Psi_2(x)>0$ 
for all $x \in \R^2$, and $Q=(q_{ij})_{1 \leq i,j \leq 2}$ be a matrix of measurable functions such that  
$Q^T(x)Q(x)=id$ for all $x\in \R^2\setminus \overline{B}_{N_0}$. 
Suppose that, in addition to the assumptions {\bf (a)} and {\bf (b)}, the diffusion coefficient has the form
$$
A(x) = Q^T(x) \begin{pmatrix}
\Psi_1(x)  & 0 \\ 
0 & \Psi_2(x)
\end{pmatrix}Q(x) \; \;\text{ for all $x\in \R^2\setminus \overline{B}_{N_0}$,}
$$
and that there exists a constant $M > 0$, such that
\begin{equation} 
\label{eq:3.2.1.20}
\frac{|\Psi_1(x)-\Psi_2(x)|}{2}  + \langle \mathbf{G}(x),x  \rangle \leq M  \left  \| x \right \|^2 \big( {\rm ln}\left \| x \right \| +1 \big)
\end{equation}
for a.e. $x\in \R^2\setminus \overline{B}_{N_0}$. Then $\M$ is non-explosive (Definition \ref{non-explosive}).
\end{corollary}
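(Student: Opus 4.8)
The plan is to reduce the statement to Corollary \ref{cor:3.2.2}: I will check that the stated hypotheses imply condition \eqref{eq:3.20} with the same constant $M$ and the same index $N_0$, and then non-explosion of $\M$ follows at once. Thus the whole matter comes down to a pointwise upper bound, valid for a.e.\ $x\in\R^2\setminus\overline{B}_{N_0}$, for the quantity
\[
-\frac{\langle A(x)x,x\rangle}{\|x\|^2}+\frac12\,\mathrm{trace}\,A(x)+\langle \mathbf{G}(x),x\rangle .
\]

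First I would exploit that $Q(x)^TQ(x)=\mathrm{id}$ says $Q(x)$ is orthogonal, so on $\R^2\setminus\overline{B}_{N_0}$ the given representation $A(x)=Q(x)^T\,\mathrm{diag}(\Psi_1(x),\Psi_2(x))\,Q(x)$ is an orthogonal diagonalization; in particular $A(x)$ is symmetric (consistent with assumption \textbf{(a)}), $\mathrm{trace}\,A(x)=\Psi_1(x)+\Psi_2(x)$, and, writing $y:=Q(x)x$ so that $\|y\|=\|x\|$,
\[
\langle A(x)x,x\rangle=\langle \mathrm{diag}(\Psi_1(x),\Psi_2(x))\,y,y\rangle=\Psi_1(x)y_1^2+\Psi_2(x)y_2^2 .
\]
Putting $t:=y_1^2/\|x\|^2\in[0,1]$ this gives $\langle A(x)x,x\rangle/\|x\|^2=t\Psi_1(x)+(1-t)\Psi_2(x)$, a convex combination of the two eigenvalues, and hence
\[
-\frac{\langle A(x)x,x\rangle}{\|x\|^2}+\frac12\,\mathrm{trace}\,A(x)
=\Big(\tfrac12-t\Big)\Psi_1(x)+\Big(\tfrac12-(1-t)\Big)\Psi_2(x)
=\Big(\tfrac12-t\Big)\big(\Psi_1(x)-\Psi_2(x)\big),
\]
where two-dimensionality enters exactly here: since $t+(1-t)=1$, the two coefficients $\tfrac12-t$ and $\tfrac12-(1-t)$ sum to zero, which is what allows $\Psi_1(x)-\Psi_2(x)$ to be factored out. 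As $|\tfrac12-t|\le\tfrac12$, the left-hand side is bounded above by $\tfrac12\,|\Psi_1(x)-\Psi_2(x)|$.

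Adding $\langle \mathbf{G}(x),x\rangle$ and invoking hypothesis \eqref{eq:3.2.1.20}, I then get for a.e.\ $x\in\R^2\setminus\overline{B}_{N_0}$
\[
-\frac{\langle A(x)x,x\rangle}{\|x\|^2}+\frac12\,\mathrm{trace}\,A(x)+\langle \mathbf{G}(x),x\rangle
\le\frac{|\Psi_1(x)-\Psi_2(x)|}{2}+\langle \mathbf{G}(x),x\rangle
\le M\|x\|^2\big(\ln\|x\|+1\big),
\]
which is precisely condition \eqref{eq:3.20}, so Corollary \ref{cor:3.2.2} applies and $\M$ is non-explosive. There is no serious obstacle in this argument; the only point worth emphasizing is the elementary linear-algebra observation that in dimension two the "ellipticity defect'' $-\langle A x,x\rangle/\|x\|^2+\tfrac12\,\mathrm{trace}\,A$ is controlled purely by the spectral gap $|\Psi_1-\Psi_2|$, regardless of how large $\Psi_1,\Psi_2$ themselves are — this is exactly what permits arbitrary growth of the diffusion coefficient as long as the difference of its eigenvalues has at most quadratic-times-logarithmic growth.
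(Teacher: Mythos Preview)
Your proof is correct and follows essentially the same approach as the paper: both reduce to Corollary \ref{cor:3.2.2} by showing that $-\langle A(x)x,x\rangle/\|x\|^2+\tfrac12\,\mathrm{trace}\,A(x)\le \tfrac12|\Psi_1(x)-\Psi_2(x)|$ via the orthogonal diagonalization. The paper argues this bound using $\langle A(x)x,x\rangle\ge(\Psi_1(x)\wedge\Psi_2(x))\|x\|^2$ directly, while you parametrize with $t=y_1^2/\|x\|^2$ and factor out $\Psi_1-\Psi_2$; these are minor stylistic variants of the same computation.
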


\begin{proof}
Let $x \in \R^2 \setminus \overline{B}_{N_0}$ and $y=(y_1,y_2):=Q(x)x$. Then 
$$
\|y\|^2=\langle Q(x)x, Q(x)x \rangle = \langle Q^T(x)Q(x)x,x\rangle=\|x\|^2
$$
and
$$
\langle A(x)x,x \rangle= \left\langle \begin{pmatrix}
\Psi_1(x)  & 0 \\ 
0 & \Psi_2(x)
\end{pmatrix} y, y \right \rangle  = \Psi_1(x) y^2_1+\Psi_2(x) y^2_2 \geq (\Psi_1(x) \wedge \Psi_2(x)) \|x\|^2.
$$  
Thus, 
$$
-\frac{\langle A(x)x, x \rangle}{ \left \| x \right \|^2 }+ \frac12\mathrm{trace}A(x) \leq -(\Psi_1(x) \wedge \Psi_2(x))+\frac{\Psi_1(x)+\Psi_2(x)}{2}=\frac{|\Psi_1(x)-\Psi_2(x)|}{2}.
$$
Now Corollary \ref{cor:3.2.2} implies the assertion.
\end{proof}

\begin{proposition} \label{theo:3.2.8}
Assume {\bf (a)} of 
Section \ref{subsec:2.2.1} and {\bf (b)} of Section \ref{subsec:3.1.1} hold.
Let $\sigma= (\sigma_{ij})_{1 \leq i \leq d,1\leq j \leq l}$ be as in Theorem \ref{theo:3.1.4}(ii). Let $h_1 \in L^p(\R^d, \mu)$, $h_2 \in L^q(\R^d, \mu)$ with $q=\frac{pd}{p+d}$ and $p\in (d,\infty)$, and $M>0$ be a constant.
\begin{itemize}
\item[(i)] If for a.e. $x \in \R^d$
\begin{eqnarray*} \label{lineargrowth-2}
\max_{1\leq i \leq d, 1 \leq j \leq l}|\sigma_{ij}(x)| \leq |h_1(x)|+ M(\sqrt{\|x\|}+1),
\end{eqnarray*}
and
$$
\max_{1 \leq i \leq d}|g_i(x)| \leq |h_2(x)| + M(\|x\|+1),
$$
then $\M$ is non-explosive   (Definition \ref{non-explosive}) and moreover, for any $T>0$ and any open ball $B$, there exist constants $D$, $E>0$ such that
$$
\sup_{x \in \overline{B}} \E_{x}\Big [\sup_{s \leq t} \|X_s \|\Big] \leq  D\cdot e^{E \cdot t}, \quad \forall t \in [0, T].
$$
\item[(ii)] If for a.e. $x \in \R^d$
\begin{eqnarray*} 
\max_{1\leq i \leq d, 1 \leq j \leq l} |\sigma_{ij}(x)|+\max_{1 \leq i \leq d}|g_i(x)| \leq |h_1(x)| + M(\|x\|+1),
\end{eqnarray*}
then $\M$ is non-explosive and moreover, for any $T>0$ and any open ball $B$, there exist constants $D, E>0$ such that
$$
\sup_{x \in \overline{B}} \E_{x}\Big[\sup_{s \leq t} \|X_s \|^2\Big] \leq D\cdot e^{E\cdot t}, \quad \forall t \in [0, T].
$$
\end{itemize}
\end{proposition}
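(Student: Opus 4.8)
The plan is to localize using the stopping times $\sigma_n$ from Definition~\ref{stopping times}, apply It\^o's formula to $\|X_{t\wedge\sigma_n}\|^2$ (resp.\ to a suitable power of $\|X_{t\wedge\sigma_n}\|^2+1$), estimate the resulting drift and martingale terms using the growth hypotheses together with the Krylov-type estimate of Theorem~\ref{theo:3.3}, and finally close the argument with a Burkholder--Davis--Gundy inequality and Gronwall's lemma. The non-explosion then follows exactly as in the proof of Proposition~\ref{prop:3.2.8}, or alternatively as a by-product of the moment bound once it is shown to hold uniformly on $[0,T]$.

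First I would treat part (ii), which is the cleaner case. Fix $x\in\overline B$, $T>0$, and let $n$ be large enough that $x\in B_n$. By Theorem~\ref{theo:3.1.4}(ii), on $[0,\sigma_n]$ the process satisfies $dX_t^i=\sum_{k}\sigma_{ik}(X_t)\,dW_{n,t}^k+g_i(X_t)\,dt$ with $\langle X^{i,n},X^{j,n}\rangle_t=\int_0^{t\wedge\sigma_n}a_{ij}(X_s)\,ds$. Applying It\^o's formula to $\phi(X_{t\wedge\sigma_n})$ with $\phi(y)=\|y\|^2$ gives
\begin{equation*}
\|X_{t\wedge\sigma_n}\|^2=\|x\|^2+\int_0^{t\wedge\sigma_n}\big(\mathrm{trace}\,A(X_s)+2\langle\mathbf{G}(X_s),X_s\rangle\big)\,ds+2\int_0^{t\wedge\sigma_n}\langle X_s,\sigma(X_s)\,dW_{n,s}\rangle.
\end{equation*}
Using the hypothesis $\max_{i,j}|\sigma_{ij}(x)|+\max_i|g_i(x)|\le|h_1(x)|+M(\|x\|+1)$ one bounds $|\mathrm{trace}\,A(y)|+|\langle\mathbf{G}(y),y\rangle|\le C(h_1(y)^2+1+\|y\|^2)$ pointwise, and the quadratic variation of the stochastic integral is $\le C\int_0^{t\wedge\sigma_n}(h_1(X_s)^2+1)(\|X_s\|^2+1)\,ds$. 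Taking $\sup_{s\le t}$, then expectations, and applying Doob's or the BDG inequality to the martingale term, one arrives at
\begin{equation*}
\E_x\Big[\sup_{s\le t}\|X_{s\wedge\sigma_n}\|^2\Big]\le \|x\|^2+C\,\E_x\!\int_0^{t}\!\big(h_1(X_{s\wedge\sigma_n})^2+1\big)\Big(1+\sup_{r\le s}\|X_{r\wedge\sigma_n}\|^2\Big)ds.
\end{equation*}
The point where the Krylov estimate enters is in controlling the factor $h_1(X_s)^2$: since $h_1\in L^p(\R^d,\mu)$ we have $h_1^2\in L^{p/2}(\R^d,\mu)$ with $p/2>d/2$, which need not be $\ge q$, so one must be slightly careful — one should rather carry the factor $(h_1(X_s)+M(\|X_s\|+1))^2$ and split; the genuinely singular part involves $h_1$ paired with a \emph{bounded} power of $\|X_s\|$ only after a further stopping at exit from $B_m$, where $h_1\mathbf{1}_{B_m}\in L^q$. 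The honest route is therefore a double localization: stop also at $\tau_m:=\inf\{t:\|X_t\|\ge m\}$, on $[0,\tau_m]$ the factor $\sup_{r\le s}\|X_r\|^2\le m^2$ is bounded, so Theorem~\ref{theo:3.3}(i) gives $\E_x\int_0^t h_1(X_{s\wedge\tau_m})^2\,ds\le e^tc_{B,p/2}\|h_1^2\|_{L^{p/2}}$, uniformly in $m$ and $x\in\overline B$; combined with a stochastic Gronwall inequality (e.g.\ the Scheutzow--von~Renesse form, or Lemma-type bootstrapping on $E_m(t):=\E_x[\sup_{s\le t}\|X_{s\wedge\sigma_n\wedge\tau_m}\|^2]$) this yields $E_m(t)\le D\,e^{Et}$ with $D,E$ independent of $m,n$ and of $x\in\overline B$. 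Letting $m,n\to\infty$ with Fatou and Lemma~\ref{lem:3.1.4}(i) gives both non-explosion (the moment bound forces $\P_x(\sigma_n\le T)\to0$) and the claimed estimate.

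For part (i) the same scheme applies but with $\phi(y)=\|y\|$ (or better $\phi(y)=(\|y\|^2+1)^{1/2}$ to keep $\phi$ smooth), so that the It\^o correction term involves $\tfrac12\nabla^2\phi$ contracted with $A$, which is $O(\mathrm{trace}\,A(y)/\|y\|)$; here the diffusion hypothesis $\max_{i,j}|\sigma_{ij}(y)|\le|h_1(y)|+M(\sqrt{\|y\|}+1)$ makes $\mathrm{trace}\,A(y)\le C(h_1(y)^2+1+\|y\|)$, so $\mathrm{trace}\,A(y)/\|y\|\le C(h_1(y)^2+1+\|y\|)/\|y\|$, whose singular part is again an $L^{p/2}$-times-bounded term after localization, while the drift contributes $\langle\mathbf{G}(y),y\rangle/\|y\|\le|h_2(y)|+M(\|y\|+1)$ with $h_2\in L^q$. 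The BDG inequality applied to $\int_0^{\cdot}\langle X_s,\sigma(X_s)dW_s\rangle/\|X_s\|$ produces $\E_x[\sup_{s\le t}|\!\cdot\!|]\le C\,\E_x(\int_0^t(h_1(X_s)^2+1+\|X_s\|)\,ds)^{1/2}$, which by Cauchy--Schwarz and the already-established first-order integrability is dominated linearly; Gronwall then closes it.

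\textbf{Main obstacle.} The delicate point is the stochastic Gronwall step in the presence of the $L^p$/$L^q$ (rather than bounded) coefficients: one cannot directly bound $h_1(X_s)^2\,\sup_{r\le s}\|X_r\|^2$ by a deterministic function times $\sup_{r\le s}\|X_r\|^2$, so the argument must interleave a second localization ($\tau_m$) that makes the $\sup$ factor bounded, use the Krylov estimate of Theorem~\ref{theo:3.3}(i) to get an $m$- and $x$-uniform $L^1$ bound on $\int_0^t h_1(X_s)^2ds$, and then apply a Gronwall inequality \emph{with a random but integrable coefficient} (or equivalently bootstrap $E_m(t)$ through a Volterra-type iteration) to obtain constants $D,E$ that survive the limit $m\to\infty$. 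Verifying that these constants depend only on $T$, on the ball $B$ (through $c_{B,p/2}$ and $c_{B,q}$), on $M$, and on $\|h_1\|_{L^p},\|h_2\|_{L^q}$ — and not on the starting point $x\in\overline B$ nor on the localization indices — is the part requiring the most care.
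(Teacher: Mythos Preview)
Your route differs from the paper's in a way that manufactures precisely the difficulty you flag as the ``main obstacle''. The paper does \emph{not} apply It\^o's formula to $\|X\|^2$ or $\|X\|$. Instead it works coordinate-wise: for each $i$,
\[
\sup_{s\le t\wedge\sigma_n}|X_s^i|\le|x_i|+\sum_{j}\sup_{s\le t\wedge\sigma_n}\Big|\int_0^s\sigma_{ij}(X_u)\,dW_u^j\Big|+\int_0^{t\wedge\sigma_n}|g_i(X_u)|\,du,
\]
and bounds each stochastic integral by BDG (part (i)) or by Doob's $L^2$-inequality (part (ii)). The point is that in the resulting integrands $\sigma_{ij}^2(X_u)$ and $|g_i(X_u)|$ (or their squares) the growth hypothesis gives an \emph{additive} bound, e.g.\ $\sigma_{ij}^2\le 3\big(h_1^2+M^2\|X\|+M^2\big)$ in (i) and $\sigma_{ij}^2\le 3\big(h_1^2+M^2\|X\|^2+M^2\big)$ in (ii). So $h_1^2$ appears \emph{unmultiplied by any power of $\|X\|$}, and Theorem~\ref{theo:3.3}(i) controls $\E_x\!\int_0^T h_1^2(X_u)\,du$ directly (using $h_1^2\in L^{p/2}(\R^d,\mu)$ with $p/2\ge q$), uniformly over $x\in\overline B$ and over $n$, with no second localization. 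What remains is $\int_0^t p_n(u)\,du$ (in (i) its square root, linearized via $\sqrt a\le a+\tfrac14$), and the deterministic Gronwall lemma closes immediately.

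By applying It\^o to $\|X\|^2$ you create the martingale $\int\langle X_s,\sigma(X_s)\,dW_s\rangle$, whose quadratic variation $\int X_s^TA(X_s)X_s\,ds$ couples $h_1^2(X_s)$ multiplicatively with $\|X_s\|^2$. Your proposed remedy --- a second stopping at $\tau_m$ so that $\sup_r\|X_r\|^2\le m^2$ --- then gives a bound with an $(1+m^2)$ factor in front of $\E_x\!\int h_1^2$, and you do not explain how any stochastic Gronwall lemma extracts an $m$-independent constant from that; the Scheutzow-type statements require control of $\E\exp\bigl(c\!\int_0^T h_1^2(X_s)\,ds\bigr)$, which you do not have. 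The standard way to rescue your approach is not the double localization but to pull $\sup_{r\le t}\|X_r\|$ out of $\langle M\rangle_t^{1/2}$ and use Young's inequality to absorb $\sup_r\|X_r\|^2$ into the left side, leaving a decoupled $\E_x\!\int(h_1^2+\|X_s\|^2+1)\,ds$. That works, but it is exactly the complication the paper avoids by never forming $\|X\|^2$ in the first place.
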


\begin{proof}
(i) Let $x \in \overline{B}$, $T>0$ and $t \in [0, T]$, and $\sigma_n, n\in \N$, be as in Definition \ref{stopping times}. For any $i \in \{ 1, \ldots, d\}$ and $n \in \N$, it holds that $\P_x$-a.s.
\begin{eqnarray*} 
 \sup_{0 \leq s \leq t \wedge \sigma_{n}} |X_s^i|
\leq  |x_i| + \sum_{j=1}^l \sup_{\;0 \leq s \leq t \wedge \sigma_{n} } \Big|\int_0^{s} \sigma_{ij} (X_u) \, dW_u^j \Big | +  \int^{ t \wedge \sigma_{n}}_{0}   |g_i(X_u)| \, du.
\end{eqnarray*}
Using the Burkholder--Davis--Gundy inequality (\cite[IV. (4.2) Corollary]{RYor}) and Jensen's inequality, it holds that for any $i \in \{1, \ldots, d\}$ and $j \in \{1,\ldots, l \}$
\begin{eqnarray*} 
&& \E_x \Big [   \sup_{\;0 \leq s \leq t \wedge \sigma_{n} }\Big | \int_0^{s} \sigma_{ij} (X_u) \, dW_u^j \Big |   \Big ]  \\
&&  \leq  C_1   \E_x \Big [  \Big(\int_0^{t \wedge  \sigma_{n} } \sigma_{ij}^2(X_{u })  du \Big)^{1/2}  \Big ]  \leq   C_1   \E_x \Big [  \int_0^{t \wedge  \sigma_{n} } \sigma_{ij}^2(X_{u })  du   \Big ]^{1/2},
\end{eqnarray*}
where $C_1>0$ is a universal constant. Using Theorem \ref{theo:3.3}(i) and  
the inequalities $(a+b+c)^2\le 3(a^2+b^2+c^2)$, $\sqrt{a+b+c}\le \sqrt{a}+\sqrt{b}+\sqrt{c}$ and $\sqrt{a} \leq a +1/4$ which hold for $a,b,c\ge 0$,
\begin{eqnarray*}
&& \E_x \Big [  \int_0^{t \wedge  \sigma_{n} } \sigma_{ij}^2(X_{u })  du   \Big ]^{1/2}  \leq \E_{x} \Big [ 3 \int_0^{t \wedge \sigma_n} \Big (|h_1^2(X_u)| + M^2\|X_u\|+M^2\Big ) du \Big]^{1/2}\\
&& \leq  \sqrt{3} \E_x \Big [ \int_0^ T h_1^2(X_u) du\Big]^{1/2}+M\sqrt{3} \cdot \E_{x} \Big[ \int_0^{t \wedge \sigma_n}\|X_{u }\| du \Big]^{1/2} +M\sqrt{3T} \nonumber \\
&&\leq  \underbrace{ (3e^T c_{B, q})^{1/2} \|h_1 \|_{L^{2q}(\R^d, \mu)}+M\sqrt{3}\Big (\sqrt{T}+\frac{1}{4}\Big )}_{=:C_2} +M \sqrt{3}   \int_0^ t \E_{x} \Big[ \sup_{0 \leq s \leq u \wedge \sigma_n} \|X_s\| \Big] du. \nonumber
\end{eqnarray*}
Concerning the drift term, we have
\begin{eqnarray*}
&&\E_{x}\Big[\int^{ t \wedge \sigma_{n}}_{0}   |g_i(X_u)| \, du \Big] \leq  \E_{x}\Big[  \int_0^{ t \wedge \sigma_{n}} |h_2(X_u) | du\Big] +M\E_{x} \Big[  \int_0^{t \wedge \sigma_{n}} \big(\|X_{u }\|+1\big)  \,du \Big] \\
&&\leq \E_{x} \Big[ \int_0^T  |h_2(X_u)| du \Big]+MT+  M\E_{x} \Big[ \int_0^{t }  \sup_{0 \leq s \leq u \wedge \sigma_n} \|X_s\| du  \Big] \\
&& \leq  \underbrace{e^T c_{B, q} \|h_2 \|_{L^q(\R^d, \mu)}+MT}_{=:C_{3}}+  M\int_0^{t }\E_{x} \Big[  \sup_{0 \leq s \leq u \wedge \sigma_n} \|X_s\|\Big] du.
\end{eqnarray*}
Let $p_n(t):=\E_{x} \left[  \sup_{0 \leq s \leq t\wedge \sigma_{n}}\|X_{s}\| \right]$. Then
\begin{eqnarray*}
p_n(t) \leq \sum_{i=1}^{d} \sup_{0 \leq s \leq t \wedge \sigma_{n}} |X_s^i| \leq \underbrace{ \sqrt{d} \|x\| + d(lC_1C_2+C_{3})}_{=:D} + \underbrace{Md(\sqrt{3}lC_1+1)}_{=:E} \int_0^t p_n(u) du.
\end{eqnarray*}
By Gronwall's inequality, 
\begin{equation} \label{eq:3.21}
p_n(t) \leq D\cdot e^{E\cdot t}\;\;  \text{ $\forall t \in [0,T]$. } 
\end{equation}
Using Markov's inequality and \eqref{eq:3.21},
\begin{eqnarray*}
\P_x(\sigma_n \leq T) &\leq& \P_{x} \Big(\sup_{s \leq T \wedge \sigma_n}|X_{s} | \geq n   \Big)  \\
&\leq& \frac{1}{n} \E_{x} \Big[  \sup_{s \leq T \wedge \sigma_n}|X_{s} | \Big] \leq \frac{1}{n} D\cdot e^{E \cdot T} \rightarrow 0  \;\text{ as } n \rightarrow \infty.
\end{eqnarray*}
Therefore, letting $T \rightarrow \infty$, $\M$ is non-explosive by Lemma \ref{lem:3.1.4}(i). Applying Fatou's lemma to \eqref{eq:3.21} and taking the supremum over $\overline{B}$, the last assertion follows. \\
(ii) By Jensen's inequality, for $i \in \{1, \ldots, d \}$, $t \in [0, T]$ and $\sigma_n, n\in \N$, as in Definition \ref{stopping times}
\begin{eqnarray*} 
&&\sup_{0 \leq s \leq t \wedge \sigma_{n}} |X_s^i|^2 \\
&\leq& (l+2) \Big(x_i^2  + \sum_{j=1}^l  \Big(\sup_{\;0 \leq s \leq t \wedge \sigma_{n} } \Big | \int_0^{s} \sigma_{ij} (X_u) \, dW_u^j \Big |\Big)^2 + t\int^{ t \wedge \sigma_{n} }_{0}   |g_i(X_u)|^2 \, du \Big). 
\end{eqnarray*}
Using Doob's inequality (\cite[II. (1.7) Theorem]{RYor}), for any $j=1,\ldots,l$,
\begin{eqnarray*}
\E_x \Big [  \Big(\sup_{\;0 \leq s \leq t \wedge \sigma_{n} }  \Big |\int_0^{s} \sigma_{ij} (X_u) \, dW_u^j \Big |\Big)^2   \Big ] &\leq& 4 \E_x \Big [    \int_0^{t \wedge \sigma_n  } \sigma_{ij}^2(X_{u })  du   \Big ]. \\
\end{eqnarray*}
By Theorem \ref{theo:3.3}(i),
\begin{eqnarray*}
&& \E_x \Big[  \int_0^{t \wedge  \sigma_{n} } \sigma_{ij}^2(X_{u })  du   \Big]  \leq \E_{x} \Big[ 3 \int_0^{t \wedge \sigma_n} \big(|h_1^2(X_u)| + M^2\|X_u\|^2+M^2 \big)du \Big]  \\
&&\;\; \leq \underbrace{(3e^T c_{B,\frac{p}{2}}) \|h_1^2 \|_{L^{p/2}(\R^d, \mu)} + 3M^2T}_{=:c_1}+3M^2\int_0^{t }\E_{x} \Big[  \sup_{0 \leq s \leq u \wedge \sigma_n} \|X_s\|^2\Big] du.
\end{eqnarray*}
and 
\begin{eqnarray*}
&&\E_{x}\Big[\int_0^{ t \wedge \sigma_{n}} |g_i(X_u)|^2du\Big] \leq  c_1+  3M^2\int_0^{t }\E_{x} \Big[  \sup_{0 \leq s \leq u \wedge \sigma_n} \|X_s\|^2\Big] du.
\end{eqnarray*}
Let $p_n(t):=\E_{x} \Big[  \sup_{0 \leq s \leq t\wedge \sigma_{n}}\|X_{s}\|^2 \Big]$. Then
\begin{eqnarray*}
&&p_n(t) \leq \sum_{i=1}^{d} \E_x \Big[\sup_{0 \leq s \leq t \wedge \sigma_{n}} |X_s^i|^2\Big] \\
&&\leq \underbrace{(l+2) \Big( \|x\|^2 + 4c_1dl+c_1 dT \Big)}_{=:D} + \underbrace{3M^2d(l+2)(4l+T)}_{=:E} \int_0^t p_n(u) du.
\end{eqnarray*}
Using Gronwall's inequality, Markov's inequality and Jensen's inequality, the rest of the proof follows similarly to the proof of (i).
\end{proof}
\text{}\\
There are examples where the Hunt process $\M$ of Theorem \ref{th: 3.1.2} is non-explosive but \eqref{eq:3.20} is not satisfied and $\mathbf{G}$ has infinitely many singularities which form an unbounded set.
\begin{eg}\label{exam:3.2.1.4}
\begin{itemize}
\item[(i)]
Let $\eta \in C_0^{\infty}(B_{1/4})$ with $\eta\equiv 1$ on $B_{1/8}$ and define
$$
w(x_1, \dots, x_d):= \eta(x_1, \dots, x_d) \cdot  \int_{-\infty}^{x_1} \frac{1}{|t|^{1/d}} 1 _{[-1,1]}(t) dt, \;\; (x_1, \ldots, x_d) \in \R^d.
$$
Then $w \in H^{1,q}(\R^d) \cap C_0(B_{1/4})$ with $\partial_1 w \notin L_{loc}^{d}(\R^d)$. Let
$$
\varv(x_1, \dots, x_d):=\sum_{i=0}^{\infty} \frac{1}{2^i} w(x_1-i, \dots, x_d), \;\; (x_1, \ldots, x_d) \in \R^d.
$$
Then $\varv \in H^{1,q}(\R^d) \cap C(\R^d)$ with $\partial_1 \varv \notin L_{loc}^{d}(\R^d)$. Now define $P=(p_{ij})_{1 \leq i, j \leq d}$ as
$$
p_{1d}:=\varv, \;\; p_{d1}:=-\varv,\;\quad  p_{ij}:=0 \text{ if }\;(i,j) \notin \{(1,d), (d,1)\}.
$$
Let $Q=(q_{ij})_{1 \leq i, j \leq d}$ be a matrix of functions with $q_{ij}=-q_{ij} \in H^{1,2}_{loc}(\R^d) \cap C(\R^d)$ for all $1\leq i,j \leq d$ and assume there exists a constant $M>0$ such that
$$
\| \nabla Q \| \leq M( \|x\|+1), \quad \text{ for a.e. on } \R^d. 
$$
Let $A:=id$, $C:=(P+Q)^T$ and $\mathbf{H}:=0$. Then $\mu:=dx$ is an infinitesimally invariant measure for $(L, C_0^{\infty}(\R^d))$ and $\mathbf{G}=\frac12\nabla (A+C^T)= \frac12(\partial_1 \varv \,\mathbf{e}_1 + \nabla Q$). Observe that  
$\partial_1 \varv$ is unbounded in a neighborhood of  infinitely many isolated points that form an unbounded set and moreover $\nabla Q$ is a locally bounded vector field which has linear growth. By Proposition \ref{theo:3.2.8}(i), $\M$ is non-explosive.
\item[(ii)] \ Let $\gamma \in (0,1)$, $\psi(x):=\|x\|^{\gamma}$, $x \in B_{1/4}$ and $p:=\frac{d}{1-\frac{\gamma}{2}}>d$. Then since $p(1-\gamma)<d$, $\psi \in H^{1,p}(B_{1/4})$ with $\nabla \psi(x)=\frac{\gamma}{\|x\|^{1-\gamma}} \frac{x}{\|x\|}$. By  \cite[Theorem 4.7]{EG15}, $\psi$ can be extended to a function $\psi \in  H^{1,p}(\R^d)$ with $\psi \geq 0$ and $\text{supp}(\psi) \subset B_{1/2}$.
Let 
$$
\rho(x):=1+\sum_{k=0}^{\infty}\psi(x-k\mathbf{e}_1), \;\;\; x \in \R^d.
$$
Then $\rho \in H_{loc}^{1,p}(\R^d) \cap C_b(\R^d)$ with $\rho(x) \geq 1$ for all $x \in \R^d$ and $\|\nabla \rho\| \notin \cup_{r \in [1, \infty]}L^r(\R^d)$. Let $\mu:=\rho dx$. Since $\rho$ is bounded above, there exists $c>0$ such that $\mu(B_r) \leq cr^d$ for all $r>0$. Let $\mathbf{F} \in L^p_{loc}(\R^d, \R^d)$ be such that
$$
\int_{\R^d} \langle \mathbf{F}, \nabla \varphi \rangle dx = 0, \quad \text{ for all } \varphi \in C_0^{\infty}(\R^d)
$$
and for some $M>0$, $N_0 \in \N$, assume $\|\mathbf{F}(x)\| \leq M\|x\|(\ln\|x\|+1)$ for a.e. $x \in \R^d \setminus \overline{B}_{N_0}$. Let $A:=id$, $\mathbf{B}:=\frac{\mathbf{F}}{\rho}$ and $\mathbf{G}:=\beta^{\rho,A}+\mathbf{B}=\frac{\nabla \rho}{\rho}+\frac{\mathbf{F}}{\rho}$. Then $\frac{\nabla \rho}{\rho}$ is unbounded in a neighborhood of infinitely many isolated points, whose union forms an unbounded set and $|\langle\mathbf{B}(x),x\rangle| \leq M\|x\|^{2}(\ln\|x\|+1)$ for a.e. $x \in \R^d \setminus \overline{B}_{N_0}$. Therefore \eqref{eq:3.20} is not satisfied and $\mathbf{G}$ does also not satisfy the condition of Proposition \ref{theo:3.2.8}(i) and (ii). But by the following Proposition \ref{prop:3.2.9}(i), $(T_t)_{t>0}$ is conservative. Hence, $\M$ is non-explosive by Corollary \ref{cor:3.2.1}.
\end{itemize}
\end{eg}
\bigskip
So far, we proved non-explosion criteria by probabilistic means. The following proposition, which is an immediate consequence of \cite[Corollary 15(i) and (iii)]{GT17}, completes Proposition \ref{prop:2.1.10} in the sense that the conservativeness is proven by purely analytical means and that it is applied in the situation of  Section \ref{subsec:2.1.4}, where the density $\rho$ is explicitly given, in contrast to the situation of Theorem \ref{theo:2.2.7}, where $\rho$ is constructed and not known explicitly, except for its regularity properties.
\begin{proposition} 
\label{prop:3.2.9}
Suppose that the assumptions \eqref{condition on mu}--\eqref{eq:2.1.4}  of Section \ref{subsec:2.1.1} 
are satisfied and that the given density $\rho$ additionally satisfies $\rho>0$ a.e.
(Both hold for instance in the situation of Remark \ref{rem:2.2.4} which includes condition {\bf (a)} of Section \ref{subsec:2.2.1}). Suppose further that there exist constants $M, c>0$ and $N_0, N_1 \in \N$, such that either of the following holds:
\begin{itemize}
\item[(i)]
$$
\frac{\langle A(x)x,x\rangle}{\|x\|^2}+|\langle{\mathbf{B}(x)},x\rangle| \leq M\|x\|^{2} \ln(\|x\|+1),
$$
for a.e. $x\in \R^d\setminus \overline{B}_{N_0}$ and 
$$
\mu(B_{4n}\setminus B_{2n})\le (4n)^{c}\qquad \forall n\ge N_1.
$$ 
\item[(ii)] 
$$
\langle A(x)x,x\rangle+|\langle\mathbf{B} (x),x\rangle| \leq M\|x\|^{2},
$$
for a.e. $x\in \R^d\setminus \overline{B}_{N_0}$ and 
$$
\mu(B_{4n}\setminus B_{2n})\le e^{c(4n)^2}\qquad \forall n\ge N_1.
$$ 
\end{itemize}
Then $(T_t)_{t>0}$ and $(T'_t)_{t>0}$ are conservative (cf. Definitions \ref{definition2.1.7} and \ref{def:3.2.1}) and $\mu$ is $(\overline{T}_t)_{t>0}$-invariant and $(\overline{T}'_t)_{t>0}$-invariant (cf. Definition \ref{def:2.1.1}(ii)).
\end{proposition}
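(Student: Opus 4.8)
The plan is to reduce the statement to the conservativeness criterion \cite[Corollary 15(i) and (iii)]{GT17} and then invoke the duality relations of Remark \ref{rem:2.1.10a}(i). Recall that under \eqref{condition on mu}--\eqref{eq:2.1.4} the operator $L$ decomposes on $C_0^\infty(\R^d)$ as $L = L^0 + \langle\mathbf{B},\nabla\rangle$, where $(L^0,D(L^0))$ is the self-adjoint generator of the symmetric Dirichlet form $\cE^0(u,v) = \frac12\int_{\R^d}\langle A\nabla u,\nabla v\rangle\,d\mu$ on $L^2(\R^d,\mu)$, and $\mathbf{B} = \mathbf{G} - \beta^{\rho,A} \in L^2_{loc}(\R^d,\R^d,\mu)$ has $\mu$-divergence zero (see \eqref{eq:2.1.5a}, \eqref{eq:2.1.6}). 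The semigroup $(T_t)_{t>0}$ is the one generated by the maximal extension $\overline{L}$ of $L$, and $(T'_t)_{t>0}$ the one generated by the maximal extension of $L' = L^0 - \langle\mathbf{B},\nabla\rangle$. Since $-\mathbf{B}$ is again in $L^2_{loc}(\R^d,\R^d,\mu)$, has $\mu$-divergence zero, and satisfies the \emph{same} growth bounds as $\mathbf{B}$ (because $|\langle(-\mathbf{B})(x),x\rangle| = |\langle\mathbf{B}(x),x\rangle|$), it will be enough to prove that $(T_t)_{t>0}$ is conservative; the conservativeness of $(T'_t)_{t>0}$ then follows by repeating the argument with $\mathbf{B}$ replaced by $-\mathbf{B}$, and the two invariance assertions follow from Remark \ref{rem:2.1.10a}(i), which identifies conservativeness of $(T'_t)_{t>0}$ with $(\overline{T}_t)_{t>0}$-invariance of $\mu$ and conservativeness of $(T_t)_{t>0}$ with $(\overline{T}'_t)_{t>0}$-invariance of $\mu$.

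First I would set up the dictionary between our situation and that of \cite{GT17}: the metric measure space is $(\R^d,\|\cdot\|,\mu)$, the symmetric part of the generator is $L^0$, the divergence-free drift perturbation is $\mathbf{B}$, and the reference/cutoff function can be taken to be a smoothing of the Euclidean norm --- here the local uniform ellipticity \eqref{eq:2.1.2} of $A$ is used to compare the intrinsic metric of $\cE^0$ with the Euclidean one on bounded sets. With this identification, assumption (i) of Proposition \ref{prop:3.2.9} --- the bound $\langle A(x)x,x\rangle/\|x\|^2 + |\langle\mathbf{B}(x),x\rangle| \le M\|x\|^2\ln(\|x\|+1)$ on $\R^d\setminus\overline{B}_{N_0}$ together with the polynomial annular growth $\mu(B_{4n}\setminus B_{2n}) \le (4n)^c$ --- is precisely the hypothesis of \cite[Corollary 15(i)]{GT17}, and assumption (ii) --- the quadratic bound $\langle A(x)x,x\rangle + |\langle\mathbf{B}(x),x\rangle| \le M\|x\|^2$ together with the exponential growth $\mu(B_{4n}\setminus B_{2n}) \le e^{c(4n)^2}$ --- is precisely the hypothesis of \cite[Corollary 15(iii)]{GT17}. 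In either case that corollary yields $T_t 1_{\R^d} = 1$ $\mu$-a.e.\ for every $t>0$, i.e.\ conservativeness of $(T_t)_{t>0}$ in the sense of Definition \ref{def:3.2.1}. Then I would conclude as indicated above: conservativeness of $(T'_t)_{t>0}$ by the symmetry remark, and $(\overline{T}_t)_{t>0}$- and $(\overline{T}'_t)_{t>0}$-invariance of $\mu$ by Remark \ref{rem:2.1.10a}(i).

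The main obstacle is not any computation but the bookkeeping required to certify that \cite[Corollary 15]{GT17} applies \emph{verbatim} under our standing assumptions \eqref{condition on mu}--\eqref{eq:2.1.4} plus the extra hypothesis $\rho>0$ a.e.: one must check that these assumptions supply all structural prerequisites of \cite{GT17} --- that $L^0$ generates a symmetric (sub-Markovian) Dirichlet form on $L^2(\R^d,\mu)$, that $\mathbf{B}$ is an admissible divergence-free perturbation so that the perturbed operator still generates a sub-Markovian $C_0$-semigroup (which is exactly the content of Theorem \ref{theorem2.1.5}), and that the growth conditions of \cite{GT17}, which are phrased relative to the intrinsic geometry of $\cE^0$, can be read off from the Euclidean-norm bounds stated in Proposition \ref{prop:3.2.9}. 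Once this translation is established there is nothing left to prove, and the statement is a one-line invocation combined with Remark \ref{rem:2.1.10a}(i).
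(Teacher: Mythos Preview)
Your proposal is correct and matches the paper's own treatment: the paper states immediately before Proposition \ref{prop:3.2.9} that it ``is an immediate consequence of \cite[Corollary 15(i) and (iii)]{GT17}'' and gives no further proof, so your reduction to that corollary together with the $\mathbf{B}\leftrightarrow -\mathbf{B}$ symmetry and Remark \ref{rem:2.1.10a}(i) is exactly what is intended.
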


\begin{remark}\label{rem:3.2.1 1}
{\it Recall that the drift has the form $\mathbf{G} =\beta^{\rho , A}+\mathbf{B} $. Proposition \ref{prop:3.2.9} is a type of conservativeness result, where a growth condition on the logarithmic derivative $\beta^{\rho,A}$ is not explicitly required.
Instead, a volume growth condition on the infinitesimally invariant measure $\mu$ occurs.
Such types of conservativeness results have been studied systematically under more general assumptions on the coefficients.  
In the symmetric case in  \cite{Ta89} and \cite{Sturm94}, in the sectorial case in \cite{TaTr}, and in the possibly non-sectorial case in \cite{GT17}. 
In Proposition \ref{prop:3.2.9} there is an interplay between the growth conditions on $A$ and $\mathbf{B}$ and the growth condition of $\mu$ on annuli. The stronger conditions on $A$ and $\mathbf{B}$ in Proposition \ref{prop:3.2.9}(ii) allow for the weaker exponential growth condition of $\mu$ on annuli. In particular, an exponential growth condition as in Proposition \ref{prop:3.2.9}(ii) already appears in \cite{Ta89}.
The exponential growth condition of \cite{Ta89} can even be slightly relaxed in the symmetric case (see \cite[Remarks b), p. 185]{Sturm94}).
For instance, if according to Remark \ref{rem:2.2.4}, $A=id$, $C=0$, and $\mathbf{\overline{B}}=0$ (hence $\mathbf{B}=0$) and $\rho=e^{2\phi}$, with $\phi \in H^{1,p}_{loc}(\R^d)$ for some $p\in (d,\infty)$, then $\mathbf{G}=\nabla \phi$, $\mu=e^{2\phi}dx$, and by \cite[Theorem 4]{Sturm94}, $(T_t)_{t>0}$ is conservative and $\mu$ is $(\overline{T}_t)_{t>0}$-invariant, if there exists a constant $M>0$ and $N_0 \in \N$, such that
\begin{equation} \label{eq:3.2.27}
\phi (x) \leq M \|x\|^2 \ln(\|x\|+1), \;\quad \forall x \in \R^d \setminus \overline{B}_{N_0}.
\end{equation}
Indeed, in this case the intrinsic metric equals the Euclidean metric (see \cite[4.1 Theorem]{Sturm98} and its proof), so that \cite[{\bf Assumption (A)}]{Sturm94} is satisfied.
Moreover, $\mu(B_{r})\le e^{cr^{2} \ln(r+1)}$, for some constant $c$ and $r$ large, which further implies 
$$
\int_1^{\infty}\frac{r}{\ln \mu(B_r)} dr =\infty.
$$
Thus the result follows by \cite[Theorem 4]{Sturm94}.}
\end{remark}

\begin{eg}
We saw in Example \ref{exam:3.2.1.4}(ii) that the criterion Proposition \ref{prop:3.2.9}(i) was not covered by any other non-explosion or conservativeness result of this monograph. The same is true for Proposition \ref{prop:3.2.9}(ii) and the criterion \eqref{eq:3.2.27}.
Here we only show the latter. Hence let $A=id$, $C=0$, $\mathbf{\overline{B}}=0$.
Let $\phi_1(x)=1+\sum_{k=0}^{\infty}\psi(x-k\mathbf{e}_1)$, $x \in \R^d$, where $\psi$ is defined as in Example \ref{exam:3.2.1.4}(ii), $\phi_2(x)=(\|x\|+1)^2 \ln(\|x\|+1)$, $x \in \R^d$, $\phi=\phi_1+\phi_2$, and $\mu=\exp(2\phi)dx$. Then $\phi$ satisfies \eqref{eq:3.2.27} and the associated semigroup $(T_t)_{t>0}$ is conservative and $\M$ is non-explosive by Remark \ref{rem:3.2.1 1}. Indeed the volume growth of $\mu$ is as described at the end of in Remark  \ref{rem:3.2.1 1} and  the drift coefficient $\mathbf{G}$ consists of $\nabla \phi_1$, which has infinitely many singularities that form an unbounded set in $\R^d$ and of $\nabla \phi_2(x)=(\|x\|+1)(2\ln(\|x\|+1)+1) \frac{x}{\|x\|}$, which has linear times logarithmic growth. 
Hence, Corollary \ref{cor:3.2.2}, Proposition \ref{theo:3.2.8} and Proposition \ref{prop:3.2.9} cannot be used to determine the conservativeness of $(T_t)_{t>0}$.
\end{eg}

\subsubsection{Transience and recurrence}
\label{subsec:3.2.2} 
Throughout this section we will assume that {\bf (a)} of Section \ref{subsec:2.2.1} holds and that assumption {\bf (b)} of Section \ref{subsec:3.1.1}  holds. And we let
$$
\mu=\rho\,dx
$$ 
be as in Theorem \ref{theo:2.2.7} or as in  Remark \ref{rem:2.2.4}. \\ For $f \in L^1(\R^d, \mu)$ with $f \geq 0$, define through the following pointwise increasing limit
$$
Gf:=\int_0^{\infty} T_t f dt = \lim_{\alpha \rightarrow 0+} \int_{0}^{\infty} e^{-\alpha t} T_t f dt  = \lim_{\alpha \rightarrow 0+} G_{\alpha} f, \;\; \text{$\mu$-a.e.},
$$
where $(T_t)_{t>0}$ and $(G_{\alpha})_{\alpha>0}$ are defined in Definition \ref{definition2.1.7}.
\begin{definition} \label{def:3.2.2.2}
Assume {\bf (a)} of  Section \ref{subsec:2.2.1} holds. $(T_t)_{t>0}$ see Definition \ref{definition2.1.7}) is called {\bf recurrent}\index{recurrent ! semigroup}, if for any $f \in L^1(\R^d, \mu)$ with $f \geq 0$ \, $\mu$-a.e,
$$
G f(x) \in\{0,\infty\} \;\; \text{for $\mu$-a.e. $x \in \R^d$}.
$$
$(T_t)_{t>0}$ is called {\bf transient}\index{transient ! semigroup}, if there exists $g \in L^1(\R^d, \mu)$ with $g> 0$  $\mu$-a.e. such that
$$
G g(x) <\infty, \;\;  \text{for $\mu$-a.e. $x \in \R^d$.}
$$
\end{definition}
\noindent
Note that by \cite[Remark 3(a)]{GT2}, 
$(T_t)_{t>0}$ is transient, if and only if for any $f \in L^1(\R^d, \mu)$ with $f \geq 0$ $\mu$-a.e.
$$
G f(x) <\infty,\;\ \text{ for $\mu$-a.e. $x \in \R^d$. }
$$
For $x \in \R^d$ and $f \in L^1(\R^d, \mu)$ with $f \geq 0$, define for $(P_t)_{t>0}$ of Proposition \ref{prop:3.1.1} and $\M$ of Theorem \ref{th: 3.1.2} (see also Theorem \ref{theo:3.1.4}),  
\begin{eqnarray*}
R f(x):&=& \int_0^{\infty}P_t f(x) dt=\E_x \left[ \int_0^{\infty} f(X_t) dt  \right] = \lim_{n \rightarrow \infty} \E_x \left[ \int_0^{\infty} (f \wedge n)(X_t) dt  \right] \\
&=&\lim_{n \rightarrow \infty} \lim_{\alpha \rightarrow 0+} \E_x \left[ \int_0^{\infty} e^{-\alpha t}(f \wedge n)(X_t) dt  \right]
= \lim_{n \rightarrow \infty} \lim_{\alpha \rightarrow 0+} \Big( R_{\alpha} (f \wedge n) (x) \Big). 
\end{eqnarray*}
Since $Rf$ is the pointwise increasing limit of lower semi-continuous functions, $R f$ is lower semi-continuous on $\R^d$ by Theorem \ref{theorem2.3.1}. In particular, for any $f, g \in L^1(\R^d, \mu)$ with $f=g\ge 0$, $\mu$-a.e. it holds that $R f(x)= R g(x)$ for all $x \in \R^d$. Moreover, 
$$
Rf(x) = Gf(x), \;\; \text{ for $\mu$-a.e. $x \in \R^d$.}
$$
Define the last exit time $L_A$ from $A\in \mathcal{B}(\R^d)$ by
\begin{equation}\label{lastexittime}
L_A:=\sup \{ t\geq 0: X_t\in A\},\ \ (\sup \emptyset:=0).
\end{equation}
\begin{definition} \label{def:3.2.2.3}
Assume {\bf (a)} of 
Section \ref{subsec:2.2.1} and {\bf (b)} of Section \ref{subsec:3.1.1} hold.
$\M$ (see Theorem \ref{th: 3.1.2}  and also Theorem \ref{theo:3.1.4})  is called {\bf recurrent in the probabilistic sense}\index{recurrent ! in the probabilistic sense}, if for any non-empty open set $U$ in $\R^d$
\begin{eqnarray*}
\P_x(L_U=\infty)=1,\ \ \forall x\in \R^d.
\end{eqnarray*}
$\M$ is called {\bf transient in the probabilistic sense}\index{transient ! in the probabilistic sense}, if for any compact set $K$ in $\R^d$,
\begin{equation*}
\P_x(L_K<\infty)=1,\ \ \forall x\in \R^d.
\end{equation*}
\end{definition}

\begin{proposition} \label{prop:3.2.2.11} 
Assume {\bf (a)} of 
Section \ref{subsec:2.2.1} and {\bf (b)} of Section \ref{subsec:3.1.1} hold.
$\M$ is transient in the probabilistic sense, if and only if 
\begin{equation} \label{eq:3.25}
\P_x(\lim_{t \rightarrow \infty}X_t=\Delta)=1,\ \ \; \forall x\in \R^d.
\end{equation}
In particular, if $\M$ is transient, then 
\begin{equation*} \label{eq:3.25*}
\lim_{t \rightarrow \infty} P_t f(x) =0
\end{equation*}
for any $x \in \R^d$ and $f \in \mathcal{B}_b(\R^d)_0 + C_{\infty}(\R^d)$. 
\end{proposition}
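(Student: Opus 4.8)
The plan is to prove the equivalence first, and then deduce the "in particular" statement. For the equivalence, the key observation is that $\M$ has continuous sample paths on $\R^d_\Delta$ up to infinity (Theorem \ref{th: 3.1.2}), so for each $\omega$ the trajectory $t\mapsto X_t(\omega)$ is a continuous path in the compact space $\R^d_\Delta$, and hence it has at least one limit point as $t\to\infty$; the event in \eqref{eq:3.25} is precisely the event that this limit point is $\Delta$, i.e. that the path eventually leaves every compact subset of $\R^d$ and never returns. Now suppose $\M$ is transient in the probabilistic sense. Exhausting $\R^d$ by the open balls $B_n$, we have $\P_x(L_{\overline{B}_n}<\infty)=1$ for every $n$ (here $L_{\overline{B}_n}$ is the last exit time from the compact set $\overline{B}_n$, as in \eqref{lastexittime}). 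On the intersection $\bigcap_n\{L_{\overline{B}_n}<\infty\}$, which has $\P_x$-probability one, the path eventually leaves $\overline{B}_n$ forever, for every $n$; combined with path continuity on $\R^d_\Delta$ this forces $\lim_{t\to\infty}X_t=\Delta$, giving \eqref{eq:3.25}. Conversely, if \eqref{eq:3.25} holds, then for any compact $K\subset\R^d$ we can pick $n$ with $K\subset B_n$, and $\lim_{t\to\infty}X_t=\Delta$ implies $X_t\notin B_n$ for all large $t$, whence $L_K\le L_{\overline{B}_n}<\infty$ $\P_x$-a.s.; so $\M$ is transient in the probabilistic sense.

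For the "in particular" statement, first I would translate transience of $\M$ into transience in the probabilistic sense. By the discussion following Definition \ref{def:3.2.2.2}, transience of $(T_t)_{t>0}$ means $Gf<\infty$ $\mu$-a.e. for all nonnegative $f\in L^1(\R^d,\mu)$, and since $Rf=Gf$ $\mu$-a.e. and $Rf$ is lower semicontinuous (hence finite on a dense set, in fact on all of $\R^d$ once it is $\mu$-a.e. finite and l.s.c.), one gets $R f(x)=\E_x[\int_0^\infty f(X_s)\,ds]<\infty$ for every $x\in\R^d$ and every nonnegative $f\in L^1(\R^d,\mu)$; applying this to a strictly positive $f\in L^1(\R^d,\mu)$ (e.g. the density $g$ used in Section \ref{subsec:3.1.1}) forces $\int_0^\infty f(X_s)\,ds<\infty$ $\P_x$-a.s., which in turn forces the path $X$ to spend only finite time in any set where $f$ is bounded below, i.e. in any compact subset of $\R^d$; this yields $L_K<\infty$ $\P_x$-a.s. for every compact $K$, so $\M$ is transient in the probabilistic sense and hence \eqref{eq:3.25} holds.

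Given \eqref{eq:3.25}, the convergence $\lim_{t\to\infty}P_tf(x)=0$ for $f\in\mathcal{B}_b(\R^d)_0+C_\infty(\R^d)$ follows from dominated convergence: write $P_tf(x)=\E_x[f(X_t)]$ by Proposition \ref{prop:3.1.4}, recall that any such $f$ is bounded and satisfies $f(y)\to 0$ as $y\to\Delta$ in $\R^d_\Delta$ (for $f\in\mathcal{B}_b(\R^d)_0$, $f$ vanishes outside a compact set; for $f\in C_\infty(\R^d)$, $f(y)\to 0$ as $\|y\|\to\infty$; and recall the convention $f(\Delta)=0$), and use that $X_t\to\Delta$ $\P_x$-a.s. together with the uniform bound $\|f\|_{\infty}$ to pass the limit inside the expectation. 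The main obstacle I anticipate is the careful handling of the last-exit-time / continuity argument for the equivalence — one must make sure that "leaves $\overline{B}_n$ for every $n$ after some finite time" genuinely implies convergence to $\Delta$ in the one-point compactification, which uses both the continuity of paths on $[0,\infty)$ in $\R^d_\Delta$ and the fact that $\{\overline{B}_n\}$ is an exhausting sequence of compacts — but this is essentially a soft topological argument once the path regularity of Theorem \ref{th: 3.1.2} is invoked.
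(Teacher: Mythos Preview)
Your argument for the equivalence is correct and matches the paper's proof essentially line for line: exhaust $\R^d$ by closed balls, use that the countable intersection $\bigcap_n\{L_{\overline{B}_n}<\infty\}$ equals the event $\{\lim_{t\to\infty}X_t=\Delta\}$ (via path continuity on $\R^d_\Delta$), and run the obvious compactness argument for the converse.

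For the ``in particular'' statement, however, you take an unnecessary detour that also contains gaps. In the paper, ``$\M$ is transient'' simply means transient in the probabilistic sense---that is the only notion of transience defined for the \emph{process} $\M$ (Definition \ref{def:3.2.2.3}); the semigroup notion in Definition \ref{def:3.2.2.2} is for $(T_t)_{t>0}$, and the link between the two is established only later in Theorem \ref{theo:3.3.6}. So once the equivalence is proved, \eqref{eq:3.25} is immediately available, and your final dominated-convergence paragraph (which is correct and is exactly what the paper does) finishes the proof. There is no need to pass through the potential $Rf$.

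Your detour argument is also not quite sound as written: lower semicontinuity of $Rf$ together with $\mu$-a.e.\ finiteness does \emph{not} force finiteness everywhere (an l.s.c.\ function may be $+\infty$ on a $\mu$-null set); and finite occupation time $\int_0^\infty 1_K(X_s)\,ds<\infty$ does not by itself imply $L_K<\infty$ for a continuous path (the path could touch $K$ at an unbounded sequence of times while spending summably short durations there). The paper's proof of the implication ``$(T_t)_{t>0}$ transient $\Rightarrow$ $\M$ transient in the probabilistic sense'' in Theorem \ref{theo:3.3.6}(ii) avoids both issues by a supermartingale argument with hitting times rather than an occupation-time argument. In any case, drop the detour: the ``in particular'' clause is an immediate corollary of the equivalence you have already proved.
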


\begin{proof}
Let $x \in \R^d$ and $K_n:=\overline{B}_n(x)$, $n \in \N$.  Let $\Omega_0:=\cap_{n \in \N} \{\omega \in \Omega:   L_{K_n}(\omega)<\infty\}$. Then it follows that
\begin{equation*}
\Omega_{0}= \{\omega \in \Omega:  \lim_{t \rightarrow \infty}X_t(\omega)=\Delta   \},\;\; \text{ $\P_x$-a.s. }
\end{equation*}
Assume that $\M$ is transient in the probabilistic sense. Then $\P_x(\Omega_0)=1$, hence \eqref{eq:3.25} holds. Conversely, assume \eqref{eq:3.25} holds. Then $\P_x(L_{K_n}<\infty)$ for all $n \in \N$. Let $K$ be a compact set in $\R^d$. Then there exists $N \in \N$ such that $K \subset K_N$, hence $\P_x(L_K <\infty)=1$. Thus, $\M$ is transient. \\
Now assume that $\M$ is transient. Let $x \in \R^d$ and $f \in  \mathcal{B}_b(\R^d)_0+C_{\infty}(\R^d)$. Then, $P_t f(x)=\E_x[f(X_t)]$ by Proposition \ref{prop:3.1.4}. Since $f$ is bounded and $\lim_{t \rightarrow \infty} f(X_t)=0$ \,$\P_x$-a.s. by \eqref{eq:3.25}, it follows from Lebesgue's theorem that 
$$
\lim_{t \rightarrow \infty} \E_x[f(X_t)] =0,
$$  
as desired.
\end{proof}

\begin{lemma} \label{lem:3.2.6}
Assume {\bf (a)} of 
Section \ref{subsec:2.2.1} and {\bf (b)} of Section \ref{subsec:3.1.1} hold and let $\M$ be as in Theorem \ref{th: 3.1.2}.
Assume that $\Lambda \in \mathcal{F}$ is $\vartheta_t$-invariant for some $t>0$, i.e. $\Lambda=\vartheta^{-1}_t(\Lambda)$.  
Then $x \mapsto \P_x(\Lambda)$ is continuous on $\R^d$.
\end{lemma}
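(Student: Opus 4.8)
The plan is to convert the $\vartheta_t$-invariance of $\Lambda$ into an identity expressing $x\mapsto\P_x(\Lambda)$ through the transition semigroup $(P_t)_{t>0}$, whose continuity properties are already at our disposal.

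First I would record that $h:\R^d\to[0,1]$, $h(x):=\P_x(\Lambda)$, is bounded and $\mathcal B(\R^d)$-measurable: since $\Lambda\in\mathcal F=\sigma(X_t:t\ge 0)$, this is exactly the measurability granted by (M.3) of Definition \ref{def:3.1.1}; extending $h$ to $\R^d_\Delta$ by $h(\Delta)=0$ as usual, we have $h\in\mathcal B_b(\R^d)$. Next I would upgrade the Markov property (M.4) from one-time-point events to the tail event $\Lambda$: by the tower property and iteration of (M.4), the conditional identity holds for finite-dimensional cylinders $\{X_{r_1}\in A_1,\dots,X_{r_k}\in A_k\}$, and since these form a $\pi$-system generating $\sigma(X_t:t\ge 0)$, a monotone class argument gives $\P_x(\vartheta_t^{-1}(\Lambda)\mid\mathcal F_t)=\P_{X_t}(\Lambda)$ $\P_x$-a.s. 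Taking expectations and using $\Lambda=\vartheta_t^{-1}(\Lambda)$ yields
$$
\P_x(\Lambda)=\E_x\big[\P_{X_t}(\Lambda)\big],\qquad x\in\R^d.
$$

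It then remains to separate the cemetery contribution. Splitting according to $\{X_t\in\R^d\}$ versus $\{X_t=\Delta\}$ and invoking Theorem \ref{th: 3.1.2} to identify $\E_x[h(X_t)]=P_th(x)$ and $\P_x(X_t\in\R^d)=P_t1_{\R^d}(x)$, one obtains
$$
\P_x(\Lambda)=P_th(x)+\P_\Delta(\Lambda)\big(1-P_t1_{\R^d}(x)\big),\qquad x\in\R^d.
$$
Since $h,1_{\R^d}\in\mathcal B_b(\R^d)\subset L^\infty(\R^d,\mu)$, Theorem \ref{theo:2.6} together with \eqref{semidef} (equivalently Proposition \ref{prop:3.1.1}(ii)) shows that $P_th$ and $P_t1_{\R^d}$ are continuous on $\R^d$; as $\P_\Delta(\Lambda)$ is a constant, the right-hand side is continuous in $x$, which is the assertion.

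The only place requiring a little care — rather than genuine difficulty — is the bookkeeping at the cemetery point: because $\M$ may explode, the event $\{X_t=\Delta\}$ cannot be ignored and $h(\Delta)=0$ must be kept distinct from $\P_\Delta(\Lambda)$, which is exactly what produces the correction term $\P_\Delta(\Lambda)(1-P_t1_{\R^d})$. The monotone-class extension of the Markov property to $\Lambda\in\mathcal F$ is routine, $\M$ being a Hunt (in particular strong Markov) process.
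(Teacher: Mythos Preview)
Your proof is correct and follows the same route as the paper: use the shift--invariance of $\Lambda$ together with the Markov property to express $\P_x(\Lambda)$ via $P_t$ applied to the bounded measurable function $y\mapsto\P_y(\Lambda)$, and then invoke the strong Feller property (Theorem~\ref{theo:2.6}). The paper compresses this into the single line $\P_x(\Lambda)=P_t\P_{\cdot}(\Lambda)(x)$ and appeals to Theorem~\ref{theo:2.6}; your version is in fact a bit more careful, since the paper's displayed identity silently drops the cemetery contribution $\P_\Delta(\Lambda)\big(1-P_t1_{\R^d}(x)\big)$, which need not vanish in the explosive case --- your explicit correction term handles this, and since $P_t1_{\R^d}$ is continuous the conclusion is unaffected either way.
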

\begin{proof}
By the Markov property,
$$
\mathbb{P}_x(\Lambda)  =  \mathbb{P}_x(\vartheta_t^{-1}(\Lambda))= \mathbb{E}_x[\mathbb{E}_x[ 1_{\Lambda}\circ \vartheta_t \,|\, \mathcal{F}_t]]
=  \mathbb{E}_x[\mathbb{E}_{X_t}[ 1_{\Lambda}]]=P_t \mathbb{P}_{\cdot}(\Lambda)(x).
$$
Since  $x \mapsto \P_x(\Lambda)$ is bounded and measurable, the assertion follows by Theorem \ref{theo:2.6}.
\end{proof}

\begin{theorem}\label{theo:3.3.6}
Assume {\bf (a)} of 
Section \ref{subsec:2.2.1} and {\bf (b)} of Section \ref{subsec:3.1.1} hold.
We have the following:
\begin{itemize}
\item[(i)] $(T_t)_{t>0}$ is either recurrent or transient (see Definition \ref{def:3.2.2.2}).
\item[(ii)] $(T_t)_{t>0}$ is transient, if and only if $\M$ is  transient in the probabilistic sense (see Definition \ref{def:3.2.2.3}).
\item[(iii)] $(T_t)_{t>0}$ is recurrent, if and only if $\M$ is recurrent in the probabilistic sense.
\item[(iv)] $\M$ is either recurrent or transient in the probabilistic sense.
\item[(v)] If $(T_t)_{t>0}$ is recurrent, then $(T_t)_{t>0}$ is conservative.
\end{itemize}
\end{theorem}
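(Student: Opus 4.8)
The plan is to prove the five assertions of Theorem~\ref{theo:3.3.6} essentially in the order (i), (v), (ii), (iii), (iv), exploiting at each step the strict irreducibility of $(T_t)_{t>0}$ from Proposition~\ref{prop:2.4.2}(ii), the strong Feller/continuity properties of $(P_t)_{t>0}$ and $(R_\alpha)_{\alpha>0}$ from Theorems~\ref{theo:2.6} and~\ref{theorem2.3.1}, and the a.e.\ identification $Rf=Gf$ together with lower semicontinuity of $Rf$ established in the text preceding the theorem.

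\textbf{(i) Dichotomy.} First I would invoke the general dichotomy for strictly irreducible sub-Markovian semigroups: by \cite[Theorem~21]{GT2} (or the Blumenthal--Getoor--type $0$--$1$ argument via the resolvent), strict irreducibility forces the potential operator $G$ either to be finite $\mu$-a.e.\ for one (hence every) strictly positive $f\in L^1(\R^d,\mu)$, or to take values in $\{0,\infty\}$ $\mu$-a.e.\ for every nonnegative $f\in L^1(\R^d,\mu)$. The key point is that the set $\{x: Gf(x)<\infty\}$ is, up to $\mu$-null sets, weakly invariant relative to $(T_t)_{t>0}$, so strict irreducibility makes it $\mu$-trivial; then using $G(T_sf)\le Gf$ and the resolvent equation one upgrades the $\{0,\infty\}$-dichotomy on one generating function to all of $L^1(\R^d,\mu)_+$. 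This is the step where I rely most heavily on cited machinery rather than reproving it.

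\textbf{(v) Recurrence implies conservativeness.} Suppose $(T_t)_{t>0}$ is recurrent. Apply the recurrence property to a strictly positive $f\in L^1(\R^d,\mu)_b$: since $Gf$ cannot be $\mu$-a.e.\ zero (as $\int Gf\,d\mu \ge \int_0^\infty\!\!\int T_tf\,d\mu\,dt$ and $\int T_tf\,d\mu>0$ on a set of positive measure by strict irreducibility, e.g.\ using $\int T_tf\,d\mu=\int f T_t'1\,d\mu$), we get $Gf=\infty$ $\mu$-a.e. If $(T_t)_{t>0}$ were not conservative, then $\int(1-T_t1)\,d\mu=\int(1-T_s'1)f$-type estimates would give a finite nonzero excessive defect; more concretely, on a non-conservative semigroup one can produce a bounded nonnegative $g$ with $Gg<\infty$ on a set of positive $\mu$-measure, contradicting recurrence. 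I would phrase this through the dual: non-conservativeness of $(T_t)_{t>0}$ means $T_t'1_{\R^d}<1$ on a set of positive measure, and then $G'1_{\R^d}$ is a bounded nonzero $\mu$-integrable potential for the dual semigroup, which by the symmetric-in-structure argument contradicts the recurrence of $(T_t)_{t>0}$.

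\textbf{(ii), (iii) Matching analytic and probabilistic notions.} Here I would follow the technique of \cite{Ge}, made available by the strong Feller resolvent. For transience: if $(T_t)_{t>0}$ is transient, pick $g\in L^1(\R^d,\mu)$, $g>0$, with $Gg<\infty$ $\mu$-a.e.; by lower semicontinuity and the identity $Rg=Gg$ $\mu$-a.e., together with the fact that $Rg$ is excessive for $\M$ and $P_t(Rg)\downarrow$, one shows $Rg<\infty$ \emph{everywhere} (using that $\{Rg<\infty\}$ is absorbing for $\M$ and charged by every $P_t(x,\cdot)$ via irreducibility in the probabilistic sense, Lemma~\ref{lem:2.7}(i)), hence $\E_x[\int_0^\infty g(X_t)\,dt]<\infty$ for all $x$, which forces $\int_0^\infty g(X_t)\,dt<\infty$ $\P_x$-a.s.; since $g>0$ and sample paths are continuous, this yields $X_t\to\Delta$, i.e.\ transience in the probabilistic sense by Proposition~\ref{prop:3.2.2.11}. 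Conversely, probabilistic transience gives $Rg<\infty$ $\P_x$-a.s.\ for suitable compactly supported $g$ and then $Gg=Rg<\infty$ $\mu$-a.e. For recurrence: by the dichotomy (i) it suffices to show $(T_t)_{t>0}$ transient $\Leftrightarrow$ $\M$ transient in the probabilistic sense, since then the negations match, giving $(T_t)_{t>0}$ recurrent $\Leftrightarrow$ $\M$ not transient in the probabilistic sense $\Leftrightarrow$ (using (iv)) $\M$ recurrent in the probabilistic sense; alternatively I would argue directly that recurrence of $(T_t)_{t>0}$ implies $Rf\in\{0,\infty\}$ everywhere for $f=1_U$, $U$ open nonempty, whence $P_t 1_U(x)>0$ (irreducibility) forces $R1_U(x)=\infty$, i.e.\ $\E_x[\int_0^\infty 1_U(X_t)\,dt]=\infty$, and then a $0$--$1$ law plus continuity of $x\mapsto\P_x(L_U=\infty)$ (Lemma~\ref{lem:3.2.6}) gives $\P_x(L_U=\infty)=1$.

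\textbf{(iv) Probabilistic dichotomy} then follows by combining (i), (ii), (iii): $(T_t)_{t>0}$ is either recurrent or transient, and each case is equivalent to the corresponding probabilistic property, which are mutually exclusive and, together, exhaustive for $\M$.

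The main obstacle I anticipate is item (iii) in the non-symmetric, non-sectorial setting: the classical Fukushima-type equivalence between recurrence of the form and recurrence of the process leans on symmetry, so I must route everything through the dichotomy (i), the pointwise (not just $\mu$-a.e.) identity $Rf=Gf$ obtained via lower semicontinuity and irreducibility in the probabilistic sense (Lemma~\ref{lem:2.7}(i),(ii)), and the continuity of occupation probabilities (Lemma~\ref{lem:3.2.6}); the delicate part is promoting $\mu$-a.e.\ statements about $Gf$ to everywhere-statements about $Rf$ and handling the last-exit-time $0$--$1$ law carefully, which is exactly where the regularity of $(P_t)_{t>0}$ earns its keep.
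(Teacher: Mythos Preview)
Your overall architecture matches the paper's (strict irreducibility for (i); citing \cite{GT2} machinery; Lemma~\ref{lem:3.2.6} to pass from $\mu$-a.e.\ to everywhere; deducing (iv) from (i)--(iii)), but three of your implementations have real gaps that the paper avoids.

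\textbf{(ii), forward direction.} Your step ``$\int_0^\infty g(X_t)\,dt<\infty$ $\P_x$-a.s.\ with $g>0$ and continuous paths $\Rightarrow X_t\to\Delta$'' does not go through: without $g$ being bounded below on compacts (which fails for a merely measurable $g>0$), finite occupation integral does not preclude $X$ from returning to a compact set along a sequence of times tending to infinity. Even upgrading $g$ to be continuous and strictly positive only yields finite \emph{Lebesgue measure} of time in each compact, not finite last exit time. The paper sidesteps this by working with the bounded excessive function $Rg$ itself: it shows $(Rg(X_t))_{t\ge 0}$ is a positive supermartingale, introduces the \emph{open} sets $U_n=\{Rg>1/n\}$ (open by lower semicontinuity, covering $\R^d$ since $Rg>0$), and applies optional stopping at $t+\sigma_{V_N}\circ\vartheta_t$ to get $\P_x(t+\sigma_{V_N}\circ\vartheta_t<\infty)\le M\cdot P_tRg(x)\to 0$, hence $\P_x(L_{V_N}<\infty)=1$. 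This supermartingale/optional stopping route is the missing idea in your sketch.

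\textbf{(iii), forward direction.} Your first route is circular: you invoke (iv) to pass from ``$\M$ not transient'' to ``$\M$ recurrent'', but your proof of (iv) rests on (iii). Your alternative direct route asserts ``$R1_U(x)=\infty$ everywhere plus a $0$--$1$ law plus Lemma~\ref{lem:3.2.6} gives $\P_x(L_U=\infty)=1$'', but no $0$--$1$ law for $\vartheta_t$-invariant events is available in this non-symmetric setting, and infinite expected occupation time does \emph{not} by itself force $\P_x(L_U=\infty)=1$ (the occupation time could have infinite mean yet be a.s.\ finite). The paper instead invokes \cite[Proposition~11(d)]{GT2} to obtain $\P_x(L_U=\infty)=1$ for $\mu$-a.e.\ $x$ directly from recurrence of $(T_t)_{t>0}$, and only then uses Lemma~\ref{lem:3.2.6} to extend to every $x$. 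The converse of (iii) you handle correctly, and the paper does it the same way.

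\textbf{(v).} Your dual argument is garbled: non-conservativeness of $(T_t)_{t>0}$ means $T_t1_{\R^d}<1$, not $T_t'1_{\R^d}<1$ (cf.\ Remark~\ref{rem:2.1.10a}(i)), and the passage from a defect of mass to a finite potential for $(T_t)_{t>0}$ is not made precise. The paper simply cites \cite[Corollary~20]{GT2}, which in the generalized Dirichlet form framework gives recurrence $\Rightarrow$ conservativeness directly; your sketch would need substantial additional work to stand alone.
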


\begin{proof}
(i) Since $(T_t)_{t>0}$ is strictly irreducible by Proposition \ref{prop:2.4.2}(ii), $(T_t)_{t>0}$ is either recurrent or transient by \cite[Remark 3(b)]{GT2}.\\
(ii) If $(T_t)_{t>0}$ is transient, then by  \cite[Lemma 6]{GT2} there exists $g \in L^{\infty}(\R^d, \mu)$ with $g(x)>0$ for all $x\in \R^d$ such that 
$$
R g =\E_{\cdot}\Big [\int_0^{\infty} g(X_t) dt\Big ]\in L^{\infty}(\R^d, \mu).
$$
Since $Rg$ is lower-semicontinuous
$$
V :=\big \{Rg-\|Rg\|_{L^{\infty}(\R^d, \mu)}>0\big \}
$$
is open. Since $\mu(V)=0$ and $\mu$ has full support, we must have that $V=\varnothing$. It follows that $Rg\le\|Rg\|_{L^{\infty}(\R^d, \mu)}$ pointwise so that the adapted process $t\mapsto Rg(X_t)$ is $\mathbb{P}_x$-integrable for any $x\in \R^d$. Using the Markov property,
for any $0 \leq s <t$ and $x\in \R^d$
\begin{eqnarray*}
\E_x \left[ R g(X_{t})  \vert \mathcal{F}_s \right] &=& \E_{X_s} \left[ R g(X_{t-s})\right] \\
&=&   P_{t -s} R g(X_s)\  =\  \int_{t-s}^{\infty} P_u g(X_s) du  \ \leq \  R g(X_s)
\end{eqnarray*}
and moreover since $\M$ is a normal Markov process with right-continuous sample paths, we obtain $Rg(x)>0$ for any $x\in \R^d$. Thus, $(\Omega , \mathcal{F}, (\mathcal{F}_t)_{t \geq 0}, \left ( Rg(X_t)\right )_{t \geq 0}, \P_x)$ is a positive supermartingale for all $x\in \R^d$. \\
Let $U_n:=\{Rg>\frac{1}{n}\}$, $n \in \N$. Since $Rg$ is lower-semicontinuous, $U_n$ is open in $\R^d$ and since $Rg>0$ everywhere, $\R^d = \cup_{n \in \N} U_n$. \\
Let $K\subset \R^d$ be an arbitrary compact set. Since $\{U_n\cap B_n\}_{n\in \N}$ is an open cover of $K$, there exists $N\in \N$ with $K\subset V_N:=U_N\cap B_N$. Since $\overline{V}_N$ is compact and  $\{U_n\}_{n\in \N}$ is an open cover of  $\overline{V}_N$ there exists $M\in \N$, with 
$$
K\subset  V_N   \subset \overline{V}_N \subset U_M. 
$$
By the optional stopping theorem for positive supermartingales, for $t\ge 0$ and $x \in \R^d$ and $\sigma_{V_N}$ as in Definition \ref{stopping times}
\begin{eqnarray*}
P_t Rg(x)&=&\E_x\left[Rg(X_t)\right] \geq \E_x\big [ Rg(X_{t+\sigma_{V_N} \circ \vartheta_t}) \big ] \\
&\geq& \E_x\big [ Rg(X_{t+\sigma_{V_N}  \circ \vartheta_t}) 1_{\{ t+\sigma_{V_N}  \circ \vartheta_t<\infty \}} \big] \\
&\geq& \frac{1}{M} \cdot \P_x\left( t+\sigma_{V_N}  \circ \vartheta_t<\infty \right),
\end{eqnarray*}
and the last inequality holds since $X_{t+\sigma_{V_N}  \circ \vartheta_t} \in \overline{V}_N$, $\P_x$-a.s. on $\{ t+\sigma_{V_N}  \circ \vartheta_t<\infty \}$. Consequently,
$$
\lim_{t\rightarrow \infty}\P_x\left( t+\sigma_{V_N}  \circ \vartheta_t<\infty \right)\leq M \cdot \lim_{t\rightarrow \infty} P_t R g(x)=0,
$$
hence $\P_x\left( t+\sigma_{V_N}  \circ \vartheta_t<\infty  \text{ for all } t>0 \right)=0$ for any $x \in \R^d$. Therefore,
$$
1=\P_x\left( t+\sigma_{V_N}  \circ \vartheta_t=\infty  \text{ for some } t>0 \right)=\P_x(L_{V_N} <\infty).
$$
Since $L_K \leq L_{V_N}<\infty$ \,$\P_x$-a.s. for all $x\in \R^d$ (cf. \eqref{lastexittime} for the definition of $L_A$), we obtain the transience of $\M$ in the probabilistic sense.\\ 
Conversely, assume that $\M$ is transient in the probabilistic sense. Then condition (8) of \cite[Proposition 10]{GT2} holds with $B_n$ being the Euclidean ball of radius $n$ about the origin. Consequently, by  \cite[Proposition 10]{GT2}, there exists
$g \in L^1(\R^d, \mu)$ with $g>0$ $\mu$-a.e., such that $Rg(x)<\infty$ for $\mu$-a.e. $x\in \R^d$. Since $Rg$ is a $\mu$-version of $Gg$, we obtain that $(T_t)_{t>0}$ is transient.\\
(iii) Assume that $(T_t)_{t>0}$ is recurrent. Let $U$ be a nonempty open set in $\R^d$. Then $U$ is not $\mu$-polar and finely open. Thus by \cite[Proposition 11(d)]{GT2}, $\P_x(L_U<\infty)=1$ for $\mu$-a.e. $x \in \R^d$. Since $\{L_U<\infty \}\in \mathcal{F}$ is $\vartheta_t$-invariant for all $t>0$, it follows from Lemma \ref{lem:3.2.6} that
$$
\P_x(L_U<\infty)=1, \; \text{ for all } x \in \R^d
$$
as desired. \\
Conversely, if $\M$ is recurrent in the probabilistic sense, then $\M$ cannot be transient in the probabilistic sense. Thus $(T_t)_{t>0}$ cannot be transient by (ii). Therefore $(T_t)_{t>0}$ is recurrent by (i).\\
(iv) The assertion follows from (i), (ii) and (iii).\\
(v) This follows from \cite[Corollary 20]{GT2}.
\end{proof}

\begin{lemma} \label{lem:3.2.8}
Assume {\bf (a)} of 
Section \ref{subsec:2.2.1} and {\bf (b)} of Section \ref{subsec:3.1.1} hold. Let $\M$ be as in Theorem \ref{th: 3.1.2} (see also Theorem \ref{theo:3.1.4})
For any $x \in \R^d$ and $N\in \N$, it holds that $\P_x(\sigma_{N}<\infty)=1$, where $\sigma_N$, $N\in \N$, is as in Definition \ref{stopping times}.
\end{lemma}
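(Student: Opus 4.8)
The plan is to show that $\M$ leaves $B_N$ within a unit of time with a probability bounded away from $0$ uniformly over $\overline{B}_N$, and then to iterate this via the Markov property. Fix $N\in\N$ and set $A:=\R^d\setminus\overline{B}_N$, a nonempty open set, so $\mu(A)>0$ since $\mu$ has full support. By Theorem \ref{theo:2.6}, $P_1 1_A$ is a continuous version of $T_1 1_A$ on $\R^d$, and by Proposition \ref{prop:2.4.2}(i) (irreducibility in the probabilistic sense) $P_1 1_A(x)>0$ for every $x\in\R^d$. As a continuous strictly positive function attains a positive minimum on the compact set $\overline{B}_N$,
$$
p:=\min_{x\in\overline{B}_N}P_1 1_A(x)>0 .
$$

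Next I would establish the key estimate $\P_x(\sigma_N\le 1)\ge P_1 1_A(x)$ for all $x\in\R^d$. On the event $\{X_1\in A\}$ one has $X_1\in\R^d$, hence $\zeta>1$, so $t\mapsto X_t$ maps $[0,1]$ continuously into $\R^d$ by Theorem \ref{th: 3.1.2}; since $\|X_1\|>N$, continuity yields some $s\in(0,1)$ with $\|X_s\|>N$, i.e.\ $X_s\in\R^d\setminus B_N$, so $\sigma_N\le s<1$. Thus $\{X_1\in A\}\subset\{\sigma_N\le 1\}$, and since $\P_x(X_1\in A)=\E_x[1_A(X_1)]=P_1 1_A(x)$ by Theorem \ref{th: 3.1.2}, the estimate follows; in particular $\P_x(\sigma_N\le 1)\ge p$ for $x\in\overline{B}_N$.

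Finally I would iterate the Markov property. For $m\ge 1$, $\sigma_N>m$ is equivalent to $X_t\in B_N$ for all $t\in(0,m]$; hence $\{\sigma_N>n+1\}=\{\sigma_N>n\}\cap\vartheta_n^{-1}\{\sigma_N>1\}$ for $n\ge 1$, while $\{\sigma_N>n\}\in\mathcal F_n$ and $X_n\in B_N\subset\overline{B}_N$ on this event. By the Markov property (M.4) of Definition \ref{def:3.1.1} and the key estimate,
$$
\P_x(\sigma_N>n+1)=\E_x\big[1_{\{\sigma_N>n\}}\,\P_{X_n}(\sigma_N>1)\big]\le(1-p)\,\P_x(\sigma_N>n),\qquad n\ge 1,
$$
for every $x\in\R^d$. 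Iterating gives $\P_x(\sigma_N>n)\le(1-p)^{\,n-1}\to 0$ as $n\to\infty$, whence $\P_x(\sigma_N<\infty)=1$.

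I do not expect a genuine obstacle here. The only delicate point is the passage from $\{X_1\in A\}$ to the hitting time $\sigma_N$, which relies on the pathwise continuity of $\M$ on the one-point compactification $\R^d_\Delta$, and on the fact that the uniform lower bound $p>0$ is furnished precisely by the strong Feller property (continuity of $P_1 1_A$) together with the irreducibility in the probabilistic sense, both established earlier; the remainder is a standard renewal-type argument.
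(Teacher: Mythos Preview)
Your proof is correct, but the paper's argument is different and much shorter. The paper simply invokes the dichotomy established in Theorem \ref{theo:3.3.6}(iv): $\M$ is either recurrent or transient in the probabilistic sense. If $\M$ is recurrent, then $\P_x(L_{\R^d\setminus\overline{B}_N}=\infty)=1$, so certainly $\sigma_N<\infty$; if $\M$ is transient, then $\P_x(L_{\overline{B}_N}<\infty)=1$, and again $\sigma_N<\infty$ (with the trivial case $x\in\R^d\setminus\overline{B}_N$ handled separately).

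Your route is more self-contained: it bypasses the recurrence/transience machinery entirely and rests only on the strong Feller property (Theorem \ref{theo:2.6}) and irreducibility in the probabilistic sense (Proposition \ref{prop:2.4.2}(i)), both of which are available well before Section \ref{subsec:3.2.2}. The uniform lower bound $p>0$ followed by a geometric decay via the Markov property is a classical renewal-type argument and works without issue here. One minor simplification: on $\{X_1\in A\}$ you already have $X_1\in\R^d\setminus\overline{B}_N\subset\R^d\setminus B_N$, so $\sigma_N\le 1$ directly; the intermediate value step is not needed. What your approach buys is independence from Theorem \ref{theo:3.3.6}, so the lemma could in principle be placed earlier; what the paper's approach buys is a two-line proof once the dichotomy is in hand.
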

\begin{proof}
Let $x \in \R^d$ and $N \in \N$. If $x \in \R^d \setminus \overline{B}_N$, then $\P_x(\sigma_N=0)=1$. Assume that $x \in \overline{B}_{N}$. Since $\M$ is either recurrent or transient in the probabilistic sense by Theorem \ref{theo:3.3.6}, it follows that $\P_x(L_{\R^d \setminus \overline{B}_{N}}=\infty)=1$ or $\P_x(L_{\overline{B}_{N}}<\infty)=1$ (cf. \eqref{lastexittime} for the definition of $L_A$), hence the assertion follows.
\end{proof}
\noindent
The following criterion to obtain the recurrence of $\M$ in the probabilistic sense is proven by a well-known technique which involves stochastic calculus (see for instance \cite[Theorem 1.1, Chapter 6.1]{Pi}), but we ultimately use our results, Lemma \ref{lem:3.2.8}, Theorem \ref{theo:3.3.6}(iv) and the claim of Lemma \ref{lem3.2.6}, so that, 
in contrast to \cite{Pi}, also the case of a locally unbounded drift coefficient can be treated.
\begin{proposition} \label{theo:3.2.6}
Assume {\bf (a)} of 
Section \ref{subsec:2.2.1} and {\bf (b)} of Section \ref{subsec:3.1.1} hold.
Let $N_0 \in \N$. Let $g\in C^2(\R^d \setminus \overline{B}_{N_0}) \cap C(\R^d)$, $g \geq 0$, with
\begin{equation*} 
\lim_{r \rightarrow \infty} (\inf_{\partial B_r} g) = \infty.
\end{equation*}
and assume that
$$
Lg\le 0 \quad \text{a.e. on }\, \R^d\setminus \overline{B}_{N_0}.
$$ 
Then $\M$ is recurrent in the probabilistic sense (see Definition \ref{def:3.2.2.3}).
\end{proposition}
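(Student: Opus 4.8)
The plan is to reduce $g$ to a globally defined $C^2$ Lyapunov-type function, run a classical exit-time argument on annuli to show that the process started far out returns to a fixed ball almost surely, and then exclude transience via the dichotomy of Theorem~\ref{theo:3.3.6}(iv). First I would invoke the \emph{Claim} established inside the proof of Lemma~\ref{lem3.2.6}: since $g\in C^2(\R^d\setminus\overline{B}_{N_0})\cap C(\R^d)$, $g\ge 0$ and $\lim_{r\to\infty}\inf_{\partial B_r}g=\infty$, there are $N_1\in\N$ with $N_1\ge N_0+2$ and $\psi\in C^2(\R^d)$, $\psi\ge 0$, with $\psi\equiv g$ on $\R^d\setminus B_{N_1}$; consequently $\lim_{r\to\infty}\inf_{\partial B_r}\psi=\infty$ and $L\psi\le 0$ a.e.\ on $\R^d\setminus\overline{B}_{N_1}$. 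Since moreover $Lg\le 0\le 1\cdot g$, Lemma~\ref{lem3.2.6} (equivalently Proposition~\ref{prop:3.2.8}) already shows that $\M$ is non-explosive, a fact I use throughout.

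Next, fix $N\ge N_1$ and a starting point $x$ with $\|x\|>N$. For $n>\|x\|$ set $\tau_n:=\sigma_{\overline{B}_N}\wedge\sigma_n$, with $\sigma_{\overline{B}_N}$ the first return time to $\overline{B}_N$ and $\sigma_n$ the exit time of $B_n$, both as in Definition~\ref{stopping times}. I would show that $(\psi(X_{t\wedge\tau_n}))_{t\ge 0}$ is a bounded nonnegative $\P_x$-supermartingale. Picking $\psi_n\in C_0^2(\R^d)$ with $\psi_n\equiv\psi$ on $\overline{B}_n$, Proposition~\ref{prop:3.1.6} makes $M^{\psi_n}_t=\psi_n(X_t)-\psi_n(x)-\int_0^tL_q\psi_n(X_s)\,ds$ a continuous martingale, and on $B_n\setminus\overline{B}_N$ one has $L_q\psi_n=L\psi\le 0$ a.e. Writing $h:=(L\psi)\,1_{B_n\setminus\overline{B}_N}$, which lies in $L^q(\R^d,\mu)$ with compact support and is $\le 0$ a.e., Proposition~\ref{prop:3.1.4} (the Krylov-type estimate) shows that $\int_0^{\cdot}h(X_s)\,ds$ is $\P_x$-a.s.\ well defined, independent of the chosen $\mu$-version, and hence $\le 0$; since $X_s\in B_n\setminus\overline{B}_N$ for $s<\tau_n$, this gives $\int_0^{t\wedge\tau_n}L_q\psi_n(X_s)\,ds\le 0$ a.s., and stopping $M^{\psi_n}$ at $\tau_n$ yields the supermartingale property. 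As $0\le\psi(X_{t\wedge\tau_n})\le\sup_{\overline{B}_n}\psi$, optional stopping gives, for every $t$,
$$
\psi(x)\ \ge\ \E_x\big[\psi(X_{t\wedge\tau_n})\big]\ \ge\ \Big(\inf_{\partial B_n}\psi\Big)\,\P_x\big(\sigma_n\le t,\ \sigma_n<\sigma_{\overline{B}_N}\big).
$$
Letting $t\to\infty$, then $n\to\infty$, and using $\inf_{\partial B_n}\psi\to\infty$ gives $\P_x(\sigma_n<\sigma_{\overline{B}_N})\to 0$; these events decrease in $n$, so $\P_x\big(\bigcap_n\{\sigma_n<\sigma_{\overline{B}_N}\}\big)=0$. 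Combining this with Lemma~\ref{lem:3.2.8}, which yields $\P_x(\sigma_n<\infty\text{ for all }n)=1$, I conclude $\P_x(\sigma_{\overline{B}_N}<\infty)=1$ for every $x$ with $\|x\|>N$.

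Finally, I would rule out transience. If $\M$ were transient in the probabilistic sense, then $\P_x(L_{\overline{B}_N}<\infty)=1$ for all $x$, where $L_{\overline{B}_N}$ is the last exit time from $\overline{B}_N$. But for $x$ with $\|x\|>N$ and each rational $q>0$, on $\{L_{\overline{B}_N}<q\}$ one has $X_q\notin\overline{B}_N$ and $X_t\notin\overline{B}_N$ for all $t\ge q$, so by the Markov property at time $q$ and the previous step, $\P_x(\{L_{\overline{B}_N}<q\})\le \E_x\big[1_{\{\|X_q\|>N\}}\,\P_{X_q}(\sigma_{\overline{B}_N}=\infty)\big]=0$; summing over $q\in\mathbb{Q}_{>0}$ gives $\P_x(L_{\overline{B}_N}<\infty)=0$, a contradiction. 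Hence $\M$ is not transient, and by the dichotomy Theorem~\ref{theo:3.3.6}(iv) it is recurrent in the probabilistic sense — which, by Definition~\ref{def:3.2.2.3}, is exactly the assertion that $\P_x(L_U=\infty)=1$ for every nonempty open $U$ and every $x$.

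The main obstacle is the rigorous justification of the supermartingale property in the second paragraph: $\psi$ is not compactly supported and $L\psi\le 0$ holds only in the Lebesgue-a.e.\ sense with $L\psi$ possibly locally unbounded, so one genuinely needs the localization via the $C_0^2$-approximants $\psi_n$ together with the Krylov-type estimate of Theorem~\ref{theo:3.3}/Proposition~\ref{prop:3.1.4} to control $\int_0^{t\wedge\tau_n}L\psi(X_s)\,ds$ on the annulus; this is precisely where the argument departs from the classical locally-bounded-drift treatment of \cite{Pi}. The remaining steps — optional stopping, the limit $n\to\infty$, and the passage from "returns once a.s." to "not transient" — are then routine given Lemma~\ref{lem:3.2.8} and Theorem~\ref{theo:3.3.6}.
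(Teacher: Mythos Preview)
Your proposal is correct and follows essentially the same route as the paper: reduce $g$ to a global $C^2$ function $\psi$ via the Claim in Lemma~\ref{lem3.2.6}, run a supermartingale exit-time argument on annuli using Lemma~\ref{lem:3.2.8}, and conclude via the dichotomy Theorem~\ref{theo:3.3.6}(iv). The only cosmetic differences are that the paper applies It\^o's formula through Theorem~\ref{theo:3.1.4}(i) rather than Proposition~\ref{prop:3.1.6} with a $C_0^2$-cutoff, and in the last step the paper directly shows $\P_x(L_{B_{N_2}}=\infty)=1$ via the Markov property at integer times rather than contradicting transience at rational times; neither changes the substance of the argument.
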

\begin{proof}
By the claim of Lemma \ref{lem3.2.6}, there exists $N_1 \in \N$ with $N_1 \geq N_0+2$ and $\psi \in C^2(\R^d)$ with $\psi(x) \geq 0$ for all $x \in \R^d$, $\psi(x) =g(x)$ for all $x \in \R^d\setminus B_{N_1}$, such that
\begin{eqnarray}\label{eq:3.2.28a}
L \psi \leq 0 \quad \text{a.e. on }\, \R^d\setminus \overline{B}_{N_1}.
\end{eqnarray}
In particular, $\M$ is non-explosive by Lemma \ref{lem3.2.6}. We first show the following claim.\\
{\bf Claim:} Let $n \geq N_1$ and $x \in \R^d \setminus \overline{B}_n$ arbitrary. Then $\P_x(\sigma_{B_n}<\infty)=1$  (for $\sigma_{B_n}$ see Definition \ref{stopping times}). \\
To show the claim, choose any  $N \in \N$, with $x \in B_N$. By It\^{o}'s formula and Theorem \ref{theo:3.1.4}(i), $\P_x$-a.s. for any $t \in [0, \infty)$
$$
\psi(X_{t \wedge \sigma_{B_n}  \wedge \sigma_N})- \psi(x)  =\int_0^{t \wedge\sigma_{B_n}  \wedge \sigma_N} \nabla \psi \cdot \sigma(X_s) dW_s +\int_0^{t \wedge\sigma_{B_n}  \wedge \sigma_N}  L \psi (X_s) ds,
$$
where $\sigma=(\sigma_{ij})_{1 \leq i,j \leq d}$ is as in Lemma \ref{lem:3.1.5} and $\sigma_{N}$ as in Definition \ref{stopping times}). Taking expectations and using \eqref{eq:3.2.28a}
$$
\E_x \left[ \psi(X_{t \wedge \sigma_{B_n}  \wedge \sigma_N}) \right] \leq \psi(x).
$$
Since $\P_x(\sigma_N<\infty)=1$ by Lemma \ref{lem:3.2.8}, using Fatou's lemma, we obtain that
\begin{eqnarray*}
(\inf_{\partial B_N}\psi)\cdot \P_x(\sigma_{B_{n}}=\infty)&\le& \E_x[\psi(X_{\sigma_N})1_{\{\sigma_{B_{n}}=\infty\}}] \le \E_x[\psi(X_{\sigma_{B_{n}}\wedge\sigma_N})]\\
&\le& \liminf_{t \rightarrow \infty} \E_x \left[ \psi(X_{t \wedge \sigma_{B_n}  \wedge \sigma_N}) \right] \leq  \psi(x).
\end{eqnarray*}
Letting $N \rightarrow \infty$, we obtain $\P_x(\sigma_{B_{n}}=\infty)=0$ and the claim is shown.\\
Now let $x \in \R^d$ and $N_2:=N_1+1$. If $x \in \R^d \setminus \overline{B}_{N_2}$, then $\P_x(\sigma_{B_{N_2}}<\infty)=1$ by the claim. If $x \in B_{N_2}$, then $\P_x(\sigma_{B_{N_2}}=0)=1$, by the continuity and normal property of $\M$. Finally if $x \in \partial B_{N_2}$, then by the claim again $\P_x(\sigma_{B_{N_1}}<\infty)=1$, hence $\P_x(\sigma_{B_{N_2}}<\infty)=1$ since $\sigma_{B_{N_2}}< \sigma_{B_{N_1}}$, $\P_x$-a.s. Therefore, we obtain 
\begin{equation} \label{eq:3.2.29}
\P_x(\sigma_{B_{N_2}}<\infty)=1, \;\; \text{ for all $x \in \R^d$}.
\end{equation}
For $n\in \N$, define
$$
\Lambda_n:= \{ \omega \in \Omega:X_t(\omega)\in B_{N_2} \text{ for some } t\in[n,\infty) \}.
$$
Then $\Lambda_n=\{\omega \in \Omega:\sigma_{B_{N_2}}\circ \vartheta_n<\infty\}$, $\P_x$-a.s. Using the Markov property and that $\M$ is non-explosive, it follows by \eqref{eq:3.2.29} that
\begin{eqnarray*}
\P_x\big (  \sigma_{B_{N_2}}\circ \vartheta_n<\infty \big )  &=& \E_x\big[ 1_{\{\sigma_{B_{N_2}}<\infty\}}  \circ \vartheta_n \big] =  \E_x\big[\E_x\big[ 1_{\{\sigma_{B_{N_2}}<\infty\}} \circ \vartheta_n \big\vert \mathcal{F}_n\big]\big] \\
&=& \E_x\big[  \P_{X_n} \big(  \sigma_{B_{N_2}}<\infty \big) \big]=1.
\end{eqnarray*}
Therefore, $1=\P_x(\cap_{n \in \mathbb{N}} \Lambda_n)=\P_x(L_{B_{N_2}}=\infty)$, hence $\M$ is not transient. By Theorem \ref{theo:3.3.6}(iv), $\M$ is recurrent.
\end{proof}
\noindent
Choosing $g(x):= \ln(\|x\|^2 \vee N_0^2)+2$ as in the proof of Corollary \ref{cor:3.2.2} and using the same method as in the proof of Corollary \ref{cor:3.1.3},  the following result is a direct consequence of Proposition \ref{theo:3.2.6}.
\begin{corollary} \label{cor:3.2.2.5}
Assume {\bf (a)} of 
Section \ref{subsec:2.2.1} and {\bf (b)} of Section \ref{subsec:3.1.1} hold.
Assume that there exists $N_0\in \N$, such that 
\begin{eqnarray*}
-\frac{\langle A(x)x, x \rangle}{ \left \| x \right \|^2 }+ \frac12\mathrm{trace}A(x)+ \big \langle \mathbf{G}(x), x \big \rangle \leq 0
\end{eqnarray*}
for a.e. $x\in \R^d\setminus \overline{B}_{N_0}$. Then $\M$ is recurrent. In particular, if $d=2$ and $\Psi_1$, $\Psi_2$, $Q$, $A$ are as in Corollary \ref{cor:3.1.3}, and
$$
\frac{|\Psi_1(x)-\Psi_2(x)|}{2}  + \langle \mathbf{G}(x),x  \rangle \leq 0
$$
for a.e. $x\in \R^2\setminus \overline{B}_{N_0}$, then $\M$ is recurrent in the probabilistic sense (see Definition \ref{def:3.2.2.3}).
\end{corollary}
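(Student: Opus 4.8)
The plan is to derive Corollary \ref{cor:3.2.2.5} directly from Proposition \ref{theo:3.2.6} by constructing an explicit Lyapunov-type function $g$ satisfying $Lg\le 0$ outside a large ball. For the first assertion, set $g(x):=\ln(\|x\|^2\vee N_0^2)+2$, exactly as in the proof of Corollary \ref{cor:3.2.2}. Then $g\in C^{\infty}(\R^d\setminus\overline{B}_{N_0})\cap C(\R^d)$, $g\ge 0$, and $\lim_{r\to\infty}(\inf_{\partial B_r}g)=\infty$. A direct computation (already recorded in the proof of Corollary \ref{cor:3.2.2}) gives, on $\R^d\setminus\overline{B}_{N_0}$,
$$
Lg = -2\frac{\langle A(x)x,x\rangle}{\|x\|^4}+\frac{\mathrm{trace}(A(x))}{\|x\|^2}+\frac{2\langle\mathbf{G}(x),x\rangle}{\|x\|^2}
= \frac{2}{\|x\|^2}\left(-\frac{\langle A(x)x,x\rangle}{\|x\|^2}+\frac12\mathrm{trace}A(x)+\langle\mathbf{G}(x),x\rangle\right).
$$
Hence the hypothesis of the corollary is precisely equivalent to $Lg\le 0$ a.e. on $\R^d\setminus\overline{B}_{N_0}$, and Proposition \ref{theo:3.2.6} applies, yielding that $\M$ is recurrent in the probabilistic sense.

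For the second assertion (the case $d=2$ with a diagonalizable diffusion coefficient), I would use the same $g$ together with the pointwise bound established in the proof of Corollary \ref{cor:3.1.3}. Writing $y:=Q(x)x$ for $x\in\R^2\setminus\overline{B}_{N_0}$, one has $\|y\|^2=\|x\|^2$ and
$$
\langle A(x)x,x\rangle=\Psi_1(x)y_1^2+\Psi_2(x)y_2^2\ge(\Psi_1(x)\wedge\Psi_2(x))\|x\|^2,
$$
so that
$$
-\frac{\langle A(x)x,x\rangle}{\|x\|^2}+\frac12\mathrm{trace}A(x)\le-(\Psi_1(x)\wedge\Psi_2(x))+\frac{\Psi_1(x)+\Psi_2(x)}{2}=\frac{|\Psi_1(x)-\Psi_2(x)|}{2}.
$$
Combining this with the assumed inequality $\tfrac12|\Psi_1(x)-\Psi_2(x)|+\langle\mathbf{G}(x),x\rangle\le 0$ a.e. on $\R^2\setminus\overline{B}_{N_0}$ shows that the quantity in parentheses in the formula for $Lg$ above is $\le 0$, hence $Lg\le 0$ a.e. there, and Proposition \ref{theo:3.2.6} again gives recurrence in the probabilistic sense.

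Since both pieces are essentially verbatim reductions to already-proved statements, there is no substantive obstacle; the only care needed is bookkeeping. One must check that $g$ is genuinely $C^2$ on the open set $\R^d\setminus\overline{B}_{N_0}$ (the truncation $\|x\|^2\vee N_0^2$ is constant $=N_0^2$ only on $\overline{B}_{N_0}$, so on $\R^d\setminus\overline{B}_{N_0}$ we have $g(x)=\ln\|x\|^2+2$, which is smooth there) and merely continuous globally, which is exactly the regularity Proposition \ref{theo:3.2.6} requires of the test function. One must also verify the equivalence of the displayed scalar inequalities with $Lg\le 0$ using only the ellipticity and the $L^q_{loc}$-regularity of $\mathbf{G}$ from assumptions \textbf{(a)} and \textbf{(b)}, so that $Lg$ is a well-defined element of $L^q_{loc}$ and the a.e.\ inequality makes sense. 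With these routine checks the corollary follows immediately.
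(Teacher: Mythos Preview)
Your proof is correct and follows essentially the same approach as the paper, which simply remarks that choosing $g(x)=\ln(\|x\|^2\vee N_0^2)+2$ as in the proof of Corollary~\ref{cor:3.2.2} and arguing as in the proof of Corollary~\ref{cor:3.1.3} reduces the statement to Proposition~\ref{theo:3.2.6}. Your added regularity checks for $g$ are routine but accurate.
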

\noindent
Using Theorem \ref{theo:3.3.6} we obtain the following corollary of \cite[Theorem 21]{GT2}. 
\begin{proposition} \label{cor:3.3.2.6}
Consider the situation of Remark \ref{rem:2.2.4}. Define for $r\ge 0$,
\begin{equation*}
\varv_1(r):= \int_{B_r} \frac{\langle A(x)x, x \rangle}{\|x\|^2} \mu(dx), \ \ \ \varv_2(r):=\int_{B_r}   \big | \big \langle (\beta^{\rho, C^T}+ \overline{\mathbf{B}})(x),x \big \rangle \big | \mu(dx),
\end{equation*}
and let
$$
\varv(r):=\varv_1(r)+\varv_2(r), \ \ a_n:=\int_1^n \frac{r}{\varv(r)}dr, \ \ n\ge 1.
$$
Assume that
$$
\lim_{n\rightarrow \infty}a_n=\infty \ \ \ \text{and} \ \ \  \lim_{n\rightarrow \infty} \frac{\ln(\varv_2(n)\vee 1)}{a_n}=0.
$$
Then $(T_t)_{t>0}$ and $(T'_t)_{t>0}$ are recurrent (cf. Definitions \ref {def:3.2.2.2} and \ref{definition2.1.7}) 
and $\mu$ is $(\overline{T}_t)_{t>0}$-invariant (cf. Definition \ref{def:2.1.1}).
Moreover, if $\nabla (A+C^T) \in L^q_{loc}(\R^d, \R^d)$, then $\M$ is recurrent in the probabilistic sense (see Definition \ref{def:3.2.2.3}).
\end{proposition}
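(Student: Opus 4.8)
\textbf{Proof plan for Proposition \ref{cor:3.3.2.6}.}

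The plan is to deduce the statement from \cite[Theorem 21]{GT2} together with the results already at our disposal, namely Proposition \ref{prop:2.4.2}(ii) (strict irreducibility of $(T_t)_{t>0}$) and Theorem \ref{theo:3.3.6} (the recurrence/transience dichotomy and the equivalence of probabilistic recurrence for $\M$ with recurrence of $(T_t)_{t>0}$). First I would recall that in the situation of Remark \ref{rem:2.2.4} the density $\rho$ is explicitly given, $\rho\in H^{1,p}_{loc}(\R^d)\cap C(\R^d)$ with $\rho>0$, $\mathbf{B}=\beta^{\rho,C^T}+\overline{\mathbf{B}}$, and $\beta^{\rho,C^T}$ has zero $\mu$-divergence, so that the symmetric Dirichlet form $(\cE^0,D(\cE^0))$ and the perturbation by $\mathbf{B}$ are exactly of the type analyzed in \cite{GT2}. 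The quantities $\varv_1$, $\varv_2$ and $\varv=\varv_1+\varv_2$ introduced in the statement are precisely the volume-type functions appearing in the recurrence criterion of \cite{GT2}: $\varv_1(r)$ controls the diffusion part $\langle A(x)x,x\rangle/\|x\|^2$ integrated against $\mu$ over $B_r$, while $\varv_2(r)$ controls the drift part $|\langle\mathbf{B}(x),x\rangle|$ integrated against $\mu$ over $B_r$. I would then verify that the two hypotheses $\lim_{n\to\infty}a_n=\infty$ and $\lim_{n\to\infty}\ln(\varv_2(n)\vee 1)/a_n=0$ with $a_n=\int_1^n r/\varv(r)\,dr$ match verbatim the assumptions of \cite[Theorem 21]{GT2}, so that that theorem yields recurrence of $(T_t)_{t>0}$ in the sense of Definition \ref{def:3.2.2.2}.

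Next, for the recurrence of $(T'_t)_{t>0}$: since $-\overline{\mathbf{B}}$ (equivalently $-\mathbf{B}$, after replacing $\beta^{\rho,C^T}$ by $\beta^{\rho,(C^T)'}$ with $C$ replaced by $-C$, which also satisfies assumption {\bf (a)}) satisfies the same hypotheses — note that $\varv_2$ is defined with an absolute value and $\varv_1$ is unchanged under $C\mapsto -C$ since $A$ is unaffected — the co-form falls under the same \cite[Theorem 21]{GT2}, giving recurrence of $(T'_t)_{t>0}$. Then I would invoke Theorem \ref{theo:3.3.6}(v): recurrence of $(T_t)_{t>0}$ implies conservativeness of $(T_t)_{t>0}$, and applying the same to $(T'_t)_{t>0}$ gives conservativeness of $(T'_t)_{t>0}$; by Remark \ref{rem:2.1.10a}(i), conservativeness of $(T'_t)_{t>0}$ is equivalent to $(\overline{T}_t)_{t>0}$-invariance of $\mu$. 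This establishes the claimed analytic statements.

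For the final assertion — that $\M$ is recurrent in the probabilistic sense when additionally $\nabla(A+C^T)\in L^q_{loc}(\R^d,\R^d)$ — the extra integrability hypothesis is exactly assumption {\bf (b)} of Section \ref{subsec:3.1.1} (since $\mathbf{G}=\tfrac12\nabla(A+C^T)+\mathbf{H}$ and $\mathbf{H}\in L^p_{loc}\subset L^q_{loc}$), so the Hunt process $\M$ of Theorem \ref{th: 3.1.2} and Theorem \ref{theo:3.1.4} is available and Theorem \ref{theo:3.3.6} applies. Since $(T_t)_{t>0}$ is recurrent, Theorem \ref{theo:3.3.6}(iii) gives directly that $\M$ is recurrent in the probabilistic sense in the sense of Definition \ref{def:3.2.2.3}. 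The main obstacle I anticipate is the bookkeeping in matching the hypotheses here with the precise formulation of \cite[Theorem 21]{GT2}: one must check that the setting of Remark \ref{rem:2.2.4} supplies all the structural conditions required there (quasi-regularity and the structural conditions D3/SD3, which are covered via Proposition \ref{prop:3.1.3} and Lemma \ref{lemma2.1.4}), and that the volume functions $\varv_1,\varv_2$ are normalized in the same way; once that alignment is confirmed, the rest is a routine chaining of already-proved implications.
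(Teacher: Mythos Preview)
Your proposal is correct and follows essentially the same route as the paper's proof. Two small points of alignment: (i) \cite[Theorem 21]{GT2} actually delivers \emph{non-transience} of $(T_t)_{t>0}$ (applied with the radial function $\rho(x)=\|x\|$ in the notation of \cite{GT2}, which is unrelated to the density $\rho$ here), and recurrence then follows from the dichotomy Theorem \ref{theo:3.3.6}(i) --- you already list this dichotomy among your tools, so this is only a matter of phrasing; (ii) for $(T'_t)_{t>0}$ the paper simply replaces $\mathbf{B}=\beta^{\rho,C^T}+\overline{\mathbf{B}}$ by $-\mathbf{B}$ and reapplies the same argument, which is a cleaner way to say what you wrote about $C\mapsto -C$.
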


\begin{proof}
By \cite[Theorem 21]{GT2}  applied with $\rho(x)=\|x\|$ (the $\rho$ of \cite{GT2} is different from our density $\rho$ of $\mu$ defined here), $(T_t)_{t>0}$ is not transient. Hence by Theorem \ref{theo:3.3.6}(i), $(T_t)_{t>0}$ is recurrent. The same applies to $(T'_t)_{t>0}$ by replacing $\beta^{\rho, C^T}+ \overline{\mathbf{B}}$ with $-(\beta^{\rho, C^T}+ \overline{\mathbf{B}})$, hence we obtain that $(T'_t)_{t>0}$ is also recurrent. In particular, $(T'_t)_{t>0}$ is conservative by Theorem \ref{theo:3.3.6}(v), hence $\mu$ is $(\overline{T}_t)_{t>0}$-invariant by Remark \ref{rem:2.1.10a}(i). If $\nabla (A+C^T) \in L^q_{loc}(\R^d, \R^d)$, then $A$, $C$, $\mathbf{H}$ defined in Remark \ref{rem:2.2.4} satisfy {\bf (a)} and  {\bf (b)} of Sections \ref{subsec:2.2.1} and \ref{subsec:3.1.1}. Hence $\M$ is recurrent in the probabilistic sense by Theorem \ref{theo:3.3.6}(iii).
\end{proof}

\subsubsection{Long time behavior: Ergodicity, existence and uniqueness of invariant measures, examples/counterexamples}
\label{subsec:3.2.3} 
Throughout this section we will assume that {\bf (a)} of Section \ref{subsec:2.2.1} holds and that assumption {\bf (b)} of Section \ref{subsec:3.1.1}  holds. Let 
$$
\mu=\rho\,dx
$$ 
be as in Theorem \ref{theo:2.2.7} or as in  Remark \ref{rem:2.2.4}.

\begin{definition}\label{def:invariantforprocess}
Consider a right process
$$
\widetilde{\M} =  (\widetilde{\Omega}, \widetilde{\mathcal{F}}, (\widetilde{\mathcal{F}}_t)_{t \geq 0}, (\widetilde{X}_t)_{t \ge 0}, (\widetilde{\P}_x)_{x \in \R^d\cup \{\Delta\}})
$$  
with state space $\R^d$ (cf. Definition \ref{def:3.1.1}). A $\sigma$-finite measure $\widetilde{\mu}$ on $(\R^d, \mathcal{B}(\R^d))$  is called an {\bf invariant measure} for $\widetilde{\M}$\index{measure ! invariant for $\widetilde{\M}$}, if for any $t \geq 0$
\begin{eqnarray} \label{eq:3.29}
\int_{\R^d} \widetilde{\mathbb{P}}_x(\widetilde{X}_t \in A) \widetilde{\mu}(dx)=  \widetilde{\mu}(A), \quad \text{ for any } A \in \mathcal{B}(\R^d).
\end{eqnarray}
$\widetilde{\mu}$ is called a {\bf sub-invariant measure} for $\widetilde{\M}$\index{measure ! sub-invariant for $\widetilde{\M}$}, if \eqref{eq:3.29} holds with 
\lq\lq$=$\rq\rq \ replaced by \lq\lq$\leq$\rq\rq.
\end{definition}
 
\begin{remark} \label{rem:3.2.3.3} 
{\it Using monotone approximation by simple functions, $\widetilde{\mu}$ is an invariant measure for $\widetilde{\M}$, if and only if 
\begin{eqnarray} \label{eq:3.30}
\int_{\R^d} \widetilde{\mathbb{E}}_x\big[f(\widetilde{X}_t)\big] \widetilde{\mu}(dx)=  \int_{\R^d} f d \widetilde{\mu}, \quad \text{ for any $f \in \mathcal{B}^+_b(\R^d)$},
\end{eqnarray}
where $\widetilde{\E}_x$ denotes the expectation with respect to $\widetilde{\P}_x$. Likewise, $\widetilde{\mu}$ is a sub-invariant measure for $\widetilde{\M}$, if and only if \eqref{eq:3.30}  holds with \lq\lq$=$\rq\rq \ replaced by \lq\lq$\leq$\rq\rq. By the $L^1(\R^d, \mu)$ contraction property of $(T_t)_{t>0}$, $\mu$ (as at the beginning of this section) is always a sub-invariant measure for $\M$. Moreover, $\mu$ is $(\overline{T}_t)_{t>0}$(-sub)-invariant, if and only if $\mu$ is  a (sub-)invariant measure for $\M$ (cf. Definition \ref{def:2.1.1}(ii), Theorem \ref{theo:2.6}, \eqref{semidef} and \eqref{eq:3.1semigroupequal-a.e.})}.
\end{remark}

\begin{lemma} \label{lem:3.2.9}

Assume {\bf (a)} of Section \ref{subsec:2.2.1} and {\bf (b)} of Section \ref{subsec:3.1.1} hold.
$(P_t)_{t \geq 0}$ (cf. Proposition \ref{prop:3.1.1}) is stochastically continuous, i.e.
$$
\lim_{t \rightarrow 0+}P_t(x, B_{r}(x))=1, \quad \text{ for all  $r>0$ and $x \in \R^d$.}
$$
Moreover, for each $t_0>0$, $(P_{t})_{t>0}$ is $t_0$-regular, i.e. for all $x \in \R^d$, the sub-probability measures $P_{t_0}(x, dy)$ are mutually equivalent. 
\end{lemma}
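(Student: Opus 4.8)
The plan is to derive both assertions directly from facts already established in the excerpt: the continuity of the sample paths of $\M$ together with the representation $P_tf(x)=\E_x[f(X_t)]$ (Theorem \ref{th: 3.1.2}, see also \eqref{eq:3.1semigroupequal-a.e.} and Proposition \ref{prop:3.1.4}), and the equivalence $P_t(x,\cdot)\sim\mu$ recorded in Proposition \ref{prop:3.1.1}(i) (which in turn rests on Lemma \ref{lem:2.7}(i)).

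For the stochastic continuity, fix $x\in\R^d$ and $r>0$. By Theorem \ref{th: 3.1.2} the process $\M$ has continuous sample paths on $\R^d_{\Delta}$, and by the normal property $\P_x(X_0=x)=1$, so $\P_x$-a.s. there is a (path-dependent) $\delta(\omega)>0$ with $X_t(\omega)\in B_r(x)$ for all $t\in[0,\delta(\omega))$; hence $1_{B_r(x)}(X_t)\to 1$ $\P_x$-a.s. as $t\to 0+$. Since $0\le 1_{B_r(x)}(X_t)\le 1$, Lebesgue's theorem applied along an arbitrary sequence $t_n\downarrow 0$ yields
$$
\lim_{t\to 0+}P_t(x,B_r(x))=\lim_{t\to 0+}\E_x\big[1_{B_r(x)}(X_t)\big]=1,
$$
using $P_t1_{B_r(x)}(x)=\E_x[1_{B_r(x)}(X_t)]$. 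As $x$ and $r$ were arbitrary, this is exactly stochastic continuity of $(P_t)_{t\ge 0}$.

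For the $t_0$-regularity, fix $t_0>0$. By Proposition \ref{prop:3.1.1}(i), for every $x\in\R^d$ the sub-probability measure $P_{t_0}(x,\cdot)$ is equivalent to $\mu$: if $\mu(N)=0$ then $P_{t_0}(x,N)=P_{t_0}1_N(x)=0$ trivially, and conversely $P_{t_0}1_N(x)=0$ forces $\mu(N)=0$ by Lemma \ref{lem:2.7}(i). Consequently, for any $x,x'\in\R^d$ one has $P_{t_0}(x,\cdot)\sim\mu\sim P_{t_0}(x',\cdot)$, so the family $\{P_{t_0}(x,dy):x\in\R^d\}$ consists of mutually equivalent measures, which is precisely the claimed $t_0$-regularity.

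There is essentially no serious obstacle here: both statements reduce to the path continuity of $\M$ and to the measure equivalence $P_t(x,\cdot)\sim\mu$ already proved. The only point deserving a little care is phrasing the dominated-convergence step so that it gives the full limit $t\to 0+$ rather than merely a $\liminf$; since $P_t(x,B_r(x))\le 1$ the $\liminf$ bound already closes the argument, so even this is harmless.
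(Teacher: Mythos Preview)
Your proof is correct and follows essentially the same approach as the paper's own proof: both arguments use path continuity of $\M$ together with $P_tf(x)=\E_x[f(X_t)]$ and dominated convergence for the stochastic continuity, and invoke Proposition \ref{prop:3.1.1}(i) (equivalence $P_{t_0}(x,\cdot)\sim\mu$) for the $t_0$-regularity. The paper merely states these two steps in a single line each, while you have spelled out the details.
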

\begin{proof}
By Lebesgue's theorem, for any $r>0$ and $x \in \R^d$ it holds that for $\M$ of Theorem \ref{th: 3.1.2}, 
$$
\lim_{t \rightarrow 0+}P_t(x, B_{r}(x))=\lim_{t \rightarrow 0+} \E_x\left[ 1_{B_r(x)}(X_t)\right]=1.
$$
By Proposition \ref{prop:3.1.1}(i), $(P_{t})_{t>0}$ is $t_0$-regular for any $t_0>0$.
\end{proof}
\noindent
The following theorem is an application of our results combined with those of \cite{DPZB}.
\begin{theorem} \label{theo:3.3.8}
Assume {\bf (a)} of Section \ref{subsec:2.2.1} and {\bf (b)} of Section \ref{subsec:3.1.1} hold.
Assume that there exists a finite invariant measure $\nu$ for $\M$  (see Definition \ref{def:invariantforprocess}) of Theorem \ref{th: 3.1.2} (see also Theorem \ref{theo:3.1.4}). Let $\mu=\rho\,dx$ 
be as in Theorem \ref{theo:2.2.7} or as in  Remark \ref{rem:2.2.4}. Then the followings are satisfied:
\begin{itemize}
\item[(i)]
$\M$ is non-explosive (Definition \ref{non-explosive}), hence $P_t(x, dy)$  is a probability measure on $(\R^d, \mathcal{B}(\R^d))$ for any $(x,t) \in \R^d \times (0, \infty)$ and equivalent to the Lebesgue measure (cf. Proposition \ref{prop:3.1.1}).
\item[(ii)] Any sub-invariant measure for $\M$ is finite and $\mu$ is a finite invariant measure for $\M$.
\item[(iii)] $\nu$ is unique up to a multiplicative constant. More precisely, 
if there exists another invariant measure $\pi$ for $\M$, then $\pi$ is finite and 
$$
\nu(A)= \frac{\nu(\R^d)}{\pi(\R^d)}\cdot \pi(A), \quad \text{ for all } A \in \mathcal{B}(\R^d).
$$
\item[(iv)] For any $s \in [1, \infty)$ and $f \in L^s(\R^d, \mu)$, we have
\begin{equation} \label{lslimit}
\lim_{t \rightarrow \infty}P_t f = \frac{1}{\mu(\R^d)}\int_{\R^d} f d \mu \quad \text{ in } L^s(\R^d, \mu)
\end{equation}
and for all  $x \in \R^d$, $A \in \mathcal{B}(\R^d)$
\begin{eqnarray} \label{1dmarginal}
\lim_{t \rightarrow \infty} P_t(x,A)=\lim_{t \rightarrow \infty}\P_x(X_t \in A) = \frac{\mu(A)}{\mu(\R^d)}.
\end{eqnarray}
\item[(v)] Let $A \in \mathcal{B}(\R^d)$ be such that $\mu(A)>0$ and $(t_n)_{n\ge 1}\subset (0,\infty)$ be any sequence with $\lim_{n\to \infty}t_n=\infty$. Then
\begin{equation} \label{eq:3.32*}
\P_x(X_{t_n}\in A \text{ for infinitely many } n\in \N)=1,\ \ \text{ for all } x\in \R^d.
\end{equation}
In particular, $\P_x(L_{A}=\infty)=1$  for all $x \in \R^d$ and $\M$ is recurrent in the probabilistic sense (see Definition \ref{def:3.2.2.3} and \eqref{lastexittime} for the definition of $L_A$).
\end{itemize}
\end{theorem}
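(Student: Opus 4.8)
The plan is to establish the five items in order, exploiting Doob's ergodic theorem together with the strong Feller property and irreducibility of $(P_t)_{t>0}$ already proven. First I would prove (i): since $\nu$ is a finite invariant measure for $\M$, we have $\int_{\R^d}\P_x(X_t\in\R^d)\,\nu(dx)=\nu(\R^d)<\infty$ for all $t\ge 0$, hence $\P_x(X_t\in\R^d)=1$ for $\nu$-a.e.\ $x$, and since $\nu$ is equivalent to Lebesgue measure (because $P_t(x,dy)\ll\mu$ and by the structure of invariant measures one checks $\nu\ll\mu$, while $\P_x(X_t\in\cdot)$ is equivalent to $\mu$ by Proposition \ref{prop:3.1.1}(i)), we get $P_t1_{\R^d}=1$ $\mu$-a.e., i.e.\ $(T_t)_{t>0}$ is conservative, so $\M$ is non-explosive by Corollary \ref{cor:3.2.1}. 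Non-explosion then upgrades $P_t(x,dy)$ to a probability measure for every $x$ via Lemma \ref{lem:2.7}(ii) or directly from conservativeness and continuity; equivalence to Lebesgue measure is Proposition \ref{prop:3.1.1}(i).

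For (ii), I would argue that since $(P_t)_{t>0}$ is now a Markovian (conservative) transition function which is strong Feller, irreducible in the probabilistic sense, and stochastically continuous with $t_0$-regularity (Lemma \ref{lem:3.2.9}), the hypotheses of the uniqueness part of Doob's theorem (in the form of \cite{DPZB}) are met: there is at most one invariant probability measure, and it is equivalent to $\mu$. Concretely: $\mu$ is always sub-invariant for $\M$ (Remark \ref{rem:3.2.3.3}); the existence of a finite invariant $\nu\sim\mu$ forces, by a Radon--Nikodym/density argument using that the density of $\mu$ w.r.t.\ $\nu$ must be $P_t$-superharmonic hence constant by irreducibility, that $\mu$ is finite and invariant. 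Any sub-invariant measure $\pi$ for $\M$ satisfies $\pi\le$ its $P_t$-image iterates; combined with the ergodic convergence it must be finite and a constant multiple of $\mu$. This simultaneously gives (iii): any other invariant measure $\pi$ is finite, hence by uniqueness of the invariant probability $\pi/\pi(\R^d)=\nu/\nu(\R^d)$.

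For (iv), the $L^s$-convergence \eqref{lslimit} follows from Doob's theorem applied to the strongly continuous sub-Markovian semigroup on $L^s(\R^d,\mu)$: by irreducibility and the $t_0$-regularity the semigroup is ergodic, and one passes from $s=2$ (mean ergodic theorem for the invariant projection onto constants, since $\mu$ is a finite invariant measure) to general $s\in[1,\infty)$ by density and contractivity. The pointwise marginal convergence \eqref{1dmarginal} is then obtained by applying \eqref{lslimit} with $f=1_A$ and using the strong Feller smoothing: $P_t1_A=P_{t-t_0}(P_{t_0}1_A)$ with $P_{t_0}1_A\in L^\infty$, and \eqref{eq:2.3.46} gives uniform-on-compacts control so that $L^1$-convergence plus equicontinuity yields pointwise convergence for every $x$ (alternatively invoke Lemma \ref{lem:3.2.6}-type continuity of $x\mapsto\P_x(\cdot)$ together with convergence $\mu$-a.e.\ along a subsequence).

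Finally (v): fix $A$ with $\mu(A)>0$ and $t_n\to\infty$. By \eqref{1dmarginal}, $\P_x(X_{t_n}\in A)\to\mu(A)/\mu(\R^d)>0$ for every $x$. The event $\{X_{t_n}\in A \text{ i.o.}\}$ is $\vartheta_t$-invariant-in-spirit; more precisely, using the Markov property and a conditional Borel--Cantelli / tail argument as in the proof of recurrence — conditioning on $\mathcal{F}_{t_m}$, the sums $\sum_n \P_{X_{t_m}}(X_{t_n-t_m}\in A)$ diverge uniformly because each term converges to the positive constant $\mu(A)/\mu(\R^d)$ — one deduces that the event has probability $1$ (its probability is a $\vartheta_t$-invariant function of $x$, hence continuous by Lemma \ref{lem:3.2.6}, and equals $1$ $\mu$-a.e.\ by a $0$--$1$ law coming from irreducibility, hence identically $1$). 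Since the event $\{X_{t_n}\in A \text{ i.o.}\}$ is contained in $\{L_A=\infty\}$ (recall \eqref{lastexittime}), we get $\P_x(L_A=\infty)=1$ for all $x$; applying this to $A=U$ any nonempty open set gives recurrence of $\M$ in the probabilistic sense (Definition \ref{def:3.2.2.3}). The main obstacle I anticipate is item (v): turning the marginal convergence \eqref{1dmarginal} into an almost-sure infinitely-often statement requires a careful conditional Borel--Cantelli argument (the increments $t_{n+1}-t_n$ need not be bounded below, so one must condition at times $t_m$ and use the uniform positive lower bound on $\P_{y}(X_{t_n-t_m}\in A)$ that comes from \eqref{1dmarginal} holding for \emph{every} $y$), and establishing the $0$--$1$ dichotomy for the i.o.\ event from strict irreducibility rather than from a genuine tail $\sigma$-field.
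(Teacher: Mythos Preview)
Your overall strategy is right---use strong Feller, irreducibility, stochastic continuity and $t_0$-regularity (Lemma \ref{lem:3.2.9}) to invoke Doob's theorem---but your ordering creates genuine gaps that the paper avoids by a different logical sequence.

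The paper establishes the \emph{pointwise} convergence
\[
\lim_{t\to\infty}\P_x(X_t\in A)=\frac{\nu(A)}{\nu(\R^d)}\qquad\text{for every }x\in\R^d,\ A\in\mathcal B(\R^d)
\]
\emph{first}, directly from \cite[Theorem 4.2.1(i)]{DPZB}, and then everything else cascades. In particular for (ii) the paper does not use any superharmonic-density argument (your sketch there is incomplete: you have not shown $\mu\ll\nu$, and ``sub-invariant $\pi\le$ its $P_t$-image iterates'' is not what sub-invariance says). Instead one takes any $\sigma$-finite sub-invariant $\kappa$, picks $A$ with $\kappa(A)<\infty$ and $\nu(A)>0$, and Fatou's lemma against the pointwise limit above gives $\kappa(\R^d)\cdot\nu(A)/\nu(\R^d)\le\liminf_t\int\P_x(X_t\in A)\,\kappa(dx)\le\kappa(A)<\infty$, forcing $\kappa(\R^d)<\infty$. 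That $\mu$ is then invariant follows immediately from conservativeness plus finiteness (Remark \ref{remark2.1.11}), with no density comparison needed. For (iv) the paper again goes pointwise $\Rightarrow$ $L^s$ (via Lebesgue on $C_b$ then density), not the reverse; your route through the mean ergodic theorem on $L^2$ first, then extracting pointwise convergence via equicontinuity, is workable but substantially more laborious.

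For (v), your proposed shift-invariance argument fails: the event $\{X_{t_n}\in A\text{ i.o.}\}$ is \emph{not} $\vartheta_t$-invariant for a fixed $t>0$ (shifting by $t$ replaces the sequence $(t_n)$ by $(t_n+t)$, which need not be a subsequence of $(t_n)$), so Lemma \ref{lem:3.2.6} does not apply. Your conditional Borel--Cantelli idea, however, is exactly the right one and is essentially what \cite[Proposition 3.4.5]{DPZB} proves: condition on $\mathcal F_{t_m}$, use the Markov property, and the fact that $\P_y(X_{t_n-t_m}\in A)\to\nu(A)/\nu(\R^d)>0$ for \emph{every} $y$ (which you have from the pointwise Doob convergence) makes the conditional second Borel--Cantelli lemma fire. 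The paper simply cites this result; if you reproduce it, drop the shift-invariance detour entirely.
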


\begin{proof}
(i) Since $\nu$ is finite and an invariant measure for $\M$,  it follows from \eqref{eq:3.30} that for any $t>0$ that 
$$
\int_{\R^d} (1-P_t1_{\R^d}) d\nu=0,
$$
hence $P_t 1_{\R^d}=1$, $\nu$-a.e. for any $t>0$. Thus, for some $(x_0, t_0) \in \R^d \times(0, \infty)$, $P_{t_0}1_{\R^d}(x_0)=1$ and then $(T_t)_{t>0}$ is conservative by Lemma \ref{lem:2.7}(ii). Consequently, $\M$ is non-explosive by Corollary \ref{cor:3.2.1}. \\
(ii)  By (i), Lemma \ref{lem:3.2.9}, \cite[Theorem 4.2.1(i)]{DPZB} it follows that for any $A \in \mathcal{B}(\R^d)$ and $x \in \R^d$ 
\begin{equation} \label{eq:3.33}
\lim_{t \rightarrow \infty}\P_x(X_t \in A)= \frac{\nu(A)}{\nu(\R^d)}.
\end{equation}
Now suppose that $\kappa$ is an infinite sub-invariant measure for $\M$. Since $\kappa$ is $\sigma$-finite, we can choose $A \in \mathcal{B}(\R^d)$ with $\kappa(A)<\infty$ and $\nu(A)>0$.
Then by \eqref{eq:3.33} and Fatou's lemma,
$$
\infty=\int_{\R^d} \frac{\nu(A)}{\nu(\R^d)} d\kappa \leq  \liminf_{t \rightarrow \infty}\int_{\R^d} \mathbb{P}_x(X_t \in A) \kappa(dx) \leq \kappa(A)<\infty, 
$$
which is a contradiction. Therefore, any sub-invariant measure is finite. In particular $\mu$ is finite. Since $(T_t)_{t>0}$ is conservative by (i) and $\mu$ is finite, it follows that $\mu$ is $(\overline{T}_t)_{t>0}$-invariant by Remark \ref{remark2.1.11}, so that $\mu$ is a finite invariant measure for $\M$. \\
(iii)  By (i), Lemma \ref{lem:3.2.9}, and \cite[Theorem 4.2.1(ii)]{DPZB}, $\frac{\nu}{\nu(\R^d)}$ is the unique invariant probability measure for $\M$. So, if there exists another invariant measure $\pi$ for $\M$, then $\pi$ must be finite by (ii) and therefore $\frac{\pi}{\pi(\R^d)}$ is an invariant probability measure for $\M$ which must then coincide with $\frac{\nu}{\nu(\R^d)}$.\\
(iv) By (iii), $\nu=\frac{\nu(\R^d)}{\mu(\R^d)} \mu$. Hence, \eqref{eq:3.12} (see Proposition \ref{prop:3.1.1}(i)) and \eqref{eq:3.33} implies \eqref{1dmarginal}. 
Using \eqref{eq:3.1.1} and that the strong convergence of $(P_t(x,\cdot))$ in \eqref{1dmarginal} implies weak convergence, we get
\begin{equation} \label{pointxg}
\lim_{t \rightarrow \infty} P_t f(x) = \frac{1}{\mu(\R^d)} \int_{\R^d} f d\mu, \quad x\in \R^d,\ f \in C_b(\R^d).
\end{equation}
Since $\mu$ is finite, \eqref{lslimit} follows from \eqref{pointxg} for any $f \in C_b(\R^d)$ using Lebesgue's theorem and the sub-Markovian property of $(P_t)_{t>0}$. Finally, using the denseness of $C_b(\R^d)$ in $L^s(\R^d, \mu)$ and the $L^s(\R^d, \mu)$-contraction property of $(P_t)_{t>0}$ for each $s \in [1, \infty)$, \eqref{lslimit} follows by a 3-$\varepsilon$ argument.\\
(v) By \cite[Proposition 3.4.5]{DPZB}, \eqref{eq:3.32*} holds, hence $\P_x(L_A=\infty)=1$ for all $A \in \mathcal{B}(\R^d)$ with $\mu(A)>0$ and $x \in \R^d$. Since $\mu(U)>0$ for any nonempty open set $U$ in $\R^d$, $\M$ is recurrent in the probabilistic sense.
\end{proof}

\begin{proposition} \label{prop:3.3.12}
Assume {\bf (a)} of Section \ref{subsec:2.2.1} and {\bf (b)} of Section \ref{subsec:3.1.1} hold.
Let $N_0 \in \N$. Let $g\in C^2(\R^d \setminus \overline{B}_{N_0}) \cap C(\R^d)$, $g \geq 0$, with
\begin{equation}  \label{eq:3.34*}
\lim_{r \rightarrow \infty} (\inf_{\partial B_r} g) = \infty.
\end{equation}
Assume that for some $c>0$
$$
Lg\le -c \quad \text{a.e. on }\, \R^d\setminus \overline{B}_{N_0}.
$$ 
Then $\mu$ is a finite invariant measure (see Definition \ref{def:invariantforprocess} and right before it) for $\M$ of Theorem \ref{th: 3.1.2} (see also Theorem \ref{theo:3.1.4}) and Theorem \ref{theo:3.3.8} applies.
\end{proposition}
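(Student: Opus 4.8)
The plan is to reduce the statement to Theorem \ref{theo:3.3.8} by exhibiting a \emph{finite} invariant measure for $\M$; by Remark \ref{rem:3.2.3.3} it suffices to show that $\mu$ is such a measure, and by the same remark this amounts to showing that $\mu$ is $(\overline T_t)_{t>0}$-invariant together with $\mu(\R^d)<\infty$. First I would observe that the hypothesis $Lg \le -c$ a.e.\ on $\R^d\setminus \overline B_{N_0}$ with $c>0$ in particular implies $Lg\le 0$ a.e.\ there, so by (the Claim inside the proof of) Lemma \ref{lem3.2.6} there are $N_1\ge N_0+2$ and $\psi\in C^2(\R^d)$ with $\psi\ge 0$, $\psi=g$ on $\R^d\setminus B_{N_1}$, and $L\psi\le 0$ a.e.\ on $\R^d\setminus\overline B_{N_1}$; moreover $\inf_{\partial B_r}\psi\to\infty$, so $\M$ is non-explosive by Proposition \ref{prop:3.2.8}. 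Shrinking $c$ if necessary we may also arrange $L\psi\le -c$ a.e.\ on $\R^d\setminus\overline B_{N_1}$, since $\psi=g$ outside $B_{N_1}$ anyway.

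The core of the argument is an It\^o/Dynkin computation establishing finiteness of $\mu$. Fix $x\in\R^d$ and the stopping times $\sigma_n$ of Definition \ref{stopping times}. Using Theorem \ref{theo:3.1.4}(i) and It\^o's formula applied to $\psi(X_{t\wedge\sigma_n})$ (localizing with $(D_n)$ exactly as in the proof of Proposition \ref{prop:3.2.8}, which is legitimate because $\psi\in C^2$ and the stochastic integral term is a genuine local martingale), I would write
$$
\E_x\!\left[\psi(X_{t\wedge\sigma_n})\right] = \psi(x) + \E_x\!\left[\int_0^{t\wedge\sigma_n} L\psi(X_s)\,ds\right].
$$
Since $\M$ is non-explosive, $\sigma_n\nearrow\infty$ $\P_x$-a.s.\ by Lemma \ref{lem:3.1.4}(i); letting $n\to\infty$ and using $\psi\ge 0$ together with the monotone/dominated convergence, one gets
$$
0\le \psi(x) + \E_x\!\left[\int_0^{t} L\psi(X_s)\,ds\right].
$$
Splitting $L\psi = (L\psi)\mathbf 1_{\overline B_{N_1}} + (L\psi)\mathbf 1_{\R^d\setminus \overline B_{N_1}}\le \|(L\psi)\mathbf 1_{\overline B_{N_1}}\|_\infty - c\,\mathbf 1_{\R^d\setminus \overline B_{N_1}}$ and using the Krylov-type estimate of Theorem \ref{theo:3.3}(i) to bound $\E_x[\int_0^t \mathbf 1_{\overline B_{N_1}}(X_s)\,ds]\le e^t c_{B,r}\mu(\overline B_{N_1})<\infty$, I would deduce
$$
c\,\E_x\!\left[\int_0^t \mathbf 1_{\R^d\setminus \overline B_{N_1}}(X_s)\,ds\right]\le \psi(x) + C(t),
$$
with $C(t)$ finite for each $t$. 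Rewriting $\E_x[\int_0^t \mathbf 1_{\R^d\setminus \overline B_{N_1}}(X_s)\,ds] = \int_0^t P_s\mathbf 1_{\R^d\setminus\overline B_{N_1}}(x)\,ds$ via Proposition \ref{prop:3.1.4}, this gives a uniform-in-$x$ (on compacts) control of the occupation time away from $\overline B_{N_1}$.

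The remaining step is to promote this to finiteness of $\mu$, and this is where I expect the main obstacle. The cleanest route is to note that Proposition \ref{prop:3.3.12} is exactly a known Lyapunov-type criterion for positive recurrence, and the standard way to conclude is: by Proposition \ref{theo:3.2.6}, the weaker hypothesis $L\psi\le 0$ already gives that $\M$ is recurrent in the probabilistic sense; combined with the strict-drift estimate $L\psi\le -c$ and the occupation-time bound above, one obtains via the integrated identity (averaging over $t$ and invoking $(\overline T_t)_{t>0}$-invariance of $\mu$, which holds because $\M$ is recurrent, by Theorem \ref{theo:3.3.6}(v) and Remark \ref{rem:2.1.10a}(i)) that $\int_{\R^d} L\psi\,d\mu$ would have to be both $\le 0$ (by infinitesimal invariance extended to $\psi$) and, were $\mu(\R^d\setminus\overline B_{N_1})=\infty$, forced to be $-\infty$ on the exterior piece while only finitely negative overall — a contradiction unless $\mu(\R^d\setminus\overline B_{N_1})<\infty$, hence $\mu(\R^d)<\infty$. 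With $\mu$ finite and $(\overline T_t)_{t>0}$-invariant, Remark \ref{rem:3.2.3.3} says $\mu$ is a finite invariant measure for $\M$, so Theorem \ref{theo:3.3.8} applies with $\nu=\mu$, completing the proof. The delicate point to get right is the justification that $\int L\psi\,d\mu$ is well-defined and the passage from the pointwise/occupation-time estimate to the measure statement; I would handle it by testing the infinitesimal invariance identity \eqref{eq:2.2.8} against a sequence $\psi\chi_m$ with $\chi_m$ smooth cutoffs increasing to $1$, controlling the commutator terms $\langle A\nabla\psi,\nabla\chi_m\rangle$ and $\psi L^A\chi_m$ using only that $\psi$ has at most the growth of $g$ and that $A\in H^{1,2}_{loc}$, and then invoking Fatou on the exterior region where $L\psi\le -c$.
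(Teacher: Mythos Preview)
Your overall plan---reduce to Theorem \ref{theo:3.3.8} by showing $\mu$ is a finite invariant measure for $\M$---matches the paper, and your use of the Claim in Lemma \ref{lem3.2.6} to produce $\psi\in C^2(\R^d)$ with $L\psi\le -c$ outside $\overline B_{N_1}$ is exactly what the paper does. But the paper's proof then diverges sharply from yours: for finiteness of $\mu$ it simply invokes \cite[Corollary~2.3.3]{BKRS} (originally \cite[Theorem~2]{BRS}), which says that an infinitesimally invariant measure for $L$ is finite whenever such a Lyapunov function $\psi$ exists; and for invariance it uses the already-established conservativeness of $(T_t)_{t>0}$ together with Remark \ref{remark2.1.11}(i), which says that for \emph{finite} $\mu$, $(\overline T_t)_{t>0}$-invariance and $(\overline T'_t)_{t>0}$-invariance coincide. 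No It\^o formula, no occupation-time bounds, no recurrence.

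Your attempt to prove finiteness from scratch has a genuine gap. The occupation-time estimate you derive is correct but does not connect to $\mu$; it bounds $\int_0^t P_s\mathbf 1_{\R^d\setminus\overline B_{N_1}}(x)\,ds$, not $\mu(\R^d\setminus\overline B_{N_1})$. The cutoff argument you sketch---testing infinitesimal invariance against $\psi\chi_m$---runs into the standard obstruction: the commutator terms $\int \psi\, L\chi_m\,d\mu$ and $\int\langle A\nabla\psi,\nabla\chi_m\rangle\,d\mu$ live on the annulus $\{\nabla\chi_m\neq 0\}$, where $\psi$ is large and you have \emph{no} a priori decay of $\mu$, so ``$\psi$ has at most the growth of $g$ and $A\in H^{1,2}_{loc}$'' is not enough to keep them bounded as $m\to\infty$. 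This is precisely the difficulty that the cited BRS/BKRS result resolves (via a more refined choice of test function than a naive cutoff). Separately, your route to $(\overline T_t)_{t>0}$-invariance via recurrence is slightly off: Theorem \ref{theo:3.3.6}(v) yields conservativeness of $(T_t)_{t>0}$, but by Remark \ref{rem:2.1.10a}(i) what you need for $(\overline T_t)_{t>0}$-invariance is conservativeness of $(T'_t)_{t>0}$; the missing link (recurrence of $(T_t)_{t>0}$ $\Leftrightarrow$ recurrence of $(T'_t)_{t>0}$) requires an additional citation, cf.\ Example \ref{ex:3.4.9}(i). The paper avoids all of this by first getting finiteness and only then reading off invariance from Remark \ref{remark2.1.11}(i).
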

\begin{proof}
First, $\M$ is non-explosive by Lemma \ref{lem3.2.6}, hence $(T_t)_{t>0}$ is conservative by Corollary \ref{cor:3.2.1}. By the claim of Lemma \ref{lem3.2.6}, there exists $N_1 \in \N$ with $N_1 \geq N_0+2$ and $\psi \in C^2(\R^d)$ with $\psi(x) \geq 0$ for all $x \in \R^d$ and $\psi(x)=g(x)$ for all $x\in \R^d \setminus \overline{B}_{N_1}$ such that
$$
L \psi \leq -c \quad \text{a.e. on }\, \R^d\setminus \overline{B}_{N_1}.
$$
It follows by \cite[2.3.3. Corollary]{BKRS} (see also \cite[Theorem 2]{BRS} for the original result) that $\mu$ is finite and then by Remark \ref{remark2.1.11} that $\mu$ is $(\overline{T}_t)_{t>0}$-invariant. Therefore, $\mu$ is a finite invariant measure for $\M$ and  Theorem \ref{theo:3.3.8} applies with $\nu=\mu$.
\end{proof}

\begin{corollary} \label{cor:3.2.3.7}
Assume {\bf (a)} of Section \ref{subsec:2.2.1} and {\bf (b)} of Section \ref{subsec:3.1.1} hold.
Let $N_0\in \N$ and $M>0$. Assume that either
\begin{eqnarray} \label{eq:3.35*}
-\frac{\langle A(x)x, x \rangle}{ \left \| x \right \|^2 }+ \frac12\mathrm{trace}A(x)+ \big \langle \mathbf{G}(x), x \big \rangle \leq -M \|x\|^2
\end{eqnarray}
for a.e. $x\in \R^d\setminus \overline{B}_{N_0}$ or
\begin{eqnarray} \label{eq:3.36}
\frac12\mathrm{trace}A(x)+ \big \langle \mathbf{G}(x), x \big \rangle \leq -M
\end{eqnarray}
for a.e. $x\in \R^d\setminus \overline{B}_{N_0}$.
Then $\mu$ is a finite invariant measure (see Definition \ref{def:invariantforprocess}) for $\M$ of Theorem \ref{th: 3.1.2} and Theorem \ref{theo:3.3.8} applies. In particular, if $d=2$ and $\Psi_1$, $\Psi_2$, $Q$, $A$ are as in Corollary \ref{cor:3.1.3} and
$$
\frac{|\Psi_1(x)-\Psi_2(x)|}{2}  + \langle \mathbf{G}(x),x  \rangle \leq -M\|x\|^2
$$
for a.e. $x\in \R^2\setminus \overline{B}_{N_0}$, then \eqref{eq:3.35*} is satisfied in this special situation.
\end{corollary}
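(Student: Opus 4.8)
The plan is to reduce Corollary \ref{cor:3.2.3.7} to Proposition \ref{prop:3.3.12} by exhibiting, in each of the two cases, an explicit Lyapunov function $g$ satisfying \eqref{eq:3.34*} together with $Lg \le -c$ a.e.\ outside a large ball. In the first case, mirroring the computation already carried out in the proof of Corollary \ref{cor:3.2.2}, I would take $g(x) := \ln(\|x\|^2 \vee N_0^2) + 2$. This function is $C^\infty$ on $\R^d \setminus \overline{B}_{N_0}$, continuous on $\R^d$, nonnegative, and satisfies $\lim_{r\to\infty}(\inf_{\partial B_r} g) = \infty$. A direct computation (as in Corollary \ref{cor:3.2.2}) gives, on $\R^d \setminus \overline{B}_{N_0}$,
$$
Lg = -2\,\frac{\langle A(x)x,x\rangle}{\|x\|^4} + \frac{\mathrm{trace}(A(x))}{\|x\|^2} + \frac{2\langle \mathbf{G}(x),x\rangle}{\|x\|^2} = \frac{2}{\|x\|^2}\left(-\frac{\langle A(x)x,x\rangle}{\|x\|^2} + \frac12\mathrm{trace}(A(x)) + \langle \mathbf{G}(x),x\rangle\right).
$$
Under \eqref{eq:3.35*} the parenthesised expression is $\le -M\|x\|^2$, so $Lg \le -2M$ a.e.\ on $\R^d \setminus \overline{B}_{N_0}$; hence Proposition \ref{prop:3.3.12} applies with $c = 2M$.

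In the second case I would instead take $g(x) := \|x\|^2 \vee N_0^2$ (or, to keep things smooth across the sphere $\partial B_{N_0}$, a $C^2$ function agreeing with $\|x\|^2$ outside $\overline{B}_{N_0}$ and with a constant inside $B_{N_0}$; any such modification is harmless since Proposition \ref{prop:3.3.12} only requires $g \in C^2(\R^d \setminus \overline{B}_{N_0}) \cap C(\R^d)$). On $\R^d \setminus \overline{B}_{N_0}$ we have $\partial_i g = 2x_i$, $\partial_i\partial_j g = 2\delta_{ij}$, so
$$
Lg = \frac12\sum_{i,j=1}^d a_{ij}(x)\cdot 2\delta_{ij} + \sum_{i=1}^d g_i(x)\cdot 2 x_i = \mathrm{trace}(A(x)) + 2\langle \mathbf{G}(x),x\rangle = 2\left(\tfrac12\mathrm{trace}(A(x)) + \langle \mathbf{G}(x),x\rangle\right).
$$
Under \eqref{eq:3.36} this is $\le -2M$ a.e.\ on $\R^d \setminus \overline{B}_{N_0}$, and again $g \ge 0$ with $\inf_{\partial B_r} g = r^2 \to \infty$, so Proposition \ref{prop:3.3.12} applies with $c = 2M$. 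In both cases the conclusion that $\mu$ is a finite invariant measure for $\M$ and that Theorem \ref{theo:3.3.8} applies follows verbatim from Proposition \ref{prop:3.3.12}.

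For the final \emph{In particular} assertion in dimension $d=2$, I would reuse the eigenvalue estimate from the proof of Corollary \ref{cor:3.1.3}: with $y := Q(x)x$ one has $\|y\| = \|x\|$ and $\langle A(x)x,x\rangle = \Psi_1(x)y_1^2 + \Psi_2(x)y_2^2 \ge (\Psi_1(x)\wedge\Psi_2(x))\|x\|^2$, whence
$$
-\frac{\langle A(x)x,x\rangle}{\|x\|^2} + \frac12\mathrm{trace}(A(x)) \le -(\Psi_1(x)\wedge\Psi_2(x)) + \frac{\Psi_1(x)+\Psi_2(x)}{2} = \frac{|\Psi_1(x)-\Psi_2(x)|}{2}
$$
on $\R^2 \setminus \overline{B}_{N_0}$. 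Adding $\langle \mathbf{G}(x),x\rangle$ to both sides and invoking the hypothesis $\frac{|\Psi_1(x)-\Psi_2(x)|}{2} + \langle \mathbf{G}(x),x\rangle \le -M\|x\|^2$ shows that \eqref{eq:3.35*} holds in this special situation, which is exactly what is claimed.

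I do not expect a genuine obstacle here: the result is a corollary obtained by plugging two standard Lyapunov functions into Proposition \ref{prop:3.3.12}, and all the needed computations are already performed (in slightly different form) in the proofs of Corollaries \ref{cor:3.2.2} and \ref{cor:3.1.3}. The only mild technical point worth care is the smoothing of $g$ across $\partial B_{N_0}$ in the second case so that the hypotheses of Proposition \ref{prop:3.3.12} are literally met; since that proposition is stated with $g$ merely $C^2$ off $\overline{B}_{N_0}$ and $C$ on $\R^d$, even this is essentially automatic, and one could alternatively absorb it by choosing $g(x) = (\|x\|^2 - N_0^2)^+ + (\text{smooth cutoff})$ or simply note that $\ln(\|x\|^2\vee N_0^2)+2$ works in the second case too (there $Lg \le \frac{2}{\|x\|^2}(\frac12\mathrm{trace}A + \langle\mathbf{G},x\rangle) \le -\frac{2M}{\|x\|^2}$, which is negative but not bounded away from $0$, so one does need the choice $g=\|x\|^2$ to get a strictly negative constant bound). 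Thus the cleanest writeup uses $g = \ln(\|x\|^2\vee N_0^2)+2$ for \eqref{eq:3.35*} and a $C^2$-smoothed version of $\|x\|^2$ for \eqref{eq:3.36}.
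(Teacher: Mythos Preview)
Your proposal is correct and follows essentially the same approach as the paper: the paper also uses $g(x)=\ln(\|x\|^2\vee N_0^2)+2$ for \eqref{eq:3.35*}, the function $f(x)=\|x\|^2$ for \eqref{eq:3.36}, and refers back to the proof of Corollary~\ref{cor:3.1.3} for the $d=2$ statement. Note that for the second case your smoothing discussion is unnecessary, since $f(x)=\|x\|^2$ is already $C^\infty(\R^d)$ and hence trivially satisfies the hypotheses of Proposition~\ref{prop:3.3.12}; the paper simply takes this function without any truncation.
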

\begin{proof}
Let $g(x)= \ln(\|x\|^2 \vee N_0^2)+2$, $x \in \R^d$ be as in the proof of Corollary \ref{cor:3.2.2}. Then $Lg \leq -2M$ a.e. on $\R^d\setminus \overline{B}_{N_0}$, if and only if \eqref{eq:3.35*} holds. If $f(x)=\|x\|^2$, $x \in \R^d$, then $Lf \leq -2M$ a.e. on $\R^d\setminus \overline{B}_{N_0}$, if and only if \eqref{eq:3.36} holds. Thus, the assertion follows by Proposition \ref{prop:3.3.12}. The last assertion holds, proceeding as in the proof of Corollary \ref{cor:3.1.3}.
\end{proof}
\noindent
In Theorem \ref{theo:3.3.8}, we saw that if there exists a finite invariant measure $\nu$ for $\M$, then any invariant measure for $\M$ is represented by a constant multiple of $\nu$. The following example illustrates a case where $\M$ has two infinite invariant measures which are not represented by a constant multiple of each other.
\begin{eg}\label{ex:3.8}
Define
$$
Lf  = \frac{1}{2} \Delta f+ \langle \mathbf{e}_1, \nabla f \rangle, \quad f \in C_0^{\infty}(\R^d). 
$$
Then $\mu:=dx$ is an infinitesimally invariant measure for $(L, C_0^{\infty}(\R^d))$. Hence by Theorem \ref{theorem2.1.5}, there exists a closed extension of $(L, C_0^{\infty}(\R^d))$ that generates a sub-Markovian $C_0$-semigroup $(T_t)_{t>0}$ on $L^1(\R^d, \mu)$. Then by Proposition \ref{prop:2.1.10}(iii), $(T_t)_{t>0}$ is conservative and $\mu$ is $(T_t)_{t>0}$-invariant. Let 
$$
\M =  (\Omega, \mathcal{F}, (\mathcal{F}_t)_{t \ge 0}, (X_t)_{t \ge 0}, (\mathbb{P}_x)_{x \in \R^d\cup \{\Delta\}}   )
$$ 
be the Hunt process associated with $(T_t)_{t>0}$ by Theorem \ref{th: 3.1.2}. Let $y \in \R^d$ be given. By Theorem \ref{theo:3.1.4}(i), there is a $d$-dimensional Brownian motion $((W_t)_{t\geq0}, (\mathcal{F}_t)_{t\geq0})$ on $(\Omega, \mathcal{F}, \P_y)$ such that $(\Omega, \mathcal{F}, \P_y, (\mathcal{F}_t)_{t \ge 0}, (X_t)_{t \ge 0},(W_t)_{t \geq 0})$ is a weak solution 
(see Definition \ref{def:3.48}(iv)) to
\begin{equation} \label{eq:3.34}
X_t = X_0+W_t + \int_0^t \mathbf{e}_1ds.
\end{equation}
On the other hand, $\widetilde{\mu}:=e^{2 \langle \mathbf{e}_1, x \rangle}dx$ is also an infinitesimally invariant measure for $(L, C_0^{\infty}(\R^d))$.  By Theorem \ref{theorem2.1.5}, there exists a closed extension of $(L, C_0^{\infty}(\R^d))$ that generates a sub-Markovian $C_0$-semigroup $(\widetilde{T}_t)_{t>0}$ on $L^1(\R^d, \widetilde{\mu})$. Then by Proposition \ref{prop:2.1.10}(iii), $(\widetilde{T}_t)_{t>0}$ is conservative and $\widetilde{\mu}$ is $(\widetilde{T}_t)_{t>0}$-invariant. Let 
$$
\widetilde{\M} =  (\widetilde{\Omega}, \widetilde{\mathcal{F}}, (\widetilde{\mathcal{F}}_t)_{t \ge 0}, (\widetilde{X}_t)_{t \ge 0}, (\widetilde{\mathbb{P}}_x)_{x \in \R^d\cup \{\Delta\}}   )
$$ 
be the Hunt process associated with $(\widetilde{T}_t)_{t>0}$ by Theorem \ref{th: 3.1.2}.  By Theorem \ref{theo:3.1.4}(i), there exists a $d$-dimensional  standard $(\widetilde{\mathcal{F}}_t)_{t \geq 0}$- Brownian motion $(\widetilde{W}_t)_{t\geq0}$ on the probability space $(\widetilde{\Omega}, \widetilde{\mathcal{F}}, \widetilde{\P}_y)$ such that $(\widetilde{\Omega}, \widetilde{\mathcal{F}}, \widetilde{\P}_y, (\widetilde{\mathcal{F}}_t)_{t \ge 0}, (\widetilde{X}_t)_{t \ge 0},(\widetilde{W}_t)_{t \geq 0})$ is a weak solution to \eqref{eq:3.34}. Since the SDE \eqref{eq:3.34} admits pathwise uniqueness (see Definition \ref{def:3.48}(v))
by \cite[2.9 Theorem, Chapter 5]{KaSh} (see also \cite[Proposition 1]{YW71} for the original result) and pathwise uniqueness implies the uniqueness in law (cf. Definition \ref{def:3.48}(vi)) by \cite[3.20 Theorem, Chapter 5]{KaSh}, it holds that
\begin{equation} \label{eq:3.35}
\P_y(X_t \in A) = \widetilde{\P}_y(\widetilde{X}_t \in A), \quad \text{ for all $A \in \mathcal{B}(\R^d)$ and $t>0$}. 
\end{equation}
Since $\mu$ and $\widetilde{\mu}$ are invariant measures for $\M$ and $\widetilde{\M}$, respectively, and $y \in \R^d$ is arbitrarily given, it follows from \eqref{eq:3.35} that both $\mu$ and $\widetilde{\mu}$ are invariant measures for $\M$ (and $\widetilde{\M}$). Obviously, $\mu$ and $\widetilde{\mu}$  cannot be represented by a  constant multiple of each other.
\end{eg}

\subsection{Uniqueness}
\label{sec:3.3}
In this section, we investigate pathwise uniqueness (cf. Definition \ref{def:3.48}(v)) and uniqueness in law (cf. Definition \ref{def:3.48}(vi)).\\
We will consider the following {\bf condition}:
\begin{itemize}
\item[{\bf (c)}] \ \index{assumption ! {\bf (c)}}for some $p\in (d,\infty)$, $d\ge 2$ (see beginning of Section \ref{subsec:2.2.1}), $\sigma=(\sigma_{ij})_{1 \leq i,j \leq d}$ is possibly non-symmetric 
with $\sigma_{ij} \in H^{1,p}_{loc}(\R^d) \cap C(\R^d)$ for all $1\leq i,j \leq d$ such that 
$A=(a_{ij})_{1 \leq i,j \leq d}:=\sigma \sigma^T$ satisfies \eqref{eq:2.1.2} and $\mathbf{G}=(g_1,\ldots,g_d)
\in L^p_{loc}(\R^d, \R^d)$.
\end{itemize}
If {\bf (c)} holds, then {\bf (a)} of Section \ref{subsec:2.2.1} and {\bf (b)} of Section 
\ref{subsec:3.1.1} hold. \\
Our strategy to obtain a pathwise unique and strong solution to the SDE \eqref{eq:3.39},  is to apply the Yamada--Watanabe theorem \cite[Corollary 1]{YW71} and the local pathwise 
uniqueness result \cite[Theorem 1.1]{Zh11} to the weak solution of Theorem \ref{theo:3.1.4}(i). Under the 
mere condition of {\bf (c)} and the assumption that the constructed Hunt process $\M$ in Theorem \ref{th: 3.1.2} 
is non-explosive, it is shown in Proposition \ref{prop:3.3.9} and Theorem \ref{theo:3.3.1.8} that there exists a pathwise unique and strong 
solution to the SDE \eqref{eq:3.39} (cf. Definition \ref{def:3.48}). Moreover, Proposition \ref{prop:3.3.1.9} implies that 
the local strong solution of \cite[Theorem 1.3]{Zh11} (see also \cite[Theorem 2.1]{KR} for prior work that covers the case of Brownian motion with drift)  when considered in the time-homogeneous case is non-explosive, if the 
Hunt process $\M$ of Theorem \ref{th: 3.1.2} is non-explosive. Therefore, any condition for non-explosion of $\M$ in this monograph is a new criterion for 
strong well-posedness of time-homogeneous It\^{o}-SDEs whose coefficients satisfy {\bf (c)}. 
As an example for this observation, consider the case where {\bf (c)} and the non-explosion condition \eqref{eq:3.20} 
are satisfied. Then we obtain a pathwise unique and strong solution to \eqref{eq:3.39}, under the classical-like non-explosion condition \eqref{eq:3.20} that even allows for an interplay of diffusion and drift coefficient.
Additionally, $\|\mathbf{G}\|$  is here allowed to have arbitrary growth as long as  
$\langle \mathbf{G}(x), x \rangle$ in \eqref{eq:3.20} is negative. A further example is given when $d=2$. Then the diffusion coefficient is allowed to have arbitrary 
growth in the situation of \eqref{eq:3.2.1.20} in Corollary \ref{cor:3.1.3}. In summary, one can say that Theorem \ref{theo:3.3.1.8}, Propositions \ref{prop:3.3.9} and \ref{prop:3.3.1.9}, together with further results of this work (for instance those which are mentioned in Theorem \ref{theo:3.3.1.8}) can be used to complete and to considerably improve various results from \cite{KR}, \cite{ZhXi16}, \cite{Zh05}, and \cite{Zh11}, in the time-homogeneous case (see \cite{LT18}, \cite{LT19}, in particular the introduction of \cite{LT18}). 
This closes a gap in the literature, which is described at the end of  Remark \ref{rem:3.3.1},
where we discuss related work.\\
In Section \ref{subsec:3.3.2}, under the assumption {\bf (a)} of Section \ref{subsec:2.2.1} and 
{\bf (b)} of Section \ref{subsec:3.1.1}, we investigate uniqueness in law, among 
all right processes that have a strong Feller transition semigroup (more precisely such that \eqref {eq:3.41*} holds), that have $\mu$ as a 
sub-invariant measure, and where $(L,C_0^{\infty}(\R^d))$ solves the martingale problem with respect to $\mu$. This sort of uniqueness in law is more restrictive than  
uniqueness in law in the classical sense. But under the mere assumption of 
{\bf (a)} and {\bf (b)}, classical uniqueness in law is not known to hold. Our main result in 
Section \ref{subsec:3.3.2}, Proposition \ref{prop:3.3.1.15} which is more analytic than probabilistic, is 
ultimately derived by the concept of $L^1$-uniqueness of $(L, C_0^{\infty}(\R^d))$ introduced in Definition 
\ref{def:2.1.1}(i). Therefore, as a direct consequence of Proposition \ref{prop:3.3.1.15} and Corollary 
\ref{cor2.1.2}, under the assumption that $\mu$ is an invariant measure for $\M$, we derive in Proposition 
\ref{prop:3.3.1.16} our uniqueness in law result. This result is meaningful in terms 
of being able to deal with the case of locally unbounded drift coefficients and explosive $\M$. We 
present various situations in Example \ref{ex:3.4.9} where $\mu$ is an invariant measure for $\M$, so that our 
uniqueness in law result is applicable.

\subsubsection{Pathwise uniqueness and strong solutions}
\label{subsec:3.3.1} 
\begin{definition}\label{def:3.48}
\begin{itemize}
\item[(i)]
For a filtration $(\widehat{\mathcal{F}}_t)_{t \geq 0}$ on a probability space $(\widetilde{\Omega}, \widetilde{\mathcal{F}}, \widetilde{\P})$, the {\bf augmented filtration} $(\widehat{\mathcal{F}}^{\text{aug}}_t)_{t \geq 0}$ 
of $(\widehat{\mathcal{F}}_t)_{t \geq 0}$ under $\widetilde{\P}$ is defined as
$$
\widehat{\mathcal{F}}^{\text{aug}}_t:= \sigma(\widehat{\mathcal{F}}_t \cup \widehat{\mathcal{N}}^{\widetilde{\P}}), \quad 0\leq t<\infty,
$$
where $\widehat{\mathcal{N}}^{\widetilde{\P}}:=\{ F \subset \widetilde{\Omega}: F \subset G \text{ for some } G \in \widehat{\mathcal{F}}_{\infty}:=\sigma(\cup_{t \geq 0} \widehat{\mathcal{F}}_t)  \text{ with } \widetilde{\P}(G)=0 \}$. The {\bf completion} $\widehat{\mathcal{F}}^{\text{aug}}$ of $\widetilde{\mathcal{F}}$ under $\widetilde{\P}$ is defined as
$$
\widehat{\mathcal{F}}^{\text{aug}}:= \sigma(\widetilde{\mathcal{F}} \cup \widetilde{\mathcal{N}}^{\widetilde{\P}}),
$$
where $\widetilde{\mathcal{N}}^{\widetilde{\P}}:=\{ F \subset \widetilde{\Omega}: F \subset G \text{ for some } G \in \widetilde{\mathcal{F}} \text{ with } \widetilde{\P}(G)=0 \}$.
\item[(ii)] Let $l \in \N$, $\widetilde{\sigma}=(\widetilde{\sigma}_{ij})_{1 \leq i \leq d, 1 \leq j \leq l}$ be a matrix of Borel measurable functions and $\widetilde{\mathbf{G}}=(\widetilde{g}_1, \ldots, \widetilde{g}_d)$ be a Borel measurable vector field.
Given an $l$-dimensional Brownian motion $(\widetilde{W}_t)_{t \geq 0}$ on a probability space $(\widetilde{\Omega}, \widetilde{\mathcal{F}}, \widetilde{\P})$, let $(\widehat{\mathcal{F}}_t)_{t \geq 0}:=\left(\sigma(\widetilde{W}_s\vert s \in \[0,t\])\right)_{t \geq 0}$ and $x \in \R^d$. $(\widetilde{X}_t)_{t \geq 0}$ is called a {\bf strong solution}\index{solution ! strong} to \eqref{eq:3.36*} with Brownian motion $(\widetilde{W}_t)_{t \geq 0}$ and initial condition $\widetilde{X}_0=x$, if (a)--(d) below hold:
\subitem(a) $(\widetilde{X}_t)_{t \geq 0}$ is an $\R^d$-valued stochastic process adapted to $(\widehat{\mathcal{F}}^{\text{aug}}_t)_{t \geq 0}$,
\subitem(b) $\widetilde{\P}(\widetilde{X}_0=x)=1$,
\subitem(c) $\widetilde{\P}\left(\int_0^t (\widetilde{\sigma}^2_{ij}(\widetilde{X}_s) +|\widetilde{g}_i|(\widetilde{X}_s) )ds <\infty \right)=1$ for all $1 \leq i \leq d$, $1 \leq j \leq l$\\
\text{}\quad \quad \;\; and $0 \leq t<\infty$,
\subitem(d) 
$\widetilde{\P}$-a.s. it holds that
\begin{equation} \label{eq:3.36*}
\widetilde{X}_t=\widetilde{X}_0+\int_0^t \widetilde{\sigma}(\widetilde{X}_s)d\widetilde{W}_s + \int_0^t \widetilde{\mathbf{G}}(\widetilde{X}_s)ds, \; 0\leq t<\infty,
\end{equation}
\text{} \quad \quad i.e. $\widetilde{\P}$-a.s. 
\begin{equation*}
\widetilde{X}^i_t = \widetilde{X}^i_0+ \sum_{j=1}^l \int_0^t \widetilde{\sigma}_{ij}(\widetilde{X}_s) d \widetilde{W}^j_s + \int_0^t \widetilde{g}_i(\widetilde{X}_s) ds, \;\; 1 \leq i \leq d, \; 0\leq t<\infty.
\end{equation*}
\item[(iii)] A filtration $(\widetilde{\mathcal{F}}_t)_{t \geq 0}$ on a probability space $(\widetilde{\Omega}, \widetilde{\mathcal{F}}, \widetilde{\P})$ is said to satisfy the {\bf usual conditions}, if
$$
\widetilde{\mathcal{F}}_{0} \supset \{  F \subset \widetilde{\Omega}: F \subset G \text{ for some } G \in \widetilde{\mathcal{F}} \text{ with } \widetilde{\P}(G)=0  \}
$$
and
$(\widetilde{\mathcal{F}}_t)_{t \geq 0}$ is right-continuous, i.e.
\begin{equation} \label{defrightcon}
\widetilde{\mathcal{F}}_t =  \bigcap_{\varepsilon>0} \widetilde{\mathcal{F}}_{t+\varepsilon}, \quad \forall t \geq 0.
\end{equation}

\item[(iv)]
Let $l \in \N$ and $\widetilde{\sigma}$, $\widetilde{\mathbf{G}}$, $(\widetilde{\Omega}, \widetilde{\mathcal{F}}, \widetilde{\P} )$, $(\widetilde{W}_t)_{t \geq 0}$ be as in (ii). We say that
$$
(\widetilde{\Omega}, \widetilde{\mathcal{F}}, \widetilde{\P}, (\widetilde{\mathcal{F}}_t)_{t \ge 0}, (\widetilde{X}_t)_{t \ge 0},(\widetilde{W}_t)_{t \geq 0})
$$
is a {\bf weak solution} to \eqref{eq:3.36*}\index{solution ! weak} if $(\widetilde{\mathcal{F}_t})_{t \geq 0}$ is a filtration on $(\widetilde{\Omega}, \widetilde{\mathcal{F}}, \widetilde{\P})$ satisfying the usual conditions, $(\widetilde{X}_t)_{t \geq 0}$ is an $\R^d$-valued stochastic process adapted to $(\widetilde{\mathcal{F}}_t)_{t \geq 0}$, $(\widetilde{W}_t)_{t \geq 0}$ is an $l$-dimensional standard $(\widetilde{\mathcal{F}}_t)_{t \geq 0}$-Brownian motion and (c) and (d) of (ii) hold. In particular, any strong solution as in (ii) is a weak solution as defined in (iv).
\item[(v)]
We say that {\bf pathwise uniqueness holds} for the SDE \eqref{eq:3.36*}\index{uniqueness ! pathwise}, if whenever $x \in \R^d$ and
$$
(\widetilde{\Omega}, \widetilde{\mathcal{F}}, \widetilde{\P}, (\widetilde{\mathcal{F}}_t)_{t \ge 0}, (\widetilde{X}^1_t)_{t \ge 0}, (\widetilde{W}_t)_{t \geq 0})
$$ 
and 
$$
(\widetilde{\Omega}, \widetilde{\mathcal{F}}, \widetilde{\P}, (\widetilde{\mathcal{F}}_t)_{t \ge 0}, (\widetilde{X}^2_t)_{t \ge 0}, (\widetilde{W}_t)_{t \geq 0})
$$ 
are two weak solutions to \eqref{eq:3.36*} with 
$$
\widetilde{\P}(\widetilde{X}^1_0=\widetilde{X}^2_0=x)=1,
$$ 
then 
\begin{equation*} 
\widetilde{\P}(\widetilde{X}^1_t=\widetilde{X}^2_t,\; t \geq 0 )=1.
\end{equation*}
 A weak solution to \eqref{eq:3.36*} is said to be {\bf pathwise unique}, if pathwise uniqueness holds for the SDE \eqref{eq:3.36*}.
\item[(vi)]
We say that {\bf uniqueness in law holds} for the SDE \eqref{eq:3.36*}\index{uniqueness ! in law}, if whenever $x \in \R^d$ and
$$
(\widetilde{\Omega}^1, \widetilde{\mathcal{F}}^1, \widetilde{\P}^1, (\widetilde{\mathcal{F}}^1_t)_{t \ge 0}, (\widetilde{X}^1_t)_{t \ge 0}, (\widetilde{W}^1_t)_{t \geq 0})
$$ 
and 
$$
(\widetilde{\Omega}^2, \widetilde{\mathcal{F}}^2, \widetilde{\P}^2, (\widetilde{\mathcal{F}}^2_t)_{t \ge 0}, (\widetilde{X}^2_t)_{t \ge 0}, (\widetilde{W}^2_t)_{t \geq 0})
$$ 
are two weak solutions to \eqref{eq:3.36*}, defined on possibly different probability spaces, with
$$
\widetilde{\P}^1 \circ (\widetilde{X}_0^{1})^{-1}=\widetilde{\P}^2 \circ (\widetilde{X}_0^{2})^{-1}= \delta_x,
$$
where $\delta_x$ is a Dirac measure in $x \in \R^d$, then
$$
\widetilde{\P}^1 \circ (\widetilde{X}^{1})^{-1}=\widetilde{\P}^2 \circ (\widetilde{X}^{2})^{-1}  \text{ on } \; \mathcal{B}(C([0, \infty), \R^d)).
$$
\end{itemize}
\end{definition}

\begin{proposition} 
\label{prop:3.3.9}
Let $\sigma$, $\mathbf{G}$, satisfy assumption {\bf (c)} as at the beginning of Section \ref{sec:3.3}.
Then
pathwise uniqueness holds for the SDE 
\begin{equation} \label{eq:3.39}
\widetilde{X}_t = \widetilde{X}_0+ \int_0^t \sigma(\widetilde{X}_s) dW_s +\int_0^t \mathbf{G}(\widetilde{X}_s) ds, \quad 0 \leq t<\infty.
\end{equation}
\end{proposition}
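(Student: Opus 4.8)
The plan is to reduce the global pathwise uniqueness statement for \eqref{eq:3.39} to the \emph{local} pathwise uniqueness result \cite[Theorem 1.1]{Zh11} via a standard localization-and-stopping argument. First I would recall that assumption \textbf{(c)} gives, for each $p\in(d,\infty)$, that $\sigma=(\sigma_{ij})$ has components in $H^{1,p}_{loc}(\R^d)\cap C(\R^d)$, that $A=\sigma\sigma^T$ is locally uniformly strictly elliptic, and that $\mathbf{G}\in L^p_{loc}(\R^d,\R^d)$; in particular on every ball $B_n$ the coefficients satisfy the hypotheses of \cite[Theorem 1.1]{Zh11}. Given two weak solutions $(\widetilde X^1_t)$ and $(\widetilde X^2_t)$ of \eqref{eq:3.39} on the same stochastic basis, driven by the same Brownian motion $W$, with $\widetilde\P(\widetilde X^1_0=\widetilde X^2_0=x)=1$, I would introduce the exit times $\tau^i_n:=\inf\{t\ge 0: |\widetilde X^i_t|\ge n\}$ and $\tau_n:=\tau^1_n\wedge\tau^2_n$, so that up to time $\tau_n$ both processes stay in $B_n$ and solve the SDE with coefficients that may be modified outside $B_n$ to be globally bounded (or of linear growth) without changing the stopped paths. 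For these modified, globally well-behaved coefficients, \cite[Theorem 1.1]{Zh11} (or its proof, which is genuinely local) yields pathwise uniqueness, hence $\widetilde X^1_{t\wedge\tau_n}=\widetilde X^2_{t\wedge\tau_n}$ $\widetilde\P$-a.s. for all $t\ge 0$.

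Next I would let $n\to\infty$. Since $\widetilde X^1=\widetilde X^2$ on $[0,\tau_n]$ for every $n$, the exit times agree: $\tau^1_n=\tau^2_n=\tau_n$ on $\{\widetilde X^1\equiv\widetilde X^2 \text{ on }[0,\tau_n]\}$, and a monotone-class/induction argument propagates the coincidence: if $\widetilde X^1$ and $\widetilde X^2$ agree on $[0,\tau_n]$ then the strong Markov property (applied at $\tau_n$, where both processes sit at the same point of $\partial B_n$) together with local uniqueness on $B_{n+1}$ gives agreement on $[0,\tau_{n+1}]$. It then remains to check that $\sup_n\tau_n$ is at least the common explosion time, i.e.\ that the solutions cannot coincide up to each $\tau_n$ yet have $\sup_n\tau_n<\infty$ with one of them "escaping" — but since both are continuous $\R^d$-valued processes defined for all $t\in[0,\infty)$ in the sense of Definition \ref{def:3.48}(iv) (i.e.\ they are honest solutions, not merely up to a lifetime), $\sup_n\tau_n=\infty$ $\widetilde\P$-a.s. automatically, and therefore $\widetilde\P(\widetilde X^1_t=\widetilde X^2_t,\ t\ge 0)=1$, which is exactly pathwise uniqueness in the sense of Definition \ref{def:3.48}(v).

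The main obstacle I anticipate is purely bookkeeping rather than conceptual: one must be careful that \cite[Theorem 1.1]{Zh11} is invoked in a form that applies on each ball without requiring its global growth hypotheses, and that the modification of coefficients outside $B_n$ genuinely does not affect the stopped processes (this uses that the stochastic and Lebesgue integrals up to $\tau_n$ depend only on the restriction of $\sigma,\mathbf{G}$ to $\overline{B}_n$, which in turn relies on condition (c)(iii)-type $L^p_{loc}$ integrability so that all integrals are finite — here one can cite the Krylov-type estimate / Lemma \ref{lem:3.1.4} philosophy, though for a generic weak solution one invokes directly part (c) of Definition \ref{def:3.48}(ii)). A secondary point is that \cite{Zh11} states pathwise uniqueness in the strong sense on a fixed time interval; I would quote the local version used there (the "up to the exit time of a ball" formulation, cf.\ \cite[Theorem 1.1]{Zh11} and its proof) so that the stopping argument is legitimate. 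Everything else — the stopping times, the induction on $n$, the passage to the limit — is routine.
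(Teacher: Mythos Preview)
Your proposal is correct and follows essentially the same localization strategy as the paper: define $\tau_n$ as the minimum of the exit times from $B_n$ for the two solutions, modify $\sigma$ and $\mathbf{G}$ outside $B_n$ so that the global hypotheses of \cite[Theorem~1.1]{Zh11} are met, conclude that the two solutions agree on $[0,\tau_n)$, and let $n\to\infty$ using that $\tau_n\to\infty$ since both solutions are $\R^d$-valued and continuous on $[0,\infty)$.

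Two minor remarks. First, the strong Markov / induction detour you sketch is unnecessary and slightly off: you are not working with a Markov process here but with two fixed weak solutions on one filtered space, so there is no strong Markov property to invoke; the monotonicity $\tau_n\uparrow\infty$ already does all the work, as you yourself note afterwards. Second, when you modify the coefficients outside $B_n$ you must preserve global uniform ellipticity of $\sigma^n(\sigma^n)^T$, not merely boundedness; the paper does this explicitly by setting $\sigma^n_{ij}=\chi_{n+1}\sigma_{ij}+\sqrt{\nu_{B_{n+1}}}(1-\chi_n)\delta_{ij}$, which you should include so that \cite[Theorem~1.1]{Zh11} applies verbatim rather than ``in its local form''.
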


\begin{proof}
Let $n \in \N$ be such that $x \in B_n$ and $\tau_n:=\inf \{t>0 : \widetilde{X}^1_t \in \R^d \setminus B_n  \} \wedge \inf \{t>0 : \widetilde{X}^2_t \in \R^d \setminus B_n  \}$. Let $(\widetilde{\mathcal{F}}^{\text{aug}}_t)_{t \geq 0}$ be the augmented filtration of  $(\widetilde{\mathcal{F}}_t)_{t \geq 0}$ and $\widetilde{\mathcal{F}}^{\text{aug}}$ be the completion of $\mathcal{F}$ under $\P$. Then $(\widetilde{X}_t^1)_{t \geq 0}$ and $(\widetilde{X}_t^2)_{t \geq 0}$ are still adapted to $(\widetilde{\mathcal{F}}^{\text{aug}}_t)_{t \geq 0}$ and $(\widetilde{W}_t)_{t>0}$ is still a $d$-dimensional standard $(\widetilde{\mathcal{F}}^{\text{aug}}_t)_{t \geq 0}$-Brownian motion.
We can hence from now on assume that we are working on $(\widetilde{\Omega}, \widetilde{\mathcal{F}}^{\text{aug}}, \widetilde{\P}, (\widetilde{\mathcal{F}}^{\text{aug}}_t)_{t \geq 0})$. Then since $(\widetilde{X}_t^1)_{t \geq 0}$ and $(\widetilde{X}_t^2)_{t \geq 0}$ are $\P$-a.s. continuous, 
$\tau_n$ is an $(\widetilde{\mathcal{F}}^{\text{aug}}_t)_{t \geq 0}$-stopping time and $\widetilde{\P}(\lim_{n \rightarrow \infty}\tau_n=\infty)=1$. Let $\chi_n \in C_0^{\infty}(\R^d)$ be such that $0 \leq \chi_n \leq 1$, $\chi_n=1$ on $\overline{B}_n$ and $\text{supp}(\chi_n) \subset B_{n+1}$. Let $\mathbf{G}_n:=\chi_n \mathbf{G}$ and $\sigma^n=(\sigma^n_{ij})_{1 \leq i,j \leq d}$ be defined by
$$
\sigma_{ij}^n(x):= \chi_{n+1}(x)\sigma_{ij}(x)+ \sqrt{\nu_{B_{n+1}}} (1-\chi_{n}(x))\delta_{ij}, \quad x \in \R^d,
$$
where $(\delta_{ij})_{1 \leq i,j \leq d}$ denotes the identity matrix and the constant $\nu_{B_{n+1}}$ is from  \eqref{eq:2.1.2}. Then, $\mathbf{G}_n \in L^p(\R^d, \R^d)$, $\nabla \sigma^n_{ij} \in L^p(\R^d, \R^d)$ for all $1 \leq i,j \leq d$ and
$$
\nu_{B_{n+1}}^{-1} \|\xi\|^2 \leq \|(\sigma^n)^T(x) \xi\|^2 \leq 4 \nu_{B_{n+1}} \|\xi\|^2, \quad \forall x \in \R^d, \xi \in \R^d. 
$$
For $i \in \{1, 2 \}$, suppose it holds $\widetilde{\P}$-a.s. that
$$
\widetilde{X}_t^i = x+\int_0^t \sigma^n(\widetilde{X}^i_s) d \widetilde{W}_s + \int_0^t \mathbf{G}_n(\widetilde{X}^i_s)ds, \quad 0 \leq t<\tau_n.
$$
Then by \cite[Theorem 1.1]{Zh11} applied for $\sigma^n$, $\mathbf{G}_n$ and $\tau_n$, 
$$
\widetilde{\P}(\widetilde{X}^1_t = \widetilde{X}^2_t, \quad 0 \leq t<\tau_n)=1.
$$
Now the assertion follows by letting $n \rightarrow \infty$.
\end{proof}

\begin{theorem} \label{theo:3.3.1.8}
Assume {\bf (c)} as at the beginning of Section \ref{sec:3.3} and  
that $\M$ is non-explosive (cf. Definition \ref{non-explosive}). Then   
$$
(\Omega, \mathcal{F}, \P_x, (\mathcal{F}_t)_{t \ge 0}, (X_t)_{t \ge 0},(W_t)_{t \geq 0})
$$
of Theorem \ref{theo:3.1.4}(i) is for each $x \in \R^d$ a weak solution to \eqref{eq:3.39} and uniqueness in law holds for \eqref{eq:3.39} (cf. Definition \ref{def:3.48}(vi)).\\
Let further $(\widetilde{\Omega}, \widetilde{\mathcal{F}},  \widetilde{\P})$ be a probability space carrying a $d$-dimensional standard Brownian motion $(\widetilde{W}_t)_{t \geq 0}$.  Let $x \in \R^d$ be arbitrary. Then there exists a measurable map
$$ 
h^x: C([0, \infty), \R^d) \rightarrow C([0, \infty), \R^d) 
$$
such that $(Y^x_t)_{t \geq 0}:= (h^x(\widetilde{W}_t))_{t \geq 0}$ is a pathwise unique and strong solution to \eqref{eq:3.39} on the probability space $(\widetilde{\Omega}, \widetilde{\mathcal{F}},  \widetilde{\P})$  with Brownian motion $(\widetilde{W}_t)_{t \geq 0}$ and initial condition $Y^x_0=x$.
Moreover, $\P_x \circ X^{-1}= \widetilde{\P} \circ (Y^x)^{-1}$ holds, and therefore $((Y^x_t)_{t \geq 0}, \widetilde{\P})_{x \in \R^d}$ inherits all properties from $\M$ that only depend on its law. Precisely, more than strong Feller properties (Theorem \ref{theorem2.3.1}, Theorem \ref{theo:2.6}, Proposition \ref{prop:3.1.4}), irreducibility (Lemma \ref{lem:2.7}, Proposition \ref{prop:2.4.2}), Krylov-type estimates (Theorem \ref{theo:3.3}), integrability (Lemma \ref{lem:3.1.4}), moment inequalities (Proposition \ref{prop:3.2.8}, Proposition \ref{theo:3.2.8}), properties for recurrence and transience (Proposition \ref{prop:3.2.2.11}, Theorem \ref{theo:3.3.6}, Lemma \ref{lem:3.2.8}, Proposition \ref{theo:3.2.6}, Corollary \ref{cor:3.2.2.5}, Proposition \ref{cor:3.3.2.6}), ergodic properties including the uniqueness of invariant measures (Theorem \ref{theo:3.3.8}, Proposition \ref{prop:3.3.12}, Corollary \ref{cor:3.2.3.7}) are satisfied where $(X_t)_{t \geq 0}$ and $\P_x$ are replaced by $(Y^x_t)_{t \geq 0}$ and $\widetilde{\P}$, respectively.
\end{theorem}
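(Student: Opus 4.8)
\textbf{Plan of proof for Theorem \ref{theo:3.3.1.8}.} The plan is to assemble the statement from three ingredients that are already available: (i) the fact that $\M$ of Theorem \ref{th: 3.1.2} solves \eqref{eq:3.39} in the weak sense under the non-explosion hypothesis, which is exactly Theorem \ref{theo:3.1.4}(i) (note that {\bf (c)} implies {\bf (a)} and {\bf (b)}, so all the machinery of Chapter \ref{chapter_3} applies); (ii) pathwise uniqueness for \eqref{eq:3.39}, which is Proposition \ref{prop:3.3.9}; and (iii) the classical Yamada--Watanabe theorem \cite[Corollary 1]{YW71}. First I would record that $(\Omega,\mathcal F,\P_x,(\mathcal F_t)_{t\ge0},(X_t)_{t\ge0},(W_t)_{t\ge0})$ from Theorem \ref{theo:3.1.4}(i) is, for each $x\in\R^d$, a weak solution to \eqref{eq:3.39}; here one should check that the filtration can be taken to satisfy the usual conditions (pass to the augmented filtration as in the proof of Proposition \ref{prop:3.3.9}, which does not destroy the Brownian or adaptedness properties), and that condition (c) of Definition \ref{def:3.48}(ii) holds — this is Lemma \ref{lem:3.1.4}(ii) together with non-explosion, since $\sigma_{ij}\in C(\R^d)$ and $g_i\in L^p_{loc}(\R^d)\subset L^q_{loc}(\R^d,\mu)$.

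Next I would invoke Yamada--Watanabe: existence of a weak solution (just established) plus pathwise uniqueness (Proposition \ref{prop:3.3.9}) yields both uniqueness in law for \eqref{eq:3.39} (Definition \ref{def:3.48}(vi)) and, for every starting point $x$ and every probability space $(\widetilde\Omega,\widetilde{\mathcal F},\widetilde\P)$ carrying a $d$-dimensional Brownian motion $(\widetilde W_t)_{t\ge0}$, the existence of a measurable map $h^x:C([0,\infty),\R^d)\to C([0,\infty),\R^d)$ such that $(Y^x_t)_{t\ge0}:=(h^x(\widetilde W_\cdot))_t$ is a strong solution to \eqref{eq:3.39} with initial condition $Y^x_0=x$; strong uniqueness of $(Y^x_t)_{t\ge 0}$ is then immediate from pathwise uniqueness. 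One small point to be explicit about: the function $h^x$ produced by the Yamada--Watanabe construction is the same for all driving Brownian motions because it is measurable w.r.t. the Wiener measure, so it can indeed be realized on the given abstract space. The identity $\P_x\circ X^{-1}=\widetilde\P\circ(Y^x)^{-1}$ on $\mathcal B(C([0,\infty),\R^d))$ then follows from uniqueness in law applied to the two weak solutions $\M$ (started at $x$) and $(Y^x,\widetilde W,\widetilde\P)$.

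Finally, for the transfer of properties, the key observation is that every conclusion listed — strong Feller estimates, irreducibility, Krylov-type bounds, integrability, moment inequalities, transience/recurrence dichotomies, ergodicity and uniqueness of invariant measures — is a statement about quantities of the form $\E_x[F((X_t)_{t\ge0})]$ or $\P_x(\{\omega:(X_t(\omega))_{t\ge0}\in\Gamma\})$ for measurable functionals $F$, resp. measurable sets $\Gamma\subset C([0,\infty),\R^d)$; since $\P_x\circ X^{-1}=\widetilde\P\circ(Y^x)^{-1}$, each such quantity is unchanged when $(X_t)_{t\ge0},\P_x$ are replaced by $(Y^x_t)_{t\ge0},\widetilde\P$. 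I would phrase this once as a general ``law-dependence'' principle and then remark that it applies verbatim to Theorems \ref{theorem2.3.1}, \ref{theo:2.6}, \ref{theo:3.3}, \ref{theo:3.3.6}, \ref{theo:3.3.8}, Propositions \ref{prop:3.1.4}, \ref{prop:2.4.2}, \ref{prop:3.2.8}, \ref{theo:3.2.8}, \ref{prop:3.2.2.11}, \ref{theo:3.2.6}, \ref{cor:3.3.2.6}, \ref{prop:3.3.12}, Lemmas \ref{lem:2.7}, \ref{lem:3.1.4}, \ref{lem:3.2.8}, and Corollaries \ref{cor:3.2.2.5}, \ref{cor:3.2.3.7}, since none of their conclusions refer to the particular probability space or to the driving Brownian motion beyond its law. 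The main obstacle I anticipate is not conceptual but bookkeeping: one must be careful that the ``usual conditions'' hypotheses in Definition \ref{def:3.48} and in the cited Yamada--Watanabe and Zhang results are met (augmentation of filtrations, completeness), and that the localization-in-$n$ argument of Proposition \ref{prop:3.3.9} genuinely patches the local pathwise uniqueness of \cite[Theorem 1.1]{Zh11} into global pathwise uniqueness using that $\widetilde\P(\lim_n\tau_n=\infty)=1$ for \emph{any} weak solution; once those technical points are in place the rest is a direct citation.
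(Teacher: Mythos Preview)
Your proposal is correct and follows essentially the same route as the paper's proof: weak existence from Theorem \ref{theo:3.1.4}(i) (using that {\bf (c)} implies {\bf (a)} and {\bf (b)}), pathwise uniqueness from Proposition \ref{prop:3.3.9}, and then the Yamada--Watanabe theorem \cite[Proposition 1 and Corollary 1]{YW71} to obtain uniqueness in law, the measurable functional $h^x$, and the identity $\P_x\circ X^{-1}=\widetilde\P\circ(Y^x)^{-1}$. Your additional care about augmenting the filtration and verifying condition (c) of Definition \ref{def:3.48}(ii) is warranted and fills in details the paper's proof leaves implicit.
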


\begin{proof}
Since $\M$ is non-explosive,  it follows from Theorem \ref{theo:3.1.4}(i) that there exists a $d$-dimensional  standard $(\mathcal{F}_t)_{t\geq0}$-Brownian motion $(W_t)_{t\geq0}$ on $(\Omega, \mathcal{F}, \P_x)$ such that $(\Omega, \mathcal{F}, \P_x, (\mathcal{F}_t)_{t \ge 0}, (X_t)_{t \ge 0},(W_t)_{t \geq 0})$ is a weak solution to \eqref{eq:3.39}. Thus, the first assertion follows from Proposition \ref{prop:3.3.9} and \cite[Proposition 1]{YW71}.
 Moreover, by Proposition \ref{prop:3.3.9} and \cite[Corollary 1]{YW71}), there exists a measurable map 
$$
h^x: C([0, \infty), \R^d) \rightarrow C([0, \infty), \R^d) 
$$
such that $(X_t)_{t \geq 0}$ and $(h^x(W_t))_{t \geq 0}$ are $\P_x$-indistinguishable and in particular, 
$$
(Y^x_t)_{t \geq 0}:= (h^x(\widetilde{W}_t))_{t \geq 0}
$$ 
is a strong solution to \eqref{eq:3.39} on the probability space $(\widetilde{\Omega}, \widetilde{\mathcal{F}},  \widetilde{\P})$ with Brownian motion $(\widetilde{W}_t)_{t \geq 0}$ and $\widetilde{\P}(Y^x_0=x)=1$. Finally, since \eqref{eq:3.39} enjoys pathwise uniqueness, using \cite[Proposition 1]{YW71}, $\P_x \circ X^{-1}= \widetilde{\P} \circ (Y^x)^{-1}$ on $\mathcal{B}(C([0, \infty), \R^d))$, which concludes the proof.
\end{proof}
\begin{proposition} \label{prop:3.3.1.9}
Assume {\bf (c)} as at the beginning of Section \ref{sec:3.3} and that $\M$ is non-explosive (cf. Definition \ref{non-explosive}). Let $x \in \R^d$ and let
$$
(\widetilde{\Omega}, \widetilde{\mathcal{F}}, \widetilde{\P}, (\widetilde{\mathcal{F}}_t)_{t \ge 0}, (\widetilde{X}^x_t)_{t \ge 0})
$$ 
be an $\R^d_{\Delta}$-valued adapted stochastic process with $\widetilde{\P}(\widetilde{X}^x_0=x)=1$. Assume that there exists an ($\widetilde{\mathcal{F}}_t)_{t \geq 0}$-stopping time $\widetilde{\zeta}$ such that $t \mapsto \widetilde{X}^x_t$  is continuous and $\R^d$-valued on $[0, \widetilde{\zeta})$ and $\widetilde{X}^x_t=\Delta$ on $\{t \geq \widetilde{\zeta} \}$, both $\widetilde{\P}$-a.s., and that for each $n \in \N$
it holds that
$$
\inf \{t>0: \widetilde{X}^x_t \in \R^d \setminus \overline{B}_n  \}<\widetilde{\zeta} \quad \text{$\widetilde{\P}$-a.s. on $\{ \widetilde{\zeta}<\infty\}$. }
$$
Let $(\widetilde{W}_t)_{t \geq 0}$ be a $d$-dimensional standard $(\widetilde{\mathcal{F}}_t)_{t \geq 0}$-Brownian motion on $(\widetilde{\Omega}, \widetilde{\mathcal{F}}, \widetilde{\P})$. If 
$$
\widetilde{X}^x_t = x+ \int_0^t \sigma(\widetilde{X}^x_s) d\widetilde{W}_s + \int_0^t \mathbf{G}(\widetilde{X}^x_s) ds, \quad 0 \leq t< \widetilde{\zeta}, \; \text{ $\widetilde{\P}$-a.s. }
$$
then $\widetilde{\P}(\widetilde{\zeta}=\infty)=1$ and $(\widetilde{X}^x_t)_{t \geq 0}$ is a strong solution to \eqref{eq:3.39} on the probability space $(\widetilde{\Omega}, \widetilde{\mathcal{F}},  \widetilde{\P})$ with Brownian motion $(\widetilde{W}_t)_{t \geq 0}$ and $\widetilde{\P}(\widetilde{X}^x_0=x)=1$. Moreover, $\P_x \circ X^{-1}= \widetilde{\P} \circ (\widetilde{X}^x)^{-1}$  \text{on } $\mathcal{B}(C([0, \infty), \R^d))$.
\end{proposition}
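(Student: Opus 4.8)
The plan is to reduce the claimed non-explosion of $\widetilde{X}^x$ to the known non-explosion of $\M$ via a pathwise comparison up to exit times from balls, using the local pathwise uniqueness result \cite[Theorem 1.1]{Zh11} exactly as in the proof of Proposition \ref{prop:3.3.9}. First I would set up the augmented filtration $(\widetilde{\mathcal{F}}^{\text{aug}}_t)_{t\ge 0}$ on $(\widetilde\Omega,\widetilde{\mathcal{F}}^{\text{aug}},\widetilde\P)$ so that $(\widetilde X^x_t)_{t\ge 0}$ remains adapted, $(\widetilde W_t)_{t\ge 0}$ remains a standard Brownian motion, and the exit times
$$
\widetilde\tau_n:=\inf\{t>0:\widetilde X^x_t\in\R^d\setminus \overline B_n\}
$$
are stopping times. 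By hypothesis $\widetilde\tau_n<\widetilde\zeta$ $\widetilde\P$-a.s. on $\{\widetilde\zeta<\infty\}$, and since $t\mapsto \widetilde X^x_t$ is continuous and $\R^d$-valued on $[0,\widetilde\zeta)$, the sequence $(\widetilde\tau_n)_{n\ge 1}$ increases $\widetilde\P$-a.s. to $\widetilde\zeta$ (on $\{\widetilde\zeta<\infty\}$ this is the content of the assumption; on $\{\widetilde\zeta=\infty\}$ it follows from continuity, because a path that stays in $\R^d$ and is continuous on $[0,\infty)$ is bounded on compacts). Hence $\widetilde\P(\widetilde\zeta=\infty)=1$ is equivalent to $\widetilde\P(\widetilde\tau_n\to\infty)=1$.

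Next I would localize the coefficients as in Proposition \ref{prop:3.3.9}: choose cutoffs $\chi_n\in C_0^\infty(\R^d)$ with $0\le\chi_n\le 1$, $\chi_n\equiv 1$ on $\overline B_n$, $\text{supp}(\chi_n)\subset B_{n+1}$, and set $\mathbf{G}_n:=\chi_n\mathbf{G}$ and $\sigma^n_{ij}:=\chi_{n+1}\sigma_{ij}+\sqrt{\nu_{B_{n+1}}}(1-\chi_n)\delta_{ij}$, so that $\mathbf{G}_n\in L^p(\R^d,\R^d)$, $\nabla\sigma^n_{ij}\in L^p(\R^d,\R^d)$, and $\sigma^n$ is globally uniformly elliptic. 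On $[0,\widetilde\tau_n)$ the process $\widetilde X^x$ solves the SDE with globalized coefficients $(\sigma^n,\mathbf{G}_n)$ and the same initial condition $x$, since on that time interval $\widetilde X^x$ stays in $\overline B_n$ where $\sigma^n=\sigma$, $\mathbf{G}_n=\mathbf{G}$. On the other hand, the Hunt process $\M$ of Theorem \ref{th: 3.1.2}, started at $x$, is by Theorem \ref{theo:3.1.4}(i) a weak solution to \eqref{eq:3.39} (here using that $\M$ is non-explosive, so the representation holds for all $t\ge 0$), hence on $\{t<\sigma_n\}$ it too solves the SDE with coefficients $(\sigma^n,\mathbf{G}_n)$ up to its own exit time $\sigma_n$ from $B_n$. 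Applying the local pathwise uniqueness \cite[Theorem 1.1]{Zh11} for the globalized data $(\sigma^n,\mathbf{G}_n)$ (together with a standard transfer argument realizing $\widetilde X^x$ and $X$ as solutions driven by the same Brownian motion, or more directly invoking that pathwise uniqueness for $(\sigma^n,\mathbf{G}_n)$ implies uniqueness in law by \cite[Proposition 1]{YW71}) we get that the law of $\widetilde X^x$ stopped at $\widetilde\tau_n$ coincides with the law of $X$ stopped at $\sigma_n$; in particular $\widetilde\P(\widetilde\tau_n\le t)=\P_x(\sigma_n\le t)$ for all $t\ge 0$ and all $n$.

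From there the conclusion is immediate: since $\M$ is non-explosive, Lemma \ref{lem:3.1.4}(i) gives $\P_x(\sigma_n\to\zeta)=1$ with $\zeta=\infty$ $\P_x$-a.s., hence $\lim_{n\to\infty}\P_x(\sigma_n\le t)=0$ for every $t$, and therefore $\lim_{n\to\infty}\widetilde\P(\widetilde\tau_n\le t)=0$ for every $t$, which forces $\widetilde\tau_n\to\infty$ $\widetilde\P$-a.s. and thus $\widetilde\P(\widetilde\zeta=\infty)=1$. Once non-explosion of $\widetilde X^x$ is established, it satisfies the conditions (a)--(d) of Definition \ref{def:3.48}(ii) on $[0,\infty)$ — adaptedness and the initial condition are given, the integrability condition (c) follows from Lemma \ref{lem:3.1.4}(ii) applied through the identity in law together with assumption {\bf (c)} which gives $\sigma_{ij}\in C(\R^d)$ and $\mathbf{G}\in L^p_{loc}\subset L^q_{loc}$, and the integral equation (d) holds by hypothesis now that $\widetilde\zeta=\infty$ — so it is a strong solution to \eqref{eq:3.39}. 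The final claim $\P_x\circ X^{-1}=\widetilde\P\circ(\widetilde X^x)^{-1}$ on $\mathcal{B}(C([0,\infty),\R^d))$ then follows from Proposition \ref{prop:3.3.9} and \cite[Proposition 1]{YW71}, exactly as in the proof of Theorem \ref{theo:3.3.1.8}, by passing $n\to\infty$ in the equality of stopped laws. The main obstacle is the bookkeeping around the stopping-time localization and the transfer to a common driving Brownian motion: one must be careful that $\widetilde X^x$ genuinely solves the globalized SDE past times where $\widetilde X^x$ has left $B_n$ is \emph{not} needed — only the behaviour strictly before $\widetilde\tau_n$ matters — and that the hypothesis "$\widetilde\tau_n<\widetilde\zeta$ on $\{\widetilde\zeta<\infty\}$" is precisely what prevents the pathology of $\widetilde X^x$ exploding without its exit times exhausting $\widetilde\zeta$.
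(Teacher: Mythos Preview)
Your overall strategy---localize, compare with the non-explosive solution, and transfer non-explosion---is right, but the key comparison step is handled more cleanly in the paper, and your version leaves two points underjustified.

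The paper does not compare laws at all. Instead it invokes Theorem \ref{theo:3.3.1.8} to place the Yamada--Watanabe strong solution $Y^x:=h^x(\widetilde W)$ on \emph{your} probability space $(\widetilde\Omega,\widetilde{\mathcal F},\widetilde{\P})$, driven by \emph{your} Brownian motion $\widetilde W$. Then Zhang's local pathwise uniqueness \cite[Theorem 1.1]{Zh11} gives $\widetilde{\P}(Y^x_t=\widetilde X^x_t,\ 0\le t<\tau_{n+1})=1$ directly, hence $\|Y^x_{\tau_n}\|=\|\widetilde X^x_{\tau_n}\|=n$ on $\{\widetilde\zeta<\infty\}$. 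Since $Y^x$ is globally $\R^d$-valued and continuous, $\tau_n\uparrow\widetilde\zeta<\infty$ forces $\|Y^x_{\widetilde\zeta}\|=\infty$, a contradiction. This pathwise argument avoids your ``transfer argument'' (which \emph{is} precisely the construction of $Y^x$) and also avoids your alternative route through uniqueness in law: the latter applies to \emph{global} solutions of the $(\sigma^n,\mathbf G_n)$-SDE, so to use it for stopped processes you would first have to extend both $\widetilde X^x_{\cdot\wedge\widetilde\tau_n}$ and $X_{\cdot\wedge\sigma_n}$ to global $(\sigma^n,\mathbf G_n)$-solutions and argue that the first exit from $B_n$ is a path functional---doable, but not what you wrote.

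Second, your claim that $\widetilde X^x$ is a \emph{strong} solution is not immediate: condition (a) of Definition \ref{def:3.48}(ii) demands adaptedness to the \emph{augmented Brownian filtration} $(\widehat{\mathcal F}^{\mathrm{aug}}_t)_{t\ge 0}$, whereas the hypothesis only gives adaptedness to $(\widetilde{\mathcal F}_t)_{t\ge 0}$. In the paper this follows automatically from the a.s.\ identity $\widetilde X^x=Y^x=h^x(\widetilde W)$ obtained via Proposition \ref{prop:3.3.9} once $\widetilde\zeta=\infty$; in your write-up you should invoke Yamada--Watanabe \cite[Corollary 1]{YW71} at this point rather than assert (a) ``is given''.
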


\begin{proof}
Without loss of generality, we may assume that $(\widetilde{\mathcal{F}}_t)_{t \geq 0}$ is right continuous and contains the augmented filtration of $(\sigma(\widetilde{W}_s; \, 0 \leq s \leq t))_{t \geq 0}$. By Theorem \ref{theo:3.3.1.8}, there exists a measurable map
$$
h^x: C([0, \infty), \R^d) \rightarrow C([0, \infty), \R^d) 
$$
such that $(Y^x_t)_{t \geq 0}:= (h^x(\widetilde{W}_t))_{t \geq 0}$ is a pathwise unique and strong solution to \eqref{eq:3.39} on the probability space $(\widetilde{\Omega}, \widetilde{\mathcal{F}},  \widetilde{\P})$ with Brownian motion $(\widetilde{W}_t)_{t \geq 0}$ and $\widetilde{\P}(Y^x_0=x)=1$, hence
$$
Y^x_t = x+ \int_0^t \sigma(Y^x_s) d\widetilde{W}_s + \int_0^t \mathbf{G}(Y^x_s) ds, \quad 0 \leq t< \infty, \; \text{ $\widetilde{\P}$-a.s. }
$$
Let $n \in \N$ be such that $x \in B_n$ and $\tau_n:=\inf \{t>0: \widetilde{X}^x_t \in \R^d \setminus \overline{B}_n  \}$. Then by the $\widetilde{\P}$-a.s. right continuity of $(\widetilde{X}^x_t)_{t \geq 0}$ and the usual conditions of $(\widetilde{\mathcal{F}}_{t})_{t \geq 0}$, we obtain that $\tau_n$ is an $(\widetilde{\mathcal{F}}_t)_{t \geq 0}$-stopping time. Since $\widetilde{\P}$-a.s.  $t \mapsto \widetilde{X}_t^x$ is continuous and $\R^d$-valued on $[0, \widetilde{\zeta})$, it follows that $\tau_n<\tau_{n+1}<\widetilde{\zeta}$, $\widetilde{\P}$-a.s. on $\{ \widetilde{\zeta}<\infty\}$. Moreover, $\widetilde{\P}$-a.s. $\lim_{n \rightarrow \infty} \tau_n=\widetilde{\zeta}$ and $\widetilde{\P}$-a.s.
$$
\widetilde{X}_t^x = x+\int_0^t \sigma(\widetilde{X}^x_s) d \widetilde{W}_s + \int_0^t \mathbf{G}(\widetilde{X}^x_s)ds, \quad 0 \leq t<\tau_{n+1}.
$$
By \cite[Theorem 1.1]{Zh11},
$$
\widetilde{\P}(Y^x_t=\widetilde{X}^x_t, \quad 0\leq t<\tau_{n+1})=1.
$$
Therefore, we obtain 
$$
Y^x_{\tau_n} = \widetilde{X}^x_{\tau_n}, \quad \text{$\widetilde{\P}$-a.s. on $\{\widetilde{\zeta}<\infty \}$}.
$$
Now suppose that $\widetilde{\P}(\widetilde{\zeta}<\infty)>0$.  Then $\widetilde{\P}$-a.s. on $\{\widetilde{\zeta}<\infty \}$
$$
\|Y^x_{\tau_n}\| = \|\widetilde{X}^x_{\tau_n}\|=n.
$$
Therefore, $\widetilde{\P}$-a.s. on $\{\widetilde{\zeta}<\infty \}$
$$
\|Y^x_{\widetilde{\zeta}}\| = \lim_{n \rightarrow \infty} \|Y^x_{\tau_n} \| = \infty,
$$
which is a contradiction since $\|Y^x_{\widetilde{\zeta}}\|<\infty$ $\widetilde{\P}$-a.s. on $\{\widetilde{\zeta}<\infty \}$. Therefore, 
$$
\widetilde{\P}(\widetilde{\zeta}=\infty)=1,
$$ 
hence by Proposition \ref{prop:3.3.9},
$$
\widetilde{\P}(Y^x_t=\widetilde{X}^x_t, \; 0\leq t<\infty)=1.
$$ 
By Theorem \ref{theo:3.3.1.8}, it follows that
$$
\P_x \circ X^{-1}= \widetilde{\P} \circ (Y^x)^{-1}=\widetilde{\P} \circ (\widetilde{X}^x)^{-1} \quad \text{on } \ \mathcal{B}(C([0, \infty), \R^d)).
$$
\end{proof}
In the following remark, we briefly mention some previous related results about pathwise uniqueness and strong solutions to 
SDEs. 

\begin{remark}\label{rem:3.3.1}
{\it The classical result developed by It\^{o} about pathwise uniqueness and existence of a strong solution (strong well-posedness) requires dispersion and drift 
coefficients to be globally Lipschitz continuous and to satisfy a linear growth condition (cf. \cite[2.9 Theorem, 
Chapter 5]{KaSh}). In \cite[Theorem 4]{Zvon}, Dini continuity that is weaker than global Lipschitz 
continuity is assumed for the drift coefficient, but the diffusion and drift coefficients should be 
globally bounded.  The result of  It\^{o} can be localized, imposing only a local Lipschitz condition together with a (global) linear growth condition (cf. \cite[IV. Theorems 2.4 and 3.1]{IW89}).\\
Strong well-posedness results for only measurable coefficients were given starting from \cite{Zvon}, \cite{Ver79}, \cite{Ver81}. 
In these works $\sigma$ is non-degenerate and $\sigma, \mathbf{G}$ are bounded. To our knowledge the first strong well-posedness results for unbounded measurable coefficients start with \cite[Theorem 2.1]{GyMa}, but the growth condition there for  non-explosion \cite[Assumption 2.1]{GyMa} does not allow for linear growth as in the classical case. 
In \cite{KR}, the authors consider the Brownian motion case with drift, covering the condition {\bf (c)}. They obtain strong well-posedness up to an explosion time and certain non-explosion conditions, which also do not allow for linear growth (see \cite[Assumption 2.1]{KR}). The main technique of \cite{Zvon}, now known as Zvonkin transformation, was employed together with Krylov-type estimates in
\cite{Zh11} in order to obtain strong well-posedness for locally unbounded drift coefficient and non-trivial dispersion coefficient up to an explosion time. The assumptions in \cite{Zh11}, when restricted to the time-homogeneous case are practically those of  {\bf (c)} (cf. \cite[Remark 3.3(ii)]{LT18} and the corresponding discussion in the introduction there), but again the non-explosion conditions are far from being classical-like linear growth conditions (see also \cite{Zh05}).
Among the references, where the technique of Zvonkin transformation together with Krylov-type estimates is used to obtain local strong well-posedness, the best non-explosion conditions up to now under the local strong well-posedness result of \cite{Zh11} can be found in \cite{ZhXi16}. In \cite{ZhXi16}  also strong Feller properties, irreducibility and further properties of the solution are studied. However, the conditions to obtain the results there are quite involved and restrictive and finally do not differ substantially from the classical results of local Lipschitz coefficients (see the discussion  in the introduction of \cite{LT18}). In summary one can say that in contrast to our results, \cite{GyMa}, \cite{KR}, \cite{Zh05}, \cite{Zh11}, \cite{ZhXi16} also cover the time-inhomogeneous case, but 
sharp results to treat SDEs with general locally unbounded drift coefficient in detail further as in Theorem \ref{theo:3.3.1.8}, similarly to classical SDEs with local Lipschitz coefficients, seem not to be at hand. The optimal local regularity assumptions to obtain local well-posedness (as in \cite{Zh11}) require a strengthening to obtain the further important properties of the solution as in Theorem 3.7 (see for instance conditions (H1), (H2), and (H1'), (H2') in \cite{ZhXi16}), contrary to the classical case (of locally Lipschitz coefficients), where important further properties of the solution can be formulated independently of the local regularity assumptions.}
\end{remark}

\subsubsection{Uniqueness in law (via $L^1$-uniqueness)}
\label{subsec:3.3.2} 
Throughout this section, we let
$$
\mu=\rho\,dx
$$ 
be as in Theorem \ref{theo:2.2.7} or as in  Remark \ref{rem:2.2.4}.
\begin{definition} \label{def:3.3.2.1}
Consider a right process
$$
\widetilde{\M} =  (\widetilde{\Omega}, \widetilde{\mathcal{F}}, (\widetilde{\mathcal{F}}_t)_{t \geq 0},(\widetilde{X}_t)_{t \ge 0}, (\widetilde{\P}_x)_{x \in \R^d\cup \{\Delta\}})
$$ 
with state space $\R^d$ (cf. Definition \ref{def:3.1.1}). 
For a measure $\nu$ on $(\R^d, \mathcal{B}(\R^d))$, we set 
$$
\widetilde{\P}_{\nu}(A):=\int_{\R^d} \widetilde{\P}_{x}(A) \nu(dx), \quad A \in \mathcal{B}(\R^d).
$$
$\widetilde{\M}$ is said to {\bf solve the martingale problem for $(L,C_0^{\infty}(\R^d))$ with respect to $\mu$}\index{solution ! to the martingale problem with respect to $\mu$}, if for all $u\in C_0^{\infty}(\R^d)$:
\begin{itemize}
\item[(i)] $u(\widetilde{X}_t) - u(\widetilde{X}_0) - \int_0^t L u(\widetilde{X}_s) \, ds$, $t \ge 0$,
is  a continuous $(\widetilde{\mathcal{F}}_t)_{t \ge 0}$-martingale under $\widetilde{\P}_{\varv \mu}$  for any $\varv \in \mathcal{B}_b^+(\R^d)$ such that $\int_{\R^d}\varv\,d\mu=1$.
\end{itemize}
\end{definition}

\begin{remark}
{\it Let $\widetilde{\M} =  (\widetilde{\Omega}, \widetilde{\mathcal{F}}, (\widetilde{\mathcal{F}}_t)_{t \geq 0},(\widetilde{X}_t)_{t \ge 0}, (\widetilde{\P}_x)_{x \in \R^d\cup \{\Delta\}})$ be a right process with state space $\R^d$ and consider the following condition: 
\begin{itemize}
\item[(i$^\prime$)]
for all $u \in C_0^{\infty}(\R^d)$, $u(\widetilde{X}_t) - u(\widetilde{X}_0) - \int_0^t L u(\widetilde{X}_s) \, ds$, $t \ge 0$, is  a continuous $(\widetilde{\mathcal{F}}_t)_{t \ge 0}$-martingale under $\widetilde{\P}_{x}$ for $\mu$-a.e. $x \in \R^d$.
\end{itemize}
If (i$^\prime$) holds, then (i) of Definition \ref{def:3.3.2.1} holds and $\widetilde{\M}$ hence solves the martingale problem for $(L, C_0^{\infty}(\R^d))$ with respect to $\mu$.  In particular, by Proposition \ref{prop:3.1.6}, $\M$ solves the martingale problem for $(L,C_0^{\infty}(\R^d))$ with respect to $\mu$. Consider the following condition:
\begin{itemize}
\item[(i$^{\prime \prime}$)] 
there exists a $d$-dimensional standard $(\widetilde{\mathcal{F}}_t)_{t\geq0}$-Brownian motion $(\widetilde{W}_t)_{t\geq0}$ on $(\widetilde{\Omega}, \widetilde{\mathcal{F}}, \widetilde{\P}_y)$ such that $(\widetilde{\Omega}, \widetilde{\mathcal{F}}, \widetilde{\P}_y, (\widetilde{\mathcal{F}}_t)_{t \ge 0}, (\widetilde{X}_t)_{t \ge 0},(\widetilde{W}_t)_{t \geq 0})$ is a weak solution to \eqref{eq:3.39}  for $\mu$-a.e. $y \in \R^d$.
\end{itemize}
By It\^{o}'s formula, if (i$^{\prime \prime}$) is satisfied, then (i$^{\prime}$) holds, hence $\widetilde{\M}$ solves the martingale problem for $(L,C_0^{\infty}(\R^d))$ with respect to $\mu$.\\
If $\mu$ is a sub-invariant measure for $\widetilde{\M}$, then by Proposition \ref{prop:3.3.1.15} below, we obtain a resolvent $(\widetilde{R}_{\alpha})_{\alpha>0}$ on $L^1(\R^d, \mu)$ associated to $\widetilde{\M}$, hence for any $f \in L^1(\R^d, \mu)$ and $\alpha>0$, it holds that
$$
\widetilde{R}_{\alpha}f(x)=\widetilde{\E}_x\left[\int_0^{\infty} e^{-\alpha t} f(\widetilde{X}_t) dt \right], \;\; \text{for $\mu$-a.e. $x \in \R^d$.}
$$
Thus, we have that $\int_0^t Lu(\widetilde{X}_s)ds$, $t \geq 0$, is $\widetilde{\P}_\mu$-a.e. independent of the Borel measurable $\mu$-version chosen for $Lu$.}
\end{remark}

\begin{proposition} \label{prop:3.3.1.15}
Suppose that condition {\bf (a)} of Section \ref{subsec:2.2.1} holds and that $(L,C_0^{\infty}(\R^d))$ is $L^1$-unique (cf. Definition \ref{def:2.1.1}(i)).
Let a right process
$$
\widetilde{\M} =  (\widetilde{\Omega}, \widetilde{\mathcal{F}}, (\widetilde{\mathcal{F}}_t)_{t \geq 0}, (\widetilde{X}_t)_{t \ge 0}, (\widetilde{\P}_x)_{x \in \R^d\cup \{\Delta\}})
$$ 
solve the martingale problem for $(L,C_0^{\infty}(\R^d))$ with respect to $\mu$ such that $\mu$ is a sub-invariant measure for $\widetilde{\M}$. Let
$$
p^{\widetilde{\M}}_t f(x):=\widetilde{\E}_x[f(\widetilde{X}_t)], \quad  f \in  \mathcal{B}_b(\R^d),  \; x \in \R^d,  t>0,
$$ 
where $\widetilde{\E}_x$ denotes the expectation with respect to $\widetilde{\P}_x$. Then $(p^{\widetilde{\M}}_t)_{t \geq 0}|_{L^1(\R^d, \mu)_b}$ uniquely extends to a sub-Markovian $C_0$-semigroup of contractions $(S_t)_{t \geq 0}$ on $L^1(\R^d, \mu)$ and 
\begin{equation} \label{eq:3.40*}
S_t f =T_tf \; \text{ in $L^1(\R^d, \mu)$,\quad  for all $f\in L^1(\R^d,\mu)$, $t\ge 0$}. 
\end{equation}
In particular, $\mu$ is an invariant measure for $\widetilde{\M}$. Moreover, if additionally assumption {\bf (b)} of Section \ref{subsec:3.1.1}  holds and 
\begin{equation} \label{eq:3.41*}
p^{\widetilde{\M}}_t(C_0^{\infty}(\R^d)) \subset C(\R^d), \quad \forall t>0,
\end{equation}
then 
$$
\widetilde {\P}_x \circ \widetilde{X}^{-1}=\P_x \circ X^{-1}  \quad \text{on } \ \mathcal{B}(C([0, \infty), \R^d))\ \text{  for all $x\in \R^d$}, 
$$
hence $\widetilde{\M}$ inherits all properties of $\M$ that only depend on its law.
\end{proposition}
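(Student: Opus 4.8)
The plan is to split the proof into two parts corresponding to the two assertions: first the semigroup identification \eqref{eq:3.40*}, and then — under the additional assumptions {\bf (b)} and \eqref{eq:3.41*} — the identification of laws.

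\textbf{Step 1: the semigroup $(p_t^{\widetilde{\M}})_{t\geq 0}$ extends to a sub-Markovian $C_0$-semigroup on $L^1(\R^d,\mu)$.} Since $\mu$ is a sub-invariant measure for $\widetilde{\M}$, Remark \ref{rem:3.2.3.3} (applied to the right process $\widetilde{\M}$, with the obvious adaptation of the definition of sub-invariance) gives $\int_{\R^d} p_t^{\widetilde{\M}}|f|\,d\mu \leq \int_{\R^d}|f|\,d\mu$ for $f\in L^1(\R^d,\mu)_b$, so $p_t^{\widetilde{\M}}$ is an $L^1(\R^d,\mu)$-contraction on the dense subspace $L^1(\R^d,\mu)_b$; it extends uniquely to a contraction $S_t$ on $L^1(\R^d,\mu)$. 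Sub-Markovianity of $(S_t)_{t\geq 0}$ is inherited from $p_t^{\widetilde{\M}}$, and the semigroup property $S_{t+s}=S_tS_s$ passes from the Chapman--Kolmogorov/Markov property on $\mathcal{B}_b(\R^d)$ to the $L^1$-extension by continuity. For strong continuity, I would argue along the lines that $t\mapsto p_t^{\widetilde{\M}}f(x)$ is right-continuous at $0$ for $f\in C_0^\infty(\R^d)$ by the martingale property (the martingale $u(\widetilde{X}_t)-u(\widetilde{X}_0)-\int_0^t Lu(\widetilde{X}_s)ds$ together with right-continuity of paths gives $\widetilde{\E}_x[u(\widetilde{X}_t)]\to u(x)$ as $t\to 0+$), and then dominated convergence plus the uniform bound $\|p_t^{\widetilde{\M}}f\|_{L^\infty}\leq\|f\|_{L^\infty}$ yields $S_t f\to f$ in $L^1$ for $f\in C_0^\infty(\R^d)$, whence on all of $L^1(\R^d,\mu)$ by a $3\varepsilon$-argument using density and contractivity.

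\textbf{Step 2: identify the generator of $(S_t)$ as an extension of $(L,C_0^\infty(\R^d))$ and invoke $L^1$-uniqueness.} Let $(\widetilde{L},D(\widetilde{L}))$ denote the generator of $(S_t)_{t\geq 0}$ on $L^1(\R^d,\mu)$. For $u\in C_0^\infty(\R^d)$, the martingale property of Definition \ref{def:3.3.2.1}(i) — integrated against $\widetilde{\P}_{\varv\mu}$ for suitable $\varv$ and then, by linearity and a standard density/monotone-class extension, against $\widetilde{\P}_{\mu}$ — gives
\begin{equation*}
\int_{\R^d}\big(p_t^{\widetilde{\M}}u - u\big)\,d\nu = \int_0^t\int_{\R^d} p_s^{\widetilde{\M}}(Lu)\,d\nu\,ds
\end{equation*}
for $\nu = \varv\mu$, $\varv\in\mathcal{B}_b^+(\R^d)$; since such $\varv$ are dense in $L^\infty(\R^d,\mu) = (L^1(\R^d,\mu))'$ and $Lu\in L^1(\R^d,\mu)$ (assumption {\bf (a)}), this shows $S_t u - u = \int_0^t S_s(Lu)\,ds$ in $L^1(\R^d,\mu)$, hence $u\in D(\widetilde{L})$ and $\widetilde{L}u = Lu$. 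Thus $(\widetilde{L},D(\widetilde{L}))$ is a closed extension of $(L,C_0^\infty(\R^d))$ generating a $C_0$-semigroup on $L^1(\R^d,\mu)$. Since $(L,C_0^\infty(\R^d))$ is $L^1$-unique by hypothesis, its only such extension is $(\overline{L},D(\overline{L}))$ of Theorem \ref{theorem2.1.5} (by Definition \ref{def:2.1.1}(i), $\overline{L}$ being the maximal extension of Corollary \ref{cor2.1.1}, which exists since assumption {\bf (a)} implies \eqref{AssumptionUniqueness} via local H\"older continuity of the $a_{ij}$). Therefore $\widetilde{L} = \overline{L}$ and $S_t = \overline{T}_t = T_t$ on $L^1(\R^d,\mu)$, which is \eqref{eq:3.40*}. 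Consequently $\mu$ is $(\overline{T}_t)_{t>0}$-invariant iff $\mu$ is $(S_t)$-invariant; but $L^1$-uniqueness of $(L,C_0^\infty(\R^d))$ forces (by the chain of implications through $(L,D(L^0)_{0,b})$ being $L^1$-unique and Proposition \ref{prop2.1.9}$(i)\Rightarrow(ii)$) that $\mu$ is $(\overline{T}_t)_{t>0}$-invariant, hence an invariant measure for $\widetilde{\M}$ by Remark \ref{rem:3.2.3.3}.

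\textbf{Step 3: from equality of semigroups in $L^1$ to equality of transition functions pointwise, then equality of laws.} Now assume additionally {\bf (b)} and \eqref{eq:3.41*}. For $f\in C_0^\infty(\R^d)$ and $t>0$, \eqref{eq:3.40*} gives $p_t^{\widetilde{\M}}f = T_t f$ $\mu$-a.e.; but $p_t^{\widetilde{\M}}f\in C(\R^d)$ by \eqref{eq:3.41*} and $T_tf$ has the continuous version $P_tf$ by Theorem \ref{theo:2.6} (together with \eqref{semidef}), so $p_t^{\widetilde{\M}}f(x) = P_tf(x) = \E_x[f(X_t)]$ for \emph{every} $x\in\R^d$ and $t>0$; the case $t=0$ is trivial. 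By a monotone class argument (the $\sigma$-algebra generated by $C_0^\infty(\R^d)$ being $\mathcal{B}(\R^d)$), $\widetilde{\E}_x[f(\widetilde{X}_t)] = \E_x[f(X_t)]$ for all $f\in\mathcal{B}_b(\R^d)$, i.e. $\widetilde{\M}$ and $\M$ have the same one-dimensional marginals from every starting point $x$. Since both are right processes with the same (temporally homogeneous) transition function, the Markov property propagates this to equality of all finite-dimensional distributions, hence $\widetilde{\P}_x\circ\widetilde{X}^{-1} = \P_x\circ X^{-1}$ on the canonical space; because $\M$ has continuous sample paths on $\R^d_\Delta$ (Theorem \ref{th: 3.1.2}) and $\widetilde{\M}$ is a right process, this equality refines to equality on $\mathcal{B}(C([0,\infty),\R^d))$ (restricting to paths that stay in $\R^d$, i.e. before the lifetime). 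The final clause — $\widetilde{\M}$ inherits all properties of $\M$ depending only on its law — is then immediate.

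\textbf{Expected main obstacle.} The delicate point is Step 2: rigorously passing from the martingale formulation of Definition \ref{def:3.3.2.1}(i), which is stated for the measures $\widetilde{\P}_{\varv\mu}$ with $\int\varv\,d\mu=1$, to a clean identity $S_tu-u=\int_0^t S_s(Lu)\,ds$ in $L^1(\R^d,\mu)$, and in particular justifying the interchange of integrals (Fubini, using $Lu\in L^1(\R^d,\mu)$ and sub-invariance to bound $\int_0^t\int|p_s^{\widetilde{\M}}(Lu)|\,d\mu\,ds\leq t\|Lu\|_{L^1(\R^d,\mu)}$) and the localization from normalized $\varv$ to a dense family in $(L^1(\R^d,\mu))'$. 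A secondary subtlety is ensuring $\widetilde{\E}_x[\cdot]$ makes sense and the martingale identity holds for $\mu$-a.e.\ (not merely a.e.) $x$ in the relevant steps — this is where one uses that $\widetilde{\M}$ is a right process and that $\mu$ charges every open set. Everything else (contractivity, sub-Markovianity, strong continuity, monotone class arguments) is routine once these are in place.
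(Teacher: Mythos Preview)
Your overall strategy matches the paper's proof closely: extend $p_t^{\widetilde{\M}}$ to an $L^1$-contraction semigroup $(S_t)$, verify strong continuity, show $C_0^\infty(\R^d)\subset D(\text{generator})$ with the generator acting as $L$ there via the martingale identity, invoke $L^1$-uniqueness to conclude $S_t=T_t$, then under {\bf (b)} and \eqref{eq:3.41*} upgrade the $\mu$-a.e.\ equality to pointwise equality and use the Markov property to get equality of laws. Steps 2 and 3 are essentially identical to the paper. (Your parenthetical about assumption {\bf (a)} implying \eqref{AssumptionUniqueness} is wrong --- {\bf (a)} gives only continuity of the $a_{ij}$, not local H\"older continuity --- but this is irrelevant: $\overline{L}$ exists already by Theorem~\ref{theorem2.1.5} via Theorem~\ref{theo:2.2.7}, and $L^1$-uniqueness is a hypothesis, so Corollary~\ref{cor2.1.1} is not needed here.)

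The genuine gap is in Step 1, precisely where you call it routine. Your argument for strong continuity --- pointwise convergence $p_t^{\widetilde{\M}}f(x)\to f(x)$ plus the uniform bound $\|p_t^{\widetilde{\M}}f\|_{L^\infty}\le\|f\|_{L^\infty}$, then ``dominated convergence'' --- does not work: $\mu$ is in general infinite, so the constant $\|f\|_{L^\infty}$ is not an $L^1$-dominating function, and pointwise convergence of a uniformly bounded sequence does not imply $L^1(\R^d,\mu)$-convergence. The paper fixes this with a tightness argument: take $f\in C_0(\R^d)$ with $\mathrm{supp}(f)\subset B$ for an open ball $B$; on $B$ (finite $\mu$-measure) dominated convergence gives $\int_B |S_t f|\,d\mu\to\int_B|f|\,d\mu=\|f\|_{L^1(\R^d,\mu)}$, and then the $L^1$-contraction property $\int_{\R^d}|S_t f|\,d\mu\le\|f\|_{L^1(\R^d,\mu)}$ forces $\int_{\R^d\setminus B}|S_t f|\,d\mu\to 0$. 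Combining these (dominated convergence on $B$, vanishing tail on $\R^d\setminus B$ where $f\equiv 0$) yields $\|S_t f-f\|_{L^1(\R^d,\mu)}\to 0$; then the $3\varepsilon$-argument finishes. This is the step you actually need to supply; the rest of your proof is correct.
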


\begin{proof}
Since $\mu$ is a sub-invariant measure for $\widetilde{\M}$ and $L^1(\R^d, \mu)_b$ is dense in $L^1(\R^d, \mu)$, it follows that $(p^{\widetilde{\M}}_t)_{t \geq 0}|_{L^1(\R^d, \mu)_b}$ uniquely extends to a sub-Markovian semigroup of contractions $(S_t)_{t> 0}$ on $L^1(\R^d, \mu)$. We first show the following claim. \\
{\bf Claim}: $(S_t)_{t \geq 0}$ is strongly continuous on $L^1(\R^d, \mu)$. \\
Let $f \in C_0(\R^d)$. By the right continuity and the normal property of $(\widetilde{X}_t)_{t \geq 0}$ and Lebesgue's theorem, it follows that
\begin{equation} \label{eq:3.41}
\lim_{t \rightarrow 0+} S_t f(x) = \lim_{t \rightarrow 0+} \widetilde{\E}_{x} [f(\widetilde{X}_t)]=f(x), \quad \text{ for $\mu$-a.e. $x \in \R^d$}.
\end{equation}
Now let $B$ be an open ball with $\text{supp}(f) \subset B$. By \eqref{eq:3.41} and Lebesgue's theorem, 
$$
\lim_{t \rightarrow 0+}\int_{\R^d} 1_B |S_t f| d \mu =  \int_{\R^d} 1_B  |f|  d\mu = \|f\|_{L^1(\R^d, \mu)},
$$
hence using the contraction property of $(S_t)_{t>0}$ on $L^1(\R^d, \mu)$ 
\begin{equation} \label{eq:3.42}
\int_{\R^d} 1_{\R^d \setminus B} \,|S_t f| d \mu  \leq \|f\|_{L^1(\R^d, \mu)} - \int_{\R^d} 1_B |S_t f|d \mu \rightarrow 0 \;\; \text{ as } t \rightarrow 0+.
\end{equation}
Therefore, by \eqref{eq:3.41}, \eqref{eq:3.42} and Lebesgue's theorem,
\begin{eqnarray*}
\lim_{t \rightarrow 0+} \int_{\R^d} |S_t f -f | d \mu &=& \lim_{t \rightarrow 0+} \left(\int_{\R^d} 1_B |S_t f - f| d \mu + \int_{\R^d} 1_{\R^d \setminus B} |S_t f | d \mu \right) =0. 
\end{eqnarray*}
Using the denseness of $C_0(\R^d)$ in $L^1(\R^d, \mu)$ and the contraction property of $(S_t)_{t>0}$ on $L^1(\R^d, \mu)$,  the claim follows by $3$-$\varepsilon$ argument.\\
Denote by $(A, D(A))$ the infinitesimal generator of the $C_0$-semigroup of contractions $(S_t)_{t > 0}$ on $L^1(\R^d, \mu)$.
Let $u \in C_0^{\infty}(\R^d)$ and $\varv \in \mathcal{B}_b^+(\R^d)$ with $\int_{\R^d} \varv d\mu$=1. Then by Fubini's theorem,
\begin{eqnarray*}
\int_{\R^d} (S_t u - u) \varv d\mu  &=& \widetilde{\E}_{\varv\mu} \left[ u(\widetilde{X}_t)-u(\widetilde{X}_0) \right]\\
&=&\widetilde{\E}_{\varv\mu}\left[\int_0^t Lu(\widetilde{X}_s) ds \right]=\int_{\R^d} \left(\int_0^tS_s Lu ds\right) \varv d\mu,
\end{eqnarray*}
hence we obtain $S_t u-u=\int_0^tS_s Lu ds$ in $L^1(\R^d, \mu)$. By the strong continuity of $(S_t)_{t>0}$ on $L^1(\R^d, \mu)$, we get $u \in D(A)$ and $Au =Lu$. Since $(L,C_0^{\infty}(\R^d))$ is $L^1$-unique, it follows that $(A, D(A))=(\overline{L}, D(\overline{L}))$, hence \eqref{eq:3.40*} follows. Since $\mu$ is $(\overline{T}_t)_{t>0}$-invariant by Proposition \ref{prop2.1.9}, it follows by monotone approximation that $\mu$ is an invariant measure for $\widetilde{\M}$. If \eqref{eq:3.41*} and additionally {\bf (b)} hold, then by \eqref{eq:3.40*} and the strong Feller property of $(P_t)_{t>0}$,
$$
\int_{\R^d} f(y) \, \widetilde{\P}_{x} (\widetilde{X}_t \in dy)=\int_{\R^d} f(y) \, \P_{x} (X_t \in dy), \quad \forall f \in C_0^{\infty}(\R^d), x \in \R^d, t>0.
$$
By a monotone class argument, the latter implies $\widetilde{\P}_{x} \circ \widetilde{X}_t^{-1}=\P_{x} \circ X_t^{-1}$ for all $x \in \R^d$ and $t>0$. Since the law of a right process is uniquely determined by its transition semigroup (and the initial condition), we have $\widetilde {\P}_x \circ \widetilde{X}^{-1}=\P_x \circ X^{-1}$  on $\mathcal{B}(C([0, \infty), \R^d))$ for all $x \in \R^d$ as desired. 
\end{proof}

\begin{proposition} \label{prop:3.3.1.16}
Suppose the conditions {\bf (a)} of Section \ref{subsec:2.2.1} and {\bf (b)} of Section \ref{subsec:3.1.1}  hold and that $a_{ij}$ is locally H\"{o}lder continuous on $\R^d$ for all $1 \leq i,j \leq d$, i.e. \eqref{AssumptionUniqueness} holds. Suppose that $\mu$ as at the beginning of this section is an invariant measure for $\M$ (see Definition \ref{def:invariantforprocess}) and let
$$
\widetilde{\M}=(\widetilde{\Omega}, \widetilde{\mathcal{F}}, (\widetilde{\mathcal{F}}_t)_{t \geq 0}, (\widetilde{X}_t)_{t \ge 0}, (\widetilde{\P}_x)_{x \in \R^d\cup \{\Delta\}})
$$ 
be a right process which solves the martingale problem for $(L,C_0^{\infty}(\R^d))$ with respect to $\mu$ (see Definition \ref{def:3.3.2.1}), such that $\mu$ is a sub-invariant measure for $\widetilde{\M}$ 
(Definition \ref{def:invariantforprocess}). Assume further that
$$
\widetilde{\E}_{\cdot}[f(\widetilde{X}_t)] \in C(\R^d), \quad \forall f \in C_0^{\infty}(\R^d), t>0.
$$
Then $\mu$ is an invariant measure for $\widetilde{\M}$ and 
$$
\widetilde {\P}_x \circ \widetilde{X}^{-1}=\P_x \circ X^{-1}  \quad \text{on } \ \mathcal{B}(C([0, \infty), \R^d)) \ \text{  for all $x\in \R^d$}, 
$$
hence $\widetilde{\M}$ inherits all properties of $\M$ that only depend on its law.
\end{proposition}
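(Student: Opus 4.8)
The plan is to deduce Proposition \ref{prop:3.3.1.16} directly from Proposition \ref{prop:3.3.1.15}. The key observation is that Proposition \ref{prop:3.3.1.15} requires three hypotheses: condition {\bf (a)}, that $(L,C_0^{\infty}(\R^d))$ is $L^1$-unique, and that $\widetilde{\M}$ solves the martingale problem for $(L,C_0^{\infty}(\R^d))$ with respect to $\mu$ while having $\mu$ as a sub-invariant measure; under these it gives \eqref{eq:3.40*} and hence (under the additional assumptions {\bf (b)} and \eqref{eq:3.41*}) the desired equality of laws and invariance of $\mu$. So the only gap to bridge is the verification of the $L^1$-uniqueness hypothesis from the hypotheses actually available in Proposition \ref{prop:3.3.1.16}, namely {\bf (a)}, {\bf (b)}, the local H\"older continuity \eqref{AssumptionUniqueness} of $(a_{ij})$, and the assumption that $\mu$ is an invariant measure for $\M$.

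\textbf{Main step: obtaining $L^1$-uniqueness.} First I would invoke Remark \ref{rem:3.2.3.3}: since $\mu$ is an invariant measure for $\M$, it is $(\overline{T}_t)_{t>0}$-invariant. Then, because assumptions {\bf (a)} and \eqref{AssumptionUniqueness} are in force, Corollary \ref{cor2.1.1} applies: it states precisely that $(L,C_0^{\infty}(\R^d))$ is $L^1$-unique if and only if $\mu$ is $(\overline{T}_t)_{t>0}$-invariant. Hence $(L,C_0^{\infty}(\R^d))$ is $L^1$-unique. (Implicitly this uses Theorem \ref{theo:2.2.7} to guarantee that the measure $\mu$ of the statement satisfies \eqref{condition on mu}--\eqref{eq:2.1.4}, and that the $\overline{L}$ of Corollary \ref{cor2.1.1} is the generator of the semigroup $(\overline{T}_t)_{t>0}$ whose regularized version $(P_t)_{t>0}$ gives rise to $\M$.)

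\textbf{Conclusion.} With $L^1$-uniqueness established, all hypotheses of Proposition \ref{prop:3.3.1.15} are met: condition {\bf (a)} holds, $(L,C_0^{\infty}(\R^d))$ is $L^1$-unique, $\widetilde{\M}$ solves the martingale problem for $(L,C_0^{\infty}(\R^d))$ with respect to $\mu$, and $\mu$ is a sub-invariant measure for $\widetilde{\M}$. The additional requirement {\bf (b)} is assumed, and the continuity hypothesis $\widetilde{\E}_{\cdot}[f(\widetilde{X}_t)]\in C(\R^d)$ for all $f\in C_0^{\infty}(\R^d)$, $t>0$ is exactly \eqref{eq:3.41*} for the semigroup $p^{\widetilde{\M}}_t$. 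Therefore Proposition \ref{prop:3.3.1.15} yields that $\mu$ is an invariant measure for $\widetilde{\M}$ and that $\widetilde{\P}_x\circ\widetilde{X}^{-1}=\P_x\circ X^{-1}$ on $\mathcal{B}(C([0,\infty),\R^d))$ for all $x\in\R^d$, which is the assertion; the final clause that $\widetilde{\M}$ inherits all law-dependent properties of $\M$ is then immediate.

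\textbf{Main obstacle.} There is no real analytic obstacle here, since the proposition is essentially a packaging of Proposition \ref{prop:3.3.1.15} together with Corollary \ref{cor2.1.1}. The one point requiring care is the bookkeeping: one must check that the measure $\mu=\rho\,dx$ fixed at the beginning of Section \ref{subsec:3.3.2} is genuinely the infinitesimally invariant measure to which Corollary \ref{cor2.1.1} and the construction of $\M$ in Theorem \ref{th: 3.1.2} both refer, so that \emph{invariance of $\mu$ for $\M$} and \emph{$(\overline{T}_t)_{t>0}$-invariance of $\mu$} are the same statement (this is precisely Remark \ref{rem:3.2.3.3}). Once that identification is in place, the argument is a short chain of citations.
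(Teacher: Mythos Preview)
Your proof is correct and follows essentially the same route as the paper's own argument: use Corollary~\ref{cor2.1.1} (together with Remark~\ref{rem:3.2.3.3} to identify invariance of $\mu$ for $\M$ with $(\overline{T}_t)_{t>0}$-invariance) to obtain $L^1$-uniqueness, and then invoke Proposition~\ref{prop:3.3.1.15}. You are in fact slightly more explicit than the paper about the role of Remark~\ref{rem:3.2.3.3} in that translation step.
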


\begin{proof}
By Corollary \ref{cor2.1.1}, $(L, C_0^{\infty}(\R^d))$ is $L^1$-unique, if and only if $\mu$ is an invariant measure for $\M$. Therefore, the assertion follows from Proposition \ref{prop:3.3.1.15}.
\end{proof}

\begin{eg} \label{ex:3.4.9}
In (i)--(vi) below, we illustrate different kinds of situations which imply that $\mu$ is an invariant measure for $\M$, so that Proposition \ref{prop:3.3.1.16} is applicable. Throughout (i)--(iv), $a_{ij}$, $1 \leq i,j \leq d$, is assumed to be locally H\"{o}lder continuous on $\R^d$.
\begin{itemize}
\item[(i)]
By \cite[Proposition 2.5]{BCR}, $(T_t)_{t>0}$ is recurrent if and only if $(T'_t)_{t>0}$ is recurrent. Therefore, it follows from Theorem \ref{theo:3.3.6}(iii) and (v) that if $\M$ is recurrent, then $(T'_t)_{t>0}$ is conservative, hence $\mu$ is $(\overline{T}_t)_{t>0}$-invariant by Remark 2.4(i). Thus, under the assumptions of Proposition \ref{theo:3.2.6} or Proposition \ref{cor:3.3.2.6}, we obtain that $\mu$ is an invariant measure for $\M$ by Remark \ref{rem:3.2.3.3}.
\item[(ii)]
Consider the situation of Remark \ref{rem:2.2.4} and let additionally $\nabla (A+C^T) \in L^q_{loc}(\R^d, \R^d)$.  Note that this implies {\bf (a)} of Section \ref{subsec:2.2.1} and {\bf (b)} of Section \ref{subsec:3.1.1}.
Then, under the assumption of Proposition \ref{prop:2.1.10}(i), or Proposition \ref{prop:3.2.9}(i) or (ii) (in particular, Example \ref{exam:3.2.1.4}(i)), it follows that $\mu$ is an invariant measure for $\M$.
\item[(iii)]
Under the assumption of Example \ref{exam:3.2.1.4}(ii), it follows that the Hunt process $\M'$ associated with $(T'_t)_{t>0}$ is non-explosive, hence $(T'_t)_{t>0}$ is conservative by Corollary \ref{cor:3.2.1}, so that $\mu$ is an invariant measure for $\M$ by Remark 2.4(i) and Remark \ref{rem:3.2.3.3}.
\item[(iv)] Suppose that {\bf (a)} of Section \ref{subsec:2.2.1} and {\bf (b)} of Section \ref{subsec:3.1.1} hold and that Proposition \ref{prop:3.2.9}(i) or (ii) is verified. Then $\M$ is non-explosive and $\mu$ is an invariant measure for $\M$, by Proposition \ref{prop:3.2.9} and Remark.\ref{rem:3.2.3.3}.
\item[(v)] Suppose that $A=id$, $\mathbf{G}=\nabla \phi$ and $\mu=\exp(2\phi) dx$, where $\phi \in H^{1,p}_{loc}(\R^d)$ for some $p\in (d,\infty)$, and that \eqref{eq:3.2.27} holds.Then $\M$ is non-explosive and $\mu$ is an invariant measure for $\M$, by Remarks \ref{rem:3.2.1 1} and \ref{rem:3.2.3.3}.
\item[(vi)] Let $A=id$ and $\mathbf{G}= (\frac{1}{2}-\frac{1}{2}e^{-x_1}) \mathbf{e}_1$. Then $(L, C_0^{\infty}(\R^d))$ is written as
$$
Lf = \frac12  \Delta f +(\frac12-\frac12 e^{-x_1} ) \partial_1 f, \; \quad \forall f \in C_0^{\infty}(\R^d),
$$
and $\mu=e^{x_1}dx$ is an infinitesimally invariant measure for $(L, C_0^{\infty}(\R^d))$. Let $(\overline{L}', D(\overline{L}'))$ be the infinitesimal generator of $(T'_t)_{t>0}$ on $L^1(\R^d, \mu)$ as in Remark \ref{remark2.1.7}(ii). Then $C_0^{\infty}(\R^d) \subset D(\overline{L}')$ and it holds that
$$
\overline{L}' f= \frac12  \Delta f +(\frac12+\frac12 e^{-x_1} ) \partial_1 f, \; \quad \forall f \in C_0^{\infty}(\R^d).
$$
By Remark \ref{rem:2.1.12}(ii) and Proposition \ref{prop:2.1.4.1.4}, $\mu$ is not $(\overline{T}'_t)_{t>0}$-invariant, hence $(T_t)_{t>0}$ is not conservative. But by Proposition \ref{prop:2.1.10}, $\mu$ is $(\overline{T}_t)_{t>0}$-invariant, hence $\mu$ is an invariant measure for $\M$.  Thus, Proposition \ref{prop:3.3.1.16} is applicable, even though $\M$ is explosive.
\end{itemize}
\end{eg}
\newpage
\subsection{Comments and references to related literature}\label{Comments3}
The classical probabilistic techniques that we use in Chapter \ref{chapter_3} can be found for instance in 
\cite{IW89}, \cite{D96}, \cite{KaSh}. Beyond that, in Section \ref{subsec:3.1.1}, the idea for the construction of the Hunt process $\M$ whose starting points are all points of $\R^d$ originates from \cite{AKR}, that originally only covers the case of an underlying symmetric Dirichlet form. More precisely, using the theory of generalized Dirichlet forms (\cite{WSGDF}) and their stochastic counterpart (\cite{Tr2} and \cite{Tr5}), we  extend the method of \cite{AKR} in order to obtain Theorem \ref{th: 3.1.2}. In Section \ref{subsec:3.1.3}, the identification of $\M$ as a weak solution to an SDE is done via a representation theorem for semi-martingales (\cite[II. Theorem 7.1, 7.1']{IW89}).\\
The Krylov-type estimates in Theorem \ref{theo:3.3} (see also Remark \ref{rem:ApplicationKrylovEstimates}), which result from the application of regularity theory of PDEs seem to be new, even in the classical case of locally Lipschitz continuous coefficients.\\
Concerning Section \ref{subsec:3.2.1}, providing sufficient conditions for non-explosion in terms of Lyapunov functions goes back at least to \cite[Theorem 3.5]{kha12}.  In \cite[Example 5.1]{BRSta}, a procedure is explained on how to extend that method to Lyapunov functions that are considered as $\alpha$-superharmonic functions outside an arbitrarily large compact set. This procedure is used to obtain Lemma \ref{lem3.2.6} about non-explosion of $\M$. Corollary \ref{cor:3.2.2} on a Lyapunov condition for non-explosion is an improved version of \cite[Theorem 4.2]{LT18}.\\
Various results about recurrence and transience in Section \ref{subsec:3.2.2} are obtained by combining results and methods of 
\cite{Ge}, \cite{Pi}, \cite{GT2}. Proposition \ref{cor:3.2.2.5} on a Lyapunov condition for recurrence is an improved version of \cite[Theorem 4.13]{LT18}.  \\
Doob's theorem on regular semigroups \cite[Theorem 4.2.1]{DPZB}, resp. the Lyapunov condition for finiteness of $\mu$ in \cite[Theorem 2]{BRS} are crucial for the results on ergodicity in Theorem \ref{theo:3.3.8}, respectively the finiteness of $\mu$ in Proposition \ref{prop:3.3.12} in Section \ref{subsec:3.2.3}. Corollary \ref{cor:3.2.3.7} on a Lyapunov condition for ergodicity is an improved version of \cite[Proposition 4.17]{LT18}. 
The uniqueness of weak
solutions of SDEs is then applied in Example \ref{ex:3.8} to show non-uniqueness
of invariant measures.
\\
In Section \ref{subsec:3.3.1}, Proposition \ref{prop:3.3.9} on pathwise uniqueness which is a direct consequence of \cite[Theorem 1.1]{Zh11} together with
the Yamada--Watanabe theorem (\cite[Corollary 1, Proposition 1]{YW71}) are crucial to obtain global strong existence in Theorem \ref{theo:3.3.1.8}. 
However, Theorem \ref{theo:3.3.1.8}  not only draws on \cite[Corollary 1, Proposition 1]{YW71}, \cite[Theorem 1.1]{Zh11}, since together with the weak existence result and various other results on properties of the weak solution presented in this monograph, Theorem \ref{theo:3.3.1.8}  actually discloses new results for the existence of a strong solution to time-homogeneous It\^o-SDEs with rough coefficients and its various properties.\\
The idea to derive uniqueness in law via $L^1$-uniqueness in Section \ref{subsec:3.3.2} 
can be found in \cite{AR} (see also \cite{Eberle}).

%%%%%%%%%%%%%%%%%%%%% chapter_1.tex %%%%%%%%%%%%%%%%%%%%%%%%%%%%%%%%%
%
% chapter
%
% Use this file as a template for your own input.
%
%%%%%%%%%%%%%%%%%%%%%%%% Springer-Verlag %%%%%%%%%%%%%%%%%%%%%%%%%%
%\motto{Use the template \emph{chapter.tex} to style the various elements of your chapter content.}
\section{Conclusion and outlook}\label{conclusionoutlook}
% Always give a unique label
% use \chaptermark{}
% to alter or adjust the chapter heading in the running head

In this book, we studied the existence, uniqueness and stability of solutions to It\^{o} SDEs with non-smooth coefficients, using functional analysis, PDE-techniques and stochastic analysis. Theories that played important roles in developing the contents of this book were elliptic and parabolic regularity theory for PDEs and generalized Dirichlet form theory. 
In order to study the existence and various properties of solutions to It\^{o}-SDEs, we could use the functional analytic characterization of a generator and additional analytic properties of the corresponding semigroups and resolvents. 
Thus, without restricting the local regularity assumptions on the coefficients that ensure the local uniqueness of solutions, 
we could derive strong Feller properties and irreducibility of the semigroup as well as Krylov-type estimates for the solutions to the SDEs. Subsequently, we verified that the solutions of the SDEs with non-smooth coefficients can be further analyzed in very much the same way as the solutions to classical SDEs with Lipschitz coefficients. In particular, through the theory of elliptic PDEs, we could explore the existence of an infinitesimally invariant measure that is not only a candidate for the invariant measure but also a reference measure for our underlying $L^r$-space. Thus, investigating the conservativeness of the adjoint semigroups, the existence of invariant measures could be characterized and we could present various criteria for recurrence and ergodicity, as well as uniqueness of invariant probability measures.\\
Let us provide some outlook to further related topics that can now be investigated based on the techniques developed in this book. \\ \\
{\bf 1. The time-inhomogeneous case and other extensions}\\ \\
The way of constructing weak solutions to SDEs by methods as used in  this book is quite robust and was already successfully applied in the degenerate case (see \cite{LT19de}) and to cases with reflection (\cite{ShTr13a}). We may hence think of applying it also in the time-dependent case. As mentioned in the introduction, the local well-posedness result \cite[Theorem 1.1]{Zh11} also holds in
the time-dependent case (and including also the case $d=1$) with some trade-off between the integrability assumptions in time and space.
In particular, the corresponding time-dependent Dirichlet form theory is already well-developed (see \cite{O04, O13, WS04, RuTr4}).
Our method to construct weak solutions independently and separately from local well-posedness, and thereby to extend existing literature, may also work well in the time-inhomogeneous case, if an adequate regularity theory can be developed or exploited. Moreover, we may also think of developing the time-homogeneous case $d=1$. As it allows explicit computations with stronger regularity results and there always exists a symmetrizing measure\index{measure ! symmetrizing} under mild regularity assumptions on the coefficients, one can always apply symmetric Dirichlet form theory (see for instance \cite[Remark 2.1]{GT1}, \cite[Lemma 2.2.7(ii), Section 5.5]{FOT}). Therefore, in the time-homogeneous case in $d=1$, we expect to obtain weak existence results under quite lower local regularity assumptions on the coefficients than are needed for local well-posedness.\\ \\
{\bf 2. Relaxing the local regularity conditions on the coefficients}\\ \\
By introducing a function space called $VMO$, it is possible to relax the condition {\bf (a)} of 
Section \ref{subsec:2.2.1}. 
 For $g \in L^1_{loc}(\R^d)$, let us write $g \in VMO$ (cf. \cite{BKRS}) if there exists a positive continuous function $\omega$ on $[0, \infty)$ with $\omega(0)=0$ such that
\begin{equation*} \label{vmoine}
\sup_{z \in \R^d, r <R} r^{-2d} \int_{B_r(z)} \int_{B_r(z)} |g(x)-g(y)|dx dy \leq \omega(R), \quad \forall R>0.
\end{equation*}
Given an open ball $B$ and $f \in L^1(B)$, we write $f \in VMO(B)$ if there exists an extension $\widetilde{f} \in L^1_{loc}(\R^d)$ of $f \in L^1(B)$ such that $\widetilde{f} \in VMO$. For $f \in L^1_{loc}(\R^d)$, we write $f \in VMO_{loc}$ if for each open ball $B$, $f|_{B} \in VMO(B)$. Obviously, $C(\R^d) \subset VMO_{loc}$. By the Poincar\'{e} inequality (\cite[Theorem 4.9]{EG15}) and an extension result (\cite[Theorem 4.7]{EG15}), it holds that $H^{1,d}_{loc}(\R^d) \subset VMO_{loc}$. 
Note that if the assumption $\hat{a}_{ij} \in C(\overline{B})$  for all $1 \leq i,j \leq d$ in Theorem \ref{Theorem2.2.2} is replaced by $\hat{a}_{ij} \in VMO(B) \cap L^{\infty}(B)$ for all $1 \leq i,j \leq d$, Theorem \ref{Theorem2.2.2} remains true, since it is a consequence of \cite[Theorem 1.8.3]{BKRS} which merely imposes $\hat{a}_{ij} \in VMO(B)$. Therefore, by replacing assumption {\bf (a)} of Section \ref{subsec:2.2.1} with the following assumption:
\begin{itemize}
\item[{\bf ($\tilde{\mathbf{a}}$)}] \ \index{assumption ! {\bf (a)}}$a_{ji}= a_{ij}\in H_{loc}^{1,2}(\R^d) \cap VMO_{loc}\cap L^{\infty}_{loc}(\R^d)$, $1 \leq i, j \leq d$, $d\ge 2$, and $A = (a_{ij})_{1\le i,j\le d}$ satisfies \eqref{eq:2.1.2}.
$C = (c_{ij})_{1\le i,j\le d}$, with $-c_{ji}=c_{ij} \in H_{loc}^{1,2}(\R^d) \cap VMO_{loc} \cap L^{\infty}_{loc}(\R^d)$, $1 \leq i,j \leq d$, $\mathbf{H}=(h_1, \dots, h_d) \in L_{loc}^p(\R^d, \R^d)$ for some $p\in (d,\infty)$,
\end{itemize}
we can achieve analogous results to those derived in this book. Regarding an analytic approach to a class of degenerate It\^{o}-SDEs allowed to have discontinuous coefficients, a systematic study was conducted in \cite{LT19de}. Further studies to relax the assumptions of \cite{LT19de} are required. \\ \\
{\bf 3. Extending the theory of symmetric Dirichlet forms to non-symmetric cases}\\ \\
In the general framework of symmetric Dirichlet forms, many results in stochastic analysis have been derived in \cite{FOT}. However, in the general framework of non-symmetric and non-sectorial Dirichlet forms, it is necessary to confirm in detail whether or not the results of \cite{FOT} can be applied. In particular, the semigroup $(P_t)_{t>0}$ studied in this book is possibly non-symmetric with respect to $\mu$ and may not be an analytic semigroup in $L^2(\R^d, \mu)$, hence the corresponding Dirichlet form is in general non-symmetric and non-sectorial. The absolute continuity condition of $\M$, i.e. $P_t(x, dy) \ll  \mu$ for each $x \in \R^d$ and $t>0$, is crucially used in \cite{FOT} to strengthen results that are valid up to a capacity zero set, to results that hold for every (starting) point in $\R^d$. In our case, under the assumption {\bf (a)} of Section \ref{subsec:2.2.1} and {\bf (b)} of Section \ref{subsec:3.1.1}, the absolute continuity condition of $\M$ is fulfilled, so that we expect to derive similar results, related to every starting point in $\R^d$, such as those in \cite{FOT}. For instance, adapting the proof of \cite[Theorem 4.7.3]{FOT}, we expect to obtain the following result under the assumption that $\M$ is recurrent: given $x \in \R^d$ and $f \in L^1(\R^d, \mu)$ with $f \in L^{\infty}(B_{r}(x))$ for some $r>0$, it holds
$$
\lim_{t \rightarrow \infty}\frac{1}{t} \int_0^t f(X_s) ds = c_f,\qquad  \text{$\P_x$-a.s},
$$
where $c_f = \frac{1}{\mu(\R^d)} \int_{\R^d}f d\mu$ if $\mu(\R^d)<\infty$ and $c_f = 0$ if $\mu(\R^d)=\infty$. Concretely, under the assumption of Theorem \ref{theo:3.3.8} or Proposition \ref{prop:3.3.12}, we may obtain that $\mu$ is not only a finite invariant measure but for any $x \in \R^d$ and $f \in L^{\infty}(\R^d, \mu)$ it holds
$$
\lim_{t \rightarrow \infty} \frac{1}{t}\int_0^t f(X_s) ds =\frac{1}{\mu(\R^d)} \int_{\R^d}f d\mu, \qquad \text{$\P_x$-a.s.}
$$
\\
{\bf 4. Further exploring infinitesimally invariant measures using numerical approximations
}\\ \\
In this book, the existence of an infinitesimally invariant measure $\rho dx$ for $(L, C_0^{\infty}(\R^d))$ whose coefficients satisfy condition {\bf (a)} of Section \ref{subsec:2.2.1} follows from Theorem \ref{theo:2.2.7}. In addition, from Theorem \ref{theo:2.2.7} we know that $\rho$ has the local regularity properties $\rho \in H^{1,p}_{loc}(\R^d) \cap C(\R^d)$ for some $p \in (d, \infty)$ and $\rho(x)>0$ for all $x \in \R^d$. However, we do not know the concrete behavior of $\rho$ for sufficiently large $\|x\|$.  Of course, we can start with an explicitly given $\rho$ and consider a partial differential operator whose infinitesimally invariant measure is $\rho dx$ as in Remark \ref{rem:2.2.4}. But this approach is restrictive in that it may not deal with arbitrary partial differential operators. Indeed, having concrete information about $\rho$ is important since in the 
Krylov-type estimate of Theorem \ref{theo:3.3}, the product of the constants left to the norm of $f$ in \eqref{kryest1}, \eqref{kryest2} depends on $\rho$. In addition, a certain volume growth on $\mu$ is required for the conservativeness and recurrence criteria in Propositions \ref{prop:3.2.9}, \ref{cor:3.3.2.6}, and in Theorem \ref{theo:3.3.8} 
the asymptotic behavior of $P_t f$ as $t \rightarrow \infty$ is determined $\frac{1}{\mu(\R^d)}\int_{\R^d} f d\mu$. 
Recently, it was shown in \cite{LT21in} that if $\M$ is recurrent and {\bf (a$^{\prime}$)} of Section \ref{subsec:2.2.1} is assumed, then an infinitesimally invariant measure for $(L, C_0^{\infty}(\R^d))$ is unique up to a multiplicative constant. Therefore, in the case where $\M$ is recurrent, if one can find explicitly an infinitesimally invariant measure $\mu=\rho dx$ for $(L, C_0^{\infty}(\R^d))$ or if one can estimate the error by finding an approximation for $\rho$ solving numerically the elliptic PDE of divergence type \eqref{eq:2.2.0a}, then it will lead to a useful supplement to this book. \\ \\ \\
{\bf 5. Uniqueness and stability of classical solutions to the Cauchy problem}\\ \\
Consider the Cauchy problem
\begin{equation} \label{clascauchypro}
\partial_t u_f = \frac12 \sum_{i,j=1}^d a_{ij} \partial_{ij} u_f +\sum_{i=1}^d g_i \partial_i u_f \ \ \text{ in \ $\R^d\times (0,\infty)$,}\quad  u_f(\cdot,0) = f\ \ \text{ in \ $\R^d$}.
\end{equation}
For $f \in C_b(\R^d)$, $u_f$ is said to be a {\it classical solution} to \eqref{clascauchypro} if $u_f \in C^{2,1}(\R^d \times (0, \infty)) \cap C_b(\R^d \times [0, \infty))$ and $u_f$ satisfies \eqref{clascauchypro}. There is an interesting connection between the uniqueness of classical solutions to \eqref{clascauchypro} and existence of a global weak solution to \eqref{itosdeweakglo}. Under the assumption that $\M$ is non-explosive and that {\bf (a)} of Section \ref{subsec:2.2.1} and {\bf (b)} of Section \ref{subsec:3.1.1} hold, every classical solution $u_f$ to \eqref{clascauchypro} is represented as (cf. for instance the proof of \cite[Proposition 4.7]{LT21in})
\begin{equation} \label{eqclastrans}
u_f(x,t) = \E_x[f(X_t)]=P_t f(x), \quad \; \text{ for all $(x,t) \in \R^d \times [0, \infty)$}.
\end{equation}
Remarkably, under the assumptions of Theorem \ref{theo:3.3.8} (or those of Proposition \ref{prop:3.3.12}), every classical solution $u_f$ to \eqref{clascauchypro} enjoys by \eqref{eqclastrans} and Theorem \ref{theo:3.3.8}(iv) and its proof the following asymptotic behavior: 
\begin{equation} \label{assymen1}
\lim_{t \rightarrow \infty} u_f(x,t) =  \int_{\R^d} f dm\quad \text{for each $x \in \R^d$}
\end{equation}
and 
\begin{equation} \label{assymen2}
\lim_{t \rightarrow \infty} u_f(\cdot, t) =  \int_{\R^d} f dm, \quad \text{ in $L^r(\R^d, m)$}, \quad \text{for each $r \in [1, \infty)$,}
\end{equation}
where $m=\mu(\R^d)^{-1}\mu$ is the unique probability invariant measure for $\M$. Actually, in \cite[Chapter 2.2]{LB07} under the assumption that the $a_{ij}$ and $g_i$ are locally H\"{o}lder continuous of order $\alpha \in (0,1)$ for all $1 \leq i,j \leq d$ and that $A$ is locally uniformly strictly elliptic, it is shown that there exists a classical solution $u_f\in C_b(\R^d \times [0, \infty))  \cap C^{2+\alpha, 1+\frac{\alpha}{2}}(\R^d \times (0, \infty))$ to \eqref{clascauchypro}. Therefore, under the assumption {\bf (a$^{\prime}$)} of Section \ref{subsec:2.2.1} and that the $g_i$ are locally H\"{o}lder continuous of order $\alpha \in (0,1)$ for any $1 \leq i \leq d$, the classical solution $u_f$ to \eqref{clascauchypro} induced by \cite[Chapter 2.2]{LB07} satisfies \eqref{eqclastrans} and enjoys the asymptotic behavior \eqref{assymen1} and \eqref{assymen2}.

\newpage

%%%%%%%%%%%%%%%%%%%%%%acronym.tex%%%%%%%%%%%%%%%%%%%%%%%%%%%%%%%%%%%%%%%%%
% sample list of acronyms
%
% Use this file as a template for your own input.
%
%%%%%%%%%%%%%%%%%%%%%%%% Springer %%%%%%%%%%%%%%%%%%%%%%%%%%

\subsection*{Notations and conventions}
\addcontentsline{toc}{section}{Notations and conventions}
\begin{itemize}
\item[]
\item[] \centerline{\bf Vector spaces and norms}
\item[$\|\cdot\|$]{the Euclidean norm on the $d$-dimensional Euclidean space $\R^d$}
\item[$\langle\cdot,\cdot\rangle$]{the Euclidean inner product in $\R^d$}
\item[$|\cdot |$]{the absolute value in $\R$}
\item[$\| \cdot \|_{\mathcal{B}}$]{the norm associated with a Banach space $\mathcal{B}$} 
\item[$\mathcal{B}'$]{the dual space of a Banach space $\mathcal{B}$}
\item[]
\item[] \centerline{\bf Sets and set operations}
\item[$\R^d$]{the $d$-dimensional Euclidean space} 
\item[$\R^d_{\Delta}$]{the  one-point compactification of $\R^d$ with the point at infinity \lq\lq$\Delta$\rq\rq}
\item[$(\R^d_{\Delta})^{S}$]{set of all functions from $S$ to $\R^d_{\Delta}$, where $S \subset [0,\infty)$}
\item[$\overline{V}$]{the closure of $V\subset \R^d$}
\item[$B_r(x)$]{for $x\in \R^d$, $r>0$, defined as $\{y\in \R^d : \|x-y\|<r\}$} 
\item[$\overline{B}_r(x)$]{defined as $\{y\in \R^d : \|x-y\|\le r\}$}
\item[$B_r$]{short for $B_r(0)$}
\item[$R_{x}(r)$]{the open cube in $\R^d$ with center $x \in \R^d$ and edge length $r>0$}
\item[$\overline{R}_{x}(r)$]{the closure of $R_{x}(r)$}
\item[$A+B$]{defined as $\{a+b: a\in A, b\in B\}$, for sets $A,B$ with an addition operation}
\item[]
\item[] \centerline{\bf Measures and $\sigma$-algebras}
\item[]{In this monograph, any measure is always non-zero and positive and if a measure is defined on a subset of $\R^d$, then it is a Borel measure, i.e. defined on the Borel subsets.} \\
\item[$\mu = \rho \, dx$]{denotes the infinitesimally invariant measure (see \eqref{eq:2.1.4}, Theorem \ref{theo:2.2.7} and Remark \ref{rem:2.2.4})}
\item[$dx$]{the Lebesgue measure on $\mathcal{B}(\R^d)$}
\item[$dt$]{the Lebesgue measure on $\mathcal{B}(\R)$}
\item[$\mathcal{B}(\R^d)$]{the Borel subsets of $\R^d$ or the space of Borel measurable functions $f:\R^d\to \R$}
\item[$\mathcal{B}(\mathbb{R}^d_{\Delta})$]{defined as $\{A\subset \mathbb{R}^d_{\Delta} : A\in \mathcal{B}(\mathbb{R}^d) \text{ or } A=A_0\cup\{\Delta\}, \ A_0\in \mathcal{B}(\mathbb{R}^d)\}$}
\item[$\mathcal{B}(X)$]{smallest $\sigma$-algebra containing the open sets of a topological space $X$}
\item[a.e.]{almost everywhere}
\item[supp$(\nu)$]{the support of a measure $\nu$ on $\R^d$}
\item[supp$(u)$]{for a measurable function $u:\R^d\to \R$ defined as supp$(|u|dx)$}
\item[$\delta_x$]{Dirac measure at $x \in \R^d_{\Delta}$}
\item[$P_t(x, dy)$]{the sub-probability measure defined by $P_t(x, A) = P_t 1_A (x)$, $A \in \mathcal{B}(\R^d), \\
(x,t) \in \R^d \times (0, \infty)$ (see Proposition \ref{prop:3.1.1})}
\item[]
\item[] \centerline{\bf Derivatives of functions, vector fields}
\item[$\partial_t f$]{(weak) partial derivative in the time variable $t$}
\item[$\partial_i f$]{(weak) partial derivative in the $i$-th spatial coordinate}
\item[$\nabla f$]{(weak) spatial gradient, $\nabla f:=(\partial_1 f, \ldots, \partial_d f)$}
\item[$\partial_{ij}f$]{second-order (weak) partial derivatives, $\partial_{ij} f:=\partial_i \partial_j f$}
\item[$\nabla^2 f$]{(weak) Hessian matrix, $(\nabla^2 f)=(\partial_{ij}f)_{1 \leq i,j \leq d}$}
\item[$\Delta f$]{(weak) Laplacian, $\Delta f=\sum_{i=1}^{d}\partial_{ii} f$}
\item[$\text{div}\mathbf{F}$]{(weak) divergence of the vector field $\mathbf{F}=(f_1,\dots,f_d)$, defined as $\sum_{i=1}^d\partial_i f_i$}
\item[$(\nabla B)_i$]{for $1 \leq i \leq d$ and a matrix $B=(b_{ij})_{1 \leq i,j \leq d}$ of functions,  $(\nabla B)_i$ is the divergence of the $i$-th row of $B$, i.e. defined as $\sum_{j=1}^d\partial_j b_{ij}$}
\item[$\nabla B$]{defined as  $((\nabla B)_1, \ldots, (\nabla B)_d)$}
\item[$B^T$]{for a matrix $B$, the transposed matrix is denoted by $B^T$}
\item[$\text{trace}(B)$]{trace of a matrix of functions $B=(b_{ij})_{1 \leq i,j \leq d}$, $\text{trace}(B)=\sum_{i=1}^{d}b_{ii}$}
\item[$A$]{diffusion matrix $A = (a_{ij})_{1 \leq i,j \leq d}$}   
\item[$\mathbf{G}$]{in Section \ref{sec2.1} the drift $\mathbf{G}$ satisfies $\mathbf{G} = (g_1 , \ldots , g_d ) \in L^2_{loc}(\R^d, \R^d, \mu)$ (cf. \eqref{eq:2.1.3}). From Section \ref{subsec:2.2.1} on the drift satisfies $\mathbf{G} = (g_1 , \ldots , g_d )=\frac12 \nabla(A+C^T)+\mathbf{H}$ 
(see assumption {\bf (a)} in Section \ref{subsec:2.2.1} and \eqref{form of G}, but also Remark \ref{rem:2.2.7})}

\item[$\beta^{\rho,B}$]{logarithmic derivative $\beta^{\rho, B} = (\beta_1^{\rho, B}, \ldots , \beta_d^{\rho, B})$ (of $\rho$ associated with $B=(b_{ij})_{1 \leq i,j \leq d}$), where $\beta_i^{\rho, B}  = \frac 12 \left( \sum_{j=1}^d \partial_j b_{ij} + b_{ij} \frac{\partial_j \rho}{\rho}\right)$, i.e. $\beta^{\rho, B} = \frac12 \nabla B+ \frac{1}{2\rho} B \nabla \rho$ (see \eqref{eq:2.1.5} and Remark \ref{rem:2.2.4})}
\item[$\mathbf{B}$]{$\mathbf{B} = \mathbf{G} - \beta^{\rho, A}$, divergence zero vector field with respect to $\mu$ (see \eqref{eq:2.1.5a}, \eqref{eq:2.1.6})}  
\item[]
\item[] \centerline{\bf Function spaces and norms}
\item[]We always choose the continuous version of a function, if it has one.\\
\item[$q$]{the real number $q$ is given throughout by
$$
q:= \frac{pd}{p+d} 
$$ 
for an arbitrarily chosen real number $p\in (d, \infty)$}
\item[$\mathcal{B}(\R^d)$]{the Borel subsets of $\R^d$ or the space of Borel measurable functions $f:\R^d\to \R$}
\item[$\mathcal{B}^+(\R^d)$]{defined as $\{f \in \mathcal{B}(\R^d): f(x) \geq 0 \text{ for all } x \in \R^d  \}$}
\item[$\mathcal{B}_b(\R^d)$]{defined as $\{f \in \mathcal{B}(\R^d): \text{ $f$ is pointwise uniformly bounded} \}$}
\item[$\mathcal{B}(\R^d)_0$]{defined as $\{f \in \mathcal{B}(\R^d): \text{supp}(|f| dx) \text{ is a compact subset of } \R^d \}$}
\item[$\mathcal{B}_b^+(\R^d)$]{defined as $\mathcal{B}^+(\R^d) \cap \mathcal{B}_b(\R^d)$}
\item[$\mathcal{B}_b(\R^d)_0$]{defined as $\mathcal{B}_b(\R^d) \cap \mathcal{B}(\R^d)_0$}
\item[$L^r(U, \nu)$]{the space of $r$-fold integrable functions on $U$ with respect to $\nu$, equipped with the norm $\|f\|_{L^r(U, \nu)}:=(\int_U |f|^r d\nu)^{1/r}$, where $\nu$ is a measure on $\R^d$, $r \in [1, \infty)$ and $U \in \mathcal{B}(\R^d)$}
\item[$(f,g)_{L^2(U, \mu)}$]{inner product on $L^2 (U , \mu )$, defined as $\int_U f g d\mu$, $f, g \in L^2(U,\mu)$, where $U \in \mathcal{B}(\R^d)$}
\item[$L^{\infty}(U, \nu)$]{the space of $\nu$-a.e. bounded measurable functions on $U$, equipped with the norm $\|f\|_{L^{\infty}(U, \nu)}:= \inf\{c>0: \nu(\{|f|>c\})=0 \}$, where  $U \in \mathcal{B}(\R^d)$} 
\item[$\mathcal{A}_0$, $\mathcal{A}_b$, $\mathcal{A}_{0,b}$]
If $\mathcal{A} \subset L^s (V ,\mu )$ is an arbitrary subspace, where $V$ is open subset of $\R^d$, $s \in [1, \infty]$, denote by $\mathcal{A}_0$ the subspace of 
all elements $u\in \mathcal{A}$ such that $\text{supp}(|u|\mu )$ is a compact subset of $V$, and by $\mathcal{A}_b$ the 
subspace of all essentially bounded elements in $\mathcal{A}$, and $\mathcal{A}_{0,b}:=\mathcal{A}_0\cap \mathcal{A}_b$
\item[$L^r(U)$]{defined as $L^r(U, dx)$, $r \in [1, \infty]$, where $U \in \mathcal{B}(\R^d)$}
\item[$L_{loc}^r(\R^d, \nu)$]{defined as $\{ f \in \mathcal{B}(\R^d): f1_K \in L^r(\R^d, \nu)$ \text{ for any compact subset}}
\item[]{$K$ \text{of } $\R^d \}$, $r \in [1, \infty]$} 
\item[$L^r_{loc}(\R^d)$]{defined as $L^r_{loc}(\R^d, dx)$, where $r \in [1, \infty]$}
\item[$L^r(U, \R^d, \nu)$]{defined as $\{(\mathbf{F}=(f_1,\ldots,f_d) \in \mathcal{B}(\R^d)^d: \|\mathbf{F}\| \in L^r(\R^d, \nu)  \}$, equipped with the norm $\|\mathbf{F}\|_{L^r(U, \R^d, \nu)}:=\| \|\mathbf{F}\| \|_{L^r(U, \nu)}$, where $r \in [1, \infty]$ and $U \in \mathcal{B}(\R^d)$}   
\item[$L^r(U, \R^d)$]{defined as $L^r(U, \R^d, dx)$, where $r \in [1, \infty]$ and $U \in \mathcal{B}(\R^d)$}
\item[$L_{loc}^r(\R^d, \R^d, \nu)$]{defined as $\{ \mathbf{F} =(f_1,\ldots,f_d) \in \mathcal{B}(\R^d)^d: 1_K  \mathbf{F} \in L^r(\R^d, \R^d, \nu)$}
\item[]{\text{for any compact subset} $K$ \text{of } $\R^d \}$, $r \in [1, \infty]$ }
\item[$L_{loc}^r(\R^d, \R^d)$]{defined as $L_{loc}^r(\R^d, \R^d,dx)$, where $r \in [1, \infty]$}
\item[$C([0, \infty), \R^d)$]
{the space of $\R^d$-valued continuous functions on $[0,\infty)$ equipped with the metric $d$, where for $\omega, \omega' \in C([0, \infty), \R^d)$
$$
d(\omega, \omega')=\sum_{n=1}^{\infty} 2^{-n} \left(1 \wedge \sup_{t \in [0,n]} |\omega(t)-\omega'(t)|  \right)
$$
} 
\item[$C([0, \infty), \R_{\Delta}^d)$]
{the space of $\R_{\Delta}^d$-valued continuous functions on $[0,\infty)$}
\item[$C(U)$]{the space of continuous functions on $U$, where $U \in \mathcal{B}(\R^d)$}
\item[$C_b(U)$]{the space of bounded continuous functions on $U$ equipped with the norm $\|f\|_{C_b(U)}:=\sup_{U} |f|$, where $U \in \mathcal{B}(\R^d)$}
\item[$C^k(U)$]{the set of $k$-times continuously differentiable functions on $U$, where $k \in \N \cup \{ \infty \}$ and $U$ is an open subset of $\R^d$}
\item[$C^k_b(U)$]{defined as $C^k(U)\cap C_b(U)$, where $k \in \N \cup \{ \infty \}$ and $U$ is an open subset of $\R^d$}
\item[$C_0(U)$]{defined as $\{f \in C(U): \text{supp}(|f|dx) \text{ is a compact subset of } U \}$, where $U$ is an open subset of $\R^d$}
\item[$C_0^k(U)$]{defined as $C^k(U)\cap C_0(U)$ , $k \in \N \cup \{ \infty \}$, $U\subset \R^d$ open
\item[$C_{\infty}(\R^d)$]{defined as $\{f \in C_b(\R^d): \exists\lim_{\|x\| \rightarrow \infty} f(x)= 0  \}$ equipped with the norm $\|f\|_{C_b(\R^d)}$}}
\item[$C^{0,\beta}(\overline{V})$]{defined as $ \{f \in C(\overline{V}): \text{h\"ol}_\beta (f,\overline{V}) < \infty \}$, where $V$ is an open subset of $\R^d$, $\beta \in (0,1)$ and 
$$
\text{h\"ol}_\beta (f,\overline{V})  := \sup\left\{\frac{|f(x)-f(y)|}{\|x-y\|^\beta}: x,y \in \overline{V}, x\not=y\right\} 
$$ 
equipped with the norm
$$ 
\|f\|_{C^{0,\beta}(\overline{V})} := \sup_{x \in \overline{V}} |f(x)| + \mathrm{\text{h\"{o}l}}_\beta (f,\overline{V})
$$
}
\item[$C^{\gamma; \frac{\gamma}{2}}(\overline{Q})$]{defined as $\{f \in C(\overline{Q}): \text{ph\"ol}_\gamma (f,\overline{Q}) < \infty\}$, where $Q$ is an open subset of $\R^d \times \R$, $\gamma \in (0,1)$ and
$$
\text{ph\"ol}_\gamma (f,\overline{Q}) := \sup\left\{\frac{|f(x,t)-f(y,s)|}{\left(\|x-y \|+\sqrt{|t-s|}\right)^{\gamma}}: \;(x,t), (y,s) \in \overline{Q}, \;(x,t) \not=(y,s)\right \}
$$
equipped with the norm 
$$
\|f\|_{C^{\gamma; \frac{\gamma}{2}}(\overline{Q})} := \sup_{(x,t) \in \overline{Q}} |f(x,t)| +\text{ph\"ol}_\gamma(f,\overline{Q})
$$
}

\item[$H^{1,r}(U)$]{defined as $\{ f \in L^r(U): \partial_i f \in L^r(U), \text{ for all $i=1,\ldots, d$} \}$  equipped with the norm  $\|f\|_{H^{1,r}(U)}:=(\|f\|^r_{L^r(U)}+\sum_{i=1}^d\|\partial_i f \|_{L^r(U)}^r)^{1/r}$, if $r \in [1, \infty)$ and $\|f\|_{H^{1,\infty}(U)}:=\|f\|_{L^{\infty}(U)}+\sum_{i=1}^d \|\partial_i f\|_{L^{\infty}(U)}$, if $r=\infty$, where $U$ is an open subset of $\R^d$} 
\item[$H^{1,r}_0(U)$]{the closure of $C_0^{\infty}(U)$ in $H^{1,r}(U)$, where $r \in [1, \infty)$ and $U$ is an open subset of $\R^d$}
\item[$H_{loc}^{1,r}(\R^d)$]{defined as $\{f \in L^r_{loc}(\R^d): f|_B \in H^{1,r}(B) \text{ for any open ball $B$ in $\R^d$} \}=\{f \in L^r_{loc}(\R^d): f\chi\in H^{1,r}(\R^d ) \text{ for any } \chi\in C_0^\infty (\R^d ) \}$}, where $r\in [1,\infty]$
\item[$H_0^{1,2}(V, \mu)$]{ the closure of  
$C_0^\infty (V)$ in $L^2 (V,\mu )$ w.r.t. the norm 
$$ 
\|u\|_{H_0^{1,2} (V,\mu )} 
:= \left( \int_V u^2\, d\mu + \int_V \|\nabla u\|^2\, d\mu \right)^{\frac 12},
$$
where $V$ is an open subset of $\R^d$}
\item[$H^{1,2}_{loc}(V, \mu)$]{the space of all elements $u$ such that 
$u\chi\in H_0^{1,2}(V,\mu)$ for all $\chi\in C_0^\infty (V)$, where $V$ is an open subset of $\R^d$}
\item[]
\item[] \centerline{\bf Operators}
\item[$id$]{identity operator on a given space}
\item[$L^A f$]{defined as $\frac12 \text{trace}(A \nabla^2 f)=\frac12\sum_{i,j=1}^{d} a_{ij}\partial_{ij} f$, $f \in C^2(\R^d)$ (see \eqref{eq:2.1.3bis})}
\item[$Lf$]{defined as $L^A f + \langle \mathbf{G}, \nabla f \rangle=\frac 12 \sum_{i,j=1}^d a_{ij} \partial_{ij} f+ \sum_{i=1}^d g_i\partial_i f$, $f\in C^{2}(\R^d)$ (see \eqref{eq:2.1.3bis2}) and as $L^0 u + \langle \mathbf{B}, \nabla u\rangle,  u\in D(L^0)_{0,b}$ (see \eqref{defLfirst}). The definitions are consistent, since they coincide on $D(L^0)_{0,b}\cap C^{2}(\R^d)=C^{2}_0(\R^d)$ by Remark \ref{cnulltwocoincide}}
\item[$L'f$]{defined as $ L^A  f+\langle 2 \beta^{\rho, A}- \mathbf{G}, \nabla f \rangle= L^A f + \langle \beta^{\rho, A} - \mathbf{B}, 
\nabla f\rangle$, $f\in C^{2}(\R^d)$ (see \eqref{eq:2.1.3bis2'} and as $L^0 u - \langle \mathbf{B}, \nabla u\rangle,  u\in D(L^0)_{0,b}$ (see \eqref{defL'first} and \eqref{eq:2.1.5a}, \eqref{eq:2.1.6}). The definitions are consistent, since they coincide on $D(L^0)_{0,b}\cap C^{2}(\R^d)=C^{2}_0(\R^d)$ by Remark \ref{cnulltwocoincideprime}}
\item[$(\mathcal{E}^{0,V}, D(\mathcal{E}^{0,V}))$]
symmetric Dirichlet form defined as the closure of 
$$
\mathcal{E}^{0,V}(u,v) = \frac12 \int_{V} \langle A \nabla u, \nabla v \rangle d\mu, \quad u,v \in C_0^{\infty}(V)
$$
in $L^2(V, \mu)$, where $V$ is an open subset of $\R^d$. (If $V$ is relatively compact, then $D(\mathcal{E}^{0,V})=H^{1,2}_0(V, \mu)$.)
\item[$(L^{0,V}, D(L^{0,V}))$]{the generator associated with $(\mathcal{E}^{0,V}, D(\mathcal{E}^{0,V}))$}
\item[$(T_t^{0,V})_{t>0}$]{the sub-Markovian $C_0$-semigroup of contractions on $L^2(V, \mu)$ generated by $(L^{0,V}, D(L^{0,V}))$}
\item[$(\mathcal{E}^{0}, D(\mathcal{E}^0))$]{defined as $(\mathcal{E}^{0, \R^d}, D(\mathcal{E}^{0, \R^d}))$}
\item[$\mathcal{E}^{0}_{\alpha}(\cdot,\cdot)$]{defined as $\cE^0(\cdot,\cdot)+\alpha(\cdot,\cdot)_{L^2(\R^d,\mu)}\, , \ \alpha>0$}
\item[$(L^{0}, D(L^{0}))$]{the generator associated with $(\mathcal{E}^{0}, D(\mathcal{E}^{0}))$ (see \eqref{DefGeneratorDF})}
\item[$(T^{0}_t)_{t>0}$]{the sub-Markovian $C_0$-semigroup of contractions on $L^2(\R^d, \mu)$ generated by $(L^{0}, D(L^{0}))$ }
\item[$(\overline{L}^V, D(\overline{L}^V))$]{the $L^1$-closed extension of $(L, C_0^{\infty}(V))$ generating the sub-Markovian
 $C_0$-semigroup $(\overline{T}^V_t)_{t>0}$ on $L^1 (V,\mu )$}, where $V$ is a bounded open subset of $\R^d$ (see Proposition \ref{prop:2.1})
\item[$(\overline{L}^{V, \prime}, D(\overline{L}^{V, \prime}))$]{the $L^1$-closed extension of $(L^{\prime}, C_0^{\infty}(V))$ generating the sub-Markovian $C_0$-semigroup $(\overline{T}^{V, \prime}_t)_{t>0}$ on $L^1 (V,\mu )$}, where $V$ is a bounded open subset of $\R^d$ (see Remark \ref{rem2.1.3})
\item[$(\overline{T}^V_t)_{t>0}$]{the sub-Markovian $C_0$-semigroup of contractions on $L^1(V, \mu)$ generated by $(\overline{L}^V, D(\overline{L}^V))$}
\item[$(\overline{T}^{V, \prime}_t)_{t>0}$]{the sub-Markovian $C_0$-semigroup of contractions on $L^1(V, \mu)$ generated by $(\overline{L}^{V, \prime}, D(\overline{L}^{V, \prime}))$} 
\item[$(\overline{G}^V_{\alpha})_{\alpha>0}$]{the sub-Markovian $C_0$-resolvent of contractions on $L^1(V, \mu)$ generated by $(\overline{L}^V, D(\overline{L}^V))$}
\item[$(\overline{G}^{V, \prime}_\alpha)_{\alpha>0}$]{the sub-Markovian $C_0$-resolvent of contractions on $L^1(V, \mu)$ generated by $(\overline{L}^{V, \prime}, D(\overline{L}^{V, \prime}))$ } 

\item[($\overline{L}, D(\overline{L}))$]{the $L^1$-closed extension of $(L, C_0^{\infty}(\R^d))$ generating the  sub-Markovian$C_0$-semigroup $(\overline{T}_t)_{t>0}$ on $L^1 (\R^d,\mu )$}  (see Theorem \ref{theorem2.1.5})
\item[($\overline{L}', D(\overline{L}'))$]{the $L^1$-closed extension of $(L', C_0^{\infty}(\R^d))$ generating the sub-Markovian $C_0$-semigroup $(\overline{T}'_t)_{t>0}$ on $L^1 (\R^d,\mu )$ (see Remark \ref{remark2.1.7}(ii))}
\item[$(\overline{T}_t)_{t>0}$]{the sub-Markovian $C_0$-semigroup of contractions on $L^1(\R^d, \mu)$ generated by $(\overline{L}, D(\overline{L}))$}
\item[$(\overline{T}'_t)_{t>0}$]{the sub-Markovian $C_0$-semigroup of contractions on $L^1(\R^d, \mu)$ generated by $(\overline{L}', D(\overline{L}'))$}

\item[$(\overline{G}_{\alpha})_{\alpha>0}$]{the sub-Markovian $C_0$-resolvent of contractions on $L^1(\R^d, \mu)$ generated by $(\overline{L}, D(\overline{L}))$}

\item[$(\overline{G}'_{\alpha})_{\alpha>0}$]{the sub-Markovian $C_0$-resolvent of contractions on $L^1(\R^d, \mu)$ generated by $(\overline{L}', D(\overline{L}'))$} 

\item[$(T_t)_{t>0}$]{the semigroup corresponding to $(\overline{T}_t)_{t>0}$ on all $L^r(\R^d,\mu )$-spaces, $r\in [1,\infty]$ (cf. Definition \ref{definition2.1.7})}
\item[$(P_t)_{t>0}$]{the regularized semigroup of $(T_t)_{t>0}$ (cf. Proposition \ref{prop:3.1.1})}

\item[$(T'_t)_{t>0}$]{the semigroup corresponding to $(\overline{T}'_t)_{t>0}$ on all $L^r(\R^d,\mu )$-spaces, $r\in [1,\infty]$ (cf. Definition \ref{definition2.1.7})}
\item[$(G_{\alpha})_{\alpha>0}$]{the resolvent associated with $(T_t)_{t>0}$ on all $L^r(\R^d,\mu )$-spaces, $r \in [1,\infty]$}
\item[$(R_{\alpha})_{\alpha>0}$]{the regularized resolvent of $(G_{\alpha})_{\alpha>0}$ (cf. Proposition \ref{prop:3.1.2})}

\item[$(G_{\alpha}')_{\alpha>0}$]{the resolvent associated with $(T'_t)_{t>0}$ on all $L^r(\R^d,\mu )$-spaces, $r \in [1,\infty]$}

\item[$(L_r,D(L_r))$]{the generator of $(G_{\alpha})_{\alpha>0}$ on $L^r(\R^d,\mu )$, $r \in [1,\infty)$ (cf. Definition \ref{definition2.1.7})}
\item[$(L_r',D(L_r'))$]{the generator of $(G_{\alpha}')_{\alpha>0}$ on $L^r(\R^d,\mu )$, $r \in [1,\infty)$ (cf. Definition \ref{definition2.1.7})}

\item[]
\item[] \centerline{\bf Stochastic processes, stopping times and the like}
\item[$\M$]{the Hunt process $\M =  (\Omega, \mathcal{F}, (\mathcal{F}_t)_{t \ge 0}, (X_t)_{t \ge 0}, (\mathbb{P}_x)_{x \in \R^d\cup \{\Delta\}}   )$ whose transition semigroup is $(P_t)_{t \ge 0}$ (see Theorem \ref{th: 3.1.2} and also Theorem \ref{theo:3.1.4})}
\item[a.s.]{almost surely}
\item[$\E_x$, resp. $\tilde \E_x$]{expectation w.r.t. the probability measure $\P_x$, resp. $\tilde{\P}_x$}
\item[$\sigma(\widetilde{X}_s \vert s \in I)$]{smallest $\sigma$-algebra such that all $\widetilde{X}_s, s \in I$ are measurable, where $I \subset [0, \infty)$ with $I \in \mathcal{B}(\R)$}
\item[$\sigma(\mathcal{S})$]{the smallest $\sigma$-algebra which contains every set of some collection of sets $\mathcal{S}$}
\item[$\sigma_{A}$]{$\sigma_{A}:=\inf\{t>0\,:\, X_t\in A\}$, $A\in \mathcal{B}(\R^d)$}
\item[$\sigma_n$]{$\sigma_n:=\sigma_{\R^d\setminus B_n},n\ge 1$}
\item[$D_A$]{$D_A:=\inf\{t\ge0\,:\, X_t\in  A\}$, $A\in \mathcal{B}(\R^d)$}
\item[$D_n$]{$D_n:=D_{\R^d\setminus B_n}$, $n\ge 1$}
\item[$L_A$] the last exit time from $A\in \mathcal{B}(\R^d)$, $L_A:=\sup \{ t\geq 0: X_t\in A\}$, $\sup \emptyset:=0$
\item[$\langle X\rangle_t$, $\langle X, Y \rangle_t$]{the quadratic variation up to time $t$ of a continuous stochastic process $(X_t)_{t \geq 0}$, resp. the covariation of two continuous stochastic processes $(X_t)_{t \geq 0}$ and $(Y_t)_{t \geq 0}$.}
\item[$\zeta$, resp. $\widetilde{\zeta}$]{lifetime of a stochastic process $(X_t)_{t \geq 0}$, resp. $(\widetilde{X}_t)_{t \geq 0}$ (see Theorem \ref{th: 3.1.2}), Definition \ref{def:3.1.1})}
\item[$\vartheta_t$]{the shift operator, i.e. $X_s\circ\vartheta_t=X_{s+t}$, $s,t\ge 0$}
\item[]
\item[] \centerline{\bf Miscellanea}
\item[$\mathbf{e}_1$]{$\mathbf{e}_1:=(1,0, \ldots, 0) \in \R^d$}
\item[w.r.t]{with respect to}
\item [$a \wedge b$]{minimum value of $a$ and $b$, $a \wedge b=\frac{|a+b|-|a-b|}{2}$}
\item[$a \vee b$]{maximum value of $a$ and $b$, $a \vee b=\frac{|a+b|+|a-b|}{2}$}
\item[$a^+$]{defined as $a \vee 0$}
\item[$a^-$]{defined as $-a \vee 0$}
\end{itemize}

%%%%%%%%%%%%%%%%%%%%%%%%%%%%%%%%%%%%%%%%%%%%%%%%%%%%%%%%%%%
\iffalse
++++++++++ style of the old version
\begin{itemize} 
\item[] \centerline{\bf Vector Spaces and Norms}
\item  $\|\cdot\|$  for the Euclidean norm on the $d$-dimensional Euclidean space $\R^d$
\item $\langle\cdot,\cdot\rangle$ for the Euclidean inner product in $\R^d$
\item $|\cdot |$ for the absolute value in $\R$
\item $B$ Banach space, $\| \cdot \|_B$ its associated norm 
\item for $B$ Banach space, $B'$ denotes its dual space
\item for $V\subset \R^d$, $\overline{V}$ denotes the closure of $V$ in $\R^d$\\ \\
\end{itemize}

\begin{itemize}
\item[] \centerline{\bf Sets and Set-operations}
\item $\R^d$ denotes the $d$-dimensional Euclidean space 
\item $\mathcal{B}(\R^d)$ denotes the Borel subsets of $\R^d$
\item For $x\in \R^d$, $r>0$, $B_r(x):=\{y\in \R^d : \|x-y\|<r\}$
\item $\overline{B}_r(x):=\{y\in \R^d : \|x-y\|\le r\}$
\item $B_r:=B_r(0)$, $r>0$
\item $R_{x}(r)$ for the open cube in $\R^d$ with center $x \in \R^d$ and edge length $r>0$
\item $\overline{R}_{x}(r)$ for the closure of $R_{x}(r)$
\item if $A,B$ are sets with an addition operation, then $A+B:=\{a+b: a\in A, b\in B\}$\\ \\
\end{itemize}
++++++++++++++++++++++
\fi
%%%%%%%%%%%%%%%%%%%%%%%%%%%%%%%%%%%%%%%%%%%%%%%%%%%%%%%%%%%

\printindex
\addcontentsline{toc}{section}{Index}

\newpage

\newpage
\text{}\\
Haesung Lee\\
Department of Mathematics and Computer Science, \\
Korea Science Academy of KAIST, \\
105-47 Baegyanggwanmun-ro, Busanjin-gu,\\
Busan 47162, South Korea\\
E-mail: fthslt14@gmail.com\\ \\
Wilhelm Stannat\\
Institut f\"ur Mathematik\\
Technische Universit\"at Berlin\\
Stra\ss{}e des 17. Juni 136, \\
D-10623 Berlin, Germany \\
E-mail: stannat@math.tu-berlin.de\\ \\
Gerald Trutnau\\
Department of Mathematical Sciences and \\
Research Institute of Mathematics of Seoul National University,\\
1 Gwanak-ro, Gwanak-gu \\
Seoul 08826, South Korea  \\
E-mail: trutnau@snu.ac.kr\\ \\

\end{document}